\documentclass[letterpaper,12pt,oneside]{book}
\usepackage{fullpage}
\usepackage{amsmath,amssymb,amsfonts,amsthm}
\usepackage[all,arc,knot,poly]{xy}
\usepackage{enumerate}
\usepackage{mathtools}
\usepackage{amsxtra}
\usepackage{needspace}
\usepackage{setspace}

\DeclareMathOperator{\Hom}{\operatorname{Hom}}
\DeclareMathOperator{\Ad}{\operatorname{Ad}}
\DeclareMathOperator{\sgn}{\operatorname{sgn}}
\DeclareMathOperator{\Spec}{\operatorname{Spec}}
\DeclareMathOperator{\vsspan}{\operatorname{span}}
\DeclareMathOperator{\Loc}{\operatorname{Loc}}
\DeclareMathOperator{\Conf}{\operatorname{Conf}}
\DeclareMathOperator{\Aut}{\operatorname{Aut}}
\DeclareMathOperator{\im}{\operatorname{im}}
\DeclareMathOperator{\cross}{\operatorname{cross}}
\DeclareMathOperator{\supp}{\operatorname{supp}}

\makeatletter
\def\Ddots{\mathinner{\mkern1mu\raise\p@
\vbox{\kern7\p@\hbox{.}}\mkern2mu
\raise4\p@\hbox{.}\mkern2mu\raise7\p@\hbox{.}\mkern1mu}}
\makeatother

\newenvironment{step}[1][]{
\medskip
\emph{Step #1.}
}
{\medskip}

\theoremstyle{definition}
\newtheorem{theorem}{Theorem}[section]
\newtheorem{definition}[theorem]{Definition}
\newtheorem{lemma}[theorem]{Lemma}
\newtheorem{proposition}[theorem]{Proposition}

\newtheorem{corollary}[theorem]{Corollary}

\newtheorem{example}[theorem]{Example}

\newtheorem{appendixlemma}{Lemma}[chapter]
\newtheorem{appendixproposition}[appendixlemma]{Proposition}
\newtheorem{appendixtheorem}[appendixlemma]{Theorem}

\begin{document}

\doublespacing

\frontmatter
\begin{titlepage}
\begin{center}
  \vspace*{1cm}

  \Huge
  The Geometry of Cluster Varieties \\
  from Surfaces

  \vspace{2cm}

  \normalsize
  A Dissertation \\
  Presented to the Faculty of the Graduate School \\
  of \\
  Yale University \\
  in Candidacy for the Degree of \\
  Doctor of Philosophy
        
  \vfill
        
  by \\
  Dylan Gregory Lucasi Allegretti \\

  \vspace{1cm}

  Dissertation Director: Alexander Goncharov

  \vspace{1cm}

\end{center}
\end{titlepage}


\newpage

\begin{center}
  \textbf{Abstract}
  \addcontentsline{toc}{chapter}{Abstract}

  \vspace{1cm}

  \textbf{The Geometry of Cluster Varieties from Surfaces}
  
  \vspace{1cm}

  Dylan Gregory Lucasi Allegretti \\
  
  \vspace{1cm}
\end{center}

Cluster varieties are geometric objects introduced by Fock and Goncharov. They have recently found applications in several areas of mathematics and mathematical physics. The goal of this thesis is to study the geometry of a large class of cluster varieties associated to compact oriented surfaces with boundary.

The main original contribution of this thesis is to develop the properties of a particular kind of cluster variety called the \emph{symplectic double}. Let $S$ be a compact oriented surface with boundary together with finitely many marked points on its boundary, and let~$S^\circ$ denote the same surface equipped with the opposite orientation. We consider the surface $S_{\mathcal{D}}$ obtained by gluing $S$ and $S^\circ$ along corresponding boundary components. We show that the symplectic double is birational to a certain moduli space of local systems associated to this surface $S_{\mathcal{D}}$. We define a version of the notion of measured lamination on~$S_{\mathcal{D}}$ and prove that the space of all such laminations is a tropicalization of the symplectic double. We describe a canonical map from this space of laminations into the algebra of rational functions on the symplectic double. There is an explicit formula expressing this map in terms of special polynomials called $F$-polynomials from the theory of cluster algebras.

The second main contribution of this thesis is a proof of Fock and Goncharov's duality conjectures for quantum cluster varieties associated to a disk with finitely many marked points on its boundary. These duality conjectures identify a canonical set of elements in the quantized algebra of functions on a cluster variety satisfying a number of special properties. The results presented here grew out of joint work with Hyun~Kyu~Kim on quantum cluster varieties associated to punctured surfaces.

\newpage

\begin{center}
  \textbf{Acknowledgements}
  \addcontentsline{toc}{chapter}{Acknowledgements}
\end{center}

Most of all, I would like to express my gratitude to my advisor, Alexander~Goncharov, for years of guidance and instruction. His mentorship exposed me to some of the most exciting modern developments in mathematics and gave me the skills I need to contribute to mathematical knowledge. The time and attention he gave me have had a huge impact on my development as a mathematician.

I would like to thank the rest of the mathematics faculty at Yale~University for providing a stimulating environment for research. In particular, I would like to thank Igor~Frenkel, whose vision of mathematics has been an inspiration to me throughout graduate school.

I have benefitted greatly from interactions with my fellow graduate students. I thank Linhui~Shen for many helpful discussions and for contributing several important technical ideas to my research. I thank Hyun~Kyu~Kim for a wonderful collaboration that led to some of the results presented in this thesis. I have also learned a great deal from conversations with Efim~Abrikosov, Ivan~Ip, and Daping~Weng.

Finally, I would like to thank the many mathematicians I have met outside Yale~University at conferences and seminars. In particular, I thank Sergey~Fomin and Andrew~Neitzke for their support and interest in my work.

\tableofcontents

\mainmatter
\chapter{Introduction}
\label{ch:Introduction}

This thesis studies objects called cluster varieties, which are geometric counterparts of the cluster algebras of Fomin and Zelevinsky~\cite{FZI}. Cluster varieties were introduced by Fock and Goncharov in a series of recent papers~\cite{IHES,ensembles,dual,dilog}, and they have already found applications in many different parts of mathematics and mathematical physics.

The central example studied in this thesis is the \emph{cluster symplectic variety} or \emph{symplectic double}. This is a particular kind of cluster variety which carries a natural symplectic form and plays a key role in the quantization of cluster varieties~\cite{dilog}. In this introductory chapter, we will motivate the definition of the symplectic double and summarize the main results of this thesis. As we will see, the formulas used to define the symplectic double come from a remarkable function called the quantum dilogarithm.

\section{Origins of cluster varieties}

\subsection{Seeds and mutations}

We begin with some elementary ideas related to quivers. Recall that a quiver is simply a directed graph. It consists of a set $Q_0$ (the set of \emph{vertices}), a set $Q_1$ (the set of \emph{arrows}), and maps $s:Q_1\rightarrow Q_0$ and $t:Q_1\rightarrow Q_0$ taking an edge to its \emph{source} and \emph{target}, respectively. We typically display an arrow diagrammatically as 
\[
\xymatrix{
s(\alpha) \ar[rr]^{\alpha} & & t(\alpha).
}
\]

A \emph{loop} in a quiver $Q$ is an arrow $\alpha$ whose source and target coincide. A \emph{2-cycle} is a pair of distinct arrows $\alpha$ and $\beta$ such that the target of $\alpha$ is the source of $\beta$ and vice versa. A quiver is said to be \emph{finite} if the sets $Q_0$ and $Q_1$ are both finite.

From now on, all of the quivers that we discuss will be finite quivers with no loops or 2-cycles. Such quivers are determined up to isomorphism by skew-symmetric matrices.

\begin{definition}
If $Q$ is a quiver and $i$ and $j$ are vertices of $Q$, then we define
\[
\varepsilon_{ij}=|\{\text{arrows from $j$ to $i$}\}| - |\{\text{arrows from $i$ to $j$}\}|.
\]
\end{definition}

Abstracting from this situation, we arrive at the notion of a seed.

\begin{definition}
A \emph{seed} $\mathbf{i}=(I,\varepsilon_{ij})$ consists of 
\begin{enumerate}
\item A finite set $I$.
\item A skew-symmetric integer matrix $\varepsilon_{ij}$ ($i$,~$j\in I$).
\end{enumerate}
\end{definition}

The notion of a seed is an important ingredient in the definition of a cluster variety or cluster algebra. If $\mathbf{i}=(I,\varepsilon_{ij})$ is a seed, then we can consider an associated lattice 
\[
\Lambda=\mathbb{Z}[I].
\]
This lattice has a basis $\{e_i\}$ given by $e_i=\{i\}$ for $i\in I$ and a $\mathbb{Z}$-valued skew-symmetric bilinear form $(\cdot,\cdot)$ given on basis elements by 
\[
(e_i,e_j)=\varepsilon_{ij}.
\]
Thus we have the following equivalent definition of a seed.

\begin{definition}
\label{def:altseed}
A \emph{seed} $\mathbf{i}=(\Lambda,\{e_i\},(\cdot,\cdot))$ consists of 
\begin{enumerate}
\item A lattice $\Lambda$.
\item A basis $\{e_i\}$ for $\Lambda$.
\item A $\mathbb{Z}$-valued skew-symmetric bilinear form $(\cdot,\cdot)$ on $\Lambda$.
\end{enumerate}
\end{definition}

Let us now return to the quivers that motivated the definition of a seed. If $k$ is any vertex of a quiver~$Q$, then there is a natural operation that we can perform on~$Q$ to get a new quiver. This operation is easiest to understand in the special case where $k$ is a source (a vertex with no incoming arrows) or a sink (a vertex with no outgoing arrows). In either of these special cases, we can simply reverse the direction of all arrows incident to~$k$. More generally, one has the following operation on quivers.

\begin{definition}
\label{def:quivermutation}
Let $k$ be a vertex of a quiver $Q$. Then we define a new quiver $\mu_k(Q)$, called the quiver obtained by \emph{mutation} in the direction~$k$, as follows.
\begin{enumerate}
\item For each pair of arrows $i\rightarrow k\rightarrow j$, we add a new arrow $i\rightarrow j$.
\item Reverse all arrows incident to~$k$.
\item Remove the arrows from a maximal set of pairwise disjoint 2-cycles.
\end{enumerate}
\end{definition}

The last item in this definition means, for example, that we replace the diagram $\xymatrix{i \ar@< 4pt>[r] \ar@< -4pt>[r] & j \ar[l]}$ by $\xymatrix{i \ar[r] & j}$.

\begin{example}
The diagrams below illustrate the steps of~Definition~\ref{def:quivermutation}.
\[
\vcenter{
\xymatrix{
& k \ar[rd] \\
i \ar[ru] & & j \ar[ll]
}
}
\leadsto
\vcenter{
\xymatrix{
& k \ar[rd] \\
i \ar[ru] \ar@< 2pt>[rr] & & j \ar@<2pt>[ll]
}
}
\leadsto
\vcenter{
\xymatrix{
& k \ar[ld] \\
i \ar@< 2pt>[rr] & & j \ar@<2pt>[ll] \ar[lu]
}
}
\leadsto
\vcenter{
\xymatrix{
& k \ar[ld] \\
i & & j \ar[lu]
}
}
\]
\end{example}

It is natural to ask how the associated seed changes when we perform a mutation at some vertex of a quiver. To answer this question, first note that the set of vertices of a quiver coincides with the set of vertices of any quiver obtained by mutation. The change in $\varepsilon_{ij}$ is described by the following proposition.

\begin{proposition}
\label{prop:matrixmutation}
A mutation in the direction~$k$ changes the matrix~$\varepsilon_{ij}$ to the matrix
\[
\varepsilon_{ij}'=
\begin{cases}
-\varepsilon_{ij} & \mbox{if } k\in\{i,j\} \\
\varepsilon_{ij}+\frac{|\varepsilon_{ik}|\varepsilon_{kj}+\varepsilon_{ik}|\varepsilon_{kj}|}{2} & \mbox{if } k\not\in\{i,j\}.
\end{cases}
\]
\end{proposition}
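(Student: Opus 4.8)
The plan is to follow the three steps of Definition~\ref{def:quivermutation} literally, keeping track of the numbers $a_{pq} := |\{\text{arrows from }p\text{ to }q\}|$, so that $\varepsilon_{pq} = a_{qp} - a_{pq}$. The key elementary fact, used throughout, is that since $Q$ has no $2$-cycles, for every ordered pair $(p,q)$ at most one of $a_{pq}$, $a_{qp}$ is nonzero; hence $a_{pq} = (\varepsilon_{qp})^{+}$ and $a_{qp} = (\varepsilon_{qp})^{-}$, where $x^{+} := \max(x,0)$ and $x^{-} := \max(-x,0)$.

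Consider first the case $k \in \{i,j\}$; by skew-symmetry of $\varepsilon$ it suffices to treat $j = k$ with $i \neq k$. Step~(1) only creates arrows between vertices different from $k$, so it does not touch the arrows between $i$ and $k$; step~(2) reverses all of them, interchanging $a_{ik}$ and $a_{ki}$; and since a one-directional bundle of arrows stays one-directional after reversal, no $2$-cycle appears and step~(3) does nothing here. Thus after mutation $\varepsilon'_{ik} = a_{ik} - a_{ki} = -\varepsilon_{ik}$, as claimed (the degenerate case $i = j = k$ giving $0 = 0$).

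Now suppose $k \notin \{i,j\}$. Step~(1) adds $a_{ik}a_{kj}$ arrows $i \to j$ and $a_{jk}a_{ki}$ arrows $j \to i$, and creates no loops, because a path $i \to k \to i$ would force a $2$-cycle at $k$. Step~(2) does not alter arrows between $i$ and $j$. After steps~(1)--(2) the only $2$-cycles present lie between pairs of vertices both distinct from $k$ — none incident to $k$ are created by step~(1), and step~(2) merely turns the one-directional bundles at $k$ into one-directional bundles — so step~(3) deletes only equally many oppositely oriented arrows between such pairs, leaving the difference $a_{ji} - a_{ij}$ unchanged. Therefore
\[
\varepsilon'_{ij} = \bigl(a_{ji} + a_{jk}a_{ki}\bigr) - \bigl(a_{ij} + a_{ik}a_{kj}\bigr) = \varepsilon_{ij} + a_{jk}a_{ki} - a_{ik}a_{kj}.
\]
Substituting $a_{ki} = (\varepsilon_{ik})^{+}$, $a_{ik} = (\varepsilon_{ik})^{-}$, $a_{jk} = (\varepsilon_{kj})^{+}$, $a_{kj} = (\varepsilon_{kj})^{-}$ and invoking the identity $x^{+}y^{+} - x^{-}y^{-} = \tfrac12\bigl(|x|y + x|y|\bigr)$ — immediate from $x^{\pm} = \tfrac12(|x| \pm x)$ — with $x = \varepsilon_{ik}$ and $y = \varepsilon_{kj}$ yields the stated formula.

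The argument is essentially just bookkeeping, so I do not expect a genuine obstacle; the one point that must be handled with care is verifying that neither loops nor $2$-cycles meeting the vertex $k$ are ever produced, so that step~(3) acts only on pairs of vertices disjoint from $k$. This is precisely where the standing assumption that $Q$ is loop- and $2$-cycle-free enters, and the same observation shows en passant that $\mu_k(Q)$ is again loop- and $2$-cycle-free and that the formula is independent of the chosen maximal set of disjoint $2$-cycles.
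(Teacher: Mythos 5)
Your proof is correct. The paper explicitly omits the argument as ``straightforward,'' and what you have written is exactly the intended bookkeeping: the identification $a_{ki}=(\varepsilon_{ik})^{+}$, $a_{ik}=(\varepsilon_{ik})^{-}$ (valid precisely because $Q$ has no $2$-cycles), the check that steps~(1)--(3) create no loops or $2$-cycles at $k$ and that step~(3) removes arrows in cancelling pairs, and the identity $x^{+}y^{+}-x^{-}y^{-}=\tfrac12(|x|y+x|y|)$ together give the stated formula, with nothing missing.
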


The proof of this proposition is straightforward and will be omitted. Using this result, we can define mutations at the level of seeds.

\begin{definition}
Let $\mathbf{i}=(I,\varepsilon_{ij})$ be a seed and $k\in I$. Then we define a new seed $\mathbf{i}'=(I',\varepsilon_{ij}')$ called the seed obtained by \emph{mutation} in the direction $k$ by setting $I'=I$, and defining $\varepsilon_{ij}'$ by the above formula.
\end{definition}

Naturally, there is also a notion of mutation for seeds in the sense of Definition~\ref{def:altseed}. For any integer $n$, let us write $[n]_+=\max(0,n)$.

\begin{definition}
\label{def:altmutation}
Let $\mathbf{i}=(\Lambda,\{e_i\},(\cdot,\cdot))$ be a seed and $e_k$ a basis vector. Then we define a new seed $\mathbf{i}'=(\Lambda',\{e_i'\},(\cdot,\cdot)')$ called the seed obtained by \emph{mutation} in the direction of $e_k$. It is given by $\Lambda'=\Lambda$, $(\cdot,\cdot)'=(\cdot,\cdot)$, and 
\[
e_i'=
\begin{cases}
-e_k & \mbox{if } i=k \\
e_i+[\varepsilon_{ik}]_+e_k & \mbox{if } i\neq k.
\end{cases}
\]
\end{definition}

\subsection{The quantum dilogarithm}

In the next step of our construction, we employ the following special function.

\begin{definition}
The \emph{quantum dilogarithm} is the formal infinite product
\[
\Psi^q(x)=\prod_{k=1}^\infty(1+q^{2k-1}x)^{-1}.
\]
\end{definition}

The quantum dilogarithm power series first appeared in the 19th century under the names ``$q$-exponential'' and ``infinite Pochhammer symbol''. It was rediscovered in the 1990s by Faddeev and Kashaev, who showed that it satisfies a quantum version of a certain functional equation for the classical dilogarithm function~\cite{FK}. In subsequent works, the quantum dilogarithm was used to quantize the Teichm\"uller space of a surface~\cite{CF,K}.

In this section, we will study the quantum dilogarithm as a function on the following algebra of $q$-commuting variables.

\begin{definition}
\label{def:quantumtorus}
Let $\Lambda$ be a lattice equipped with a $\mathbb{Z}$-valued skew-symmetric bilinear form~$(\cdot,\cdot)$. Then the \emph{quantum torus algebra} is the noncommutative algebra over $\mathbb{Q}[q,q^{-1}]$ generated by variables $Y_v$ ($v\in\Lambda$) subject to the relations 
\[
q^{-(v_1,v_2)}Y_{v_1}Y_{v_2}=Y_{v_1+v_2}.
\]
\end{definition}

This definition allows us to associate to any seed $\mathbf{i}=(\Lambda,\{e_i\}_{i\in I},(\cdot,\cdot))$, a quantum torus algebra $\mathcal{X}_{\mathbf{i}}^q$. The basis $\{e_i\}$ provides a set of generators $X_i^{\pm1}$ given by $X_i=Y_{e_i}$ for this algebra $\mathcal{X}_{\mathbf{i}}^q$. They obey the commutation relations 
\[
X_iX_j=q^{2\varepsilon_{ij}}X_jX_i.
\]
The algebra $\mathcal{X}_{\mathbf{i}}^q$ satisfies the Ore condition from ring theory, so we can form its noncommutative fraction field $\widehat{\mathcal{X}}_{\mathbf{i}}^q$. In addition to associating a quantum torus algebra to every seed, we use the quantum dilogarithm to construct a natural map $\widehat{\mathcal{X}}_{\mathbf{i}'}^q\rightarrow\widehat{\mathcal{X}}_{\mathbf{i}}^q$ whenever two seeds~$\mathbf{i}=(\Lambda,\{e_i\},(\cdot,\cdot))$ and~$\mathbf{i}'=(\Lambda',\{e_i'\},(\cdot,\cdot)')$ are related by a mutation.

\Needspace*{3\baselineskip}
\begin{definition}\mbox{}
\label{def:quantummutationdef}
\begin{enumerate}
\item The automorphism $\mu_k^\sharp:\widehat{\mathcal{X}}_{\mathbf{i}}^q\rightarrow\widehat{\mathcal{X}}_{\mathbf{i}}^q$ is given by conjugation with $\Psi^q(X_k)$:
\[
\mu_k^\sharp=\Ad_{\Psi^q(X_k)}.
\]

\item The isomorphism $\mu_k':\widehat{\mathcal{X}}_{\mathbf{i}'}^q\rightarrow\widehat{\mathcal{X}}_{\mathbf{i}}^q$ is induced by the natural lattice map $\Lambda'\rightarrow\Lambda$.

\item The mutation map $\mu_k^q:\widehat{\mathcal{X}}_{\mathbf{i}'}^q\rightarrow\widehat{\mathcal{X}}_{\mathbf{i}}^q$ is the composition $\mu_k^q=\mu_k^\sharp\circ\mu_k'$.
\end{enumerate}
\end{definition}

Although conjugation by $\Psi^q(X_k)$ produces a priori a formal power series, this construction in fact provides a map $\widehat{\mathcal{X}}_{\mathbf{i}'}^q\rightarrow\widehat{\mathcal{X}}_{\mathbf{i}}^q$ of skew-fields. The proof of this fact is rather tedious, and the interested reader is referred to Appendix~\ref{ch:DerivationOfTheClassicalMutationFormulas}.

The quantum torus algebra $\mathcal{X}_{\mathbf{i}}^q$ used in this construction can be represented as an algebra of operators on an infinite-dimensional  Hilbert space~$\mathcal{H}_{\mathbf{i}}$. In fact, Fock and Goncharov show that there exist intertwining operators $\mathcal{H}_{\mathbf{i'}}\rightarrow\mathcal{H}_{\mathbf{i}}$ whenever the seeds $\mathbf{i}$ and~$\mathbf{i}'$ are related by a mutation~\cite{dilog}.

\subsection{The quantum symplectic double}

To fully understand the ideas of the previous section, particularly the representations on Hilbert spaces, it is useful embed the construction in a larger one. To do this, note that if  $\mathbf{i}=(\Lambda,\{e_i\},(\cdot,\cdot))$ is any seed, then we can form the ``double'' $\Lambda_{\mathcal{D}}=\Lambda_{\mathcal{D},\mathbf{i}}$ of the lattice $\Lambda$ given by the formula 
\[
\Lambda_{\mathcal{D}}=\Lambda\oplus\Lambda^\vee
\]
where $\Lambda^\vee=\Hom(\Lambda,\mathbb{Z})$. The basis $\{e_i\}$ for $\Lambda$ provides a dual basis $\{f_i\}$ for $\Lambda^\vee$, and hence we have a basis $\{e_i,f_i\}$ for $\Lambda_{\mathcal{D}}$. Moreover, there is a natural skew-symmetric bilinear form $(\cdot,\cdot)_{\mathcal{D}}$ on~$\Lambda_{\mathcal{D}}$ given by the formula 
\[
\left((v_1,\varphi_1),(v_2,\varphi_2)\right)_{\mathcal{D}}=(v_1,v_2)+\varphi_2(v_1)-\varphi_1(v_2).
\]
We can apply the construction of Definition~\ref{def:quantumtorus} to these data to get a quantum torus algebra which we denote $\mathcal{D}_{\mathbf{i}}^q$. If we let $X_i$ and $B_i$ denote the generators associated to the basis elements $e_i$ and $f_i$, respectively, then we have the commutation relations 
\[
X_iX_j=q^{2\varepsilon_{ij}}X_jX_i, \quad B_iB_j=B_jB_i, \quad X_iB_j=q^{2\delta_{ij}}B_jX_i.
\]
We will write $\widehat{\mathcal{D}}_{\mathbf{i}}^q$ for the (noncommutative) fraction field of~$\mathcal{D}_{\mathbf{i}}^q$. The following notations will be important in the sequel:
\[
\mathbb{B}_k^+=\prod_{i|(e_k,e_i)>0}B_i^{(e_k,e_i)}, \quad \mathbb{B}_k^-=\prod_{i|(e_k,e_i)<0}B_i^{-(e_k,e_i)}, \quad \widehat{X}_i=X_i\prod_jB_j^{(e_i,e_j)}.
\]
One can check using the above relations that the elements $X_k$ and $\widehat{X}_k$ commute. Just as before, we have a natural map $\widehat{\mathcal{D}}_{\mathbf{i}'}^q\rightarrow\widehat{\mathcal{D}}_{\mathbf{i}}^q$ whenever $\mathbf{i}$ and $\mathbf{i}'$ are two seeds related by a mutation.

\Needspace*{4\baselineskip}
\begin{definition}\mbox{}
\label{def:doublemutation}
\begin{enumerate}
\item The automorphism $\mu_k^\sharp:\widehat{\mathcal{D}}_{\mathbf{i}}^q\rightarrow\widehat{\mathcal{D}}_{\mathbf{i}}^q$ is given by 
\[
\mu_k^\sharp=\Ad_{\Psi^q(X_k)/\Psi^q(\widehat{X}_k)}.
\]

\item The isomorphism $\mu_k':\widehat{\mathcal{D}}_{\mathbf{i}'}^q\rightarrow\widehat{\mathcal{D}}_{\mathbf{i}}^q$ is induced by the natural lattice map $\Lambda_{\mathcal{D},\mathbf{i}'}\rightarrow\Lambda_{\mathcal{D},\mathbf{i}}$.

\item The mutation map $\mu_k^q:\widehat{\mathcal{D}}_{\mathbf{i}'}^q\rightarrow\widehat{\mathcal{D}}_{\mathbf{i}}^q$ is the composition $\mu_k^q=\mu_k^\sharp\circ\mu_k'$.
\end{enumerate}
\end{definition}

Notice that the generators $X_i^{\pm1}$ span a subalgebra of $\mathcal{D}_{\mathbf{i}}^q$ which is isomorphic to the quantum torus algebra $\mathcal{X}_{\mathbf{i}}^q$ defined previously. Moreover, since $X_k$ and $\widehat{X}_k$ commute, the restriction of~$\mu_k^q$ to this subalgebra coincides with the mutation map from Definition~\ref{def:quantummutationdef}.

Thus we have constructed a collection of quantum torus algebras and maps between their fraction fields which extend the construction of the previous section. The importance of this new construction stems from the fact that the algebra $\mathcal{D}_{\mathbf{i}}^q$ has a \emph{canonical} representation as an algebra of difference operators on a Hilbert space $\mathcal{H}_{\mathbf{i}}$. Whenever two seeds $\mathbf{i}$ and~$\mathbf{i}'$ are related by a mutation, there exists a unitary operator $\mathcal{H}_{\mathbf{i'}}\rightarrow\mathcal{H}_{\mathbf{i}}$ intertwining the representations. Details of this construction can be found in~\cite{dilog}.

\subsection{Classical limit}

The mutation map $\mu_k^q$ described in Definition~\ref{def:doublemutation} involves conjugation by a quantum dilogarithm, so the result of applying this map is a priori a formal power series. One can show (see Appendix~\ref{ch:DerivationOfTheClassicalMutationFormulas}) that in fact this definition provides a map $\widehat{\mathcal{D}}_{\mathbf{i}'}^q\rightarrow\widehat{\mathcal{D}}_{\mathbf{i}}^q$ of skew fields. The proof of this fact yields an explicit formula for the action of $\mu_k^q$ on generators.

\begin{theorem}
\label{thm:introclassicallimit}
In the classical limit $q=1$, the map $\mu_k^q$ is given on the generators $B_i'$ and~$X_i'$ of $\mathcal{D}_{\mathbf{i}'}^q$ by the formulas 
\[
\mu_k^1(B_i') =
\begin{cases}
\frac{X_k\mathbb{B}_k^++\mathbb{B}_k^-}{(1+X_k)B_k} & \mbox{if } i=k \\
B_i & \mbox{if } i\neq k
\end{cases}
\]
and 
\[
\mu_k^1(X_i')=
\begin{cases}
X_k^{-1} & \mbox{if } i=k \\
X_i{(1+X_k^{-\sgn(\varepsilon_{ik})})}^{-\varepsilon_{ik}} & \mbox{if } i\neq k.
\end{cases}
\]
\end{theorem}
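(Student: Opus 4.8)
The plan is to compute the conjugation action $\Ad_{\Psi^q(X_k)/\Psi^q(\widehat X_k)}$ on each generator in the limit $q=1$, where $\Psi^1(x)$ formally behaves like the ``half'' of the dilogarithm in the sense that $\Psi^q(x)^{-1}\Psi^q(q^2 x)=1+qx$, so that conjugation by $\Psi^q$ multiplies an eigenvector of the shift by a factor of $(1+qx)^{\pm 1}$. Concretely, the key mechanism is: if $Y$ is an element with $X_k Y = q^{2a} Y X_k$ for some integer $a$, then $\Psi^q(X_k)\, Y\, \Psi^q(X_k)^{-1} = Y \cdot \prod_{j}(\text{ratio of consecutive }\Psi\text{-factors})$, which telescopes to $Y$ times a product of $a$ linear factors in $X_k$; passing to $q=1$ collapses these to a power of $(1+X_k)$. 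The same computation applies to $\widehat X_k$, which commutes with $X_k$, so the two conjugations can be handled independently and their contributions multiplied.

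First I would record the commutation relations of $X_k$ and $\widehat X_k$ with each generator: from $X_iX_j=q^{2\varepsilon_{ij}}X_jX_i$, $X_iB_j=q^{2\delta_{ji}}B_jX_i$, and the definition $\widehat X_i=X_i\prod_j B_j^{(e_i,e_j)}$, one gets $X_k X_i = q^{2\varepsilon_{ki}} X_i X_k$, and $\widehat X_k$ commutes with all $B_i$ and $q$-commutes with $X_i$ with exponent determined by $(e_i,e_k)=\varepsilon_{ik}$ and by the $B$-contribution. I would then treat the two cases $i=k$ and $i\neq k$ separately.

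For the $X$-generators with $i\neq k$: since $X_k$ and $\widehat X_k$ commute, $\widehat X_k$ contributes nothing to $\mu_k^\sharp(X_i)$ when $X_i$ already commutes with $\widehat X_k$; more carefully one checks $X_i$ commutes with $\widehat X_k$ (this is the statement that $X_k$ and $\widehat X_k$ commute specialized appropriately, plus a direct check), so only $\Ad_{\Psi^q(X_k)}$ matters, and the telescoping product yields $X_i(1+q^{\pm1}X_k)^{\mp\varepsilon_{ik}}$ with signs governed by $\sgn(\varepsilon_{ik})$; at $q=1$ this is the stated formula. For $X_k'$: the lattice map $\mu_k'$ sends $e_k'\mapsto -e_k$, hence $\mu_k'(X_k')=X_k^{-1}$, and $\Ad_{\Psi^q(X_k)/\Psi^q(\widehat X_k)}$ fixes $X_k^{-1}$ since $X_k$ commutes with both $X_k$ and $\widehat X_k$; so $\mu_k^1(X_k')=X_k^{-1}$. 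For $B_i'$ with $i\neq k$: the lattice map fixes $f_i$ for $i\neq k$ up to a term in $e_k$ coming from $e_i'=e_i+[\varepsilon_{ik}]_+ e_k$ dualizing; one checks the net effect together with the $\Psi$-conjugation leaves $B_i$ unchanged. The substantive case is $B_k'$: here $\mu_k'$ introduces the $X_k$- and $\mathbb B_k^\pm$-dependence via the dual lattice map (the dual basis element $f_k$ transforms into a combination involving $f_i$ with coefficients $-[\varepsilon_{ik}]_+$, i.e.\ into $B_k^{-1}$ times a monomial in the $B_i$), and then conjugation by $\Psi^q(X_k)/\Psi^q(\widehat X_k)$ produces the ratio $\bigl(X_k\mathbb B_k^+ + \mathbb B_k^-\bigr)/\bigl((1+X_k)B_k\bigr)$ after the telescoping collapses at $q=1$; matching $\mathbb B_k^+$ with the positive part of $(e_k,e_i)$ and $\mathbb B_k^-$ with the negative part is exactly the bookkeeping built into the definitions of $\mathbb B_k^\pm$.

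I expect the main obstacle to be the $B_k'$ computation: one must carefully track the dual-lattice mutation $\Lambda_{\mathcal D,\mathbf i'}\to\Lambda_{\mathcal D,\mathbf i}$ on the $f$-basis (which is where the $\mathbb B_k^\pm$ monomials and the $B_k^{-1}$ enter) and then correctly combine it with the two-sided $\Psi^q$-conjugation, verifying that the formal power series produced by $\Ad_{\Psi^q(X_k)}$ and $\Ad_{\Psi^q(\widehat X_k)^{-1}}$ reassemble — via the identity $\Psi^q(x)^{-1}(1+qx)\Psi^q(x)=$ shifted version and its telescoping — into the single rational expression $\frac{X_k\mathbb B_k^++\mathbb B_k^-}{(1+X_k)B_k}$ once $q\to 1$. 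The other cases are comparatively routine applications of the same telescoping lemma, and the fact that $X_k$ commutes with $\widehat X_k$ is what makes the two quantum-dilogarithm conjugations decouple cleanly.
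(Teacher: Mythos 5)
Your proposal is correct and follows essentially the same route as the paper's Appendix~\ref{ch:DerivationOfTheClassicalMutationFormulas}: decompose $\mu_k^q=\mu_k^\sharp\circ\mu_k'$, compute $\mu_k^\sharp$ by pushing the power series past each generator and telescoping with $\Psi^q(q^2x)=(1+qx)\Psi^q(x)$ (using that $X_k$ and $\widehat X_k$ commute to decouple the two dilogarithm conjugations), and compute $\mu_k'$ from the dual-basis mutation $f_k\mapsto -f_k+\sum_j[-\varepsilon_{kj}]_+f_j$, which is exactly where $\mathbb B_k^-/B_k$ enters before conjugation supplies the factor $(1+qX_k)(1+q\widehat X_k)^{-1}$. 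Setting $q=1$ then gives the stated formulas, just as in the paper.
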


The two formulas appearing in this theorem were discovered by Fock and Goncharov~\cite{dilog}. Their motivation was to quantize the Teichm\"uller space of a surface and its higher analogs. Remarkably, these same formulas have appeared in several other seemingly unrelated contexts. Indeed, they arise as a special case of the formulas that Fomin and Zelevinsky used to define cluster algebras with coefficients~\cite{FZIV}, and they have appeared in the work of Kontsevich and Soibelman on wall-crossing~\cite{wallcross} (see also~\cite{Neitzke}).

\section{Summary of main results}

In~\cite{dilog}, Fock and Goncharov used the formulas of Theorem~\ref{thm:introclassicallimit} to glue together split algebraic tori and construct a scheme called the cluster symplectic variety or symplectic double. This scheme is one of the main objects studied in this thesis. Here we will summarize our main results. All of the notation and terminology appearing in this section will be explained more precisely later.

The first main result of this thesis deals with a certain moduli space of local systems associated to a surface. More precisely, let $S$ be a compact oriented surface with finitely many marked points on its boundary, and let $S^\circ$ be the same surface equipped with the opposite orientation. We consider the \emph{double} $S_{\mathcal{D}}$ obtained by gluing $S$ and $S^\circ$ along corresponding boundary components. In~\cite{double}, Fock and Goncharov defined a moduli space $\mathcal{D}_{PGL_m,S}$. Roughly speaking, this space parametrizes $PGL_m$-local systems on the double $S_{\mathcal{D}}$, together with some additional data. In addition, Fock and Goncharov describe a rational map 
\[
\mathcal{D}_{PGL_m,S}\dashrightarrow\mathcal{D}
\]
from the moduli space to the symplectic double associated with the surface~$S$. The first main result of this thesis is the following extension of this result of Fock and Goncharov:

\begin{theorem}
The rational map above is a birational equivalence.
\end{theorem}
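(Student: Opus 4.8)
The plan is to establish the birational equivalence by exhibiting both a dense open set on which the map is defined and an explicit inverse rational map, then checking the two compositions are the identity on these dense open loci. Since the symplectic double $\mathcal{D}$ is glued from split algebraic tori indexed by seeds, and the moduli space $\mathcal{D}_{PGL_m,S}$ carries a cluster structure coming from an ideal triangulation of $S$ (equivalently a ribbon graph / spectral network on $S_{\mathcal{D}}$), it suffices to work in a single chart: fix an ideal triangulation $T$ of $S$, which determines a seed $\mathbf{i}_T$ and hence a cluster torus inside $\mathcal{D}$, and on the other side determines a system of coordinates on a dense open subset of $\mathcal{D}_{PGL_m,S}$. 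The first step is therefore to write down Fock--Goncharov's rational map in these coordinates and record the formula explicitly.

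First I would recall the construction of the coordinates on $\mathcal{D}_{PGL_m,S}$ attached to $T$: the local system on $S_{\mathcal{D}}$ restricted to the two halves $S$ and $S^\circ$ gives a pair of framed $PGL_m$-local systems, whose Fock--Goncharov $\mathcal{X}$-coordinates along the edges of $T$ and $T^\circ$ provide the $X_i$-type variables, while the relative position data of the two framings along the common boundary (the ``gluing'' parameters) provide the $B_i$-type variables. Next I would verify that in these coordinates the map to the cluster torus of $\mathcal{D}$ is a monomial change of variables, hence obviously birational onto its image in this chart; the content is that the image is dense, which follows from a dimension count together with irreducibility of $\mathcal{D}_{PGL_m,S}$. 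Then I would check compatibility with mutation: changing $T$ by a flip changes the seed by a mutation, and one must confirm that Fock and Goncharov's coordinate transformation rule for $\mathcal{D}_{PGL_m,S}$ under a flip agrees with the mutation formulas of Theorem~\ref{thm:introclassicallimit} (this is where the $B$-variable transformation $\mu_k^1(B_k')=\frac{X_k\mathbb{B}_k^++\mathbb{B}_k^-}{(1+X_k)B_k}$ must be matched against the geometric recipe for re-gluing along the flipped edge). Granting this, the maps on the individual charts patch to a global birational equivalence.

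The main obstacle I anticipate is precisely this last compatibility check: verifying that the transformation of the gluing parameters $B_i$ under a flip matches the cluster mutation formula. The $\mathcal{X}$-coordinate part is classical Fock--Goncharov theory, but the $B$-part is genuinely new here, and one must set up the relative-position coordinates on the doubled surface carefully enough that the flip induces exactly the rational function above and not merely something mutation-equivalent up to an automorphism. A secondary technical point is ensuring that the open loci where the coordinates are defined (on both sides, for all triangulations in a connected exchange graph) are large enough that their complements have codimension at least one, so that ``birational'' is justified rather than merely ``dominant''; I would handle this by noting that $\mathcal{D}_{PGL_m,S}$ and $\mathcal{D}$ are both irreducible of the same dimension and that the map is generically injective on one chart, which forces birationality.
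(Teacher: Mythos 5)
There is a genuine gap. The crux of the theorem is not the mutation compatibility you focus on --- that is already supplied by the preceding proposition quoted from Fock--Goncharov, which says the transition maps between charts are exactly the cluster transformations, so it suffices to prove birationality onto a \emph{single} torus $\mathcal{D}_{\mathbf{i}}$ (which is open dense in $\mathcal{D}$). The crux is generic injectivity, equivalently the existence of an inverse rational map $\mathcal{D}_{\mathbf{i}}\dashrightarrow\mathcal{D}_{PGL_m,S}$, and your proposal never establishes it. Your assertion that the coordinate map is ``a monomial change of variables, hence obviously birational onto its image'' does not parse: the source is a moduli space of local systems with framings and gluing data, not a second torus, so there is nothing with respect to which the map is monomial, and the whole question is whether a generic point of the moduli space is determined by its coordinates. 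Likewise, irreducibility plus equality of dimensions plus dominance only yields that the map is generically \emph{finite}; it does not force degree one, so your closing sentence assumes exactly what must be proved.

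What the paper actually does is reconstruct the point of $\mathcal{D}_{PGL_m,S}$ from the tuple $(B_j,X_j)$: use the $X_j$ to recover the framed $PGL_m$-local system on $S$; lift its monodromy to a \emph{twisted} $SL_m$-local system, which requires choosing both a lift $\pi_1(S)\to SL_m(\mathbb{C})$ and a splitting of the central extension $1\to\mathbb{Z}/2\mathbb{Z}\to\bar{\pi}_1(T'S)\to\pi_1(S)\to1$; choose decorated flags over the framing flags, set $A_i^\circ=B_iA_i$, and use these to build the local system on $S^\circ$, its framing, and the gluing datum $\alpha$. One must then check that the result is independent of all these choices modulo the $\Delta_{SL_m}=\Hom(\pi_1(S),Z(SL_m))$-action built into the definition of the moduli space --- this is where the real work lies, and it is entirely absent from your outline. (A smaller omission: the paper also needs an argument, via inverse bijections on $\mathbb{C}$-points of $\mathbb{Q}$-schemes, to conclude birationality over $\mathbb{Q}$ rather than over $\mathbb{C}$.) Without the reconstruction step your argument does not go through.
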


This theorem gives a concrete way of thinking about the symplectic double as a moduli space of local systems. It is analogous to the results of~\cite{IHES} which relate other moduli spaces of local systems to cluster varieties.

After proving this result, we discuss how the symplectic double is related to various concepts from geometry. We define a space $\mathcal{D}^+(S)$ which we call the \emph{doubled Teichm\"uller space}. Roughly speaking, it is defined as the Teichm\"uller space of $S_{\mathcal{D}}$ together with a choice of orientation for each component of $\partial S$. In addition, we consider a space $\mathcal{D}(\mathbb{R}_{>0})$ associated to the surface~$S$. It is defined using the same formulas from Theorem~\ref{thm:introclassicallimit} where the variables $B_i$ and $X_i$ are taken to be positive real numbers. We prove the following result:

\begin{theorem}
There is a natural homeomorphism $\mathcal{D}^+(S)\cong\mathcal{D}(\mathbb{R}_{>0})$.
\end{theorem}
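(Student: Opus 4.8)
The plan is to construct the homeomorphism $\mathcal{D}^+(S) \cong \mathcal{D}(\mathbb{R}_{>0})$ by exhibiting mutually inverse maps built from coordinate systems, following the strategy used by Fock and Goncharov for the analogous statement about the Teichm\"uller space and the positive points of the $\mathcal{X}$-space. First I would fix an ideal triangulation $T$ of the surface $S$ (equivalently, a seed $\mathbf{i}_T$ in the cluster atlas underlying $\mathcal{D}$). The triangulation $T$ of $S$ induces a triangulation $T_{\mathcal{D}}$ of the double $S_{\mathcal{D}}$, glued from $T$ and its mirror image on $S^\circ$. On the Teichm\"uller side, a point of $\mathcal{D}^+(S)$ is a hyperbolic structure on $S_{\mathcal{D}}$ together with an orientation of each boundary component of $\partial S$; from such a structure one reads off shear coordinates along the edges of $T_{\mathcal{D}}$ (the classical Thurston/Penner shear coordinates, i.e. cross-ratios of the four ideal points surrounding an edge), and these naturally package into the $X$-type coordinates (for edges of $S$) and $B$-type coordinates (for the glued boundary edges, using the chosen orientations to break the symmetry). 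This produces a map $\mathcal{D}^+(S) \to \mathcal{D}(\mathbb{R}_{>0})$ landing in the positive locus.

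The key steps, in order, are: (1) make precise the coordinate assignment above and check that all shear coordinates are indeed positive, so the image lies in $\mathcal{D}(\mathbb{R}_{>0})$; (2) check that changing the triangulation $T$ to $T'$ by a flip changes the coordinates exactly by the classical mutation formulas of Theorem~\ref{thm:introclassicallimit} at $q=1$ — for the $X$-coordinates this is the familiar computation with shear coordinates and flips, and for the $B$-coordinates it is the corresponding computation on the doubled surface, where the flip affects the mirror edge as well — so that the map $\mathcal{D}^+(S) \to \mathcal{D}(\mathbb{R}_{>0})$ is well defined independently of $T$; (3) construct the inverse map, sending a tuple of positive reals to a hyperbolic structure on $S_{\mathcal{D}}$, by gluing ideal hyperbolic triangles according to the shear data, using a developing-map argument to show the gluing closes up to a complete (or appropriately decorated) structure on $S_{\mathcal{D}}$, and recovering the boundary orientations from the sign data implicit in the $B$-coordinates; (4) verify the two composites are the identity and that both maps are continuous, which is immediate once the coordinates are shown to be honest global charts.

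I expect the main obstacle to be step (3), the surjectivity/inverse construction: given positive numbers, one must actually build a hyperbolic surface and show it is the double of a structure on $S$ in the required sense. The subtlety is that $\mathcal{D}^+(S)$ is not simply the Teichm\"uller space of $S_{\mathcal{D}}$ — it carries the extra datum of orientations on the components of $\partial S$, and the doubled surface $S_{\mathcal{D}}$ has a canonical orientation-reversing involution; one must show that a point of $\mathcal{D}(\mathbb{R}_{>0})$ determines a hyperbolic structure on $S_{\mathcal{D}}$ that is \emph{compatible} with this involution up to the boundary-orientation choices, i.e. that the $B$-coordinates and their transformation behavior correctly encode both the glued geometry and the $2^{|\pi_0(\partial S)|}$-fold choice. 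Concretely, one must track how the involution acts on shear coordinates (it should swap $X_e \leftrightarrow X_{\bar e}$ and fix the $B_e$ up to the recorded signs) and confirm that the gluing construction respects this, so that the developing map descends correctly. Once this compatibility is pinned down, the remaining verifications are routine manipulations with cross-ratios and the mutation formulas already recorded in Theorem~\ref{thm:introclassicallimit}.
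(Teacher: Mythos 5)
Your overall architecture (coordinates attached to a triangulation, flip compatibility via the classical mutation formulas, inverse by gluing ideal triangles and a developing-map argument) matches the paper's, and your treatment of the $X$-coordinates is correct: they are the shear coordinates of the point of the enhanced Teichm\"uller space of $S$ obtained by cutting $S_{\mathcal{D}}$ along the image of $\partial S$. The genuine gap is in your $B$-coordinates. You propose to read them off as shear coordinates of ``the glued boundary edges'' of an induced ideal triangulation $T_{\mathcal{D}}$ of $S_{\mathcal{D}}$. This does not work: the image of a boundary component of $S$ in $S_{\mathcal{D}}$ is a closed loop of the simple lamination, not an ideal arc (a component with no marked points contributes no punctures at all to $S_{\mathcal{D}}$), so $T$ and its mirror do not assemble into an ideal triangulation of the double in general --- the edges of $T$ must instead \emph{spiral} into these loops in the direction prescribed by the chosen orientations. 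Even in the cases where an induced triangulation exists, its shear coordinates number $2|J|$ plus the number of boundary segments, with the excess absorbed by parabolicity constraints at the new punctures, so one does not get a free chart by $\mathbb{R}_{>0}^{2|J|}$ this way. In the paper the $B_i$ are indexed by the \emph{same} set $J$ of internal edges as the $X_i$, and $B_i=A_i^\circ/A_i$ is a ratio of Penner lambda-lengths of the arc $i$ in $S$ and its mirror $i^\circ$ in $S^\circ$, computed after spiraling and after transporting a choice of horocycles from $\tilde{S}$ to $\tilde{S^\circ}$ via a lift $\tilde{\iota}$ of the tautological map $\iota:S\rightarrow S^\circ$ (Lemma~\ref{lem:correspondence}). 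The well-definedness of the $B_i$, the flip rule (Ptolemy applied to both $A$ and $A^\circ$ and then dividing), and the inverse construction all live in this horocycle correspondence, which your proposal does not supply.

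A second omission: $\mathcal{D}^+(S)$ is not just ``hyperbolic structures on $S_{\mathcal{D}}$ plus boundary orientations.'' It is a union of strata in which subsets of the loops of the simple lamination are pinched to nodes, each node carrying a pair of horocycles defined up to simultaneous shift. These strata are exactly what the coordinates hit when the monodromy around a boundary loop is parabolic (the sum of $\log X_j$ over the incident edges vanishes), and without them the coordinate map cannot be a bijection onto all of $\mathbb{R}_{>0}^{2|J|}$. Your step (3) --- glue ideal triangles from the $X$-data, build the mirror region $\tilde{S^\circ}$ from $A_i^\circ=B_iA_i$ by Penner's uniqueness lemma, and tile $\mathbb{H}$ by alternating copies of $\tilde{S}$ and $\tilde{S^\circ}$ --- is the right idea for the inverse, but it must be set up so as to produce points of these degenerate strata as well.
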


Another concept from geometry is the notion of a measured lamination. We consider a space $\mathcal{D}_L(S)$ which parametrizes certain laminations on the surface~$S_{\mathcal{D}}$. We also consider a space $\mathcal{D}(\mathbb{Q}^t)$ which is a tropicalization of the symplectic double. This space is defined using the formulas of Theorem~\ref{thm:introclassicallimit} with the operation of addition replaced by maximum and the operation of multiplication replaced by ordinary addition:
\begin{align*}
x_i' &=
\begin{cases}
-x_k & \mbox{if } i=k \\ 
x_i+\varepsilon_{ki}\max\left(0,\sgn(\varepsilon_{ki})x_k\right) & \mbox{if } i\neq k
\end{cases} \\
b_i' &=
\begin{cases}
\max\biggr(x_k+\sum_{j|\varepsilon_{kj}>0}\varepsilon_{kj}b_j,-\sum_{j|\varepsilon_{kj}<0}\varepsilon_{kj}b_j\biggr)-\max(0,x_k)-b_k & \mbox{if } i=k \\
b_i & \mbox{if } i\neq k.
\end{cases} 
\end{align*}
We prove the following:

\begin{theorem}
There is a natural homeomorphism $\mathcal{D}_L(S)\cong\mathcal{D}(\mathbb{Q}^t)$.
\end{theorem}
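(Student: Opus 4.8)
The plan is to construct the homeomorphism $\mathcal{D}_L(S)\cong\mathcal{D}(\mathbb{Q}^t)$ by exhibiting, for each seed $\mathbf{i}$ in the cluster structure attached to the surface $S$ (i.e.\ each ideal triangulation), a coordinate chart $\mathcal{D}_L(S)\to\mathbb{Q}^{I}\times\mathbb{Q}^{I}$ sending a lamination to the pair of tuples of integers $(b_i,x_i)$ recording its intersection/shear data on the two halves $S$ and $S^\circ$ of the doubled surface $S_{\mathcal{D}}$. Concretely, the $x_i$ should be the usual Thurston-type shear coordinates of the lamination with respect to the triangulation, read off on one copy of the surface, and the $b_i$ should be the signed intersection numbers with the arcs of the triangulation (the coordinates conjugate to the $X$-coordinates), accounting for how the lamination crosses the glued boundary components. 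First I would make precise the definition of $\mathcal{D}_L(S)$ recalled earlier in the thesis, identify which weighted curves on $S_{\mathcal{D}}$ are allowed, and check that the proposed coordinates are well defined and take values in $\mathbb{Q}$ (or $\mathbb{Z}$ on the integral sublattice, with the rational points coming from rational weights).

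The core of the argument is a compatibility check: under a flip of the triangulation at an arc $k$, the coordinates $(b_i,x_i)$ must transform exactly by the tropical formulas displayed in the statement,
\[
x_i'=\begin{cases}-x_k & i=k\\ x_i+\varepsilon_{ki}\max(0,\sgn(\varepsilon_{ki})x_k) & i\neq k,\end{cases}
\]
and the corresponding piecewise-linear formula for $b_i'$. For the $x$-coordinates this is the standard fact that tropicalized cluster $\mathcal{X}$-mutation computes the change of shear coordinates of a lamination under a flip, which I can cite from the surface/cluster literature. The new content is the $b$-coordinate: I would verify by a direct case analysis on how a weighted curve crosses the quadrilateral around the flipped arc $k$ — distinguishing the possible local pictures of the curve relative to the four sides and the two diagonals — that the intersection data mutates by the stated $\max$-formula. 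Each local picture contributes additively, so it suffices to handle finitely many elementary configurations; this reduces to the tropicalization of the classical formula $\mu_k^1(B_k')=\frac{X_k\mathbb{B}_k^++\mathbb{B}_k^-}{(1+X_k)B_k}$ from Theorem~\ref{thm:introclassicallimit}, which tropicalizes precisely to the displayed expression for $b_k'$.

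Once the coordinate charts are shown to be compatible with flips, they glue to a well-defined continuous map $\Phi:\mathcal{D}_L(S)\to\mathcal{D}(\mathbb{Q}^t)$, since $\mathcal{D}(\mathbb{Q}^t)$ is by definition the set of tuples glued by exactly these tropical transition maps. To see $\Phi$ is a bijection I would construct the inverse geometrically: given a tuple $(b_i,x_i)$ in a fixed chart, build the lamination on $S_{\mathcal{D}}$ whose restriction to $S$ has shear coordinates $x_i$ and whose transverse weights across the arcs are prescribed by the $b_i$, using the standard correspondence between nonnegative integer/rational shear-and-weight data and weighted multicurves on a surface with boundary, then reflect and glue along $\partial S$ to obtain a curve on the double. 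Continuity of $\Phi^{-1}$ and of $\Phi$ follows because in each fixed chart both maps are (piecewise) linear in the coordinates, and the topology on both sides is the one induced by the charts. Injectivity reduces to the statement that a lamination on $S_{\mathcal{D}}$ of the allowed type is determined by its shear and intersection coordinates in a single triangulation, which is the classical coordinatization of laminations.

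The main obstacle I anticipate is the flip-compatibility of the $b$-coordinates: the formula for $b_k'$ mixes the $x$-data at $k$ with the $b$-data at the neighbors $j$ with $\varepsilon_{kj}\neq 0$, so the case analysis must simultaneously track the shear of the curve through the quadrilateral and its weights on all four surrounding arcs, including the subtle bookkeeping of which strands of the weighted curve pass to the $S^\circ$ side through a shared boundary circle. Getting the signs and the $\max$ right in every configuration — and checking that the boundary-orientation data implicit in the definition of $\mathcal{D}_L(S)$ matches the asymmetry between the two cases in the $b_k'$ formula — is where the real work lies; everything else is either standard or formal.
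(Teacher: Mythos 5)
Your plan is essentially the paper's proof: define $(b_i,x_i)$-coordinates on doubled laminations for each ideal triangulation (with $x_i$ the shear coordinates of the lamination cut to $S$, and $b_i$ the difference $a_i^\circ-a_i$ of truncated intersection counts on the two halves, regularized by distinguished lifts near the spiraling ends), prove they give a bijection with $\mathbb{Q}^{2|J|}$, and verify that a flip transforms them by the tropical Ptolemy relation $a_{24}=\max(a_{12}+a_{34},a_{14}+a_{23})-a_{13}$ applied on both halves, which yields exactly the displayed formula for $b_k'$. The only caveat is that the $b_i$ are not single signed intersection numbers but differences between the $S$- and $S^\circ$-sides, and the curves on $S^\circ$ are not a reflection of those on $S$ but are reconstructed from the $b_i$; you flag precisely this bookkeeping as the hard part, and it is where the paper spends its effort, so the approach is sound.
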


We use the above result to describe a certain self-duality of the symplectic double. More precisely, we construct a natural subspace $\mathcal{D}(\mathbb{Z}^t)\subseteq\mathcal{D}(\mathbb{Q}^t)$ and a map 
\[
\mathbb{I}_{\mathcal{D}}:\mathcal{D}(\mathbb{Z}^t)\rightarrow\mathbb{Q}(\mathcal{D})
\]
from this subspace to the algebra of rational functions on the symplectic double associated to the surface $S$. This map is similar to maps studied by Fock and Goncharov for other kinds of cluster varieties.

By construction, the symplectic double has an atlas of coordinate charts such that for each seed one has coordinates $B_i$,~$X_i$~($i\in I$), and these coordinates transform by the formulas from Theorem~\ref{thm:introclassicallimit}. Label the elements of $I$ by the numbers $\{1,\dots,n\}$ so that a general point of $\mathcal{D}$ has coordinates $B_1,\dots,B_n,X_1,\dots,X_n$. These $B_i$ and $X_i$ are rational functions on the symplectic double, and we show that the function $\mathbb{I}_\mathcal{D}(l)$ can be expressed in a particularly nice way as a rational function in the variables $B_i$ and $X_i$ for any $l\in\mathcal{D}(\mathbb{Z}^t)$.

To make these statements more precise, let us introduce some notation. Any point $l\in\mathcal{D}(\mathbb{Z}^t)$ corresponds to a collection of disjoint simple closed curves on~$S_{\mathcal{D}}$ where each homotopy class of loops that lie entirely in $S$ or $S^\circ$ appears at most once. If we cut the surface along the image of $\partial S$, we obtain a collection $\mathcal{C}$ of curves on $S$ and a collection $\mathcal{C}^\circ$ of curves on $S^\circ$.

The illustration below shows an example of a closed curve on the double $S_{\mathcal{D}}$ of a genus-2 surface $S$ with two holes. In this case, the set $\mathcal{C}$ consists of the curves labeled $c_1$ and~$c_3$, while the set $\mathcal{C}^\circ$ consists of the curves labeled $c_2$ and~$c_4$. In Chapter~\ref{ch:DualityOfClusterVarieties}, we will see how to associate, to any element $c$ of $\mathcal{C}$ or $\mathcal{C}^\circ$, a polynomial $F_c$. For curves such as the ones below, which intersect the image of~$\partial S$ in~$S_{\mathcal{D}}$, these $F_c$ are in fact the $F$-polynomials of Fomin and Zelevinsky~\cite{FZIV}. We will prove the following theorem, which expresses the function $\mathbb{I}_\mathcal{D}(l)$ in terms of these polynomials.
\[
\xy 0;/r.50pc/: 
(-6,-4.5)*\ellipse(3,1){.}; 
(-6,-4.5)*\ellipse(3,1)__,=:a(-180){-}; 
(6,-4.5)*\ellipse(3,1){.}; 
(6,-4.5)*\ellipse(3,1)__,=:a(-180){-}; 
(-13.5,-10)*{>};
(13.5,-9.8)*{>};
(-9,0)*{}="1";
(-3,0)*{}="2";
(3,0)*{}="3";
(9,0)*{}="4";
(-15,0)*{}="A2";
(15,0)*{}="B2";
"1";"2" **\crv{(-9,-5) & (-3,-5)};
"1";"2" **\crv{(-9,5) & (-3,5)};
"3";"4" **\crv{(3,-5) & (9,-5)};
"3";"4" **\crv{(3,5) & (9,5)};
"A2";"B2" **\crv{(-15,12) & (15,12)};
(-9,-9)*{}="A";
(9,-9)*{}="B";
"A";"B" **\crv{(-8,-5) & (8,-5)}; 
(-15,-9)*{}="A1";
(15,-9)*{}="B1";
"B2";"B1" **\dir{-}; 
"A2";"A1" **\dir{-};
"A";"B" **\crv{(-8,-13) & (8,-13)}; 
(-15,-18)*{}="A0";
(15,-18)*{}="B0";
"B1";"B0" **\dir{-}; 
"A1";"A0" **\dir{-};
"A0";"B0" **\crv{(-15,-30) & (15,-30)};
(-9,-18)*{}="5";
(-3,-18)*{}="6";
"5";"6" **\crv{(-9,-23) & (-3,-23)};
"5";"6" **\crv{(-9,-13) & (-3,-13)};
(3,-18)*{}="7";
(9,-18)*{}="8";
"7";"8" **\crv{(3,-23) & (9,-23)};
"7";"8" **\crv{(3,-13) & (9,-13)};
(-11,-9)*{}="V1";
(13,-9)*{}="V2";
(-13,-9)*{}="V3";
(11,-9)*{}="V4";
(6,3.8)*{}="V5";
(8,-3)*{}="V6";
"V1";"V2" **\crv{~*=<2pt>{.} (-8,-15) & (10,-15)};
"V3";"V4" **\crv{(-10,-15) & (8,-15)};
"V3";"V5" **\crv{(-18,13) & (7,7)};
"V1";"V5" **\crv{~*=<2pt>{.} (-16,12) & (4,3)};
"V2";"V6" **\crv{~*=<2pt>{.} (14,-6) & (10,0)};
"V4";"V6" **\crv{(12,-6) & (8,-4)};
(13,7)*{S}; 
(13,-25)*{S^\circ}; 
(13,-4)*{c_1}; 
(11,-13)*{c_2}; 
(-10.5,-4)*{c_3}; 
(-11,-13)*{c_4}; 
\endxy
\]

\begin{theorem}
\label{thm:intromain}
Let $l$ be a point of $\mathcal{D}(\mathbb{Z}^t)$ as above. Then for any choice of seed, we have 
\[
\mathbb{I}_\mathcal{D}(l)=\frac{\prod_{c\in\mathcal{C}^\circ}F_c(\widehat{X}_1,\dots,\widehat{X}_n)}{\prod_{c\in\mathcal{C}}F_c(X_1,\dots,X_n)}B_1^{g_{l,1}}\dots B_n^{g_{l,n}}X_1^{h_{l,1}}\dots X_n^{h_{l,n}}
\]
where the $F_c$ are polynomials, the $g_{l,i}$ and $h_{l,i}$ are integers, and the $\widehat{X}_i$ are given by 
\[
\widehat{X}_i=X_i\prod_j B_j^{\varepsilon_{ij}}.
\]
\end{theorem}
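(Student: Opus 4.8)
The plan is to prove the formula by induction on the number of mutations needed to reach a given seed from the one in which the statement is first established (``at the cluster level''). The base case is the heart of the matter: I would start with a seed in which the point $l\in\mathcal{D}(\mathbb{Z}^t)$ is, in a sense to be made precise in Chapter~\ref{ch:DualityOfClusterVarieties}, in ``standard position'' with respect to the lamination data. For such a seed, the closed curves on $S_{\mathcal{D}}$ decompose into $\mathcal{C}$ on $S$ and $\mathcal{C}^\circ$ on $S^\circ$, and I would build $\mathbb{I}_{\mathcal{D}}(l)$ directly from this curve data: each curve $c\in\mathcal{C}$ contributes a factor governed by the $\mathcal{X}$-lamination/skein-type construction on $S$ (producing the $F$-polynomial $F_c(X_1,\dots,X_n)$ in the denominator together with an explicit monomial in the $X_i$), and symmetrically each $c\in\mathcal{C}^\circ$ contributes $F_c$ evaluated at the $\widehat{X}_i$, because the opposite-orientation copy $S^\circ$ is glued in through the ``$B$-twisted'' variables $\widehat{X}_i=X_i\prod_j B_j^{\varepsilon_{ij}}$ — this is exactly the asymmetry built into the symplectic double's mutation rule in Definition~\ref{def:doublemutation}, where one conjugates by $\Psi^q(X_k)/\Psi^q(\widehat{X}_k)$. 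The $B$-monomial $B_1^{g_{l,1}}\cdots B_n^{g_{l,n}}$ records the intersection of the lamination with the gluing curves (the image of $\partial S$), which is where the symplectic double genuinely sees more than the $\mathcal{X}$-variety.

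Next I would verify the inductive step: assuming the formula holds for the seed $\mathbf{i}$, I must show it holds for $\mu_k(\mathbf{i})$. Here I apply $\mu_k^1$ from Theorem~\ref{thm:introclassicallimit} to the right-hand side and separately track how the tropical coordinates $(b_{l,i}, x_{l,i})$ — hence the exponents $g_{l,i}, h_{l,i}$ — transform under the tropical mutation formulas displayed just before Theorem~\ref{thm:intromain}. Two facts make this tractable. First, $X_k$ and $\widehat{X}_k$ commute and the restriction of $\mu_k^1$ to the subalgebra generated by the $X_i$ is the ordinary $\mathcal{X}$-mutation, so the transformation of $\prod_{c\in\mathcal{C}}F_c(X_1,\dots,X_n)$ times the $X$-monomial is the already-understood $\mathcal{X}$-variety statement (the analog for the $\mathcal{X}$-space, which one may cite from Chapter~\ref{ch:DualityOfClusterVarieties}). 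Second, under $\mu_k^1$ the hatted variables transform by the \emph{same} rational rule as the $X_i$ but in the variables $\widehat{X}_i$ — this is the key algebraic identity $\mu_k^1(\widehat{X}_i') = \widehat{X}_i(1+\widehat{X}_k^{-\sgn\varepsilon_{ik}})^{-\varepsilon_{ik}}$ for $i\neq k$ and $\widehat{X}_k^{-1}$ for $i=k$, which follows from the commutation relations $X_iB_j=q^{2\delta_{ji}}B_jX_i$ together with the explicit form of $\mathbb{B}_k^\pm$ — so the numerator $\prod_{c\in\mathcal{C}^\circ}F_c(\widehat{X}_1,\dots,\widehat{X}_n)$ times its monomial transforms by the mirror of the $\mathcal{X}$-statement. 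What remains is to reconcile the leftover $B_k$-denominator and the factor $(X_k\mathbb{B}_k^+ + \mathbb{B}_k^-)/((1+X_k)B_k)$ coming from $\mu_k^1(B_k')$ with the change in the $B$-monomial predicted by the tropical rule for $b_k'$; this is a direct but delicate bookkeeping computation in which the $\max$ in the tropical formula for $b_k'$ matches the Newton-polygon/leading-term structure of $X_k\mathbb{B}_k^+ + \mathbb{B}_k^-$.

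I expect the main obstacle to be the base case rather than the induction — specifically, constructing $\mathbb{I}_{\mathcal{D}}(l)$ for a lamination $l$ and showing the resulting expression really has the product form claimed, with the $F_c$ matching the Fomin–Zelevinsky $F$-polynomials for curves crossing the gluing locus. This requires setting up, in Chapter~\ref{ch:DualityOfClusterVarieties}, the precise dictionary between connected components of the curve $l$ on $S_{\mathcal{D}}$ and the building blocks of the function: curves contained in $S$ or $S^\circ$ contribute via a Teichm\"uller/lamination pairing that must be shown to evaluate to an $F$-polynomial times a monomial, and curves crossing $\partial S$ must be cut into arcs whose endpoints on the gluing circle are recombined so that the $B$-variable exponents count transverse intersections. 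The subtlety is orientation: because $S^\circ$ carries the opposite orientation, the ``same'' combinatorial curve on $S^\circ$ must be fed into $F_c$ through $\widehat{X}_i$ rather than $X_i$, and getting the signs and the twisting by $\prod_j B_j^{\varepsilon_{ij}}$ exactly right is where the argument is most likely to need care. Once the base case is pinned down, the inductive propagation is essentially forced by the compatibility of $\mu_k^1$ with the tropical mutation rules, which is why I would organize the proof to isolate and dispatch that compatibility first.
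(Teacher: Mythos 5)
Your architecture --- prove the formula in one well-chosen seed and propagate it by checking compatibility of $\mu_k^1$ with the transformation of the exponents --- is genuinely different from the paper's, which works directly in an \emph{arbitrary} ideal triangulation by cutting $l$ along $\partial S$ and handling each constituent curve separately; the mutation-compatibility you propose to verify by hand is exactly what Fomin--Zelevinsky's separation formula (Proposition~\ref{prop:FZ63}) already packages, and your observation that the $\widehat{X}_i$ mutate by the same rule as the $X_i$ is correct. However, the base case does not exist in the form you describe for closed loops of $l$ lying entirely in $S$ or in $S^\circ$: such a loop is never an edge of any ideal triangulation, so there is no seed putting it in ``standard position,'' and its $F_c$ is \emph{not} a Fomin--Zelevinsky $F$-polynomial (the paper says so explicitly). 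These curves must be handled by factoring the $2\times 2$ monodromy matrices into a polynomial part times a scalar $X_i^{-1/2}$ and taking the trace of the $k$th power, as in Propositions~\ref{prop:Scircloop} and~\ref{prop:Sloop}; your skein/standard-position mechanism produces nothing for them. Relatedly, even for an intersecting curve the prefactor is not obtained by ``tracking the tropical coordinates $(b_{l,i},x_{l,i})$'': the $g_{l,i}$ are alternating sums of the $\mathbf{g}$-vectors of the arcs $c_i$, and the $X$-monomial comes from the separate, nontrivial fact (Lemma~\ref{lem:factorization}) that the leftover $A$-monomial $A^{\mathbf{g}_{\mathrm{even}}-\mathbf{g}_{\mathrm{odd}}}$ collapses to a monomial in the $X_j$ alone. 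Your proposal has no counterpart of that step, and without it the right-hand side is not even a function of the $B_i$ and $X_i$.

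The second gap is the integrality of the $h_{l,i}$, which your proposal does not address and which is false curve-by-curve: each individual curve contributes exponents of the form $\pm k/2$, and only the product over all curves of $l$ is integral, precisely because $l\in\mathcal{D}(\mathbb{Z}^t)$ forces the class $[\sigma_l]\in H_1(S,\mathbb{Z}/2\mathbb{Z})$ to vanish (Theorems~\ref{thm:integralDlam} and~\ref{thm:restrictintegral}). Any proof of the statement as given must include this homological argument; an induction on mutations cannot supply it, because the half-integrality is already present in whatever seed you take as the base case.
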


All of the notation appearing in this theorem will be defined precisely below. As we will see, the above formula is in a sense a generalization of Corollary~6.3 in~\cite{FZIV}. If $c$ is a closed loop belonging to~$\mathcal{C}$ or~$\mathcal{C}^\circ$, then $F_c$ does not arise as one of Fomin and Zelevinsky's $F$-polynomials, but Theorem~\ref{thm:intromain} suggests that the $F_c$ should be viewed as ``generalized'' $F$-polynomials. Such generalized $F$-polynomials have appeared previously in the work of Musiker, Schiffler, and Williams~\cite{MW,MSW2} on cluster algebras associated to surfaces.

In addition to the duality map described above, we construct a map 
\[
\mathcal{I}_{\mathcal{D}}:\mathcal{D}(\mathbb{Z}^t)\times\mathcal{D}(\mathbb{Z}^t)\rightarrow\mathbb{Z}
\]
as a kind of intersection pairing for laminations and prove the following.

\begin{theorem}
The map $\mathcal{I}_{\mathcal{D}}$ is the tropicalization of $\mathbb{I}_{\mathcal{D}}$ (in an appropriate sense).
\end{theorem}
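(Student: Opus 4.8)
The plan is to show that the combinatorial intersection pairing $\mathcal{I}_{\mathcal{D}}$ on pairs of integral laminations agrees with the tropicalization of the function $\mathbb{I}_{\mathcal{D}}$, where by ``tropicalization'' we mean the following: given $l, m \in \mathcal{D}(\mathbb{Z}^t)$, we evaluate the rational function $\mathbb{I}_{\mathcal{D}}(l)$ on the point of $\mathcal{D}(\mathbb{R}_{>0})$ obtained from $m$ by the exponential substitution $B_i = e^{R b_{m,i}}$, $X_i = e^{R x_{m,i}}$, take $\log$, divide by $R$, and let $R \to \infty$. So the statement to prove is
\[
\lim_{R\to\infty} \frac{1}{R}\log\bigl|\mathbb{I}_{\mathcal{D}}(l)\bigr|_{B_i = e^{Rb_{m,i}},\, X_i = e^{Rx_{m,i}}}\bigr| = \mathcal{I}_{\mathcal{D}}(l,m).
\]
First I would reduce the left-hand side to an explicit tropical expression using the closed formula of Theorem~\ref{thm:intromain}. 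Since $\log$ turns the product into a sum and tropicalization turns $+$ into $\max$ and $\times$ into $+$, the contribution of the monomial factor $B_1^{g_{l,1}}\cdots X_n^{h_{l,n}}$ is the linear form $\sum_i g_{l,i} b_{m,i} + \sum_i h_{l,i} x_{m,i}$, and the contribution of each $F$-polynomial factor $F_c$ is $\max$ over its Newton polytope of the corresponding linear functional (evaluated at $\widehat{x}_{m,i} = x_{m,i} + \sum_j \varepsilon_{ij} b_{m,j}$ in the numerator and at $x_{m,i}$ in the denominator, with a sign coming from whether $c \in \mathcal{C}$ or $\mathcal{C}^\circ$). This identifies $\operatorname{trop}\mathbb{I}_{\mathcal{D}}(l)$ as a well-defined piecewise-linear function of $m$, which is the first thing to pin down.

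The core of the argument is then a geometric interpretation of both sides as intersection numbers on $S_{\mathcal{D}}$. The right-hand side $\mathcal{I}_{\mathcal{D}}(l,m)$ is, by its definition in Chapter~\ref{ch:DualityOfClusterVarieties}, the minimal geometric intersection number of the multicurves representing $l$ and $m$, suitably weighted and with the contributions of arcs and closed loops in $S$ versus $S^\circ$ signed appropriately. For the left-hand side, I would use the fact — already exploited in the proof of Theorem~\ref{thm:intromain} — that the tropicalized coordinates $x_{m,i}$, $b_{m,i}$ and the shear/tropical-$A$ coordinates $\widehat{x}_{m,i}$ are precisely the shear coordinates (and their doubled analogues) of the lamination $m$ with respect to the ideal triangulation attached to the chosen seed. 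The $F$-polynomial $F_c$ has the property that its tropical evaluation at shear coordinates of $m$ computes the intersection number of $c$ with $m$ up to an explicit correction by $g$-vector/monomial terms; this is the surface-cluster-algebra statement behind ``generalized $F$-polynomials'' of Musiker--Schiffler--Williams \cite{MSW2,MW}. Matching the correction terms against the monomial $B_1^{g_{l,1}}\cdots X_n^{h_{l,n}}$ shows that $\operatorname{trop}\mathbb{I}_{\mathcal{D}}(l)$ evaluated at $m$ equals a sum of signed intersection numbers $\sum_{c} \pm i(c, m)$ running over the components $c$ of the multicurve $l$.

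It then remains to check that this signed sum of intersection numbers equals $\mathcal{I}_{\mathcal{D}}(l,m)$. This is a purely topological identity on $S_{\mathcal{D}}$: one decomposes each of $l$ and $m$ into pieces lying in $S$, pieces lying in $S^\circ$, and pieces crossing the gluing curves $\partial S \hookrightarrow S_{\mathcal{D}}$, and verifies that the signs produced by the formula (negative for the $S^\circ$-part, coming from the opposite orientation and the appearance of $\widehat{X}_i$ rather than $X_i$) are exactly the signs built into the definition of $\mathcal{I}_{\mathcal{D}}$. Since both $\operatorname{trop}\mathbb{I}_{\mathcal{D}}$ and $\mathcal{I}_{\mathcal{D}}$ are piecewise-linear in $m$ and invariant under mutation, it suffices to verify the identity for a single convenient seed (the one given by a fixed ideal triangulation), and there one can argue curve-component by curve-component. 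The main obstacle I anticipate is controlling the interaction terms: an individual component $c$ of $l$ that crosses the gluing locus is cut into arcs distributed between $\mathcal{C}$ and $\mathcal{C}^\circ$, so its contribution to $\operatorname{trop}\mathbb{I}_{\mathcal{D}}(l)(m)$ is a difference $\operatorname{trop} F_{c\cap S}(\widehat{x}_m) - \operatorname{trop} F_{c\cap S^\circ}(x_m)$ plus monomial terms, and showing that this difference collapses to the single geometric quantity $i(c,m)$ on $S_{\mathcal{D}}$ — with no leftover boundary contributions — will require a careful bookkeeping of shear coordinates across the seam, essentially a doubled version of the ``skein-relation / no-crossing'' lemmas used to prove Theorem~\ref{thm:intromain}.
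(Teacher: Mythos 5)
Your overall strategy is genuinely different from the paper's. The paper does not tropicalize the $F$-polynomial formula at all. Instead it proves the identity component by component (Propositions~\ref{prop:intersectingintersection} and~\ref{prop:loopintersection}) and reduces each case to the already-known classical statement $\mathcal{I}(l,m)=(\mathbb{I}_{\mathcal{A}}(l))^t(m)$ for the cluster $K_2$-/Poisson duality. For a loop lying in $S$ or $S^\circ$ this is immediate: $\mathbb{I}_{\mathcal{D}}(l)(e^{Cm})=\mathbb{I}_{\mathcal{A}}(u)(e^{Cv})^{\pm1}$ for suitable laminations $u,v$ on the cut surface. For an intersecting curve the paper lifts everything to the universal cover, encloses all relevant triangles in a single triangulated ideal polygon $P$ (and its mirror $P^\circ$), truncates the lifted lamination $m$ to finite laminations $v\in\mathcal{A}_L(P)$, $v^\circ\in\mathcal{A}_L(P^\circ)$, and obtains the exact identity $\mathbb{I}_{\mathcal{D}}(l)(e^{Cm})=\mathbb{I}_{\mathcal{A}}(u^\circ)(e^{Cv^\circ})/\mathbb{I}_{\mathcal{A}}(u)(e^{Cv})$ before taking $C\to\infty$. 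This polygon device is precisely what absorbs all of the seam bookkeeping into the classical result.

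That seam bookkeeping is where your proposal has a genuine gap. You correctly identify it as the main obstacle, but you do not close it, and it is not a routine verification for two reasons. First, for an intersecting curve $l$ the quantity $\mathcal{I}_{\mathcal{D}}(l,m)$ is \emph{not} defined as a signed minimal intersection number on $S_{\mathcal{D}}$: Definition~\ref{def:intersectionD}(1) defines it as $k(\sum_{i\text{ even}}a_{c_i}^\circ-\sum_{i\text{ odd}}a_{c_i})$, where the $a_{c_i}$, $a_{c_i}^\circ$ are half-counts of intersections in the universal cover between \emph{distinguished curves} chosen near the spiralling endpoints, after winding the ends of the $c_i$ around the holes. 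So the ``purely topological identity on $S_{\mathcal{D}}$'' you appeal to at the end is not the naive one, and matching it against a difference of tropical $F$-polynomial evaluations requires exactly the normalization-of-endpoints analysis that the polygon construction performs. Second, your intermediate claim that $\operatorname{trop}F_c$ at the shear coordinates of $m$ computes $i(c,m)$ up to $\mathbf{g}$-vector corrections is, for the arcs $c_i$, essentially the classical $\mathcal{A}$--$\mathcal{X}$ tropical duality restricted to a polygon, and for closed loops $c$ the factor $F_c$ is a monodromy trace rather than a Fomin--Zelevinsky $F$-polynomial, so the Musiker--Schiffler--Williams results you cite do not apply verbatim and you would be re-deriving the Fock--Goncharov statement rather than citing it. If you want to pursue your route, the cleanest fix is to import the paper's polygon trick at exactly the step you flag: replace the term-by-term Newton-polytope analysis for an intersecting curve by the single identity expressing $\mathbb{I}_{\mathcal{D}}(l)$ as a ratio of two $\mathbb{I}_{\mathcal{A}}$'s on $P$ and $P^\circ$, at which point the correction terms and signs take care of themselves.
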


In the final part of this thesis, we study cluster varieties associated to a disk with finitely many marked points on its boundary. We first consider an object called the cluster Poisson variety. This is a particular kind of cluster variety defined using the formula for $\mu_k^1(X_i')$ in~Theorem~\ref{thm:introclassicallimit}. We denote this cluster variety by the symbol~$\mathcal{X}$. There is a dual space denoted $\mathcal{A}_0(\mathbb{Z}^t)$ which can be understood as a tropicalization of a certain moduli space associated to the disk. One of the main results of the work of Fock and Goncharov states that there exists a map $\mathbb{I}_{\mathcal{A}}:\mathcal{A}_0(\mathbb{Z}^t)\rightarrow\mathcal{O}(\mathcal{X})$ into the algebra of regular functions on $\mathcal{X}$ satisfying a number of special properties.

By construction, the cluster Poisson variety can be canonically quantized. We will write the expression $\mathcal{O}_q(\mathcal{X})$ for the $q$-deformed algebra of regular functions on the cluster Poisson variety. We prove the following result, which was originally conjectured in~\cite{ensembles}.

\begin{theorem}
\label{thm:introquantumduality}
When $S$ is a disk with finitely many marked points on its boundary, there exists a natural map 
\[
\mathbb{I}_{\mathcal{A}}^q:\mathcal{A}_0(\mathbb{Z}^t)\rightarrow\mathcal{O}_q(\mathcal{X})
\]
which reduces to $\mathbb{I}_{\mathcal{A}}$ in the classical limit $q=1$.
\end{theorem}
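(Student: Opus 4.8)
The plan is to construct $\mathbb{I}_{\mathcal{A}}^q$ one cluster chart at a time, by writing down an explicit element of the quantum torus attached to each seed, and then to prove that these local expressions are intertwined by the quantum mutation maps $\mu_k^q$, so that together they determine a single element of $\mathcal{O}_q(\mathcal{X})$. Since $S$ is a disk with marked points, a point $l$ of $\mathcal{A}_0(\mathbb{Z}^t)$ is the tropical datum of an integral $\mathcal{A}$-lamination: a disjoint union of arcs joining marked points, each with a positive integer weight. Fixing an ideal triangulation $T$, equivalently a seed $\mathbf{i}_T$ with cluster Poisson coordinates $X_1,\dots,X_n$, I attach to each arc $c$ occurring in $l$ a quantum $F$-polynomial $F_c^q(X_1,\dots,X_n)$ — a $q$-deformation of the $F$-polynomial of Fomin and Zelevinsky, read off from the intersection numbers of $c$ with the arcs of $T$ — and a $g$-vector $(g_{c,i})_i$, and I set
\[
\mathbb{I}_{\mathcal{A}}^q(l)\ =\ \Bigl\langle\,\textstyle\prod_{c}F_c^q(X_1,\dots,X_n)\,\Bigr\rangle\cdot\Bigl\langle\,\textstyle\prod_i X_i^{\sum_c g_{c,i}}\,\Bigr\rangle ,
\]
where $c$ runs over the components of $l$ counted with weight and $\langle\,\cdot\,\rangle$ denotes the Weyl (symmetric) ordering of products of $q$-commuting generators. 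This is the $\mathcal{A}$--$\mathcal{X}$ analogue, for the disk, of the formula in Theorem~\ref{thm:intromain}, and at $q=1$ it is precisely the formula that defines $\mathbb{I}_{\mathcal{A}}$.

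Before checking well-definedness I would verify that the building blocks are legitimate: for a single arc $c$ of weight one the element above should be an honest (non-Laurent) element of the quantum torus with coefficients in $\mathbb{Z}[q,q^{-1}]$. As the disk produces a cluster algebra of finite type $A_n$, this follows from the quantum Laurent phenomenon of Berenstein and Zelevinsky together with the finite-type positivity of quantum $F$-polynomials; I would cite this or supply the short argument, and record the flip recursion satisfied by the $F_c^q$, which is the only property used afterwards.

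The heart of the argument is independence of the chart. If $T$ and $T'$ differ by a flip at the arc labelled $k$, I must show that $\mu_k^q$ sends the element constructed in $\mathbf{i}_{T'}$ to the one constructed in $\mathbf{i}_T$. Because $\mathbb{I}_{\mathcal{A}}^q$ is multiplicative over the components of $l$ and their weights once the Weyl ordering is in place, it suffices to treat $l$ a single arc $c$ of weight one, and there are two cases. If $c$ does not cross the flipped arc, its intersection data — hence $F_c^q$ and its $g$-vector — are unchanged, and the claim reduces to checking that the distinguished monomial transforms correctly under $\mu_k^q$, a direct computation with the quantum analogue of the second formula of Theorem~\ref{thm:introclassicallimit}. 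If $c$ does cross the flipped arc, the claim becomes a quantum Ptolemy / quantum exchange relation, which I would obtain by combining the flip recursion for the $F_c^q$ with the explicit action of $\mu_k^q = \mu_k^\sharp \circ \mu_k'$ on generators. Reconciling the powers of $q$ coming from conjugation by the quantum dilogarithm with those coming from the Weyl ordering of the monomial prefactors is where essentially all the difficulty sits, and it is the step I expect to be the main obstacle: the normalizations of the $F_c^q$ and of the monomial prefactor must be fixed exactly so that no spurious powers of $q$ remain.

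Once well-definedness is in hand, the compatible family of seed expressions defines an element of $\mathcal{O}_q(\mathcal{X})$ by the very definition of that algebra, and naturality — equivariance under the finite mapping class group of the disk, and independence of all auxiliary choices — is automatic because both the combinatorics of laminations and the quantum $F$-polynomials are intrinsic. The classical limit was matched already at the level of formulas, so $\mathbb{I}_{\mathcal{A}}^q$ reduces to $\mathbb{I}_{\mathcal{A}}$ at $q=1$, as required.
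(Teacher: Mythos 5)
Your architecture --- define the element chart by chart via quantum $F$-polynomials and $g$-vectors, then prove the local expressions are intertwined by $\mu_k^q$ --- is a genuinely different route from the paper's. The paper never verifies mutation-compatibility of an explicit formula by hand. Instead it realizes all the quantum tori at once inside a single ambient skew-field $\mathcal{F}$, namely the fraction field of Muller's skein algebra of the marked disk: each ideal triangulation $T$ gives a toric frame $M_T$, Muller's theorem says a flip of triangulations induces exactly a mutation of quantum seeds, and $\mathbb{I}_{\mathcal{A}}^q(l)$ is simply defined as the skein-algebra class $M_{T_l}(\mathbf{w})$ of the multicurve $l$, read in the triangulation $T_l$ containing all arcs of $l$. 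Chart-independence is then automatic --- the element lives in $\mathcal{F}$ before any chart is chosen --- and the work shifts to showing (via Tran's quantum $F$-polynomials and a $\mathbf{g}$-vector computation) that this intrinsic element is a Laurent polynomial in the $X_{j;T}$ with coefficients in $\mathbb{Z}_{\geq 0}[q,q^{-1}]$ for every $T$. In other words, the skein algebra does once and for all precisely the computation you defer.

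That deferred computation is where your proposal has a genuine gap. You correctly identify that the entire content of well-definedness sits in the quantum Ptolemy step --- reconciling the powers of $q$ produced by conjugation with $\Psi^q(X_k)$ against those produced by your Weyl-ordering normalization --- but you do not carry it out, and it is not a routine verification: the normalization of $F_c^q$ (Tran's $\omega^{\lambda_{j;t}}$ prefactor, which vanishes in type $A_n$ only because the classical $F$-polynomials have nonzero constant term) and of the monomial prefactor must match exactly, and an error of a single power of $q$ in either destroys the gluing. Two further points need care even granting that computation. First, your reduction ``it suffices to treat a single arc of weight one'' is not automatic: $\mu_k^q$ is an algebra homomorphism, so it preserves products, but it does not preserve Weyl orderings, so you must check that the Weyl-ordered product of the single-arc elements for a simple multicurve equals the product of the single-arc elements up to an explicitly controlled power of $q$ that transforms correctly (in the paper this is the identity $M_{T_l}(\mathbf{w})=\omega^{\alpha}\prod_i M_{T_l}(\mathbf{e}_i)^{w_i}$ with $\alpha$ computed from $\Lambda_{T_l}$, together with the invariance $\Lambda_{T_l}(\mathbf{e}_i,\mathbf{e}_j)=\Lambda_T(\mathbf{g}_i,\mathbf{g}_j)$). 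Second, the existence of quantum $F$-polynomials satisfying a flip recursion with positive coefficients is itself a theorem (Tran), not something read off from intersection numbers; you should cite it rather than treat it as a definition. If you either supply the quantum exchange-relation computation in full or replace it by the skein-algebra mechanism, the proof closes; as written, the central step is asserted rather than proved.
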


A similar map was constructed in~\cite{AK} for cluster varieties associated to punctured surfaces. Interestingly, the proof of Theorem~\ref{thm:introquantumduality} uses several tools that were not present in~\cite{AK}. These include the theory of quantum cluster algebras~\cite{BZq}, the notion of quantum $F$-polynomial from~\cite{Tran}, and the work of Muller on skein algebras~\cite{Muller}. The original construction in~\cite{AK} was instead based on the ``quantum trace map'' introduced by Bonahon and Wong~\cite{BonahonWong}. Recent results of L\^{e} suggest that these two approaches are in fact equivalent~\cite{Le}.

Like the cluster Poisson variety, the symplectic double can be canonically quantized. We will write $\mathcal{D}^q$ for the $q$-deformed algebra of rational functions on the symplectic double.

\begin{theorem}
When $S$ is a disk with finitely many marked points on its boundary, there exists a natural map 
\[
\mathbb{I}_{\mathcal{D}}^q:\mathcal{D}(\mathbb{Z}^t)\rightarrow\mathcal{D}^q
\]
which reduces to $\mathbb{I}_{\mathcal{D}}$ in the classical limit $q=1$.
\end{theorem}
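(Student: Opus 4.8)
The plan is to write down an explicit formula for $\mathbb{I}_{\mathcal{D}}^q(l)$ in each seed, modeled on the classical expression of Theorem~\ref{thm:intromain}, and then to prove that the resulting family of elements is compatible with the quantum mutation maps $\mu_k^q$ of Definition~\ref{def:doublemutation}, so that it descends to a well-defined element of $\mathcal{D}^q$; the argument will run closely parallel to the proof of Theorem~\ref{thm:introquantumduality}. Concretely, for a fixed seed $\mathbf{i}$ and a point $l\in\mathcal{D}(\mathbb{Z}^t)$ with associated curve collections $\mathcal{C}$ on $S$ and $\mathcal{C}^\circ$ on $S^\circ$, I would set
\[
\mathbb{I}_{\mathcal{D}}^q(l)=\Bigl(\prod_{c\in\mathcal{C}^\circ}F_c^q(\widehat{X}_1,\dots,\widehat{X}_n)\Bigr)\Bigl(\prod_{c\in\mathcal{C}}F_c^q(X_1,\dots,X_n)\Bigr)^{-1}\cdot M_l,
\]
where $F_c^q$ is the quantum $F$-polynomial of~\cite{Tran} attached to the arc $c$ (and, for a closed loop $c$, the ``generalized'' quantum $F$-polynomial extracted from the quantum skein relations of~\cite{Muller}, by analogy with~\cite{MW,MSW2} in the classical setting), and $M_l$ is the monomial $B_1^{g_{l,1}}\cdots B_n^{g_{l,n}}X_1^{h_{l,1}}\cdots X_n^{h_{l,n}}$ times the unique power of $q$ making it invariant under $q\mapsto q^{-1}$ (a Weyl ordering). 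The first point to record is that the $X_i$ and the $\widehat{X}_i$ generate two mutually commuting quantum tori inside $\mathcal{D}_{\mathbf{i}}^q$, with opposite bilinear forms; hence the two products of quantum $F$-polynomials commute with one another and $\mathbb{I}_{\mathcal{D}}^q(l)$ is a bona fide element of the skew-field $\mathcal{D}_{\mathbf{i}}^q$. Setting $q=1$ returns $\mathbb{I}_{\mathcal{D}}(l)$ by Theorem~\ref{thm:intromain}, since Tran's quantum $F$-polynomials and the Weyl-ordered monomials specialize to their classical counterparts.

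The substance of the proof is the identity $\mu_k^q\bigl(\mathbb{I}_{\mathcal{D}}^q(l)_{\mathbf{i}'}\bigr)=\mathbb{I}_{\mathcal{D}}^q(l)_{\mathbf{i}}$ for every seed $\mathbf{i}$, every direction $k$, and every $l$. I would assemble it from three ingredients: the explicit quantum mutation rules for $X_i$, $B_i$, $\widehat{X}_i$ coming from Definition~\ref{def:doublemutation} (the restriction to the $X_i$-subalgebra being the quantum cluster Poisson mutation); the mutation recursion satisfied by the quantum $F$-polynomials of~\cite{Tran}, together with the quantum skein recursion of~\cite{Muller} for the loop case; and the combinatorial transformation of the tropical data $(g_{l,i},h_{l,i})$ and of the collections $\mathcal{C},\mathcal{C}^\circ$ dictated by the tropical $\mathcal{D}$-mutation formulas recalled in the introduction. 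At the level of exponents, matching these against one another is precisely the computation underlying Theorem~\ref{thm:intromain}; the genuinely new bookkeeping is that every reordering of the noncommuting generators emits a power of $q$, so one must check that the change in the $F^q$-factors under $\mu_k^\sharp=\Ad_{\Psi^q(X_k)/\Psi^q(\widehat{X}_k)}$ is compensated by the change in $M_l$ up to exactly the $q$-power predicted by Tran's recursion. Because the cluster algebra of a disk is of finite type there are only finitely many seeds, and the verification can be organized by first treating the elementary laminations (single arcs and single loops) and then invoking a multiplicativity property $\mathbb{I}_{\mathcal{D}}^q(l_1+l_2)=q^{N(l_1,l_2)}\mathbb{I}_{\mathcal{D}}^q(l_1)\mathbb{I}_{\mathcal{D}}^q(l_2)$ for disjoint laminations, itself inherited from the corresponding fact for quantum $F$-polynomials lying in a common cluster.

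An alternative, and probably shorter, route is to build $\mathbb{I}_{\mathcal{D}}^q$ directly out of the map $\mathbb{I}_{\mathcal{A}}^q:\mathcal{A}_0(\mathbb{Z}^t)\to\mathcal{O}_q(\mathcal{X})$ supplied by Theorem~\ref{thm:introquantumduality}. Cutting $l\in\mathcal{D}(\mathbb{Z}^t)$ along the image of $\partial S$ produces the two laminations $\mathcal{C}\subset S$ and $\mathcal{C}^\circ\subset S^\circ$, which are points of the $\mathcal{A}$-tropical spaces of the two copies of the disk; applying $\mathbb{I}_{\mathcal{A}}^q$ to each yields precisely the two $F^q$-factors above, now as elements of the two commuting quantum-torus subalgebras of $\mathcal{D}^q$ generated by the $X_i$ and by the $\widehat{X}_i$, and one then multiplies by the $B_i$-monomial $M_l$. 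Mutation invariance of $\mathbb{I}_{\mathcal{D}}^q$ follows from that of $\mathbb{I}_{\mathcal{A}}^q$ together with the fact, built into Definition~\ref{def:doublemutation}, that the two projections $\mathcal{D}^q\to\mathcal{X}^q$ (for $S$ and for $S^\circ$) intertwine the $\mathcal{D}$- and $\mathcal{X}$-mutations, plus a short direct computation for the monomial $M_l$.

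The main obstacle either way is the $q$-power bookkeeping in the normalizing monomial $M_l$: classically the monomial factor is canonical, but in $\mathcal{D}^q$ every ordering of $B_i$, $X_i$, $\widehat{X}_i$ differs by a power of $q$, and mutation invariance holds only for one specific normalization, which must be identified explicitly and then shown to be reproduced on the nose by $\mu_k^q$. A secondary difficulty is the treatment of the closed-loop curves, for which $F_c^q$ is not one of Tran's quantum $F$-polynomials and must instead be constructed — together with its mutation recursion — from Muller's quantum skein algebra, mirroring the way the classical $F_c$ for loops is handled via the Musiker–Schiffler–Williams expansion formulas.
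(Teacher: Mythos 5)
Your proposal correctly identifies the shape of the answer (quantum $F$-polynomial factors for the two cut pieces times a normalizing monomial in the $B_i,X_i$), but it does not close the step you yourself flag as the main obstacle, and that step is the entire content of the theorem. In your first route, the identity $\mu_k^q\bigl(\mathbb{I}_{\mathcal{D}}^q(l)_{\mathbf{i}'}\bigr)=\mathbb{I}_{\mathcal{D}}^q(l)_{\mathbf{i}}$ — equivalently, the existence of one specific $q$-power normalization of $M_l$ compatible with $\Ad_{\Psi^q(X_k)/\Psi^q(\widehat{X}_k)}$ in every seed — is asserted to follow from "matching exponents" and Tran's recursion, but no mechanism is given for producing that normalization or proving it is preserved; guessing a Weyl ordering and checking finitely many seeds is not a proof until the cancellation is actually exhibited. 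Your second route is closer to what the paper does, but it too is missing the key construction, and as stated it is slightly wrong: $\mathbb{I}_{\mathcal{A}}^q$ applied to $\mathcal{C}$ yields $F^q$-factors \emph{times} the $\mathbf{g}$-vector monomials $M_T(\mathbf{g}_{\mathcal{C}})$, not the bare $F^q$-factors, so "then multiply by the $B_i$-monomial $M_l$" double-counts and leaves the normalization problem exactly where it was.

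The paper's resolution is a different device that makes mutation-invariance automatic rather than something to be verified. One forms the algebra $\mathcal{F}_{\mathcal{D}}=(\mathcal{F}\otimes_{\mathbb{Z}[\omega,\omega^{-1}]}\mathcal{F}^\circ)/\mathcal{I}$ from the fraction fields of the skein algebras of $S$ and $S^\circ$, and realizes the quantum double torus inside it by setting $X_j=M_T(\sum_s\varepsilon_{js}\mathbf{e}_s)\otimes1$ and $B_j=M_T(-\mathbf{e}_j)\otimes M_{T^\circ}(\mathbf{e}_j)$; one then \emph{proves} (this is where the real computation lives) that these elements satisfy the $\mathcal{D}^q$ commutation relations and transform under a flip exactly by $\mu_k^q$. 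The canonical element is defined intrinsically as $\omega^{-N_l}[\mathcal{C}]^{-1}\otimes[\mathcal{C}^\circ]$ with $N_l=\Lambda_T(\mathbf{g}_{\mathcal{C}},\mathbf{g}_{\mathcal{C}^\circ})$; since skein classes do not depend on a triangulation, well-definedness is immediate, and the remaining work is to expand this element as a rational expression in the $B_j,X_j$ via quantum $F$-polynomials and $\mathbf{g}$-vectors, recovering your formula a posteriori. Two smaller points: since $S$ is a disk, every component of a doubled lamination is an intersecting curve, so the closed-loop case (and the need for "generalized" quantum $F$-polynomials from the skein recursion) never arises here; and the multiplicativity constant you denote $q^{N(l_1,l_2)}$ is likewise absorbed automatically by the skein product rather than needing a separate argument.
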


The existence of the map $\mathbb{I}_{\mathcal{D}}^q$ was essentially conjectured by Fock and Goncharov in Conjecture~3.6 of~\cite{dilog}. We will see that the construction of $\mathbb{I}_{\mathcal{D}}^q$ closely parallels the construction of the map $\mathbb{I}_{\mathcal{A}}^q$.

\section{Organization}

In Chapter~\ref{ch:AbstractClusterVarieties}, we use the formulas from this chapter to define three types of cluster varieties: cluster $K_2$-varieties, cluster Poisson varieties, and cluster symplectic varieties. We describe the various additional structures associated with these cluster varieties and the relationships between these structures. We then introduce the notion of a semifield and the notion of points of a cluster variety defined over a semifield.

In Chapter~\ref{ch:ModuliSpacesOfLocalSystems}, we discuss moduli spaces of local systems, one of the main sources of cluster structure. After briefly recalling some terminology related to surfaces and flag varieties, we review the classical results of Fock and Goncharov~\cite{IHES} on the relationship between cluster varieties and moduli spaces of local systems. We define decorated twisted local systems and framed local systems, and we show that the moduli spaces parametrizing these objects are birationally equivalent to the cluster $K_2$- and Poisson varieties, respectively. We then define the symplectic double moduli space from~\cite{double}. We conclude with the author's theorem~\cite{Dmoduli} that this moduli space is birationally equivalent to the symplectic double.

In Chapter~\ref{ch:TeichmullerSpaces}, we discuss three versions of the Teichm\"uller space of a surface. We first define Penner's decorated Teichm\"uller space and prove, following~\cite{Penner} and~\cite{dual}, that this object is identified with the positive real points of the cluster $K_2$-variety. We then define the enhanced Teichm\"uller space and prove, again following~\cite{Penner} and~\cite{dual}, that it is identified with the positive real points of the cluster Poisson variety. Finally, we define a version of the Teichm\"uller space of a doubled surface appearing in the work of Fock and Goncharov~\cite{double} and the author~\cite{Dlam} and prove that it is identified with the positive real points of the symplectic double.

In Chapter~\ref{ch:MeasuredLaminations}, we define three versions of the space of measured laminations on a surface and prove that these spaces are identified with the tropical rational points of the three types of cluster varieties. The metric space completions of these spaces of laminations provide compactifications of the corresponding Teichm\"uller spaces. We identify those laminations that have integral coordinates. The material in this chapter is based on the works of Fock and Goncharov~\cite{IHES,dual} and the author~\cite{Dlam}.

In Chapter~\ref{ch:DualityOfClusterVarieties}, we discuss dualities between the different cluster varieties. We begin with the classical results of Fock and Goncharov which relate the cluster $K_2$- and Poisson varieties~\cite{IHES}. We then review the author's results from~\cite{Dlam} on a self-duality of the symplectic double. We show that the tropical integral points of the symplectic double parametrize an interesting class of rational functions on the symplectic double itself. These are given by an explicit formula involving the $F$-polynomials of Fomin and Zelevinsky~\cite{FZIV}.

In Chapter~\ref{ch:DualityOfQuantumClusterVarieties}, we generalize these results to the quantum setting. We consider quantum cluster varieties associated to a disk with finitely many marked points on its boundary. Using various results from the theory of quantum cluster algebras~\cite{Muller,Tran}, we construct a canonical map from the tropical integral points of the cluster $K_2$-variety into the quantized algebra of regular functions on the cluster Poisson variety. This map satisfies a number of properties conjectured by Fock and Goncharov in~\cite{ensembles}. In addition to this map, we construct a canonical map from the tropical integral points of the symplectic double into its quantized algebra of rational functions. The results of this chapter are based on the paper~\cite{Dquantum}. They extend the author's joint work with Kim~\cite{AK}.

\chapter{Abstract cluster varieties}
\label{ch:AbstractClusterVarieties}

Cluster varieties are a class of schemes obtained by gluing split algebraic tori using certain birational maps called cluster transformations. There are three types of cluster varieties: the cluster $K_2$-varieties, cluster Poisson varieties, and cluster symplectic varieties. In this chapter, we define these objects and the extra structures they carry. In addition, we describe the notion of points of a cluster variety defined over a semifield.

\section{Basic definitions}

In this chapter, we will use a definition of seed which is slightly more general than the one from Chapter~\ref{ch:Introduction}.

\begin{definition}
\label{def:seed}
A \emph{seed} $\mathbf{i}=(I,J,\varepsilon_{ij})$ consists of a finite set $I$, a subset $J\subseteq I$, and a skew-symmetric matrix $\varepsilon_{ij}$ ($i,j\in I$) with integer entries. The matrix $\varepsilon_{ij}$ is called the \emph{exchange matrix}, and the set $I-J$ is called the set of \emph{frozen elements} of $I$.
\end{definition}

In this thesis, all schemes are defined over the field of rational numbers. Thus the \emph{multiplicative group} is the affine scheme $\mathbb{G}_m=\Spec\mathbb{Q}[x,x^{-1}]$. For any field $K$, one has $\mathbb{G}_m(K)=K^*$. A product of multiplicative groups is known as a \emph{split algebraic torus}. Given a seed $\mathbf{i}=(I,J,\varepsilon_{ij})$, we get the following three split algebraic tori:
\[
\mathcal{A}_\mathbf{i} = (\mathbb{G}_m)^{|I|}, \quad
\mathcal{X}_\mathbf{i} = (\mathbb{G}_m)^{|J|}, \quad
\mathcal{D}_\mathbf{i} = (\mathbb{G}_m)^{2|J|}.
\]
We write $\{A_i\}_{i\in I}$ for the natural coordinates on~$\mathcal{A}_\mathbf{i}$ and $\{X_j\}_{j\in J}$ for the natural coordinates on~$\mathcal{X}_\mathbf{i}$. We write $\{B_j, X_j\}_{j\in J}$ for the natural coordinates on $\mathcal{D}_\mathbf{i}$.

\begin{definition}
\label{def:mutation}
Let $\mathbf{i}=(I,J,\varepsilon_{ij})$ be a seed and $k\in J$ a non-frozen element. Then we define a new seed $\mu_k(\mathbf{i})=\mathbf{i}'=(I',J',\varepsilon_{ij}')$, called the seed obtained by \emph{mutation} in the direction~$k$ by setting $I'=I$, $J'=J$, and 
\[
\varepsilon_{ij}'=
\begin{cases}
-\varepsilon_{ij} & \mbox{if } k\in\{i,j\} \\
\varepsilon_{ij}+\frac{|\varepsilon_{ik}|\varepsilon_{kj}+\varepsilon_{ik}|\varepsilon_{kj}|}{2} & \mbox{if } k\not\in\{i,j\}.
\end{cases}
\]
A seed mutation induces birational maps on tori defined by the formulas 
\[
\mu_k^*A_i' =
\begin{cases}
A_k^{-1}\biggr(\prod_{j|\varepsilon_{kj>0}}A_j^{\varepsilon_{kj}} + \prod_{j|\varepsilon_{kj<0}}A_j^{-\varepsilon_{kj}}\biggr) & \mbox{if } i=k \\
A_i & \mbox{if } i\neq k
\end{cases}
\]
and
\[
\mu_k^*X_i'=
\begin{cases}
X_k^{-1} & \mbox{if } i=k \\
X_i{(1+X_k^{-\sgn(\varepsilon_{ik})})}^{-\varepsilon_{ik}} & \mbox{if } i\neq k
\end{cases}
\]
and
\[
\mu_k^*B_i' =
\begin{cases}
\frac{X_k\prod_{j|\varepsilon_{kj>0}}B_j^{\varepsilon_{kj}} + \prod_{j|\varepsilon_{kj<0}}B_j^{-\varepsilon_{kj}}}{(1+X_k)B_k} & \mbox{if } i=k \\
B_i & \mbox{if } i\neq k
\end{cases}
\]
where $\{A_i'\}_{i\in I}$, $\{X_j'\}_{j\in J}$, and $\{B_j',X_j'\}_{j\in J}$ are the coordinates on $\mathcal{A}_{\mathbf{i}'}$, $\mathcal{X}_{\mathbf{i}'}$, and $\mathcal{D}_{\mathbf{i}'}$, respectively.
\end{definition}

\begin{definition}
Two seeds will be called \emph{mutation equivalent} if they are related by a sequence of mutations. We will denote the mutation equivalence class of a seed $\mathbf{i}$ by $|\mathbf{i}|$. A  transformation of the $\mathcal{A}$-, $\mathcal{X}$-, or $\mathcal{D}$-tori obtained by composing the above birational maps is called a \emph{cluster transformation}.
\end{definition}

We can now define cluster varieties by by gluing the tori using cluster transformations.

\Needspace*{2\baselineskip}
\begin{definition} \mbox{}
\begin{enumerate}
\item The \emph{cluster $K_2$-variety} $\mathcal{A}=\mathcal{A}_{|\mathbf{i}|}$ is a scheme obtained by gluing the $\mathcal{A}$-tori for all seeds mutation equivalent to the seed $\mathbf{i}$ using the above birational maps. We will denote by $\mathcal{A}_0$ the subspace given in any cluster coordinate system by the equations $A_i=1$ ($i\in I-J$).

\item The \emph{cluster Poisson variety} $\mathcal{X}=\mathcal{X}_{|\mathbf{i}|}$ is a scheme obtained by gluing the $\mathcal{X}$-tori for all seeds mutation equivalent to the seed $\mathbf{i}$ using the above birational maps.

\item The \emph{cluster symplectic variety} or \emph{symplectic double} $\mathcal{D}=\mathcal{D}_{|\mathbf{i}|}$ is a scheme obtained by gluing the tori $\mathcal{D}_\mathbf{i}$ for all seeds mutation equivalent to the seed $\mathbf{i}$ using the above maps.
\end{enumerate}
\end{definition}

The topic of this thesis is essentially the geometry of these schemes, particularly the symplectic double, which was not so well studied until recently~\cite{double,Dlam,Dmoduli,Dquantum}.

\section{Properties of cluster varieties}

Let us now discuss some of the extra structures on the schemes defined in the previous section. This will help to justify the names of these cluster varieties.

\begin{definition}
For any field $F$, the abelian group $K_2(F)$ is defined as the quotient of~$\Lambda^2 F^*$, the wedge square of the multiplicative group $F^*$ of~$F$, by the subgroup generated by the \emph{Steinberg relations}
\[
(1-x)\wedge x
\]
for $x\in F^*-\{1\}$.
\end{definition}

For example, one can take $F$ to be the field $\mathbb{Q}(X)$ of rational functions on a variety~$X$, and there is a canonical map 
\[
d\log:\Lambda^2\mathbb{Q}(X)^*\rightarrow\Omega_{\log}^2(X)
\]
given by 
\[
f\wedge g\mapsto d\log(f)\wedge d\log(g).
\]
Here $\Omega_{\log}^2(X)$ is the space of 2-forms with logarithmic singularities on~$X$. The map sends the Steinberg relations to zero and therefore induces a well defined map $K_2(X)\coloneqq K_2(\mathbb{Q}(X))\rightarrow \Omega_{\log}^2(X)$.

\begin{definition}
For any seed $\mathbf{i}$, we define an element 
\[
W_{\mathcal{A},\mathbf{i}} = \frac{1}{2}\sum_{i,j\in I}\varepsilon_{ij}A_i\wedge A_j \in \Lambda^2\mathbb{Q}(\mathcal{A}_{\mathbf{i}})^*.
\]
\end{definition}

\begin{proposition}[\cite{dilog}, Corollary~2.18]
This definition provides a canonical element $\mathbf{W}_\mathcal{A}\in K_2(\mathcal{A})$. Its image under the map $d\log$ is a canonical presymplectic structure $\Omega_{\mathcal{A}}=d\log(\mathbf{W}_{\mathcal{A}})$ on the space $\mathcal{A}$.
\end{proposition}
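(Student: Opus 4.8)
The plan is to verify the two assertions in turn. For the first — that the locally defined elements $W_{\mathcal{A},\mathbf{i}}$ glue to a single class $\mathbf{W}_{\mathcal{A}}\in K_2(\mathcal{A})$ — it suffices to show that if $\mathbf{i}' = \mu_k(\mathbf{i})$, then under the cluster transformation $\mu_k^*$ the element $W_{\mathcal{A},\mathbf{i}'}$ pulls back to an element of $\Lambda^2\mathbb{Q}(\mathcal{A}_{\mathbf{i}})^*$ which is congruent to $W_{\mathcal{A},\mathbf{i}}$ modulo the Steinberg relations. So first I would write $W_{\mathcal{A},\mathbf{i}'} = \tfrac12\sum_{i,j}\varepsilon_{ij}'\, A_i'\wedge A_j'$, apply $\mu_k^*$ using the exchange relations of Definition~\ref{def:mutation} (so $A_i' \mapsto A_i$ for $i\neq k$ and $A_k' \mapsto A_k^{-1}(P_k^+ + P_k^-)$, where $P_k^{\pm} = \prod_{j\mid \pm\varepsilon_{kj}>0} A_j^{\pm\varepsilon_{kj}}$), and also substitute the matrix mutation formula $\varepsilon_{ij}' = -\varepsilon_{ij}$ or $\varepsilon_{ij} + \tfrac12(|\varepsilon_{ik}|\varepsilon_{kj} + \varepsilon_{ik}|\varepsilon_{kj}|)$. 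Expanding, the difference $\mu_k^*W_{\mathcal{A},\mathbf{i}'} - W_{\mathcal{A},\mathbf{i}}$ collects into terms involving $\log$ of the binomial $1 + (P_k^-/P_k^+)$ wedged against the $A_j$; the key algebraic identity is that $P_k^+ + P_k^- = P_k^+\bigl(1 + \prod_j A_j^{-\varepsilon_{kj}}\bigr)$, so the new factor is $1 + A$ where $A = \prod_j A_j^{-\varepsilon_{kj}}$ is exactly the monomial one would call $X_k$ in the $\mathcal{X}$-coordinates. One then checks that the surviving wedge terms are precisely $(1+A)\wedge A$ up to sign and integer multiples, i.e. Steinberg relations, so the class in $K_2$ is unchanged. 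This is the routine-but-careful bookkeeping step, and it is the main obstacle: organizing the double sum so that everything that is not manifestly a multiple of a Steinberg symbol cancels.

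For the second assertion, I would apply the map $d\log$, which by construction kills the Steinberg relations, so $d\log(W_{\mathcal{A},\mathbf{i}'}) = d\log(\mu_k^* W_{\mathcal{A},\mathbf{i}'}) = d\log(W_{\mathcal{A},\mathbf{i}})$ after the identification above; hence the local $2$-forms $\Omega_{\mathcal{A},\mathbf{i}} = \tfrac12\sum_{i,j}\varepsilon_{ij}\, d\log A_i \wedge d\log A_j$ agree on overlaps and define a global form $\Omega_{\mathcal{A}} = d\log(\mathbf{W}_{\mathcal{A}})$ on $\mathcal{A}$. Closedness is automatic since each $d\log A_i$ is closed, so $\Omega_{\mathcal{A}}$ is a presymplectic form; it need not be nondegenerate because $\varepsilon_{ij}$ may be degenerate (e.g. when $|I|$ is odd, or along frozen directions), which is exactly why the word "presymplectic" appears. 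The only real content here is the gluing, which is immediate from the $K_2$-statement, so once the first step is done this is a one-line consequence.

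In summary: the heart of the argument is the Step-1 computation showing mutation-invariance of $W_{\mathcal{A},\mathbf{i}}$ modulo Steinberg relations, for which the crucial observation is that the binomial appearing in the $\mathcal{A}$-mutation rule factors so as to produce exactly the Steinberg symbol $(1+X_k)\wedge X_k$; Step 2 then follows formally by pushing forward along $d\log$. I expect to cite the quiver/matrix mutation formula of Proposition~\ref{prop:matrixmutation} and the $\mathcal{A}$- and $\mathcal{X}$-exchange relations of Definition~\ref{def:mutation}, and otherwise the proof is self-contained.
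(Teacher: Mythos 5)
The paper does not actually prove this proposition; it is quoted verbatim from Fock--Goncharov (\cite{dilog}, Corollary~2.18) with no argument supplied. Your outline is precisely the standard proof of that result, and it is sound: the verification I would want to see spelled out is that, after writing $P_k^+ + P_k^- = P_k^{-}\bigl(1+\prod_jA_j^{\varepsilon_{kj}}\bigr)$ (or your variant with $P_k^+$ pulled out), the monomial part $\sum_{i\neq k}\varepsilon_{ik}\,A_i\wedge P_k^{\mp}$ exactly accounts for the correction term $\tfrac12(|\varepsilon_{ik}|\varepsilon_{kj}+\varepsilon_{ik}|\varepsilon_{kj}|)$ in the matrix mutation rule -- this follows from the identity $\varepsilon_{ik}[-\varepsilon_{kj}]_+-\varepsilon_{jk}[-\varepsilon_{ki}]_+=\tfrac12\bigl(\varepsilon_{ik}|\varepsilon_{kj}|+|\varepsilon_{ik}|\varepsilon_{kj}\bigr)$ -- while the leftover term is $\pm(1+X)\wedge X$ with $X=\prod_jA_j^{\pm\varepsilon_{kj}}$, a Steinberg-type symbol. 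Two small points: the monomial $\prod_jA_j^{-\varepsilon_{kj}}$ you single out is $p^*X_k^{-1}$ rather than $p^*X_k$ under the paper's convention $p^*X_i=\prod_jA_j^{\varepsilon_{ij}}$ (harmless, since either factorization works), and $(1+x)\wedge x$ equals the Steinberg symbol $(1-(-x))\wedge(-x)$ only up to the $2$-torsion element $(1+x)\wedge(-1)$, which is why one works in $K_2$ modulo torsion (or $\Lambda^2$ modulo the subgroup generated by such symbols) as Fock and Goncharov do; neither issue affects the conclusion, and the $d\log$ step is, as you say, formal.
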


It is this canonical class $\mathbf{W}_{\mathcal{A}}$ that gives $\mathcal{A}$ the name cluster $K_2$-variety. In the context of Teichm\"uller theory, the presymplectic structure $\Omega_{\mathcal{A}}$ is related to the Weil-Petterson symplectic form on Teichm\"uller space.

Turning now to the cluster Poisson variety $\mathcal{X}$, we make the following definition.

\begin{definition}
For any seed $\mathbf{i}$, define a Poisson structure on the torus $\mathcal{X}_{\mathbf{i}}$ by 
\[
\{X_i,X_j\}=\varepsilon_{ij}X_iX_j
\]
where $X_j$~($j\in J$) are the coordinates on $\mathcal{X}_{\mathbf{i}}$.
\end{definition}

\begin{proposition}
Cluster transformations preserve this Poisson structure, and hence $\mathcal{X}$ has a canonical Poisson structure.
\end{proposition}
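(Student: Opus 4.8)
The goal is to prove that cluster transformations preserve the Poisson bracket $\{X_i,X_j\}=\varepsilon_{ij}X_iX_j$, so that $\mathcal{X}$ inherits a canonical Poisson structure. A cluster transformation is by definition a composition of the basic mutation maps $\mu_k^*$, and it is convenient to factor each such map as $\mu_k^* = \mu_k'^* \circ \mu_k^{\sharp *}$, where $\mu_k'$ is the monomial ``sign change'' part coming from the lattice map $\varepsilon_{ij}\mapsto\varepsilon_{ij}'$ for $k\in\{i,j\}$ together with the relabeling $X_i\mapsto X_i$ for $i\ne k$, and $\mu_k^\sharp$ is the automorphism $X_i\mapsto X_i(1+X_k^{-\sgn\varepsilon_{ik}})^{-\varepsilon_{ik}}$ (with $X_k$ fixed). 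Since a composition of Poisson maps is Poisson, it suffices to check each factor separately for a single mutation direction $k$.

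\textbf{Plan of the proof.} First I would record the general principle that a logarithmically-homogeneous bracket of the form $\{X_i,X_j\}=\varepsilon_{ij}X_iX_j$ is equivalent, after passing to logarithmic coordinates $x_i=\log X_i$, to the constant-coefficient bracket $\{x_i,x_j\}=\varepsilon_{ij}$; thus a monomial change of variables $X_i' = \prod_j X_j^{c_{ij}}$ transforms $\varepsilon$ by the congruence $\varepsilon' = C\,\varepsilon\,C^{T}$ with $C=(c_{ij})$. Step one: handle $\mu_k'$. Here the new coordinates are $X_i'=X_i$ for $i\ne k$ and $X_k'=X_k^{-1}$, so $C$ is the identity with the $k$-th diagonal entry flipped to $-1$; the congruence $C\varepsilon C^T$ then flips the sign of row $k$ and column $k$ of $\varepsilon$, which is exactly the rule $\varepsilon_{ij}'=-\varepsilon_{ij}$ for $k\in\{i,j\}$ from Definition~\ref{def:mutation}. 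So $\mu_k'$ is Poisson by a direct matrix computation. Step two: handle $\mu_k^\sharp$. This is the substantive step. One must show that substituting $X_i\mapsto X_i(1+X_k^{-\sgn\varepsilon_{ik}})^{-\varepsilon_{ik}}$ (and $X_k\mapsto X_k$) sends the bracket with exchange matrix $\mu_k'(\varepsilon)$ to the bracket with exchange matrix $\varepsilon' = \mu_k(\varepsilon)$. The clean way is to verify the three types of bracket relations among the images $\tilde X_i := X_i(1+X_k^{-\sgn\varepsilon_{ik}})^{-\varepsilon_{ik}}$: namely $\{\tilde X_k,\tilde X_i\}$, and $\{\tilde X_i,\tilde X_j\}$ for $i,j\ne k$, using the Leibniz rule together with the already-known bracket on the $X$'s. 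A useful auxiliary computation is $\{X_k, 1+X_k^{\pm1}\}=0$ and $\{X_i, 1+X_k^{m}\} = m\,\varepsilon_{ik}\,X_i(1+X_k^m - 1) = m\varepsilon_{ik}X_i X_k^m$; feeding these into the Leibniz expansion of $\{\tilde X_i,\tilde X_j\}/(\tilde X_i \tilde X_j)$ produces, after collecting the terms, precisely $\varepsilon_{ij}+\tfrac12(|\varepsilon_{ik}|\varepsilon_{kj}+\varepsilon_{ik}|\varepsilon_{kj}|)$. I would organize this by cases on the signs of $\varepsilon_{ik}$ and $\varepsilon_{jk}$, since the $\sgn$ in the exponent makes the formula piecewise; there are essentially the four sign patterns (same sign, opposite signs) to check, and in each the factors $(1+X_k^{\pm1})$ combine so that the spurious denominators cancel.

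\textbf{Expected main obstacle.} The bookkeeping in Step two is where all the work lies: one has to be careful that the exponent $-\sgn(\varepsilon_{ik})$ depends on $i$, so the two factors $(1+X_k^{-\sgn\varepsilon_{ik}})^{-\varepsilon_{ik}}$ and $(1+X_k^{-\sgn\varepsilon_{jk}})^{-\varepsilon_{jk}}$ attached to $\tilde X_i$ and $\tilde X_j$ may involve different powers of $X_k$, and the cross terms from the Leibniz rule must be shown to assemble into exactly the quadratic mutation formula rather than something merely proportional to it. It is worth noting that this identity is the classical ($q=1$) shadow of the statement that the quantum mutation map $\mu_k^q$ of Definition~\ref{def:quantummutationdef} is an algebra homomorphism of the quantum torus skew-fields — a fact the excerpt attributes to Appendix~\ref{ch:DerivationOfTheClassicalMutationFormulas} — so one could alternatively deduce the proposition by taking the classical limit of that appendix result; but a self-contained direct verification along the lines above is short enough to be worth giving. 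Finally, since the bracket is defined torus-by-torus and we have checked it is preserved by every generating birational map, it glues to a well-defined Poisson structure on the scheme $\mathcal{X}$, which is the claim.
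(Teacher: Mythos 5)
Your argument is correct. Note first that the paper states this proposition without proof, so there is nothing to match against line by line; the closest thing in the text is Appendix~\ref{ch:DerivationOfTheClassicalMutationFormulas}, which derives the quantum mutation formulas, and you rightly observe that the proposition is the $q=1$ shadow of the statement that $\mu_k^q$ is an algebra map of quantum tori. Your direct verification is the standard one and it goes through: in logarithmic coordinates the only nontrivial case is $i,j\neq k$, where $\{\log\tilde X_i,\log\tilde X_j\}=\varepsilon_{ij}+\varepsilon_{ik}\varepsilon_{jk}\bigl[\tfrac{bX_k^{-b}}{1+X_k^{-b}}-\tfrac{aX_k^{-a}}{1+X_k^{-a}}\bigr]$ with $a=\sgn\varepsilon_{ik}$, $b=\sgn\varepsilon_{jk}$; the bracketed term is $0$ when $a=b$ and $\mp1$ when $a=-b$, which is exactly $\tfrac12(|\varepsilon_{ik}|\varepsilon_{kj}+\varepsilon_{ik}|\varepsilon_{kj}|)/(\varepsilon_{ik}\varepsilon_{jk})$ times $\varepsilon_{ik}\varepsilon_{jk}$, and the case $i=k$ is immediate since $\tilde X_j/X_j$ is a function of $X_k$ alone. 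One small caution about your factorization $\mu_k^*=\mu_k'^*\circ\mu_k^{\sharp*}$: your $\mu_k'$ (the bare sign flip $X_k\mapsto X_k^{-1}$) is not the paper's $\mu_k'$, whose classical limit also carries the monomial factors $X_k^{[\varepsilon_{ik}]_+}$; and to invoke ``a composition of Poisson maps is Poisson'' you must name the intermediate exchange matrix explicitly (the one with row and column $k$ negated but the off-$k$ block uncorrected) and check each factor against it. This is bookkeeping, not a gap, and your Step two already contains the substantive computation.
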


Finally, let us define an extra structure on the cluster variety $\mathcal{D}$.

\begin{definition}
For any seed $\mathbf{i}$, the torus $\mathcal{D}_{\mathbf{i}}$ has a natural symplectic structure 
\[
\Omega_{\mathbf{i}}=-\frac{1}{2}\sum_{i,j\in J}\varepsilon_{ij}d\log B_i\wedge d\log B_j - \sum_{i\in J}d\log B_i\wedge d\log X_i
\]
and a compatible Poisson structure given by 
\[
\{B_i,B_j\}=0,\quad \{X_i,B_j\}=\delta_{ij}X_iB_j,\quad \{X_i,X_j\}=\varepsilon_{ij}X_iX_j.
\]
where $B_j$,~$X_j$~($j\in J$) are the coordinates on $\mathcal{D}_{\mathbf{i}}$.
\end{definition}

As usual, these formulas define canonical structures on the full cluster variety~$\mathcal{D}$. In fact, one can make a stronger statement.

\begin{proposition}[\cite{dilog}, Proposition~2.14]
There is a canonical class $\mathbf{W}\in K_2(\mathcal{D})$ such that the symplectic structure defined above arises as $\Omega_{\mathcal{D}}=d\log(\mathbf{W})$.
\end{proposition}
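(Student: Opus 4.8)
The plan is to write down the obvious lift of $\Omega_{\mathbf i}$ to an element of $\Lambda^2\mathbb Q(\mathcal D_{\mathbf i})^*$ and to verify that, modulo the Steinberg relations, this element is unchanged under a single mutation. Since $\mathbb Q(\mathcal D)=\mathbb Q(\mathcal D_{\mathbf i})$ for every seed $\mathbf i$, and every cluster transformation is a composition of mutations, the elements attached to the various charts then all represent a single well-defined class $\mathbf W\in K_2(\mathcal D)$; and since $d\log$ applied to the lift of $\Omega_{\mathbf i}$ returns $\Omega_{\mathbf i}$ by inspection, the identity $\Omega_{\mathcal D}=d\log(\mathbf W)$ follows once the gluing is established.

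Concretely, I would set
\[
W_{\mathbf i}\;=\;-\tfrac12\sum_{i,j\in J}\varepsilon_{ij}\,B_i\wedge B_j\;-\;\sum_{i\in J}B_i\wedge X_i\ \in\ \Lambda^2\mathbb Q(\mathcal D_{\mathbf i})^*,
\]
so that $d\log W_{\mathbf i}=\Omega_{\mathbf i}$ by the definition of $\Omega_{\mathbf i}$. Using $\widehat X_i=X_i\prod_jB_j^{\varepsilon_{ij}}$ one also has the equivalent symmetric form $W_{\mathbf i}=-\tfrac12\sum_{i\in J}B_i\wedge(X_i\widehat X_i)$, which makes the $X\leftrightarrow\widehat X$ symmetry of the double manifest and tends to streamline the bookkeeping. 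Everything now reduces to the congruence $\mu_k^*W_{\mathbf i'}\equiv W_{\mathbf i}$ modulo Steinberg relations for a single mutation $\mu_k$.

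For this step the tools are bilinearity, $(uv)\wedge w=u\wedge w+v\wedge w$, together with the congruence $u\wedge v\equiv u\wedge w+w\wedge v$ whenever $u+v=w$ (the Steinberg relation applied to $\tfrac uw+\tfrac vw=1$). Since $\mu_k^*B_i'=B_i$ for $i\neq k$, only the terms involving $B_k$ or the $X$-variables change, and $\mu_k^*W_{\mathbf i'}-W_{\mathbf i}$ splits into four contributions: the change in the $B\wedge B$ part over indices $\neq k$, governed by the matrix mutation rule $\varepsilon_{ij}'-\varepsilon_{ij}=\tfrac12\bigl(|\varepsilon_{ik}|\varepsilon_{kj}+\varepsilon_{ik}|\varepsilon_{kj}|\bigr)$; a term $\sum_{j\neq k}\varepsilon_{kj}\,(B_k'B_k)\wedge B_j$; a term $\sum_{i\neq k}\varepsilon_{ik}\,B_i\wedge\bigl(1+X_k^{-\sgn(\varepsilon_{ik})}\bigr)$ coming from $\mu_k^*X_i'$; and a term $(B_k'B_k)\wedge X_k$, where $B_k'B_k=\bigl(X_k\mathbb B_k^{+}+\mathbb B_k^{-}\bigr)/(1+X_k)$ with $\mathbb B_k^{+}=\prod_{j|\varepsilon_{kj}>0}B_j^{\varepsilon_{kj}}$, $\mathbb B_k^{-}=\prod_{j|\varepsilon_{kj}<0}B_j^{-\varepsilon_{kj}}$. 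Applying the $u+v=w$ congruence to the numerator, using the factorization $X_k\mathbb B_k^{+}+\mathbb B_k^{-}=\mathbb B_k^{-}(1+\widehat X_k)=X_k\mathbb B_k^{+}(1+\widehat X_k^{-1})$, and to $1+X_k$, one rewrites these contributions in terms of $\mathbb B_k^{\pm}$, $X_k$, $1+X_k$, $\widehat X_k$, $1+\widehat X_k$; the $B$-monomial parts then cancel the $B\wedge B$ change, the $B_i\wedge(1+X_k^{\pm1})$ parts cancel the $\mathcal X$-mutation term, and what is left is assembled from $X_k\wedge(1+X_k)$ and $\widehat X_k\wedge(1+\widehat X_k)$.

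The one delicate point — the main obstacle — is the fate of these residual terms. The case split forced by $\sgn(\varepsilon_{ik})$, via the identity $1+X_k^{-1}=X_k^{-1}(1+X_k)$, feeds factors of $X_k^{-1}$ into the wedges and produces expressions of the form $(1+X_k)\wedge X_k$ and $(1+\widehat X_k)\wedge\widehat X_k$, each congruent modulo Steinberg to a $2$-torsion class $(-1)\wedge(1+X_k)$, respectively $(-1)\wedge(1+\widehat X_k)$. One should verify that these cancel against one another in $\mu_k^*W_{\mathbf i'}-W_{\mathbf i}$ — exactly as in the parallel computation for the cluster $K_2$-variety, Corollary~2.18 of \cite{dilog} — so that the class lies in $K_2(\mathcal D)$ on the nose; should a residual $2$-torsion term survive, one simply works with $K_2(\mathcal D)$ modulo $2$-torsion, which costs nothing here because $d\log$ annihilates $(-1)\wedge(\cdot)$. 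Once the per-mutation congruence is in hand, invariance under any sequence of mutations — in particular any sequence returning to the starting seed — is automatic, so the elements $W_{\mathbf i}$ glue to the canonical class $\mathbf W\in K_2(\mathcal D)$ with $d\log(\mathbf W)=\Omega_{\mathcal D}$.
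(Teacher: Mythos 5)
Your proposal is correct, and it reconstructs the standard argument: the thesis itself gives no proof of this proposition (it is quoted from Fock--Goncharov's \cite{dilog}), so there is nothing internal to compare against. Your lift $W_{\mathbf i}=-\tfrac12\sum_{i,j}\varepsilon_{ij}B_i\wedge B_j-\sum_i B_i\wedge X_i$ does satisfy $d\log W_{\mathbf i}=\Omega_{\mathbf i}$, and the mutation check goes through as you outline: combining your second and fourth contributions into $(\mu_k^*B_k'\cdot B_k)\wedge\widehat X_k$ and using $X_k\mathbb B_k^{+}+\mathbb B_k^{-}=\mathbb B_k^{-}(1+\widehat X_k)$, the $(1+X_k)\wedge B_j$ terms cancel against the $\mathcal X$-mutation contribution, the residue $\mathbb B_k^{-}\wedge(\widehat X_k/X_k)$ cancels the change in the $B\wedge B$ part (via $|\varepsilon_{ki}|=\varepsilon_{ki}+2[-\varepsilon_{ki}]_+$ and $v\wedge v=0$), and what survives is $\bigl((1+X_k)/(1+\widehat X_k)\bigr)\wedge(-1)$, which is $2$-torsion and killed by $d\log$ exactly as you say. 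So the class is canonical in $K_2(\mathcal D)$ modulo $2$-torsion, which suffices for the stated identity $\Omega_{\mathcal D}=d\log(\mathbf W)$.
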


In addition to these extra structures, there are various relationships between the different cluster varieties. In the following result, we will assume the seeds used to define $\mathcal{A}$, $\mathcal{X}$, and~$\mathcal{D}$ have no frozen elements.

\begin{theorem}[\cite{dilog}, Theorem~2.3]
\label{thm:doubleproperties}
Assume $I=J$. Then the cluster varieties $\mathcal{A}$, $\mathcal{X}$, and~$\mathcal{D}$ satisfy the following properties.
\begin{enumerate}
\item There is a map $\varphi:\mathcal{A}\times\mathcal{A}\rightarrow\mathcal{D}$ given in any cluster coordinate system by the formulas 
\begin{align*}
\varphi^*(B_i) &= \frac{A_i^\circ}{A_i}, \\
\varphi^*(X_i) &= \prod_j A_j^{\varepsilon_{ij}}
\end{align*}
where $A_i^\circ$ are the coordinates on the second factor of $\mathcal{A}$. It respects the canonical 2-forms in the sense that $\varphi^*\Omega_{\mathcal{D}}=p_-^*\Omega_{\mathcal{A}}-p_+^*\Omega_{\mathcal{A}}$ where $p_-$ and $p_+$ are the projections of $\mathcal{A}\times\mathcal{A}$ onto its two factors.

\item There is a Poisson map $\pi:\mathcal{D}\rightarrow\mathcal{X}\times\mathcal{X}$ given in any cluster coordinate system by the formulas 
\begin{align*}
\pi^*(X_i\otimes1) &= X_i, \\
\pi^*(1\otimes X_i) &= X_i\prod_jB_j^{\varepsilon_{ij}}.
\end{align*}

\item There are commutative diagrams 
\[
\vcenter{
\xymatrix{ 
\mathcal{A}\times\mathcal{A} \ar[rd]^-{\varphi} \ar[dd]_{p\times p} \\
& \mathcal{D}  \ar[ld]^-{\pi} \\
\mathcal{X}\times\mathcal{X}
}
}
\quad
\vcenter{
\xymatrix{ 
\mathcal{X} \ar[r]^{j} \ar[d] & \mathcal{D} \ar[d]^{\pi} \\
\Delta_\mathcal{X} \ar@{^(->}[r] & \mathcal{X}\times\mathcal{X}
}
}
\]
where $\Delta_\mathcal{X}$ denotes the diagonal in $\mathcal{X}\times\mathcal{X}$. Here $p:\mathcal{A}\rightarrow\mathcal{X}$ is a canonical map given in any cluster coordinate system by $p^*X_i=\prod_jA_j^{\varepsilon_{ij}}$, and $j:\mathcal{X}\rightarrow\mathcal{D}$ is an embedding whose image is a Lagrangian subspace given in any cluster coordinate system by the equations $B_j=1$~($j\in J$).

\item There is an involutive isomorphism $\iota:\mathcal{D}\rightarrow\mathcal{D}$ which interchanges the two components of the map $\pi$ and is given in any cluster coordinate system by the formulas 
\begin{align*}
\iota^*(B_i) &= B_i^{-1}, \\
\iota^*(X_i) &= X_i\prod_jB_j^{\varepsilon_{ij}}.
\end{align*}
\end{enumerate}
\end{theorem}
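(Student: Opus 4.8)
The statement of Theorem~\ref{thm:doubleproperties} bundles together four assertions, each of which reduces to a finite calculation in cluster coordinates. The plan is to verify each formula on a single torus chart and then check compatibility with one mutation, which suffices since cluster transformations are generated by mutations and the objects in question (the maps $\varphi$, $\pi$, $j$, the involution $\iota$, and the various $2$-forms and Poisson brackets) are all defined chart-by-chart by the displayed formulas. Throughout I work under the standing hypothesis $I=J$, so there are no frozen directions and the coordinate systems are $\{A_i\}$, $\{X_i\}$, $\{B_i,X_i\}$ respectively.

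\emph{Parts (1) and (3), the map $\varphi$ and the commuting triangle.} First I would check that the formulas $\varphi^*(B_i)=A_i^\circ/A_i$ and $\varphi^*(X_i)=\prod_j A_j^{\varepsilon_{ij}}$ are compatible with mutation at a vertex $k$: substitute the $\mathcal{A}$-mutation rule $\mu_k^*A_k'=A_k^{-1}(\prod_{\varepsilon_{kj}>0}A_j^{\varepsilon_{kj}}+\prod_{\varepsilon_{kj}<0}A_j^{-\varepsilon_{kj}})$ into $A_k^\circ/A_k$ and into $\prod_j A_j^{\varepsilon_{ij}}$, and verify that the result matches $\mu_k^*B_k'$ and $\mu_k^*X_i'$ from Definition~\ref{def:mutation} after using $\widehat X$-type bookkeeping; this is the computation that forces the precise shape of the $\mathcal{D}$-mutation formula, so it is essentially a matter of matching the two sides of an already-known identity. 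The identity $\varphi^*\Omega_{\mathcal{D}}=p_-^*\Omega_{\mathcal{A}}-p_+^*\Omega_{\mathcal{A}}$ is then a direct substitution: writing $d\log B_i=d\log A_i^\circ-d\log A_i$ and $d\log X_i=\sum_j\varepsilon_{ij}d\log A_j$, plug into $\Omega_{\mathbf{i}}=-\tfrac12\sum\varepsilon_{ij}d\log B_i\wedge d\log B_j-\sum_i d\log B_i\wedge d\log X_i$ and collect terms; the cross terms between the two $\mathcal{A}$-factors cancel because $\varepsilon$ is skew-symmetric, leaving $\pm\tfrac12\sum\varepsilon_{ij}d\log A_i\wedge d\log A_j$ on each factor, which is exactly $\pm\Omega_{\mathcal{A}}$. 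The triangle in part (3) is immediate once one computes $\pi\circ\varphi$ on coordinates: $\pi^*(X_i\otimes 1)\circ\varphi^*=\prod_j A_j^{\varepsilon_{ij}}=p^*X_i$ and $\pi^*(1\otimes X_i)\circ\varphi^* = (\prod_j A_j^{\varepsilon_{ij}})\prod_j(A_j^\circ/A_j)^{\varepsilon_{ij}}=\prod_j(A_j^\circ)^{\varepsilon_{ij}}=p^*X_i^\circ$, so the composite is $p\times p$.

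\emph{Parts (2) and (3), the Poisson map $\pi$ and the square.} Here I would first check compatibility of $\pi^*(X_i\otimes1)=X_i$, $\pi^*(1\otimes X_i)=\widehat X_i:=X_i\prod_j B_j^{\varepsilon_{ij}}$ with mutation — the first component is tautological, and the second is precisely the statement that $\widehat X_i$ transforms like an $\mathcal{X}$-variable, which one verifies from the $\mathcal{D}$-mutation formulas for $X_i'$ and $B_i'$ (this is where the mixed bracket $\{X_i,B_j\}=\delta_{ij}X_iB_j$ and the commutativity of the $B$'s are used to see that $X_k$ and $\widehat X_k$ commute, as already noted after Definition~\ref{def:quantummutationdef} in the introduction). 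That $\pi$ is Poisson is the check $\{\pi^*f,\pi^*g\}=\pi^*\{f,g\}$ on the four generating pairs: $\{X_i,X_j\}_{\mathcal{D}}=\varepsilon_{ij}X_iX_j$ matches the first $\mathcal{X}$-factor; $\{\widehat X_i,\widehat X_j\}_{\mathcal{D}}=\varepsilon_{ij}\widehat X_i\widehat X_j$ follows by expanding the bracket using $\{X_i,B_j\}=\delta_{ij}X_iB_j$, $\{B_i,B_j\}=0$, and cancelling a symmetric combination against $\varepsilon$; and $\{X_i,\widehat X_j\}_{\mathcal{D}}=\varepsilon_{ij}X_i\widehat X_j - $ (a correction from the $B$-factors) must be shown to vanish, i.e. the two $\mathcal{X}$-factors Poisson-commute — this is the identity $\varepsilon_{ij}+\sum_\ell(\text{contribution of }B_\ell)=0$, again a skew-symmetry cancellation. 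For the square: $j^*$ is restriction to $\{B_j=1\}$, under which $\widehat X_i$ becomes $X_i$, so $\pi\circ j$ lands in the diagonal; that the image of $j$ is Lagrangian follows by restricting $\Omega_{\mathbf{i}}$ to $B_j\equiv1$, where every term containing a $d\log B_i$ dies, leaving $0$.

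\emph{Part (4), the involution.} I would verify $\iota^*(B_i)=B_i^{-1}$, $\iota^*(X_i)=\widehat X_i$ is an algebra automorphism (clear), is involutive (compute $\iota^*\iota^*(B_i)=B_i$ trivially, and $\iota^*\iota^*(X_i)=\iota^*(X_i\prod B_j^{\varepsilon_{ij}})=\widehat X_i\prod B_j^{-\varepsilon_{ij}}=X_i$), commutes with mutation (same style of check as above), and interchanges the two components of $\pi$ (immediate from $\pi^*(1\otimes X_i)=\widehat X_i=\iota^*(X_i)=\iota^*\pi^*(X_i\otimes1)$ and conversely, using $\widehat{\widehat X}_i = X_i$ which is the involutivity computation again).

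\emph{Main obstacle.} None of the individual computations is deep, but the genuinely delicate point — and the one I would spend the most care on — is the compatibility of the $\mathcal{D}$-coordinate formulas with mutation in parts (1), (2) and (4) \emph{simultaneously}, i.e. checking that $A_i^\circ/A_i$, $\prod_j A_j^{\varepsilon_{ij}}$, $X_i$, $\widehat X_i$, $B_i^{-1}$ all transform correctly under the single formula $\mu_k^*B_k'=\tfrac{X_k\prod_{\varepsilon_{kj}>0}B_j^{\varepsilon_{kj}}+\prod_{\varepsilon_{kj}<0}B_j^{-\varepsilon_{kj}}}{(1+X_k)B_k}$. The bookkeeping of signs in the exponents (the split of $j$ into $\varepsilon_{kj}>0$ and $\varepsilon_{kj}<0$) against the $[\,\cdot\,]_+$ in the $\mathcal{A}$- and $\mathcal{X}$-mutation rules is where errors are easy to make, and I would organize that computation around the quantities $\mathbb{B}_k^\pm$ introduced before Definition~\ref{def:doublemutation} to keep it manageable.
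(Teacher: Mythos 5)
The thesis does not prove this theorem: it is quoted verbatim from Fock and Goncharov (\cite{dilog}, Theorem~2.3), so there is no in-paper argument to compare against. Your plan --- verify each formula on one torus chart and check commutation with a single mutation --- is the standard and correct strategy, and your computations for parts (1), (3) and (4) (the cancellation of cross terms in $\varphi^*\Omega_{\mathcal{D}}$, the identity $\pi\circ\varphi=p\times p$, the isotropy of $\{B_j=1\}$, and the involutivity $\widehat{\widehat{X}}_i=X_i$) all go through as you describe.

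There is, however, one concrete error in part (2). With the bracket $\{B_i,B_j\}=0$, $\{X_i,B_j\}=\delta_{ij}X_iB_j$, $\{X_i,X_j\}=\varepsilon_{ij}X_iX_j$, a direct computation in logarithmic coordinates gives
\[
\{\log\widehat{X}_i,\log\widehat{X}_j\}=\varepsilon_{ij}+\varepsilon_{ji}-\varepsilon_{ij}=-\varepsilon_{ij},
\]
so $\{\widehat{X}_i,\widehat{X}_j\}=-\varepsilon_{ij}\widehat{X}_i\widehat{X}_j$, not $+\varepsilon_{ij}\widehat{X}_i\widehat{X}_j$ as you assert. (Your claim that the two factors Poisson-commute, $\{X_i,\widehat{X}_j\}=0$, is correct.) Consequently $\pi$ is a Poisson map only when the second factor of $\mathcal{X}\times\mathcal{X}$ carries the \emph{opposite} Poisson structure --- which is the Fock--Goncharov convention, reflecting that the second factor is attached to the orientation-reversed surface $S^\circ$. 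If you carry out the verification you propose, you will hit this sign; the fix is to state the target as $\mathcal{X}\times\mathcal{X}^{-}$ (or to note the convention), not to force the bracket to come out with a $+$. The rest of the proposal, including the identification of the mutation-compatibility checks as the delicate bookkeeping step organized around $\mathbb{B}_k^{\pm}$, is sound.
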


\section{Positivity and semifields}

Like any variety or scheme, one can consider the points of a cluster variety defined over various fields, such as the field of real or complex numbers. However, cluster varieties have an additional property that allows us to consider more exotic constructions. The mutation formulas of Definition~\ref{def:mutation} involve the operations of addition, multiplication, and division, but not subtraction. This positivity property leads us to consider the points of a cluster variety defined over the following kind of algebraic structure.

\begin{definition}
A \emph{semifield} $\mathbb{P}$ is a set equipped with binary operations $+$ and $\cdot$ such that $+$ is commutative and associative, $\mathbb{P}$ is an abelian group under $\cdot$, and the usual distributive law holds: $(a+b)\cdot c=a\cdot c+b\cdot c$ for all~$a$,~$b$,~$c\in\mathbb{P}$.
\end{definition}

\begin{example}
The following examples of semifields will play an important role in our discussion.
\begin{enumerate}
\item Let $\mathbb{P}=\mathbb{R}_{>0}$, the set of positive real numbers. Then $\mathbb{P}$ is a semifield under the usual operations of addition and multiplication.
\item Let $\mathbb{P}=\mathbb{Z}$, $\mathbb{Q}$, or~$\mathbb{R}$. Then $\mathbb{P}$ is a semifield with the operations $\oplus$ and $\otimes$ given by 
\[
a\oplus b=\max(a,b), \quad a\otimes b=a+b
\]
for all $a$, $b\in\mathbb{P}$. The semifields defined in this way are called \emph{tropical semifields} in the works of Fock and~Goncharov. They are denoted $\mathbb{Z}^t$, $\mathbb{Q}^t$, and~$\mathbb{R}^t$.
\item Let $\mathbb{P}=\mathbb{Q}_{\mathrm{sf}}(u_1,\dots,u_n)$ be the set of subtraction-free rational functions in the variables $u_1,\dots,u_n$. This set consists of all rational functions in $u_1,\dots,u_n$ that are expressible as a ratio of two polynomials with positive integral coefficients. It is a semifield whose operations are ordinary addition and multiplication of rational functions.
\item Let $\mathbb{P}=\mathrm{Trop}(y_1,\dots,y_n)$ be the free multiplicative abelian group generated by $y_1,\dots,y_n$ with the auxiliary addition defined by 
\[
\prod_{i=1}^n y_i^{a_i}\oplus \prod_{i=1}^n y_i^{b_i}=\prod_{i=1}^n y_i^{\min(a_i,b_i)}.
\]
This operation makes $\mathbb{P}$ into a semifield which Fomin and Zelevinsky call the \emph{tropical semifield} generated by $y_1,\dots,y_n$.
\end{enumerate}
\end{example}

Given a semifield $\mathbb{P}$ and a split algebraic torus $H$, we can form the set $H(\mathbb{P})=X_*(H)\otimes_\mathbb{Z}\mathbb{P}$. Here $X_*(H)$ is the group of cocharacters of $H$ and we are using the abelian group structure of $\mathbb{P}$. For example, we have the sets $\mathcal{A}_\mathbf{i}(\mathbb{P})$, $\mathcal{X}_\mathbf{i}(\mathbb{P})$, and~$\mathcal{D}_\mathbf{i}(\mathbb{P})$ of $\mathbb{P}$-points of the tori that we used above to construct cluster varieties. Note that the maps 
\[
\psi_{\mathbf{i},\mathbf{i}'}:\mathcal{A}_\mathbf{i}\rightarrow\mathcal{A}_{\mathbf{i}'}
\]
that we used to define the cluster $K_2$-variety induce maps
\[
\psi_{\mathbf{i},\mathbf{i}'*}:\mathcal{A}_\mathbf{i}(\mathbb{P})\rightarrow\mathcal{A}_{\mathbf{i}'}(\mathbb{P})
\]
and similarly for the cluster Poisson and symplectic varieties.

\begin{definition}
The set $\mathcal{A}(\mathbb{P})$ of \emph{$\mathbb{P}$-points} of the cluster $K_2$-variety is defined as the quotient 
\[
\mathcal{A}(\mathbb{P})=\coprod \mathcal{A}_\mathbf{i}(\mathbb{P})/(\text{identifications }\psi_{\mathbf{i},\mathbf{i}'*}).
\]
The sets $\mathcal{X}(\mathbb{P})$ and $\mathcal{D}(\mathbb{P})$ of $\mathbb{P}$-points of the cluster Poisson and symplectic varieties, respectively, are defined analogously.
\end{definition}

A large part of this thesis will be devoted to understanding the geometric objects that arise when one considers the points of a cluster variety defined over a semifield.

\chapter{Moduli spaces of local systems}
\label{ch:ModuliSpacesOfLocalSystems}

Cluster varieties arise naturally as moduli spaces of geometric structures on surfaces. In this chapter, we consider three different moduli spaces of local systems associated to a surface. We will see that these spaces are birationally equivalent to the three types of cluster varieties.

\section{Preliminaries}

\subsection{Decorated surfaces}

We begin by introducing some terminology related to surfaces.

\begin{definition}
A \emph{decorated surface} is a compact oriented surface with boundary together with a finite (possibly empty) collection of marked points on the boundary.
\end{definition}

Given a decorated surface $S$, we can shrink those boundary components without marked points to get a surface $S'$ with punctures and boundary where every boundary component contains at least one marked point.

\begin{definition}
Let $S$ be a decorated surface. An \emph{ideal triangulation} of $S$ is a triangulation of the surface $S'$ described in the preceding paragraph whose vertices are the marked points and the punctures.
\end{definition}

From now on, we will consider only decorated surfaces $S$ that admit an ideal triangulation. Note that in general the sides of a triangle in an ideal triangulation may not be distinct. In this case, the triangle is said to be \emph{self-folded}.

\begin{example}
The illustrations below show an ideal triangulation of a punctured torus and an ideal triangulation of disk with five marked points on its boundary.
\[
\xy 0;/r.60pc/: 
(-6.5,0)*{}="0";
(-3.5,0)*{}="1";
(3.5,0)*{}="2";
(-4,1)*{}="3";
(4,1)*{}="4";
(-9,0)*{}="A";
(9,2.25)*{}="B1";
(9,-2.25)*{}="B2";
(13,0)*{}="C";
"1";"2" **\crv{(-3,3) & (3,3)};
"3";"4" **\crv{(-2.75,-2.5) & (2.75,-2.5)};
"A";"B1" **\crv{(-8,9) & (4.5,7)};
"A";"B2" **\crv{(-8,-9) & (4.5,-7)};
"B1";"C" **\crv{(8.5,2.5) & (12,0)};
"B2";"C" **\crv{(8.5,-2.5) & (12,0)};
"2";"C" **\crv{(4.5,2) & (7,2)};
"2";"C" **\crv{~*=<2pt>{.} (4.5,-2) & (7,-2)};
"0";"C" **\crv{(-5,8) & (6,3)};
"0";"C" **\crv{(-5,-8) & (6,-3)};
"1";"C" **\crv{(-3.5,5) & (6,2.5)};
"1";"C" **\crv{~*=<2pt>{.} (-3.5,-5) & (6,-2.5)};
\endxy
\qquad\qquad
\xy /l1.5pc/:
{\xypolygon5"A"{~:{(-3,0):}}},
{"A5"\PATH~={**@{-}}'"A2"'"A4"},
\endxy
\]
\end{example}

Let $m\geq2$ be an integer and consider the triangle in $\mathbb{R}^3$ defined by the equation 
\[
x+y+z=m
\]
where $x,y,z\geq0$. We can subdivide this into smaller triangles by drawing the lines $x=p$, $y=p$, and~$z=p$ where $p$  is an integer, $0\leq p\leq m$. An \emph{$m$-triangulation} of the triangle is defined to be a triangulation isotopic to this one. Given an ideal triangulation $T$ of a surface, we can draw a homeomorphic image of an $m$-triangulation within each of its triangles, matching up the vertices on the edges of~$T$. This produces a new triangulation called the \emph{$m$-triangulation} of $T$. The illustration below shows a pair of triangles and the corresponding 3-triangulation.
\[
\xy /l1.5pc/:
{\xypolygon4"A"{~:{(2,2):}}},
"A1";"A3" **\dir{-}; 
\endxy
\quad
\quad
\xy /l1.5pc/:
{\xypolygon4"A"{~:{(2,2):}}},
"A1";"A3" **\dir{-}; 
(1,-0.95)*{}="W1"; 
(1,0.95)*{}="W2"; 
(0.05,-1.85)*{}="X1"; 
(-0.9,-0.95)*{}="X2"; 
(0.05,1.85)*{}="Y1"; 
(-0.9,0.95)*{}="Y2"; 
(1.95,-1.85)*{}="U1"; 
(2.9,-0.95)*{}="U2"; 
(1.95,1.85)*{}="V1"; 
(2.9,0.95)*{}="V2"; 
"X1";"Y1" **\dir{.}; 
"X2";"Y2" **\dir{.}; 
"X1";"W1" **\dir{.}; 
"X2";"W2" **\dir{.}; 
"Y2";"W1" **\dir{.}; 
"Y1";"W2" **\dir{.}; 
"U1";"V1" **\dir{.}; 
"U2";"V2" **\dir{.}; 
"U1";"W1" **\dir{.}; 
"U2";"W2" **\dir{.}; 
"V2";"W1" **\dir{.}; 
"V1";"W2" **\dir{.}; 
\endxy
\]

The orientation of the surface $S$ provides an orientation of each triangle in the ideal triangulation~$T$. If $e$ is any edge of the $m$-triangulation that does not lie along an edge of the original ideal triangulation~$T$, then $e$ is parallel to one of the edges of~$T$, and therefore it acquires an orientation.

Given an ideal triangulation $T$ of a decorated surface $S$ and an integer $m\geq2$, we define 
\[
I_m^T=\{\text{vertices of the $m$-triangulation of $T$}\} - \{\text{vertices of $T$}\}
\]
and 
\[
J_m^T=I_m^T-\{\text{vertices on the boundary of $S$}\}.
\]

\begin{definition}
Let $T$ be an ideal triangulation of~$S$ with no self-folded triangles. The \emph{exchange matrix} $\varepsilon_{ij}^T$ ($i,j\in I_m^T\times I_m^T$) is given by the formula 
\[
\varepsilon_{ij}^T=|\{\text{oriented edges from $j$ to $i$}\}| - |\{\text{oriented edges from $i$ to $j$}\}|.
\]
\end{definition}

Thus for a decorated surface with an ideal triangulation~$T$, we can define a seed with $I=I_m^T$, $J=J_m^T$, and $\varepsilon_{ij}=\varepsilon_{ij}^T$.

An important special case of this construction is the case $m=2$. In this case, the $m$-triangulation of $T$ has exactly one vertex in the interior of each edge of $T$, so the set $I$ can be identified with the set of all edges of $T$. An edge of the ideal triangulation~$T$ will be called \emph{external} if it lies along the boundary of $S$, connecting two marked points, and \emph{internal} otherwise. Thus for $m=2$, the set $J$ can be identified with the set of internal edges.

\begin{definition}
If $k$ is an internal edge of the ideal triangulation $T$, then a \emph{flip} at~$k$ is the transformation of $T$ that removes the edge~$k$ and replaces it by the unique different edge that, together with the remaining edges, forms a new ideal triangulation:
\[
\xy /l1.5pc/:
{\xypolygon4"A"{~:{(2,2):}}},
{"A1"\PATH~={**@{-}}'"A3"},
\endxy
\quad
\longleftrightarrow
\quad
\xy /l1.5pc/:
{\xypolygon4"A"{~:{(2,2):}}},
{"A2"\PATH~={**@{-}}'"A4"}
\endxy
\]
A flip will be called \emph{regular} if none of the triangles above is self-folded.
\end{definition}

\begin{proposition}
Any two isotopy classes of ideal triangulations on a surface are related by a sequence of flips.
\end{proposition}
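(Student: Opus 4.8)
The plan is to prove the connectedness of the flip graph by reducing to a known result, namely that any two ideal triangulations of a surface are related by a sequence of flips together with the action of the mapping class group, and then upgrading this to genuine sequences of flips by a standard collapsing argument. More concretely, I would follow the classical approach of Harer, Penner, and Hatcher for triangulated surfaces, adapted to the present setting of decorated surfaces with marked points on the boundary (and possibly punctures obtained by shrinking boundary components without marked points).

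First I would set up the combinatorics: fix the surface $S$, pass to the associated surface $S'$ with punctures and boundary as in the text, and observe that an ideal triangulation is determined by its underlying collection of isotopy classes of arcs. I would introduce the \emph{arc complex} whose vertices are isotopy classes of essential arcs between marked points/punctures and whose top-dimensional simplices are ideal triangulations, and recall (citing Penner or Hatcher) that this complex is connected — indeed contractible — in the relevant range of cases, which is precisely the statement that any two ideal triangulations are joined by a path through top-dimensional simplices, i.e.\ by flips. An equivalent and perhaps more self-contained route: given two ideal triangulations $T_0$ and $T_1$, put them in minimal position so that their arcs cross transversally and minimally; induct on the total number of crossings. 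If the number is zero the triangulations share an arc, and one cuts along that arc and applies the inductive hypothesis to the pieces. If the number is positive, one finds an arc $a$ of $T_1$ crossing $T_0$, looks at the first triangle of $T_0$ that $a$ enters, and performs a flip on $T_0$ inside the union of at most two adjacent triangles so as to strictly decrease the number of crossings of $a$ (or of the total configuration) with $T_0$; iterating drives the crossing number down to zero.

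The main obstacle I anticipate is the careful treatment of low-complexity and degenerate cases — surfaces where the mapping class group argument breaks down or where self-folded triangles are unavoidable — and in particular making the crossing-reduction step rigorous when the arc $a$ re-enters the same triangle or when a flip would produce a self-folded triangle. One must check that a finite sequence of flips always suffices even though intermediate triangulations may contain self-folded triangles (the statement as phrased does not require the flips to be regular). Handling the bigon/once-punctured-monogon exceptional configurations, and confirming that the surfaces under consideration here (those admitting an ideal triangulation, as assumed in the excerpt) all fall under the cases where connectivity is known, is where the real care is needed. I would organize the write-up so that the generic inductive step is dispatched quickly and the bulk of the argument, or a citation to Penner's work, covers these boundary cases.
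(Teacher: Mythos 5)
The paper does not prove this proposition at all: it is stated as a classical fact (due to Harer, Mosher, Penner, Hatcher), so there is no in-paper argument to compare against. Your outline is the standard route to this result, and both branches you describe (connectivity of the arc complex, or the crossing-number induction in which one flips an edge of $T_0$ to strictly decrease its intersection number with a fixed arc of $T_1$) are correct in substance; you also correctly identify where the real work lies, namely the existence of an intersection-reducing flip when the arc re-enters a triangle, the presence of self-folded triangles (whose interior edge is not flippable, though this does not obstruct connectivity since the proposition only asks for \emph{some} sequence of flips), and the low-complexity exceptional surfaces. As written this is a plan rather than a complete proof --- the reduction step and the base case with shared arcs are asserted rather than carried out --- but filling them in is routine and well documented in the sources you name, so for the purposes of this thesis a citation to Penner or Fomin--Shapiro--Thurston is the appropriate resolution, which is effectively what the paper does by omitting the proof.
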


\subsection{Flag varieties}

The local systems that we are interested in will be equipped with a decoration by flags.

\begin{definition}
A (complete) \emph{flag} in $\mathbb{C}^m$ is a sequence of subspaces 
\[
\{0\}=V_0\subseteq V_1\subseteq V_2 \subseteq\dots\subseteq V_m=\mathbb{C}^m
\]
where $\dim V_i=i$. We denote the set of all such flags by~$\mathcal{B}(\mathbb{C})$.
\end{definition}

If we are given a matrix $X\in GL_m(\mathbb{C})$, then we can write 
\[
X=(\mathbf{x}_1\quad\mathbf{x}_2\quad\dots\quad\mathbf{x}_m)
\]
where $\mathbf{x}_i$ is the $i$th column of~$X$. If we then define $V_i=\vsspan_{\mathbb{C}}\{\mathbf{x}_1,\dots,\mathbf{x}_i\}$, we obtain a flag $\{0\}=V_0\subseteq V_1\subseteq V_2 \subseteq\dots\subseteq V_m=\mathbb{C}^m$. Moreover, if we multiply on the right of~$X$ by a nondegenerate upper triangular matrix, we get 
\[
X
\left( \begin{array}{cccc}
b_{11} & b_{12} & \dots & b_{1m} \\
0 & b_{22} &\dots & b_{2m} \\
\vdots &  & \ddots & \vdots \\
0 & 0 & \dots & b_{mm} \end{array} \right)
=(b_{11}\mathbf{x}_1\quad b_{12}\mathbf{x}_1+b_{22}\mathbf{x}_2 \quad\dots\quad b_{1m}\mathbf{x}_1+\dots+b_{mm}\mathbf{x}_m),
\]
which determines the same flag. Hence there is a well defined map $GL_m(\mathbb{C})/B\rightarrow\mathcal{B}(\mathbb{C})$ where $B$ denotes the group of upper triangular matrices in~$GL_m(\mathbb{C})$. One can check that this map is a bijection, and hence we have an identification $\mathcal{B}(\mathbb{C})=GL_m(\mathbb{C})/B$. Since $B$ contains the center of $GL_m(\mathbb{C})$, we also have an identification $\mathcal{B}(\mathbb{C})=PGL_m(\mathbb{C})/B$ where, by abuse of notation, we write $B$ for the image of the group of upper triangular matrices in~$PGL_m(\mathbb{C})$. This discussion leads to the following definition.

\begin{definition}
The \emph{flag variety} for the group $PGL_m$ is defined as the quotient space $\mathcal{B}=PGL_m/B$ where $B$ is a Borel subgroup of $PGL_m$.
\end{definition}

\begin{example}
For $m=2$, the flag variety $\mathcal{B}=PGL_m/B$ is identified with the projective line $\mathbb{P}^1$.
\end{example}

In addition to the ordinary flag variety, we consider the following object.

\begin{definition}
The \emph{decorated flag variety} of $SL_m$ is isomorphic to the quotient $\mathcal{A}=SL_m/U$ where $U$ is a maximal unipotent subgroup of~$SL_m$.
\end{definition}

\section{Decorated twisted local systems}

We will now use the notion of a flag to define a certain space of $SL_m$-local systems. This construction is easiest to understand when $m$ is odd, so we will consider that case first. Let $S$ be a decorated surface, and let $\mathcal{L}$ be a $SL_m$-local system on~$S$, that is, a principal $SL_m$-bundle with flat connection. There is a natural left action of~$SL_m$ on $\mathcal{A}$, so we can form the associated bundle 
\[
\mathcal{L}_{\mathcal{A}}=\mathcal{L}\times_{SL_m}\mathcal{A}.
\]
Choose points $x_1,\dots,x_r$ on the boundary of~$S$, one on each segment bounded by adjacent marked points. Then the \emph{punctured boundary} of $S$ is defined as the subset of~$\partial S$ given by $\partial^\circ S=\partial S-\{x_1,\dots,x_r\}$. 

\begin{definition}
\label{def:Aspacesimplified}
Let $S$ be a decorated surface and $\mathcal{L}$ an $SL_m$-local system on $S$ for~$m$ odd. A \emph{decoration} for $\mathcal{L}$ is a flat section of the restriction $\mathcal{L}_{\mathcal{A}}|_{\partial^\circ S}$ to the punctured boundary. A \emph{decorated $SL_m$-local system} is a local system $\mathcal{L}$ together together with a decoration. The space of all decorated $SL_m$-local systems on $S$ is denoted~$\mathcal{A}_{SL_m,S}$.
\end{definition}

For general $m$, the definition of $\mathcal{A}_{SL_m,S}$ is more subtle. In the general case, we consider local systems, not on the surface~$S$, but on the punctured tangent bundle~$T'S$, that is, the tangent bundle of~$S$ with the zero section removed. For any point $y\in S$, we have $T_yS\cong\mathbb{R}^2$. Thus $T_y'S=T_yS-0$ is homotopy equivalent to a circle, and we have 
\[
\pi_1(T_y'S,x)\cong\mathbb{Z}
\]
for any choice of basepoint $x\in T_y'S$. Let $\sigma_S$ denote a generator of this fundamental group. It is well defined up to a sign. By abuse of notation, we will also write $\sigma_S$ for the image of this generator under the inclusion $\pi_1(T_y'S,x)\hookrightarrow \pi_1(T'S,x)$. The group $\pi_1(T'S,x)$ fits into a short exact sequence 
\[
\xymatrix{ 
1 \ar[r] & \mathbb{Z} \ar[r] & \pi_1(T'S,x) \ar[r] & \pi_1(S,y) \ar[r] & 1
}
\]
where the group $\mathbb{Z}$ is identified with the central subgroup of $\pi_1(T'S,x)$ generated by~$\sigma_S$.

\begin{definition}
A \emph{twisted $SL_m$-local system} $\mathcal{L}$ on $S$ is an $SL_m$-local system on the punctured tangent bundle $T'S$ with monodromy $(-1)^{m-1}e$ around $\sigma_S$ where $e$ is the identity in~$SL_m$. A \emph{decoration} for $\mathcal{L}$ is a locally constant section of the restriction of $\mathcal{L}_{\mathcal{A}}$ to the lifted punctured boundary. A \emph{decorated twisted $SL_m$-local system} is a twisted local system $\mathcal{L}$ together together with a decoration. The space of all decorated twisted $SL_m$-local systems on $S$ is denoted~$\mathcal{A}_{SL_m,S}$.
\end{definition}

Observe that the element $(-1)^{m-1}e$ has order two, and therefore this definition does not depend on the choice of generator $\sigma_S$. It reduces to Definition~\ref{def:Aspacesimplified} when $m$ is odd.

A twisted $SL_m$-local system on $S$ is thus the same thing as a homomorphism $\rho:\pi_1(T'S,x)\rightarrow SL_m$, considered up to conjugation, such that $\rho(\sigma_S)=(-1)^{m-1}e$. If we define $\bar{\pi}_1(T'S,x)$ to be the quotient of $\pi_1(T'S,x)$ by the central subgroup $2\mathbb{Z}$ generated by the element~$\sigma_S^2$, then we obtain a new central extension 
\[
\xymatrix{ 
1 \ar[r] & \mathbb{Z}/2\mathbb{Z} \ar[r] & \bar{\pi}_1(T'S,x) \ar[r] & \pi_1(S,y) \ar[r] & 1.
}
\]
Let $\bar{\sigma}_S$ be the order two element in the quotient group $\mathbb{Z}/2\mathbb{Z}$. Then we can think of a twisted local system as a representation $\rho:\bar{\pi}_1(T'S,x)\rightarrow SL_m$, considered modulo conjugation, with the property that $\rho(\bar{\sigma}_S)=(-1)^{m-1}e$.

In~\cite{IHES}, Fock and Goncharov show how to associate, to a general point of $\mathcal{A}_{SL_m,S}$ and ideal triangulation $T$ of $S$, a collection of coordinates $A_i$ indexed by the set $I=I_m^T$. These coordinates determine a point of the torus $\mathcal{A}_{\mathbf{i}}$ where $\mathbf{i}$ is the seed corresponding to the triangulation~$T$. In fact, Fock and Goncharov prove the following result:

\begin{proposition}[\cite{IHES}, Theorem~1.17]
\label{prop:Aisom}
There is a birational map 
\[
\mathcal{A}_{SL_m,S}\dashrightarrow\mathcal{A}_{\mathbf{i}}
\]
where $\mathbf{i}$ is the seed corresponding to an ideal triangulation $T$ of~$S$. Let $T$ and $T'$ be ideal triangulations of~$S$ related by a sequence of regular flips, and let $\mathbf{i}$ and $\mathbf{i}'$ be the corresponding seeds. Then the transition map $\mathcal{A}_{\mathbf{i}}\dashrightarrow\mathcal{A}_{\mathbf{i}'}$ is the cluster transformation used to glue these tori in the cluster $K_2$-variety.
\end{proposition}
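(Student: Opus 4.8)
The plan is to follow the strategy of Fock and Goncharov in \cite{IHES}: first write down an explicit coordinate system on $\mathcal{A}_{SL_m,S}$ attached to a triangulation, then invert it rationally by reconstructing the local system, and finally check the flip/mutation compatibility by localizing to a single quadrilateral. For \emph{Step~1}, fix an ideal triangulation $T$ with no self-folded triangles. A decorated twisted $SL_m$-local system restricts over a lift of each triangle to a flat bundle, and a flat section of $\mathcal{L}_\mathcal{A}$ near each corner gives, after parallel transport to a common point, a decorated flag, i.e.\ an element of $SL_m/U$ represented by a frame $(v_1,\dots,v_m)$ with $v_1\wedge\cdots\wedge v_m=1$. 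To a lattice point $(a,b,c)$ of the $m$-triangulation of a triangle with corners carrying flags $A,B,C$ in counterclockwise order, assign
\[
A_{(a,b,c)} \;=\; v^A_1\wedge\cdots\wedge v^A_a \;\wedge\; v^B_1\wedge\cdots\wedge v^B_b \;\wedge\; v^C_1\wedge\cdots\wedge v^C_c \;\in\; \Lambda^m\mathbb{C}^m\cong\mathbb{C}.
\]
When the lattice point lies on an edge of $T$ one of $a,b,c$ vanishes and the expression depends only on the two flags on that edge, so it is independent of which adjacent triangle computes it; this is exactly what makes $\{A_i\}_{i\in I_m^T}$ well defined. The twisting condition $\rho(\sigma_S)=(-1)^{m-1}e$ is precisely what is needed for the frames, and hence these wedge products, to be unambiguously defined on the punctured tangent bundle (and when $m$ is odd it collapses to the untwisted Definition~\ref{def:Aspacesimplified}). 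On the locus where all decorated flags around every triangle are in general position all $A_i$ are nonzero, giving a rational map $\mathcal{A}_{SL_m,S}\dashrightarrow\mathcal{A}_\mathbf{i}$ into the torus of the seed attached to $T$.

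For \emph{Step~2} I would build a rational inverse. The local input is that three decorated flags in general position are determined, up to the diagonal $SL_m$-action, by the numbers $A_{(a,b,c)}$ attached to a single triangle: normalize two of the flags to standard position and solve the resulting linear and Plücker equations for the third, the $A_i$ being exactly the data that kill the residual torus ambiguity in the frames. Gluing triangle by triangle along the edges of $T$ (the flags on a shared edge matching because the edge coordinates agree) produces a decorated flag at every vertex of a developing map, and transporting around loops of $\pi_1(T'S,x)$ reconstructs a representation $\rho$ with the correct central value $(-1)^{m-1}e$. One checks this is inverse to the map of Step~1 on a dense open subset, and a dimension count on the two sides confirms the map is dominant, hence birational.

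For \emph{Step~3} it suffices to treat a single regular flip at an internal edge $k$, since any two triangulations are joined by a sequence of flips and the coordinate maps transform functorially. A flip alters the $m$-triangulation only inside the quadrilateral $Q$ glued from the two triangles adjacent to $k$, so the whole question reduces to comparing the two families of $\mathcal{A}$-coordinates attached to a configuration of four decorated flags in general position. One shows that replacing one diagonal of $Q$ by the other is realized, at the level of seeds, by a specific sequence of mutations at the vertices of $I_m^T$ lying strictly inside $Q$ (a single mutation at $k$ when $m=2$), and that each such mutation changes the wedge-product coordinates by a three-term Plücker relation, which is exactly the cluster $\mathcal{A}$-mutation formula
\[
\mu_k^*A_k' \;=\; A_k^{-1}\Bigl(\prod_{j\mid\varepsilon_{kj}>0}A_j^{\varepsilon_{kj}} \;+\; \prod_{j\mid\varepsilon_{kj}<0}A_j^{-\varepsilon_{kj}}\Bigr)
\]
of Definition~\ref{def:mutation}. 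For $m=2$ this is literally the Ptolemy relation among the six minors of a quadrilateral; for general $m$ one passes through a chain of intermediate, non-triangulation polygonal dissections of $Q$, invoking at each stage a Plücker relation among configurations of three or four flags.

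The conceptual content of Steps~1 and~2 is careful but routine bookkeeping — the twisting and the exclusion of self-folded triangles remove all the potential pathologies. The real obstacle is Step~3 for $m>2$: pinning down the exact sequence of mutations that realizes a flip in the $m$-triangulated quadrilateral, and verifying that the accumulated change of wedge-product coordinates is precisely the composite of the cluster $\mathcal{A}$-transformations. Matching the combinatorics of the $m$-triangulation of $Q$ with the web of Plücker relations and with the mutation formulas is where the essential work lies.
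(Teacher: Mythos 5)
The thesis does not prove this proposition: it is quoted verbatim as Theorem~1.17 of~\cite{IHES}, so there is no in-paper argument to compare yours against. Your sketch is a faithful reconstruction of the Fock--Goncharov strategy — the wedge-product coordinates $A_{(a,b,c)}=v^A_1\wedge\cdots\wedge v^A_a\wedge v^B_1\wedge\cdots\wedge v^B_b\wedge v^C_1\wedge\cdots\wedge v^C_c$ at lattice points of the $m$-triangulation, the triangle-by-triangle reconstruction of the configuration of decorated flags to invert the coordinate map, and the localization of a flip to the quadrilateral $Q$ — and for $m=2$ your argument is essentially complete, since there a flip is a single mutation and the coordinate change is literally the Ptolemy relation among the six minors of four decorated flags. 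The one substantive omission, which you identify yourself, is Step~3 for $m>2$: exhibiting the explicit sequence of mutations (supported at the interior lattice points of $Q$, passing through non-triangulation dissections) that realizes a flip, and verifying that the accumulated wedge-product identities compose to exactly that cluster transformation, is the actual content of the Fock--Goncharov theorem rather than a routine verification. As written, your proposal is an accurate roadmap of the cited proof with its hardest segment deferred, which is an acceptable level of detail for a result the thesis itself imports without proof.
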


Since the torus $\mathcal{A}_{\mathbf{i}}$ is open in the cluster $K_2$-variety, we have the following immediate consequence.

\begin{theorem}
There is a natural birational equivalence of the moduli space $\mathcal{A}_{SL_m,S}$ and the cluster $K_2$-variety associated to the surface $S$.
\end{theorem}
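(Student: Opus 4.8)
The plan is to read the statement off Proposition~\ref{prop:Aisom} together with the defining property of the gluing. First I would fix an ideal triangulation $T$ of $S$ with no self-folded triangles and let $\mathbf{i}$ be the seed with $I=I_m^T$, $J=J_m^T$, $\varepsilon_{ij}=\varepsilon_{ij}^T$. Proposition~\ref{prop:Aisom} provides a birational map $\mathcal{A}_{SL_m,S}\dashrightarrow\mathcal{A}_{\mathbf{i}}$. On the other side, the cluster $K_2$-variety $\mathcal{A}=\mathcal{A}_{|\mathbf{i}|}$ is by definition obtained by gluing the tori $\mathcal{A}_{\mathbf{i}'}$ ($\mathbf{i}'$ mutation equivalent to $\mathbf{i}$) along the birational cluster transformations of Definition~\ref{def:mutation}; hence the canonical open immersion $\mathcal{A}_{\mathbf{i}}\hookrightarrow\mathcal{A}$ realizes $\mathcal{A}_{\mathbf{i}}$ as a Zariski-dense open subscheme of $\mathcal{A}$ and is therefore itself a birational equivalence. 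Composing the two maps yields a birational equivalence $\mathcal{A}_{SL_m,S}\dashrightarrow\mathcal{A}$, which is the assertion.

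The second point to address is that this equivalence should be \emph{natural}, i.e. independent of the auxiliary choice of $T$. Given two triangulations $T$ and $T'$, I would invoke the fact recorded above that they are connected by a sequence of flips, and the second half of Proposition~\ref{prop:Aisom}: along a regular flip the transition map $\mathcal{A}_{\mathbf{i}}\dashrightarrow\mathcal{A}_{\mathbf{i}'}$ induced by the moduli interpretation is exactly the cluster transformation used to glue $\mathcal{A}$. Consequently the composite $\mathcal{A}_{SL_m,S}\dashrightarrow\mathcal{A}_{\mathbf{i}}\hookrightarrow\mathcal{A}$ computed via $T$ agrees, as a map of function fields, with the one computed via $T'$, so the birational equivalence is canonical.

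The main obstacle I anticipate is the presence of non-regular flips, i.e. triangulations containing self-folded triangles: for these the exchange matrix $\varepsilon_{ij}^T$ is not defined by the stated formula and Proposition~\ref{prop:Aisom} is silent. To circumvent this I would argue that any two triangulations without self-folded triangles can be connected through a sequence of \emph{regular} flips (after suitable stabilization, e.g.\ by adding marked points), so that the birational identification can be transported among all such ``good'' triangulations using only the regular-flip statement, the self-folded triangulations being irrelevant to the birational type. Alternatively, one can note that Fock and Goncharov's coordinate functions $A_i$ are defined on a dense open subset of $\mathcal{A}_{SL_m,S}$ and that any two such coordinate systems are related by subtraction-free rational expressions, so that the function field they generate is intrinsic to $\mathcal{A}_{SL_m,S}$ and canonically identified with $\mathbb{Q}(\mathcal{A})$. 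The remaining bookkeeping — density and openness of $\mathcal{A}_{\mathbf{i}}$ in $\mathcal{A}$, and that a composition of birational maps is birational — is routine.
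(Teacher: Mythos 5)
Your argument is exactly the paper's: Proposition~\ref{prop:Aisom} gives a birational map $\mathcal{A}_{SL_m,S}\dashrightarrow\mathcal{A}_{\mathbf{i}}$, and since the torus $\mathcal{A}_{\mathbf{i}}$ is open (hence dense) in the cluster $K_2$-variety, the composite is a birational equivalence. The paper records this as an immediate consequence without the additional discussion of naturality and self-folded triangulations, but your core reasoning coincides with its proof.
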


\section{Framed local systems}

Let $\mathcal{L}$ be a $PGL_m$-local system. There is a natural left action of the group $PGL_m$ on the flag variety $\mathcal{B}$, and so we can form the associated bundle 
\[
\mathcal{L}_{\mathcal{B}}=\mathcal{L}\times_{PGL_m}\mathcal{B}.
\]
We will consider $PGL_m$-local systems on $S$ with some additional data.

\begin{definition}
Let $S$ be a decorated surface and $\mathcal{L}$ a $PGL_m$-local system on $S$. A \emph{framing} for $\mathcal{L}$ is a flat section of the restriction $\mathcal{L}_{\mathcal{B}}|_{\partial^\circ S}$ to the punctured boundary. A \emph{framed $PGL_m$-local system} is a local system $\mathcal{L}$ together together with a framing. The space of all framed $PGL_m$-local systems on $S$ is denoted~$\mathcal{X}_{PGL_m,S}$.
\end{definition}

There is an alternative way of thinking about framed local systems that will be useful in what follows. Let $S$ be a decorated surface as before, and let $S'$ be the punctured surface obtained by deleting all marked points and shrinking those boundary components without marked points. Fix a complete, finite-area hyperbolic metric on this surface~$S'$ so that $\partial S'$ is totally geodesic. Then its universal cover can be identified with a subset of the hyperbolic plane~$\mathbb{H}$ with totally geodesic boundary.

\begin{definition}
The punctures and deleted marked points on~$S'$ give rise to a set of points on the boundary~$\partial\mathbb{H}$. We call this the \emph{Farey set} and denote it by~$\mathcal{F}_{\infty}(S)$.
\end{definition}

The action of $\pi_1(S)$ by deck transformations on the universal cover gives rise to an action of $\pi_1(S)$ on this set~$\mathcal{F}_{\infty}(S)$. Recall that a $PGL_m(\mathbb{C})$-local system can be viewed as a homomorphism $\rho:\pi_1(S)\rightarrow PGL_m(\mathbb{C})$, modulo the action of $PGL_m(\mathbb{C})$ by conjugation. The following result gives a characterization of framed local systems emphasizing this monodromy representation.

\begin{proposition}[\cite{IHES}, Lemma~1.1]
\label{prop:Xconfig}
Consider a pair $(\rho,\psi)$ where $\rho:\pi_1(S)\rightarrow PGL_m(\mathbb{C})$ is a group homomorphism, and $\psi:\mathcal{F}_{\infty}(S)\rightarrow\mathcal{B}(\mathbb{C})$ is a $(\pi_1(S),\rho)$-equivariant map from the set described above into the flag variety. That is, for any $\gamma\in\pi_1(S)$, we have 
\[
\psi(\gamma c)=\rho(\gamma)\psi(c).
\]
The moduli space $\mathcal{X}_{PGL_m,S}(\mathbb{C})$ parametrizes such pairs modulo the action of~$PGL_m(\mathbb{C})$.
\end{proposition}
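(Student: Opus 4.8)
The plan is to translate both sides of the claimed bijection into the language of the universal cover $\widetilde S$ of $S$ and to match them directly. First I would invoke the holonomy correspondence, which the text has already recalled: fixing basepoints and writing $\pi_1(S)=\pi_1(S,y)$, every $PGL_m(\mathbb{C})$-local system $\mathcal{L}$ on $S$ arises, uniquely up to isomorphism, as the flat principal bundle with monodromy a homomorphism $\rho:\pi_1(S)\to PGL_m(\mathbb{C})$ that is well defined up to conjugation; explicitly $\mathcal{L}=\widetilde S\times_{\pi_1(S)}PGL_m(\mathbb{C})$, and hence $\mathcal{L}_{\mathcal{B}}=\widetilde S\times_{\pi_1(S)}\mathcal{B}(\mathbb{C})$. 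For any $\pi_1(S)$-invariant subset $A\subseteq\widetilde S$ with image $\overline A\subseteq S$, a section of $\mathcal{L}_{\mathcal{B}}$ over $\overline A$ is the same datum as a $\pi_1(S)$-equivariant map $A\to\mathcal{B}(\mathbb{C})$, and such a section is flat (horizontal for the canonical flat connection on $\mathcal{L}_{\mathcal{B}}$) if and only if the corresponding map is locally constant, i.e. constant on each connected component of $A$. Thus a flat section of $\mathcal{L}_{\mathcal{B}}|_{\overline A}$ is precisely a $\pi_1(S)$-equivariant map $\pi_0(A)\to\mathcal{B}(\mathbb{C})$ from the set of connected components.

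Applying this with $\overline A=\partial^\circ S$, so that $A$ is the full preimage of the punctured boundary in $\widetilde S$, reduces the proposition to a purely topological assertion: there is a natural $\pi_1(S)$-equivariant bijection between $\pi_0(A)$ and the Farey set $\mathcal{F}_\infty(S)$. This is where the hyperbolic geometry of $S'$ enters. Using the complete, finite-area hyperbolic metric on $S'$ with totally geodesic boundary and the embedding $\widetilde{S'}\subseteq\mathbb{H}$, I would show that each connected component of $A$ converges to exactly one ideal point of $\partial\mathbb{H}$, that this point lies in $\mathcal{F}_\infty(S)$, and that distinct components converge to distinct points which together exhaust $\mathcal{F}_\infty(S)$; compatibility with deck transformations is then automatic. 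Concretely, a component of $A$ adjacent to a deleted boundary marked point is carried to the ideal endpoint of the corresponding lift of a boundary geodesic, while a component of $A$ equal to a whole boundary circle of $S$ carrying no marked points is carried to the parabolic fixed point of the corresponding puncture of $S'$; one checks that the stabilizers match (trivial in the first case, the cyclic boundary subgroup in the second), so that the two $\pi_1(S)$-sets agree orbit by orbit. Granting this bijection, a framing of $\mathcal{L}$ — that is, a flat section of $\mathcal{L}_{\mathcal{B}}|_{\partial^\circ S}$ — is exactly the datum of a $(\pi_1(S),\rho)$-equivariant map $\psi:\mathcal{F}_\infty(S)\to\mathcal{B}(\mathbb{C})$.

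It remains to pass to isomorphism classes. An isomorphism of framed local systems $(\mathcal{L},s)\to(\mathcal{L}',s')$ over $S$ is an isomorphism of local systems carrying $s$ to $s'$; translated through the dictionary above it is precisely an element $g\in PGL_m(\mathbb{C})$ with $\rho'=g\rho g^{-1}$ and $\psi'=g\circ\psi$. Hence the groupoid of framed $PGL_m(\mathbb{C})$-local systems on $S$ is equivalent to the action groupoid of $PGL_m(\mathbb{C})$ acting by simultaneous conjugation on the set of pairs $(\rho,\psi)$ as in the statement, and its set of isomorphism classes — the set of $\mathbb{C}$-points of $\mathcal{X}_{PGL_m,S}$ — is the quotient of that set of pairs by $PGL_m(\mathbb{C})$, which is exactly the claim. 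I expect the main obstacle to be the geometric identification $\pi_0(A)\cong\mathcal{F}_\infty(S)$: one must treat uniformly the components coming from deleted boundary marked points and those coming from shrunk boundary circles, verify the matching of stabilizers, and confirm that no component limits to two distinct ideal points while every point of $\mathcal{F}_\infty(S)$ is attained — the role of passing from $\partial S$ to the punctured boundary $\partial^\circ S$ being precisely to make these components contractible (or controlled circles), so that the flat-section data degenerates to the discrete datum $\psi$. Everything else is a formal manipulation of the associated-bundle and equivariance definitions.
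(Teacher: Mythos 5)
The paper states this proposition without proof (it is quoted from Fock--Goncharov), and your argument is the standard and correct one: the holonomy/associated-bundle description reduces a flat section over $\partial^\circ S$ to a $\pi_1(S)$-equivariant map on $\pi_0$ of its preimage $A\subseteq\widetilde S$, and the hyperbolic identification of that $\pi_0$ with $\mathcal{F}_\infty(S)$ --- free orbits for the arc components containing marked points, orbits with cyclic peripheral stabilizer for unmarked boundary circles --- completes the translation before quotienting by $PGL_m(\mathbb{C})$. The only slip is verbal: a component of $A$ lying over an unmarked boundary circle is a line in $\widetilde S$ stabilized by the peripheral cyclic subgroup rather than ``a whole boundary circle,'' but your stabilizer matching shows this is what you intend.
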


In~\cite{IHES}, Fock and Goncharov show how to associate, to a general point of $\mathcal{X}_{PGL_m,S}$ and ideal triangulation $T$ of $S$, a collection of coordinates $X_j$ indexed by the set $J=J_m^T$ described above. These coordinates determine a point of the torus $\mathcal{X}_{\mathbf{i}}$ where $\mathbf{i}$ is the seed corresponding to the triangulation~$T$. In fact, Fock and Goncharov prove the following result:

\begin{proposition}[\cite{IHES}, Theorem~1.17]
\label{prop:Xisom}
There is a birational map 
\[
\mathcal{X}_{PGL_m,S}\dashrightarrow\mathcal{X}_{\mathbf{i}}
\]
where $\mathbf{i}$ is the seed corresponding to an ideal triangulation $T$ of~$S$. Let $T$ and $T'$ be ideal triangulations of~$S$ related by a sequence of regular flips, and let $\mathbf{i}$ and $\mathbf{i}'$ be the corresponding seeds. Then the transition map $\mathcal{X}_{\mathbf{i}}\dashrightarrow\mathcal{X}_{\mathbf{i}'}$ is the cluster transformation used to glue these tori in the cluster Poisson variety.
\end{proposition}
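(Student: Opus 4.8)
The plan is to follow Fock and Goncharov, mirroring the argument behind Proposition~\ref{prop:Aisom}. First I would attach coordinates to a framed local system. Using the description of Proposition~\ref{prop:Xconfig}, pass to the universal cover, where the framing assigns a flag in $\mathcal{B}(\mathbb{C})$ to every vertex of the lifted ideal triangulation, i.e.\ to every point of $\mathcal{F}_\infty(S)$. Restrict to the dense open locus of framed local systems that are \emph{generic with respect to $T$}: the flags at the vertices of each triangle of $T$, and at the four vertices of each quadrilateral containing an internal edge, are in general position. For a vertex $j\in J_m^T$ in the interior of a triangle $t$ of $T$, let $X_j$ be the corresponding triple ratio of the three flags at the vertices of $t$; for $j$ on an internal edge $e$ shared by triangles $t$ and $t'$, let $X_j$ be the corresponding double ratio built from the four flags at the vertices of $t\cup t'$. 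These are $PGL_m$-invariant rational functions of the configuration and are independent of the lift, so together they define a rational map $\mathcal{X}_{PGL_m,S}\dashrightarrow\mathcal{X}_{\mathbf{i}}$.

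Next I would show this map is birational by exhibiting a rational inverse. Cutting $S$ along the internal edges of $T$ splits it into triangles; since each triangle is simply connected, a framed local system on it is just a configuration of three flags modulo $PGL_m$, and on the generic locus such a configuration is recovered rationally from its triple ratios (the standard parametrization of $\Conf_3(\mathcal{B})$, whose dimension equals the number of interior vertices of the $m$-triangulated triangle). The double ratios along internal edges then pin down the gluing matrices identifying flags across each edge, so the whole framed local system — the flags on the universal cover and the monodromy homomorphism up to conjugacy — is reconstructed rationally from the $X_j$. A dimension count gives $\dim\mathcal{X}_{PGL_m,S}=|J_m^T|=\dim\mathcal{X}_{\mathbf{i}}$, so the two maps are mutually inverse on dense open sets.

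Finally I would verify the flip compatibility. If $T$ and $T'$ differ by a regular flip at an internal edge $k$, they agree outside one quadrilateral $Q$, so every coordinate $X_j$ with $j$ outside the $m$-triangulation of $Q$ is literally unchanged. Inside $Q$ one is comparing the coordinates from the two $m$-triangulations of a fixed configuration of four flags. For $m=2$ this is the classical cross-ratio computation, giving $X_k\mapsto X_k^{-1}$ and the neighboring coordinates multiplied by $(1+X_k^{-\sgn(\varepsilon_{ik})})^{-\varepsilon_{ik}}$; for general $m$ one reduces to this case, together with the $m=3$ case for the triple ratios, and checks that the transformation is exactly the $\mathcal{X}$-cluster transformation of Definition~\ref{def:mutation}. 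Since the tori $\mathcal{X}_{\mathbf{i}}$ are glued into the cluster Poisson variety by precisely these maps, the proposition follows.

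The main obstacle is the last step for general $m$: the explicit transformation law of triple ratios and edge double ratios under re-triangulating $Q$, and the supporting fact that generic triples of flags are faithfully coordinatized by their triple ratios. I would either reproduce the Fock--Goncharov configuration computation directly or deduce it from the $m=2$ and $m=3$ cases via the functoriality of these invariants under passing to exterior powers.
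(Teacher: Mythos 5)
The paper offers no proof of this proposition: it is quoted verbatim from Fock--Goncharov (\cite{IHES}, Theorem~1.17), so there is nothing internal to compare your argument against. Your sketch does reproduce the broad strategy of the cited proof: triple ratios at vertices interior to triangles and double ratios at vertices on internal edges define the map; generic configurations of three flags are reconstructed rationally from their triple ratios, and the edge invariants recover the gluing, which gives the rational inverse; and the flip computation reduces to configurations of four flags in the quadrilateral.

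Two cautions on where the substance actually lies. First, the reconstruction step rests on the nontrivial parametrization theorem for $\Conf_3(\mathcal{B})$ (that the triple ratios give a birational isomorphism of the configuration space of triples of flags in generic position with a torus); this is a real theorem of \cite{IHES}, not a routine dimension count, and your appeal to ``mutually inverse on dense open sets because the dimensions agree'' should be replaced by an actual verification that the two compositions are the identity. Second, and more importantly, for $m>2$ a single flip of the ideal triangulation does \emph{not} correspond to a single seed mutation but to a composition of $\binom{m+1}{3}$ mutations (one for $m=2$, four for $m=3$, etc.), and exhibiting this decomposition --- together with the matching sequence of elementary modifications of the $m$-triangulation of the quadrilateral --- is the main technical work in the Fock--Goncharov proof. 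Your phrase ``checks that the transformation is exactly the $\mathcal{X}$-cluster transformation of Definition~\ref{def:mutation}'' and the proposed reduction ``to the $m=2$ and $m=3$ cases via exterior powers'' gloss over precisely this point; as written they would only establish the case $m=2$. The proposition as stated is still consistent with this, since the paper's notion of cluster transformation allows compositions of mutations, but a complete proof must produce the composition explicitly.
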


As an immediate consequence, we have the following.

\begin{theorem}
There is a natural birational equivalence of the moduli space $\mathcal{X}_{PGL_m,S}$ and the cluster Poisson variety associated to the surface $S$.
\end{theorem}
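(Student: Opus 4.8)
The plan is to obtain the statement as an immediate corollary of Proposition~\ref{prop:Xisom}, in exact parallel with the case of $\mathcal{A}_{SL_m,S}$ treated just above. Fix an ideal triangulation $T$ of $S$ having no self-folded triangles, and let $\mathbf{i}$ be the associated seed. By the very construction of the cluster Poisson variety $\mathcal{X}=\mathcal{X}_{|\mathbf{i}|}$ as a gluing of the tori $\mathcal{X}_{\mathbf{i}'}$ over the mutation class $|\mathbf{i}|$, each of these tori --- in particular $\mathcal{X}_{\mathbf{i}}$ --- is a dense open subscheme of $\mathcal{X}$. Composing the birational map $\mathcal{X}_{PGL_m,S}\dashrightarrow\mathcal{X}_{\mathbf{i}}$ of Proposition~\ref{prop:Xisom} with this open immersion yields a birational map $\Phi_T\colon\mathcal{X}_{PGL_m,S}\dashrightarrow\mathcal{X}$.

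The only thing to check is that $\Phi_T$ is independent of the choice of triangulation, so that it is canonically attached to $S$. Suppose $T'$ is obtained from $T$ by a regular flip at an internal edge $k$, with seed $\mathbf{i}'=\mu_k(\mathbf{i})$. The second assertion of Proposition~\ref{prop:Xisom} says precisely that the transition map $\mathcal{X}_{\mathbf{i}}\dashrightarrow\mathcal{X}_{\mathbf{i}'}$ intertwining the two coordinate systems on $\mathcal{X}_{PGL_m,S}$ coincides with the cluster transformation used to glue these tori inside $\mathcal{X}$; hence $\Phi_T$ and $\Phi_{T'}$ agree as rational maps to $\mathcal{X}$. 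Since any two isotopy classes of ideal triangulations on $S$ are connected by a sequence of flips, $\Phi_T$ does not depend on $T$, and we denote the resulting map by $\Phi$. The same argument shows $\Phi$ is equivariant for the mapping class group of $S$, which is what is meant by calling the equivalence ``natural''.

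That $\Phi$ is a birational equivalence is then automatic: over the dense open locus of $\mathcal{X}_{PGL_m,S}$ on which the coordinates $X_j$ ($j\in J_m^T$) are regular and nonvanishing, $\Phi$ restricts to an isomorphism onto $\mathcal{X}_{\mathbf{i}}$, which is dense and open in $\mathcal{X}$. I do not anticipate a genuine obstacle here: the content of the theorem is entirely contained in Proposition~\ref{prop:Xisom}, and the passage from ``birationally equivalent to the torus $\mathcal{X}_{\mathbf{i}}$'' to ``birationally equivalent to the cluster variety $\mathcal{X}$'' rests only on the openness of $\mathcal{X}_{\mathbf{i}}$ in $\mathcal{X}$. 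The single point deserving a sentence of care is the treatment of triangulations containing self-folded triangles, which do not directly define seeds; these are circumvented because the mutations relevant to the gluing are already realized by regular flips between suitable triangulations, or else are handled by the amalgamation formalism of~\cite{IHES}, so the argument is unaffected.
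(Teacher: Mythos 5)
Your argument is correct and is exactly the paper's: the theorem is stated there as an immediate consequence of Proposition~\ref{prop:Xisom}, using only that the torus $\mathcal{X}_{\mathbf{i}}$ is open (hence dense) in the cluster Poisson variety, with the compatibility of transition maps under flips supplied by the second assertion of that proposition. Your additional remarks on independence of the triangulation and on self-folded triangles are sensible elaborations of the same route.
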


\section{The symplectic double moduli space}

There is a similar moduli space corresponding to the symplectic double. In this case, the moduli space essentially parametrizes geometric structures on a doubled surface.

\begin{definition}
Suppose that $S$ is a decorated surface and $S^\circ$ is the same surface equipped with the opposite orientation. The \emph{double} $S_{\mathcal{D}}$ is obtained by gluing $S$ and~$S^\circ$ along corresponding boundary components and deleting the image of each marked point in the resulting surface.
\end{definition}

Denote by $\widetilde{\Loc}_{SL_m,S}$ the space of twisted $SL_m$-local systems on the surface $S$. Suppose we are given a representation $\rho:\bar{\pi}_1(T'S)\rightarrow SL_m$ corresponding to a point in this space and a homomorphism $\sigma:\pi_1(S)\rightarrow Z(SL_m)$ into the center of $SL_m$. Composing the latter map with the projection $\bar{\pi}_1(T'S)\rightarrow\pi_1(S)$, we get a homomorphism $\tilde{\sigma}:\bar{\pi}_1(T'S)\rightarrow SL_m$, which we can multiply pointwise with $\rho$ to get a new point $\sigma\cdot\rho\in\widetilde{\Loc}_{SL_m,S}$. Thus we have defined an action of the group 
\[
\Delta_{SL_m}=\Hom(\pi_1(S),Z(SL_m))
\]
on the space $\widetilde{\Loc}_{SL_m,S}$. The natural homeomorphism $S\rightarrow S^\circ$ induces an isomorphism $\pi_1(S)\cong\pi_1(S^\circ)$, and therefore we have a diagonal action of $\Delta_{SL_m}$ on the product space 
\[
\widetilde{\Loc}_{SL_m,S}\times\widetilde{\Loc}_{SL_m,S^\circ}.
\]

Let $\mathcal{L}$ be a twisted $SL_m$-local system on a decorated surface $S$. If $\beta$ is a framing for $\mathcal{L}$, then we define an $H$-local subsystem $\mathcal{F}_{\beta}$ over the punctured boundary of~$S$, where $H$ is the Cartan group for~$SL_m$. It is the local subsystem of $\mathcal{L}_{\mathcal{A}}$ obtained by taking the preimage of the section $\beta$ under the natural map $\mathcal{L}_{\mathcal{A}}\rightarrow\mathcal{L}_{\mathcal{B}}$.

We now give the definition of the symplectic double moduli space. As in~\cite{double}, we will assume that the surface~$S$ has no marked points on its boundary. Later we will explain how this construction can be extended to general decorated surfaces.

\begin{definition}[\cite{double}, Definition~2.3]
\label{def:Dspace}
Let $S$ be a decorated surface with no marked points. The moduli space $\mathcal{D}_{PGL_m,S}$ parametrizes the data $(\mathcal{L},\mathcal{L}^\circ,\beta,\beta^\circ,\alpha)$ where 
\begin{enumerate}
\item The pair $(\mathcal{L},\mathcal{L}^\circ)$ is an element of 
\[
\left(\widetilde{\Loc}_{SL_m,S}\times\widetilde{\Loc}_{SL_m,S^\circ}\right)/\Delta_{SL_m}.
\]

\item $\beta$ and $\beta^\circ$ are framings for the $PGL_m$-local systems on $S$ and $S^\circ$ corresponding to $\mathcal{L}$ and $\mathcal{L}^\circ$, respectively.

\item $\alpha$ is an $H$-equivariant map $\mathcal{F}_{\beta}\rightarrow\mathcal{F}_{\beta^\circ}$ of local subsystems. (So in particular these local subsystems are isomorphic.)
\end{enumerate}
\end{definition}

Definition~\ref{def:Dspace} describes the moduli space $\mathcal{D}_{PGL_m,S}$ when there are no marked points on the boundary of~$S$. To define this space for a general decorated surface, we must modify the definition slightly. As before, we consider tuples $(\mathcal{L},\mathcal{L}^\circ,\beta,\beta^\circ,\alpha)$. If we choose a decorated flag in the fiber of $\mathcal{F}_\beta$ over each marked point on $\partial S$, then the map $\alpha$ gives a corresponding choice of decorated flags in the fibers of $\mathcal{F}_{\beta^\circ}$. For a generic choice of flags, Fock and Goncharov's construction provides coordinates $A_i$ and~$A_i^\circ$~($i\in I_m^T-J_m^T$) corresponding to these decorations of~$\mathcal{L}$ and~$\mathcal{L}^\circ$, respectively. They are independent of the ideal triangulation~$T$.

\begin{definition}
\label{def:Dspacemarked}
For any decorated surface $S$, the space $\mathcal{D}_{PGL_m,S}$ parametrizes the data $(\mathcal{L},\mathcal{L}^\circ,\beta,\beta^\circ,\alpha)$ as above where we require $A_i=A_i^\circ$ for $i\in I_m^T-J_m^T$ and any choice of decorated flags.
\end{definition}

To construct coordinates on the moduli space $\mathcal{D}_{PGL_m,S}$, fix an ideal triangulation~$T$ of the surface~$S$ and a general point $\mu\in\mathcal{D}_{PGL_m,S}$. This point $\mu$ determines a framed local system on~$S$. By Proposition~\ref{prop:Xisom}, there is a collection of $X_j$, indexed by the set $J=J_m^T$, which are coordinates of this framed local system.

In addition to these coordinates $X_j$ ($j\in J$), we will define a collection of coordinates $B_j$~($j\in J$) as follows. Let $t$ be any triangle in the ideal triangulation $T$. Then the data defining the point $\mu$ allow us to assign, to each vertex $p$ of this triangle, an invariant flag~$b_p$. For each vertex $p$, we can then choose a decorated flag $a_p$ in the fiber of the projection $\mathcal{A}\rightarrow\mathcal{B}$ over the flag $b_p$. By parallel transporting these decorated flags to a common point in the interior of $t$, we get a well defined point in the configuration space 
\[
\Conf_3(\mathcal{A})=SL_m\backslash\mathcal{A}^3
\]
of triples of affine flags. On the other hand, consider the ideal triangulation $T^\circ$ of~$S^\circ$ corresponding to $T$, and let $t^\circ$ be the triangle in this triangulation corresponding to~$t$. We have associated a decorated flag $a_p$ to each vertex $p$, and the choice of $m$ allows us to associate a decorated flag $a_p^\circ$ to the corresponding vertex of $t^\circ$. We can once again transport these decorated flags to a common point in the interior of $t^\circ$. In this way, we obtain a second point of $\Conf_3(\mathcal{A})$. This construction produces a well defined point in 
\[
\left(\Conf_3(\mathcal{A})\times\Conf_3(\mathcal{A})\right)/H^3
\]
where $H$ denotes the Cartan group of $SL_m$.

By the results of~\cite{IHES}, the first factor $\Conf_3(\mathcal{A})$ is identified with the space of decorated twisted local systems on $t$, and by Proposition~\ref{prop:Aisom}, there is a set of coordinates $A_i$ on this space, parametrized by a set of vertices of the $m$-triangulation of~$t$. Similarly, there are coordinates $A_i^\circ$ on the second factor. For each index $i$, we define 
\[
B_i=\frac{A_i^\circ}{A_i}.
\]
One can show that this ratio is independent of all choices in the construction. Thus we have associated a well defined $B_j$ to each index $j$ in the set $J=J_m^T$.

\begin{proposition}[\cite{double}]
This construction defines a rational map 
\[
\mathcal{D}_{PGL_m,S}\dashrightarrow\mathcal{D}_{\mathbf{i}}
\]
where $\mathbf{i}$ is the seed corresponding to an ideal triangulation $T$ of~$S$. Let $T$ and $T'$ be ideal triangulations of~$S$ related by a sequence of regular flips, and let $\mathbf{i}$ and $\mathbf{i}'$ be the corresponding seeds. Then the transition map $\mathcal{D}_{\mathbf{i}}\dashrightarrow\mathcal{D}_{\mathbf{i}'}$ is the cluster transformation used to glue these tori in the symplectic double.
\end{proposition}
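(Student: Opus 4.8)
The plan is to prove the two assertions separately: first, that the recipe above assigns to a generic point of $\mathcal{D}_{PGL_m,S}$ a well-defined point of the torus $\mathcal{D}_{\mathbf{i}}=(\mathbb{G}_m)^{2|J|}$ depending rationally on the moduli data; and second, that a regular flip $T\leadsto T'$ transforms these coordinates by the cluster transformation of Definition~\ref{def:mutation}. For the first assertion, the coordinates $X_j$ $(j\in J)$ are those of the underlying framed $PGL_m$-local system on $S$, and so are well-defined and rational in the moduli data by Proposition~\ref{prop:Xisom}; it therefore suffices to check that each $B_j=A_j^\circ/A_j$ is independent of the choices made in its construction. There are three such choices: (i) the Cartan lift $a_p$ of the invariant flag $b_p$ at each vertex $p$; (ii) the interior point of a triangle to which the decorated flags are parallel-transported; and (iii), for $j$ lying on an internal edge, the choice of one of its two adjacent triangles. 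For (i), replacing $a_p$ by $h_p\cdot a_p$ with $h_p$ in the Cartan group $H$ rescales each coordinate $A_i$ of the adjacent triangle by a monomial character of $h_p$ determined by the position of $i$; since $\alpha$ is a morphism of $H^k$-local systems, the matching replacement $a_p^\circ\mapsto h_p\cdot a_p^\circ$ rescales $A_i^\circ$ by the \emph{same} character, so $B_i$ is unchanged and descends to $(\Conf_3(\mathcal{A})\times\Conf_3(\mathcal{A}))/H^3$. For (ii), a triangle is contractible, so parallel transport between two interior points is canonical. For (iii), the coordinate attached to an edge vertex of a decorated twisted local system depends only on the two decorated flags at the ends of that edge, transported to a point near it, and $t\cup t'$ is contractible, so the value is the same computed from either triangle. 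Collecting the $X_j$ and $B_j$ for $j\in J$ thus yields a well-defined point of $\mathcal{D}_{\mathbf{i}}$, and since every step is algebraic and the $A_j,A_j^\circ$ are invertible on a dense open, the construction is a rational map $\mathcal{D}_{PGL_m,S}\dashrightarrow\mathcal{D}_{\mathbf{i}}$.

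For the second assertion, fix an internal edge $k$ with regular flip $T'=\mu_k(T)$. The $X_j$ transform by the $\mathcal{X}$-cluster transformation by Proposition~\ref{prop:Xisom}. By construction, for any system of Cartan lifts one has $B_i=A_i^\circ/A_i$ and, for the underlying framed local system, $X_i=\prod_j A_j^{\varepsilon_{ij}}$ — these are exactly the formulas defining $\varphi^*B_i$ and $p^*X_i$ in Theorem~\ref{thm:doubleproperties} — and by Proposition~\ref{prop:Aisom} (applied to $S$ and to $S^\circ$) the flip acts on the $A_i$ and, independently, on the $A_i^\circ$ by the $\mathcal{A}$-cluster transformation of Definition~\ref{def:mutation}. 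Writing $B_k'=(A_k^\circ)'/A_k'$, substituting $A_k'=A_k^{-1}(\prod_{j|\varepsilon_{kj}>0}A_j^{\varepsilon_{kj}}+\prod_{j|\varepsilon_{kj}<0}A_j^{-\varepsilon_{kj}})$ and the analogous expression for $(A_k^\circ)'$, dividing numerator and denominator by $\prod_{j|\varepsilon_{kj}>0}A_j^{\varepsilon_{kj}}$, and rewriting the outcome with $X_k=\prod_j A_j^{\varepsilon_{kj}}$ and $B_j=A_j^\circ/A_j$ $(j\neq k)$, gives
\[
\mu_k^*B_k'=\frac{X_k\prod_{j|\varepsilon_{kj}>0}B_j^{\varepsilon_{kj}}+\prod_{j|\varepsilon_{kj}<0}B_j^{-\varepsilon_{kj}}}{(1+X_k)B_k},
\]
together with $\mu_k^*B_i'=B_i$ for $i\neq k$, which is precisely the $\mathcal{D}$-cluster transformation. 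Conceptually this is the commutativity of $\varphi$ and $p$ with mutation that is already contained in Theorem~\ref{thm:doubleproperties}.

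The main obstacle is the well-definedness, and specifically pinning down how the gluing datum $\alpha$ does its job: the Cartan lifts $a_p$ are defined only up to $H$ at each vertex, and it is precisely because $\alpha$ is an isomorphism of $H^k$-local systems — so that a lift on the $S$-side rigidly determines the corresponding lift on the $S^\circ$-side along each boundary circle — that the numerator $A_j^\circ$ and denominator $A_j$ of each $B_j$ rescale in lockstep as one passes between triangles and around the glued surface. Once this bookkeeping is settled, the remaining content is the short subtraction-free manipulation displayed above, which reduces the flip-compatibility of the $B_j$ to the already-established flip-compatibility of the $\mathcal{A}$- and $\mathcal{X}$-coordinates under regular flips.
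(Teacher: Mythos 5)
Your argument is correct and is essentially the intended one: the paper itself gives no proof of this proposition (it is quoted from Fock--Goncharov's symplectic double paper), and the natural argument is exactly what you write --- well-definedness of $B_j=A_j^\circ/A_j$ via the $H$-equivariance of the gluing datum $\alpha$ (i.e.\ descent to $(\Conf_3(\mathcal{A})\times\Conf_3(\mathcal{A}))/H^3$), followed by the observation that the $\mathcal{D}$-exchange relation for $B_k$ is forced by applying the $\mathcal{A}$-exchange relation separately to the $A_j$ and the $A_j^\circ$ and using $X_k=\prod_jA_j^{\varepsilon_{kj}}$. The only points left implicit (equally implicit in the text) are that the coordinates are invariant under the diagonal $\Delta_{SL_m}$-action built into Definition~\ref{def:Dspace}, and that for $m>2$ a single flip decomposes into several seed mutations, so your algebraic step must be iterated through the intermediate seeds, where the relations $B_i=A_i^\circ/A_i$ and $X_i=\prod_jA_j^{\varepsilon_{ij}}$ are maintained formally by Theorem~\ref{thm:doubleproperties}.
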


We claim that this map is in fact a birational equivalence. To prove this, we will need the following elementary facts.

\begin{lemma}
\label{lem:splitting}
Let 
\[
\xymatrix{ 
1 \ar[r] & A \ar[r]^{u} & B \ar[r]^{v} & C \ar[r] & 1
}
\]
be a central extension of groups with $\Aut(A)=1$, and let $s:C\rightarrow B$ be a homomorphism such that $v\circ s=1_C$. Then there is a natural isomorphism $A\times C\cong B$, $(a,c)\mapsto u(a)s(c)$. If $s':C\rightarrow B$ is any other homomorphism such that $v\circ s'=1_C$, then there exists $\varphi\in\Hom(C,A)$ such that 
\[
s'(c)=u(\varphi(c))s(c)
\]
for all $c\in C$.
\end{lemma}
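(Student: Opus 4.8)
The plan is to prove the two assertions of Lemma~\ref{lem:splitting} in turn, both by elementary diagram-chasing in the category of groups, using only the hypotheses that the extension is central and that $\Aut(A)=1$.

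\textbf{Step 1: the isomorphism $A\times C\cong B$.} I would define $\Phi:A\times C\to B$ by $\Phi(a,c)=u(a)s(c)$ and check it is a homomorphism. The key computation is
\[
\Phi(a,c)\Phi(a',c')=u(a)s(c)u(a')s(c')=u(a)u(a')s(c)s(c')=u(aa')s(cc')=\Phi(aa',cc'),
\]
where the middle equality uses that $u(A)$ is central in $B$ (this is exactly where centrality of the extension enters). Injectivity: if $u(a)s(c)=1$ then applying $v$ and using $v\circ u=1$, $v\circ s=1_C$ gives $c=1$, hence $u(a)=1$, hence $a=1$ since $u$ is injective. Surjectivity: given $b\in B$, set $c=v(b)$; then $v(bs(c)^{-1})=1$, so $bs(c)^{-1}\in\ker v=\im u$, say $bs(c)^{-1}=u(a)$, giving $b=u(a)s(c)=\Phi(a,c)$. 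Note this step does not use $\Aut(A)=1$ at all.

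\textbf{Step 2: comparing two splittings.} Given a second section $s'$ with $v\circ s'=1_C$, I would set, for each $c\in C$, $\varphi(c)=u^{-1}\bigl(s'(c)s(c)^{-1}\bigr)$; this is well defined because $v\bigl(s'(c)s(c)^{-1}\bigr)=c c^{-1}=1$, so $s'(c)s(c)^{-1}\in\im u$, and $u$ is injective. Then $s'(c)=u(\varphi(c))s(c)$ by construction, so the remaining point is that $\varphi:C\to A$ is a homomorphism. Compute $s'(cc')=s'(c)s'(c')=u(\varphi(c))s(c)u(\varphi(c'))s(c')=u(\varphi(c))u(\varphi(c'))s(c)s(c')=u(\varphi(c)\varphi(c'))s(cc')$, again using centrality of $u(A)$ to move $s(c)$ past $u(\varphi(c'))$; comparing with $s'(cc')=u(\varphi(cc'))s(cc')$ and cancelling $s(cc')$ then using injectivity of $u$ gives $\varphi(cc')=\varphi(c)\varphi(c')$.

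\textbf{Main obstacle.} There is essentially no obstacle: the lemma is a routine exercise in central extensions. The only subtlety worth flagging is the role of the hypothesis $\Aut(A)=1$, which in fact is \emph{not needed} for either statement as written — both conclusions follow from centrality alone. (The hypothesis is presumably included because it holds in the intended application, where $A$ is a finite abelian group like $\mathbb{Z}/2\mathbb{Z}$ with, say, $A$ of order dividing $2$ so $\Aut(A)=1$, and it guarantees the extension has no ``twisting'' of the $A$-factor; but the proof above never invokes it.) The one place I would be careful is to state explicitly that ``central extension'' means $u(A)\subseteq Z(B)$, since every multiplication rearrangement above rests on that.
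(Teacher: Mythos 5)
Your proof is correct and follows essentially the same route as the paper's: the second half (defining $\varphi(c)=u^{-1}\bigl(s'(c)s(c)^{-1}\bigr)$ and using centrality of $u(A)$ to check it is a homomorphism) is precisely the paper's argument. The only difference is in the first half, where the paper quotes the standard semidirect-product splitting and then invokes $\Aut(A)=1$ to pass to a direct product, whereas you verify the isomorphism directly from centrality; your remark that $\Aut(A)=1$ is redundant is correct, since centrality of $u(A)$ already forces the conjugation action of $s(C)$ on $u(A)$ to be trivial.
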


\begin{proof}
It is well known that if $s:C\rightarrow B$ is a homomorphism satisfying $v\circ s=1_C$, then the rule $(a,c)\mapsto u(a)s(c)$ defines an isomorphism of a semidirect product $A\rtimes C$ with~$B$. Since $\Aut(A)=1$, the semidirect product is isomorphic to the direct product $A\times C$.

If $s$ and $s'$ are homomorphisms $C\rightarrow B$ such that $v\circ s=v\circ s'=1_C$, then we have 
\[
v(s'(c)s(c)^{-1})=1
\]
for all $c\in C$. By exactness, we have $s'(c)s(c)^{-1}\in\ker v=\im u$, and therefore there exists $\varphi(c)\in A$ such that $u(\varphi(c))=s'(c)s(c)^{-1}$, that is,
\[
s'(c)=u(\varphi(c))s(c)
\]
for all $c\in C$. We claim that the map $\varphi:C\rightarrow A$ defined in this way is a group homomorphism. Indeed, suppose $c_1$,~$c_2\in C$. Since the image of $u$ is central in $B$, we have 
\begin{align*}
u(\varphi(c_1c_2)) &= s'(c_1c_2)s(c_1c_2)^{-1} \\
&= s'(c_1)s'(c_2)s(c_2)^{-1}s(c_1)^{-1} \\
&= s'(c_1)u(\varphi(c_2))s(c_1)^{-1} \\
&= s'(c_1)s(c_1)^{-1}u(\varphi(c_2)) \\
&= u(\varphi(c_1))u(\varphi(c_2)).
\end{align*}
Since $u$ is injective, we can cancel $u$ on both sides of $u(\varphi(c_1c_2))=u(\varphi(c_1)\varphi(c_2))$ to see that $\varphi$ is a homomorphism as claimed. This completes the proof.
\end{proof}

\begin{lemma}
\label{lem:lifting}
Let $S$ be a decorated surface with at least one boundary component. Any homomorphism $\pi_1(S)\rightarrow PGL_m(\mathbb{C})$ can be lifted to a homomorphism $\pi_1(S)\rightarrow SL_m(\mathbb{C})$. Any two lifts are related by the action of an element of the group 
\[
\Delta_{SL_m}=\Hom(\pi_1(S),Z(SL_m(\mathbb{C}))).
\]
\end{lemma}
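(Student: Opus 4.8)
The plan is to reduce everything to the single structural fact that $\pi_1(S)$ is a \emph{free group}: since $S$ is a compact oriented surface with nonempty boundary, it deformation retracts onto a finite graph, so $\pi_1(S)$ is free on some finite generating set $x_1,\dots,x_n$ (the marked points on $\partial S$ are not removed, so they do not affect this). Granting this, the existence of a lift is immediate. First I would note that the projection $p:SL_m(\mathbb{C})\rightarrow PGL_m(\mathbb{C})$ is surjective, since a coset represented by $g\in GL_m(\mathbb{C})$ is also represented by $\lambda g$ for $\lambda$ chosen with $\lambda^m=\det(g)^{-1}$, and $\lambda g\in SL_m(\mathbb{C})$. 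Then, given $\bar\rho:\pi_1(S)\rightarrow PGL_m(\mathbb{C})$, I would pick $g_i\in SL_m(\mathbb{C})$ with $p(g_i)=\bar\rho(x_i)$ for each $i$ and invoke the universal property of the free group to obtain a homomorphism $\rho:\pi_1(S)\rightarrow SL_m(\mathbb{C})$ with $\rho(x_i)=g_i$. Since $p\circ\rho$ and $\bar\rho$ agree on the generators they are equal, so $\rho$ is a lift of $\bar\rho$.

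For the uniqueness assertion, suppose $\rho_1,\rho_2:\pi_1(S)\rightarrow SL_m(\mathbb{C})$ both lift $\bar\rho$. For each $\gamma$ the element $\sigma(\gamma):=\rho_1(\gamma)\rho_2(\gamma)^{-1}$ lies in $\ker p=Z(SL_m(\mathbb{C}))$, so it remains to check that $\sigma:\pi_1(S)\rightarrow Z(SL_m(\mathbb{C}))$ is a homomorphism. This is exactly the centrality computation carried out in the proof of Lemma~\ref{lem:splitting}: because the values of $\sigma$ are central (and $Z(SL_m(\mathbb{C}))$ is abelian),
\[
\sigma(\gamma_1\gamma_2)=\rho_1(\gamma_1)\sigma(\gamma_2)\rho_2(\gamma_1)^{-1}=\sigma(\gamma_2)\rho_1(\gamma_1)\rho_2(\gamma_1)^{-1}=\sigma(\gamma_1)\sigma(\gamma_2).
\]
Hence $\sigma\in\Hom(\pi_1(S),Z(SL_m(\mathbb{C})))=\Delta_{SL_m}$, and $\rho_1(\gamma)=\sigma(\gamma)\rho_2(\gamma)$ for all $\gamma$, i.e.\ $\rho_1=\sigma\cdot\rho_2$ under the pointwise-multiplication action of $\Delta_{SL_m}$. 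Conversely, for any $\sigma\in\Delta_{SL_m}$ the pointwise product $\sigma\cdot\rho_2$ is again a homomorphism into $SL_m(\mathbb{C})$ (its values lie in $SL_m(\mathbb{C})$ and $\sigma$ has central image) and again projects to $\bar\rho$, so the set of lifts is a single $\Delta_{SL_m}$-orbit.

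I do not anticipate a genuine obstacle: once freeness of $\pi_1(S)$ is in hand, both halves are formal. The one subtlety worth flagging is that Lemma~\ref{lem:splitting} cannot be quoted wholesale, because its hypothesis $\Aut(A)=1$ fails for $A=Z(SL_m(\mathbb{C}))\cong\mu_m$ when $m>2$; only the part of that lemma's argument showing that two sections of a central extension differ by an element of $\Hom(C,A)$ is needed here, and that part relies solely on centrality. By contrast, for a closed surface the existence of a lift would genuinely require the vanishing of an obstruction class in $H^2(\pi_1(S);Z(SL_m))$, but that case does not arise in this setting.
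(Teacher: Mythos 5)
Your proof is correct and follows essentially the same route as the paper's: freeness of $\pi_1(S)$ (plus surjectivity of $SL_m(\mathbb{C})\to PGL_m(\mathbb{C})$, which the paper attributes to algebraic closedness of $\mathbb{C}$) gives existence by lifting generators, and uniqueness follows because two lifts differ pointwise by central elements, the resulting map into $Z(SL_m(\mathbb{C}))$ being a homomorphism by the centrality computation you spell out and the paper leaves as "one easily checks." Your side remark that Lemma~\ref{lem:splitting} cannot be invoked verbatim because $\Aut(\mu_m)\neq 1$ for $m>2$ is accurate but immaterial, since neither you nor the paper actually needs that lemma here.
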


\begin{proof}
Let $\rho:\pi_1(S)\rightarrow PGL_m(\mathbb{C})$ be a homomorphism. Since $\pi_1(S)$ is free and $\mathbb{C}$ is algebraically closed, we can lift this to a homomorphism $\pi_1(S)\rightarrow SL_m(\mathbb{C})$ by choosing a lift for each generator of $\pi_1(S)$.

Suppose $\rho_1$,~$\rho_2:\pi_1(S)\rightarrow SL_m(\mathbb{C})$ are lifts of $\rho$. Then for every $\gamma\in\pi_1(S)$, the elements $\rho_1(\gamma)$ and $\rho_2(\gamma)$ represent the same class in $PGL_m(\mathbb{C})$, so there exists a nonzero scalar $\lambda_{\gamma}$ such that 
\[
\rho_2(\gamma)=\lambda_{\gamma}\rho_1(\gamma).
\]
Taking determinants of both sides, we see that 
\[
1=\lambda_{\gamma}^m.
\]
The group $Z(SL_m(\mathbb{C}))$ is identified with the group of $m$th roots of unity, so we can define a map $\sigma:\pi_1(S)\rightarrow Z(SL_m(\mathbb{C}))$ by putting $\sigma(\gamma)=\lambda_{\gamma}$. One easily checks that this map is a homomorphism. By construction, we have $\rho_2=\sigma\cdot\rho_1$.
\end{proof}

Let $A=\mathbb{Q}[x_1,\dots,x_k]/(f_1,\dots,f_s)$ and $B=\mathbb{Q}[x_1,\dots,x_l]/(g_1,\dots,g_t)$ be reduced algebras over~$\mathbb{Q}$, and let $X=\Spec A$ and $Y=\Spec B$ be the corresponding affine schemes over~$\mathbb{Q}$.

\begin{lemma}
\label{lem:pointsfield}
Let $\varphi^*:B\rightarrow A$ and $\psi^*:A\rightarrow B$ be ring homomorphisms such that the induced maps $\varphi:X(\mathbb{C})\rightarrow Y(\mathbb{C})$ and $\psi:Y(\mathbb{C})\rightarrow X(\mathbb{C})$ satisfy $\varphi\circ\psi=1_{Y(\mathbb{C})}$ and $\psi\circ\varphi=1_{X(\mathbb{C})}$. Then $X$ and $Y$ are isomorphic as schemes over~$\mathbb{Q}$.
\end{lemma}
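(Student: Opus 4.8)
The plan is to show that the ring homomorphisms $\varphi^*$ and $\psi^*$ are mutually inverse, which immediately gives an isomorphism $X\cong Y$ of schemes over $\mathbb{Q}$. The hypothesis only tells us that the composites $\psi^*\circ\varphi^*:B\to B$ and $\varphi^*\circ\psi^*:A\to A$ become the identity after passing to $\mathbb{C}$-points; I must promote this to an identity of ring maps. Consider the ring endomorphism $\theta=\psi^*\circ\varphi^*:B\to B$. It suffices to prove that any $\mathbb{Q}$-algebra endomorphism $\theta$ of a reduced finite-type $\mathbb{Q}$-algebra $B$ that induces the identity on $B$-points over $\mathbb{C}$ is itself the identity; the same argument applied to $A$ then finishes the proof.

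To see this, pick a generator $\bar{x}_i\in B$ and set $h_i=\theta(\bar{x}_i)-\bar{x}_i\in B$. The claim is that $h_i=0$. Since $B$ is reduced, it injects into the product of its residue fields at its $\mathbb{C}$-points; more concretely, an element of a reduced finite-type $\mathbb{Q}$-algebra that vanishes at every $\mathbb{C}$-point is zero, because $B\otimes_{\mathbb{Q}}\mathbb{C}$ is a reduced finite-type $\mathbb{C}$-algebra (reducedness is preserved under the separable, indeed characteristic-zero, extension $\mathbb{Q}\subseteq\mathbb{C}$) and such an element lies in the nilradical by the Nullstellensatz. So it is enough to check that $h_i$ evaluates to $0$ at every point of $X(\mathbb{C})=\operatorname{Hom}_{\mathbb{Q}\text{-alg}}(B,\mathbb{C})$. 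But a point $q\in X(\mathbb{C})$ is a $\mathbb{Q}$-algebra map $q:B\to\mathbb{C}$, and $q(h_i)=q(\theta(\bar{x}_i))-q(\bar{x}_i)=(q\circ\theta)(\bar{x}_i)-q(\bar{x}_i)$. The value of $h_i$ at the point $q$ is computed by the composite $q\circ\theta$, i.e. by the point $\theta^\#(q)\in X(\mathbb{C})$, where $\theta^\#$ is the induced map on $\mathbb{C}$-points. By hypothesis $\theta^\#=(\psi\circ\varphi)^\#=\psi^\#\circ\varphi^\#=\mathrm{id}_{X(\mathbb{C})}$ (using that $\varphi$ and $\psi$ are inverse on $\mathbb{C}$-points), so $q\circ\theta=q$, hence $q(h_i)=0$. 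As $q$ was arbitrary, $h_i=0$, so $\theta$ fixes all the generators and therefore $\theta=\mathrm{id}_B$.

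Running the symmetric argument with $A$ in place of $B$ shows $\varphi^*\circ\psi^*=\mathrm{id}_A$ as well. Thus $\varphi^*$ and $\psi^*$ are inverse $\mathbb{Q}$-algebra isomorphisms, so $X$ and $Y$ are isomorphic as schemes over $\mathbb{Q}$. The one delicate point — and the step I would be most careful about — is the passage from ``vanishes at all $\mathbb{C}$-points'' to ``is zero'' in a reduced finite-type $\mathbb{Q}$-algebra; this uses that $\mathbb{C}$ is algebraically closed (for the Nullstellensatz over $\mathbb{C}$) together with the fact that $B\otimes_{\mathbb{Q}}\mathbb{C}$ remains reduced, which holds because $\mathbb{Q}$ is perfect. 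Everything else is a formal unwinding of the relation between a ring map and the map it induces on $\mathbb{C}$-points.
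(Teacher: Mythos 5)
Your proof is correct and rests on the same key fact as the paper's: a reduced finite-type $\mathbb{Q}$-algebra remains reduced after base change to $\mathbb{C}$ (since $\mathbb{Q}$ is perfect), and by the Nullstellensatz such an algebra is separated by its $\mathbb{C}$-points, so a ring map is determined by the map it induces on $\mathbb{C}$-points. The paper packages this as the variety--coordinate-ring correspondence over $\mathbb{C}$ and then restricts the resulting isomorphism of complexified coordinate rings to the $\mathbb{Q}$-forms, whereas you verify directly that $\psi^*\circ\varphi^*$ and $\varphi^*\circ\psi^*$ are identity maps; apart from the harmless slip of writing $X(\mathbb{C})$ for $\operatorname{Hom}_{\mathbb{Q}\text{-alg}}(B,\mathbb{C})=Y(\mathbb{C})$, your argument is complete and in fact makes explicit the reducedness-under-base-change point that the paper leaves implicit.
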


\begin{proof}
We can think of the $X(\mathbb{C})$ as the closed subset of $\mathbb{C}^k$ defined by the polynomials $f_1,\dots,f_s\in\mathbb{C}[x_1,\dots,x_k]$ and think of $Y(\mathbb{C})$ as the closed subset of $\mathbb{C}^l$ defined by $g_1,\dots,g_t\in\mathbb{C}[x_1,\dots,x_l]$. The maps $\psi$ and $\varphi$ provide inverse isomorphisms of these closed subsets. Since we assume that $A$ and $B$ are reduced, the Nullstellensatz implies that there is an isomorphism of coordinate rings  
\[
\mathbb{C}[x_1,\dots,x_k]/(f_1,\dots,f_s)\stackrel{\cong}{\rightarrow}\mathbb{C}[x_1,\dots,x_l]/(g_1,\dots,g_t).
\]
This isomorphism restricts to the map $\psi^*:A\rightarrow B$, and its inverse restricts to $\varphi^*:B\rightarrow A$. It follows that $A\cong B$ as rings, and therefore $X$ and $Y$ are isomorphic as schemes over~$\mathbb{Q}$.
\end{proof}

\begin{theorem}
\label{thm:Dbirationaltorus}
Fix an ideal triangulation $T$ of the surface $S$, and let $\mathbf{i}$ be the corresponding seed. There is a rational map 
\[
\mathcal{D}_{\mathbf{i}}\dashrightarrow\mathcal{D}_{PGL_m,S}
\]
from the split algebraic torus $\mathcal{D}_{\mathbf{i}}=\mathbb{G}_m^{2|J|}$ into the symplectic double moduli space. Composing this map with the coordinate functions gives the identity whenever the composition is defined.
\end{theorem}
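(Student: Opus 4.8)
The plan is to construct the rational map explicitly, by inverting Fock and Goncharov's coordinate constructions triangle by triangle, and then to check that composing with the coordinate map of~\cite{double} returns the input coordinates. It is convenient to describe the construction at the level of $\mathbb{C}$-points; that it is given by rational formulas in the coordinates, and so defines a rational map of the underlying $\mathbb{Q}$-schemes (the form in which Lemma~\ref{lem:pointsfield} can later be applied), will be clear from the construction.

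Fix a general point of $\mathcal{D}_{\mathbf{i}}$ with coordinates $B_j$, $X_j$ ($j\in J$), and set $\widehat{X}_j = X_j\prod_i B_i^{\varepsilon_{ji}}$. Applying the inverse of the birational map of Proposition~\ref{prop:Xisom} to~$S$ with the coordinates $X_j$ produces a framed $PGL_m$-local system on~$S$: place a standard generic configuration of invariant flags in each triangle of~$T$, glue across each internal edge by the transformation prescribed by the corresponding $X$-coordinate, and read off the monodromy and boundary framing. Lift this to a twisted $SL_m$-local system~$\mathcal{L}$ on~$S$; a lift of the monodromy representation exists because $\pi_1(S)$, hence $\bar\pi_1(T'S)$, is free, and the monodromy $(-1)^{m-1}e$ around~$\sigma_S$ is imposed canonically from the triangulation as in~\cite{IHES}. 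Running the same construction on~$S^\circ$ with the coordinates $\widehat{X}_j$ --- which are the coordinates of the second factor of the map~$\pi$ of Theorem~\ref{thm:doubleproperties}(2) --- yields a twisted $SL_m$-local system~$\mathcal{L}^\circ$ on~$S^\circ$. By Lemma~\ref{lem:lifting}, applied to the underlying $PGL_m$-representations of $\pi_1(S)$ and~$\pi_1(S^\circ)$, each lift is ambiguous precisely by an element of $\Delta_{SL_m}=\Hom(\pi_1(S),Z(SL_m))$, with Lemma~\ref{lem:splitting} pinning down the torsor structure, so after passing to the diagonal quotient the pair $(\mathcal{L},\mathcal{L}^\circ)$ becomes a well-defined point of $(\widetilde{\Loc}_{SL_m,S}\times\widetilde{\Loc}_{SL_m,S^\circ})/\Delta_{SL_m}$. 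Since~$T$ and the corresponding triangulation~$T^\circ$ of~$S^\circ$ agree along the image of~$\partial S$, the relation $\widehat{X}_j = X_j\prod_i B_i^{\varepsilon_{ji}}$ makes the boundary restrictions of~$\mathcal{L}$ and~$\mathcal{L}^\circ$ isomorphic, as required by Definition~\ref{def:Dspace}(1).

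It remains to produce the gluing datum. The local systems $\mathcal{L}$ and~$\mathcal{L}^\circ$ come equipped with framings $\beta$, $\beta^\circ$, hence with the $H^k$-local systems $\mathcal{F}_\beta$, $\mathcal{F}_{\beta^\circ}$. For each triangle~$t$ of~$T$ with vertices~$p$, the framings determine invariant flags $b_p$, $b_p^\circ$; choosing decorated lifts $a_p$, $a_p^\circ$ over these gives a point of $(\Conf_3(\mathcal{A})\times\Conf_3(\mathcal{A}))/H^3$, and by the inverse of Proposition~\ref{prop:Aisom} the lifts can be taken so that the resulting $\mathcal{A}$-coordinates satisfy $A_i^\circ/A_i = B_j$ for the index~$j$ attached to the corresponding interior vertex of the $m$-triangulation. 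The elements of~$H$ relating $a_p$ and~$a_p^\circ$ across the image of~$\partial S$ assemble into a map $\alpha\colon\mathcal{F}_\beta\to\mathcal{F}_{\beta^\circ}$ of $H^k$-local systems; the consistency of this assembly, across edges of a triangle and under flips, is equivalent to the flip compatibility of the ratios $B_j = A_j^\circ/A_j$, which in turn reduces to the flip compatibility of the $\mathcal{A}$- and $\mathcal{X}$-coordinates established in~\cite{IHES}. This produces a point $(\mathcal{L},\mathcal{L}^\circ,\beta,\beta^\circ,\alpha)$ of $\mathcal{D}_{PGL_m,S}$ in the sense of Definition~\ref{def:Dspace}, depending rationally on the $B_j$ and~$X_j$. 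Feeding it into the coordinate map of~\cite{double} returns the input: the $X$-coordinates of this point are those of its underlying framed $PGL_m$-local system on~$S$, namely the given~$X_j$, and its $B$-coordinates are the ratios $A_i^\circ/A_i$ of the triangle configurations, namely the given~$B_j$.

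The main obstacle is the well-definedness buried in this construction: one must show that the triangle-local data glue to honest twisted local systems on~$S$ and~$S^\circ$ and to an honest isomorphism~$\alpha$, independently of the choices of $SL_m$-lifts (absorbed by the diagonal $\Delta_{SL_m}$-action via Lemmas~\ref{lem:splitting} and~\ref{lem:lifting}) and of the decorated-flag lifts~$a_p$ (absorbed by the $H^3$-quotient and the fact that each $B_i$ enters only as a ratio). Once the relevant cocycle and flip-compatibility conditions are isolated, each is reducible to the corresponding fact already proved for $\mathcal{A}_{SL_m,S}$ and $\mathcal{X}_{PGL_m,S}$ in~\cite{IHES}, together with a short direct computation tracking the $B$-coordinates and the relation $\widehat{X}_j = X_j\prod_i B_i^{\varepsilon_{ji}}$ around loops and under flips.
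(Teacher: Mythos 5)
Your overall architecture matches the paper's up to the point where $\mathcal{L}^\circ$ enters, but the way you build $\mathcal{L}^\circ$ and $\alpha$ contains a genuine gap. You construct $\mathcal{L}^\circ$ by running the $\mathcal{X}$-space reconstruction \emph{independently} on $S^\circ$ with the coordinates $\widehat{X}_j$ and then lifting to $SL_m$. Each of the two lifts (of the monodromy of $\mathcal{L}$ and of $\mathcal{L}^\circ$) is then ambiguous by its own element of $\Delta_{SL_m}$, so the pair $(\mathcal{L},\mathcal{L}^\circ)$ is only well defined modulo $\Delta_{SL_m}\times\Delta_{SL_m}$. Passing to the quotient by the \emph{diagonal} copy of $\Delta_{SL_m}$, as Definition~\ref{def:Dspace} requires, does not absorb the anti-diagonal part of this ambiguity, so your pair is not a well-defined point of $\bigl(\widetilde{\Loc}_{SL_m,S}\times\widetilde{\Loc}_{SL_m,S^\circ}\bigr)/\Delta_{SL_m}$. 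The same independence causes trouble for $\alpha$: the elements of $H$ you extract triangle by triangle from the ratios $A_i^\circ/A_i=B_i$ assemble into a map of $H^k$-local systems only if the decorated-flag correspondence is equivariant for the monodromy of $\mathcal{L}^\circ$, and this holds for exactly one choice of $SL_m$-lift of $\rho^\circ$ relative to the lift of $\rho$ and the chosen decorated flags. Your appeal to ``flip compatibility'' does not address this; the issue is a cocycle/equivariance condition around loops at a \emph{fixed} triangulation, not compatibility under change of triangulation.

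The paper avoids both problems by not constructing $\mathcal{L}^\circ$ independently at all: after lifting $\rho$ once and choosing decorated flags with $A_i^\circ=B_iA_i$, it \emph{defines} $\rho_0^\circ(\gamma^\circ)$ as the unique element of $SL_m(\mathbb{C})$ carrying the decorated-flag triple at the vertices of $t^\circ$ to the triple at $(\gamma t)^\circ$. This makes $\alpha$ equivariant by construction, forces a change of the lift $\rho_0$ by $\sigma\in\Delta_{SL_m}$ to change $\rho_0^\circ$ by the same $\sigma$ (so the pair descends to the diagonal quotient), and the underlying framed local system of $\mathcal{L}^\circ$ then automatically has coordinates $\widehat{X}_j$. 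To repair your argument you would need to replace the independent reconstruction of $\mathcal{L}^\circ$ by such a transport construction, or else prove separately that there is a canonical normalization of the lift of $\rho^\circ$ relative to that of $\rho$; you would also need to check, as the paper does in its Step~2, that the choice of splitting $s\colon\pi_1(S)\to\bar{\pi}_1(T'S)$ used to pass from $\rho_0$ to a twisted local system affects $\mathcal{L}$ and $\mathcal{L}^\circ$ by the same element of $\Delta_{SL_m}$.
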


\begin{proof}
The proof will consist of two steps. In the first step, we will construct an inverse to this map at the level of complex points. The definition of this map will require some choices. In the second part of the proof, we will show that the construction is independent of these choices.

\Needspace*{2\baselineskip}
\begin{step}[1]
Construction of the map
\end{step}

To construct this map, suppose we are given $B_j$,~$X_j\in \mathbb{C}^*$ for~$j\in J$. By Proposition~\ref{prop:Xisom}, we can use the $X_j$ ($j\in J$) to construct a point in $\mathcal{X}_{PGL_m,S}(\mathbb{C})$, that is, a $PGL_m(\mathbb{C})$-local system on~$S$ together with a framing~$\beta$. This local system is represented by some homomorphism $\pi_1(S)\rightarrow PGL_m(\mathbb{C})$.

By Lemma~\ref{lem:lifting}, this homomorphism can be lifted to a homomorphism $\rho_0:\pi_1(S)\rightarrow SL_m(\mathbb{C})$. Recall the central extension 
\[
\xymatrix{ 
1 \ar[r] & \mathbb{Z}/2\mathbb{Z} \ar[r]^-{u} & \bar{\pi}_1(T'S) \ar[r]^-{v} & \pi_1(S) \ar[r] & 1.
}
\]
Since $\pi_1(S)$ is free, we can choose a homomorphism $s:\pi_1(S)\rightarrow\bar{\pi}_1(T'S)$ such that $v\circ s=1$. Once we have chosen this homomorphism, Lemma~\ref{lem:splitting} gives an isomorphism 
\[
\eta:\mathbb{Z}/2\mathbb{Z}\times\pi_1(S)\stackrel{\cong}{\rightarrow}\bar{\pi}_1(T'S).
\]
Consider the homomorphism $\mathbb{Z}/2\mathbb{Z}\times\pi_1(S)\rightarrow SL_m(\mathbb{C})$ defined by the rules $(1,\gamma)\mapsto\rho_0(\gamma)$ and $(\bar{\sigma}_S,1)\mapsto(-1)^{m-1}e$. Abusing notation, we will denote this map also by~$\rho_0$. Then the composition $\rho=\rho_0\circ\eta^{-1}:\bar{\pi}_1(T'S)\rightarrow SL_m(\mathbb{C})$ sends $\bar{\sigma}_S$ to the element $(-1)^{m-1}e$ and hence defines a twisted local system $\mathcal{L}$ on the surface~$S$.

Choose a complete, finite area hyperbolic metric with totally geodesic boundary on~$S'$ so that its universal cover is identified with a subset of the hyperbolic plane. By Proposition~\ref{prop:Xconfig}, the framing $\beta$ is the same thing as an equivariant map $\psi:\mathcal{F}_{\infty}(S)\rightarrow\mathcal{B}(\mathbb{C})$ that associates a flag to each point of $\mathcal{F}_{\infty}(S)$. Here $\mathcal{F}_{\infty}(S)$ is defined as the set of vertices of a lift $\tilde{T}$ of the ideal triangulation~$T$ to the hyperbolic plane.

Let $t$ be any triangle of this lifted triangulation. Then there is a flag associated to each vertex of $t$, and we can choose a decorated flag that projects to this flag under the natural map $\mathcal{A}(\mathbb{C})\rightarrow\mathcal{B}(\mathbb{C})$. This gives a triple of decorated flags and hence an element of $\mathcal{A}_{SL_m,t}(\mathbb{C})$. By Proposition~\ref{prop:Aisom}, there are coordinates $A_i$ on this space, corresponding to vertices $i$ in the $m$-triangulation of $t$. Define a new collection of coordinates by putting 
\[
A_i^\circ=B_iA_i
\]
for each vertex $i$.

Consider a second copy of the hyperbolic plane with a triangulation $\tilde{T}^\circ$ obtained by reversing the orientation of $\tilde{T}$. There is a triangle $t^\circ$ in this triangulation corresponding to the triangle~$t$, and we can use the numbers $A_i^\circ$ to construct a configuration of three decorated flags associated to the vertices of this triangle. In this way, we get an $H$-equivariant correspondence between decorated flags associated to the vertices of~$\tilde{T}$ and decorated flags associated to the vertices of~$\tilde{T}^\circ$.

Let $\gamma^\circ\in\pi_1(S^\circ)$ and let $\gamma$ be the corresponding element of $\pi_1(S)$. We can view this element $\gamma$ as a deck transformation of the universal cover of $S$, and it maps any triangle~$t$ of~$\tilde{T}$ to a new triangle $t'$. If we choose a decorated flag at each vertex of~$t$ which projects to the ordinary flag at this vertex, then the $SL_m(\mathbb{C})$ transformation $\rho_0(\gamma)$ gives a new triple of decorated flags at the vertices of~$t'$. Consider the corresponding triangles~$t^\circ$ and~$(t')^\circ$ in the triangulation~$\tilde{T^\circ}$. By the construction described above, there are corresponding decorated flags at the vertices of~$t^\circ$ and~$(t')^\circ$. We define $\rho_0^\circ(\gamma^\circ)$ to be the unique element of~$SL_m(\mathbb{C})$ that takes the triple of decorated flags at the vertices of~$t^\circ$ to the triple at the vertices of~$(t')^\circ$.

This defines a homomorphism $\rho_0^\circ:\pi_1(S^\circ)\rightarrow SL_m(\mathbb{C})$. From the isomorphism $\eta$ considered above, we get an isomorphism 
\[
\bar{\pi}_1(T'S^\circ)\cong\mathbb{Z}/2\mathbb{Z}\times\pi_1(S^\circ).
\]
Thus we can construct a representation $\rho^\circ:\bar{\pi}_1(T'S^\circ)\rightarrow SL_m(\mathbb{C})$ as before, and this defines a twisted $SL_m(\mathbb{C})$-local system $\mathcal{L}^\circ$ on the surface~$S^\circ$. The construction also gives an equivariant assignment of a flag to each vertex of $\tilde{T}^\circ$, so by Proposition~\ref{prop:Xconfig}, we have a framing of the associated $PGL_m(\mathbb{C})$-local system. Finally, the correspondence between decorated flags at the vertices of~$\tilde{T}$ and~$\tilde{T}^\circ$ gives the gluing datum~$\alpha$ appearing in~Definition~\ref{def:Dspace}.

Thus we have associated a point in $\mathcal{D}_{PGL_m,S}(\mathbb{C})$ to a collection of elements $B_j$,~$X_j\in \mathbb{C}^*$ for~$j\in J$. By construction, this map composes with the coordinate functions to give the identity whenever this composition is defined. This completes Step~1 of the proof.

\Needspace*{2\baselineskip}
\begin{step}[2]
Independence of choices
\end{step}

In the above construction, we had to choose a lift $\rho_0:\pi_1(S)\rightarrow SL_m(\mathbb{C})$ of a certain map $\pi_1(S)\rightarrow PGL_m(\mathbb{C})$. It follows from Lemma~\ref{lem:lifting} that the resulting pair $(\rho_0,\rho_0^\circ)$ of representations $\pi_1(S)\rightarrow SL_m(\mathbb{C})$ is well defined modulo the diagonal action of $\Delta_{SL_m}$.

In addition to this choice of lift, we had to choose a homomorphism $s:\pi_1(S)\rightarrow\bar{\pi}_1(T'S)$ with the property that $v\circ s=1$. By~Lemma~\ref{lem:splitting}, this choice provides an isomorphism $\eta:\mathbb{Z}/2\mathbb{Z}\times\pi_1(S)\rightarrow\bar{\pi}_1(T'S)$ given by the formula 
\[
\eta(x,\gamma)=u(x)s(\gamma).
\]
If we choose a different homomorphism $s':\pi_1(S)\rightarrow\bar{\pi}_1(T'S)$ satisfying $v\circ s'=1$, then we get a different isomorphism $\eta':\mathbb{Z}/2\mathbb{Z}\times\pi_1(S)\rightarrow\bar{\pi}_1(T'S)$ given by 
\[
\eta'(x,\gamma) = u(x)s'(\gamma) = u(\varphi(\gamma))u(x)s(\gamma)
\]
for some $\varphi\in\Hom(\pi_1(S),\mathbb{Z}/2\mathbb{Z})$. We constructed the twisted local system $\mathcal{L}$ above by describing a homomorphism $\rho:\bar{\pi}_1(T'S)\rightarrow SL_m(\mathbb{C})$. This was defined as a composition $\rho=\rho_0\circ\eta^{-1}$. If we repeat the same construction with the different map~$s'$, we get a different homomorphism $\rho'=\rho_0\circ\eta'^{-1}$. We can write 
\[
\rho=\rho_0\circ\eta^{-1}\circ\eta'\circ\eta'^{-1}.
\]
By the above formulas, we have 
\[
\eta^{-1}\circ\eta'(x,\gamma)=(\varphi(\gamma)x,\gamma).
\]
Let $\tilde{\gamma}\in\bar{\pi}_1(T'S)$ and write $\eta'^{-1}(\tilde{\gamma})=(x,\gamma)$. Then 
\begin{align*}
\rho(\tilde{\gamma}) &= (\rho_0\circ\eta^{-1}\circ\eta'\circ\eta'^{-1})(\tilde{\gamma}) \\
&= (\rho_0\circ\eta^{-1}\circ\eta')(x,\gamma) \\
&= \rho_0(\varphi(\gamma)x,\gamma) \\
&= \rho_0(\varphi(\gamma),1)\rho_0(x,\gamma).
\end{align*}
Finally, we have 
\begin{align*}
\rho_0(x,\gamma) &= (\rho_0\circ\eta'^{-1})(\tilde{\gamma}) \\
&= \rho'(\tilde{\gamma})
\end{align*}
and therefore 
\[
\rho(\tilde{\gamma})=\sigma(\gamma)\rho'(\tilde{\gamma})
\]
where we have defined $\sigma(\gamma)=\rho_0(\varphi(\gamma),1)$. The representation $\rho^\circ$ changes in exactly the same way, so the pair $(\mathcal{L},\mathcal{L}^\circ)$ of twisted local systems determined by these representations is well defined modulo the diagonal action of the group $\Delta_{SL_m}=\Hom(\pi_1(S),Z(SL_m(\mathbb{C})))$. This completes the Step~2 of the proof.

Now the generic part of the moduli space $\mathcal{D}_{PGL_m,S}$ is an affine scheme over $\mathbb{Q}$, and one can check that the map constructed above is defined by polynomials with coefficients in~$\mathbb{Q}$. Hence, by Lemma~\ref{lem:pointsfield}, we have a birational map $\mathcal{D}_{\mathbf{i}}\dashrightarrow\mathcal{D}_{PGL_m,S}$.
\end{proof}

As an immediate consequence, we have the following.

\begin{theorem}
There is a natural birational equivalence of the moduli space $\mathcal{D}_{PGL_m,S}$ and the symplectic double associated to the surface $S$.
\end{theorem}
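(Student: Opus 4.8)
The plan is to obtain the statement as a formal consequence of Theorem~\ref{thm:Dbirationaltorus} and the construction of the symplectic double by gluing coordinate tori. Fix an ideal triangulation $T$ of $S$ and let $\mathbf{i}$ be the associated seed. The Proposition of Fock and Goncharov recalled above provides a rational map $\mathcal{D}_{PGL_m,S}\dashrightarrow\mathcal{D}_{\mathbf{i}}$, and Theorem~\ref{thm:Dbirationaltorus} exhibits a rational map $\mathcal{D}_{\mathbf{i}}\dashrightarrow\mathcal{D}_{PGL_m,S}$ which, by the application of Lemma~\ref{lem:pointsfield} at the end of its proof, is a birational inverse to that coordinate map (the two compositions being the identity on $\mathbb{C}$-points, and both maps being defined over $\mathbb{Q}$). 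Hence $\mathcal{D}_{\mathbf{i}}$ is birationally equivalent to $\mathcal{D}_{PGL_m,S}$.

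Next, by the very definition of the symplectic double, $\mathcal{D}_{\mathbf{i}}$ is one of the split algebraic tori glued together to form $\mathcal{D}=\mathcal{D}_{|\mathbf{i}|}$, so the tautological open immersion $\mathcal{D}_{\mathbf{i}}\hookrightarrow\mathcal{D}$ realizes $\mathcal{D}_{\mathbf{i}}$ as a dense open subscheme of $\mathcal{D}$ and is in particular a birational equivalence. Composing the two birational equivalences gives a birational equivalence $\mathcal{D}_{PGL_m,S}\dashrightarrow\mathcal{D}$.

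Finally I would check that this equivalence does not depend on the auxiliary triangulation, which is what makes it \emph{natural}. If $T'$ is obtained from $T$ by a regular flip, with seed $\mathbf{i}'$, then the Proposition guarantees that the transition map $\mathcal{D}_{\mathbf{i}}\dashrightarrow\mathcal{D}_{\mathbf{i}'}$ intertwining the two coordinate maps is exactly the cluster transformation used to glue $\mathcal{D}_{\mathbf{i}}$ to $\mathcal{D}_{\mathbf{i}'}$ inside $\mathcal{D}$; since any two ideal triangulations of $S$ are joined by a sequence of flips, the composite birational maps $\mathcal{D}_{PGL_m,S}\dashrightarrow\mathcal{D}_{\mathbf{i}}\hookrightarrow\mathcal{D}$ for different triangulations all agree as rational maps into $\mathcal{D}$, so the equivalence is canonical. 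I do not expect a serious obstacle at this stage: all of the substance — in particular the delicate independence-of-choices argument and the passage from equality on $\mathbb{C}$-points to an isomorphism of $\mathbb{Q}$-schemes via Lemma~\ref{lem:pointsfield} — is already absorbed into Theorem~\ref{thm:Dbirationaltorus}, and what remains is the formal observation that a dense open chart is birational to the ambient scheme together with the flip-compatibility recorded in the Proposition.
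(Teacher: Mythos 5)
Your proposal is correct and follows essentially the same route as the paper, which presents this theorem as an immediate consequence of Theorem~\ref{thm:Dbirationaltorus} together with the fact that the coordinate torus $\mathcal{D}_{\mathbf{i}}$ is an open dense chart of the symplectic double. Your additional remark on flip-compatibility is exactly the content of the cited Proposition and correctly accounts for the word ``natural'' in the statement.
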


\chapter{Teichm\"uller spaces}
\label{ch:TeichmullerSpaces}

In low-dimensional topology and geometry, the Teichm\"uller space of a surface is a space that parametrizes hyperbolic structures on the surface, modulo a certain equivalence. Here we will study three variants of the classical Teichm\"uller space of a surface. We will see that these Teichm\"uller spaces correspond to the positive real points of the three cluster varieties.

\section{Decorated Teichm\"uller space}

Let us first recall the classical Teichm\"uller space of a punctured surface.

\begin{definition}
The \emph{Teichm\"uller space} $\mathcal{T}(S)$ of a punctured surface $S$ is the quotient 
\[
\mathcal{T}(S)=\Hom'(\pi_1(S),PSL_2(\mathbb{R}))/PSL_2(\mathbb{R})
\]
where $\Hom'(\pi_1(S),PSL_2(\mathbb{R}))$ is the set of all discrete and faithful representations of~$\pi_1(S)$ into $PSL_2(\mathbb{R})$ such that the image of a loop surrounding a puncture is parabolic. The group~$PSL_2(\mathbb{R})$ acts on this set by conjugation.
\end{definition}

Let $\rho:\pi_1(S)\rightarrow PSL_2(\mathbb{R})$ be an element of the set described above. Then we can represent~$S$ as a quotient 
\[
S=\mathbb{H}/\Delta
\]
where $\mathbb{H}$ is the upper half plane and $\Delta=\rho(\pi_1(S))$ is a discrete subgroup of $PSL_2(\mathbb{R})$. By definition, the map $\rho$ takes any loop surrounding a puncture to a parabolic transformation. If $x\in\partial\mathbb{H}$ is the fixed point of the parabolic transformation corresponding to a puncture~$p$ in~$S$, then a horocycle in $\mathbb{H}$ centered at $x$ projects to a curve on $S$ which we also call a \emph{horocycle} at $p$.

\begin{definition}
If $S$ is a punctured surface, then we define the \emph{decorated Teichm\"uller space} $\mathcal{A}^+(S)$ to be the space that parametrizes pairs $(\rho,\mathcal{S})$ where $\rho$ is a point of $\mathcal{T}(S)$ and $\mathcal{S}$ is a set of horocycles, one at each puncture.
\end{definition}

More generally, suppose that $S$ is any decorated surface. Delete the marked points on the boundary of $S$ and double the resulting surface along its boundary arcs. This produces a punctured surface $S'$ where each marked point in the original surface gives rise to a puncture in~$S'$. The doubled surface $S'$ comes equipped with a natural involution $\iota:S'\rightarrow S'$.

\begin{definition}
The \emph{decorated Teichm\"uller space} $\mathcal{A}^+(S)$ is the $\iota$-invariant subspace of $\mathcal{A}^+(S')$.
\end{definition}

Note that this space can be identified with the one defined previously in the special case where there are no marked points on $S$. We write $\mathcal{A}_0^+(S)$ for the set of points in $\mathcal{A}^+(S)$ such that if $e$ is the segment of $\partial S$ between two marked points, then the horocycles at the ends of~$e$ are tangent. When there is no possibility of confusion, we will simply write $\mathcal{A}^+$ and~$\mathcal{A}_0^+$.

To construct coordinates on the decorated Teichm\"uller space, fix a point $m\in\mathcal{A}^+$ and let $i$ be an edge of an ideal triangulation. The point $m$ allows us to write $S'$ as a quotient $S'=\mathbb{H}/\Delta$ where $\Delta$ is a discrete subgroup of $PSL_2(\mathbb{R})$. We can then deform $i$ into a geodesic and lift this geodesic to the upper half plane $\mathbb{H}$. By definition of the decorated Teichm\"uller space, we have horocycles at the ends of the resulting geodesic in $\mathbb{H}$. We define~$A_i$ as the exponentiated half length (respectively, negative half length) of the segment of the lifted curve between the intersection points with the horocycles if these horocycles do not intersect (respectively, if they do intersect).
\[
\xy 0;/r.40pc/: 
(0,-6)*{}="1"; 
(12,-6)*{}="2"; 
"1";"2" **\crv{(0,3) & (12,3)}; 
(-8,-6)*{}="X"; 
(20,-6)*{}="Y"; 
"X";"Y" **\dir{-}; 
(0,-4)*\xycircle(2,2){-};
(12,0)*\xycircle(6,6){-};
(6,-10)*{A_i=e^{l/2}}; 
(3,2)*{l}; 
\endxy
\quad
\xy 0;/r.40pc/: 
(0,-6)*{}="1"; 
(12,-6)*{}="2"; 
"1";"2" **\crv{(0,3) & (12,3)}; 
(-8,-6)*{}="X"; 
(20,-6)*{}="Y"; 
"X";"Y" **\dir{-}; 
(0,3)*\xycircle(9,9){-};
(12,1)*\xycircle(7,7){-};
(7.3,2)*{l}; 
(6,-10)*{A_i=e^{-l/2}}; 
\endxy
\]
Doing this for every edge of an ideal triangulation of $S$, we get a collection of numbers $A_i$ ($i\in I$) corresponding to the point $m$.

\begin{proposition}
The numbers $A_i$ ($i\in I$) provide a bijection 
\[
\mathcal{A}^+(S)\rightarrow\mathbb{R}_{>0}^{|I|}.
\]
\end{proposition}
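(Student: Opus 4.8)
The plan is to exhibit an explicit inverse construction: given positive reals $A_i$ ($i \in I$) attached to the edges of a fixed ideal triangulation $T$ of $S$, build a point of $\mathcal{A}^+(S)$ realizing these as its coordinates, and show this construction is well-defined and inverse to the coordinate map of the preceding discussion. First I would reduce to the model of a single ideal triangle: for three positive numbers assigned to the sides of an ideal triangle in $\mathbb{H}$, there is (up to the $PSL_2(\mathbb{R})$-action) a unique configuration of three ideal vertices together with a horocycle at each vertex so that the exponentiated signed half-lengths cut out on the sides agree with the prescribed numbers. This is Penner's $\lambda$-length lemma, and it gives a bijection between $\mathbb{R}_{>0}^3$ and decorated ideal triangles modulo $PSL_2(\mathbb{R})$; I would invoke it as the base case.

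Next I would glue. Fixing a lift $\tilde{T}$ of $T$ to $\mathbb{H}$ and an initial triangle, I place one decorated triangle using the three $A_i$ on its sides, then inductively attach an adjacent decorated triangle across each edge: the shared edge already has a prescribed $\lambda$-length, which by the base case determines the new ideal vertex and its horocycle uniquely once the two shared vertices and their horocycles are fixed. Proceeding over the tree dual to $\tilde T$, this produces a developing map sending the vertices of $\tilde T$ to $\partial\mathbb{H}$ together with a horocycle at each, equivariant under a representation $\rho\colon\pi_1(S)\to PSL_2(\mathbb{R})$ built edge by edge on the generators. I would then check that $\rho$ is discrete and faithful with parabolics around punctures — this follows because the developed picture tiles a convex region of $\mathbb{H}$ by ideal triangles with the correct combinatorics, so the quotient is $S'$ with its complete hyperbolic structure — and that the horocycles descend to a well-defined decoration, yielding a genuine point $m \in \mathcal{A}^+(S)$. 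In the bordered case one further restricts to $\iota$-invariant configurations, which is automatic since the construction respects the doubling.

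Finally I would verify the two compositions are the identity. That the coordinate map applied to the constructed $m$ returns the original $A_i$ is immediate from the base case, since each edge's $\lambda$-length was prescribed directly. Conversely, starting from an arbitrary $m \in \mathcal{A}^+(S)$, reading off its $A_i$ and rerunning the construction must recover $m$: this is because a point of $\mathcal{A}^+(S)$ is determined by the $\rho$-equivariant assignment of decorated ideal vertices to $\tilde T$, and that assignment is rigid once the $\lambda$-lengths along all edges are known (two decorated triangles sharing an edge with a fixed $\lambda$-length on it are determined by each other). One should also note well-definedness independent of the choice of ideal triangulation $T$: a regular flip changes the $A_i$ by the cluster transformation of Definition~\ref{def:mutation}, which is subtraction-free and hence preserves positivity, so the target $\mathbb{R}_{>0}^{|I|}$ is compatible across charts, though for the statement as phrased a single $T$ suffices.

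I expect the main obstacle to be the discreteness and faithfulness of the developed representation $\rho$ together with the claim that the developing map is injective onto a convex subset of $\mathbb{H}$ — i.e.\ that the ideal triangles laid down in the gluing step have disjoint interiors and assemble into an embedded tiling rather than overlapping. The base-case $\lambda$-length lemma and the bookkeeping of signs (horocycles intersecting versus disjoint) are routine; the genuinely geometric input is controlling the global behavior of the development, which is where one must use that every $A_i > 0$ and that $T$ is an honest ideal triangulation of a surface admitting a complete finite-area hyperbolic metric.
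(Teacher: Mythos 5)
Your proposal is correct and follows essentially the same route as the paper: both build the inverse by developing decorated ideal triangles into $\mathbb{H}$ one at a time via Penner's realization lemmas (Corollary~4.8 for the initial triangle, Lemma~4.14 for each adjacent one), recover $\rho$ from the deck transformations, and check the two compositions are the identity. You are more explicit than the paper about the genuinely delicate point — that the development embeds without overlaps and yields a discrete faithful representation — which the paper compresses into ``one can check.''
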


\begin{proof}
We will construct an inverse to this map. Let us suppose that we are given a positive number $A_i$ for each edge $i\in I$. Let $\tilde{S}$ denote the topological universal cover of $S$. Then we can lift the ideal triangulation of $S$ to a triangulation of $\tilde{S}$, and we can associate to each edge of this triangulation the number associated to its projection.

Let $t_0$ be any triangle in the triangulation of~$\tilde{S}$. By~\cite{Penner}, Chapter~1, Corollary~4.8, there exists an ideal triangle $u_0$ in $\mathbb{H}$ and horocycles around the endpoints of $u_0$ realizing the $A$-coordinates associated to the edges of $t_0$. Next, consider a triangle $t$ adjacent to $t_0$ in the triangulation of~$\tilde{S}$. The common edge $t\cap t_0$ corresponds to an edge in~$\mathbb{H}$ with horocycles around its endpoints. By~\cite{Penner}, Chapter~1, Lemma~4.14, there is a unique ideal triangle $u$ in~$\mathbb{H}$ adjacent to $u_0$ with horocycles around its endpoints so that these horocycles agree with the ones already constructed and realize the $A$-coordinates associated to the edges of $t$.

Continuing in this way, we obtain a collection of ideal triangles in~$\mathbb{H}$ where each triangle corresponds to a triangle in~$\tilde{S}$. Now any element $\gamma\in\pi_1(S)$ corresponds to a deck transformation of $\tilde{S}$, and there is a unique element of $PSL_2(\mathbb{R})$ that realizes this deck transformation as an isometry of $\mathbb{H}$ preserving the triangulation. In this way, we obtain a representation $\rho:\pi_1(S)\rightarrow PSL_2(\mathbb{R})$. One can check that this construction provides a two-sided inverse of the map $\mathcal{A}^+(S)\rightarrow\mathbb{R}_{>0}^{|I|}$.
\end{proof}

\begin{proposition}
A regular flip at an edge $k$ of the ideal triangulation changes the coordinates~$A_i$ to new coordinates~$A_i'$ given by the formula 
\[
A_i' =
\begin{cases}
A_k^{-1}\biggr(\prod_{j|\varepsilon_{kj>0}}A_j^{\varepsilon_{kj}} + \prod_{j|\varepsilon_{kj<0}}A_j^{-\varepsilon_{kj}}\biggr) & \mbox{if } i=k \\
A_i & \mbox{if } i\neq k.
\end{cases}
\]
\end{proposition}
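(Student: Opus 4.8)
The plan is to reduce the statement to Penner's Ptolemy relation together with a local computation of the exchange matrix. For an edge $i\neq k$, a flip at $k$ leaves the geodesic representing $i$ and the two horocycles at its endpoints unchanged, so $A_i'=A_i$ is immediate from the definition of the $A$-coordinates; the content of the proposition is therefore the value of $A_k'$. To compute it I would localize to the ideal quadrilateral $Q=t\cup t'$ formed by the two triangles of the ideal triangulation adjacent to $k$. Lifting $Q$ to the universal cover $\mathbb{H}$, we obtain an embedded ideal quadrilateral with distinct ideal vertices $P_1,P_2,P_3,P_4$ in cyclic order, decorated by the lifts of the horocycles determined by $m\in\mathcal{A}^+(S)$; the diagonal $P_1P_3$ is a lift of $k$, and the edge $k'$ introduced by the flip lifts to the other diagonal $P_2P_4$. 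By definition of the $A$-coordinates, the $A$-coordinate of an edge of the triangulation is the lambda length — in the signed convention $A=e^{\pm l/2}$ — of any of its lifts with respect to the lifted horocycles; in particular the four sides of $Q$ realize the $A$-coordinates of the four boundary edges of $Q$, and $A_k'$ equals the lambda length of $P_2P_4$.

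By Penner's Ptolemy relation for lambda lengths (\cite{Penner}, Chapter~1), the product of the lambda lengths of the two diagonals of $Q$ equals the sum of the products of the lambda lengths of the two pairs of opposite sides:
\[
A_k\,A_k'=A_{P_1P_2}\,A_{P_3P_4}+A_{P_2P_3}\,A_{P_4P_1},
\]
where $A_{P_aP_b}$ denotes the $A$-coordinate of the edge of $T$ of which $P_aP_b$ is a lift. (The relation holds regardless of whether the horocycles at the various vertices intersect, which is precisely why the signed convention for the $A$-coordinates is the correct one.) Solving for $A_k'$, it then remains to identify $A_{P_2P_3}A_{P_4P_1}$ with $\prod_{j\mid\varepsilon_{kj}>0}A_j^{\varepsilon_{kj}}$ and $A_{P_1P_2}A_{P_3P_4}$ with $\prod_{j\mid\varepsilon_{kj}<0}A_j^{-\varepsilon_{kj}}$.

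For this I would compute $\varepsilon_{kj}$ from the $2$-triangulation of $T$. The midpoint of $k$ meets the oriented small edges of only the two triangles $t$ and $t'$, and in each of these the three side-midpoints span a small triangle carrying the orientation induced by that of $S$. Reading off the two oriented small edges incident to the midpoint of $k$ inside $t$ and the two inside $t'$, one finds that $\varepsilon_{k,P_2P_3}$ and $\varepsilon_{k,P_4P_1}$ are positive while $\varepsilon_{k,P_1P_2}$ and $\varepsilon_{k,P_3P_4}$ are negative, with $\varepsilon_{kj}=0$ for all other $j$; each of these four contributions has absolute value $1$. Since the flip is regular, the four sides of $Q$ are edges of $T$ distinct from $k$, and no two \emph{adjacent} sides of $Q$ can be the same edge of $T$ (such a coincidence would force the flipped triangulation to contain a self-folded triangle), so at worst a pair of \emph{opposite} sides is identified — in which case the corresponding exponent in $\varepsilon_{kj}$ becomes $\pm2$ and the products above are unchanged. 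Hence $\prod_{j\mid\varepsilon_{kj}>0}A_j^{\varepsilon_{kj}}=A_{P_2P_3}A_{P_4P_1}$ and $\prod_{j\mid\varepsilon_{kj}<0}A_j^{-\varepsilon_{kj}}=A_{P_1P_2}A_{P_3P_4}$, and substituting into the displayed Ptolemy identity yields the asserted formula for $A_k'$.

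I expect the orientation bookkeeping in the last step to be the main obstacle: one must pin down the directions of the small edges at the midpoint of $k$ carefully enough to see that like-signed entries of $\varepsilon_{kj}$ correspond to \emph{opposite} sides of $Q$, and must verify that the self-gluings still permitted under the regularity hypothesis do not disturb the matching. The Ptolemy relation itself is classical; if one wants a self-contained argument rather than a citation to \cite{Penner}, it can be re-derived in a few lines by placing $P_1,P_2,P_3$ at $0,1,\infty$ in $\mathbb{H}$, writing the three corresponding horocycles and the horocycle at $P_4$ explicitly, and computing the six relevant signed lengths — a routine calculation I would not reproduce in full.
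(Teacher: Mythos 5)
Your proof is correct and takes essentially the same route as the paper, whose entire proof is the one-line remark that the statement is a restatement of the Ptolemy relation from~\cite{Penner}. You have simply supplied the details the paper leaves implicit — the identification of the two products $\prod_{j\mid\varepsilon_{kj}>0}A_j^{\varepsilon_{kj}}$ and $\prod_{j\mid\varepsilon_{kj}<0}A_j^{-\varepsilon_{kj}}$ with the pairs of opposite sides of the quadrilateral, and the check that regularity of the flip rules out the identifications of adjacent sides that would otherwise cause cancellation in $\varepsilon_{kj}$ — and these details are right.
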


\begin{proof}
This result is a restatement of the Ptolemy relation proved in~\cite{Penner}.
\end{proof}

Note that the formula appearing in this proposition is exactly the formula that we used to define the cluster $K_2$-variety. Thus we immediately obtain the following.

\begin{theorem}
The decorated Teichm\"uller space is naturally identified with the space of positive real points of the cluster $K_2$-variety associated to the surface~$S$:
\[
\mathcal{A}^+(S)=\mathcal{A}(\mathbb{R}_{>0}).
\]
\end{theorem}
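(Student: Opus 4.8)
The plan is to assemble the two preceding propositions into a single triangulation-independent identification. Recall that $\mathcal{A}(\mathbb{R}_{>0})$ was defined as $\coprod_{\mathbf{i}}\mathcal{A}_{\mathbf{i}}(\mathbb{R}_{>0})$ modulo the identifications induced by the cluster transformations $\psi_{\mathbf{i},\mathbf{i}'}$. The first key observation is that, because each $\psi_{\mathbf{i},\mathbf{i}'}^*$ is given by subtraction-free rational expressions in the $A$-coordinates and its inverse (the mutation in the same direction) is of the same form, the induced map $\psi_{\mathbf{i},\mathbf{i}'*}:\mathcal{A}_{\mathbf{i}}(\mathbb{R}_{>0})\to\mathcal{A}_{\mathbf{i}'}(\mathbb{R}_{>0})$ is an honest bijection of sets $\mathbb{R}_{>0}^{|I|}\to\mathbb{R}_{>0}^{|I|}$ — over a semifield there is no indeterminacy locus to excise. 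Consequently the canonical map $\mathcal{A}_{\mathbf{i}}(\mathbb{R}_{>0})\to\mathcal{A}(\mathbb{R}_{>0})$ is a bijection for \emph{every} seed $\mathbf{i}$, so in particular it does not matter that not every such seed arises from an ideal triangulation.

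First I would fix an ideal triangulation $T$ with associated seed $\mathbf{i}$, and form the composite
\[
\mathcal{A}^+(S)\;\longrightarrow\;\mathbb{R}_{>0}^{|I|}=\mathcal{A}_{\mathbf{i}}(\mathbb{R}_{>0})\;\longrightarrow\;\mathcal{A}(\mathbb{R}_{>0}),
\]
where the first arrow is the $A$-coordinate bijection of the proposition above and the second is the chart inclusion just discussed. This composite is a bijection, being a composite of bijections. What remains — and what makes the identification \emph{natural} — is to check that it is independent of the choice of $T$.

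For independence, I would use that any two ideal triangulations are connected by a sequence of flips, together with the fact that a regular flip at an internal edge $k$ changes the $A$-coordinates by precisely the Ptolemy relation, which is the same formula that defines the cluster transformation $\mu_k^*$ on the $\mathcal{A}$-torus. Thus, one flip at a time, the square relating the two coordinate charts on $\mathcal{A}^+(S)$ to the two torus charts on $\mathcal{A}(\mathbb{R}_{>0})$ commutes; chaining these squares along a flip sequence joining two arbitrary triangulations shows that the composite above is the same whichever triangulation is used to compute it. Hence we obtain a well-defined bijection $\mathcal{A}^+(S)\cong\mathcal{A}(\mathbb{R}_{>0})$ compatible with all coordinate systems simultaneously, which is exactly the asserted natural identification.

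The main obstacle is the familiar technical nuisance of self-folded triangles: the exchange matrix $\varepsilon^T_{ij}$, and hence the seed, was only defined for triangulations without self-folded triangles, and a flip joining two such triangulations need not pass only through triangulations of this kind, nor be regular at each step. I would handle this exactly as in the references already cited (Penner, and Fock--Goncharov), either by restricting attention to surfaces on which every internal edge admits a regular flip, or by first passing to a surface with sufficiently many marked points so that self-folded triangles can be avoided along the flip path and then descending. Once this compatibility of flips with mutations is secured, the remainder is bookkeeping and the equality $\mathcal{A}^+(S)=\mathcal{A}(\mathbb{R}_{>0})$ follows.
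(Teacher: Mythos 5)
Your proposal is correct and follows essentially the same route as the paper, which derives the theorem directly from the coordinate bijection $\mathcal{A}^+(S)\rightarrow\mathbb{R}_{>0}^{|I|}$ together with the observation that the Ptolemy relation for a flip coincides with the cluster mutation formula. You simply spell out the bookkeeping (bijectivity of semifield charts, independence of the triangulation via flip sequences, and the caveat about self-folded triangles) that the paper leaves implicit.
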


\section{Enhanced Teichm\"uller space}

The next version of Teichm\"uller space that we will consider parametrizes more general surface group representations. More precisely, we consider the set 
\[
\Hom''(\pi_1(S),PSL_2(\mathbb{R}))/PSL_2(\mathbb{R})
\]
where $\Hom''(\pi_1(S),PSL_2(\mathbb{R}))$ is the set of all discrete and faithful representations of~$\pi_1(S)$ into $PSL_2(\mathbb{R})$ such that the image of a loop surrounding a puncture is either parabolic or hyperbolic. The group~$PSL_2(\mathbb{R})$ again acts by conjugation.

Suppose we are given a representation $\rho:\pi_1(S)\rightarrow PSL_2(\mathbb{R})$ in the set described above. A puncture $p$ in the surface $S$ will be called a \emph{hole} if $\rho$ maps the homotopy class of a loop surrounding $p$ to a hyperbolic transformation.

\begin{definition}
If $S$ is a punctured surface, then we define the \emph{enhanced Teichm\"uller space} $\mathcal{X}^+(S)$ to be the space that parametrizes pairs $(\rho,\mathcal{S})$ where $\rho$ is an element of the above quotient and $\mathcal{S}$ is a set of orientations, one for each hole.
\end{definition}

More generally, if $S$ is any decorated surface, then by doubling $S$, we can construct the punctured surface $S'$ with the natural involution $\iota:S'\rightarrow S'$ as before. One then has the following definition.

\begin{definition}
The \emph{enhanced Teichm\"uller space} $\mathcal{X}^+(S)$ of $S$ is the $\iota$-invariant subspace of $\mathcal{X}^+(S')$.
\end{definition}

This space can be identified with the one defined previously in the special case where there are no marked points on $S$. When there is no possibility of confusion, we will simply write~$\mathcal{X}^+$.

To construct coordinates on the enhanced Teichm\"uller space, fix a point $m\in\mathcal{X}^+$ and an ideal triangulation $T$. Modify a small neighborhood of each hole in $S$ to get a new surface~$S'$ with geodesic boundary. The edges of $T$ correspond to arcs on this surface $S'$, and there is a canonical way to deform these arcs. If an arc ends on a hole, we wind its endpoint around the hole infinitely many times in the direction prescribed by the orientation so that the arcs spiral into the holes in~$S'$. More precisely, each geodesic boundary component of~$S'$ lifts to a geodesic in~$\mathbb{H}$ and we deform the preimage of an edge by dragging its endpoints along these geodesics until they coincide with points of~$\partial\mathbb{H}$.

Once we have deformed the edges of our triangulation in this way, we can lift the triangulation to the upper half plane to get a collection of ideal triangles. Let $k$ be any internal edge and consider the two ideal triangles that share this edge. Together they form an ideal quadrilateral, and we number the vertices of this ideal quadrilateral in counterclockwise order as shown below so that $k$ joins vertices 1 and~3.
\[
\xy 0;/r.40pc/: 
(-6,-8)*{1}; 
(0,-8)*{2}; 
(12,-8)*{3}; 
(24,-8)*{4}; 
(-6,-6)*{}="1"; 
(0,-6)*{}="2"; 
(12,-6)*{}="3"; 
(24,-6)*{}="4"; 
"1";"2" **\crv{(-6,-1) & (0,-1)}; 
"2";"3" **\crv{(0,3) & (12,3)}; 
"3";"4" **\crv{(12,3) & (24,3)}; 
"1";"4" **\crv{(-6,15) & (24,15)}; 
"1";"3" **\crv{(-6,8) & (12,8)}; 
(-6,-4)*\xycircle(2,2){-};
(0,-5)*\xycircle(1,1){-};
(12,-4)*\xycircle(2,2){-};
(24,-3)*\xycircle(3,3){-};
(-8,-6)*{}="X"; 
(26,-6)*{}="Y"; 
"X";"Y" **\dir{-}; 
\endxy
\]

Choose a horocycle at each vertex, and put $A_{ij}=e^{l_{ij}/2}$ where $l_{ij}$ is the signed length of the segment between the horocycles at $i$ and $j$. We then define the \emph{cross ratio} 
\[
X_k=\frac{A_{12}A_{34}}{A_{14}A_{23}}.
\]
It is easy to see that there are two ways of numbering the vertices, and both give the same value for the cross ratio. One can also show that the numbers $X_k$ ($k\in J$) are independent of the chosen horocycles. They are the numbers that we associate to the point~$m$.

\begin{proposition}
The numbers $X_j$ ($j\in J$) provide a bijection 
\[
\mathcal{X}^+(S)\rightarrow\mathbb{R}_{>0}^{|J|}.
\]
\end{proposition}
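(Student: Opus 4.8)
The plan is to construct an explicit two-sided inverse of this map, in the spirit of the proof of the preceding proposition for the decorated Teichm\"uller space, but using the developing map of an ideal triangulation in place of Penner's triangle-by-triangle decoration argument; in effect, the proposition is the statement that the cross ratios $X_j$ are global ``shear coordinates'' on the enhanced Teichm\"uller space, so the work is to make the standard development argument precise. Note that on the open locus of decorated structures one already has the formula $X_k = A_{12}A_{34}/(A_{14}A_{23})$ from the previous section, but since hyperbolic boundary components carry no horocycles this does not by itself cover all of $\mathcal{X}^+(S)$, so a direct argument is cleaner.

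First I would lift the ideal triangulation $T$ of $S'$ to a triangulation of the universal cover $\tilde{S}'$. Since $\tilde{S}'$ is simply connected, every internal edge of this lifted triangulation separates it, so the dual graph is a tree. Given positive numbers $X_j$ ($j\in J$), I would then build a developing map $\tilde{S}'\to\mathbb{H}$: place one lifted triangle as a fixed ideal triangle in $\mathbb{H}$, say with vertices $0$, $1$, $\infty$; whenever a triangle has been placed and one of its edges corresponds to an internal edge $k$ of $T$, attach along that edge the unique ideal triangle on the opposite side whose fourth ideal vertex makes the cross ratio of the four vertices equal to $X_k$ --- positivity of $X_k$ ensures this fourth vertex lies on the correct arc of $\partial\mathbb{H}$, so the new triangle does not overlap the old one. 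Because the dual graph is a tree, this rule is unambiguous and extends over all of $\tilde{S}'$. A deck transformation $\gamma\in\pi_1(S')$ permutes the lifted triangles preserving all cross ratios, hence is realized by a unique $\rho(\gamma)\in PSL_2(\mathbb{R})$, and $\gamma\mapsto\rho(\gamma)$ is a homomorphism; restricting to the $\iota$-invariant subspace yields the representation attached to a point of $\mathcal{X}^+(S)$.

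One then verifies that $\rho$ is discrete and faithful and that the monodromy around each puncture of $S'$ is parabolic or hyperbolic. This is read off from the product of the shear parameters of the triangles encircling the puncture: positivity of the $X_j$ forces the relevant element of $PSL_2(\mathbb{R})$ to be non-elliptic, with a geodesic boundary component --- a hole --- appearing precisely when that product is not $1$, and the spiraling direction of the developed edges at such a component recovers the orientation datum $\mathcal{S}$. Finally, the two maps are mutually inverse: the cross ratios of the triangulation developed from $(X_j)$ are the $X_j$ by construction, while conversely a point of $\mathcal{X}^+(S)$ is determined up to $PSL_2(\mathbb{R})$ by its developed ideal triangulation, which is pinned down by the cross ratios together with the rigidity of ideal triangles already used in the decorated case.

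The main obstacle is not the combinatorics of the development but the verification that the output is a \emph{bona fide} point of $\mathcal{X}^+(S)$: establishing discreteness and faithfulness of $\rho$, ruling out elliptic monodromy around the punctures, and correctly extracting the orientation data at the holes. This rests on the analytic theory of shear coordinates on Teichm\"uller space due to Thurston, Penner, and Fock, and is the point at which one either invokes those results or reproves the needed estimates; the passage from $S'$ back to the decorated surface $S$ through the involution $\iota$ is then routine bookkeeping.
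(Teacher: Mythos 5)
Your proposal is correct and follows essentially the same route as the paper: lift the triangulation to the universal cover, develop ideal triangles into $\mathbb{H}$ one at a time using the cross ratios (which determine each new triangle uniquely), realize deck transformations as elements of $PSL_2(\mathbb{R})$ to get the representation, and read off the hole orientations from the product of the shear parameters around each puncture. The paper's proof is in fact terser than yours --- it compresses the discreteness, faithfulness, and non-ellipticity checks into a single ``one can check'' --- so your explicit flagging of those verifications as the real content is, if anything, an improvement in honesty rather than a divergence in method.
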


\begin{proof}
Suppose we are given a positive number $X_j$ for each edge $j\in J$. To recover the orientation of a hole~$h$, we compute the number $\sum\log X_j$, where the sum is over all edges~$j$ incident to~$h$. This sum is negative (respectively, positive) when the orientation is induced from the orientation of $S$ (respectively, the opposite of this orientation).

Let $\tilde{S}$ denote the topological universal cover of the surface $S$. We can lift the ideal triangulation of $S$ to an ideal triangulation of $\tilde{S}$, and we can associate to each edge of this triangulation the number associated to its projection.

Let $t_0$ be any triangle in the triangulation of~$\tilde{S}$, and choose a corresponding ideal triangle $u_0$ in $\mathbb{H}$. Next, consider a triangle $t$ adjacent to $t_0$ in the triangulation of~$\tilde{S}$. There is an $X$-coordinate associated to the common edge $t\cap t_0$. Since the cross ratio is a complete invariant of four points on~$\partial\mathbb{H}$, there is a unique ideal triangle $u$ in~$\mathbb{H}$ adjacent to $u_0$ so that these triangles realize the $X$-coordinate associated to the common edge.

Continuing in this way, we obtain a triangulation of a region in~$\mathbb{H}$ by geodesic triangles where each triangle corresponds to a triangle in~$\tilde{S}$. Now any element $\gamma\in\pi_1(S)$ determines a deck transformation of $\tilde{S}$, and there is a unique element of $PSL_2(\mathbb{R})$ that realizes this deck transformation as an isometry of $\mathbb{H}$ preserving the triangulation. In this way, we obtain a representation $\rho:\pi_1(S)\rightarrow PSL_2(\mathbb{R})$. One can check that this construction provides a two-sided inverse of the map $\mathcal{X}^+(S)\rightarrow\mathbb{R}_{>0}^{|J|}$.
\end{proof}

\begin{proposition}
A regular flip at an edge $k$ of the ideal triangulation changes the coordinates~$X_i$ to new coordinates~$X_i'$ given by the formula 
\[
X_i'=
\begin{cases}
X_k^{-1} & \mbox{if } i=k \\
X_i{(1+X_k^{-\sgn(\varepsilon_{ik})})}^{-\varepsilon_{ik}} & \mbox{if } i\neq k.
\end{cases}
\]
\end{proposition}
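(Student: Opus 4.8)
The plan is to prove this by a direct computation with cross ratios, in the spirit of~\cite{Penner} and~\cite{dual}, after a localization step. The coordinate $X_j$ attached to an internal edge $j$ depends only on the four ideal endpoints of the ideal quadrilateral formed by the two triangles adjacent to $j$: as observed after the definition, the chosen horocycles cancel, so $X_j$ is a sign-normalized version of the cross ratio of those four points in $\partial\mathbb{H}$, and it is invariant under $PSL_2(\mathbb{R})$. A flip at $k$ alters only the two triangles lying inside the ideal quadrilateral $Q$ having $k$ as a diagonal, so $X_j$ can change only for $j=k$ and for the (at most four) sides $j$ of $Q$, that is, precisely for the $j$ with $\varepsilon_{jk}\neq 0$. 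Here regularity of the flip ensures that $Q$ is an honest ideal quadrilateral, with no two \emph{adjacent} sides equal, so that $\varepsilon_{jk}=0$ is genuinely equivalent to $j$ being disjoint from $Q$. Thus the statement reduces to a finite computation with ideal points in $\mathbb{H}$.

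For $i=k$, label the vertices of $Q$ cyclically $1,2,3,4$ so that $k$ joins $1$ and $3$ and $X_k$ is the cross ratio of $(1,2,3,4)$. The flip replaces the diagonal $13$ by $24$; re-reading the cross ratio with the new diagonal amounts to cyclically relabelling the four vertices, and with the sign conventions in force this is exactly the operation sending the cross ratio to its reciprocal, so $X_k'=X_k^{-1}$. For $i\neq k$ a side of $Q$, say $i=12$, the triangle of the triangulation inside $Q$ and incident to $i$ is $123$ before the flip, so $X_i$ is the cross ratio of the ideal quadrilateral around $i$, whose vertices are $1$, $3$, $2$ and the apex $w$ of the triangle on the other side of $i$; after the flip that inner triangle is $124$, so $X_i'$ is the same cross ratio with the vertex $3$ replaced by $4$. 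Writing out both cross ratios, the two factors involving the external vertex $w$ cancel, and $X_i'/X_i$ becomes a cross ratio of the four vertices $1,2,3,4$ of $Q$ alone — hence one of the six standard M\"obius functions of $X_k$. A short check identifies this factor as $(1+X_k^{-\sgn(\varepsilon_{ik})})^{-\varepsilon_{ik}}$: the sign $\sgn(\varepsilon_{ik})$ records which endpoint of $k$ is the apex of the inner triangle on $i$ (hence which of $3,4$ is replaced by which), while $|\varepsilon_{ik}|\in\{1,2\}$ counts how many sides of $Q$ equal the edge $i$, the value $2$ occurring exactly when two opposite sides of $Q$ coincide, which a regular flip permits. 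Running this for each side of $Q$ and combining with the case $i=k$ gives the displayed formula.

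The cross-ratio cancellations are routine; the step that needs care is the last one — matching the geometric correction factor to the combinatorial quantity $\varepsilon_{ik}$, both in sign and in magnitude, uniformly over all configurations, including the degenerate case $|\varepsilon_{ik}|=2$. As an alternative route, once one knows that $X_k=\prod_j A_j^{\varepsilon_{kj}}$ in the decorated-Teichm\"uller coordinates of the previous section, the formula follows formally from the Ptolemy relation proved there together with the fact that the $\mathcal{X}$-cluster transformation is induced from the $\mathcal{A}$-cluster transformation via the substitution $X_j=\prod_i A_i^{\varepsilon_{ji}}$ (compare Theorem~\ref{thm:doubleproperties}); this is short, but when $\varepsilon$ is degenerate the image of $\mathcal{A}^+(S)$ in $\mathcal{X}^+(S)$ is not dense, so the identity must then be checked directly on that image or obtained by a separate density argument, which is why I would carry out the cross-ratio computation as the primary proof.
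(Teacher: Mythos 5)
Your argument is correct and is essentially the proof the paper delegates to Penner's book with its one-line citation: express each $X_j$ as a horocycle-independent cross ratio of $\lambda$-lengths, observe that the flip only replaces one apex of the quadrilateral surrounding each affected edge, and identify the resulting correction factor $A_{13}A_{24}/A_{14}A_{23}=1+X_k$ (equivalently, via the Ptolemy relation) with $(1+X_k^{-\sgn(\varepsilon_{ik})})^{-\varepsilon_{ik}}$. Your attention to the degenerate configurations — that regularity of the flip excludes an edge appearing as two \emph{adjacent} sides of the quadrilateral (which would give $\varepsilon_{ik}=0$ while $X_i$ genuinely changes), and that an edge appearing as two \emph{opposite} sides simply squares the factor, matching $|\varepsilon_{ik}|=2$ — is precisely the care a complete write-up requires and that the citation glosses over.
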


\begin{proof}
See~\cite{Penner}.
\end{proof}

This proposition immediately implies the following.

\begin{theorem}
The enhanced Teichm\"uller space is naturally identified with the space of positive real points of the cluster Poisson variety associated to the surface~$S$:
\[
\mathcal{X}^+(S)=\mathcal{X}(\mathbb{R}_{>0}).
\]
\end{theorem}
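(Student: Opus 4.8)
The statement follows by gluing coordinate charts, so the plan is to assemble the two preceding propositions into such an argument. For each ideal triangulation $T$ of $S$, write $\mathbf{i}=\mathbf{i}(T)$ for the associated seed; the first of the two propositions above provides a bijection $\phi_T\colon\mathcal{X}^+(S)\to\mathcal{X}_{\mathbf{i}}(\mathbb{R}_{>0})=\mathbb{R}_{>0}^{|J|}$, namely the cross-ratio coordinates $X_j$. If $T'$ is obtained from $T$ by a regular flip at an internal edge $k$, then the quiver of $T'$ is the mutation in direction $k$ of the quiver of $T$, so by Proposition~\ref{prop:matrixmutation} the exchange matrices are related by the cluster rule, and the second proposition above states that $\phi_{T'}=\psi_{\mathbf{i},\mathbf{i}'*}\circ\phi_T$, where $\psi_{\mathbf{i},\mathbf{i}'*}\colon\mathcal{X}_{\mathbf{i}}(\mathbb{R}_{>0})\to\mathcal{X}_{\mathbf{i}'}(\mathbb{R}_{>0})$ is exactly the positive-real-point version of the cluster $\mathcal{X}$-transformation of Definition~\ref{def:mutation}.

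First I would observe that the family $\{\phi_T\}$ is consistent along all paths in the flip graph. Since each $\phi_T$ is a bijection onto its chart, for triangulations $T$ and $T'$ joined by any sequence of regular flips the resulting composite of cluster transformations $\mathcal{X}_{\mathbf{i}(T)}(\mathbb{R}_{>0})\to\mathcal{X}_{\mathbf{i}(T')}(\mathbb{R}_{>0})$ must equal $\phi_{T'}\circ\phi_T^{-1}$ regardless of the chosen sequence; in particular the identifications around a closed loop compose to the identity, so no consistency relation needs to be checked by hand. Because any two ideal triangulations of $S$ are joined by a sequence of flips (the proposition at the end of the preliminaries), the maps $\phi_T$ descend to a single well-defined map
\[
\Phi\colon\mathcal{X}^+(S)\longrightarrow\mathcal{X}(\mathbb{R}_{>0})
\]
into the space of positive real points, which by definition is the quotient of $\coprod_T\mathcal{X}_{\mathbf{i}(T)}(\mathbb{R}_{>0})$ by the identifications $\psi_{\mathbf{i},\mathbf{i}'*}$.

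Second I would check that $\Phi$ is a bijection, in fact a homeomorphism. Over $\mathbb{R}_{>0}$ the $\mathcal{X}$-mutation formulas use only addition, multiplication and division of positive quantities and are invertible (the mutation is an involution on seeds and on tori), so every identification $\psi_{\mathbf{i},\mathbf{i}'*}$ is a diffeomorphism $\mathbb{R}_{>0}^{|J|}\to\mathbb{R}_{>0}^{|J|}$; hence the natural map from a single torus chart $\mathcal{X}_{\mathbf{i}(T)}(\mathbb{R}_{>0})$ into $\mathcal{X}(\mathbb{R}_{>0})$ is already a bijection, and a homeomorphism for the natural topology. Composing this with the bijection $\phi_T$, whose inverse is the explicit hyperbolic-geometry construction (developing a lift of $T$ triangle by triangle into $\mathbb{H}$) from the proof of the coordinate proposition, shows that $\Phi$ is a homeomorphism, which is the asserted identification $\mathcal{X}^+(S)=\mathcal{X}(\mathbb{R}_{>0})$.

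The only point I would treat with care is that the flip formula was proved only for regular flips, whereas connecting arbitrary ideal triangulations may require flips at edges of self-folded triangles. To close this gap I would either restrict to decorated surfaces in which self-folded triangles do not occur (for instance those every boundary component of which carries a marked point) or separately verify that the $X$-coordinate transition across a self-folded triangle is still the appropriate composition of cluster transformations; granting this, the remainder of the argument is routine bookkeeping with the definitions already recorded above.
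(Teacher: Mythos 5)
Your proposal is correct and is exactly the unpacking of the paper's one-line deduction: the paper also obtains the theorem by combining the coordinate bijection $\mathcal{X}^+(S)\to\mathbb{R}_{>0}^{|J|}$ with the proposition that regular flips act by cluster Poisson mutations, and identifying a single positive chart with the quotient $\mathcal{X}(\mathbb{R}_{>0})$. Your extra care about consistency of the identifications and about flips at self-folded triangles addresses points the paper leaves implicit, but does not change the argument.
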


\section{Doubled Teichm\"uller space}

We now describe a version of the Teichm\"uller space of a doubled surface first introduced in~\cite{double}. Let $\Sigma$ be any oriented punctured surface.

\begin{definition}
A \emph{simple lamination} on $\Sigma$ is a finite collection $\gamma=\{\gamma_i\}$ of simple nontrivial disjoint nonisotopic loops on $\Sigma$ which do not retract to the punctures, considered modulo isotopy.
\end{definition}

Let $\Sigma$ be an oriented surface equipped with a simple lamination $\gamma=\{\gamma_i\}$. For any subset $\{\gamma_1,\dots,\gamma_k\}$ of the set of loops $\gamma_i$, we let $\Sigma_{p_1,\dots,p_k}$ denote the singular surface obtained by pinching these loops to get the nodes $p_1,\dots,p_k$. This singular surface is equipped with a simple lamination $\gamma_{p_1,\dots,p_k}$ given by the image of $\gamma-\{\gamma_1,\dots,\gamma_k\}$. Note that if we cut the surface $\Sigma_{p_1,\dots,p_k}$ at the nodes $p_1,\dots,p_k$, we get a punctured surface $\Sigma_{p_1,\dots,p_k}'$. When we talk about a point in the Teichm\"uller space of $\Sigma_{p_1,\dots,p_k}$, we really mean a point in the Teichm\"uller space of the surface $\Sigma_{p_1,\dots,p_k}'$. Similarly, when we talk about a horocycle at a node of $\Sigma_{p_1,\dots,p_k}$, we really mean a horocycle at one of the corresponding punctures in $\Sigma_{p_1,\dots,p_k}'$.

\begin{definition}
The set $\mathcal{X}_{\Sigma;\gamma;p_1,\dots,p_k}^+$ parametrizes points $\rho\in\mathcal{T}(\Sigma_{p_1,\dots,p_k})$ of the Teichm\"uller space of $\Sigma_{p_1,\dots,p_k}$, together with the following data:
\begin{enumerate}
\item An orientation for each loop of the simple lamination $\gamma_{p_1,\dots,p_k}$.

\item For every node $p_i$, a pair of horocycles $(c_{-,i},c_{+,i})$ centered at $p_i$, located on opposite sides of the node, and defined up to simultaneous shift by any real number.
\end{enumerate}
We write $\mathcal{X}_{\Sigma;\gamma}^+$ for the union of these sets $\mathcal{X}_{\Sigma;\gamma;p_1,\dots,p_k}^+$.
\end{definition}

Now let $S$ be a decorated surface, and let $S^\circ$ be the same surface with the opposite orientation. The \emph{double} $S_\mathcal{D}$ of $S$ is the punctured surface obtained by gluing $S$ and~$S^\circ$ along corresponding boundary components and deleting the image of each marked point in the resulting surface. The surface $\Sigma=S_\mathcal{D}$ carries a natural simple lamination $\gamma$ given by the image of the boundary loops of $S$.

\begin{definition}
The \emph{doubled Teichm\"uller space} $\mathcal{D}^+(S)$ is the space $\mathcal{X}_{\Sigma;\gamma}^+$ with $\Sigma=S_\mathcal{D}$.
\end{definition}

When there is no possibility of confusion, we will denote the doubled Teichm\"uller space by $\mathcal{D}^+$. The next two results will be used to construct coordinates on this space.

\begin{lemma}
If $\rho\in\mathcal{T}(S_\mathcal{D})$ is a point in the Teichm\"uller space of $S_\mathcal{D}$, then there exists a hyperbolic structure representing $\rho$ such that $\partial S\subseteq S_\mathcal{D}$ is geodesic.
\end{lemma}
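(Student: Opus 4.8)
The plan is to reduce the statement to two classical facts about complete hyperbolic surfaces: that every essential, non-peripheral simple closed curve is freely homotopic to a unique simple closed geodesic, and that a disjoint, pairwise non-isotopic family of such curves can be straightened simultaneously by an ambient isotopy. Since a point $\rho\in\mathcal{T}(S_\mathcal{D})$ is by definition the conjugacy class of a discrete faithful representation $\pi_1(S_\mathcal{D})\rightarrow PSL_2(\mathbb{R})$ sending peripheral loops to parabolics, it is realized by a complete hyperbolic metric $g_0$ on $S_\mathcal{D}$, namely the one pulled back from the uniformization $S_\mathcal{D}\cong\mathbb{H}/\Gamma$ with $\Gamma=\rho(\pi_1(S_\mathcal{D}))$; this is a hyperbolic structure representing $\rho$, and the strategy is to modify it by an isotopy so that $\partial S$ becomes geodesic.

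Write $\gamma=\{\gamma_i\}$ for the image of $\partial S$ in $S_\mathcal{D}$, which by construction is the natural simple lamination; thus each $\gamma_i$ is a simple closed curve that is nontrivial and does not retract to a puncture, and the $\gamma_i$ are pairwise disjoint and pairwise non-isotopic. First I would observe that $\rho([\gamma_i])$ is a hyperbolic element of $\Gamma$: it is nontrivial because $\rho$ is faithful and $\gamma_i$ is essential, and it is not parabolic because $\gamma_i$ is non-peripheral and $\rho$ genuinely uniformizes $S_\mathcal{D}$. Its axis in $\mathbb{H}$ descends to a simple closed $g_0$-geodesic $\gamma_i^*$ freely homotopic to $\gamma_i$. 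Next I would check that $\gamma^*=\bigsqcup_i\gamma_i^*$ is again a disjoint union: for $i\neq j$ the classes of $\gamma_i$ and $\gamma_j$ have geometric intersection number zero, and since geodesic representatives realize the geometric intersection number, $\gamma_i^*$ and $\gamma_j^*$ are disjoint (and distinct as subsets, since $\gamma_i$ and $\gamma_j$ lie in different free homotopy classes).

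Now $\gamma$ and $\gamma^*$ are two disjoint unions of simple closed curves with the same collection of (pairwise distinct, essential) isotopy classes of components, so the change-of-coordinates principle for multicurves provides an isotopy $(\Phi_t)_{t\in[0,1]}$ of $S_\mathcal{D}$ with $\Phi_0=\mathrm{id}$ and $\Phi_1(\gamma_i)=\gamma_i^*$ for every $i$. Setting $g=\Phi_1^*g_0$ then gives a hyperbolic metric on $S_\mathcal{D}$ that still represents $\rho$, since $\Phi_1$ is isotopic to the identity; and because $\Phi_1\colon(S_\mathcal{D},g)\rightarrow(S_\mathcal{D},g_0)$ is an isometry carrying each $\gamma_i$ onto the $g_0$-geodesic $\gamma_i^*$, each $\gamma_i$, and hence all of $\partial S\subseteq S_\mathcal{D}$, is geodesic for $g$.

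I expect the only delicate point to be the simultaneous straightening in the third paragraph: it does not follow from straightening the $\gamma_i$ one at a time without verifying that each step can be performed in the complement of the curves already made geodesic, and the cleanest way around this is to invoke the multicurve version of the geodesic-representative theorem (as in Farb--Margalit's treatment of the Alexander method) rather than reprove it. The other place warranting a word of care is the claim that non-peripheral conjugacy classes map to hyperbolic, rather than parabolic, elements; this is exactly the statement that the holonomy of a point of $\mathcal{T}(S_\mathcal{D})$ is a Fuchsian representation uniformizing the surface, which is built into the definition of Teichm\"uller space used here.
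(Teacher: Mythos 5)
Your proposal is correct and follows essentially the same route as the paper's proof, which simply deforms the marking $\phi:S_\mathcal{D}\rightarrow X$ to a homotopic $\phi'$ taking $\partial S$ to geodesics and pulls back the hyperbolic structure. Your version usefully fills in the details the paper leaves implicit — that non-peripheral classes have hyperbolic holonomy, that the geodesic representatives of the disjoint $\gamma_i$ remain disjoint, and that the simultaneous straightening is the change-of-coordinates principle for multicurves — but the underlying argument is the same.
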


\begin{proof}
A point $\rho$ of the Teichm\"uller space can be viewed as a marked hyperbolic surface, that is, a hyperbolic surface $X$ together with a diffeomorphism $\phi:S_\mathcal{D}\rightarrow X$. The image of $\partial S\subseteq S_\mathcal{D}$ under $\phi$ may not be geodesic in $X$, but we can deform $\phi$ to a homotopic map $\phi'$ such that $\phi'(\partial S)$ is geodesic. This is the same as saying that the diagram 
\[
\xymatrix{ 
& S_\mathcal{D} \ar[rd]^-{\phi'} \ar[ld]_{\phi} \\
X \ar[rr]_-{1_X} & & X 
}
\]
commutes up to homotopy. Thus $(X,\phi)$ and $(X,\phi')$ represent the same point of Teichm\"uller space. Pulling back the hyperbolic structure of $X$ along $\phi'$, we obtain a hyperbolic structure on $S_\mathcal{D}$ representing the point $\rho$.
\end{proof}

Let $\rho\in\mathcal{T}(S_\mathcal{D})$. By the above lemma, we can view $\rho$ as a hyperbolic structure on $S_\mathcal{D}$ such that $\partial S\subseteq S_\mathcal{D}$ is geodesic. By cutting along $\partial S$, we recover the surfaces $S$ and $S^\circ$ equipped with hyperbolic structures with geodesic boundary. Then the universal cover $\tilde{S}$ of $S$ can be identified with a subset of $\mathbb{H}$ obtained by removing countably many geodesic half disks. Similarly, the universal cover $\tilde{S^\circ}$ of $S^\circ$ can be identified with a subset of $\mathbb{H}$.

\begin{lemma}
\label{lem:correspondence}
Let $\iota$ be the natural map $S\rightarrow S^\circ$. Then there exists a map $\tilde{\iota}:\tilde{S}\rightarrow\tilde{S^\circ}$, unique up to the action of $\pi_1(S)$ by deck transformations, such that the diagram 
\[
\xymatrix{ 
\tilde{S} \ar[r]^{\tilde{\iota}} \ar[d] & \tilde{S^\circ} \ar[d] \\
S \ar_{\iota}[r] & S^\circ
}
\]
commutes. This map $\tilde{\iota}$ restricts to an isometry on the geodesic boundary of $\tilde{S}$.
\end{lemma}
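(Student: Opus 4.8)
The plan is to produce $\tilde\iota$ by the standard lifting criterion of covering space theory and then, near the boundary, to upgrade the topological lift to an isometry using the particular way the double $S_\mathcal D$ is assembled.

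\emph{Construction and uniqueness.} Write $p_S\colon\tilde S\to S$ and $p_{S^\circ}\colon\tilde{S^\circ}\to S^\circ$ for the two universal covering maps. I would apply the lifting criterion to the map $\iota\circ p_S\colon\tilde S\to S^\circ$ and the covering $p_{S^\circ}$: since $\tilde S$ is connected, locally path connected and simply connected, the obstruction $(\iota\circ p_S)_*\pi_1(\tilde S)\subseteq(p_{S^\circ})_*\pi_1(\tilde{S^\circ})$ is vacuous, so after choosing basepoints there is a continuous $\tilde\iota\colon\tilde S\to\tilde{S^\circ}$ with $p_{S^\circ}\circ\tilde\iota=\iota\circ p_S$, which is precisely the commutativity of the square. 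Because $\iota$ is a homeomorphism, $\iota\circ p_S$ is itself a covering map, hence so is $\tilde\iota$; and a connected covering of the simply connected space $\tilde{S^\circ}$ is a homeomorphism. For uniqueness, two lifts that agree at one point coincide everywhere, and post-composing a lift with a deck transformation of $\tilde{S^\circ}\to S^\circ$ again gives a lift, so the set of lifts is a torsor under $\mathrm{Deck}(\tilde{S^\circ}/S^\circ)\cong\pi_1(S^\circ)\cong\pi_1(S)$. (The same bookkeeping shows any such $\tilde\iota$ is equivariant for the deck actions of $\pi_1(S)$ and $\pi_1(S^\circ)$ through the isomorphism $\iota_*$, which is the form in which the lemma will actually be used.)

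\emph{The boundary.} From $p_{S^\circ}\circ\tilde\iota=\iota\circ p_S$ together with $\iota(\partial S)=\partial S^\circ$, a point of $\tilde S$ lies over $\partial S$ if and only if its $\tilde\iota$-image lies over $\partial S^\circ$; since $\partial\tilde S=p_S^{-1}(\partial S)$ and $\partial\tilde{S^\circ}=p_{S^\circ}^{-1}(\partial S^\circ)$ are the two geodesic boundaries, $\tilde\iota$ restricts to a homeomorphism $\partial\tilde S\to\partial\tilde{S^\circ}$, and this restriction is a lift of $\iota|_{\partial S}$ through $p_{S^\circ}|_{\partial\tilde{S^\circ}}$. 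Now the geometric input: in $S_\mathcal D=S\cup_{\partial}S^\circ$ the two copies are glued along corresponding boundary components by the identity, so $\partial S$ and $\partial S^\circ$ are one and the same family of complete geodesics inside $S_\mathcal D$, and $\iota|_{\partial S}$ is the identity map between them; in particular the arc-length metrics induced on $\partial S$ and on $\partial S^\circ$ agree, both being restrictions of the single hyperbolic metric on $S_\mathcal D$. Hence $\iota|_{\partial S}$ is an isometry of one-dimensional hyperbolic manifolds; since $p_{S^\circ}$ is a local isometry, a lift of an isometry through it is again a local isometry, and as $\tilde\iota|_{\partial\tilde S}$ is moreover a homeomorphism, it is an isometry onto the geodesic boundary of $\tilde{S^\circ}$.

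I expect the only genuinely non-formal point to be the identification of $\iota|_{\partial S}$ with an isometry. One must resist claiming that $\iota$ is an isometry on all of $S$: the two halves of $S_\mathcal D$ obtained by cutting are in general geometrically unrelated, so $\iota^*g_{S^\circ}\neq g_S$ on the interior. The content is exactly that doubling identifies the two boundaries tautologically, so it is only there that $\iota$ acts as the identity; everything else is routine covering space theory plus the elementary fact that lifts of local isometries are local isometries.
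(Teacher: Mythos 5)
Your proposal is correct and follows essentially the same route as the paper: existence and uniqueness of $\tilde{\iota}$ via the lifting criterion for the simply connected $\tilde{S}$, and the boundary statement deduced from the fact that the covering maps are local isometries while $\iota|_{\partial S}$ is an isometry because the two boundaries are tautologically identified in the glued surface $S_{\mathcal{D}}$. Your write-up is somewhat more detailed than the paper's (which dispatches the boundary claim in one sentence by noting that the other three maps of the square restrict to isometries on boundary geodesics), but the substance is identical.
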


\begin{proof}
Since $\tilde{S}$ is simply connected, the composition $\tilde{S}\rightarrow S\stackrel{\iota}{\rightarrow} S^\circ$ induces the zero map on fundamental groups. Hence, by the lifting criterion, there exists a map $\tilde{\iota}:\tilde{S}\rightarrow\tilde{S^\circ}$ making the above diagram commutative. If $\tilde{\iota}':\tilde{S}\rightarrow\tilde{S^\circ}$ is any other map with this property, then $\tilde{\iota}$ and $\tilde{\iota}'$ are both lifts of the composition $\tilde{S}\rightarrow S\stackrel{\iota}{\rightarrow} S^\circ$. It follows that they coincide up to a deck transformation.

We know that the map $\tilde{\iota}$ restricts to an isometry on the geodesic boundary of~$\tilde{S}$ because the other three maps of the diagram restrict to isometries on boundary geodesics.
\end{proof}

To construct coordinates on $\mathcal{D}^+(S)$, fix a point $m\in\mathcal{D}^+(S)$ and let $i\in J$ be an edge of the ideal triangulation $T$. The point $m$ determines a point of the Teichm\"uller space of~$\Sigma_{p_1,\dots,p_k}$ where $\Sigma=S_\mathcal{D}$, and we can represent this by a hyperbolic structure such that the image of the boundary $\partial S$ is geodesic. By cutting along the image of $\partial S$, we recover the surfaces $S$ and $S^\circ$ equipped with hyperbolic structures with geodesic boundary.

Suppose the edge $i$ corresponds to an arc connecting two holes in $S$. Choose a pair of geodesics $g_1$ and~$g_2$ in the upper half plane $\mathbb{H}$ which project to these boundary components. Deform the arc connecting the holes by winding its endpoints around the holes infinitely many times in the direction prescribed by the orientation. This corresponds to deforming the preimage $\tilde{i}$ in $\mathbb{H}$ so that its endpoints coincide with endpoints of~$g_1$ and~$g_2$. If we let $i^\circ$ be the image of the arc $i$ under the tautological map $S\rightarrow S^\circ$, then we can apply the same procedure to $i^\circ$ to get an arc $\tilde{i}^\circ$ in $\mathbb{H}$.

Choose horocycles $c_1$ and~$c_2$ around the endpoints of $\tilde{i}$. The horocycle $c_k$ intersects~$g_k$ in a unique point. By Lemma~\ref{lem:correspondence}, there is a corresponding point on $g_k^\circ$ and thus a corresponding horocycle $c_k^\circ$. The map constructed in Lemma~\ref{lem:correspondence} restricts to an isometry on $g_k$, so if we shift $c_k$ by some amount, then the corresponding horocycle $c_k^\circ$ will shift by the same amount.
\[
\xy 0;/r.50pc/: 
(-18,-6)*{}="0"; 
(-12,-6)*{}="1"; 
(12,-6)*{}="2"; 
(18,-6)*{}="3"; 
"0";"1" **\crv{(-18,-1) & (-12,-1)}; 
"1";"2" **\crv{(-12,10) & (12,10)}; 
"2";"3" **\crv{(12,-1) & (18,-1)}; 
(-20,-6)*{}="X"; 
(20,-6)*{}="Y"; 
"X";"Y" **\dir{-}; 
(-12,-4)*\xycircle(2,2){-};
(12,-3)*\xycircle(3,3){-};
(0,7.5)*{\tilde{i}}; 
(-12,0)*{c_1}; 
(12,1)*{c_2}; 
(-17,-1)*{g_1}; 
(17,-1)*{g_2}; 
(-20,-8)*{}="Z"; 
(20,-8)*{}="W"; 
"Z";"W" **\dir{-}; 
(-18,-8)*{}="0"; 
(-12,-8)*{}="1"; 
(12,-8)*{}="2"; 
(18,-8)*{}="3"; 
"0";"1" **\crv{(-18,-13) & (-12,-13)}; 
"1";"2" **\crv{(-12,-24) & (12,-24)}; 
"2";"3" **\crv{(12,-13) & (18,-13)}; 
(12,-13)*\xycircle(5,5){-};
(-12,-12)*\xycircle(4,4){-};
(0,-22)*{\tilde{i}^\circ};
(-12,-18)*{c_1^\circ}; 
(12,-19.5)*{c_2^\circ}; 
(-18,-13)*{g_1^\circ}; 
(19,-12)*{g_2^\circ}; 
\endxy
\]

Denote by $A_i$ the exponentiated signed half distance between~$c_1$ and~$c_2$, and denote by $A_i^\circ$ the exponentiated signed half distance between~$c_1^\circ$ and~$c_2^\circ$. We can then define a number associated to the edge $i$ by 
\[
B_i=\frac{A_i^\circ}{A_i}.
\]
This defines $B_i$ when $i$ corresponds to an arc connecting two holes. If one or both of the endpoints of $i$ are punctures, then the data of the point $m\in\mathcal{D}^+(S)$ provides a horocycle~$c_k$ at any endpoint of $i$ which is a puncture, as well as a horocycle $c_k^\circ$ at the corresponding endpoint of $i^\circ$. Thus we can associate numbers $A_i$ and $A_i^\circ$ to these arc as before, and we can define $B_i$ by the above formula. One can show that the $|J|$ numbers obtained in this way are independent of all choices made in the construction.

In addition to the $B_i$, there are $|J|$ numbers $X_i$ associated to a point in the space $\mathcal{D}^+(S)$. Given a point of $\mathcal{D}^+(S)$, these are simply defined as the $X$-coordinates of the point of the enhanced Teichm\"uller space of the surface $S$ obtained by cutting $S_\mathcal{D}$ along the image of $\partial S$.

\begin{proposition}
The numbers $B_i$ and $X_i$ provide a bijection 
\[
\mathcal{D}^+(S)\rightarrow\mathbb{R}_{>0}^{2|J|}.
\]
\end{proposition}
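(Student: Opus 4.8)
The plan is to construct an explicit inverse to the map $m\mapsto(B_i,X_i)$ and to check that the two constructions are mutually inverse, following the same strategy used above for the decorated and enhanced Teichm\"uller spaces and for the moduli space $\mathcal{D}_{PGL_m,S}$ in Theorem~\ref{thm:Dbirationaltorus}. So the first step is to reconstruct the ``$S$-side.'' Given positive numbers $B_i$, $X_i$ ($i\in J$), the bijection already established for the enhanced Teichm\"uller space shows that the tuple $(X_i)$ determines a point of $\mathcal{X}^+(S)$, i.e.\ a hyperbolic structure on $S$ with geodesic boundary together with an orientation of each hole. Realizing the universal cover of $S$ as a subset of $\mathbb{H}$, lifting the ideal triangulation $T$, and spiralling the lifted edges into the boundary geodesics exactly as in the construction of the $B$-coordinates above, one obtains an equivariant family of ideal triangles in $\mathbb{H}$ with all vertices on $\partial\mathbb{H}$.

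Next I would build the ``$S^\circ$-side'' from the $B_i$. Choose a horocycle at each vertex of the lifted triangulation, normalized along a spanning tree of triangles so that the choice is consistent, and let $A_i$ be the resulting exponentiated signed half-distances; then set $A_i^\circ=B_iA_i$. Using Lemma~\ref{lem:correspondence}, which provides the comparison map $\tilde\iota:\tilde S\to\tilde{S^\circ}$ restricting to an isometry on boundary geodesics, one checks that the numbers $A_i^\circ$ are compatible with a $\pi_1(S^\circ)$-action, so by the reconstruction theorem for the decorated Teichm\"uller space, applied triangle by triangle as in Theorem~\ref{thm:Dbirationaltorus}, they assemble into a configuration of decorated ideal triangles in a second copy of $\mathbb{H}$, hence a hyperbolic structure on $S^\circ$ with geodesic boundary together with a correspondence between the horocycles on the two sheets. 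Gluing $S$ and $S^\circ$ along $\partial S$ by this correspondence produces a point $\rho$ in the Teichm\"uller space of $S_{\mathcal{D}}$ — or of a surface obtained from it by pinching some loops of $\gamma$, when the gluing degenerates — and the hole orientations together with the horocycle pairs (taken up to simultaneous shift) supply the remaining data in the definition of $\mathcal{X}_{\Sigma;\gamma}^+$. This defines a point of $\mathcal{D}^+(S)$.

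It then remains to verify independence of the auxiliary choices and that the two constructions are inverse. The key observation for the former is that rescaling the horocycle at a vertex multiplies the adjacent $A_i$ and $A_i^\circ=B_iA_i$ by the same factor and hence leaves every $B_i$ unchanged, while inducing a simultaneous shift of the two horocycles at the corresponding node (again by the isometry clause of Lemma~\ref{lem:correspondence}) — exactly the equivalence built into $\mathcal{X}_{\Sigma;\gamma}^+$; the dependence on the lift of $T$ and on the starting triangle is removed as in the independence argument of Theorem~\ref{thm:Dbirationaltorus}, with Lemmas~\ref{lem:splitting} and~\ref{lem:lifting} replaced here by the uniqueness clause of Lemma~\ref{lem:correspondence} (the ambiguity of $\tilde\iota$ up to deck transformations being precisely what is quotiented out in $\mathcal{D}^+(S)$). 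For the inverse property: starting from $m\in\mathcal{D}^+(S)$, cutting along $\partial S$ recovers hyperbolic structures on $S$ and $S^\circ$ whose cross-ratios on the $S$-side are the $X_i$ and whose $A$-coordinates have ratios $B_i$, and feeding these back in reproduces the same two pieces and the same gluing; conversely the reconstruction followed by extraction of coordinates returns $(B_i,X_i)$ by construction, using the enhanced Teichm\"uller bijection in the $X$-direction and the decorated Teichm\"uller bijection (applied to the $S^\circ$-side relative to the $S$-side) in the $B$-direction.

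I expect the main obstacle to be the passage between the non-degenerate and pinched strata of $\mathcal{D}^+(S)$: one must check that, as the reconstructed gluing data degenerates, $\rho$ lands correctly in the Teichm\"uller space of the appropriately pinched surface and that the horocycle-pair data survives the limit in the prescribed form, so that the stratification by which loops of $\gamma$ are pinched is matched on both sides of the bijection. The accompanying equivariance check — that $B_iA_i$, and not merely $A_i$, yields a genuine decorated structure on $S^\circ$ — is the other delicate point, and it is exactly where Lemma~\ref{lem:correspondence} does the work.
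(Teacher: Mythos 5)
Your proposal is correct and follows essentially the same route as the paper: reconstruct $\tilde S$ from the $X_i$ via the enhanced Teichm\"uller bijection, choose horocycles, set $A_i^\circ=B_iA_i$ and rebuild $\tilde{S^\circ}$ triangle by triangle via Penner's lemma, glue the two sheets (the paper does this by alternately inserting copies of $\tilde S$ and $\tilde{S^\circ}$ into the complementary half-disks until $\mathbb{H}$ is tiled), and check the two compositions are the identity using the compatibility of the chosen horocycles with the lift $\tilde\iota$ of Lemma~\ref{lem:correspondence}. The delicate points you flag (invariance of $B_i$ under simultaneous horocycle rescaling, and the matching of horocycles across the gluing via the monodromy) are exactly the ones the paper's Steps 2 and 3 address.
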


\begin{proof}
Call this map $\phi$. We will construct a map $\psi:\mathbb{R}_{>0}^{2|J|}\rightarrow\mathcal{D}^+(S)$ and prove that $\phi$ and $\psi$ are inverses.

\Needspace*{2\baselineskip}
\begin{step}[1]
Definition of $\psi:\mathbb{R}_{>0}^{2|J|}\rightarrow\mathcal{D}^+(S)$.
\end{step}

Suppose we are given positive numbers $B_j$ and $X_j$ for every edge $j\in J$. By the reconstruction procedure for the enhanced Teichm\"uller space, we can glue together ideal triangles in $\mathbb{H}$, using the $X_i$ as gluing parameters, to get a tiling of a region $\tilde{S}\subseteq\mathbb{H}$ with geodesic boundary.

Choose a horocycle around each vertex of this triangulation. (We do not require these horocycles to be covariant under the action of the deck transformation group.) Then for every edge~$i$ of this triangulation, there is a number $A_i$ defined as the exponentiated signed half length between the horocycles at the ends of~$i$. If $B_i$ is the $B$-coordinate corresponding to the lifted edge~$i$, define 
\[
A_i^\circ=B_iA_i.
\]
We will use these numbers $A_i^\circ$ to construct another region $\tilde{S^\circ}$ with geodesic boundary in a copy of~$\mathbb{H}$.

To construct $\tilde{S^\circ}$, first choose an edge $i$ in the triangulation of $\tilde{S}$. Let $i^\circ$ be any geodesic in the second copy of $\mathbb{H}$. There are horocycles around the endpoints of $i$ with corresponding exponentiated half length $A_i$, and we can choose horocycles around the endpoints of $i^\circ$ so that the resulting exponentiated half length equals $A_i^\circ$. Consider one of the triangles adjacent to~$i$. There are numbers $A_j$ and $A_k$ corresponding to its other sides. By \cite{Penner}, Chapter~1, Lemma~4.14, there is a unique point in the second copy of $\partial\mathbb{H}$ and a horocycle about this point so that the exponentiated half lengths of the decorated arcs incident to this point are $A_j^\circ$ and $A_k^\circ$, and the cyclic order of the vertices of the resulting triangle agrees with the original cyclic order of the vertices.

Repeating this process for every edge in the triangulation of $\tilde{S}$, we obtain a region $\tilde{S^\circ}\subseteq\mathbb{H}$ with geodesic boundary. To construct this region, we had to choose horocycles around the endpoints of the triangulation of $\tilde{S}$, but the construction is independent of these choices. We also had to choose edges $i$ and $i^\circ$ and horocycles around the endpoints of $i^\circ$. If we had made a different choice we would get a different region obtained from $\tilde{S^\circ}$ by a transformation in $PSL_2(\mathbb{R})$. Thus $\tilde{S^\circ}$ is well defined up to isometry.

Now the region $\tilde{S}$ is obtained from $\mathbb{H}$ by removing infinitely many geodesic half disks. We can extend $\tilde{S}$ to a larger region by gluing a copy of $\tilde{S^\circ}$ in each of these half disks in such a way that the horocycles around identified vertices coincide. The resulting region is again obtained from $\mathbb{H}$ by removing infinitely many geodesic half disks, and we can enlarge it by gluing a copy of $\tilde{S}$ in each of these half disks. Continuing this process ad infinitum, we partition the entire hyperbolic plane into ideal triangles. The illustration below shows an example of how the spaces $\tilde{S}$ and $\tilde{S}^\circ$ may be glued together in the disk model of the hyperbolic plane.
\[
\xy 0;/r.40pc/: 
(0,0)*\xycircle(16,16){-};
(0,16)*{}="X"; 
(0,-16)*{}="Y"; 
(-3.12,15.69)*{}="A1"; 
(-5.8,14.9)*{}="A2"; 
(-6.6,14.5)*{}="A3"; 
(-11,11.5)*{}="A4"; 
(-11.5,11)*{}="A5"; 
(-15.9,0.5)*{}="A6"; 
(-15.9,-0.5)*{}="A7"; 
(-11.5,-11)*{}="A8"; 
(-11,-11.5)*{}="A9"; 
(-6.6,-14.5)*{}="A10"; 
(-5.8,-14.9)*{}="A11"; 
(-3.12,-15.69)*{}="A12"; 
(3.12,15.69)*{}="B1"; 
(5.8,14.9)*{}="B2"; 
(6.6,14.5)*{}="B3"; 
(11,11.5)*{}="B4"; 
(11.5,11)*{}="B5"; 
(15.9,0.5)*{}="B6"; 
(15.9,-0.5)*{}="B7"; 
(11.5,-11)*{}="B8"; 
(11,-11.5)*{}="B9"; 
(6.6,-14.5)*{}="B10"; 
(5.8,-14.9)*{}="B11"; 
(3.12,-15.69)*{}="B12"; 
"X";"Y" **\dir{-}; 
"A1";"A2" **\crv{(-2.9,14.4) & (-5.1,13.4)}; 
"A3";"A4" **\crv{(-5,12) & (-9,9)}; 
"A5";"A6" **\crv{(-6.6,6.5) & (-11,0.4)}; 
"A7";"A8" **\crv{(-11,-0.4) & (-6.6,-6.5)}; 
"A9";"A10" **\crv{(-9,-9) & (-5,-12)}; 
"A11";"A12" **\crv{(-5.1,-13.4) & (-2.9,-14.4)}; 
"B1";"B2" **\crv{(2.9,14.4) & (5.1,13.4)}; 
"B3";"B4" **\crv{(5,12) & (9,9)}; 
"B5";"B6" **\crv{(6.6,6.5) & (11,0.4)}; 
"B7";"B8" **\crv{(11,-0.4) & (6.6,-6.5)}; 
"B9";"B10" **\crv{(9,-9) & (5,-12)}; 
"B11";"B12" **\crv{(5.1,-13.4) & (2.9,-14.4)}; 
(-5,0)*{\tilde{S}}; 
(5,0)*{\tilde{S}^\circ}; 
(-12.5,5)*{\tilde{S}^\circ}; 
(12.5,5)*{\tilde{S}}; 
(-12.5,-5)*{\tilde{S}^\circ}; 
(12.5,-5)*{\tilde{S}}; 
(-8,12.1)*{\tilde{S}^\circ}; 
(8.3,12.1)*{\tilde{S}}; 
(-8,-12.1)*{\tilde{S}^\circ}; 
(8.3,-12.1)*{\tilde{S}}; 
\endxy
\]

Any element of $\pi_1(S_\mathcal{D})$ corresponds to a deck transformation and is thus represented as a unique element of $PSL_2(\mathbb{R})$. In this way we recover a discrete and faithful representation $\rho:\pi_1(S_\mathcal{D})\rightarrow PSL_2(\mathbb{R})$ representing a point in $\mathcal{T}(S_\mathcal{D})$.

\Needspace*{2\baselineskip}
\begin{step}[2]
Proof of $\phi\circ\psi=1_{\mathbb{R}_{>0}^{2|J|}}$.
\end{step}

Let $B_j$ and $X_j$ ($j\in J$) be given. By the reconstruction procedure described above, we construct triangulated regions $\tilde{S}\subseteq\mathbb{H}$ and $\tilde{S}^\circ\subseteq\mathbb{H}$ which we then glue together to get a triangulation of the hyperbolic plane. From this triangulation, we get a point $m\in\mathcal{D}^+(S)$. We want to show that the coordinates of $m$ are the numbers $B_j$ and $X_j$.

This point $m$ determines a hyperbolic metric on $S_{\mathcal{D}}$, and the universal cover of the resulting hyperbolic surface is isometric to the triangulated surface $\tilde{S}_{\mathcal{D}}$ that we got by gluing together copies of $\tilde{S}$ and $\tilde{S}^\circ$. Consider a copy of $\tilde{S}$ in $\tilde{S}_{\mathcal{D}}$. To find the coordinates of~$m$, we choose horocycles around the vertices of the triangulation of~$\tilde{S}$, and we can take these to be exactly the horocycles used in the construction of~$m$ from the~$B_j$ and~$X_j$. Now consider a copy of $\tilde{S}^\circ$ adjacent to $\tilde{S}$ in $\tilde{S}_{\mathcal{D}}$. Choose a basepoint~$x$ on the geodesic separating these regions. If $g$ is any boundary geodesic of $\tilde{S}$, then there is a horocycle $h$ around one of the endpoints of $g$, and this horocycle intersects~$g$ in a unique point $p$. Draw a curve $\tilde{\alpha}$ in~$\tilde{S}$ from the point~$p$ to the point~$x$.
\[
\xy 0;/r.40pc/: 
(0,0)*\xycircle(16,16){-};
(-13,0.7)*\xycircle(3,3){-};
(13,0.7)*\xycircle(3,3){-};
(-10.7,2.75)*{}="p"; 
(0,-3.5)*{}="x"; 
(10.7,2.75)*{}="q"; 
(0,16)*{}="X"; 
(0,-16)*{}="Y"; 
(-3.12,15.69)*{}="A1"; 
(-5.8,14.9)*{}="A2"; 
(-6.6,14.5)*{}="A3"; 
(-11,11.5)*{}="A4"; 
(-11.5,11)*{}="A5"; 
(-15.9,0.5)*{}="A6"; 
(-15.9,-0.5)*{}="A7"; 
(-11.5,-11)*{}="A8"; 
(-11,-11.5)*{}="A9"; 
(-6.6,-14.5)*{}="A10"; 
(-5.8,-14.9)*{}="A11"; 
(-3.12,-15.69)*{}="A12"; 
(3.12,15.69)*{}="B1"; 
(5.8,14.9)*{}="B2"; 
(6.6,14.5)*{}="B3"; 
(11,11.5)*{}="B4"; 
(11.5,11)*{}="B5"; 
(15.9,0.5)*{}="B6"; 
(15.9,-0.5)*{}="B7"; 
(11.5,-11)*{}="B8"; 
(11,-11.5)*{}="B9"; 
(6.6,-14.5)*{}="B10"; 
(5.8,-14.9)*{}="B11"; 
(3.12,-15.69)*{}="B12"; 
"p";"x" **\crv{(-7,5) & (-3,-4)}; 
"x";"q" **\crv{(3,-4) & (7,5)}; 
"X";"Y" **\dir{-}; 
"A1";"A2" **\crv{(-2.9,14.4) & (-5.1,13.4)}; 
"A3";"A4" **\crv{(-5,12) & (-9,9)}; 
"A5";"A6" **\crv{(-6.6,6.5) & (-11,0.4)}; 
"A7";"A8" **\crv{(-11,-0.4) & (-6.6,-6.5)}; 
"A9";"A10" **\crv{(-9,-9) & (-5,-12)}; 
"A11";"A12" **\crv{(-5.1,-13.4) & (-2.9,-14.4)}; 
"B1";"B2" **\crv{(2.9,14.4) & (5.1,13.4)}; 
"B3";"B4" **\crv{(5,12) & (9,9)}; 
"B5";"B6" **\crv{(6.6,6.5) & (11,0.4)}; 
"B7";"B8" **\crv{(11,-0.4) & (6.6,-6.5)}; 
"B9";"B10" **\crv{(9,-9) & (5,-12)}; 
"B11";"B12" **\crv{(5.1,-13.4) & (2.9,-14.4)}; 
(-1,-4.5)*{x}; 
(-5,2)*{\tilde{\alpha}}; 
(5,2)*{\tilde{\alpha}^\circ}; 
(-10.8,4)*{p}; 
(10.8,4)*{q}; 
(-9.5,-1)*{h}; 
(9,-1)*{h^\circ}; 
(-8.5,8)*{g}; 
(8.5,8)*{g^\circ}; 
\endxy
\]
This curve $\tilde{\alpha}$ projects to a curve~$\alpha$ on the surface $S$, and we can apply the map~$\iota$ to get a curve $\alpha^\circ$ on $S^\circ$. Finally, we lift $\alpha^\circ$  to a curve $\tilde{\alpha}^\circ$ in~$S^\circ$ that starts at $x$. This curve $\tilde{\alpha}^\circ$ ends at some point $q=\tilde{\iota}(p)$, and there is a horocycle $h^\circ$ that intersects $g^\circ$ at $q$. In this way, we draw horocycles around all of the vertices of $\tilde{S}^\circ$ and can define the coordinates of $m$.

On the other hand, consider the monodromy along the curve in $S_{\mathcal{D}}$ obtained by concatenating $\alpha$ and $\alpha^\circ$. It maps the region~$\tilde{S}$ isometrically into the region bounded by~$g^\circ$. The horocycle used in the construction of~$m$ from the~$B_j$ and~$X_j$ is obtained by applying this transformation to $h$. Thus the horocycles used to construct $m$ agree with the ones obtained using the map $\tilde{\iota}$.

It follows that the $B$-coordinates of $m$ are simply the numbers $B_j$ that we started with. It is easy to see that the $X$-coordinates of $m$ are the numbers $X_j$. This completes Step~2 of the proof.

\Needspace*{2\baselineskip}
\begin{step}[3]
Proof of $\psi\circ\phi=1_{\mathcal{D}^+}$.
\end{step}

Let $m\in\mathcal{D}^+(S)$ be given. To calculate the coordinates of~$m$, let us pass to the universal cover~$\tilde{S}_{\mathcal{D}}$ of~$S_{\mathcal{D}}$ and choose a copy of $\tilde{S}$ in~$\tilde{S}_{\mathcal{D}}$. Choose horocycles around the vertices of the triangulation of this copy of $\tilde{S}$. Using the construction described above involving lifts of the map $\iota:S\rightarrow S^\circ$, we can get horocycles around all vertices of the triangulation of $\tilde{S}_{\mathcal{D}}$. We can use these horocycles to calculate the numbers $A_j$ and~$A_j^\circ$, and thus the coordinates $B_j$ and~$X_j$.

We can now use the $X_i$ to reconstruct the region~$\tilde{S}$. We must choose horocycles around the vertices of the triangulation of~$\tilde{S}$, and we can assume that these are exactly the ones used above to define the~$B_j$ and~$X_j$. Then we can use the $B_j$ to reconstruct an adjacent region $\tilde{S}^\circ$ with exactly the horocycles used above. By gluing together copies of $\tilde{S}$ and $\tilde{S}^\circ$, we recover $\tilde{S}_{\mathcal{D}}$ together with all of the horocycles used to compute the coordinates. Quotienting this universal cover by the action of $\pi_1(S_{\mathcal{D}})$, we recover the point $m\in\mathcal{D}^+(S)$. This completes Step~3 of the proof.
\end{proof}

We have defined the $B_i$ and $X_i$ coordinates in terms of a fixed ideal triangulation of~$S$. The next result says how these coordinates vary when we change the triangulation.

\begin{proposition}
A regular flip at an edge $k$ of the triangulation changes the coordinates~$X_i$ and~$B_i$ to new coordinates~$X_i'$ and~$B_i'$ given by the formulas 
\begin{align*}
X_i' &=
\begin{cases}
X_k^{-1} & \mbox{if } i=k \\
X_i{(1+X_k^{-\sgn(\varepsilon_{ik})})}^{-\varepsilon_{ik}} & \mbox{if } i\neq k
\end{cases} \\
B_i' &=
\begin{cases}
\frac{X_k\prod_{j|\varepsilon_{kj>0}}B_j^{\varepsilon_{kj}} + \prod_{j|\varepsilon_{kj<0}}B_j^{-\varepsilon_{kj}}}{(1+X_k)B_k} & \mbox{if } i=k \\
B_i & \mbox{if } i\neq k.
\end{cases}
\end{align*}
\end{proposition}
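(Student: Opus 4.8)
The plan is to reduce everything to Penner's Ptolemy relation, applied separately to the two hyperbolic surfaces cut out of $S_\mathcal{D}$ along the image of $\partial S$. For the $X$-coordinates there is nothing new: by construction the numbers $X_i$ attached to a point of $\mathcal{D}^+(S)$ are the $X$-coordinates of the point of the enhanced Teichm\"uller space of $S$ obtained by this cutting, so their transformation rule is exactly the one already established for $\mathcal{X}^+(S)$. It therefore suffices to treat the $B_i$.

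Fix a point $m\in\mathcal{D}^+(S)$, an ideal triangulation $T$ with internal edge $k$, and a lift $\tilde T$ to the hyperbolic plane; let the quadrilateral of $\tilde T$ having $k$ as a diagonal have vertices $1,2,3,4$ in cyclic order, with $k$ joining $1,3$ and the flipped edge $k'$ joining $2,4$. Choose a horocycle at every vertex of $\tilde T$; the resulting exponentiated signed half-lengths $A_i$ then satisfy Penner's Ptolemy relation under the flip at $k$, namely $A_kA_{k'}=A_{12}A_{34}+A_{23}A_{14}$, with $A_i'=A_i$ for $i\neq k$. The key observation is that the correspondence of Lemma~\ref{lem:correspondence} and the spiralling deformations used to define the $A_i$ depend only on the hyperbolic structure and the orientation data carried by $m$, not on the ambient triangulation; consequently this correspondence carries each decorated arc of $\tilde T$, and also the flipped decorated arc, to the corresponding decorated arcs in $\tilde{S^\circ}$, so the $A_i^\circ$ are likewise exponentiated signed half-lengths for a decorated ideal triangulation of the hyperbolic structure on $S^\circ$ relative to the transported horocycles. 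The same Ptolemy relation then gives $A_k^\circ A_{k'}^\circ=A_{12}^\circ A_{34}^\circ+A_{23}^\circ A_{14}^\circ$ and $A_i^{\circ\prime}=A_i^\circ$ for $i\neq k$; combined with $A_i'=A_i$ this yields $B_i'=B_i$ for $i\neq k$ at once.

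For $B_k'$, write $A_e^\circ=B_eA_e$ for each side $e$ of the quadrilateral and for $k$ itself, and divide the two Ptolemy relations:
\[
B_k'=\frac{A_{k'}^\circ}{A_{k'}}=\frac{B_{12}B_{34}\,A_{12}A_{34}+B_{23}B_{14}\,A_{23}A_{14}}{B_k\bigl(A_{12}A_{34}+A_{23}A_{14}\bigr)},
\]
with $B_{ij}$ the $B$-coordinate of the side joining $i$ and $j$ (understood to be $1$ for a side lying on $\partial S$). Dividing numerator and denominator by $A_{23}A_{14}$ and using $X_k=A_{12}A_{34}/(A_{14}A_{23})$ gives
\[
B_k'=\frac{B_{12}B_{34}\,X_k+B_{23}B_{14}}{(1+X_k)\,B_k}.
\]
It remains to match this with the stated formula, i.e.\ to identify $B_{12}B_{34}$ with $\prod_{j\mid\varepsilon_{kj}>0}B_j^{\varepsilon_{kj}}$ and $B_{23}B_{14}$ with $\prod_{j\mid\varepsilon_{kj}<0}B_j^{-\varepsilon_{kj}}$. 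This is the statement that the Fock--Goncharov quiver orientation on $T$ is set up so that the arrows at $k$ of the relevant sign pick out exactly the two sides occurring in the numerator of $X_k$; the $\prod$-notation automatically subsumes the degenerate configurations in which two sides of the quadrilateral coincide (so that some $\varepsilon_{kj}=\pm2$) and, via the convention above, those in which a side lies on $\partial S$, while the cases in which an endpoint of $k$ is a puncture rather than a hole are identical since the construction of the $A_i^\circ$ is unchanged there. The only genuinely delicate point is the claim that flipping commutes with the correspondence of Lemma~\ref{lem:correspondence}, which is precisely the triangulation-independence of that correspondence recorded in the second paragraph; everything else is a direct calculation.
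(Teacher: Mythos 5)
Your proof is correct and follows essentially the same route as the paper's: the $X$-transformation is inherited from the enhanced Teichm\"uller space of the cut surface, and the $B$-transformation is obtained by applying Penner's Ptolemy relation to the corresponding decorated quadrilaterals in the universal covers of $S$ and $S^\circ$, taking the ratio, and dividing through by $A_{14}A_{23}$ to introduce the cross ratio $X_k$. The extra remarks you add about triangulation-independence of the correspondence $\tilde{\iota}$ and about degenerate quadrilaterals are reasonable refinements of points the paper leaves implicit, but they do not change the argument.
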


\begin{proof}
By definition, the coordinates $X_i$ on $\mathcal{D}^+(S)$ coincide with the coordinates of the corresponding point of the enhanced Teichm\"uller space of $S$. Therefore the transformation rule for the $X_i$ is just the usual transformation rule for the coordinates on the enhanced Teichm\"uller space.

To prove the second rule, deform the arcs corresponding to the edges of the ideal triangulation by winding their endpoints around the curves of the simple lamination infinitely many times. Lift these deformed curves to the universal cover of $S$, and consider the ideal quadrilateral formed by the two triangles adjacent to the geodesic arc $\tilde{k}$ corresponding to~$k$.

Number the vertices of this quadrilateral in counterclockwise order so that the edge $\tilde{k}$ joins vertices~1 and~3. Choose a horocycle around each vertex of this ideal quadrilateral and let $A_{ij}$ denote the exponentiated signed half length between the horocycles at $i$ and $j$. There is a corresponding ideal quadrilateral in the universal cover of $S^\circ$ and a number $A_{ij}^\circ$ corresponding to the edge connecting vertices $i$ and $j$ of this ideal quadrilateral.

If we flip at the edge $k$, then the ideal quadrilateral in the universal cover of $S$ is replaced by the same ideal quadrilateral triangulated by the arc from~2 to~4, and the number associated to this new arc is 
\[
A_{24}=\frac{A_{12}A_{34}+A_{14}A_{23}}{A_{13}}.
\]
Similarly, the number $A_{13}^\circ$ transforms to 
\[
A_{24}^\circ=\frac{A_{12}^\circ A_{34}^\circ+A_{14}^\circ A_{23}^\circ}{A_{13}^\circ}.
\]
Applying these rules to the quotient $B_{13}=A_{13}^\circ/A_{13}$, we obtain 
\begin{align*}
B_{24} &= \frac{A_{12}^\circ A_{34}^\circ+A_{14}^\circ A_{23}^\circ}{A_{12}A_{34}+A_{14}A_{23}}\frac{A_{13}}{A_{13}^\circ} \\
&= \frac{\frac{A_{12}^\circ A_{34}^\circ}{A_{14}A_{23}}+\frac{A_{14}^\circ A_{23}^\circ}{A_{14}A_{23}}}{1+\frac{A_{12}A_{34}}{A_{14}A_{23}}}\frac{A_{13}}{A_{13}^\circ} \\
&= \frac{X_{13}B_{12}B_{34}+B_{14}B_{23}}{(1+X_{13})B_{13}}
\end{align*}
where we have used the relation $X_{13}=\frac{A_{12}A_{34}}{A_{14}A_{23}}$. This proves the transformation rule for the~$B_i$.
\end{proof}

As in the case of the decorated or enhanced Teichm\"uller space, we have the following result.

\begin{theorem}
The doubled Teichm\"uller space is naturally identified with the space of positive real points of the symplectic double associated to the surface~$S$:
\[
\mathcal{D}^+(S)=\mathcal{D}(\mathbb{R}_{>0}).
\]
\end{theorem}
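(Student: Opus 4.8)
The plan is to argue exactly as in the cases of the decorated and enhanced Teichm\"uller spaces treated above: produce an atlas on $\mathcal{D}^+(S)$ whose charts are indexed by ideal triangulations of $S$, whose individual charts are bijections onto the positive real points of the tori $\mathcal{D}_{\mathbf{i}}$, and whose transition maps are precisely the cluster transformations used to construct $\mathcal{D}(\mathbb{R}_{>0})$. The two preceding propositions supply exactly these data. The first gives, for every ideal triangulation $T$ with associated seed $\mathbf{i}$, a bijection $\Phi_T\colon\mathcal{D}^+(S)\to\mathbb{R}_{>0}^{2|J|}=\mathcal{D}_{\mathbf{i}}(\mathbb{R}_{>0})$ via the coordinates $B_i,X_i$. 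The second shows that when $T'$ is obtained from $T$ by a regular flip at an edge $k$, the coordinates transform by the formulas of Definition~\ref{def:mutation}, i.e.\ $\Phi_{T'}\circ\Phi_T^{-1}$ is the restriction to positive real points of the cluster transformation $\mu_k^*$ relating $\mathcal{D}_{\mathbf{i}}$ and $\mathcal{D}_{\mu_k(\mathbf{i})}$.

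Next I would record the elementary observation that every cluster transformation of $\mathcal{D}$-tori restricts to a \emph{bijection} on positive real points. A single mutation $\mu_k^*$ is subtraction-free, so it defines an honest map $\mathbb{R}_{>0}^{2|J|}\to\mathbb{R}_{>0}^{2|J|}$; since matrix mutation and the birational map $\mu_k^*$ are involutive, this map is a bijection, and an arbitrary cluster transformation, being a composite of such maps, is again a bijection (this is how the maps $\psi_{\mathbf{i},\mathbf{i}'*}$ on $\mathbb{P}$-points from the positivity discussion above are seen to be isomorphisms). It follows that the tautological map $\mathcal{D}_{\mathbf{i}}(\mathbb{R}_{>0})\to\mathcal{D}(\mathbb{R}_{>0})$ from the open torus chart is itself a bijection: gluing the positive real tori along bijective transition maps yields just one copy of $\mathbb{R}_{>0}^{2|J|}$. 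The same reasoning makes $\mathcal{D}(\mathbb{R}_{>0})$ a topological space, with the topology induced from $\mathbb{R}_{>0}^{2|J|}$ through any chart, well defined because the transition maps are homeomorphisms.

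Combining these, fix a triangulation $T$ and let $\Psi_T\colon\mathcal{D}^+(S)\to\mathcal{D}(\mathbb{R}_{>0})$ be the composite of $\Phi_T$ with the canonical bijection $\mathcal{D}_{\mathbf{i}}(\mathbb{R}_{>0})\to\mathcal{D}(\mathbb{R}_{>0})$. Then $\Psi_T$ is a bijection, and, transporting the topology of $\mathbb{R}_{>0}^{2|J|}$, a homeomorphism. It remains to check independence of $T$. Since any two ideal triangulations of $S$ are connected by a sequence of flips, it suffices to compare $\Psi_T$ and $\Psi_{T'}$ when $T'$ is obtained from $T$ by a single regular flip at $k$; there the flip proposition says $\Phi_{T'}\circ\Phi_T^{-1}=\mu_k^*|_{\mathbb{R}_{>0}}$, which is exactly the identification of the charts $\mathcal{D}_{\mathbf{i}}(\mathbb{R}_{>0})$ and $\mathcal{D}_{\mu_k(\mathbf{i})}(\mathbb{R}_{>0})$ inside $\mathcal{D}(\mathbb{R}_{>0})$. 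Hence $\Psi_T=\Psi_{T'}$, and we obtain the asserted canonical homeomorphism $\mathcal{D}^+(S)=\mathcal{D}(\mathbb{R}_{>0})$; by construction it is moreover equivariant for the natural mapping class group actions on the two sides.

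The one point requiring genuine care is the interface with self-folded triangles: the seed, the exchange matrix, and the coordinate-change formula above are all set up only for ideal triangulations without self-folded triangles and for \emph{regular} flips, whereas the flip-connectivity statement allows arbitrary flips. The standard remedy, which I would invoke, is that any two ideal triangulations of $S$ without self-folded triangles can be joined through regular flips alone; the resulting subatlas still covers all of $\mathcal{D}^+(S)$ and all of $\mathcal{D}(\mathbb{R}_{>0})$ precisely because, as shown above, every positive real transition map is a bijection, so no point of either space escapes a triangulation chart. I expect this combinatorial bookkeeping, rather than any analytic or geometric difficulty, to be the main obstacle; the substantive geometry has already been done in the two preceding propositions.
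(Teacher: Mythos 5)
Your argument is correct and is exactly the one the paper intends: the theorem is stated as an immediate consequence of the two preceding propositions (coordinates give a bijection onto $\mathbb{R}_{>0}^{2|J|}$ for each triangulation, and regular flips act by the mutation formulas), just as in the decorated and enhanced cases. Your additional remarks on bijectivity of positive-real cluster transformations and on connecting triangulations by regular flips make explicit what the paper leaves implicit, but the route is the same.
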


\chapter{Measured laminations}
\label{ch:MeasuredLaminations}

The notion of a measured lamination is a fundamental concept from low-dimensional topology and geometry. This notion was famously used by Thurston to define a natural compactification of Teichm\"uller space and to classify elements of the mapping class group of a surface. In this chapter, we define three versions of the space of measured laminations on a surface and show that these spaces are ``tropicalizations'' of the three types of cluster varieties.

\section{Tropical rational points}

\subsection{Bounded laminations}

Let $S$ be a decorated surface. In this chapter, the term ``curve'' will always refer to the following technical notion.

\begin{definition}
By a \emph{curve} in $S$, we mean an embedding $C\rightarrow S$ of a compact, connected, one-dimensional manifold $C$ with (possibly empty) boundary into $S$. We require that any endpoints of $C$ map to points on the boundary of $S$ disjoint from the marked points. When we talk about homotopies, we mean homotopies within the class of such curves. A curve is called \emph{special} if it is retractable to a puncture or to an interval on~$\partial S$ containing exactly one marked point. A curve is \emph{contractible} if it can be retracted to a point within this class of curves.
\end{definition}

\begin{definition}
A \emph{rational bounded lamination} on $S$ is the homotopy class of a collection of finitely many simple nonintersecting noncontractible curves on $S$, either closed or ending on a segment of the boundary bounded by adjacent marked points, with rational weights and subject to the following conditions and equivalence relations:
\begin{enumerate}
\item The weight of a curve is nonnegative unless the curve is special.
\item A lamination containing a curve of weight zero is equivalent to the lamination with this curve removed.
\item A lamination containing homotopic curves of weights $a$ and $b$ is equivalent to the lamination with one curve removed and the weight $a+b$ on the other.
\end{enumerate}
\end{definition}

The set of all rational bounded laminations on $S$ is denoted $\mathcal{A}_L(S,\mathbb{Q})$. We will write $\mathcal{A}_L(S,\mathbb{Z})$ for the set of all bounded laminations on $S$ that can be represented by a collection of curves with integral weights. We write $\mathcal{A}_L^0(S,\mathbb{Z})$ for the subset of all laminations such that if $e$ is the segment of $\partial S$ between two marked points, then the total weight of the curves ending on $e$ vanishes. When there is no possibility of confusion, we will simply write $\mathcal{A}_L$ and~$\mathcal{A}_L^0$.

To construct coordinates on $\mathcal{A}_L(S,\mathbb{Q})$, fix a bounded lamination $l$ and an ideal triangulation of~$S$. Deform the curves of $l$ so that each curve intersects each edge of the triangulation in the minimal number of points. Then we define $a_i$ to be half the total weight of curves that intersect the edge $i$.

\begin{proposition}
The numbers $a_i$ ($i\in I$) provide a bijection 
\[
\mathcal{A}_L(S,\mathbb{Q})\rightarrow\mathbb{Q}^{|I|}.
\]
\end{proposition}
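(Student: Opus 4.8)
The plan is to construct an explicit two-sided inverse to the coordinate map, working one ideal triangle at a time and then gluing. Fix the ideal triangulation $T$; I may assume it has no self-folded triangles, the general case reducing to this by the same local analysis. First I would check the forward map is well defined: each curve of a rational bounded lamination $l$ can be isotoped into minimal position with respect to $T$, simultaneously minimizing its geometric intersection number with every edge, and this number is a homotopy invariant, so summing over curves gives a well-defined $a_i$ for every $i\in I$. The two equivalence relations (deleting a weight-zero curve, adding the weights of parallel curves) manifestly leave every $a_i$ unchanged. The $a_i$ lie in $\mathbb{Q}$ but need not be nonnegative: the arc obtained by pushing a small loop off a marked point meets exactly the internal edges incident to that marked point, and carrying such a special curve with negative weight lets the coordinates sweep out all of $\mathbb{Q}^{|I|}$ rather than just the cone cut out by triangle inequalities.

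For the inverse, given $(a_i)_{i\in I}$, consider an ideal triangle $t$ with sides labelled by edges $i,j,k$. There is a unique isotopy class of embedded arc joining side $i$ to side $j$ inside $t$ (it cuts off the corner at the vertex where $i$ and $j$ meet), and likewise for the other two pairs; assign these arcs the weights $w_{ij}=a_i+a_j-a_k$, $w_{jk}=a_j+a_k-a_i$, $w_{ik}=a_i+a_k-a_j$. The total weight meeting side $i$ from inside $t$ is $w_{ij}+w_{ik}=2a_i$, which depends only on $i$, so the strand-ends on the two sides of each internal edge match and the corner arcs can be concatenated across all internal edges; this produces globally a family of closed curves together with arcs ending on external edges, each with a rational weight, carried by the usual train track dual to $T$, hence with no contractible components. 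When some $w_{ij}$ is negative I reinterpret that corner arc, with weight $w_{ij}<0$, as a special curve (a loop pushed off the vertex between $i$ and $j$); one checks this does not disturb the matching along the other edges, and after applying the combination and deletion relations one obtains a well-defined element of $\mathcal{A}_L(S,\mathbb{Q})$.

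Finally I would verify both compositions are the identity. That the coordinates of the reconstructed lamination are again $(a_i)$ follows at once from $w_{ij}+w_{ik}=2a_i$ and the count of endpoints on external edges. For the other composition, starting from $l$ in minimal position, I claim its intersection with each triangle is exactly the standard family of corner arcs with the weights $w_{ij}$ read off from the coordinates of $l$: a multicurve in minimal position in a disk with three marked boundary points has no closed and no inessential arcs, every essential arc is a corner arc, and the peripheral arcs around a vertex assemble across the triangles around that vertex into the special-curve part, so $l$ is recovered up to isotopy. The main obstacle is precisely this uniqueness statement — pinning down the bookkeeping of the special (possibly negatively weighted) curves and showing that the decomposition of an arbitrary tuple into a genuine nonnegative lamination plus a special-curve part is forced, so that two bounded laminations with equal coordinates are actually isotopic. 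Everything else is routine once this is arranged, and the parity obstructions familiar from integral normal coordinates are absent here because we work over $\mathbb{Q}$, where scaling is available.
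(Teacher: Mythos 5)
Your construction is the same normal-coordinates construction the paper uses (corner weights $w_{ij}=a_i+a_j-a_k$, matched across edges via $w_{ij}+w_{ik}=2a_i$), and you correctly isolate where the difficulty lies; but the step you propose for negative corner weights does not work as stated, and you leave the resulting bookkeeping explicitly unresolved, so there is a genuine gap. A corner arc of a single triangle $t$ at a vertex $v$ is not a special curve: a special curve at $v$ is the concatenation of one corner arc at $v$ in \emph{every} triangle incident to $v$, all carrying the \emph{same} weight. For a general tuple $(a_i)$ nothing forces the corner weights $a_i+a_j-a_k$ at a fixed vertex $v$ to agree, or even to have the same sign, across the triangles around $v$; so "reinterpreting that corner arc as a special curve" is not a legitimate local move, and it is exactly the decomposition into a nonnegative lamination plus a per-vertex special part that you flag as "the main obstacle" without supplying it.

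The gap is fixable, and the paper's route is worth comparing. One repair along your lines: for each vertex $v$ set $c_v=\min$ of the corner weights at $v$ over all incident triangles, subtract $c_v$ from each of those corners, and observe that the edge-matching is preserved because both sides of an edge $i$ with endpoints $v,v'$ lose exactly $c_v+c_{v'}$; the remaining nonnegative corner weights assemble into an honest weighted multicurve, and the peeled-off parts assemble into special curves of weights $c_v$. The paper instead renormalizes globally: it replaces $a_i$ by $\tilde a_i=pa_i+q$ with $q$ large enough that every corner expression $p(a_i+a_j-a_k)+q$ is nonnegative, builds the lamination with coordinates $\tilde a_i$ in the universal cover (citing the triangle-level realization from the laminations paper of Fock and Goncharov), and then adds a special curve of weight $-q$ at \emph{every} puncture and marked point --- which shifts every coordinate by exactly $-q$, since every edge has two endpoints --- and divides all weights by $p$. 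Either device must appear explicitly for your argument to be complete; as written, both surjectivity onto all of $\mathbb{Q}^{|I|}$ and injectivity (recovering the special-curve weights from the coordinates) rest on the unproved decomposition.
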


\begin{proof}
Suppose we are given a rational number $a_i$ for each edge $i\in I$. It is enough to construct a bounded lamination with coordinates 
\[
\tilde{a_i}=pa_i+q
\]
for some rational numbers $p$ and~$q$. Indeed, if we construct a lamination with these coordinates, then we can add a special curve of weight $-q$ around each puncture and each marked point and then divide the weight of each curve by $p$ to get a lamination with coordinates~$a_i$.

Let $\tilde{S}$ denote the universal cover of $S$. Then we can lift the triangulation of $S$ to a triangulation of $\tilde{S}$, and we can associate to each edge of this triangulation the number $\tilde{a_i}$ associated to its projection.

Let $t$ be any triangle in the triangulation of $\tilde{S}$. By an argument in~\cite{dual}, we can fix $p$ and $q$ so that there exists a corresponding topological triangle $u$ with finitely many nonintersecting curves joining each pair of adjacent edges where the edge corresponding to $i$ intersects the curves exactly $2\tilde{a_i}$ times. By gluing the triangles $u$ and matching the curves at each edge, we recover the universal cover of $S$ together with a collection of curves. Quotienting the resulting space by the group of deck transformations, we obtain the desired bounded lamination on the surface~$S$.
\end{proof}

\begin{proposition}
\label{prop:flipAlam}
A regular flip at an edge $k$ of the triangulation changes the coordinates~$a_i$ to new coordinates~$a_i'$ given by the formula 
\[
a_i' =
\begin{cases}
\max\biggr(\sum_{j|\varepsilon_{kj}>0}\varepsilon_{kj}a_j,-\sum_{j|\varepsilon_{kj}<0}\varepsilon_{kj}a_j\biggr)-a_k & \mbox{if } i=k \\
a_i & \mbox{if } i\neq k.
\end{cases}
\]
\end{proposition}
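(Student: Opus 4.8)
The plan is to reduce everything to a computation inside the quadrilateral cut out by the two triangles of $T$ adjacent to $k$. For $i\neq k$ the edge $i$ and the curves of $l$ crossing it are untouched by the flip, so $a_i'=a_i$ is immediate; only $a_k$ — half the total weight of curves of $l$ crossing $k$ — changes, and the weighted intersection number of $l$ with $k$ (and, after the flip, with the new diagonal $k'$) is local. So I would fix a minimal-position representative of $l$ and pass to the universal cover, where a lift $\tilde Q$ of the quadrilateral is an honestly embedded ideal quadrilateral with vertices $1,2,3,4$ in cyclic order, old diagonal $\tilde k=\overline{13}$, new diagonal $\tilde k'=\overline{24}$, and sides $a,b,c,d$ equal to $\overline{12},\overline{23},\overline{34},\overline{41}$, each a lift of one of the edges of $T$ meeting $k$. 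Since the lift $\tilde l$ meets every lift of a fixed edge $j$ in the same weighted number of points, $w_s=2a_{j(s)}$ for a side $s$, $w_{\tilde k}=2a_k$ and $w_{\tilde k'}=2a_k'$, where $j(s)$ is the edge of $T$ below $s$ and $w_\bullet$ is the weighted transversal intersection number of $\tilde l$ with the indicated curve.

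Next I would classify $\tilde l\cap\tilde Q$: it is a disjoint family of arcs, each joining two of the four sides. Minimal position forbids an arc with both endpoints on one side — this is where the hypothesis that the flip is \emph{regular} enters, ruling out self-folded triangles, loop sides and related degeneracies — so the only arc types are the four ``corner'' types $da,ab,bc,cd$ and the two ``transversal'' types $ac,bd$, and the two transversal types cannot coexist since $ac$- and $bd$-arcs must cross. Moreover a special curve meets $\tilde Q$ only in corner arcs, its defining puncture or boundary arc sitting at a vertex of $\tilde Q$, so all transversal arcs carry nonnegative weight. Writing $p_1,p_2,p_3,p_4$ for the weights of the $da,ab,bc,cd$ arcs and $q,q'\ge 0$ (with $qq'=0$) for the $ac,bd$ arcs, one reads off $w_a=p_1+p_2+q$, $w_b=p_2+p_3+q'$, $w_c=p_3+p_4+q$, $w_d=p_4+p_1+q'$, $w_{\tilde k}=p_1+p_3+q+q'$, $w_{\tilde k'}=p_2+p_4+q+q'$. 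Because $\min(q,q')=0$ this yields $w_{\tilde k}+w_{\tilde k'}=\max(w_a+w_c,\ w_b+w_d)$, i.e.\ $a_k'=\max\bigl(a_{j(a)}+a_{j(c)},\ a_{j(b)}+a_{j(d)}\bigr)-a_k$, which is just the tropicalization of the Ptolemy relation, as one expects.

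It then remains to translate the multiset $\{j(a),j(c)\}$ versus $\{j(b),j(d)\}$ into the exchange matrix. With the counterclockwise convention defining $\varepsilon^T$, in each of the two triangles the two sides other than $k$ contribute opposite signs to $\varepsilon_{k\,\cdot}$, and a short check shows $a,c$ land on one side and $b,d$ on the other, so $\varepsilon_{kj}$ equals the number of occurrences of $j$ among $\{j(b),j(d)\}$ minus its number of occurrences among $\{j(a),j(c)\}$. The point still to pin down — and where I expect the actual work to be — is that these two multisets have disjoint supports, equivalently $j(a)\neq j(d)$ and $j(b)\neq j(c)$: were $j(a)=j(d)$, the two triangles would be glued along both $k$ and $j(a)$, which forces the vertices $2$ and $4$ to coincide and $k$ to be isotopic to $j(a)$, impossible; one must verify that this kind of degeneracy is precisely what ``regular'' excludes. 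Given disjointness, $a_{j(b)}+a_{j(d)}=\sum_{j:\varepsilon_{kj}>0}\varepsilon_{kj}a_j$ and $a_{j(a)}+a_{j(c)}=-\sum_{j:\varepsilon_{kj}<0}\varepsilon_{kj}a_j$, and the formula above for $a_k'$ becomes the asserted one.

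In summary, the main obstacle is not the arc-counting (routine, once one checks that a globally minimal-position representative of $l$ restricts to a taut arc system in $\tilde Q$) but the bookkeeping in the final step: correctly tracking how a single edge of $T$ can occur with multiplicity among the sides $a,b,c,d$ — the cases $|\varepsilon_{kj}|\ge 2$ — and confirming that regularity of the flip forbids an edge from appearing on both the $\{a,c\}$ side and the $\{b,d\}$ side.
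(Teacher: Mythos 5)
Your argument is essentially correct, and it supplies a proof where the paper gives none: Proposition~\ref{prop:flipAlam} is stated without proof, and the underlying tropical Ptolemy relation $a_{24}=\max(a_{12}+a_{34},a_{14}+a_{23})-a_{13}$ is likewise invoked without derivation in the proof of Proposition~\ref{prop:flipDlam}. Your arc-by-arc count in the lifted quadrilateral --- six arc types, $qq'=0$, transversal weights nonnegative because special curves contribute only corner arcs --- is the standard derivation of that relation, and your bookkeeping of the intersection numbers $w_a,\dots,w_{\tilde k'}$ checks out.

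One correction to the step you flag as the remaining work. To rule out $j(a)=j(d)$, your stated reason --- that $k$ would be forced to be isotopic to $j(a)$ --- is not right. Gluing $\overline{12}$ to $\overline{41}$ (necessarily with $1\mapsto 1$, $2\mapsto 4$, since the other identification is incompatible with the orientation of $S$) produces a once-punctured bigon: vertex $1$ becomes a puncture, $k$ runs from it to the boundary vertex $3$, and $j(a)$ runs from it to $2=4$; these arcs are not isotopic, and the configuration is a perfectly good ideal triangulation. What fails is the flip itself: the new edge $\overline{24}$ is a loop at $2=4$ enclosing the puncture, so the triangulation \emph{after} the flip contains a self-folded triangle. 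Since the paper's definition of a regular flip excludes self-folded triangles in both the before and after pictures, this degeneracy (and the symmetric one $j(b)=j(c)$, as well as $j(a)=j(b)$ and $j(c)=j(d)$, which give self-folded triangles before the flip) is excluded. With disjointness of $\{j(a),j(c)\}$ and $\{j(b),j(d)\}$ in hand, $\varepsilon_{kj}$ counts signed multiplicities with no cancellation, and the identification of $\max\bigl(a_{j(a)}+a_{j(c)},\,a_{j(b)}+a_{j(d)}\bigr)$ with $\max\bigl(\sum_{j\mid\varepsilon_{kj}>0}\varepsilon_{kj}a_j,\,-\sum_{j\mid\varepsilon_{kj}<0}\varepsilon_{kj}a_j\bigr)$ goes through; the cases $|\varepsilon_{kj}|=2$ (opposite sides identified, as on a once-punctured torus) are handled correctly by your multiset convention.
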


This proposition immediately implies the following.

\begin{theorem}
The space of rational bounded laminations is naturally identified with the space of tropical rational points of the cluster $K_2$-variety associated to the surface~$S$:
\[
\mathcal{A}_L(S,\mathbb{Q})=\mathcal{A}(\mathbb{Q}^t).
\]
\end{theorem}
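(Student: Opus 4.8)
The plan is to package the two preceding propositions into a matching of coordinate charts. Recall that $\mathcal{A}(\mathbb{Q}^t)$ is the set $\coprod_{\mathbf{i}}\mathcal{A}_{\mathbf{i}}(\mathbb{Q}^t)$ modulo the gluing identifications $\psi_{\mathbf{i},\mathbf{i}'*}$, the coproduct ranging over the mutation class of a defining seed. Over the tropical semifield each $\psi_{\mathbf{i},\mathbf{i}'*}$ is a composite of tropicalized mutations, hence a piecewise-linear \emph{bijection} of $\mathbb{Q}^{|I|}$; therefore the tautological map $\mathcal{A}_{\mathbf{i}}(\mathbb{Q}^t)\to\mathcal{A}(\mathbb{Q}^t)$ is a bijection for every seed $\mathbf{i}$, and $\mathcal{A}(\mathbb{Q}^t)$ may be regarded as a single copy of $\mathbb{Q}^{|I|}$ carrying one coordinate system for each seed in the mutation class, the transition maps between these coordinate systems being the tropical cluster transformations.

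First I would fix an ideal triangulation $T$ of $S$ and let $\mathbf{i}_T$ be the seed it determines, with $I$ the set of edges and $J$ the set of internal edges. The proposition above asserts that $l\mapsto(a_i(l))_{i\in I}$ is a bijection $\Phi_T\colon\mathcal{A}_L(S,\mathbb{Q})\xrightarrow{\ \sim\ }\mathbb{Q}^{|I|}=\mathcal{A}_{\mathbf{i}_T}(\mathbb{Q}^t)$; composing $\Phi_T$ with the tautological bijection onto $\mathcal{A}(\mathbb{Q}^t)$ yields a bijection $\Psi_T\colon\mathcal{A}_L(S,\mathbb{Q})\xrightarrow{\ \sim\ }\mathcal{A}(\mathbb{Q}^t)$. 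The substantive point is that $\Psi_T$ is independent of $T$. Suppose $T'$ is obtained from $T$ by a regular flip at an internal edge $k$, so that $\mathbf{i}_{T'}=\mu_k(\mathbf{i}_T)$. By Proposition~\ref{prop:flipAlam} the comparison map $\Phi_{T'}\circ\Phi_T^{-1}$ on $\mathbb{Q}^{|I|}$ is given by exactly the tropicalized $\mathcal{A}$-mutation formula at $k$, i.e.\ it is $\psi_{\mathbf{i}_T,\mathbf{i}_{T'}*}$ — which is precisely the identification built into $\mathcal{A}(\mathbb{Q}^t)$ between the charts at $\mathbf{i}_T$ and $\mathbf{i}_{T'}$. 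Hence $\Psi_{T'}=\Psi_T$. Since any two ideal triangulations of $S$ are joined by a finite chain of flips, $\Psi_T$ does not depend on $T$, and the common value is the asserted natural identification $\mathcal{A}_L(S,\mathbb{Q})=\mathcal{A}(\mathbb{Q}^t)$; by construction it is compatible with the piecewise-linear structures, since on both sides the transition functions between charts are the tropical cluster transformations.

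The one point needing care — and the only real obstacle — is that Proposition~\ref{prop:flipAlam} and the bijection proposition are phrased for triangulations with no self-folded triangles, equivalently for \emph{regular} flips, whereas the connectivity statement permits arbitrary flips. When every boundary component of $S$ carries a marked point there are no punctures and hence no self-folded triangles, the flip graph is connected through regular flips, and nothing further is needed. In general one must additionally check that the coordinates $a_i$ and their transformation rule behave correctly across a self-folded triangle; this follows from the same glue-the-triangles argument used to prove the bijection proposition, now applied to the loop together with its enclosed edge, after which the argument above goes through unchanged.
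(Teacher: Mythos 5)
Your argument is correct and is essentially the paper's own: the paper simply declares the theorem an immediate consequence of the coordinate bijection and of Proposition~\ref{prop:flipAlam}, and what you have written is the explicit unpacking of that deduction (charts agree, transition maps on both sides are the tropical mutations, flips connect all triangulations). Your closing remark about self-folded triangles and regular flips is a reasonable point of care that the paper itself glosses over, but it does not change the route.
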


Since the transformation rules in Proposition~\ref{prop:flipAlam} are continuous with respect to the standard topology on $\mathbb{Q}^{|I|}$, the coordinates define a natural topology on $\mathcal{A}_L(S,\mathbb{Q})$. 

\begin{definition}
The space of \emph{real bounded laminations} is defined as the metric space completion of $\mathcal{A}_L(S,\mathbb{Q})$.
\end{definition}

This space of real bounded laminations is identified with the space $\mathcal{A}(\mathbb{R}^t)$ of $\mathbb{R}^t$-points of the cluster $K_2$-variety associated to the surface~$S$. There is a natural action of the group $\mathbb{R}_{>0}$ on this space where an element $\lambda\in\mathbb{R}_{>0}$ acts by multiplying the coordinates in any coordinate system by~$\lambda$. The \emph{spherical tropical space} $\mathcal{SA}(\mathbb{R}^t)$ is the quotient 
\[
\mathcal{SA}(\mathbb{R}^t)=\left(\mathcal{A}(\mathbb{R}^t)-0\right)/\mathbb{R}_{>0}.
\]
By Proposition~2.2 of~\cite{infinity}, we know that $\mathcal{SA}(\mathbb{R}^t)$ can be viewed as a boundary of the decorated Teichm\"uller space~$\mathcal{A}^+(S)$. In~\cite{Penner}, Chapter~5, Section~5.4 (see also~\cite{PP}), the points of $\mathcal{A}(\mathbb{R}^t)$ are identified with compactly supported measured geodesic laminations in the sense of Thurston~\cite{Th}, with extra data associated to the punctures.

\subsection{Unbounded laminations}

There is an analogous notion of unbounded lamination.

\begin{definition}
A \emph{rational unbounded lamination} on $S$ is the homotopy class of a collection of finitely many simple nonintersecting noncontractible and non-special curves on $S$ with positive rational weights and a choice of orientation for each puncture in $S$ that meets a curve. A lamination containing homotopic curves of weights $a$ and~$b$ is equivalent to the lamination with one curve removed and the weight $a+b$ on the other.
\end{definition}

The set of all rational unbounded laminations on $S$ is denoted $\mathcal{X}_L(S,\mathbb{Q})$. We will write $\mathcal{X}_L(S,\mathbb{Z})$ for the set of all unbounded laminations on $S$ that can be represented by a collection of curves with integral weights. When there is no possibility of confusion, we will simply write $\mathcal{X}_L$.

Now consider an unbounded lamination $l$ and an ideal triangulation $T$. Remove a small neighborhood of each puncture in $S$ that meets a curve to get a new surface $S'$ with boundary. The edges of $T$ correspond to arcs on this surface $S'$. If an arc ends on a hole, we wind its endpoint around the hole infinitely many times in the direction prescribed by the orientation so that the arcs spiral into the holes in~$S'$.

Let $k$ be an internal edge of the resulting triangulation of $S'$, and consider the quadrilateral on the surface with diagonal $k$. There will be finitely many curves that connect opposite sides of the quadrilateral and possibly infinitely many curves that join adjacent sides. Number the vertices of this quadrilateral in counterclockwise order as shown below so that the edge $k$ joins vertices~1 and~3.
\[
\xy /l1.5pc/:
{\xypolygon4"A"{~:{(2,2):}}};
{\xypolygon4"B"{~:{(2.5,0):}~>{}}};
{\xypolygon4"C"{~:{(0.8,0.8):}~>{}}};
{"A2"\PATH~={**@{-}}'"A4"};
(4.5,0)*{1};
(1,3.5)*{2};
(-2.5,0)*{3}; 
(1,-3.5)*{4};
(1.8,2)*{}="A0";
(0.4,-2.2)*{}="A1";
(1.7,-2.2)*{}="A2";
(0.6,-2.4)*{}="A3";
(1.5,-2.4)*{}="A4";
(0.8,-2.6)*{}="A5";
(1.3,-2.6)*{}="A6";
(0.15,-2)*{}="B0";
(0.4,2.2)*{}="B1";
(1.7,2.2)*{}="B2";
(0.6,2.4)*{}="B3";
(1.5,2.4)*{}="B4";
(0.8,2.6)*{}="B5";
(1.3,2.6)*{}="B6";
"A0";"B0" **\crv{(1.3,1.9) & (0.65,-1.9)};
"A1";"A2" **\crv{(0.5,-1.9) & (1.6,-1.9)};
"A3";"A4" **\crv{(0.7,-2.2) & (1.4,-2.2)};
"A5";"A6" **\crv{(0.9,-2.5) & (1.2,-2.5)};
"B1";"B2" **\crv{(0.5,1.9) & (1.6,1.9)};
"B3";"B4" **\crv{(0.7,2.2) & (1.4,2.2)};
"B5";"B6" **\crv{(0.9,2.5) & (1.2,2.5)};
\endxy
\]

Let $p$ be a vertex of this quadrilateral. If there are infinitely many curves connecting the edges that meet at $p$, then we can choose one such curve $\alpha_p$ and delete all of the curves between $\alpha_p$ and the point $p$. By doing this for each vertex, we remove all but finitely many curves from the quadrilateral and get a bounded lamination on a disk with four marked points. Let $a_{ij}$ be the coordinate of this bounded lamination corresponding to the edge connecting vertices $i$ and $j$. Define 
\[
x_k=a_{12}+a_{34}-a_{14}-a_{23}.
\]
It is easy to see that this number is independent of all choices made in the construction. It is the number associated to the edge $k$.

\begin{proposition}
The numbers $x_j$ ($j\in J$) provide a bijection 
\[
\mathcal{X}_L(S,\mathbb{Q})\rightarrow\mathbb{Q}^{|J|}.
\]
\end{proposition}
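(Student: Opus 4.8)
The plan is to construct a two‑sided inverse $\psi:\mathbb{Q}^{|J|}\to\mathcal{X}_L(S,\mathbb{Q})$ to the coordinate map $\phi$, in close parallel with the proof of the analogous statement for $\mathcal{A}_L(S,\mathbb{Q})$ given above. As in that case it suffices to produce, from a tuple $(x_j)_{j\in J}$ of rational numbers, an unbounded lamination whose coordinates are $\tilde x_j=p\,x_j$ for a single positive integer $p$ depending only on the combinatorics of the triangulation; dividing all curve weights by $p$ then yields a lamination with coordinates $x_j$. So from now on I assume the $x_j$ have been rescaled so that the constructions below go through over $\mathbb{Z}$.

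First I would pass to the universal cover $\tilde S$ and lift the ideal triangulation $T$ to a triangulation $\tilde T$, so that each internal edge of $\tilde T$ inherits a shear coordinate from its projection. The heart of the argument is the local model at a single internal edge $k$: using the argument of~\cite{dual} that was invoked in the $\mathcal{A}_L$ case, I would build the quadrilateral $u_k$ formed by the two triangles of $\tilde T$ adjacent to the lift of $k$, together with finitely many disjoint arcs joining pairs of its sides, so that the associated numbers satisfy $a_{12}+a_{34}-a_{14}-a_{23}=\tilde x_k$ and so that on each side shared by two adjacent quadrilaterals the arc endpoints match. Note that the shear data does \emph{not} pin down the $a_{ij}$ themselves — a simultaneous shift of all $a$'s leaves every $\tilde x_k$ unchanged — so there is a residual freedom, which is exactly the kernel of the tropicalized map $p:\mathcal{A}\to\mathcal{X}$; this freedom is absorbed by the spiralling behaviour near punctures and holes, recorded by the choice of orientation at each puncture met by a curve. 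Where $\tilde T$ has a puncture (or, after deleting a small disk around a hole, a spiralling vertex), the local configuration involves infinitely many arcs accumulating on it in the direction fixed by that orientation, which supplies precisely the extra data in the definition of an unbounded lamination. Reassembling the quadrilaterals and passing to the quotient by the deck action of $\pi_1(S)$ produces the lamination $\psi((x_j))$ on $S$, and one checks independence of the auxiliary choices — of the lift, and of which curve $\alpha_p$ is retained near each vertex — as in the $\mathcal{A}_L$ argument.

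Finally I would verify that $\phi$ and $\psi$ are mutually inverse. The identity $\phi\circ\psi=\mathrm{id}$ is immediate: each $u_k$ was built to realize the prescribed shear $\tilde x_k$, and since the coordinate $x_k$ of a lamination depends only on the finitely‑many‑arc truncation inside the quadrilateral at $k$, it is insensitive to the rescaling‑and‑dividing step. For $\psi\circ\phi=\mathrm{id}$, I would start from an unbounded lamination $l$, realize it in minimal position with respect to $\tilde T$, read off its shear coordinates and puncture orientations, and observe that the reconstruction recovers, quadrilateral by quadrilateral, the same arc system with the same spiralling directions — hence the same lamination up to isotopy.

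I expect the main obstacle to be the bookkeeping at the punctures and holes: one must show that the local ``remove all but finitely many curves'' operation used to define $x_k$ is compatible across adjacent quadrilaterals and with the reconstruction, so that the shear data \emph{together with} the orientation data pin down the spiralling uniquely. In particular the choice of which curve $\alpha_p$ to keep near a vertex $p$ must be shown irrelevant both for the definition of $x_k$ and for $\psi$, and the output of $\psi$ must be genuinely unbounded, with no special curves — this is the part of the argument with no counterpart in the bounded case, and it is where the orientations at the punctures do the essential work of resolving the shift ambiguity noted above.
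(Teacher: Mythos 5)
There is a genuine gap at the step you yourself flag as ``the main obstacle'': you never supply the mechanism that makes infinitely many spiralling arcs match across adjacent triangles. The paper's proof does not build finite quadrilateral models and does not rescale. Instead it equips each ideal triangle $u_0\subseteq\mathbb{H}$ with a \emph{canonical} infinite family of horocyclic arcs: take the unique triple of pairwise tangent horocycles at the vertices, use the tangency points to parametrize each side by $\mathbb{R}$, and join the point with parameter $s\in\frac{1}{2}+\mathbb{Z}_{\geq0}$ on one side to the point with parameter $-s$ on the next side. Adjacent triangles are then glued so that the cross ratio of the resulting quadrilateral is $e^{x_j}$, and Penner (Chapter~1, Corollary~4.16) guarantees that the arcs already drawn on the two triangles connect. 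This is precisely the content your ``compatibility of truncations across adjacent quadrilaterals'' would have to reproduce, and without it the construction of $\psi$ is not defined. Relatedly, your appeal to the $\mathcal{A}_L$-style rescaling does not transfer: unbounded laminations exclude special curves and require positive weights, so the ``add a special curve of weight $-q$'' device is unavailable, and since the local arc configurations near a spiralling vertex are infinite anyway, integrality of the $\tilde{x}_j$ buys you no finite local model. The paper works directly with arbitrary rational $x_j$ via the cross ratio $e^{x_j}$.

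A second, smaller error: the orientation at a puncture is not ``residual freedom absorbed by the spiralling''; it is \emph{determined} by the input tuple, namely by the sign of $\sum x_j$ over the edges incident to that puncture (negative for the orientation induced from $S$, positive for the opposite one). If it were a free choice to be made during the construction, $\psi$ would not be single-valued and $\phi$ could not be injective. The paper recovers these orientations from the coordinates as the first step of the reconstruction; your proof needs to do the same before it can speak of winding ``in the direction fixed by that orientation.''
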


\begin{proof}
Suppose we are given a rational number $x_j$ for each edge $j\in J$. To recover the orientation of a puncture $h$, we compute the number $\sum x_i$, where the sum is over all edges $j$ incident to~$h$. This sum is negative (respectively, positive) when the orientation is induced from the orientation of~$S$ (respectively, the opposite of this orientation).

Let $\tilde{S}$ denote the universal cover of the surface $S$. We can lift the ideal triangulation of~$S$ to an ideal triangulation of~$\tilde{S}$, and we can associate to each edge of this triangulation the number associated to its projection.

Let $t_0$ be any triangle in the triangulation of $\tilde{S}$, and choose a corresponding ideal triangle $u_0$ in the hyperbolic plane $\mathbb{H}$. There is a unique triple of horocycles about its endpoints that are pairwise tangent. The points of tangency provide three canonical points on the edges of~$u_0$. Parametrize the edges of this triangle by $\mathbb{R}$, respecting the orientation induced by the orientation of the triangle, so that the point with parameter $s\in\mathbb{R}$ lies at distance $|s|$ from the distinguished point. Connect points with parameter $s\in\frac{1}{2}+\mathbb{Z}_{\geq0}$ on one side to points with parameter~$-s$ on the next side in the clockwise direction by a horocyclic arc. In this way, we obtain the triangle $u_0$ with infinitely many arcs connecting adjacent sides.
\[
\xy 0;/r.50pc/: 
(6,14)*{\vdots}; 
(0,-6)*{}="2"; 
(12,-6)*{}="3"; 
"2";"3" **\crv{(0,3) & (12,3)}; 
(0,12)*{}="B"; 
(12,12)*{}="C";
(0,-6)*{}="B1"; 
(12,-6)*{}="C1";
(-8,-6)*{}="X"; 
(20,-6)*{}="Y"; 
"B";"B1" **\dir{-}; 
"C";"C1" **\dir{-}; 
"X";"Y" **\dir{-}; 
(-6,6)*{}="X1"; 
(18,6)*{}="Y1"; 
"X1";"Y1" **\dir{-}; 
(-6,9.892)*{}="X2"; 
(18,9.892)*{}="Y2"; 
"X2";"Y2" **\dir{-}; 
(0,0)*\xycircle(6,6){-};
(0,-2.361)*\xycircle(3.639,3.639){-};
(0,-4.661)*\xycircle(1.339,1.339){-};
(0,-5.507)*\xycircle(0.493,0.493){-};
(12,0)*\xycircle(6,6){-};
(12,-2.361)*\xycircle(3.639,3.639){-};
(12,-4.661)*\xycircle(1.339,1.339){-};
(12,-5.507)*\xycircle(0.493,0.493){-};
\endxy
\]

Next, consider a triangle $t$ adjacent to $t_0$ in the ideal triangulation of $\tilde{S}$. There is a number $x_j$ associated to the common edge $j$, and we can find an ideal triangle $u$ adjacent to~$u_0$ so that the cross ratio of the resulting quadrilateral is $e^{x_j}$. We can draw infinitely many horocyclic arcs on $u$ as we did for $u_0$. By~\cite{Penner}, Chapter~1, Corollary~4.16, these arcs connect to the ones already drawn on $u_0$.

Continuing in this way, we obtain a triangulation of a region in $\mathbb{H}$ by ideal triangles where each triangle corresponds to a triangle in $\tilde{S}$. Quotienting this region by the action of the fundamental group, we obtain a surface homeomorphic to $S$ with curves drawn on it. One can check that this construction provides a two-sided inverse of the map $\mathcal{X}_L(S)\rightarrow\mathbb{Q}^{|J|}$.
\end{proof}

\begin{proposition}
\label{prop:flipXlam}
A regular flip at an edge $k$ of the triangulation changes the coordinates~$x_i$ to new coordinates $x_i'$ given by the formula 
\begin{align*}
x_i' &=
\begin{cases}
-x_k & \mbox{if } i=k \\ 
x_i+\varepsilon_{ki}\max\left(0,\sgn(\varepsilon_{ki})x_k\right) & \mbox{if } i\neq k.
\end{cases}
\end{align*}
\end{proposition}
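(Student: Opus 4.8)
The plan is to localize the computation to the quadrilateral $Q$ cut out by the two triangles of $T$ adjacent to $k$, and to deduce everything from the already-proved transformation law for $\mathcal{A}$-laminations (Proposition~\ref{prop:flipAlam}). First observe that a flip at $k$ only alters the triangulation inside $Q$: every other edge, together with the two triangles on its sides, is left untouched. Since the coordinate $x_i$ depends only on the curves in the quadrilateral $Q_i$ whose diagonal is $i$, it follows at once that $x_i'=x_i$ whenever $i$ is not a side of $Q$, i.e. whenever $\varepsilon_{ik}=0$. It therefore remains to treat $i=k$ and the case in which $i$ is one of the (at most four) sides of $Q$.

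Label the vertices of $Q$ by $1,2,3,4$ counterclockwise so that $k$ joins $1$ and $3$, and for $p\neq q$ let $a_{pq}$ denote the coordinate on the edge $\overline{pq}$ of the bounded lamination on the four-marked-point disk used, as in the construction of $x_k$, to define $x_k=a_{12}+a_{34}-a_{14}-a_{23}$. Applying Proposition~\ref{prop:flipAlam} to this four-marked-point disk, where the nonzero entries $\varepsilon_{kj}$ are $\pm1$ and the pair $\{\overline{12},\overline{34}\}$ occurs with one sign and $\{\overline{23},\overline{41}\}$ with the other, yields the tropical Ptolemy relation $a_{13}+a_{24}=\max(a_{12}+a_{34},\,a_{14}+a_{23})$, where $a_{24}$ is the coordinate of the new diagonal $k'$. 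This single identity is the engine of the whole argument.

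For $i=k$: the four side coordinates are unchanged by the flip, and after relabelling so that the new diagonal $\overline{24}$ joins ``vertices $1$ and $3$'' one reads off $x_k'=a_{23}+a_{41}-a_{12}-a_{34}=-x_k$. For $i$ a side of $Q$: such an edge lies in exactly one triangle $\Delta$ of $Q$, and the flip replaces $\Delta$ by a triangle $\Delta'$ still having $i$ as a side but with its third vertex moved to the opposite vertex of $Q$ across $k$ (e.g. $3\mapsto4$ or $4\mapsto3$). Writing $x_i$ and $x_i'$ in terms of the coordinates of the quadrilateral $Q_i$ with diagonal $i$, the only change is this single vertex substitution, so the difference $x_i'-x_i$ collapses to $\pm\big((a_{13}+a_{24})-(a_{12}+a_{34})\big)$ or $\pm\big((a_{13}+a_{24})-(a_{14}+a_{23})\big)$, according to which of the two pairs of opposite sides of $Q$ contains $i$. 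Substituting the tropical Ptolemy relation turns this into $\max(0,x_k)$ for one pair and $\min(0,x_k)=-\max(0,-x_k)$ for the other. Since these two pairs are exactly the sides with $\sgn(\varepsilon_{ki})=+1$ and $\sgn(\varepsilon_{ki})=-1$ respectively, the result is precisely $x_i'=x_i+\varepsilon_{ki}\max\!\big(0,\sgn(\varepsilon_{ki})x_k\big)$ when $|\varepsilon_{ki}|=1$; the case $|\varepsilon_{ki}|=2$, where $i$ occurs as two opposite sides of $Q$, follows by adding the two (equal-signed) contributions. The orientations assigned to punctures need no separate discussion, since the flip does not move any curve.

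The main obstacle is the orientation bookkeeping: one must fix the counterclockwise labellings of $Q$ and of each auxiliary quadrilateral $Q_i$ consistently, track how the distinguished vertex of $\Delta$ migrates under the flip, and verify that the sign appearing in $x_i'-x_i$ is faithfully recorded by $\sgn(\varepsilon_{ki})$ --- equivalently, that the pair $\{\overline{12},\overline{34}\}$ carrying the $+$ sign in the definition of $x_k$ is the same pair that carries the $+$ sign of $\varepsilon_{ki}$. Everything else --- the reduction to $Q$, the appeal to Proposition~\ref{prop:flipAlam}, and the manipulation of maxima --- is routine, and degenerate configurations are excluded because the flip is assumed regular, so no triangle of $Q$ is self-folded.
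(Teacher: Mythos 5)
Your argument is correct, and it is essentially the approach the paper itself takes for the analogous statement about doubled laminations (Proposition~\ref{prop:flipDlam}), whose proof rests on the same tropical Ptolemy relation $a_{24}=\max(a_{12}+a_{34},a_{14}+a_{23})-a_{13}$ together with $x_{13}=a_{12}+a_{34}-a_{14}-a_{23}$; the paper simply omits a separate proof of Proposition~\ref{prop:flipXlam}. Your localization to the flip quadrilateral, the relabelling giving $x_k'=-x_k$, and the sign bookkeeping via $\sgn(\varepsilon_{ki})$ all check out.
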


This proposition immediately implies the following.

\begin{theorem}
The space of rational unbounded laminations is naturally identified with the space of tropical rational points of the cluster Poisson variety associated to the surface~$S$:
\[
\mathcal{X}_L(S,\mathbb{Q})=\mathcal{X}(\mathbb{Q}^t).
\]
\end{theorem}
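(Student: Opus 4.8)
The plan is to follow the same pattern used above for the decorated and enhanced Teichm\"uller spaces and for bounded laminations: the two preceding propositions already carry all of the geometric content, and what remains is essentially bookkeeping, namely checking that the coordinate charts attached to ideal triangulations assemble into a single global identification with $\mathcal{X}(\mathbb{Q}^t)$. First I would recall how $\mathcal{X}(\mathbb{Q}^t)$ is built: for each seed $\mathbf{i}=(I,J,\varepsilon_{ij})$ in the mutation class one has $\mathcal{X}_{\mathbf{i}}(\mathbb{Q}^t)\cong\mathbb{Q}^{|J|}$, and $\mathcal{X}(\mathbb{Q}^t)$ is the quotient of $\coprod_{\mathbf{i}}\mathcal{X}_{\mathbf{i}}(\mathbb{Q}^t)$ by the identifications induced by the maps $\psi_{\mathbf{i},\mathbf{i}'*}$. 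Tropicalizing the mutation formula for $\mu_k^*X_i'$ in Definition~\ref{def:mutation} gives exactly the piecewise-linear map $x_k'=-x_k$, and $x_i'=x_i+\varepsilon_{ki}\max(0,\sgn(\varepsilon_{ki})x_k)$ for $i\neq k$. In particular each such transition map is a piecewise-linear \emph{bijection} of $\mathbb{Q}^{|J|}$ with itself, with inverse the tropical mutation in the opposite direction, so any finite composite of them is again a bijection.

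Next I would fix an ideal triangulation $T$ of $S$, let $\mathbf{i}$ be the associated seed, and use the preceding proposition to get the bijection $\mathcal{X}_L(S,\mathbb{Q})\to\mathbb{Q}^{|J|}=\mathcal{X}_{\mathbf{i}}(\mathbb{Q}^t)$, $l\mapsto(x_j(l))_{j\in J}$. If $T'$ is a second ideal triangulation obtained from $T$ by a regular flip at an edge $k$, then Proposition~\ref{prop:flipXlam} says that the two coordinate systems on $\mathcal{X}_L(S,\mathbb{Q})$ are intertwined by precisely the tropical cluster transformation $\psi_{\mathbf{i},\mathbf{i}'*}$ recalled above. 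Since any two ideal triangulations of $S$ are connected by a sequence of flips, the various charts are pairwise compatible with the gluing data defining $\mathcal{X}(\mathbb{Q}^t)$, hence descend to a well-defined injection $\mathcal{X}_L(S,\mathbb{Q})\hookrightarrow\mathcal{X}(\mathbb{Q}^t)$; and it is surjective because, by the first paragraph, the single chart $\mathcal{X}_{\mathbf{i}}(\mathbb{Q}^t)$ already surjects onto $\mathcal{X}(\mathbb{Q}^t)$ under the quotient map while $l\mapsto(x_j(l))$ is a bijection onto $\mathcal{X}_{\mathbf{i}}(\mathbb{Q}^t)$. This yields $\mathcal{X}_L(S,\mathbb{Q})=\mathcal{X}(\mathbb{Q}^t)$.

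The point that needs genuine care — and which I regard as the main obstacle — is that the mutation class $|\mathbf{i}|$ may contain seeds not realized by any ideal triangulation (for instance because of self-folded triangles), and that some flips are irregular, so a priori the laminations only furnish the sub-atlas of $\mathcal{X}(\mathbb{Q}^t)$ indexed by triangulations. This turns out to be harmless precisely because the tropical transition maps are invertible: one chart coming from a triangulation already parametrizes all of $\mathcal{X}(\mathbb{Q}^t)$, and regular flips suffice to move between any two triangulations, so no new seed is ever needed to make the identification. If one wanted to be completely scrupulous about self-folded triangles one would phrase the compatibility in terms of tagged triangulations, as in the theory of cluster algebras from surfaces, but the bijection statement itself is unaffected.
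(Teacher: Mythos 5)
Your argument is correct and is essentially the paper's own: the paper deduces the theorem directly from the coordinate bijection $\mathcal{X}_L(S,\mathbb{Q})\to\mathbb{Q}^{|J|}$ together with Proposition~\ref{prop:flipXlam}, which identifies the change of coordinates under a flip with the tropical cluster transformation, exactly as you spell out. Your additional remarks on invertibility of the tropical transition maps and on seeds not realized by triangulations are a careful elaboration of the "immediately implies" step rather than a different route.
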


Since the transformation rules in Proposition~\ref{prop:flipXlam} are continuous with respect to the standard topology on $\mathbb{Q}^{|J|}$, the coordinates define a natural topology on $\mathcal{X}_L(S,\mathbb{Q})$. 

\begin{definition}
The space of \emph{real unbounded laminations} is defined as the metric space completion of $\mathcal{X}_L(S,\mathbb{Q})$.
\end{definition}

This space of real unbounded laminations is identified with the space $\mathcal{X}(\mathbb{R}^t)$ of $\mathbb{R}^t$-points of the cluster Poisson variety associated to the surface~$S$. There is a natural action of the group $\mathbb{R}_{>0}$ on this space where an element $\lambda\in\mathbb{R}_{>0}$ acts by multiplying the coordinates in any coordinate system by~$\lambda$. The \emph{spherical tropical space} $\mathcal{SX}(\mathbb{R}^t)$ is the quotient 
\[
\mathcal{SX}(\mathbb{R}^t)=\left(\mathcal{X}(\mathbb{R}^t)-0\right)/\mathbb{R}_{>0}.
\]
By Proposition~2.2 of~\cite{infinity}, we know that $\mathcal{SX}(\mathbb{R}^t)$ can be viewed as a boundary of the enhanced Teichm\"uller space~$\mathcal{X}^+(S)$.

\subsection{Doubled laminations}

Finally, we state our results for doubled laminations. As before, a \emph{simple lamination} on a surface is a finite collection $\gamma=\{\gamma_i\}$ of simple noncontractible nonspecial disjoint nonisotopic loops considered up to isotopy. The surface~$S_\mathcal{D}$ comes equipped with a simple lamination given by the image of the boundary loops of~$S$ in~$S_\mathcal{D}$.

\begin{definition}
A \emph{rational doubled lamination} on $S_\mathcal{D}$ is the homotopy class of a collection of finitely many simple nonintersecting noncontractible and non-special closed curves with positive rational weights and a choice of orientation for each component of $\gamma$ which meets or is homotopic to a curve. A lamination containing homotopic curves of weights $a$ and $b$ is equivalent to the lamination with one curve removed and the weight $a+b$ on the other.
\end{definition}

The set of all rational doubled laminations on $S_\mathcal{D}$ will be denoted $\mathcal{D}_L(S,\mathbb{Q})$. We will write $\mathcal{D}_L(S,\mathbb{Z})$ for the set of all doubled laminations on $S_\mathcal{D}$ that can be represented by a collection of curves with integral weights. When there is no possibility of confusion, we will simply write~$\mathcal{D}_L$.

Let $l\in\mathcal{D}_L(S,\mathbb{Q})$. Then $l$ can be represented by a collection of nonintersecting simple closed curves on $S_\mathcal{D}$ such that all curves have the same rational weight. By cutting along~$\partial S$, we recover the surfaces $S$ and $S^\circ$ with curves drawn on them.

Let $\tilde{S}$ and $\tilde{S^\circ}$ denote the universal covers of $S$ and $S^\circ$, respectively. It is useful when drawing pictures to choose hyperbolic structures on $S$ and $S^\circ$. We can choose these hyperbolic structures so that if $\gamma$ is a component of $\partial S$ or $\partial S^\circ$ that does not meet a curve of the lamination, then the monodromy around $\gamma$ is parabolic. Then the universal covers can be obtained from $\mathbb{H}$ by removing countably many geodesic half disks. Below we will assume that these hyperbolic structures have been specified. None of our constructions will depend on the choice of hyperbolic structures.

\begin{lemma}
\label{lem:correspondence2}
There exists a map $\tilde{\iota}:\tilde{S}\rightarrow\tilde{S^\circ}$, unique up to the action of $\pi_1(S)$ by deck transformations, such that the diagram 
\[
\xymatrix{ 
\tilde{S} \ar[r]^{\tilde{\iota}} \ar[d] & \tilde{S^\circ} \ar[d] \\
S \ar_{\iota}[r] & S^\circ
}
\]
commutes. Points on $\partial\tilde{S}$ that project to points on the curves of the lamination are mapped bijectively to points on $\partial\tilde{S^\circ}$ that project to points on the curves, and this bijection preserves the natural order of these points.
\end{lemma}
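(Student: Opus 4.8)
The plan is to take the existence and uniqueness of $\tilde\iota$ directly from Lemma~\ref{lem:correspondence}: its proof uses only the lifting criterion together with the fact that $\tilde S$ is simply connected, so it carries over verbatim, and any two lifts of the composition $\tilde S\to S\stackrel{\iota}{\to}S^\circ$ differ by a deck transformation of $\tilde{S^\circ}$, that is (under the isomorphism $\pi_1(S)\cong\pi_1(S^\circ)$ induced by $\iota$) by the action of $\pi_1(S)$. The genuinely new content is the assertion about boundary points, and the first thing I would do is promote $\tilde\iota$ to a homeomorphism. Since $\iota\colon S\to S^\circ$ is a homeomorphism, the composition $\iota\circ\pi$, where $\pi\colon\tilde S\to S$ is the covering projection, is again a universal covering of $S^\circ$; as $\tilde\iota$ is a lift of this covering to the universal cover $\tilde{S^\circ}\to S^\circ$, it is an isomorphism of covering spaces. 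Concretely, lifting $\iota^{-1}$ to a map $\tilde{S^\circ}\to\tilde S$ normalized so as to send $\tilde\iota(\tilde x_0)$ back to $\tilde x_0$ produces a two-sided inverse of $\tilde\iota$, the resulting self-maps of $\tilde S$ and $\tilde{S^\circ}$ being basepoint-fixing lifts of identity maps, hence the identity.

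Next I would identify the two boundaries and check that they correspond. A covering map of manifolds with boundary is a local homeomorphism, so it carries boundary to boundary and interior to interior; hence $\partial\tilde S=\pi^{-1}(\partial S)$ and $\partial\tilde{S^\circ}=(\pi^\circ)^{-1}(\partial S^\circ)$. Because $\iota(\partial S)=\partial S^\circ$ and the square in the statement commutes, $\tilde\iota(\partial\tilde S)\subseteq\partial\tilde{S^\circ}$, and applying the same to $\tilde\iota^{-1}$ (which covers $\iota^{-1}$) gives equality. Now recall that $S_\mathcal{D}$ is obtained from $S\sqcup S^\circ$ by gluing $\partial S$ to $\partial S^\circ$ along $\iota$, and that cutting the doubled lamination along the image of $\partial S$ produces curves in $S$ and in $S^\circ$ whose endpoints, lying on $\partial S$ and $\partial S^\circ$ respectively, are precisely the points where the lamination meets the image of $\partial S$. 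Writing $P\subset\partial S$ and $P^\circ\subset\partial S^\circ$ for these two finite sets of points, it follows that $\iota$ restricts to a bijection $P\to P^\circ$. A point of $\partial\tilde S$ projects to a point on a curve of the lamination exactly when it lies in $\pi^{-1}(P)$, and by commutativity $\tilde\iota$ carries $\pi^{-1}(P)$ homeomorphically onto $(\pi^\circ)^{-1}(P^\circ)$, which is exactly the set of points of $\partial\tilde{S^\circ}$ projecting to points on the curves. This gives the asserted bijection.

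For the order statement, $\tilde\iota$ restricts to a homeomorphism from each line component $\ell$ of $\partial\tilde S$ onto a line component $\ell'$ of $\partial\tilde{S^\circ}$, and this homeomorphism covers the identification of the corresponding boundary component of $S$ with that of $S^\circ$, which, after passing to their common image $\gamma_j\subseteq S_\mathcal{D}$, is the identity map of $\gamma_j$. Since the natural order of the points of $\pi^{-1}(P)$ on $\ell$ is, by definition, the order in which the corresponding crossing points occur along $\gamma_j$ (once a lift is chosen), and likewise on the $S^\circ$-side, the homeomorphism $\tilde\iota\colon\ell\to\ell'$ is order-preserving on $\pi^{-1}(P)$.

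The main obstacle here is bookkeeping rather than depth: one must be careful (i) that the two families of curves obtained from the doubled lamination by cutting really do have their endpoints matched by $\iota$, which is immediate once one recalls that $\partial S$ is glued to $\partial S^\circ$ precisely along $\iota$ in the definition of $S_\mathcal{D}$, and (ii) about orientations, i.e.\ that the ``natural order'' on the two boundaries is the one inherited from the shared curve $\gamma_j$ in $S_\mathcal{D}$, so that no spurious reversal appears. (If one wants, as in Lemma~\ref{lem:correspondence}, that $\tilde\iota$ restrict to an isometry on $\partial\tilde S$, this is automatic for the hyperbolic metrics fixed above, since the other three maps in the square are local isometries on the boundary.)
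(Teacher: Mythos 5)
Your argument is correct and follows the same route as the paper: existence and uniqueness of $\tilde{\iota}$ come from the lifting criterion exactly as in Lemma~\ref{lem:correspondence}, and the paper simply asserts that the boundary-point properties are "easy to check." Your elaboration of that step (promoting $\tilde{\iota}$ to an isomorphism of covering spaces, matching the endpoint sets via the gluing along $\iota$, and deducing order-preservation from the fact that the restriction to each boundary line covers the identity of the corresponding loop in $S_{\mathcal{D}}$) is a sound filling-in of the details the paper omits.
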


\begin{proof}
The lifting criterion ensures that there is a map $\tilde{\iota}:\tilde{S}\rightarrow\tilde{S^\circ}$ lifting the composition $\tilde{S}\rightarrow S\rightarrow S^\circ$. It is easy to check that this map has the required properties.
\end{proof}

To construct coordinates on $\mathcal{D}_L(S,\mathbb{Q})$, fix a point $l\in\mathcal{D}_L(S,\mathbb{Q})$ and let $i\in J$ be an edge of the ideal triangulation $T$. Realize the lamination $l$ by a collection of nonintersecting simple closed curves on $S_\mathcal{D}$ such that all curves have the same rational weight. By cutting along the image of $\partial S$, we recover the surfaces $S$ and $S^\circ$ with curves drawn on them.

Suppose the edge $i$ corresponds to an arc connecting two holes $\gamma_1$ and~$\gamma_2$ in~$S$ which meet the curves of $l$. Choose a pair of geodesics $g_1$ and $g_2$ in $\partial\tilde{S}$ which project to these boundary components. Deform the curve connecting the holes by winding its endpoint around the holes infinitely many times in the direction prescribed by the orientation. This corresponds to deforming the preimage $\tilde{i}$ in~$\tilde{S}$ so that its endpoints coincide with endpoints of $g_1$ and $g_2$. If we let $i^\circ$ denote the image of $i$ under the tautological map $S\rightarrow S^\circ$, then we can apply the same procedure to $i^\circ$ to get an arc $\tilde{i^\circ}$ in $\tilde{S^\circ}$.
\[
\xy 0;/r.50pc/: 
(-12,-6)*{}="1"; 
(12,-6)*{}="2"; 
"1";"2" **\crv{(-12,10) & (12,10)}; 
(-16,-6)*{}="X"; 
(16,-6)*{}="Y"; 
"X";"Y" **\dir{-}; 
(18,-1)*{g_k}; 
(-12,0)*{}="P1";
(-11,-0.5)*{}="Q1"; 
(-12,2.5)*{}="P2";
(-10,1)*{}="Q2"; 
(-12,5.5)*{}="P3";
(-9,2.5)*{}="Q3"; 
(-12,11)*{}="P4";
(-7,4)*{}="Q4";
(-8,12)*{}="P5";
(-5,5.2)*{}="Q5"; 
(-4.5,12)*{}="P6";
(-3,5.5)*{}="Q6"; 
(0,12)*{}="P7";
(0,6)*{}="Q7"; 
(4.5,12)*{}="P8";
(3,5.5)*{}="Q8"; 
(8,12)*{}="P9";
(5,5.2)*{}="Q9"; 
(12,11)*{}="P10";
(7,4)*{}="Q10"; 
(12,5.5)*{}="P11";
(9,2.5)*{}="Q11"; 
(12,2.5)*{}="P12";
(10,1)*{}="Q12"; 
(12,0)*{}="P13";
(11,-0.5)*{}="Q13"; 
"P1";"Q1" **\crv{(-12,0) & (-11,-0.5)}; 
"P2";"Q2" **\crv{(-12,2.5) & (-10,1)}; 
"P3";"Q3" **\crv{(-12,5.5) & (-9,2.5)}; 
"P4";"Q4" **\crv{(-12,11) & (-7,4)}; 
"P5";"Q5" **\crv{(-8,12) & (-5,5.2)}; 
"P6";"Q6" **\crv{(-4.5,12) & (-3,5.5)}; 
"P7";"Q7" **\crv{(0,12) & (0,6)}; 
"P8";"Q8" **\crv{(4.5,12) & (3,5.5)}; 
"P9";"Q9" **\crv{(8,12) & (5,5.2)}; 
"P10";"Q10" **\crv{(12,11) & (7,4)}; 
"P11";"Q11" **\crv{(12,5.5) & (9,2.5)}; 
"P12";"Q12" **\crv{(12,2.5) & (10,1)}; 
"P13";"Q13" **\crv{(12,0) & (11,-0.5)}; 
(-3,4)*{v_{-1}};
(0,4.3)*{v_0};
(3,4)*{v_1};
(-6,3)*{\Ddots};
(6,3.5)*{\ddots};
(13,-2)*{\vdots};
(-13,-2)*{\vdots};
(10,12)*{}="B";
"1";"B" **\crv{(-12,9) & (0,12)}; 
(9,11)*{\tilde{i}};
(-16,-8)*{}="Z"; 
(16,-8)*{}="W"; 
"Z";"W" **\dir{-}; 
(-12,-8)*{}="1"; 
(12,-8)*{}="2"; 
"1";"2" **\crv{(-12,-24) & (12,-24)}; 
(18,-13)*{g_k^\circ};
(-12,-14)*{}="P1";
(-11,-13.5)*{}="Q1"; 
(-12,-16.5)*{}="P2";
(-10,-15)*{}="Q2"; 
(-12,-19.5)*{}="P3";
(-9,-16.5)*{}="Q3"; 
(-12,-25)*{}="P4";
(-7,-18)*{}="Q4"; 
(-8,-26)*{}="P5";
(-5,-19.2)*{}="Q5"; 
(-4.5,-26)*{}="P6";
(-3,-19.5)*{}="Q6"; 
(0,-26)*{}="P7";
(0,-20)*{}="Q7"; 
(4.5,-26)*{}="P8";
(3,-19.5)*{}="Q8"; 
(8,-26)*{}="P9";
(5,-19.2)*{}="Q9"; 
(12,-25)*{}="P10";
(7,-18)*{}="Q10"; 
(12,-19.5)*{}="P11";
(9,-16.5)*{}="Q11"; 
(12,-16.5)*{}="P12";
(10,-15)*{}="Q12"; 
(12,-14)*{}="P13";
(11,-13.5)*{}="Q13"; 
"P1";"Q1" **\crv{(-12,-14) & (-11,-13.5)}; 
"P2";"Q2" **\crv{(-12,-16.5) & (-10,-15)}; 
"P3";"Q3" **\crv{(-12,-19.5) & (-9,-16.5)}; 
"P4";"Q4" **\crv{(-12,-25) & (-12,-25)}; 
"P5";"Q5" **\crv{(-8,-26) & (-5,-19.2)}; 
"P6";"Q6" **\crv{(-4.5,-26) & (-3,-19.5)}; 
"P7";"Q7" **\crv{(0,-26) & (0,-20)}; 
"P8";"Q8" **\crv{(4.5,-26) & (3,-19.5)}; 
"P9";"Q9" **\crv{(8,-26) & (5,-19.2)}; 
"P10";"Q10" **\crv{(12,-25) & (7,-18)}; 
"P11";"Q11" **\crv{(12,-19.5) & (9,-16.5)}; 
"P12";"Q12" **\crv{(12,-16.5) & (10,-15)}; 
"P13";"Q13" **\crv{(12,-14) & (11,-13.5)}; 
(6,-17)*{\Ddots};
(-6,-16.5)*{\ddots};
(13,-11)*{\vdots};
(-13,-11)*{\vdots};
(-3,-18)*{v_{-1}^\circ};
(0,-18.3)*{v_0^\circ};
(3,-18)*{v_1^\circ};
(10,-26)*{}="B";
"1";"B" **\crv{(-12,-23) & (0,-26)}; 
(9,-25)*{\tilde{i^\circ}};
\endxy
\]

Observe that the curves of the lamination that end on~$\gamma_k$ can be lifted to infinitely many curves in~$\tilde{S}$ that end on~$g_k$. Similarly, if $g_k^\circ$ is the geodesic in~$\tilde{S^\circ}$ that projects to $\gamma_k$, then the curves in~$S^\circ$ that end on~$\gamma_k$ can be lifted to infinitely many curves in~$\tilde{S^\circ}$ that end on~$g_k^\circ$. Label the endpoints of curves on $g_k$ by the symbols~$v_\alpha$ ($\alpha\in\mathbb{Z}$), and label the endpoints of curves on $g_k^\circ$ by~$v_\alpha^\circ$ ($\alpha\in\mathbb{Z}$). By Lemma~\ref{lem:correspondence2}, there is a map $\tilde{\iota}:\tilde{S}\rightarrow\tilde{S^\circ}$ that projects to the natural map $\iota:S\rightarrow S^\circ$ and is unique up to the action of $\pi_1(S)$ by deck transformations. This provides a bijection 
\[
f:\{\text{vertices }v_\alpha\}\rightarrow\{\text{vertices }v_\alpha^\circ\}
\]
which preserves the order of the vertices. Choose a vertex $v_{\alpha(k)}$ on $g_k$ and let $v_{\beta(k)}^\circ$ be the corresponding vertex given by $v_{\beta(k)}^\circ=f(v_{\alpha(k)})$. We can choose $v_{\alpha(k)}$ in such a way that the curve ending at $v_{\alpha(k)}$ intersects $\tilde{i}$ and the curve ending at $v_{\beta(k)}^\circ$ intersects $\tilde{i^\circ}$. Notice that if we choose a different vertex $v_{\alpha(k)}$, then the vertex $v_{\beta(k)}^\circ$ will change by a corresponding amount.

By construction, the curve that ends at $v_{\alpha(k)}$ intersects $\tilde{i}$ at some point $p(k)$. Denote by~$a_i$ half the number of intersections between the lifted curves of $l$ and the lifted edge $\tilde{i}$ between the points~$p(1)$ and~$p(2)$. Similarly, the curve that ends at $v_{\beta(k)}^\circ$ intersects $\tilde{i^\circ}$ at some point $p^\circ(k)$. Let $a_i^\circ$ be half the number of intersections between the lifted curves of $l$ and the lifted edge $\tilde{i^\circ}$ between the points~$p^\circ(1)$ and~$p^\circ(2)$. We can then define a coordinate associated to the edge $i$ by 
\[
b_i=a_i^\circ-a_i.
\]
This defines $b_i$ when $i$ corresponds to an arc connecting two holes. If one or both of the endpoints of $i$ are punctures, then $i$ intersects only finitely many curves near these punctures, and $i^\circ$ intersects only finitely many curves near the corresponding punctures in~$S^\circ$. Thus we can associate the half intersection numbers $a_i$ and $a_i^\circ$ to these arcs as before, and we can define $b_i$ by the above formula. One can show that the $|J|$ numbers obtained in this way are independent of all choices made in the construction.

In addition to the $b_i$, there are $|J|$ numbers $x_i$ associated to a point in the space $\mathcal{D}_L(S,\mathbb{Q})$. Given a point of $\mathcal{D}_L(S,\mathbb{Q})$, these are simply defined as the $X$-coordinates of the unbounded lamination on the surface $S$ obtained by cutting $S_\mathcal{D}$ along the image of $\partial S$.

\begin{proposition}
The numbers $b_i$ and $x_i$ provide a bijection 
\[
\mathcal{D}_L(S,\mathbb{Q})\rightarrow\mathbb{Q}^{2|J|}.
\]
\end{proposition}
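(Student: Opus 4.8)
The plan is to follow the template of the preceding propositions: construct an explicit inverse $\psi\colon\mathbb{Q}^{2|J|}\to\mathcal{D}_L(S,\mathbb{Q})$ to the coordinate map $\phi$ by a reconstruction in the universal cover, and then verify $\phi\circ\psi=\mathrm{id}$ and $\psi\circ\phi=\mathrm{id}$. Given rationals $(b_j,x_j)_{j\in J}$, I would first feed the $x_j$ into the reconstruction used in the proposition on unbounded laminations to produce an unbounded lamination $l_S$ on $S$, lifted to a $\pi_1(S)$-invariant family of curves on the universal cover $\tilde S$ whose endpoints on each boundary geodesic $g_k$ form a discrete $\mathbb{Z}$-indexed set $\{v_\alpha\}$; the orientation of each component $\gamma_k$ of $\gamma$ met by a curve is recovered from the sign of $\sum_j x_j$, the sum over edges $j$ incident to $k$, and components of $\gamma$ merely homotopic to a curve, as well as edges with punctured endpoints, are treated exactly as in the quoted constructions. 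Next I would use the $b_j$, together with the $l_S$ just built, to produce the matching unbounded lamination $l_{S^\circ}$ on $S^\circ$ with its lift $\tilde{S^\circ}$, and a gluing of $\tilde S$ to $\tilde{S^\circ}$ along the pairs $g_k\leftrightarrow g_k^\circ$ whose combinatorial shift is pinned down by the defining relation $b_i=a_i^\circ-a_i$. Iterating this gluing ad infinitum tiles the universal cover of $S_\mathcal{D}$ with curves, exactly as in the construction of $\psi$ for the doubled Teichm\"uller space; quotienting by $\pi_1(S_\mathcal{D})$ yields a doubled lamination $l=\psi(b,x)$, which one checks is independent of the auxiliary choices (hyperbolic structures, reference vertices $v_{\alpha(k)}$, and the map $\tilde\iota$ of Lemma~\ref{lem:correspondence2}).

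For $\phi\circ\psi=\mathrm{id}$, given $(b,x)$ I would compute the coordinates of $l=\psi(b,x)$, choosing the reference data in the definition of $\phi$ to be the same horocycles and reference vertices used to build $\psi$; then the $X$-coordinates of the $S$-side lamination are the $x_j$ by the unbounded-lamination proposition, and counting intersections of the lifted curves with $\tilde i$ and $\tilde{i^\circ}$ between the reference points reproduces $a_i$ and $a_i^\circ$, hence $b_i=a_i^\circ-a_i$. For $\psi\circ\phi=\mathrm{id}$, given $l\in\mathcal{D}_L(S,\mathbb{Q})$ I would record $(b,x)$, re-run $\psi$ using the very horocycles and reference vertices employed to compute the coordinates, and observe that the resulting tiled universal cover and its curves coincide with those of $l$; quotienting recovers $l$. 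Both arguments are the direct analogues of Steps~2 and~3 of the proof of the corresponding statement for the doubled Teichm\"uller space.

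The main obstacle is the $S^\circ$ half of the construction of $\psi$. The coordinate $b_i$ is defined as a difference of intersection counts taken \emph{relative to reference points $p(k)$ and $p^\circ(k)$ that are themselves singled out by the map $\tilde\iota\colon\tilde S\to\tilde{S^\circ}$}, and $\tilde\iota$ presupposes the surface $S^\circ$ together with its lamination---the very data one is trying to reconstruct. Disentangling this apparent circularity is the crux: one must show that $l_S$ and the numbers $b_i$ together determine a unique compatible lamination $l_{S^\circ}$ and a unique gluing of the universal covers, up to precisely the $\pi_1(S)$-ambiguity already present in Lemma~\ref{lem:correspondence2}, so that $\psi$ is well defined. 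Once this is established, the tiling of the universal cover of $S_\mathcal{D}$, the independence of all choices, and the two composition identities follow by the same reasoning used for the doubled Teichm\"uller space and for the unbounded laminations.
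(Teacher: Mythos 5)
Your strategy coincides with the paper's: reconstruct the $S$-side from the $x_j$, build the $S^\circ$-side from the $b_j$, glue copies of the two universal covers ad infinitum, and verify the two composition identities as in the doubled Teichm\"uller space argument. However, you correctly locate the crux --- the apparent circularity in constructing $l_{S^\circ}$ when the $b_i$ are defined relative to reference points supplied by the map $\tilde{\iota}$ of Lemma~\ref{lem:correspondence2} --- and then leave it unresolved, writing only that ``one must show'' that $l_S$ and the $b_i$ determine a unique compatible $l_{S^\circ}$ and gluing. As written, this is exactly the step that defines $\psi$, so the proposal stops short of a proof.

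The resolution is that the construction of $\psi$ never invokes $\tilde{\iota}$. One chooses, near each vertex of each triangle of the triangulated region $\tilde{S}$, a distinguished curve of the lamination; for each edge $i$ this yields a finite relative intersection number $a_i$, and one sets $a_i^\circ:=b_i+a_i$. The region $\tilde{S^\circ}$ is then built triangle by triangle: for each new ideal triangle one draws infinitely many curves joining adjacent sides and chooses distinguished curves near its vertices realizing the prescribed $a_i^\circ$ while agreeing with the distinguished curves already placed on the shared edge --- the combinatorial analogue of the inductive step used for the doubled Teichm\"uller space. The gluing of $\tilde{S}$ to $\tilde{S^\circ}$ along $g_k$ and $g_k^\circ$ is then the one identifying corresponding curve endpoints, not one ``pinned down by $b_i=a_i^\circ-a_i$'' directly. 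Compatibility with $\tilde{\iota}$ is not an input to this construction; it is verified a posteriori in the proof that $\phi\circ\psi=\mathrm{id}$, by drawing a path $\tilde{\alpha}$ from a point $p$ on a distinguished curve on a boundary geodesic $g$ to a basepoint $x$ on the separating geodesic, lifting its $\iota$-image to a path $\tilde{\alpha}^\circ$ starting at $x$, and observing that the monodromy of the concatenated loop carries the distinguished curves used in the construction to exactly the curves singled out by $\tilde{\iota}$. Supplying this two-part argument --- free choices when defining $\psi$, consistency with $\tilde{\iota}$ checked inside the composition identity --- closes the gap.
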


\begin{proof}
Call this map $\phi$. We will construct a map $\psi:\mathbb{Q}^{2|J|}\rightarrow\mathcal{D}_L(S,\mathbb{Q})$ and prove that $\phi$ and $\psi$ are inverses.

\Needspace*{2\baselineskip}
\begin{step}[1]
Definition of $\psi:\mathbb{Q}^{2|J|}\rightarrow\mathcal{D}_L(S,\mathbb{Q})$.
\end{step}

Suppose we are given rational numbers $b_j$ and $x_j$ for every $j\in J$. By the reconstruction procedure for unbounded laminations, we can use the $x_j$ to glue together ideal triangles to get a region $\tilde{S}\subseteq\mathbb{H}$ with geodesic boundary together with infinitely many curves.

Let $t_0$ be a triangle in $\tilde{S}$. There are infinitely many curves connecting each pair of adjacent sides of this triangle. For each vertex $p$ of $t_0$, choose a curve connecting the two sides of this triangle that meet at $p$. Then for every edge $i$, there is a number $a_i$ defined as half the total weight of the curves that intersect $i$ between the distinguished curves. Define 
\[
a_i^\circ=b_i+a_i.
\]
We will use these numbers $a_i^\circ$ to construct another region $\tilde{S^\circ}$ with geodesic boundary in a copy of $\mathbb{H}$.

To construct $\tilde{S^\circ}$, let $u_0$ be an ideal triangle with infinitely many curves connecting each pair of adjacent sides as before. If we choose the distinguished curves on $t_0$ sufficiently close to the vertices, then we can choose a triple of distinguished curves near the vertices of $u_0$ so that $a_i^\circ$ is half the total weight of the curves that intersect an edge of $u_0$ between distinguished curves.

Now suppose $t$ is a triangle adjacent to $t_0$ in the triangulation of $\tilde{S}$. As before we can choose a distinguished curve near each vertex of $t$ to get a triple of numbers~$a_i$. Then we can draw an ideal triangle $u$ with infinitely many curves connecting each pair of adjacent sides, and there are distinguished curves near each vertex of $u$ realizing the numbers $a_i^\circ$. We can choose the distinguished curves on $t_0$ and $t$ so that they coincide at the common edge $t_0\cap t$, and then we can glue $u_0$ and $u$ so that their distinguished curves coincide. Continuing this process inductively, we obtain the desired space $\tilde{S^\circ}$.

Now the region $\tilde{S}$ is obtained from $\mathbb{H}$ by removing infinitely many geodesic half disks. We can extend $\tilde{S}$ to a larger region by gluing a copy of $\tilde{S^\circ}$ in each of these half disks in such a way that corresponding curves are identified. The resulting region is again obtained from $\mathbb{H}$ by removing infinitely many geodesic half disks, and we can enlarge it by gluing a copy of $\tilde{S}$ in each of these half disks. Continuing this process ad infinitum, we partition the hyperbolic plane into ideal triangles together with infinitely many curves. Quotienting this space by the group of deck transformations, we recover the surface $S_{\mathcal{D}}$ with a collection of curves. There may be infinitely many curves homotopic to some component $\gamma_i$ of $\gamma$. In this case we delete a maximal collection of such curves between the distinguished curves. If there are any remaining curves homotopic to $\gamma_i$, then they all lie on one of the surfaces $S$ or $S^\circ$, and we choose the orientation of $\gamma_i$ to agree with the orientation of this surface.

\Needspace*{2\baselineskip}
\begin{step}[2]
Proof of $\phi\circ\psi=1_{\mathbb{Q}^{2|J|}}$.
\end{step}

Let $b_j$ and $x_j$ ($j\in J$) be given. By the reconstruction procedure described above, we construct triangulated regions $\tilde{S}\subseteq\mathbb{H}$ and $\tilde{S}^\circ\subseteq{H}$ which we then glue together to get a triangulation of the hyperbolic plane together with a collection of curves. From this data, we get a doubled lamination $l$ on $S_{\mathcal{D}}$. We want to show that the coordinates of $l$ are the numbers~$b_j$ and~$x_j$.

We can lift $l$ to its universal cover, which is exactly the triangulated surface $\tilde{S}_{\mathcal{D}}$ that we got by gluing together copies of $\tilde{S}$ and $\tilde{S}^\circ$. Consider a copy of $\tilde{S}$ in $\tilde{S}_{\mathcal{D}}$, and let $\tilde{i}$ be an edge of the triangulation of $\tilde{S}$. To find the coordinate of $l$ corresponding to the edge $\tilde{i}$, we must choose near each endpoint of $\tilde{i}$ a distinguished curve in $\tilde{S}$ which intersects $\tilde{i}$. Now consider a copy of $\tilde{S}^\circ$ adjacent to $\tilde{S}$ in $\tilde{S}_{\mathcal{D}}$. Choose a basepoint~$x$ on the geodesic separating these regions. Suppose the edge $\tilde{i}$ is asymptotic to an endpoint of the boundary geodesic $g$ of $\tilde{S}$. Then there is a distinguished curve $c$ that intersects $\tilde{i}$ and $g$. Let $p$ be the point of intersection with $g$. Draw a curve $\tilde{\alpha}$ in~$\tilde{S}$ from the point~$p$ to the point~$x$.
\[
\xy 0;/r.40pc/: 
(0,0)*\xycircle(16,16){-};
(-10.7,2.75)*{}="p"; 
(-10.8,-1.5)*{}="p1"; 
(0,-3.5)*{}="x"; 
(10.7,2.75)*{}="q"; 
(10.8,-1.5)*{}="q1"; 
(0,16)*{}="X"; 
(0,-16)*{}="Y"; 
(-3.12,15.69)*{}="A1"; 
(-5.8,14.9)*{}="A2"; 
(-6.6,14.5)*{}="A3"; 
(-11,11.5)*{}="A4"; 
(-11.5,11)*{}="A5"; 
(-15.9,0.5)*{}="A6"; 
(-15.9,-0.5)*{}="A7"; 
(-11.5,-11)*{}="A8"; 
(-11,-11.5)*{}="A9"; 
(-6.6,-14.5)*{}="A10"; 
(-5.8,-14.9)*{}="A11"; 
(-3.12,-15.69)*{}="A12"; 
(3.12,15.69)*{}="B1"; 
(5.8,14.9)*{}="B2"; 
(6.6,14.5)*{}="B3"; 
(11,11.5)*{}="B4"; 
(11.5,11)*{}="B5"; 
(15.9,0.5)*{}="B6"; 
(15.9,-0.5)*{}="B7"; 
(11.5,-11)*{}="B8"; 
(11,-11.5)*{}="B9"; 
(6.6,-14.5)*{}="B10"; 
(5.8,-14.9)*{}="B11"; 
(3.12,-15.69)*{}="B12"; 
"p";"x" **\crv{(-7,5) & (-3,-4)}; 
"x";"q" **\crv{(3,-4) & (7,5)}; 
"X";"Y" **\dir{-}; 
"A1";"A2" **\crv{(-2.9,14.4) & (-5.1,13.4)}; 
"A3";"A4" **\crv{(-5,12) & (-9,9)}; 
"A5";"A6" **\crv{(-6.6,6.5) & (-11,0.4)}; 
"A7";"A8" **\crv{(-11,-0.4) & (-6.6,-6.5)}; 
"A9";"A10" **\crv{(-9,-9) & (-5,-12)}; 
"A11";"A12" **\crv{(-5.1,-13.4) & (-2.9,-14.4)}; 
"B1";"B2" **\crv{(2.9,14.4) & (5.1,13.4)}; 
"B3";"B4" **\crv{(5,12) & (9,9)}; 
"B5";"B6" **\crv{(6.6,6.5) & (11,0.4)}; 
"B7";"B8" **\crv{(11,-0.4) & (6.6,-6.5)}; 
"B9";"B10" **\crv{(9,-9) & (5,-12)}; 
"B11";"B12" **\crv{(5.1,-13.4) & (2.9,-14.4)}; 
"A6";"A10" **\crv{(-8,1) & (-3,-6)}; 
"B6";"B10" **\crv{(8,1) & (3,-6)}; 
"p";"p1" **\crv{(-9.6,1.5) & (-9.6,0)}; 
"q";"q1" **\crv{(9.6,1.5) & (9.6,0)}; 
(-1,-4.5)*{x}; 
(-5,2)*{\tilde{\alpha}}; 
(5,2)*{\tilde{\alpha}^\circ}; 
(-10.8,4)*{p}; 
(10.8,4)*{q}; 
(-9,0)*{c}; 
(9,0)*{c^\circ}; 
(-4.5,-8)*{\tilde{i}}; 
(4.5,-8)*{\tilde{i}^\circ}; 
(-8.5,8)*{g}; 
(8.5,8)*{g^\circ}; 
\endxy
\]
This curve $\tilde{\alpha}$ projects to a curve~$\alpha$ on the surface $S$, and we can apply the map~$\iota$ to get a curve $\alpha^\circ$ on $S^\circ$. Finally, we lift $\alpha^\circ$  to a curve $\tilde{\alpha}^\circ$ in~$S^\circ$ that starts at $x$. This curve $\tilde{\alpha}^\circ$ ends at some point $q=\tilde{\iota}(p)$, and there is a unique lifted curve $c^\circ$ of the lamination that passes through $q$. In this way, we get distinguished curves near the endpoints of the edge $\tilde{i}^\circ$. These can be used to define the coordinates of $l$.

On the other hand, consider the monodromy along the curve in $S_{\mathcal{D}}$ obtained by concatenating $\alpha$ and $\alpha^\circ$. It maps the region~$\tilde{S}$ isometrically into the region bounded by~$g^\circ$. In the construction of $l$ from the coordinates~$b_j$ and~$x_j$, there is a correspondence between curves that end on $g$ and curves that end on $g^\circ$, and this correspondence is obtained by applying this transformation. Thus the distinguished curves used to construct $l$ agree with the ones obtained using the map $\tilde{\iota}$.

It follows that the $b$-coordinates of $l$ are simply the numbers $b_j$ that we started with. It is easy to see that the $x$-coordinates of $l$ are the numbers $x_j$. This completes Step~2 of the proof.

\Needspace*{2\baselineskip}
\begin{step}[3]
Proof of $\psi\circ\phi=1_{\mathcal{D}_L}$.
\end{step}

Let $l\in\mathcal{D}_L(S,\mathbb{Q})$ be given. To calculate the coordinates of~$l$, let us pass to the universal cover~$\tilde{S}_{\mathcal{D}}$ of~$S_{\mathcal{D}}$ and choose a copy of $\tilde{S}$ in~$\tilde{S}_{\mathcal{D}}$. We associate numbers $x_i$ to the edges of the triangulation in the usual way. If $\tilde{i}$ is an edge of the triangulation of~$\tilde{S}$, then we can choose a distinguished curve near each endpoint of~$\tilde{i}$. Using the construction described above involving lifts of the map $\iota:S\rightarrow S^\circ$, we can get a pair of distinguished curves near the endpoints of a corresponding edge $\tilde{i}^\circ$ in~$\tilde{S}^\circ$. We can use these curves to calculate the numbers~$a_i$ and~$a_i^\circ$, and thus the coordinate $b_i$.

We can now use the $x_i$ to reconstruct the region~$\tilde{S}$. We must choose a distinguished curve near each endpoint of the edge~$\tilde{i}$, and we can assume that these are exactly the ones used above to define~$b_i$. We can use the curves to begin constructing the space~$\tilde{S}^\circ$ with a collection of curves. By gluing together copies of $\tilde{S}$ and $\tilde{S}^\circ$, we recover $\tilde{S}_{\mathcal{D}}$ with a collection of curves. Quotienting this space by the action of $\pi_1(S_{\mathcal{D}})$, we recover the point $l\in\mathcal{D}_L(S,\mathbb{Q})$. This completes Step~3 of the proof.
\end{proof}

\begin{proposition}
\label{prop:flipDlam}
A regular flip at an edge $k$ of the triangulation changes the coordinates $x_i$ and~$b_i$ to new coordinates $x_i'$ and~$b_i'$ given by the formulas 
\begin{align*}
x_i' &=
\begin{cases}
-x_k & \mbox{if } i=k \\ 
x_i+\varepsilon_{ki}\max\left(0,\sgn(\varepsilon_{ki})x_k\right) & \mbox{if } i\neq k
\end{cases} \\
b_i' &=
\begin{cases}
\max\biggr(x_k+\sum_{j|\varepsilon_{kj}>0}\varepsilon_{kj}b_j,-\sum_{j|\varepsilon_{kj}<0}\varepsilon_{kj}b_j\biggr)-\max(0,x_k)-b_k & \mbox{if } i=k \\
b_i & \mbox{if } i\neq k.
\end{cases} 
\end{align*}
\end{proposition}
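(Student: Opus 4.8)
The plan is to handle the two rules separately: the transformation of the $x_i$ will follow immediately from the corresponding statement for unbounded laminations, and the transformation of the $b_k$ will come from the tropicalized Ptolemy relation applied in the quadrilateral surrounding the flipped edge.

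For the $x$-coordinates there is nothing new to prove. By construction the numbers $x_i$ attached to a point of $\mathcal{D}_L(S,\mathbb{Q})$ are the $\mathcal{X}$-coordinates of the unbounded lamination on $S$ obtained by cutting $S_\mathcal{D}$ along the image of $\partial S$, and a regular flip at an edge $k$ of the triangulation of $S$ induces a regular flip of the triangulation of $S_\mathcal{D}$ that does not meet $\partial S$; hence Proposition~\ref{prop:flipXlam} applies verbatim to give the stated formula for $x_i'$. For the same reason a flip at $k$ leaves every edge $i\neq k$, every curve of the lamination, and every distinguished point used to compute $a_i$ and $a_i^\circ$ near those edges unchanged, so $b_i'=b_i$ for $i\neq k$. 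The only real content is the formula for $b_k'$.

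To prove it, fix a lift $\tilde k$ of $k$ to $\tilde S$ and let $Q\subseteq\tilde S$ be the ideal quadrilateral formed by the two triangles adjacent to $\tilde k$, with vertices numbered $1,2,3,4$ counterclockwise so that $\tilde k$ joins $1$ and $3$; let $Q^\circ\subseteq\tilde{S^\circ}$ be the corresponding quadrilateral. The distinguished-curve choices near the vertices cut down the (a priori infinitely many) curves of $l$ meeting $Q$ to a bounded lamination on a disk with four marked points, and by using the map $\tilde\iota$ of Lemma~\ref{lem:correspondence2} these choices on $S$ and $S^\circ$ may be taken to correspond, so that the resulting $\mathcal{A}_L$-coordinates $a_{ij}$ and $a_{ij}^\circ$ of the two four-gons differ only by the shift that is absorbed in $b_{ij}=a_{ij}^\circ-a_{ij}$. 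Applying Proposition~\ref{prop:flipAlam} to the four-gon and its counterpart gives
\[
a_{24}=\max(a_{12}+a_{34},\,a_{14}+a_{23})-a_{13},\qquad a_{24}^\circ=\max(a_{12}^\circ+a_{34}^\circ,\,a_{14}^\circ+a_{23}^\circ)-a_{13}^\circ.
\]
Subtracting, substituting $a_{ij}^\circ=a_{ij}+b_{ij}$, and using the identity $x_k=a_{12}+a_{34}-a_{14}-a_{23}$ from the definition of the $x$-coordinate, the common term $a_{14}+a_{23}$ factors out of both maxima and cancels, leaving
\[
b_k'=\max\bigl(x_k+b_{12}+b_{34},\,b_{14}+b_{23}\bigr)-\max(0,x_k)-b_k.
\]
Since the sides $\{12,34\}$ of $Q$ are exactly the edges $j$ with $\varepsilon_{kj}>0$ and $\{14,23\}$ those with $\varepsilon_{kj}<0$ (the orientation convention being the one used in the analogous computation for the doubled Teichm\"uller space), this is the asserted formula, and one recognizes it as the tropicalization of the $B$-mutation rule of Definition~\ref{def:mutation}.

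The step that requires the most care is the reduction to the finite four-gon: one must check that the distinguished curves near the vertices of $Q$ and of $Q^\circ$ can be chosen so that the truncated laminations genuinely correspond under $\tilde\iota$, and that after flipping, the intersection count "between the distinguished points" along the new diagonal is the one produced by the $\mathcal{A}$-flip on the four-gon — in other words, that passing to the flipped triangulation and truncating to a four-gon commute up to the additive ambiguity that cancels in the $b$-coordinates. The degenerate configurations, where a vertex of $Q$ is a puncture or the quadrilateral is folded, are treated exactly as in the arguments for $\mathcal{A}_L$ and $\mathcal{X}_L$ above and introduce no further difficulty.
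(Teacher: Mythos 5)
Your proposal is correct and follows essentially the same route as the paper's proof: the $x$-rule is inherited from the unbounded-lamination case, and the $b_k$-rule is obtained by lifting to the universal cover, applying the tropical Ptolemy relation to the quadrilateral around $\tilde{k}$ and its counterpart in $\tilde{S^\circ}$, and subtracting using $x_k=a_{12}+a_{34}-a_{14}-a_{23}$. Your added remarks on the compatibility of the distinguished-curve truncations under $\tilde\iota$ make explicit a point the paper leaves implicit, but introduce no new method.
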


\begin{proof}
By definition, the coordinates $x_i$ on $\mathcal{D}_L(S,\mathbb{Q})$ coincide with the coordinates of the unbounded lamination on the surface $S$. Therefore the transformation rule for the $x_i$ is just the usual transformation rule for the coordinates of an unbounded lamination.

To prove the second rule, deform the curves corresponding to edges of the ideal triangulation by winding their endpoints around the holes infinitely many times. Lift these deformed curves to the universal cover of $S$, and consider the ideal quadrilateral formed by the two triangles adjacent to the geodesic arc $\tilde{k}$ corresponding to $k$.
\[
\xy 0;/r.40pc/: 
(4,6)*{\tilde{k}}; 
(-6,-8)*{1}; 
(0,-8)*{2}; 
(12,-8)*{3}; 
(24,-8)*{4}; 
(-9,-6)*{}="A"; 
(3,-6)*{}="B"; 
(15,-6)*{}="C"; 
(27,-6)*{}="D"; 
(-6,-6)*{}="1"; 
(0,-6)*{}="2"; 
(12,-6)*{}="3"; 
(24,-6)*{}="4"; 
"A";"1" **\crv{~*=<2pt>{.}(-9,-4) & (-6,-4)}; 
"2";"B" **\crv{~*=<2pt>{.}(0,-4) & (3,-4)}; 
"3";"C" **\crv{~*=<2pt>{.}(12,-4) & (15,-4)}; 
"4";"D" **\crv{~*=<2pt>{.}(24,-4) & (27,-4)}; 
"1";"2" **\crv{(-6,-1) & (0,-1)}; 
"2";"3" **\crv{(0,3) & (12,3)}; 
"3";"4" **\crv{(12,3) & (24,3)}; 
"1";"4" **\crv{(-6,15) & (24,15)}; 
"1";"3" **\crv{(-6,8) & (12,8)}; 
(-7.5,-4.5)*{}="P"; 
(-5,-4.5)*{}="Q"; 
(-1.5,-4.5)*{}="R"; 
(1.5,-4.5)*{}="S"; 
(10.5,-4.5)*{}="T"; 
(13.5,-4.5)*{}="U"; 
(22.5,-4.5)*{}="V"; 
(25.5,-4.5)*{}="W"; 
"P";"Q" **\crv{(-7.5,-3.5) & (-5,-3.5)}; 
"R";"S" **\crv{(-1.5,-3.5) & (1.5,-3.5)}; 
"T";"U" **\crv{(10.5,-3.5) & (13.5,-3.5)}; 
"V";"W" **\crv{(22.5,-3.5) & (25.5,-3.5)}; 
(-14,-6)*{}="X"; 
(32,-6)*{}="Y"; 
"X";"Y" **\dir{-}; 
\endxy
\]

Number the vertices of this quadrilateral in counterclockwise order so that the edge $\tilde{k}$ joins vertices~1 and~3. In the construction of the $b_i$, we considered near each endpoint $e$ of a lifted edge $\tilde{i}$ a curve that intersects $\tilde{i}$ and projects down to a curve of the lamination. We can choose these curves to intersect both edges of the quadrilateral that meet at $e$.

Our construction will then associate a number $a_{ij}$ to the geodesic connecting~$i$ and~$j$. There is a corresponding ideal quadrilateral in the universal cover of $S^\circ$ and a number $a_{ij}^\circ$ corresponding to the edge connecting vertices~$i$ and~$j$ of this ideal quadrilateral.

If we flip at the edge~$k$, then the ideal quadrilateral in the universal cover of $S$ is replaced by the same ideal quadrilateral triangulated by the arc from~2 to~4, and the number associated to this new arc is 
\[
a_{24}=\max(a_{12}+a_{34},a_{14}+a_{23})-a_{13}.
\]
Similarly, the number $a_{13}^\circ$ transforms to 
\[
a_{24}^\circ=\max(a_{12}^\circ+a_{34}^\circ,a_{14}^\circ+a_{23}^\circ)-a_{13}^\circ.
\]
Applying these rules to the difference $b_{13}=a_{13}^\circ-a_{13}$, we obtain 
\begin{align*}
b_{24} &=\max(a_{12}^\circ+a_{34}^\circ,a_{14}^\circ+a_{23}^\circ) - \max(a_{12}+a_{34},a_{14}+a_{23})-(a_{13}^\circ-a_{13}) \\
&= \max(a_{12}^\circ+a_{34}^\circ-a_{14}-a_{23},a_{14}^\circ+a_{23}^\circ-a_{14}-a_{23}) \\
&\qquad - \max(0,a_{12}+a_{34}-a_{14}-a_{23})-(a_{13}^\circ-a_{13}) \\
&= \max(x_{13}+b_{12}+b_{34},b_{14}+b_{23}) - \max(0,x_{13}) - b_{13}
\end{align*}
where we have used the relation $x_{13}=a_{12}+a_{34}-a_{14}-a_{23}$. This proves the transformation rule for the $b_i$.
\end{proof}

This proposition immediately implies the following.

\begin{theorem}
The space of rational doubled laminations is naturally identified with the space of tropical rational points of the symplectic double associated to the surface~$S$:
\[
\mathcal{D}_L(S,\mathbb{Q})=\mathcal{D}(\mathbb{Q}^t).
\]
\end{theorem}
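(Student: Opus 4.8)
The plan is to promote the single-chart bijection already in hand to a canonical global identification. Fix an ideal triangulation $T$ of $S$ without self-folded triangles, let $\mathbf{i}$ be the associated seed and $J=J_2^T$ the set of internal edges. The proposition immediately preceding the theorem gives a bijection $c_T\colon\mathcal{D}_L(S,\mathbb{Q})\to\mathbb{Q}^{2|J|}$, $l\mapsto((b_i)_{i\in J},(x_i)_{i\in J})$, and $\mathbb{Q}^{2|J|}$ is exactly the set $\mathcal{D}_{\mathbf{i}}(\mathbb{Q}^t)$ of $\mathbb{Q}^t$-points of the torus $\mathcal{D}_{\mathbf{i}}$. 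Since the tropicalized cluster transformations gluing the tori $\mathcal{D}_{\mathbf{i}}(\mathbb{Q}^t)$ are piecewise-linear bijections, the quotient projection $\mathcal{D}_{\mathbf{i}}(\mathbb{Q}^t)\to\mathcal{D}(\mathbb{Q}^t)$ is itself a bijection, so $c_T$ already yields \emph{a} bijection $\mathcal{D}_L(S,\mathbb{Q})\to\mathcal{D}(\mathbb{Q}^t)$. The real content of the theorem is that this bijection is \emph{natural}, i.e.\ independent of the choice of $T$, and that is where Proposition~\ref{prop:flipDlam} is used.

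Second, I would check transition compatibility. If two triangulations $T$ and $T'$ (each without self-folded triangles) are related by a regular flip at an internal edge $k$, then $k$ is a non-frozen element of the common index set, the seeds $\mathbf{i}$ and $\mathbf{i}'$ are related by mutation in the direction $k$ in the sense of Definition~\ref{def:mutation}, and Proposition~\ref{prop:flipDlam} computes $c_{T'}\circ c_T^{-1}$ on coordinates to be precisely the map obtained from the rational mutation formulas of Definition~\ref{def:mutation} by replacing addition with $\max$ and multiplication with addition. Comparing this with the tropical formulas for $x_i'$ and $b_i'$ displayed in Chapter~\ref{ch:Introduction} (and recalling that $\mathbb{Q}^t$ carries $\oplus=\max$, $\otimes=+$), one sees $c_{T'}\circ c_T^{-1}$ is exactly the identification $\psi_{\mathbf{i},\mathbf{i}'*}$ used to build $\mathcal{D}(\mathbb{Q}^t)$. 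Combined with the independence-of-choices already established for each fixed $T$ in the preceding proposition, this shows the family $\{c_T\}$ is compatible with the defining identifications of $\mathcal{D}(\mathbb{Q}^t)$. Invoking connectivity of the flip graph — any two ideal triangulations of $S$ are related by a sequence of flips, which (with the standard care for surfaces all of whose triangulations contain self-folded triangles, where one passes to a covering or the tagged variant, exactly as in the treatments of $\mathcal{A}_L$ and $\mathcal{X}_L$ earlier in the chapter) may be taken regular — the seeds from triangulations of $S$ lie in the single mutation class $|\mathbf{i}|$ defining the symplectic double of $S$, and the $c_T$ descend to one well-defined bijection $\mathcal{D}_L(S,\mathbb{Q})\to\mathcal{D}(\mathbb{Q}^t)$ carrying $(b_i,x_i)$ to the corresponding tropical coordinates. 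Continuity of the coordinates (remarked after the theorem) is then immediate since tropical cluster transformations are piecewise-linear, hence continuous.

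The main obstacle is not any single step but the bookkeeping that makes the gluing rigorous: one needs that $c_T$ depends on nothing but $T$ (the independence-of-choices in the preceding proposition), that regular flips suffice to connect triangulations for the surfaces under consideration, and that Proposition~\ref{prop:flipDlam} really is the verbatim tropicalization of the cluster transformation of Definition~\ref{def:mutation}. Each of these is routine or quoted, and the entire argument runs parallel to the analogous theorems for $\mathcal{A}_L(S,\mathbb{Q})=\mathcal{A}(\mathbb{Q}^t)$ and $\mathcal{X}_L(S,\mathbb{Q})=\mathcal{X}(\mathbb{Q}^t)$ proved just above; so no genuinely new difficulty arises, and the theorem is indeed an immediate consequence of Proposition~\ref{prop:flipDlam}.
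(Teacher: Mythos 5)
Your proposal is correct and follows exactly the route the paper takes: the theorem is stated there as an immediate consequence of Proposition~\ref{prop:flipDlam} together with the coordinate bijection of the preceding proposition, and your write-up simply makes explicit the gluing and flip-connectivity bookkeeping that the paper leaves implicit. No discrepancy with the paper's argument.
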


Since the transformation rules in Proposition \ref{prop:flipDlam} are continuous with respect to the standard topology on $\mathbb{Q}^{2|J|}$, the coordinates define a natural topology on $\mathcal{D}_L(S,\mathbb{Q})$. 

\begin{definition}
The space of \emph{real doubled laminations} is defined as the metric space completion of $\mathcal{D}_L(S,\mathbb{Q})$.
\end{definition}

This space of real $\mathcal{D}$-laminations is identified with the space $\mathcal{D}(\mathbb{R}^t)$ of $\mathbb{R}^t$-points of the symplectic double associated to the surface~$S$. There is a natural action of the group $\mathbb{R}_{>0}$ on this space where an element $\lambda\in\mathbb{R}_{>0}$ acts by multiplying the coordinates in any coordinate system by~$\lambda$. The \emph{spherical tropical space} $\mathcal{SD}(\mathbb{R}^t)$ is the quotient 
\[
\mathcal{SD}(\mathbb{R}^t)=\left(\mathcal{D}(\mathbb{R}^t)-0\right)/\mathbb{R}_{>0}.
\]
By Proposition~2.2 of~\cite{infinity}, we know that $\mathcal{SD}(\mathbb{R}^t)$ can be viewed as a boundary of the doubled Teichm\"uller space~$\mathcal{D}^+(S)$.

\section{Tropical integral points}

We will now describe those laminations that have integral coordinates. In other words, we are going to characterize the tropical integral points of the three cluster varieties.

In the case of bounded laminations, we will describe a certain topological condition on the curves. Consider a lamination $l\in\mathcal{A}_L^0(S,\mathbb{Z})$ represented by a collection of finitely many curves of weight~$\pm1$ on~$S$. If $e$ is any edge of $\partial S$ bounded by adjacent marked points, then we can drag the endpoints of the curves that end on $e$ so that they all end at the same point in the interior of $e$. Any of the resulting arcs that connects two points on $\partial S$ can be viewed as a singular 1-simplex with $\mathbb{Z}/2\mathbb{Z}$-coefficients. Similarly, if $c$ is a closed loop of the lamination, then we can view this loop as a 1-cycle with $\mathbb{Z}/2\mathbb{Z}$-coefficients. This cycle consists of a single 0-simplex in the interior of each triangle that $l$ intersects in a fixed ideal triangulation~$T$ together with a 1-simplex connecting each pair of consecutive 0-simplices. Let $\sigma_l$ be the sum of all the 1-simplices obtained in this way from arcs and closed loops. Then $\sigma_l$ is a cycle representing an element $[\sigma_l]$ of the singular homology $H_1(S,\mathbb{Z}/2\mathbb{Z})$ of $S$ with coefficients in~$\mathbb{Z}/2\mathbb{Z}$.

\begin{theorem}
\label{thm:integralAlam}
Let $l\in\mathcal{A}_L^0(S,\mathbb{Z})$ be a bounded lamination. Then $l$ has integral coordinates if and only if 
\[
[\sigma_l]=0\in H_1(S,\mathbb{Z}/2\mathbb{Z}).
\]
\end{theorem}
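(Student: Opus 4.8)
The statement relates a combinatorial/algebraic condition (the coordinates $a_i$ are all integers, rather than merely half-integers, once we represent $l$ by curves of weight $\pm 1$) to a topological condition (vanishing of the mod-$2$ homology class $[\sigma_l]$). By the proposition identifying $\mathcal{A}_L(S,\mathbb{Q})$ with $\mathbb{Q}^{|I|}$ via the coordinates $a_i$ (half the total weight crossing edge $i$), the coordinates of a weight-$\pm1$ lamination are a priori in $\tfrac12\mathbb{Z}$; the issue is precisely the parities of the crossing numbers. So the first step is to reformulate: $l$ has integral coordinates iff for every edge $i$ of the fixed ideal triangulation $T$, the total number of intersections of $l$ with $i$ is \emph{even}. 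The task is then to show this parity condition on \emph{all} edges simultaneously is equivalent to $[\sigma_l]=0$ in $H_1(S,\mathbb{Z}/2\mathbb{Z})$.

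**Key steps.** First I would set up the chain-level picture carefully: represent $l$ by arcs and closed loops in minimal position with respect to $T$, and build the mod-$2$ $1$-cycle $\sigma_l$ exactly as in the paragraph preceding the theorem — one $0$-simplex in each triangle met by $l$, joined across edges by $1$-simplices dual to the crossings, with arcs contributing chains that terminate at the boundary point on each external edge. The point is that $\sigma_l$ is homologous to $l$ itself (viewing the arcs' endpoints as pushed to a common point on each boundary segment, so the arcs become relative cycles / genuine cycles after the identification), so $[\sigma_l]$ is just the mod-$2$ fundamental class carried by $l$. Second, I would use the dual cell structure: give $S$ the CW structure dual to the triangulation $T$ — vertices = triangles of $T$, edges = edges of $T$ (each dual edge crossing one edge of $T$), $2$-cells = vertices of $T$ (i.e. punctures/marked points, appropriately compactified, since we are in $H_1(S)$ and $S$ is homotopy equivalent to $S'$). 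Then $\sigma_l$, written in this dual CW chain complex, has coefficient on the dual edge $e_i^\vee$ equal to $(\text{number of crossings of } l \text{ with } i) \bmod 2$. Third — the heart of the argument — I would compute: $\sigma_l$ is a boundary iff it equals $\partial_2 c$ for some $2$-chain $c$, equivalently (since $H_1$ with $\mathbb{Z}/2$ coefficients and we can also argue via $H^1$ and intersection pairing) iff $\sigma_l$ pairs trivially with every element of $H_1(S,\mathbb{Z}/2\mathbb{Z})$ under the mod-$2$ intersection form. But the mod-$2$ intersection number of $\sigma_l$ with the dual edge class of $i$ is exactly the parity of the crossing number $a_i$'s numerator. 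Running this both ways: if $[\sigma_l]=0$ then all these pairings vanish, so all crossing numbers are even, so all $a_i\in\mathbb{Z}$; conversely if all $a_i\in\mathbb{Z}$ then $\sigma_l$ has all-even coefficients in the dual CW chain complex, hence is the zero chain mod $2$, hence certainly nullhomologous.

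**The main obstacle.** The delicate point is the bookkeeping at the \emph{vertices} of $T$ — the punctures and marked points. In $H_1(S,\mathbb{Z}/2\mathbb{Z})$ we quotient by the $2$-cells of the dual structure, and a curve of $l$ that winds around a puncture, or an arc ending on an external edge, can change its crossing-parity with the edges incident to that vertex without changing its homology class; this is why the theorem restricts to $\mathcal{A}_L^0$ (total weight on each external edge vanishes) — that condition is precisely what makes $\sigma_l$ a genuine cycle and makes the correspondence between crossing-parities and the dual chain well-defined. I expect the bulk of the work to be: (i) checking that the chain $\sigma_l$ built from arcs and loops really is a cycle given the $\mathcal{A}_L^0$ hypothesis, using that around each vertex the incoming arc-ends cancel in pairs mod $2$; and (ii) verifying the claim ``$\sigma_l$ nullhomologous $\iff$ every dual-edge coefficient even'' directly, rather than invoking Poincaré duality — this is cleanest done by noting $\partial_2$ in the dual CW complex sends a $2$-cell (a vertex $v$ of $T$) to the sum of its incident dual edges, so the image of $\partial_2$ lands in the subgroup of chains with an even coefficient-sum around each vertex, while $\sigma_l$ already has all coefficients even and hence is automatically a sum of ``theta-graph'' boundaries; and for the converse, a cycle with some odd coefficient cannot be hit by $\partial_2$ if one tracks a single offending dual edge against a suitable test loop.
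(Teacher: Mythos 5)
Your strategy is correct, and it is genuinely different from the paper's. The paper does not argue directly: it applies the tropicalization of the map $\varphi:\mathcal{A}_0\times\mathcal{A}_0\rightarrow\mathcal{D}$ to embed $\mathcal{A}_0(\mathbb{Z}^t)$ into $\mathcal{D}(\mathbb{Z}^t)$ (with $x_j=0$ and $b_j=a_j$) and deduces the statement from Theorem~\ref{thm:integralDlam}, where all the real work lives; the complications there (modifying $\sigma$ near the holes, renormalized intersection counts, the triangle-by-triangle cancellation) are forced by the spiralling of edges in the doubled setting. Your direct route avoids all of that because bounded laminations sit in minimal position with finitely many crossings, so the reformulation ``integral coordinates iff every edge of $T$ meets $l$ an even number of times'' is immediate, and the rest is a homology computation. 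In fact your argument becomes cleaner than you suggest once the dual object is set up correctly: $S$ deformation retracts onto the $1$-dimensional dual spine $\Gamma$ of $T$ (one vertex per triangle, one edge $e_i^\vee$ per edge $i$), the retraction carries $[\sigma_l]$ to the cellular cycle $\sum_i n_i\,e_i^\vee$ with $n_i$ the crossing number (the $\mathcal{A}_L^0$ hypothesis is exactly what makes the coefficients on external edges even, so this chain is a cycle, as you anticipate), and since $\Gamma$ has no $2$-cells, $H_1(\Gamma,\mathbb{Z}/2\mathbb{Z})$ \emph{is} the group of cellular $1$-cycles. Hence $[\sigma_l]=0$ if and only if every $n_i$ is even, giving both implications at once.

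The one step that would fail as written is the dual CW structure with ``$2$-cells $=$ vertices of $T$.'' The vertices are punctures and marked points, and attaching $2$-cells dual to them computes $H_1$ of the compactified surface, not of $S$. Concretely, for an annulus with one marked point on each boundary circle, the core curve crosses each of the two internal edges once, so its coordinates are $\tfrac12$ and its class in $H_1(S,\mathbb{Z}/2\mathbb{Z})\cong\mathbb{Z}/2\mathbb{Z}$ is nonzero; but it bounds once the vertices are capped off, so your criterion ``$\partial_2 c=\sigma_l$ for a $2$-chain $c$'' would wrongly declare it nullhomologous. The fix is simply to delete the $2$-cells (so nullhomologous means the zero cellular chain), and, in the other direction, to pair $[\sigma_l]$ not with ``dual edge classes'' under the absolute intersection form on $H_1(S)$ (which is degenerate for a surface with boundary, and the dual edges are not cycles anyway) but with the edges of $T$ themselves, viewed as classes in $H_1(S,\partial S;\mathbb{Z}/2\mathbb{Z})$; intersection with a properly embedded arc is a homology invariant, which is all your first direction needs. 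With those two repairs your proof is complete and, to my mind, more transparent than the reduction used in the paper.
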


Theorem~\ref{thm:integralAlam} is a consequence of the analogous result for doubled laminations. We will therefore postpone the proof of Theorem~\ref{thm:integralAlam} to the end of this section.

\begin{theorem}
\label{thm:integralXlam}
A rational unbounded lamination has integral coordinates if and only if it can be represented by a collection of curves with integral weights. That is, 
\[
\mathcal{X}(\mathbb{Z}^t)=\mathcal{X}_L(S,\mathbb{Z}).
\]
\end{theorem}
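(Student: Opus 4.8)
The plan is to prove the two inclusions $\mathcal{X}_L(S,\mathbb{Z})\subseteq\mathcal{X}(\mathbb{Z}^t)$ and $\mathcal{X}(\mathbb{Z}^t)\subseteq\mathcal{X}_L(S,\mathbb{Z})$ separately. First I would observe that this is a statement about the single bijection $\mathcal{X}_L(S,\mathbb{Q})\to\mathbb{Q}^{|J|}$: since the tropical transition maps of Proposition~\ref{prop:flipXlam} have integer outputs on integer inputs and are invertible, they restrict to bijections of $\mathbb{Z}^{|J|}$ with itself, so a rational unbounded lamination has integral coordinates in one ideal triangulation if and only if it has them in every one, i.e.\ if and only if the corresponding $\mathbb{Q}^t$-point is a $\mathbb{Z}^t$-point. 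So it suffices to fix one ideal triangulation $T$ (say without self-folded triangles), with internal edges indexed by $J$, and to show that a rational unbounded lamination $l$ has all $x_j\in\mathbb{Z}$ iff it admits a representative with integral weights.

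For the ``if'' direction I would argue by an elementary parity computation. Put a chosen integral-weight representative of $l$ in minimal position with respect to $T$; then, as in the construction of the coordinates, wind the ends of the curves around the punctures and delete the spiralling tails, so that inside each quadrilateral one is left with a bounded lamination on a disk with four marked points carrying the same (integral) weights. Fix an internal edge $k$, number the quadrilateral $1,2,3,4$ cyclically with $k$ the diagonal from $1$ to $3$, so that $x_k=a_{12}+a_{34}-a_{14}-a_{23}$. Minimality forces every curve-arc in the quadrilateral to join two of the four sides and to cross $k$ at most once, so there are exactly six arc types. Expressing each $2a_{ij}$ as the sum of the weights of the arc types meeting side $ij$ and substituting, the non-crossing types and the two ``corner-cutting'' crossing types cancel, leaving $x_k=w_{12,34}-w_{23,14}$, the difference of the total weights of the two families of curves that cross $k$ straight across between opposite sides; hence $x_k\in\mathbb{Z}$. (This cancellation is precisely what is absent for $\mathcal{A}$-laminations, where every coordinate is an unsigned half-count, and is the reason the criterion here is so much simpler than the homological one of Theorem~\ref{thm:integralAlam}.)

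For the ``only if'' direction I would take $(x_j)\in\mathbb{Z}^{|J|}$, let $l$ be the corresponding lamination, and run the reconstruction from the proof that the $x_j$ define a bijection: glue ideal triangles in $\mathbb{H}$, each carrying the countable family of horocyclic arcs at half-integer parameters, using $e^{x_j}$ as the cross-ratio gluing parameter across edge $j$. Because each $x_j$ is an integer, the matching of horocyclic arcs across an edge (\cite{Penner}, Chapter~1, Corollary~4.16) identifies the arc at parameter $s$ on one side with the one at parameter $s$ on the other up to an \emph{integer} shift, so the arcs chain together into complete curves in the resulting tiling of $\mathbb{H}$. Passing to the quotient by $\pi_1(S)$ yields finitely many curves on $S$, a parallel family of $n$ strands being recorded as one curve of weight $n$, with orientations at the punctures dictated by the signs of the sums $\sum_{j\ni h}x_j$ exactly as in the bijectivity proof. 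That this is a genuine unbounded lamination with coordinates $(x_j)$ is already part of that proof; the only new point is that the multiplicities $n$ are integers, which is immediate since they are literal strand counts.

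The hard part will be the ``only if'' direction: one has to make precise the passage from the infinite horocyclic-arc configuration in $\mathbb{H}$ to a finite collection of weighted curves on $S$ and verify that integrality of the $x_j$ genuinely forces the inter-triangle matching to be by an integer shift. The remaining points --- that cutting off the spiralling tails in the ``if'' direction leaves the weighted arc counts in the parity computation unchanged, and that edges lying along $\partial S$ and punctures met by no curve cause no trouble --- are routine and I would only sketch them.
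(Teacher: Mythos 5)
Your proposal is correct and follows essentially the same route as the paper, whose own proof is just a terse two-sentence version of your argument: integrality of coordinates for integral-weight representatives is read off from the definition of $x_k$ (your parity computation over the six arc types makes explicit what the paper leaves implicit), and the converse is obtained by running the reconstruction procedure and observing that the resulting weights are strand counts. The extra details you supply --- the reduction to a single triangulation via integrality of the tropical mutation formulas, and the integer-shift matching of horocyclic arcs --- are correct elaborations rather than a different method.
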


\begin{proof}
Let $l$ be an unbounded lamination in $\mathcal{X}_L(S,\mathbb{Z})$. Then the definition implies that $l$ has integral coordinates, so $l\in\mathcal{X}(\mathbb{Z}^t)$, and hence $\mathcal{X}_L(S,\mathbb{Z})\subseteq\mathcal{X}(\mathbb{Z}^t)$. On the other hand, if $l\in\mathcal{X}(\mathbb{Z}^t)$, then $l$ can be reconstructed from a set of integral coordinates, and it is clear from the reconstruction procedure that the curves of the resulting lamination will have integral weights. Hence $l\in\mathcal{X}_L(S,\mathbb{Z})$ and $\mathcal{X}(\mathbb{Z}^t)\subseteq\mathcal{X}_L(S,\mathbb{Z})$.
\end{proof}

For doubled laminations, we must again impose a topological condition on the curves. To state this condition, we need some terminology and notation.

\begin{definition}
A closed curve $c$ on~$S_\mathcal{D}$ is called an \emph{intersecting curve} if every curve in its homotopy class intersects the image of $\partial S$ in~$S_\mathcal{D}$.
\end{definition}

Let us now consider an element of $\mathcal{D}_L(S,\mathbb{Z})$ represented by a collection of finitely many closed curves of weight~1 on~$S_{\mathcal{D}}$. If we cut the surface~$S_{\mathcal{D}}$ along $\partial S$, we obtain a collection~$\mathcal{C}$ of curves on~$S$ and a collection $\mathcal{C}^\circ$ of curves on~$S^\circ$.

To state the relevant topological condition, we first use the natural map $S^\circ\rightarrow S$ to draw all of the curves in $\mathcal{C}$ or $\mathcal{C}^\circ$ on the surface $S$. For example, the intersecting curve illustrated in the introduction gives the following picture.
\[
\xy 0;/r.5pc/: 
(-6,-4.5)*\ellipse(3,1){.}; 
(-6,-4.5)*\ellipse(3,1)__,=:a(-180){-}; 
(6,-4.5)*\ellipse(3,1){.}; 
(6,-4.5)*\ellipse(3,1)__,=:a(-180){-}; 
(-9,0)*{}="1";
(-3,0)*{}="2";
(3,0)*{}="3";
(9,0)*{}="4";
(-15,0)*{}="A2";
(15,0)*{}="B2";
"1";"2" **\crv{(-9,-5) & (-3,-5)};
"1";"2" **\crv{(-9,5) & (-3,5)};
"3";"4" **\crv{(3,-5) & (9,-5)};
"3";"4" **\crv{(3,5) & (9,5)};
"A2";"B2" **\crv{(-15,12) & (15,12)};
(-9,-9)*{}="A";
(9,-9)*{}="B";
"A";"B" **\crv{(-8,-5) & (8,-5)}; 
(-15,-9)*{}="A1";
(15,-9)*{}="B1";
"B2";"B1" **\dir{-}; 
"A2";"A1" **\dir{-};
(-11,-8)*{}="V1";
(13,-8)*{}="V2";
(-13,-10)*{}="V3";
(11,-10)*{}="V4";
(6,3.8)*{}="V5";
(8,-3)*{}="V6";
"V1";"V2" **\crv{~*=<2pt>{.} (-8,-3) & (10,-3)};
"V3";"V4" **\crv{(-10,-3) & (8,-3)};
"V3";"V5" **\crv{(-18,13) & (7,7)};
"V1";"V5" **\crv{~*=<2pt>{.} (-16,12) & (4,3)};
"V2";"V6" **\crv{~*=<2pt>{.} (14,-6) & (10,0)};
"V4";"V6" **\crv{(12,-6) & (8,-4)};
(13,7)*{S}; 
\endxy
\]

In this way, we get a collection of curves on $S$, and any arc in this collection that connects two holes can be viewed as a singular 1-simplex with $\mathbb{Z}/2\mathbb{Z}$-coefficients. Similarly, if $c$ is a closed loop in this collection, then we can view this loop as a 1-cycle with $\mathbb{Z}/2\mathbb{Z}$-coefficients. This cycle consists of a single 0-simplex in the interior of each triangle that $c$ intersects in a fixed ideal triangulation~$T$ together with a 1-simplex connecting each pair of consecutive 0-simplices. Let $\sigma_l$ be the sum of all the 1-simplices obtained in this way from arcs and closed loops. Then $\sigma_l$ is a cycle representing an element $[\sigma_l]$ of the singular homology $H_1(S,\mathbb{Z}/2\mathbb{Z})$ of $S$ with coefficients in~$\mathbb{Z}/2\mathbb{Z}$. With this notation, one has the following result.

\begin{theorem}
\label{thm:integralDlam}
Let $l\in\mathcal{D}_L(S,\mathbb{Z})$ be a doubled lamination. Then $l$ has integral coordinates if and only if 
\[
[\sigma_l]=0\in H_1(S,\mathbb{Z}/2\mathbb{Z}).
\]
\end{theorem}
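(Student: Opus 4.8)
The plan is to reduce Theorem~\ref{thm:integralDlam} to a local computation on a single triangle, exactly as the transformation rules in Proposition~\ref{prop:flipDlam} were proved triangle-by-triangle. First I would recall from the construction of the $b_i$ and $x_i$ coordinates that, after cutting $S_\mathcal{D}$ along $\partial S$ and drawing the curves of $\mathcal{C}\cup\mathcal{C}^\circ$ on $S$, the integrality of $b_i$ is equivalent to the integrality of the half-intersection numbers $a_i$ and $a_i^\circ$ (since $b_i = a_i^\circ - a_i$), and the integrality of $x_i$ is automatic since $x_i = a_{12}+a_{34}-a_{14}-a_{23}$ with $a_{jk}\in\frac12\mathbb Z$. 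So $l$ has integral coordinates iff for every edge $i$ of the chosen triangulation $T$ the number $a_i = \tfrac12\,(\text{total weight of curves crossing }i)$ lies in $\mathbb Z$, i.e.\ iff the number of curve-crossings on each edge is even. The key observation is that this parity is exactly detected by a $\mathbb Z/2\mathbb Z$-homology class: in each triangle of $T$ the curves of a weight-$1$ lamination pair up the crossings on the three edges, so the number of crossings on the three sides of a triangle have even sum, which says precisely that the collection of 1-simplices $\sigma_l$ (one 0-simplex per triangle met, joined by 1-simplices through the crossings) is a $\mathbb Z/2\mathbb Z$-cycle. Its reduction modulo each edge measures the parity of the crossing number on that edge.

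Next I would make the homological statement precise. Fix the ideal triangulation $T$ and let $\Delta_T$ be the induced CW structure on $S$ (vertices = marked points and punctures, adjusted as usual for boundary components without marked points, 2-cells = triangles). Work with the dual cellular chain complex, or more directly observe: a class in $H_1(S,\mathbb Z/2\mathbb Z)$ is determined by its intersection numbers mod $2$ with the edges of $T$, since the edges of $T$ (the internal ones) generate $H_1$ of the graph dual to $T$ and the relations come from the triangles. The cycle $\sigma_l$ has intersection number with edge $i$ congruent mod $2$ to the number of curves of $l$ crossing $i$, hence $[\sigma_l]=0$ in $H_1(S,\mathbb Z/2\mathbb Z)$ if and only if every such crossing number is even, which by the first paragraph is equivalent to $l\in\mathcal D(\mathbb Z^t)$. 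I would then remark that the class $[\sigma_l]$ is independent of the triangulation $T$ used to define it: a flip changes $\sigma_l$ by a boundary (the boundary of the quadrilateral 2-cell involved), which can be checked directly, so the statement is well posed; alternatively one can describe $[\sigma_l]$ intrinsically as the $\mathbb Z/2\mathbb Z$-fundamental class of the union of curves in $\mathcal C\cup\mathcal C^\circ$, drawn on $S$, relative to the boundary and then pushed to absolute homology by contracting boundary arcs to points.

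After that, I would verify the boundary/triangulation-independence claims carefully, since these are where sign-free bookkeeping can go wrong, and then assemble: $l$ has integral coordinates $\iff$ all edge-crossing numbers of the curves of $\mathcal C\cup\mathcal C^\circ$ on $S$ are even $\iff$ $[\sigma_l]=0$ in $H_1(S,\mathbb Z/2\mathbb Z)$. Finally Theorem~\ref{thm:integralAlam} follows as the special case in which $l$ lies in $\mathcal A_L^0(S,\mathbb Z)$, i.e.\ when all of $l$ already lives on $S$ (so $\mathcal C^\circ=\emptyset$) and the total boundary weight on each external edge vanishes; the same cycle $\sigma_l$ and the same parity argument apply verbatim, which is why the proof of Theorem~\ref{thm:integralAlam} was deferred.

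The main obstacle I expect is the bookkeeping in the reduction of the first paragraph: one must check that, for a weight-$1$ doubled lamination reduced to minimal position, the half-intersection numbers $a_i$ are genuinely the coordinates (no hidden contributions from the winding of arcs into holes, and the correspondence between $\mathcal C$-crossings on $S$ and $\mathcal C^\circ$-crossings on $S^\circ$ behaves correctly under the identification used to define $a_i^\circ$), and that evenness of all edge-crossing numbers is both necessary and sufficient — sufficiency requires going back through the reconstruction procedure $\psi$ to see that even crossing data always comes from an honest integral-weight collection of closed curves on $S_\mathcal{D}$ rather than merely a half-integral one. The homological part is then essentially formal.
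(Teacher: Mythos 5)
Your reduction of the theorem to the single statement ``all edge-crossing parities are even, and a class in $H_1(S,\mathbb{Z}/2\mathbb{Z})$ vanishes iff it pairs trivially with every edge of $T$'' contains a genuine gap at its first link, and it is exactly the point where most of the work in the paper's proof lives. First, the claim that integrality of $b_i=a_i^\circ-a_i$ is equivalent to integrality of $a_i$ and $a_i^\circ$ separately is false (take $a_i=\tfrac12$, $a_i^\circ=\tfrac32$); the correct condition is $a_i+a_i^\circ\in\mathbb{Z}$, i.e.\ that the \emph{total} number $n_i+n_i^\circ$ of crossings of edge $i$ by curves of $\mathcal{C}\cup\mathcal{C}^\circ$, counted between the distinguished lifts, is even. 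Second, and more seriously, the quantity ``number of curve-crossings on edge $i$'' is not finite: the edges of $T$ spiral into the holes in the direction prescribed by $l$, and the arcs of the lamination accumulate there, so $\sigma_l$ meets each such edge infinitely often and the homological intersection number $[\sigma_l]\cdot[i]$ is not directly defined, let alone equal mod $2$ to $n_i+n_i^\circ$. Bridging this is precisely what the paper does by replacing $\sigma_l$ with a homologous cycle $\sigma'$ that meets each edge finitely often and then checking that this finite count differs from $n_i+n_i^\circ$ by an even number; you flag this as ``the main obstacle'' but do not resolve it, and without it the equivalence you assert is not established. (A smaller slip: integrality of the $x_i$ does not follow from $a_{jk}\in\tfrac12\mathbb{Z}$ --- a signed sum of four half-integers need not be an integer --- but rather from Theorem~\ref{thm:integralXlam} applied to the restriction of $l$ to $S$.)

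For the purely homological half, your route is genuinely different from the paper's and is attractive if carried out: you invoke the perfect intersection pairing between $H_1(S,\mathbb{Z}/2\mathbb{Z})$ and the relative classes of the edges of $T$ (Lefschetz duality, with the edges generating $H_1(\overline{S},P;\mathbb{Z}/2\mathbb{Z})$), which would replace in one stroke both of the paper's hand-made arguments --- the triangle-by-triangle cancellation showing that even crossing numbers force $[\sigma']=0$, and the bounding-$2$-chain argument showing the converse. That substitution buys brevity and conceptual clarity, at the cost of having to set up the duality carefully for a surface with punctures, holes, and marked boundary. But this benefit is only available once the analytic/bookkeeping reduction above is actually proved, so as written the proposal does not yet constitute a proof.
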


\begin{proof}
To simplify notation, we will denote $\sigma_l$ simply by $\sigma$. We assume that the ideal triangulation $T$ has been drawn on~$S$ in such a way that the edges spiral into the holes of~$S$ in the direction specified by~$l$.

\Needspace*{2\baselineskip}
\begin{step}[1]
Integral coordinates $\implies[\sigma_l]=0$ 
\end{step}

Suppose $l\in\mathcal{D}_L(S,\mathbb{Z})$ is a lamination with integral coordinates. The 1-simplices that make up~$\sigma$ may intersect the edges of the triangulation in infinitely many points. We will begin by constructing a homologous cycle~$\sigma'$ that intersects each edge of the triangulation in a finite, and in fact an \emph{even}, number of points. We will then show that $[\sigma']=0\in H_1(S,\mathbb{Z}/2\mathbb{Z})$.

To define this cycle $\sigma'$, consider any hole in~$S$ that meets one of the simplices of~$\sigma$. Modify the cycle $\sigma$ near this hole as illustrated below so that the simplices meet at an edge of the triangulation spiraling into the hole rather than at a point of $\partial S$. Doing this near every hole gives the desired cycle $\sigma'$.
\[
\xy 0;/r.50pc/: 
(0,-4)*\ellipse(3,1){.}; 
(0,-4)*\ellipse(3,1)__,=:a(180){-}; 
(-9,8)*{}="1"; 
(-3,-8)*{}="2"; 
"1";"2" **\crv{(-8,6) & (-3,-2)}; 
(9,8)*{}="3"; 
(3,-8)*{}="4"; 
"3";"4" **\crv{(8,6) & (3,-2)}; 
(-4,8)*{}="L"; 
(0,-9)*{}="M";
(4,8)*{}="N"; 
"L";"M" **\crv{(-3,6) & (-1,4)}; 
"N";"M" **\crv{(3,6) & (1,4)}; 
(-3,-7)*{}="A1"; 
(3,-8)*{}="B1";
(-3.1,-6)*{}="A2"; 
(3,-7)*{}="B2";
(-3.3,-4.9)*{}="A3"; 
(3.1,-6)*{}="B3";
(-3.6,-3.5)*{}="A4"; 
(3.3,-5)*{}="B4";
(-7,8)*{}="A5"; 
(3.4,-4)*{}="B5";
"A1";"B1" **\crv{(-2,-8.6) & (2,-8.6)}; 
"A1";"B2" **\crv{~*=<2pt>{.} (-2,-6) & (2,-6)};
"A2";"B2" **\crv{(-2,-7.8) & (2,-7.8)}; 
"A2";"B3" **\crv{~*=<2pt>{.} (-2,-5) & (2,-5)};
"A3";"B3" **\crv{(-2,-6.5) & (2,-6.5)}; 
"A3";"B4" **\crv{~*=<2pt>{.} (-2,-4) & (2,-4)};
"A4";"B4" **\crv{(-2,-5) & (2,-5)}; 
"A4";"B5" **\crv{~*=<2pt>{.} (-2,-3) & (2,-3)};
"A5";"B5" **\crv{(-5,3) & (2,-4)}; 
(-7,9)*{i}; 
(-4,9)*{\alpha}; 
(4,9)*{\beta}; 
(5,-8)*{g}; 
\endxy
\quad
\longrightarrow
\quad
\xy 0;/r.50pc/: 
(0,-4)*\ellipse(3,1){.}; 
(0,-4)*\ellipse(3,1)__,=:a(180){-}; 
(-9,8)*{}="1"; 
(-3,-8)*{}="2"; 
"1";"2" **\crv{(-8,6) & (-3,-2)}; 
(9,8)*{}="3"; 
(3,-8)*{}="4"; 
"3";"4" **\crv{(8,6) & (3,-2)}; 
(-4,8)*{}="L"; 
(0,-6.1)*{}="M";
(4,8)*{}="N"; 
"L";"M" **\crv{(-3,6) & (-1,4)}; 
"N";"M" **\crv{(3,6) & (1,4)}; 
(-3,-7)*{}="A1"; 
(3,-8)*{}="B1";
(-3.1,-6)*{}="A2"; 
(3,-7)*{}="B2";
(-3.3,-4.9)*{}="A3"; 
(3.1,-6)*{}="B3";
(-3.6,-3.5)*{}="A4"; 
(3.3,-5)*{}="B4";
(-7,8)*{}="A5"; 
(3.4,-4)*{}="B5";
"A1";"B1" **\crv{(-2,-8.6) & (2,-8.6)}; 
"A1";"B2" **\crv{~*=<2pt>{.} (-2,-6) & (2,-6)};
"A2";"B2" **\crv{(-2,-7.8) & (2,-7.8)}; 
"A2";"B3" **\crv{~*=<2pt>{.} (-2,-5) & (2,-5)};
"A3";"B3" **\crv{(-2,-6.5) & (2,-6.5)}; 
"A3";"B4" **\crv{~*=<2pt>{.} (-2,-4) & (2,-4)};
"A4";"B4" **\crv{(-2,-5) & (2,-5)}; 
"A4";"B5" **\crv{~*=<2pt>{.} (-2,-3) & (2,-3)};
"A5";"B5" **\crv{(-5,3) & (2,-4)}; 
(-7,9)*{i}; 
(-4,9)*{\alpha'}; 
(4,9)*{\beta'}; 
(5,-8)*{g}; 
\endxy
\]
It is easy to see that $\sigma'$ is homologous to~$\sigma$. We claim that it intersects each edge of the triangulation of~$S$ in an even number of points. (Here the number of intersections is counted with multiplicity so that if two 1-simplices intersect an edge $i$ at the same point, then we say there are two intersections.)

Indeed, consider the preimages of $\sigma$ and $\sigma'$ in the universal cover $\tilde{S}$ of~$S$.
\[
\xy 0;/r.50pc/: 
(-12,-6)*{}="1"; 
(12,-6)*{}="2"; 
"1";"2" **\crv{(-12,10) & (12,10)}; 
(-14,-6)*{}="X"; 
(14,-6)*{}="Y"; 
"X";"Y" **\dir{-}; 
(-12,-8)*{p}; 
(13,-1)*{\tilde{g}}; 
(-12,0)*{}="P1";
(-12,1)*{}="Q1";
(-11,-0.5)*{}="R1"; 
(-12,7)*{}="P2";
(-12,10)*{}="Q2";
(-9,2.5)*{}="R2"; 
(-8,12)*{}="P3";
(-5.5,12)*{}="Q3";
(-5,5.2)*{}="R3"; 
(0,12)*{}="P4";
(2.5,12)*{}="Q4";
(0,6)*{}="R4"; 
"P1";"R1" **\dir{-}; 
"Q1";"R1" **\dir{-}; 
"P2";"R2" **\crv{(-11,6) & (-9,3)}; 
"Q2";"R2" **\crv{(-10,7) & (-9,3)}; 
"P3";"R3" **\crv{(-7,10.5) & (-6,9)}; 
"Q3";"R3" **\crv{(-5,8) & (-5,6)}; 
"P4";"R4" **\crv{(0,12) & (0,6)}; 
"Q4";"R4" **\crv{(1.5,11) & (0,7)}; 
(-13,-2)*{\vdots};
(12,5)*{}="A";
(10,12)*{}="B";
(-3,12)*{}="C";
"1";"A" **\crv{(-12,12) & (8,10)}; 
"1";"B" **\crv{(-12,9) & (0,12)}; 
"1";"C" **\crv{(-12,9) & (-3,12)}; 
(9,10.5)*{\tilde{i}};
(-16,-8)*{}="Z"; 
(16,-8)*{}="W"; 
\endxy
\quad
\longrightarrow
\quad
\xy 0;/r.50pc/: 
(-12,-6)*{}="1"; 
(12,-6)*{}="2"; 
"1";"2" **\crv{(-12,10) & (12,10)}; 
(-14,-6)*{}="X"; 
(14,-6)*{}="Y"; 
"X";"Y" **\dir{-}; 
(-12,-8)*{p}; 
(13,-1)*{\tilde{g}}; 
(-12,8.5)*{}="P2";
(-12,10)*{}="Q2";
(-9.5,6)*{}="R2"; 
(-7,12)*{}="P3";
(-5.5,12)*{}="Q3";
(-5,8.5)*{}="R3"; 
(0,12)*{}="P4";
(1.5,12)*{}="Q4";
(0,8.5)*{}="R4"; 
"P2";"R2" **\dir{-}; 
"Q2";"R2" **\crv{(-11,9) & (-9.5,6.5)}; 
"P3";"R3" **\crv{(-5.5,10) & (-5,8.5)}; 
"Q3";"R3" **\crv{(-5.5,12) & (-5,8.5)}; 
"P4";"R4" **\crv{(0,12) & (0,8.5)}; 
"Q4";"R4" **\crv{(0.5,10) & (0,9)}; 
(-13,-2)*{\vdots};
(12,5)*{}="A";
(10,12)*{}="B";
(-3,12)*{}="C";
"1";"A" **\crv{(-12,12) & (8,10)}; 
"1";"B" **\crv{(-12,9) & (0,12)}; 
"1";"C" **\crv{(-12,9) & (-3,12)}; 
(9,10.5)*{\tilde{i}};
(-16,-8)*{}="Z"; 
(16,-8)*{}="W"; 
\endxy
\]
Let $g$ be the boundary of a hole in~$S$, and let $\tilde{g}$ be an arc in the universal cover that projects to $g$. Then the preimage of $\sigma$ near $g$ consists of infinitely many curves meeting~$\tilde{g}$ as illustrated above on the left. These curves are alternately preimages of~$\alpha$ or~$\beta$. By definition, the coordinate $b_i$ is obtained by choosing, near each endpoint $p$ of~$\tilde{i}$, a distinguished pair of preimages $\tilde{\alpha}_p$ and $\tilde{\beta}_p$ of $\alpha$ and $\beta$, respectively. We then count the number $n_i$ of preimages of curves from $S$ that intersect $\tilde{i}$ between the distinguished curves and the number $n_i^\circ$ of preimages of curves from $S^\circ$ that intersect~$\tilde{i}$ between the distinguished curves. Then $b_i$ is given by 
\[
b_i=\frac{n_i^\circ}{2}-\frac{n_i}{2}.
\]
Since we are assuming $b_i\in\mathbb{Z}$ for all $i$, we know that $n_i^\circ-n_i\in2\mathbb{Z}$, or equivalently that $n_i^\circ+n_i\in2\mathbb{Z}$. When we modify $\sigma$ to get the cycle $\sigma'$, the preimage of $\sigma$ in $\tilde{S}$ is modified as in the illustration above. The distinguished curves $\tilde{\alpha}_p$ and $\tilde{\beta}_p$ that we chose above determine preimages $\tilde{\alpha}_p'$ and $\tilde{\beta}_p'$ of $\alpha'$ and $\beta'$, respectively. As before, we can count the number of intersections between $\tilde{i}$ and the preimages of 1-simplices of~$\sigma'$. The total number of such intersections differs from $n_i^\circ+n_i$ by an even number. Hence the total number of these intersections is even. But this number equals the number of intersections of $\sigma'$ with the edge~$i$. This proves that $\sigma'$ intersects each edge of the triangulation in an even number of points.

It remains to show that $[\sigma']=0\in H_1(S,\mathbb{Z}/2\mathbb{Z})$. If $\sigma_c$ is the summand of $\sigma'$ obtained from a closed loop $c$ on $S$ or $S^\circ$, we can modify this summand to get a homologous cycle whose vertices all lie on edges of $T$. In general, the simplices of the resulting cycle may intersect, and we will deform these arcs so that each of the intersections occurs on an edge of the ideal triangulation. We will consider the refinement of the resulting cycle whose 1-simplices are the intersections of the 1-simplices with the triangles of the ideal triangulation. Abusing notation, we will denote this refinement also by $\sigma'$. We want to show that this $\sigma'$ represents the zero class in homology.

Consider a triangle on the surface with sides~$i$,~$j$, and~$k$. The simplices of $\sigma'$ contained in this triangle are arcs connecting adjacent sides of the triangle so that the total number of arcs that terminate on a given side is always even. Let $\alpha$ be an arc joining two sides, say~$i$ and~$j$. We may assume that there are no additional arcs between $\alpha$ and the vertex $u$ that it surrounds.
\[
\xy /l1.7pc/:
{\xypolygon3"A"{~:{(-3,0):}}},
{\xypolygon3"B"{~:{(-3.5,0):}~>{}}},
(-0.1,-1)*{}="j1";
(2.1,-1)*{}="i1";
(-0.7,0)*{}="j2";
(2.7,0)*{}="i2";
"B1"*{u};
"B3"*{v};
"B2"*{w};
(2.75,-0.75)*{i};
(-0.75,-0.75)*{j};
(1,2)*{k};
"j1";"i1" **\crv{(0.5,-0.5) & (1.5,-0.5)};
"j2";"i2" **\crv{(0.5,0.5) & (1.5,0.5)};
(1,-1)*{\alpha};
(1,0.9)*{\beta};
\endxy
\quad
\xy /l1.7pc/:
{\xypolygon3"A"{~:{(-3,0):}}},
{\xypolygon3"B"{~:{(-3.5,0):}~>{}}},
(2.1,-1)*{}="i1";
(2.7,0)*{}="i2";
(-0.1,-1)*{}="j1";
(-0.7,0)*{}="j2";
(1.7,1.5)*{}="k1";
(0.2,1.5)*{}="k2";
"B1"*{u};
"B3"*{v};
"B2"*{w};
(2.75,-0.75)*{i};
(-0.75,-0.75)*{j};
(1,2)*{k};
"i1";"j1" **\crv{(1.5,-0.5) & (0.5,-0.5)};
"j2";"k2" **\crv{(0.5,0.5) & (0.25,1.5)};
"i2";"k1" **\crv{(2.5,0) & (1.5,0.5)};
(1,-1)*{\alpha};
(-0.25,1)*{\beta};
(2.25,1)*{\gamma};
\endxy
\]
There is an even number of arcs that meet the edge $j$. It follows that $\alpha$ cannot be the only one and hence there is another arc $\beta$ that terminates on this edge. By our choice of $\alpha$, this arc must not lie between $\alpha$ and $u$, so $\beta$ looks like one of the arcs illustrated above depending on whether it ends on~$i$ or~$k$.

First consider the case where $\beta$ ends on $i$. In this case, there is a cycle formed by the arcs $\alpha$ and~$\beta$ together with the portions of~$i$ and~$j$ between the endpoints of these arcs. This cycle is obviously zero in $H_1(S,\mathbb{Z}/2\mathbb{Z})$.

Now consider the case where $\beta$ joins the edges $j$ and~$k$. We can assume that there are no arcs between $\beta$ and the vertex $v$ that it surrounds. Since the total number of arcs that meet the edge $k$ is even, there is another arc $\gamma$ that meets $k$. By our choice of $\beta$, this arc must not lie between $\beta$ and $v$. Moreover, we can assume that it does not join $k$ and $j$ because this case was already treated above. It follows that $\gamma$ joins~$k$ and~$i$ as illustrated above. There is a cycle formed by the arcs $\alpha$,~$\beta$, and~$\gamma$ together with the portions of $i$,~$j$, and~$k$ between their endpoints. This cycle is again zero in $H_1(S,\mathbb{Z}/2\mathbb{Z})$.

Thus we define for any triangle a cycle with $\mathbb{Z}/2\mathbb{Z}$-coefficients. Consider the sum of this cycle with $\sigma'$ in the space of all such cycles. It is another cycle representing the same homology class as $\sigma'$. It consists of arcs on the edges of the triangulation together with arcs connecting different edges, and the total number of arcs of the latter type that meet at a given edge is always even. Thus we can iterate the above construction and continue adding cycles to get new cycles homologous to $\sigma'$. At each step, the number of arcs connecting different edges decreases, so eventually we are left with zero. This proves that $\sigma'$ is homologous to zero.

\Needspace*{2\baselineskip}
\begin{step}[2]
$[\sigma_l]=0\implies$ integral coordinates
\end{step}

Suppose our cycle satisfies $[\sigma]=0\in H_1(S,\mathbb{Z}/2\mathbb{Z})$. Since $l\in\mathcal{D}_L(S,\mathbb{Z})$, we can realize $l$ as a collection of curves on $S_\mathcal{D}$ with integral weights. Then the portion of $l$ that lies on $S\subseteq S_{\mathcal{D}}$ is a collection of curves with integral weights. Since $\mathcal{X}_L(S,\mathbb{Z})=\mathcal{X}(\mathbb{Z}^t)$, it follows that $l$ has integral $x$-coordinates. Thus we only need to show that $l$ has integral $b$-coordinates.

To prove this fact, we first modify $\sigma$ as in Step~1 to get a new cycle $\sigma'$. In general, the interiors of two 1-simplices of~$\sigma'$ may intersect. If such an intersection point lies on an edge $i$ of~$T$, let us deform this edge slightly to avoid the intersection without changing the number of intersections between $i$ and the interior points of 1-simplices of~$\sigma'$. We then take a refinement of $\sigma'$ in which every point of self intersection is a 0-simplex. Abusing notation, we will denote this refinement also by $\sigma'$.

Since $\sigma'$ is homologous to~$\sigma$, our assumption implies that $[\sigma']=0\in H_1(S,\mathbb{Z}/2\mathbb{Z})$. We can realize $S$ as a two-dimensional simplicial complex $\Delta$ such that every simplex of~$\sigma'$ is included in~$\Delta$. This simplicial complex defines a simplicial homology group $H_1^{\Delta}(S,\mathbb{Z}/2\mathbb{Z})$ with coefficients in $\mathbb{Z}/2\mathbb{Z}$. There is an isomorphism 
\[
H_1^{\Delta}(S,\mathbb{Z}/2\mathbb{Z})\cong H_1(S,\mathbb{Z}/2\mathbb{Z})
\]
between simplicial and singular homology, and so $\sigma'$ also represents the zero class in \emph{simplicial} homology. This means there is a chain $\eta$ of simplices in $\Delta$ with $\sigma'=\partial\eta$. Since $\sigma'$ does not meet a marked point or hole on~$S$, we know that $\eta$ does not meet a marked point or hole~$S$. It follows that the endpoints of an edge $i$ of~$T$ cannot lie on~$\eta$. If $i$ crosses $\sigma'$ into the interior of a 2-simplex of~$\eta$, then it must leave $\eta$ by crossing $\sigma'$ again.

It follows that $i$ intersects $\sigma'$ in an even number of points. As we showed in Step~1, this number of intersections differs by an even number from $n_i^\circ+n_i$, so we have $n_i^\circ+n_i\in2\mathbb{Z}$ and hence $n_i^\circ-n_i\in2\mathbb{Z}$. Hence $b_i=\frac{1}{2}(n_i^\circ-n_i)\in\mathbb{Z}$ as desired.
\end{proof}

\begin{proof}[Proof of Theorem~\ref{thm:integralAlam}]
There is a natural map $\varphi:\mathcal{A}_0\times\mathcal{A}_0\rightarrow\mathcal{D}$ given in coordinates by 
\begin{align*}
\varphi^*(B_i) &= \frac{A_i^\circ}{A_i}, \\
\varphi^*(X_i) &=\prod_{j\in J}A_j^{\varepsilon_{ij}}
\end{align*}
where $A_i^\circ$ are the coordinates on the second factor of $\mathcal{A}_0$ and $A_i$ are the coordinates on the first factor. It induces a map $\mathcal{A}_0(\mathbb{Q}^t)\times\mathcal{A}_0(\mathbb{Q}^t)\rightarrow\mathcal{D}(\mathbb{Q}^t)$ on tropical rational points. Let $\varphi_2:\mathcal{A}_0(\mathbb{Q}^t)\rightarrow\mathcal{D}(\mathbb{Q}^t)$ be the map obtained by precomposing the above map with the inclusion $\mathcal{A}_0(\mathbb{Q}^t)\hookrightarrow \mathcal{A}_0(\mathbb{Q}^t)\times\mathcal{A}_0(\mathbb{Q}^t)$ into the second factor. Now, if $l\in\mathcal{A}_L^0(S,\mathbb{Z})$, then the lamination $\varphi_2(l)$ lies in $\mathcal{D}_L(S,\mathbb{Z})$, and $l$ satisfies the topological condition in the statement of Theorem~\ref{thm:integralAlam} if and only if $\varphi_2(l)$ satisfies the topological condition in the statement of Theorem~\ref{thm:integralDlam}. Each $x_j$ coordinate of $\varphi_2(l)$ equals~0, and each $b_j$ equals the coordinate $a_j$ of the lamination~$l$. Thus Theorem~\ref{thm:integralAlam} is a consequence of Theorem~\ref{thm:integralDlam}.
\end{proof}

\chapter{Duality of cluster varieties}
\label{ch:DualityOfClusterVarieties}

In~\cite{IHES}, Fock and Goncharov showed that the cluster $K_2$- and Poisson varieties are dual in the sense that the tropical integral points of the cluster $K_2$-variety associated to a surface parametrize a canonical basis for the algebra of regular functions on the cluster Poisson variety. In this chapter, we review the construction of Fock and Goncharov. We then show that the tropical integral points of the symplectic double parametrize an interesting class of rational functions on the symplectic double itself. These functions are given by a remarkable formula involving the $F$-polynomials of Fomin and Zelevinsky~\cite{FZIV}.

\section{Duality map for the cluster Poisson variety}

\subsection{Calculation of monodromies}
\label{subsec:CalculationOfMonodromies}

In~\cite{IHES}, Fock and Goncharov describe a method for calculating the monodromy of a framed local system in terms of the coordinates on the space $\mathcal{X}_{PGL_2,S}(\mathbb{C})$. We will review the method here and discuss some of its implications.

To begin, let $m$ be a point of $\mathcal{X}_{PGL_2,S}(\mathbb{C})$ and let $l$ be an oriented loop on~$S$. Fix an ideal triangulation $T$ of $S$. Then for each edge $i\in I$, there is a coordinate $X_i\in\mathbb{C}^*$. For each $i\in I$, let us choose a square root $X_i^{1/2}$ of $X_i$.

Now deform $l$ so that it intersects each edge of the ideal triangulation $T$ in the minimal number of points. Suppose $i_1,\dots,i_s$ are the edges of $T$ that $l$ intersects in order (so an edge may appear more than once on this list). After crossing the edge~$i_k$, the curve $l$ enters a triangle $t$ of~$T$ before leaving through the next edge. If the curve $l$ turns to the right before leaving $t$, then we form the matrix 
\[
M_k=
\left( \begin{array}{cc}
X_{i_k}^{1/2} & X_{i_k}^{1/2} \\
0 & X_{i_k}^{-1/2} \end{array} \right).
\]
On the other hand, if $l$ turns to the left before leaving $t$, then we form the matrix
\[
M_k=
\left( \begin{array}{cc}
X_{i_k}^{1/2} & 0 \\
X_{i_k}^{-1/2} & X_{i_k}^{-1/2} \end{array} \right).
\]
We can then multiply the matrices $M_k$ defined in this way to get a matrix representing the monodromy:
\[
\rho(l)=M_1\dots M_s.
\]

\begin{definition}
By a \emph{peripheral curve} on $S$, we mean a closed curve retractible to a puncture of~$S$.
\end{definition}

\begin{proposition}
\label{prop:monodromyperipheral}
Let $l$ be a peripheral curve on~$S$. Then $\rho(l)$ has eigenvalues $X_1^{a_1}\dots X_n^{a_n}$ and $X_1^{-a_1}\dots X_n^{-a_n}$ where $a_i$ is the coordinate of~$l$ associated to the edge~$i$.
\end{proposition}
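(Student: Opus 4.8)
The plan is to put the loop $l$ in minimal position and reduce $\rho(l)$ to a product of triangular matrices. First I would replace $l$ by a standard peripheral loop around the puncture $p$: such a loop can be taken to meet only the edges of $T$ incident to $p$, crossing exactly one edge at each corner of a triangle at $p$. If $p$ has valence $d$ (a loop edge based at $p$ being counted with multiplicity two), then $l$ crosses edges $i_1,\dots,i_d$ in this order, where $i_1,\dots,i_d$ is the cyclic list of edges incident to $p$; moreover this is the minimal position, since a peripheral curve forms no bigon with any edge. By the definition of the lamination coordinate, $a_i$ is half the number of crossings of $l$ with the edge $i$, hence $a_i=\tfrac12\,\#\{k:i_k=i\}$, and $a_i=0$ for edges not incident to $p$.

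Next I would observe that $l$ turns the same way at every crossing. Orient $l$ so that $p$ always lies on a fixed side of it. Then in each triangle that $l$ traverses, the arc of $l$ separates the vertex $p$ from the opposite side, so it cuts off the corner at $p$, and therefore it turns in a fixed direction at every step. Consequently all of the matrices $M_1,\dots,M_d$ produced by the recipe of Section~\ref{subsec:CalculationOfMonodromies} are of the same type, all upper triangular or all lower triangular. In either case $\rho(l)=M_1\cdots M_d$ is a triangular matrix whose diagonal entries are $\prod_{k=1}^d X_{i_k}^{1/2}$ and $\prod_{k=1}^d X_{i_k}^{-1/2}$, so these are its eigenvalues. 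Since the exponent of $X_i$ in $\prod_{k=1}^d X_{i_k}^{1/2}$ is precisely $\tfrac12\,\#\{k:i_k=i\}=a_i$, the eigenvalues are $X_1^{a_1}\cdots X_n^{a_n}$ and $X_1^{-a_1}\cdots X_n^{-a_n}$, as claimed. (The monodromy is a priori only defined in $PGL_2$, so this computation determines the eigenvalues up to a common sign depending on the chosen $SL_2$-lift, which does not affect the statement.)

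The step I expect to be the main obstacle is the geometric claim that $l$ turns consistently. I would establish it by a purely local analysis near $p$: after deforming $l$ to the boundary of a small regular neighborhood of $p$, in every triangle the relevant arc of $l$ runs between the two sides incident to $p$ and, together with a short arc near $p$, bounds a region meeting no other part of the triangulation, which pins down the direction of the turn. The cases requiring care are self-folded triangles and loop edges at $p$, where the same edge is crossed twice and one must check that both crossings contribute turns of the same type; and one should also confirm that the standard peripheral loop indeed realizes the minimal intersection numbers used to compute the coordinates $a_i$, which again follows from the absence of bigons between a peripheral curve and any edge of $T$.
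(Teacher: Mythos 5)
Your proposal is correct and follows essentially the same route as the paper: deform $l$ to minimal position, orient it so that it turns consistently (the paper simply takes it to always turn left), observe that $\rho(l)$ is then a product of triangular matrices of a single type, and read off the eigenvalues from the diagonal, where the exponent of $X_i$ is half the number of crossings with edge $i$, i.e.\ the coordinate $a_i$. Your extra care about self-folded triangles and the $SL_2$/$PGL_2$ sign ambiguity goes beyond what the paper records but does not change the argument.
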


\begin{proof}
Deform the loop so that it intersects each edge in the minimal number of points. We may assume $l$ is oriented so that it turns always to the left. If $i_1,\dots,i_n$ are the edges of~$T$ that $l$ crosses in order, then we can use the construction described above to compute the matrix $\rho(l)$. It is given by 
\[
\prod_{k=1}^s\left( \begin{array}{cc}
X_{i_k}^{1/2} & 0 \\
X_{i_k}^{-1/2} & X_{i_k}^{-1/2} \end{array} \right)
=\left( \begin{array}{cc}
X_{i_1}^{1/2}\dots X_{i_s}^{1/2} & 0 \\
C & X_{i_1}^{-1/2}\dots X_{i_s}^{-1/2} \end{array} \right)
\]
where $C$ is a polynomial in the variables $X_{i_k}^{\pm1/2}$. The eigenvalues of this matrix are the diagonal elements.
\end{proof}

\subsection{The multiplicative canonical pairing}

We will now use the construction described above to associate a canonical function $\mathbb{I}_{\mathcal{A}}(l)$ to a lamination $l\in\mathcal{A}(\mathbb{Z}^t)$. We will assume here that the surface $S$ has no marked points and later generalize to arbitrary decorated surfaces.

\Needspace*{2\baselineskip}
\begin{definition}[\cite{IHES}, Definition~12.4]
\label{def:canonicalAX}
Let $S$ be a decorated surface with no marked points.
\begin{enumerate}
\item Let $l\in\mathcal{A}_L(S,\mathbb{Z})$ be a lamination consisting of a single nonperipheral curve of weight~$k$. Then the value of $\mathbb{I}_{\mathcal{A}}(l)$ on~$m\in\mathcal{X}_{PGL_2,S}(\mathbb{C})$ is the trace of the $k$th power of the matrix~$\rho(l)$ constructed above.

\item Let $l\in\mathcal{A}_L(S,\mathbb{Z})$ be a lamination consisting of a single peripheral curve of weight~$k$. The data of $m$ provide a distinguished eigenspace of $\rho(l)$ with eigenvalue $\lambda_l$, and we define the value of $\mathbb{I}_{\mathcal{A}}(l)$ at $m$ to be $\lambda_l^k$.

\item Let $l\in\mathcal{A}_L(S,\mathbb{Z})$ and write $l=\sum_ik_il_i$ where $l_i$ are the curves of $l$ with each homotopy class of curves appearing at most once in the sum and $k_i\in\mathbb{Z}$. Then
\[
\mathbb{I}_{\mathcal{A}}(l)=\prod_i\mathbb{I}_{\mathcal{A}}(k_il_i).
\]
\end{enumerate}
\end{definition}

Thus for any choice of square roots $X_i^{1/2}$, we define $\mathbb{I}_{\mathcal{A}}(l)(m)$ as a certain Laurent polynomial in the $X_i^{1/2}$. Notice that if $l\in\mathcal{A}(\mathbb{Z}^t)$, then the total weight of curves intersecting an edge $i$ of the triangulation is always even. It follows that in this case $\mathbb{I}_{\mathcal{A}}(l)(m)$ is a Laurent polynomial in the variables $X_i$ which is independent of the choice of square roots. Thus we get a canonical map 
\[
\mathbb{I}_{\mathcal{A}}:\mathcal{A}(\mathbb{Z}^t)\rightarrow\mathbb{Q}(\mathcal{X}).
\]
Fock and Goncharov prove the following in~\cite{IHES}.

\begin{theorem}[\cite{IHES}, Theorem~12.2]
\label{thm:classicalproperties}
The canonical functions $\mathbb{I}_{\mathcal{A}}(l)$ for $l\in\mathcal{A}(\mathbb{Z}^t)$ satisfy the following properties:
\begin{enumerate}
\item For any choice of ideal triangulation, $\mathbb{I}_{\mathcal{A}}(l)$ is a Laurent polynomial in the coordinates~$X_i$ with highest term $X_1^{a_1}\dots X_n^{a_n}$ where $a_i$ is the coordinate of $l$ associated to the edge~$i$.

\item The coefficients of the Laurent polynomial $\mathbb{I}_{\mathcal{A}}(l)$ are positive integers.

\item For any laminations $l$,~$l'\in\mathcal{A}(\mathbb{Z}^t)$, we have 
\[
\mathbb{I}_{\mathcal{A}}(l)\mathbb{I}_{\mathcal{A}}(l')=\sum_{l''\in\mathcal{A}(\mathbb{Z}^t)}c(l,l';l'')\mathbb{I}_{\mathcal{A}}(l'')
\]
where the coefficients $c(l,l';l'')$ are nonnegative integers and only finitely many terms are nonzero.
\end{enumerate}
\end{theorem}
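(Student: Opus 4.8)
The plan is to reduce every statement to the explicit monodromy matrices $\rho(l)=M_1\cdots M_s$ of Section~\ref{subsec:CalculationOfMonodromies}, and then pass from single curves to arbitrary laminations using the multiplicativity built into Definition~\ref{def:canonicalAX}. Three elementary observations drive everything. First, each entry of each $M_k$ is a monomial in $X_{i_k}^{\pm 1/2}$ with coefficient $0$ or $1$, and $\det M_k=1$, so $\rho(l)\in SL_2$. Second, for a peripheral curve Proposition~\ref{prop:monodromyperipheral} identifies the eigenvalues as $X_1^{\pm a_1}\cdots X_n^{\pm a_n}$, so there $\mathbb{I}_{\mathcal{A}}(l)$ is literally a monomial. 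Third, $\rho(l)\in SL_2$ makes available the trace identity $\mathrm{tr}(A)\,\mathrm{tr}(B)=\mathrm{tr}(AB)+\mathrm{tr}(AB^{-1})$.

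Properties~(1) and~(2) for a single nonperipheral curve are then immediate: by induction on $s$, every entry of $\rho(l)$, hence $\mathrm{tr}(\rho(l)^k)$, is a Laurent polynomial in the $X_i^{1/2}$ with nonnegative integer coefficients, and the evenness of the crossing numbers for $l\in\mathcal{A}(\mathbb{Z}^t)$ kills all half-integer powers. For the highest term, observe that every crossing of the edge $i$ contributes a factor $X_i^{\pm 1/2}$ to a given monomial, so the exponent of $X_i$ is bounded by half the number of crossings with $i$, namely by $a_i$; and this bound is attained for all $i$ at once only by the single ``diagonal'' term that stays in the first matrix coordinate throughout the product, using the $(1,1)$ entry $X_{i_k}^{1/2}$ of each $M_k$. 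That term occurs with coefficient $1$ and, being the unique coordinatewise-maximal monomial, survives. For a peripheral curve this all follows from Proposition~\ref{prop:monodromyperipheral} directly. Finally, multiplicativity in Definition~\ref{def:canonicalAX} together with additivity of the coordinates $a_i$ over the decomposition $l=\sum_i k_i l_i$ propagates the Laurent, positivity, and leading-term statements to arbitrary $l$, since a product of positive Laurent polynomials with prescribed leading monomials inherits the same structure with leading monomial $\prod_j X_j^{a_j}$.

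Property~(3) I would prove by induction on the number of intersection points of transverse multicurve representatives of $l$ and $l'$. If this number is zero, $l\cup l'$ represents an element of $\mathcal{A}_L(S,\mathbb{Z})$, it lies in $\mathcal{A}(\mathbb{Z}^t)$ because the mod~$2$ class $[\sigma_{l\cup l'}]=[\sigma_l]+[\sigma_{l'}]=0$ by Theorem~\ref{thm:integralAlam}, and $\mathbb{I}_{\mathcal{A}}(l)\,\mathbb{I}_{\mathcal{A}}(l')=\mathbb{I}_{\mathcal{A}}(l\cup l')$ by multiplicativity. Otherwise, apply the trace identity at one crossing: this writes $\mathbb{I}_{\mathcal{A}}(l)\,\mathbb{I}_{\mathcal{A}}(l')$ as a sum of two terms, each the value of $\mathbb{I}_{\mathcal{A}}$ on one of the two local resolutions, each with coefficient $+1$, and each with strictly fewer intersection points. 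Since local resolution does not change the mod~$2$ homology class, the resolved multicurves stay in $\mathcal{A}(\mathbb{Z}^t)$, the expansion terminates, and it is manifestly finite with nonnegative integer coefficients.

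The step I expect to be the real obstacle is the sign and normalization bookkeeping hidden inside property~(3). Iterated resolution produces contractible loops, which must be replaced by the appropriate scalar ``loop value,'' and the trace identity interacts with peripheral components through the distinguished eigenspaces of Definition~\ref{def:canonicalAX} rather than through full traces; one must check that these conventions are mutually consistent so that no negative coefficient is ever produced. This is precisely where Definition~\ref{def:canonicalAX} and the monodromy computation of Section~\ref{subsec:CalculationOfMonodromies} have to be used with care — the remaining arguments are routine manipulations of the elementary facts above.
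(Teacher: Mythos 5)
The paper does not actually prove this theorem: it is quoted verbatim from Fock--Goncharov (\cite{IHES}, Theorem~12.2), so there is no in-text argument to compare yours against. Your sketch is essentially the standard proof of that result and is sound. Parts (1) and (2): the entries of each $M_k$ are monomials $X_{i_k}^{\pm1/2}$ with coefficient $0$ or $1$, so every entry of the product, hence $\mathrm{tr}(\rho(l)^k)$, has nonnegative integer coefficients; the evenness of crossing numbers for $l\in\mathcal{A}(\mathbb{Z}^t)$ removes the half-integral exponents (as the paper itself notes after Definition~\ref{def:canonicalAX}); and the unique coordinatewise-maximal monomial is the all-$(1,1)$ path, with coefficient $1$, a property preserved under the products in part~3 of Definition~\ref{def:canonicalAX}. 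For part (3), the crossing-resolution induction via $\mathrm{tr}(A)\mathrm{tr}(B)=\mathrm{tr}(AB)+\mathrm{tr}(AB^{-1})$ is the right mechanism, and the bookkeeping you flag does close up: peripheral and special curves bound punctured disks and so can be isotoped off all other curves, hence never enter the resolution; when a resolved component turns out to be peripheral one uses $\mathrm{tr}(\rho(\gamma))=\lambda+\lambda^{-1}=\mathbb{I}_{\mathcal{A}}(\gamma)+\mathbb{I}_{\mathcal{A}}(-\gamma)$, still a nonnegative combination of canonical functions; contractible loops contribute $\mathrm{tr}(\mathrm{Id})=2\geq 0$; and products of traces of powers of a single monodromy expand by $\mathrm{tr}(A^a)\mathrm{tr}(A^b)=\mathrm{tr}(A^{a+b})+\mathrm{tr}(A^{|a-b|})$, again with nonnegative coefficients, so no minus sign is ever produced. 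The only point worth making explicit is the one you already identify via Theorem~\ref{thm:integralAlam}: resolution preserves the underlying $\mathbb{Z}/2\mathbb{Z}$-cycle, so every $l''$ appearing in the expansion does lie in $\mathcal{A}(\mathbb{Z}^t)$.
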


Fock and Goncharov argue moreover that the image of this map $\mathbb{I}_{\mathcal{A}}$ is a canonical vector space basis for the algebra of regular functions on the cluster Poisson variety~\cite{IHES}. Recent work of Gross, Hacking, Keel, and Kontsevich describes a more general method for constructing canonical bases~\cite{GHKK}. The construction described above has also found applications in physics, where it was used by Gaiotto, Moore, and Neitzke to calculate BPS degeneracies in certain four-dimensional supersymmetric quantum field theories~\cite{GMN}.

\subsection{The intersection pairing}

In addition to the multiplicative canonical map $\mathbb{I}_{\mathcal{A}}$, we have the following canonical map, which should be viewed as a degeneration of~$\mathbb{I}_{\mathcal{A}}$.

\begin{definition}
Let $S$ be a punctured surface and choose $l\in\mathcal{A}_L(S,\mathbb{Z})$ and $m\in\mathcal{X}_L(S,\mathbb{Z})$. Assume that $m$ provides the negative orientation for each hole. Then we define $\mathcal{I}(l,m)$ to be half the minimal number of intersections between~$l$ and~$m$. Here we take into account the weights of the curves so that if a curve of weight $k_1$ intersects a curve of weight $k_2$, then this intersection contributes the term $k_1k_2/2$ to $\mathcal{I}(l,m)$.
\end{definition}

This defines $\mathcal{I}(l,m)$ in the special case where $m$ provides the negative orientation for each hole. Note that there are natural actions of the group $(\mathbb{Z}/2\mathbb{Z})^r$ on $\mathcal{A}_L(S,\mathbb{Z})$ and $\mathcal{X}_L(S,\mathbb{Z})$ where $r$ is the number of punctures in $S$. The generator of the $i$th factor of $\mathbb{Z}/2\mathbb{Z}$ acts on $\mathcal{A}_L(S,\mathbb{Z})$ by changing the sign of the weight of curves surrounding the $i$th hole. It acts on $\mathcal{X}_L(S,\mathbb{Z})$ by changing the orientation of the $i$th hole. We extend $\mathcal{I}$ to a map
\[
\mathcal{I}:\mathcal{A}_L(S,\mathbb{Z})\times\mathcal{X}_L(S,\mathbb{Z})\rightarrow\mathbb{Q}
\]
called the \emph{intersection pairing} by requiring that it be equivariant with respect to these group actions.

To understand the relationship between the intersection pairing and the multiplicative canonical pairing, we recall the notion of the tropicalization of a subtraction-free rational function~\cite{dual}. Let $F(u_1,\dots,u_n)$ be such a function. We define its \emph{tropicalization} $F^t(u_1,\dots,u_n)$ by the formula 
\[
F^t(u_1,\dots,u_n)=\lim_{C\rightarrow\infty}\frac{\log F(e^{Cu_1},\dots,e^{Cu_n})}{C}.
\]
It is easy to see that 
\[
\lim_{C\rightarrow\infty}\frac{\log(e^{Cv_1}+\dots+e^{Cv_n})}{C}=\max(v_1,\dots,v_n),
\]
so tropicalization takes the operations~$+$ and~$\cdot$ to~$\max$ and~$+$, respectively.

For any laminations $l\in\mathcal{A}_L(S,\mathbb{Z})$ and $m\in\mathcal{X}_L(S,\mathbb{Z})$, one has 
\[
\mathcal{I}(l,m)=(\mathbb{I}_{\mathcal{A}}(l))^t(m)
\]
so that the intersection pairing is the tropicalization of the multiplicative canonical pairing. Later we will give a similar characterization of the tropicalization of the canonical mapping~$\mathbb{I}_{\mathcal{D}}$ on doubled laminations.

\section{Duality map for the symplectic double}

\subsection{The multiplicative canonical pairing}

Let $S$ be an arbitrary decorated surface. Recall that a simple closed curve $l$ on $S_\mathcal{D}$ is called an \emph{intersecting curve} if every curve in its homotopy class intersects the image of $\partial S$ in $S_\mathcal{D}$.

Suppose we are given an intersecting curve $l$ on $S_\mathcal{D}$. Deform this curve so that it intersects the image of~$\partial S$ in the minimal number of points. Then, starting from any component of this image which we may call $\gamma_1$, there is a segment $c_1$ of the curve $l$ which lies entirely in~$S$ and connects the component~$\gamma_1$ to another component which we may call $\gamma_2$. Starting from this component, there is a segment $c_2$ of $l$ which lies entirely in~$S^\circ$ and connects~$\gamma_2$ to a component~$\gamma_3$. Continue labeling in this way until the curve closes.

Now suppose we are given a general point $m\in\mathcal{D}_{PGL_2,S}(\mathbb{C})$. Cut the surface $S_\mathcal{D}$ along~$\partial S$. Then each $c_i$ is a curve that connects two boundary components of~$S$ or~$S^\circ$. Consider a curve~$c_i$ for $i$ odd. Choose a decorated flag at the endpoints of $c_i$, and define~$A_{c_i}$ as the invariant obtained by parallel transporting these flags to a common point in~$S$. For even indices $i$, the flags already chosen determine corresponding flags at the endpoints of $c_i$. We define~$A_{c_i}^\circ$ to be the invariant obtained by parallel transporting these flags to a common point in $S^\circ$.

\Needspace*{3\baselineskip}
\begin{definition} \mbox{}
Fix a general point $m\in\mathcal{D}_{PGL_2,S}(\mathbb{C})$.
\begin{enumerate}
\item Let $l$ be an intersecting curve of weight $k$ on $S_\mathcal{D}$, and assume that the orientation of each component of $\gamma$ agrees with the orientation of~$S$. Then 
\[
\mathbb{I}_\mathcal{D}(l)(m)=\left(\frac{\prod_{\text{$i$ even}}A_{c_i}^\circ}{\prod_{\text{$i$ odd}}A_{c_i}}\right)^k.
\]

\item Let $l$ be a curve of weight $k$ on $S_\mathcal{D}$ which is not an intersecting curve and is not homotopic to a loop in the simple lamination. If $l$ lies in $S^\circ$, then $\mathbb{I}_\mathcal{D}(l)(m)$ is defined as the trace of the $k$th power of the of the monodromy around $l$. If $l$ lies in $S$, then $\mathbb{I}_\mathcal{D}(l)(m)$ is the reciprocal of this quantity.

\item Let $l$ be a curve of weight $k$ on $S_\mathcal{D}$ which is homotopic to a loop $\gamma_i$ in the simple lamination. The point $m$ determines a distinguished eigenvalue $\lambda_i$ of the monodromy around $\gamma_i$. If the orientation that $l$ provides for $\gamma_i$ agrees with the orientation of~$S^\circ$, then $\mathbb{I}_\mathcal{D}(l)(m)$ is defined as $\lambda_i^k$. If this orientation agrees with the orientation of $S$, then $\mathbb{I}_\mathcal{D}(l)(m)$ is the reciprocal of this quantity.

\item Let $l\in\mathcal{D}_L(S,\mathbb{Z})$ and write $l=\sum_ik_il_i$ where $l_i$ are the curves of $l$ with each homotopy class of curves appearing at most once in the sum and $k_i\in\mathbb{Z}$. Then
\[
\mathbb{I}_{\mathcal{D}}(l)=\prod_i\mathbb{I}_{\mathcal{D}}(k_il_i).
\]
\end{enumerate}
\end{definition}

This defines the canonical map in the special case where the orientation of any~$\gamma_i$ that meets a curve agrees with the orientation of the surface~$S$. This definition can be extended in a natural way to an arbitrary lamination. Indeed, suppose $l$ is a lamination for which the function $\mathbb{I}(l)$ has been defined. Suppose the orientation that $l$ provides for $\gamma_i$ agrees with the orientation of $S$, and let $l'$ be the lamination obtained from $l$ by reversing the orientation of~$\gamma_i$. There is a natural birational automorphism $\tau$ of $\mathcal{D}_{PGL_2,S}(\mathbb{C})$ described in~\cite{Dlam} which maps a general point $m\in\mathcal{D}_{PGL_2,S}(\mathbb{C})$ to the point obtained by changing the decoration that $m$ assigns to $\gamma_i$. We define 
\[
\mathbb{I}_{\mathcal{D}}(l')=\mathbb{I}_{\mathcal{D}}(l)\circ\tau.
\]

We can use the above definition to extend the map $\mathbb{I}_{\mathcal{A}}$ to laminations on arbitrary decorated surfaces. Indeed, recall that there is a natural map $\varphi:\mathcal{A}_0\times\mathcal{A}_0\rightarrow\mathcal{D}$. It induces a map $\mathcal{A}_0(\mathbb{Z}^t)\times\mathcal{A}_0(\mathbb{Z}^t)\rightarrow\mathcal{D}(\mathbb{Z}^t)$ on tropical integral points. Let $\varphi_2:\mathcal{A}_0(\mathbb{Z}^t)\rightarrow\mathcal{D}(\mathbb{Z}^t)$ be the map obtained by precomposing the above map with the inclusion $\mathcal{A}_0(\mathbb{Z}^t)\hookrightarrow \mathcal{A}_0(\mathbb{Z}^t)\times\mathcal{A}_0(\mathbb{Z}^t)$ into the second factor. The function $\mathbb{I}_{\mathcal{A}}(l)$ is defined by applying $\mathbb{I}_{\mathcal{D}}$ to $\varphi_2(l)$ and restricting to the Lagrangian subspace $\mathcal{X}\hookrightarrow\mathcal{D}$.

\subsection{Expression in terms of $F$-polynomials}

\subsubsection{Intersecting curves}

We will prove our formula for $\mathbb{I}_{\mathcal{D}}(l)$ in several steps. We begin by examining the special case where $l$ is an intersecting curve. In this case, the proof involves ideas from the theory of cluster algebras. A summary of the results that we need here can be found in Appendix~\ref{ch:ReviewOfClusterAlgebras}.

Fix an ideal triangulation $T$ of $S$, and a point $m\in\mathcal{D}_{PGL_2,S}(\mathbb{C})$. If we choose a hyperbolic structure on $S$, then its universal cover can be identified with a subset of the hyperbolic plane~$\mathbb{H}$, and the triangulation $T$ can be lifted to a triangulation $\tilde{T}$ of the universal cover. Using the natural map $S^\circ\rightarrow S$, we can draw all of the curves $c_i$ on the surface $S$. We can then lift each curve to a geodesic $\tilde{c}_i$ in the universal cover in such a way that $\tilde{c}_i$ and $\tilde{c}_{i+1}$ share a common endpoint. Let $P$ be a triangulated ideal polygon, formed from triangles in $\tilde{T}$, which includes all of the triangles that the curves~$\tilde{c}_i$ pass through. Let $T(P)$ be the triangulation of $P$ provided by the triangulation $\tilde{T}$. Choose a decorated flag at each vertex of $T(P)$. Then we can define~$A_i$ as the invariant associated to an edge $i$ of an ideal triangle between the chosen flags.

In exactly the same way, the universal cover of $S^\circ$ can be identified with a subset of the hyperbolic plane, and the triangulation $T$ provides a triangulation $\tilde{T}^\circ$ of this universal cover. The polygon $P$ gives rise to a polygon $P^\circ$ in this universal cover. The latter polygon has a triangulation $T(P^\circ)$ provided by the triangulation $\tilde{T}^\circ$. The decorated flags that we already chose at the vertices of $T(P)$ provide decorated flags at the vertices of $T(P^\circ)$. We can define~$A_i^\circ$ as the invariant associated to an edge $i$ of an ideal triangle between the chosen flags.

It is convenient at this point to adopt a kind of multi-index notation. If $\mathbf{v}=(v_i)$ is a vector indexed by the edges of the triangulation $T(P)$, then we will write 
\[
A^{\mathbf{v}}=\prod_i A_i^{v_i}.
\]
Similarly, if $\mathbf{v}=(v_i)$ is indexed by the edges of $T(P^\circ)$, then we will write 
\[
(A^\circ)^{\mathbf{v}}=\prod_i (A_i^\circ)^{v_i}.
\]
Since there is a natural bijection between the edges of $T(P^\circ)$ and the edges of $T(P)$, the indexing sets for these vectors can be identified.

\begin{lemma}
\label{lem:Adecomposition}
For each $i$, there is a polynomial $F_{c_i}$ and an integral vector $\mathbf{g}_{c_i}$, indexed by the edges of the triangulation $T(P)$, such that 
\[
A_{c_i} = F_{c_i}(X_1,\dots,X_n)A^{\mathbf{g}_{c_i}}
\]
and 
\[
A_{c_i}^\circ = F_{c_i}(\widehat{X}_1,\dots,\widehat{X}_n)(A^\circ)^{\mathbf{g}_{c_i}}.
\]
\end{lemma}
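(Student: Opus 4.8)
The plan is to reduce the statement to a known fact about cluster algebras with principal coefficients, namely the Laurent expansion of cluster variables in terms of $F$-polynomials and $g$-vectors (Fomin--Zelevinsky's Corollary~6.3 in~\cite{FZIV}, as reviewed in Appendix~\ref{ch:ReviewOfClusterAlgebras}). First I would set up the correspondence between the geometric picture and the algebraic one. The triangulated polygon $P$ with triangulation $T(P)$ determines a seed in the cluster algebra associated to the polygon: its exchange matrix is the signed adjacency matrix of the quiver of $T(P)$, its cluster variables are the invariants $A_i$ attached to the diagonals, and the coefficients/frozen variables are the invariants attached to the boundary edges. The curve $\tilde{c}_i$, having fixed endpoints on the polygon, is (isotopic to) a chord of $P$, hence corresponds to a cluster variable $A_{c_i}$ in the cluster algebra of the polygon, obtained from the initial cluster $\{A_j\}$ by a finite sequence of flips/mutations.

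Next I would invoke the $F$-polynomial/$g$-vector expansion. For a cluster algebra with principal coefficients, any cluster variable is a Laurent monomial in the initial cluster variables times the $F$-polynomial evaluated at the coefficient variables, with the monomial exponent given by the $g$-vector. Translating into the surface language: the coefficient variables get specialized to the cross-ratio coordinates, and by the monodromy/coordinate dictionary of Fock--Goncharov (Section~\ref{subsec:CalculationOfMonodromies}, Proposition~\ref{prop:Aisom}, Proposition~\ref{prop:Xisom}) the ratio $\prod_j A_j^{\varepsilon_{ij}}$ over the edges of $T(P)$ is precisely $X_i$ on the $S$-side. This gives the first formula $A_{c_i}=F_{c_i}(X_1,\dots,X_n)A^{\mathbf{g}_{c_i}}$, with $F_{c_i}$ the $F$-polynomial of the chord $\tilde c_i$ and $\mathbf{g}_{c_i}$ its $g$-vector. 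For the second formula, the key point is that $T(P^\circ)$ is the \emph{same} abstract triangulation as $T(P)$ (with reversed orientation, hence reversed quiver, but the edge-indexing sets are canonically identified), so the same chord $\tilde c_i^\circ$ is obtained by the \emph{same} sequence of mutations from the initial cluster $\{A_j^\circ\}$. Therefore the \emph{same} $F$-polynomial and \emph{same} $g$-vector appear; only the evaluation point changes. On the $S^\circ$-side the analogous cross-ratio coordinate built from the $A_j^\circ$ is, by the definition $\widehat X_i = X_i\prod_j B_j^{\varepsilon_{ij}}$ and $B_j = A_j^\circ/A_j$, exactly $\widehat X_i$; this yields $A_{c_i}^\circ = F_{c_i}(\widehat X_1,\dots,\widehat X_n)(A^\circ)^{\mathbf{g}_{c_i}}$.

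I expect the main obstacle to be the orientation bookkeeping in the previous paragraph: when one reverses the orientation of the surface the quiver of $T(P^\circ)$ is the opposite of that of $T(P)$, so a priori the mutation sequence producing $\tilde c_i^\circ$ uses the opposite exchange matrix, and one must check that this still produces the same $F$-polynomial and $g$-vector \emph{in terms of the correspondingly relabeled coordinates}, and that the resulting cross-ratio monomial is $\widehat X_i$ rather than, say, $\widehat X_i^{-1}$. Resolving this cleanly requires being careful about the sign conventions in the definitions of $\varepsilon_{ij}^T$ and of $\widehat X_i$, and about how $F$-polynomials behave under the symmetry $\varepsilon \mapsto -\varepsilon$ combined with the appropriate sign change of coefficients; I would handle it by working out a small example (a pentagon, say, so that the chord is a genuine mutation of a diagonal) to fix all signs, and then citing the general $F$-polynomial recursion. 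The remaining steps---that $\tilde c_i$ is a chord of $P$, that the polygon cluster algebra has the stated combinatorics, that the cross-ratio monomials are $X_i$ and $\widehat X_i$---are routine given the results already assembled in the excerpt.
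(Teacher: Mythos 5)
Your proposal follows the paper's proof essentially verbatim: one forms the cluster algebra of the polygon $P$ over the trivial semifield, applies Proposition~\ref{prop:FZ63}, identifies $\prod_j A_j^{\varepsilon_{kj}}$ with $X_k$ (and $\prod_j (A_j^\circ)^{\varepsilon_{kj}}$ with $\widehat X_k$), and observes that the same $F$-polynomial and $\mathbf{g}$-vector serve for both formulas, with only the evaluation point changing. The one detail the paper supplies that your sketch leaves implicit is the appeal to the matrix formula of Musiker--Williams to guarantee that $F_{c_i}$ depends only on the variables attached to the internal edges crossed by $\tilde c_i$, which is what makes the substitution of the $X$-coordinates of $S$ legitimate.
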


\begin{proof}
Let $i_1,\dots,i_m$ be the edges of $T(P)$. Associated to the polygon $P$, there is a cluster algebra generated by the variables $A_i$ over the trivial semifield $\mathbb{P}=\{1\}$. Applying Proposition~\ref{prop:FZ63} to this cluster algebra, we see that
\[
A_{c_i} = F_{c_i}\biggr(\prod_{j}A_j^{\varepsilon_{i_1j}},\dots,\prod_{j}A_j^{\varepsilon_{i_mj}}\biggr)A_{i_1}^{g_{i_1}}\dots A_{i_m}^{g_{i_m}}
\]
for some integers $g_{i_1},\dots,g_{i_m}$. Now for any internal edge $k$ of $T(P)$, the product $\prod_{j}A_j^{\varepsilon_{kj}}$ equals the coordinate $X_k$ associated to the corresponding edge $k$ of $T$. Moreover, by the matrix formula of~\cite{MW}, this $F_{c_i}$ is a polynomial only in the variables associated to edges that $\tilde{c}_i$ crosses, which are all internal edges. Therefore we can write it as $F_{c_i}(X_1,\dots,X_n)$, a~polynomial in the $X$-coordinates. This proves the first equation. The proof of the second equation is similar. In this case, one uses the fact that $\widehat{X}_k=\prod_{j}(A_j^\circ)^{\varepsilon_{kj}}$.
\end{proof}

Recall that the variables $X_k$ are defined as cross ratios $X_k=\frac{A_iA_m}{A_jA_l}$ where $i$, $j$, $l$, $m$ are the edges of the quadrilateral with diagonal $k$. Here we will consider additional variables associated to the edges $i$ of $\tilde{T}$. Consider a triangle $\Delta$ in $\tilde{T}$ that includes $i$ as one of its edges, and label the other edges of this triangle as follows:
\[
\xy /l1.5pc/:
{\xypolygon3"A"{~:{(2,0):}}};
(2.4,0.5)*{j};
(-0.4,0.5)*{k}; 
(1,-1.4)*{i};
\endxy
\]
Then we define the variable associated to $i$ by $W_{i,\Delta}=\frac{A_iA_j}{A_k}$. Note that this depends on the chosen triangle as well as the edge $i$.

Fix an edge $i$ of the triangulation $T(P)$. For any endpoint $v$ of $i$, there is a collection of edges in $\tilde{T}$ that start at $v$ and lie in the counterclockwise direction from~$i$. Consider a curve~$i'$ that goes diagonally across $i$, intersecting finitely many of these edges transversely before terminating on one of them. An example is illustrated below.
\[
\xy /l3pc/:
{\xypolygon10"A"{~:{(2,0):}}};
{"A9"\PATH~={**@{-}}'"A4"},
{"A9"\PATH~={**@{-}}'"A5"},
{"A9"\PATH~={**@{-}}'"A6"},
{"A9"\PATH~={**@{-}}'"A7"},
{"A10"\PATH~={**@{-}}'"A4"},
{"A1"\PATH~={**@{-}}'"A4"},
{"A2"\PATH~={**@{-}}'"A4"},
(1,-1.9)*{}="1";
(1.1,1.9)*{}="2";
"1";"2" **\crv{(0.25,-1) & (1.75,1)};
(1.5,-1)*{i};
(0.8,-0.25)*{i'};
(1,-2.2)*{i_0};
(1,2.2)*{i_1};
(0.4,-1.7)*{\Delta_0};
(1.5,1.7)*{\Delta_1};
\endxy
\]

Given such a curve $i'$, let $E_i$ be the set of all edges in $T(P)$ that $i'$ crosses. Then we can form the product 
\[
P_i=W_{i_0,\Delta_0}\cdot\prod_{j\in E_i} X_j\cdot W_{i_1,\Delta_1}
\]
where $i_0$ and $i_1$ are the edges on which $i'$ terminates. One can check that this expression equals $A_i^2$. We will use this fact to prove the following result.

\begin{lemma}
\label{lem:factorization}
Let $\mathbf{s}=\sum_{i\text{ even}}\mathbf{g}_{c_i}-\sum_{i\text{ odd}}\mathbf{g}_{c_i}$. Then there exists a half integral vector $\mathbf{h}=(h_i)_{i\in J}$, indexed by the internal edges of the triangulation $T$, such that 
\[
A^{\mathbf{s}}=X^{\mathbf{h}}=\prod_{i\in J}X_i^{h_i}.
\]
\end{lemma}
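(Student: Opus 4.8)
The plan is to show that the vector $\mathbf{s}=\sum_{i\text{ even}}\mathbf{g}_{c_i}-\sum_{i\text{ odd}}\mathbf{g}_{c_i}$, which a priori is indexed by \emph{all} edges of $T(P)$ (both internal edges of $T$ and boundary edges of the polygon $P$), in fact has zero component on every edge that is not an internal edge of $T$, and moreover that its internal-edge components pair correctly with the $X$-coordinates. The starting point is the identity $A_i^2=P_i=W_{i_0,\Delta_0}\cdot\prod_{j\in E_i}X_j\cdot W_{i_1,\Delta_1}$ established just before the statement: this expresses the \emph{square} of each $A_i$ as a monomial in the $X_j$ (for internal edges $j$) and in the auxiliary variables $W_{i_0,\Delta_0}$, $W_{i_1,\Delta_1}$ attached to the two terminating edges of the transversal $i'$. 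So first I would rewrite $A^{\mathbf{s}}$ using $A_i = (P_i)^{1/2}$, obtaining $A^{\mathbf{s}}=\prod_i P_i^{s_i/2}$, a product of half-integer powers of the $P_i$.

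Next I would argue that the $W$-contributions cancel. The key geometric fact is that $\tilde{c}_i$ and $\tilde{c}_{i+1}$ share a common endpoint (by the way the lift was chosen), so the curve formed by concatenating all the $\tilde{c}_i$ is closed; equivalently, the edges $c_i$ form a closed cycle of arcs in $P$. Because $\mathbf{g}_{c_i}$ is the $\mathbf{g}$-vector of the cluster variable $A_{c_i}$ in the cluster algebra attached to $P$ (with trivial coefficients), the alternating sum $\mathbf{s}$ is the $\mathbf{g}$-vector datum of a ``closed loop'' configuration, and the dependence of $A^{\mathbf{s}}$ on any variable $A_v$ attached to a \emph{vertex} flag is invariant under the ambient $H$-action that rescales decorated flags. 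Concretely: each $A_{c_i}$ and each $A_{c_i}^\circ$ is multilinear in the decorations at its two endpoints, and since the endpoints match up around the closed cycle (each vertex flag is the endpoint of an even number — in fact exactly two — of the arcs $c_i$, once as a start and once as an end, with opposite parity contributing with opposite sign), the total dependence on every decoration cancels. This is exactly the statement that $A^{\mathbf{s}}$ is a monomial purely in the $X_j$'s. Having shown $A^{\mathbf{s}}$ involves no vertex-decoration variables and no boundary-edge variables of $P$, the identity $A_i^2=P_i$ (restricted to internal edges, where $W$-terms also drop out after the cancellation) forces $A^{\mathbf{s}}=\prod_{i\in J}X_i^{h_i}$ with $h_i\in\tfrac12\mathbb{Z}$, since each $A_i$ contributes a half-integer power.

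The step I expect to be the main obstacle is making the cancellation argument precise — i.e.\ verifying that around the closed cycle $c_1,c_2,\dots$ the decoration at each vertex of $T(P)$ really does appear with net exponent zero in $\prod_{i\text{ even}}A_{c_i}^\circ/\prod_{i\text{ odd}}A_{c_i}$ and hence in $A^{\mathbf{s}}$. This requires carefully tracking how the $\mathbf{g}$-vector of a cluster variable $A_{c_i}$ (as given by the matrix formula of~\cite{MW} cited in Lemma~\ref{lem:Adecomposition}) records the boundary vertices of the arc, and then checking that the odd/even alternation of the cycle matches the $S$ versus $S^\circ$ bookkeeping so that everything telescopes. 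Once this combinatorial bookkeeping is in hand, the passage to half-integer exponents via $A_i^2=P_i$ is routine, and the proof concludes. I would also remark that the same argument, applied with the $\widehat{X}_j$ in place of the $X_j$, shows $(A^\circ)^{\mathbf{s}}=\prod_{i\in J}\widehat{X}_i^{h_i}$ with the \emph{same} vector $\mathbf{h}$, which is what is needed downstream to assemble the $F$-polynomial formula of Theorem~\ref{thm:intromain}.
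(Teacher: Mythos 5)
Your starting point (the identity $A_i^2=P_i$) and your identification of the crux (cancellation of the $W$-factors around the closed cycle) match the paper, but the argument you substitute for the cancellation step has a genuine gap. The paper does not argue via $H$-equivariance of the decorations; it uses the Musiker--Schiffler--Williams formula $\mathbf{g}_{c_i}=\deg\bigl(x(P_-)/\cross(T,c_i)\bigr)$ together with the fact that every endpoint of a diagonal of the snake graph meets exactly one edge of the minimal matching to conclude that each $\mathbf{g}_{c_i}$ is an \emph{alternating sum of standard basis vectors along a path} in $T(P)$, and hence so is $\mathbf{s}$. This path structure is precisely what makes the mechanism work: choosing the transversal curves $i'$ so that consecutive ones terminate on a common edge, the alternating signs force the $W$-factors to telescope in $\prod_i P_i^{s_i}$, with the two free ends handled by the monodromy identity $W_{i_s,\Delta_s}=W_{i_t,\Delta_t}$; one then reads off $\prod_i P_i^{s_i}=(A^{\mathbf{s}})^2$ as an integral monomial in the $X_j$ and takes square roots. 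You never establish that $\mathbf{s}$ has this alternating-path form, so your telescoping has nothing to telescope along.

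Moreover, the $H$-invariance argument you offer in its place does not reach the conclusion even if completed. Invariance of $A^{\mathbf{s}}$ under rescaling the decorated flags at the vertices shows only that $\mathbf{s}$ lies in the kernel of the edge--vertex incidence map, i.e.\ in the \emph{rational} span of the exponent vectors of the $X_j$; it does not give the half-integrality, i.e.\ that $2\mathbf{s}$ lies in the lattice these exponent vectors generate, which is a statement about lattice indices you would still have to prove. (Your opening plan --- showing that $\mathbf{s}$ vanishes on edges other than internal edges of $T$ --- is also not the right target: the $X_j$ are cross-ratios built from boundary edges of $P$ as well, so $\mathbf{s}$ need not vanish there, and indeed the square example $X_{13}=A_{12}A_{34}/(A_{14}A_{23})$ is supported entirely on boundary edges.) The paper's squared identity delivers the factor $\tfrac12$ for free, which is why the alternating-path description of $\mathbf{s}$, rather than $H$-invariance, is the essential input; you correctly flag this bookkeeping as the main obstacle, but it is exactly the part that remains unproved.
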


\begin{proof}
Consider an arc $\tilde{c}_i$ in $T(P)$. If this arc coincides with an edge $i$ of the triangulation, then the associated $\mathbf{g}$-vector equals the standard basis vector $\mathbf{e}_i$. Otherwise $\tilde{c}_i$ intersects one or more edges of $T(P)$, and we can compute the corresponding $\mathbf{g}$-vector~$\mathbf{g}_{c_i}$ using the results of \cite{MSW1} that we reviewed in Appendix~\ref{sec:ClusterAlgebrasAssociatedToSurfaces}. In this case, $\mathbf{g}_{c_i}$ is given by the formula 
\[
\mathbf{g}_{c_i}=\deg\left(\frac{x(P_{-})}{\cross(T,c_i)}\right).
\]
By definition of a perfect matching, we know that any endpoint of a diagonal in the graph~$\bar{G}_{T,c_i}$ meets exactly one edge of the minimal perfect matching $P_{-}$. It follows that the vector $\mathbf{g}_{c_i}$ is an alternating sum of standard basis vectors corresponding to the edges of a path in the graph $\bar{G}_{T,c_i}$.

Consider the path formed by the $\tilde{c}_i$ in $P$. An example of such a path is illustrated below.
\[
\xy /l3pc/:
{\xypolygon10"A"{~:{(2,0):}}};
{"A7"\PATH~={**@{-}}'"A3"},
{"A7"\PATH~={**@{-}}'"A4"},
{"A6"\PATH~={**@{-}}'"A4"},
{"A8"\PATH~={**@{-}}'"A3"},
{"A8"\PATH~={**@{-}}'"A2"},
{"A9"\PATH~={**@{-}}'"A2"},
{"A9"\PATH~={**@{-}}'"A1"},
{"A1"\PATH~={**@{.}}'"A3"},
{"A3"\PATH~={**@{.}}'"A5"},
(3.2,0)*{s};
(-0.85,1.2)*{t};
(2,1.2)*{\tilde{c}_1};
(0.75,1.6)*{\tilde{c}_2};
(-0.39,1.1)*{\Delta_t};
(2.5,-1)*{\Delta_s};
(-0.39,1.6)*{i_t};
(2.9,-0.8)*{i_s};
\endxy
\]
Let $s$ and $t$ be the endpoints of the path formed by the $\tilde{c}_i$. Consider the closed path on the surface $S$ obtained by drawing all the $c_i$ on $S$ using the natural map $S^\circ\rightarrow S$. Let $\Delta_t$ be the triangle in $P$ that contains $t$ and is that last triangle that the lifted arcs $c_i$ pass through. Let $\Delta_s$ be the preimage of this triangle under the monodromy.

By the above discussion, the vector $\mathbf{s}$ is an alternating sum of standard basis vectors associated to the edges of a path in $T(P)$. To each edge $i$ on this path, we associate a curve~$i'$ as above so that $i'$ and $j'$ terminate on a common edge whenever $i$ and $j$ terminate on a common vertex. We can choose these curves so that the first and last ones terminate at corresponding edges~$i_s$ of~$\Delta_s$ and~$i_t$ of~$\Delta_t$. One can show in this case that $W_{i_s,\Delta_s}=W_{i_t,\Delta_t}$. It follows from the above discussion that 
\[
\prod_iP_i^{s_i}=\biggr(\prod_iA_i^{s_i}\biggr)^2
\]
where $\mathbf{s}=(s_i)$. Since $\mathbf{s}$ is an alternating sum, all $W$-factors cancel on the left hand side of the equation. Therefore the left hand side is a product of the $X_i$, and the lemma follows by taking square roots on both sides.
\end{proof}

We can now prove our formula for $\mathbb{I}_{\mathcal{D}}(l)$ in the case where $l$ is an intersecting curve.

\begin{proposition}
\label{prop:intersecting}
Let $l$ be a doubled lamination represented by a single intersecting curve of weight~1, and suppose that the orientation of each component of $\gamma$ agrees with the orientation of~$S$. Then 
\[
\mathbb{I}_\mathcal{D}(l)=\frac{\prod_{\text{$i$ even}}F_{c_i}(\widehat{X}_1,\dots,\widehat{X}_n)}{\prod_{\text{$i$ odd}}F_{c_i}(X_1,\dots,X_n)}B_1^{g_{l,1}}\dots B_n^{g_{l,n}}X_1^{h_{l,1}}\dots X_n^{h_{l,n}}
\]
where the $g_{l,i}$ are integers and the $h_{l,i}$ are half integers.
\end{proposition}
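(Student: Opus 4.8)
The plan is to combine Lemma~\ref{lem:Adecomposition} and Lemma~\ref{lem:factorization} with the definition of $\mathbb{I}_{\mathcal{D}}(l)$ and the standard substitution relating the $B$-, $X$-, and $A$-coordinates. By definition, for an intersecting curve $l$ of weight $1$ whose components of $\gamma$ are oriented compatibly with $S$,
\[
\mathbb{I}_{\mathcal{D}}(l)(m)=\frac{\prod_{i\text{ even}}A_{c_i}^\circ}{\prod_{i\text{ odd}}A_{c_i}}.
\]
First I would substitute the two expressions from Lemma~\ref{lem:Adecomposition}, getting
\[
\mathbb{I}_{\mathcal{D}}(l)(m)=\frac{\prod_{i\text{ even}}F_{c_i}(\widehat{X}_1,\dots,\widehat{X}_n)}{\prod_{i\text{ odd}}F_{c_i}(X_1,\dots,X_n)}\cdot\frac{\prod_{i\text{ even}}(A^\circ)^{\mathbf{g}_{c_i}}}{\prod_{i\text{ odd}}A^{\mathbf{g}_{c_i}}}.
\]
This already isolates the $F$-polynomial part in exactly the claimed form; what remains is to analyze the monomial factor $\prod_{i\text{ even}}(A^\circ)^{\mathbf{g}_{c_i}}\big/\prod_{i\text{ odd}}A^{\mathbf{g}_{c_i}}$ and show it equals $B_1^{g_{l,1}}\cdots B_n^{g_{l,n}}X_1^{h_{l,1}}\cdots X_n^{h_{l,n}}$ for integers $g_{l,i}$ and half-integers $h_{l,i}$.

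The key step is to split this monomial into a ``$B$-part'' and an ``$X$-part.'' Recall $B_i=A_i^\circ/A_i$, so I would write $(A^\circ)^{\mathbf{g}_{c_i}}=B^{\mathbf{g}_{c_i}}A^{\mathbf{g}_{c_i}}$ for each even $i$. The numerator $\prod_{i\text{ even}}(A^\circ)^{\mathbf{g}_{c_i}}$ then becomes $\big(\prod_{i\text{ even}}B^{\mathbf{g}_{c_i}}\big)\cdot\prod_{i\text{ even}}A^{\mathbf{g}_{c_i}}$, and dividing by $\prod_{i\text{ odd}}A^{\mathbf{g}_{c_i}}$ leaves
\[
\Big(\prod_{i\text{ even}}B^{\mathbf{g}_{c_i}}\Big)\cdot A^{\mathbf{s}},\qquad \mathbf{s}=\sum_{i\text{ even}}\mathbf{g}_{c_i}-\sum_{i\text{ odd}}\mathbf{g}_{c_i}.
\]
Now Lemma~\ref{lem:factorization} gives exactly $A^{\mathbf{s}}=\prod_{i\in J}X_i^{h_i}$ with $\mathbf{h}=(h_i)$ a half-integral vector indexed by internal edges, which supplies the $X_1^{h_{l,1}}\cdots X_n^{h_{l,n}}$ factor. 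For the remaining factor $\prod_{i\text{ even}}B^{\mathbf{g}_{c_i}}$: each $\mathbf{g}_{c_i}$ is an integral vector indexed by all edges of $T(P)$ (internal and external), but the $B_j$ are only defined for internal edges $j\in J$ — so I need to check that the contributions of external edges either cancel or are absorbed. Here I would use that external edges of $T$ correspond to boundary segments and that the relevant $B$-coordinate for a boundary edge is trivial (or can be set to $1$), together with the fact that the construction in Lemma~\ref{lem:Adecomposition} and the matrix formula of~\cite{MW} shows $\mathbf{g}_{c_i}$ is supported in a controlled way; collecting the internal-edge exponents defines the integers $g_{l,i}=\sum_{i'\text{ even}}(\mathbf{g}_{c_{i'}})_i$ and yields the $B_1^{g_{l,1}}\cdots B_n^{g_{l,n}}$ factor.

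The main obstacle I anticipate is the bookkeeping at the boundary: reconciling the fact that $\mathbf{g}$-vectors live over the larger edge set of the polygon triangulation $T(P)$ while $B$- and $X$-coordinates are indexed by the internal edges of the original triangulation $T$ of $S$, and making sure the external-edge contributions behave correctly under the identification of $T(P)$-edges with $T$-edges. In particular I would need to be careful that the ``$W$-factor cancellation'' argument from Lemma~\ref{lem:factorization} (which used that the first and last $W$-factors agree because $\Delta_s$ is the monodromy-preimage of $\Delta_t$, forcing $W_{i_s,\Delta_s}=W_{i_t,\Delta_t}$) genuinely produces a pure $X$-monomial with no leftover $A$-dependence, and that the half-integrality is exactly what survives after taking square roots. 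Once these indexing issues are settled, assembling the pieces — $F$-part from Lemma~\ref{lem:Adecomposition}, $B$-part from the $B_i=A_i^\circ/A_i$ substitution, $X$-part from Lemma~\ref{lem:factorization} — gives the stated formula, and the general weight-$k$ and multi-curve cases follow by raising to the $k$th power and multiplying, using clause~(4) of the definition of $\mathbb{I}_{\mathcal{D}}$.
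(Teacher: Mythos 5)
Your proposal is correct and follows essentially the same route as the paper's proof: substitute the two identities of Lemma~\ref{lem:Adecomposition} into the defining formula for $\mathbb{I}_{\mathcal{D}}(l)$, rewrite $(A^\circ)^{\mathbf{g}_{c_i}}=B^{\mathbf{g}_{c_i}}A^{\mathbf{g}_{c_i}}$ to extract the $B$-monomial, and apply Lemma~\ref{lem:factorization} to convert $A^{\mathbf{s}}$ into the half-integral $X$-monomial. The boundary-indexing caution you raise is reasonable but does not alter the argument, which the paper presents in exactly this three-step form.
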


\begin{proof}
By Lemma~\ref{lem:Adecomposition}, we know that
\[
A_{c_i} = F_{c_i}(X_1,\dots,X_n)A^{\mathbf{g}_{c_i}}
\]
and
\[
A_{c_i}^\circ = F_{c_i}(\widehat{X}_1,\dots,\widehat{X}_n)(A^\circ)^{\mathbf{g}_{c_i}}.
\]
Inserting these expressions into the formula in the definition of $\mathbb{I}_{\mathcal{D}}$, we obtain 
\[
\mathbb{I}_\mathcal{D}(l) = \frac{\prod_{\text{$i$ even}}F_{c_i}(\widehat{X}_1,\dots,\widehat{X}_n)}{\prod_{\text{$i$ odd}}F_{c_i}(X_1,\dots,X_n)}\frac{(A^\circ)^{\mathbf{g}_{\text{even}}}}{A^{\mathbf{g}_{\text{odd}}}}
\]
where we have defined $\mathbf{g}_{\text{even}}=\sum_{i\text{ even}}\mathbf{g}_{c_i}$ and $\mathbf{g}_{\text{odd}}=\sum_{i\text{ odd}}\mathbf{g}_{c_i}$. Substituting $A_i^\circ=B_iA_i$ into this expression, we obtain 
\[
\mathbb{I}_\mathcal{D}(l) =\frac{\prod_{\text{$i$ even}}F_{c_i}(\widehat{X}_1,\dots,\widehat{X}_n)}{\prod_{\text{$i$ odd}}F_{c_i}(X_1,\dots,X_n)}B^{\mathbf{g}_{\text{even}}}A^{\mathbf{g}_{\text{even}}-\mathbf{g}_{\text{odd}}}.
\]
Finally, by Lemma~\ref{lem:factorization}, we can write the last factor as
\[
A^{\mathbf{g}_{\text{even}}-\mathbf{g}_{\text{odd}}}=X^{\mathbf{h}}
\]
for some half integral vector $\mathbf{h}$. This completes the proof.
\end{proof}

Notice that the proof of Lemma~\ref{lem:factorization} actually gives an explicit description of the product $X_1^{h_{l,1}}\dots X_n^{h_{l,n}}$ appearing in this theorem. In proving the lemma, we have essentially described a cycle $\eta_l$ with $\mathbb{Z}/2\mathbb{Z}$-coefficients on~$S$ such that this monomial equals $X_{i_1}^{\pm1/2}\dots X_{i_s}^{\pm1/2}$ where $i_1,\dots,i_s$ are the edges of $T$ that $\eta_l$ intersects. This fact will be important below when we discuss rational functions obtained from laminations.

\subsubsection{Other curves}

We will now derive formulas for $\mathbb{I}_{\mathcal{D}}(c)$ in special cases where $c$ is a closed curve on $S_{\mathcal{D}}$ that is not an intersecting curve.

\begin{proposition}
\label{prop:Scircloop}
Let $c$ be a doubled lamination consisting of a single loop of weight~$k$ that lies entirely in~$S^\circ$ and is not retractible to the simple lamination. Then 
\[
\mathbb{I}_{\mathcal{D}}(c)=F_c(\widehat{X}_1,\dots,\widehat{X}_n)B_1^{g_{c,1}}\dots B_n^{g_{c,n}}X_1^{h_{c,1}}\dots X_n^{h_{c,n}}
\]
where $F_c$ is a polynomial, the $g_{c,i}$ are integers, and the $h_{c,i}$ are half integers.
\end{proposition}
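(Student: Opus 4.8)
The plan is to reduce to a monodromy computation on $S^\circ$, recognize the answer as a (generalized) $F$-polynomial times a Laurent monomial, and then rewrite that monomial in the $B_i$ and $X_i$ using $\widehat X_i = X_i\prod_j B_j^{\varepsilon_{ij}}$ — in close parallel with the proof of Proposition~\ref{prop:intersecting}.

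\emph{Reduction to $S^\circ$.} First I would unwind the definition of $\mathbb{I}_{\mathcal D}$ on $c$. Since $c$ lies entirely in $S^\circ$ and is not homotopic to a loop of the simple lamination, $\mathbb{I}_{\mathcal D}(c)(m)$ is the trace of the $k$-th power of the monodromy around $c$ of the $PGL_2$-local system on $S^\circ$ attached to $m$. By Theorem~\ref{thm:doubleproperties}(2) and the construction of the coordinates $B_j,X_j$ on $\mathcal D_{PGL_2,S}$, this local system is the framed $PGL_2$-local system on $S^\circ$ whose $X$-coordinates relative to the triangulation $T^\circ$ of $S^\circ$ corresponding to $T$ are the functions $\widehat X_j$ on $\mathcal D$. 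So I would compute $\mathbb{I}_{\mathcal D}(c)$ by running the monodromy algorithm of Section~\ref{subsec:CalculationOfMonodromies} for the curve $c$ on $S^\circ$, but with each symbol $X_i$ replaced by $\widehat X_i$; this already shows $\mathbb{I}_{\mathcal D}(c)$ is a Laurent polynomial in the $\widehat X_i^{1/2}$. For weight $k>1$ I would first write $\mathbb{I}_{\mathcal D}(c) = p_k\bigl(\mathbb{I}_{\mathcal D}(c_1)\bigr)$, where $c_1$ is the underlying weight-$1$ curve and $p_k\in\mathbb Z[t]$ is the polynomial with $p_k(\operatorname{tr} A)=\operatorname{tr}(A^k)$ supplied by the Cayley--Hamilton theorem in $SL_2$, reducing everything to $k=1$.

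\emph{Extracting $F_c$.} Inspecting the matrix product $M_1\cdots M_s$ of Section~\ref{subsec:CalculationOfMonodromies}, every monomial of $\mathbb{I}_{\mathcal D}(c)$ has exponent $a_i/2-v_i$ in $\widehat X_i$ with $v_i\in\{0,1,\dots,a_i\}$, where $a_i$ is the total weight with which $c$ crosses the edge $i$ of $T^\circ$; in particular the support is finite and all of its monomials have the same vector of fractional parts of exponents. Hence $\mathbb{I}_{\mathcal D}(c) = F_c(\widehat X_1,\dots,\widehat X_n)\,\widehat X_1^{h_{c,1}}\cdots\widehat X_n^{h_{c,n}}$ for an honest polynomial $F_c$ in the $\widehat X_i$ and half-integers $h_{c,i}$ (integral on the edges $c$ crosses an even number of times, genuinely half-integral otherwise). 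Alternatively, I would obtain this factorization — and an explicit combinatorial description of $F_c$ — from the band-graph perfect-matching expansion of Musiker--Schiffler--Williams~\cite{MSW2} applied to the triangulated annular region of the universal cover of $S^\circ$ swept out by $c$; this is the closed-curve counterpart of the identity $A_{c_i}=F_{c_i}(X)A^{\mathbf{g}_{c_i}}$ of Lemma~\ref{lem:Adecomposition}, whose $\mathcal A$-side rests on Proposition~\ref{prop:FZ63} and the matrix formula of~\cite{MW}.

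\emph{Rewriting the monomial, and the main obstacle.} I would then substitute $\widehat X_i = X_i\prod_j B_j^{\varepsilon_{ij}}$, which turns $\widehat X_1^{h_{c,1}}\cdots\widehat X_n^{h_{c,n}}$ into $X_1^{h_{c,1}}\cdots X_n^{h_{c,n}}B_1^{g_{c,1}}\cdots B_n^{g_{c,n}}$ with $g_{c,j}=\sum_i\varepsilon_{ij}h_{c,i}$, and check that each $g_{c,j}$ is an integer: using $2h_{c,i}\equiv a_i\pmod 2$ and a parity count over the at most two triangles of $T^\circ$ containing an internal edge $j$ — around each such triangle the three edge-crossing numbers of $c$ sum to an even number — one gets $\sum_i\varepsilon_{ij}(2h_{c,i})\in2\mathbb Z$, hence $g_{c,j}\in\mathbb Z$. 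This gives the stated formula for the fixed triangulation $T$. I expect the essential difficulty to be the second step, namely showing that the monodromy trace — an a priori arbitrary Laurent polynomial in the $\widehat X_i^{1/2}$ — really factors as a polynomial in the $\widehat X_i$ times a single Laurent monomial: done by hand this is a support analysis of the matrix product, while the cluster-algebraic route requires carefully matching the coordinate bookkeeping of Section~\ref{subsec:CalculationOfMonodromies} with the $\lambda$-length combinatorics of~\cite{MSW2} and tracking the resulting $\mathbf{g}$- and $\mathbf{h}$-vectors through $X_i\mapsto\widehat X_i$. The remaining manipulations are formal consequences of $\widehat X_i = X_i\prod_j B_j^{\varepsilon_{ij}}$ and the parity observation above.
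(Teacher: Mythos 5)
Your proposal is correct and follows essentially the same route as the paper: the paper also factors each turn matrix as an integer-entry matrix times the scalar $\widehat{X}_i^{-1/2}$, defines $F_c$ as the trace of the $k$th power of the resulting polynomial matrix product, and then substitutes $\widehat{X}_i=X_i\prod_jB_j^{\varepsilon_{ij}}$, checking integrality of the $B$-exponents by the same parity count of edge-crossings around the triangles adjacent to each edge. The only cosmetic differences are that the paper takes $\operatorname{tr}(M^k)$ directly rather than passing through the Cayley--Hamilton polynomial $p_k$, and it does not invoke the band-graph expansion of Musiker--Schiffler--Williams for this case.
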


\begin{proof}
Let $c$ be a doubled lamination satisfying the hypotheses of the proposition. In this case, $\mathbb{I}_{\mathcal{D}}(c)$ is defined as the trace of the $k$th power of the monodromy around~$c$. This monodromy is a product of the matrices 
\[
\left( \begin{array}{cc}
\widehat{X}_i^{1/2} & 0 \\
\widehat{X}_i^{-1/2} & \widehat{X}_i^{-1/2} \end{array} \right)
\quad
\text{and}
\quad
\left( \begin{array}{cc}
\widehat{X}_i^{1/2} & \widehat{X}_i^{1/2} \\
0 & \widehat{X}_i^{-1/2} \end{array} \right),
\]
one for each edge $i$ that $c$ intersects. These matrices factor as 
\[
\left( \begin{array}{cc}
\widehat{X}_i & 0 \\
1 & 1 \end{array} \right)
\cdot \widehat{X}_i^{-1/2}
\quad
\text{and}
\quad
\left( \begin{array}{cc}
\widehat{X}_i & \widehat{X}_i \\
0 & 1 \end{array} \right)
\cdot \widehat{X}_i^{-1/2},
\]
so the monodromy factors as $M\cdot\widehat{X}_{i_1}^{-1/2}\dots\widehat{X}_{i_s}^{-1/2}$ where $i_1,\dots,i_s$ are the edges that $c$ intersects and $M$ is a matrix with polynomial entries. Let us write $F_c(\widehat{X}_1,\dots,\widehat{X}_n)$ for the trace of the matrix $M^k$. Then we have 
\[
\mathbb{I}_{\mathcal{D}}(c)=F_c(\widehat{X}_1,\dots,\widehat{X}_n)\widehat{X}_{i_1}^{-k/2}\dots\widehat{X}_{i_s}^{-k/2}.
\]
Consider the product $\prod_l\prod_j B_j^{\varepsilon_{i_lj}}$. Let~$i$ be any edge of the triangulation. Using the definition of the matrix $\varepsilon_{ij}$, it is easy to show that the total degree of $B_i$ in this product is even. Hence
\begin{align*}
\widehat{X}_{i_1}^{-k/2}\dots\widehat{X}_{i_s}^{-k/2} &= \prod_l {\biggr(X_{i_l}\prod_jB_j^{\varepsilon_{i_lj}}\biggr)}^{-k/2} \\
&= {\biggr(\prod_{j,l} B_j^{\varepsilon_{i_lj}}\biggr)}^{-k/2}\biggr(\prod_lX_{i_l}^{-k/2}\biggr) \\
&= B_1^{g_{c,1}}\dots B_n^{g_{c,n}}X_1^{h_{c,1}}\dots X_n^{h_{c,n}}
\end{align*}
for some integers $g_{c,i}$ and half integers $h_{c,i}$. This completes the proof.
\end{proof}

\begin{proposition}
\label{prop:Scirchole}
Let $c$ be a doubled lamination consisting of a single loop of weight~$k$ on~$S_{\mathcal{D}}$. Assume that this loop is homotopic to a loop in the simple lamination and the orientation that $c$ provides for this loop agrees with the orientation of $S^\circ$. Then 
\[
\mathbb{I}_{\mathcal{D}}(c)=B_1^{g_{c,1}}\dots B_n^{g_{c,n}}X_1^{h_{c,1}}\dots X_n^{h_{c,n}}
\]
for integers $g_{c,i}$ and half integers $h_{c,i}$.
\end{proposition}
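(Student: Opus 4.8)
The plan is to imitate the proof of Proposition~\ref{prop:Scircloop}, with the trace of the monodromy replaced by a single distinguished eigenvalue. Fix an ideal triangulation $T$ of $S$ (drawn so that its edges spiral into the holes in the direction prescribed by~$c$) and a general point $m\in\mathcal{D}_{PGL_2,S}(\mathbb{C})$. Since the orientation that $c$ provides for the loop $\gamma_i$ agrees with the orientation of~$S^\circ$, the definition of $\mathbb{I}_{\mathcal{D}}$ gives $\mathbb{I}_{\mathcal{D}}(c)(m)=\lambda_i^{k}$, where $\lambda_i$ is the distinguished eigenvalue of the monodromy around $\gamma_i$ of the $PGL_2$-local system on $S^\circ$ determined by~$m$. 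Because $\gamma_i$ is a loop of the simple lamination, a representative can be homotoped so that it meets the edges of $T$ in a cyclic sequence $i_1,\dots,i_s$ while turning always to the same side; hence, just as in the matrix computation preceding Proposition~\ref{prop:monodromyperipheral} --- but using the coordinates of the $S^\circ$-local system, which are the $\widehat{X}_j$ according to the description of the map $\pi$ in Theorem~\ref{thm:doubleproperties} --- the monodromy is a product of matrices all of a single triangular type. Factoring each such matrix as the product of an integral matrix and the scalar $\widehat{X}_{i_l}^{-1/2}$, exactly as in the proof of Proposition~\ref{prop:Scircloop}, the monodromy becomes a triangular matrix with diagonal entries $\widehat{X}_{i_1}\cdots\widehat{X}_{i_s}$ and~$1$, multiplied by the scalar $\widehat{X}_{i_1}^{-1/2}\cdots\widehat{X}_{i_s}^{-1/2}$; its eigenvalues are therefore $\widehat{X}_{i_1}^{1/2}\cdots\widehat{X}_{i_s}^{1/2}$ and its reciprocal. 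The datum $m$ distinguishes the first of these (the one compatible with the orientation coming from~$S^\circ$), so $\mathbb{I}_{\mathcal{D}}(c)=\prod_{l=1}^{s}\widehat{X}_{i_l}^{k/2}$.

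It then remains to rewrite this monomial in terms of the $B_j$ and~$X_j$. Substituting $\widehat{X}_j=X_j\prod_{l}B_l^{\varepsilon_{jl}}$, we get
\[
\mathbb{I}_{\mathcal{D}}(c)=\prod_{l=1}^{s}\Bigl(X_{i_l}\prod_{j}B_j^{\varepsilon_{i_lj}}\Bigr)^{k/2}=\Bigl(\prod_{j,l}B_j^{\varepsilon_{i_lj}}\Bigr)^{k/2}\Bigl(\prod_{l}X_{i_l}^{k/2}\Bigr).
\]
Exactly as in the proof of Proposition~\ref{prop:Scircloop}, a local computation at each edge $j$ of $T$, using only the definition of the matrix $\varepsilon_{ij}$ together with the fact that the closed curve $\gamma_i$ returns to its initial triangle after crossing $i_1,\dots,i_s$, shows that the total exponent $\sum_{l}\varepsilon_{i_lj}$ of~$B_j$ is even. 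Consequently $g_{c,j}:=\tfrac{k}{2}\sum_{l}\varepsilon_{i_lj}$ is an integer, $h_{c,j}:=\tfrac{k}{2}\,\#\{\,l:i_l=j\,\}$ is a half integer, and the displayed expression equals $B_1^{g_{c,1}}\cdots B_n^{g_{c,n}}X_1^{h_{c,1}}\cdots X_n^{h_{c,n}}$, as claimed. One verifies, as in the previous propositions, that these exponents depend neither on the chosen square roots $\widehat{X}_j^{1/2}$ nor on the homotopy representative of $c$ used to compute the monodromy.

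The only genuinely delicate point is the bookkeeping in the first paragraph: one must check that a representative of $c$ can be chosen so that the monodromy on $S^\circ$ is a product of matrices of a single triangular type --- this is where one uses that $\gamma_i$ is a loop of the simple lamination, so that, like a peripheral curve in the sense of Definition~\ref{def:canonicalAX}, it comes with a distinguished eigenvalue --- and one must pin down which of the two eigenvalues the datum $m$ distinguishes, so that each $\widehat{X}_{i_l}$ enters with exponent $+k/2$ rather than $-k/2$ when the orientation of $\gamma_i$ agrees with~$S^\circ$. Everything else is a direct transcription of the arguments already given for Propositions~\ref{prop:monodromyperipheral} and~\ref{prop:Scircloop}, supplemented by the elementary parity count for the exponents of the $B_j$.
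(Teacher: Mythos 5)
Your proof is correct and follows essentially the same route as the paper: the paper likewise invokes Proposition~\ref{prop:monodromyperipheral} to identify the distinguished eigenvalue as $\widehat{X}_{i_1}^{1/2}\dots\widehat{X}_{i_s}^{1/2}$ (with the hatted variables because the orientation agrees with $S^\circ$) and then reuses the parity argument from the proof of Proposition~\ref{prop:Scircloop} to convert the resulting monomial into the stated form. You merely spell out the two steps that the paper cites by reference.
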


\begin{proof}
Let $c$ be a doubled lamination satisfying the hypotheses of the proposition. In this case, $\mathbb{I}_{\mathcal{D}}(c)$ is defined as $\lambda^k$ where $\lambda$ is the distinguished eigenvalue of the monodromy around~$c$. Let $i_1,\dots,i_s$ be the edges of the triangulation that meet this loop in the simple lamination. Proposition~\ref{prop:monodromyperipheral} implies 
\[
\mathbb{I}_{\mathcal{D}}(c)=\left(\widehat{X}_{i_1}^{1/2}\dots\widehat{X}_{i_s}^{1/2}\right)^k.
\]
The same argument that we used in the proof of Proposition~\ref{prop:Scircloop} can be used to show that this expression has the form $B_1^{g_{c,1}}\dots B_n^{g_{c,n}}X_1^{h_{c,1}}\dots X_n^{h_{c,n}}$ for some integers~$g_{c,i}$ and half integers~$h_{c,i}$.
\end{proof}

\begin{proposition}
\label{prop:Sloop}
Let $c$ be a doubled lamination consisting of a single loop of weight~$k$ that lies entirely in~$S$ and is not retractible to the simple lamination. Then 
\[
\mathbb{I}_{\mathcal{D}}(c)=\frac{1}{F_c(X_1,\dots,X_n)}X_1^{h_{c,1}}\dots X_n^{h_{c,n}}
\]
where $F_c$ is a polynomial and the $h_{c,i}$ are half integers.
\end{proposition}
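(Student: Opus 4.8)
The plan is to reduce this to the computation already carried out in Proposition~\ref{prop:Scircloop}, exploiting the symmetry of the symplectic double under the involution $\iota:\mathcal{D}\to\mathcal{D}$ from Theorem~\ref{thm:doubleproperties}(4). Recall $\iota^*(B_i)=B_i^{-1}$ and $\iota^*(X_i)=X_i\prod_j B_j^{\varepsilon_{ij}}=\widehat{X}_i$. Geometrically $\iota$ interchanges the roles of $S$ and $S^\circ$: a loop $c$ lying in $S$ corresponds, under $\iota$, to the ``same'' loop regarded as lying in $S^\circ$. By part~(2) of the definition of $\mathbb{I}_{\mathcal{D}}$, if $l$ lies in $S$ then $\mathbb{I}_{\mathcal{D}}(l)(m)$ is the reciprocal of the trace of the $k$th power of the monodromy around $l$, whereas if $l$ lies in $S^\circ$ it is that trace itself. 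So the natural route is to show $\mathbb{I}_{\mathcal{D}}(c) = 1/\mathbb{I}_{\mathcal{D}}(c^\circ)$ where $c^\circ$ is the corresponding loop in $S^\circ$, and then apply Proposition~\ref{prop:Scircloop} to $c^\circ$.

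Concretely, first I would repeat the monodromy calculation of Proposition~\ref{prop:Scircloop} directly for $c$ in $S$. The monodromy around $c$ is a product of matrices of the shapes
\[
\begin{pmatrix} X_i^{1/2} & 0 \\ X_i^{-1/2} & X_i^{-1/2}\end{pmatrix}
\quad\text{and}\quad
\begin{pmatrix} X_i^{1/2} & X_i^{1/2} \\ 0 & X_i^{-1/2}\end{pmatrix},
\]
one for each edge $i$ that $c$ crosses — these are the $X$-coordinates of the enhanced Teichm\"uller/Poisson structure on $S$ itself, since cutting $S_{\mathcal{D}}$ along $\partial S$ recovers $\mathcal{X}_{PGL_2,S}$ with coordinates $X_i$ (as opposed to the $S^\circ$ side, which carries the $\widehat{X}_i$). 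Factoring $\widehat{X}_i^{-1/2}$ out of each — as in Proposition~\ref{prop:Scircloop}, replacing $\widehat X$ by $X$ — gives monodromy $= M\cdot X_{i_1}^{-1/2}\cdots X_{i_s}^{-1/2}$ with $M$ a matrix with polynomial entries in the $X_i$. Writing $F_c(X_1,\dots,X_n)$ for $\operatorname{tr}(M^k)$, we get that the trace of the $k$th power of the monodromy is $F_c(X_1,\dots,X_n)\,X_{i_1}^{-k/2}\cdots X_{i_s}^{-k/2}$, hence
\[
\mathbb{I}_{\mathcal{D}}(c) = \frac{1}{F_c(X_1,\dots,X_n)\,X_{i_1}^{-k/2}\cdots X_{i_s}^{-k/2}} = \frac{1}{F_c(X_1,\dots,X_n)}\,X_{i_1}^{k/2}\cdots X_{i_s}^{k/2}.
\]
Then setting $h_{c,i}$ to be $k/2$ times the number of times $c$ crosses edge $i$ (and $0$ otherwise) yields exactly the claimed form, with the $h_{c,i}$ half integers. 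No appeal to $\iota$ is strictly needed; it just serves as a sanity check, and one can alternatively phrase the whole argument as $\iota^*\mathbb{I}_{\mathcal{D}}(c^\circ)=\mathbb{I}_{\mathcal{D}}(c)$ combined with $\iota^*$ sending $X_i\mapsto\widehat X_i$, $B_i\mapsto B_i^{-1}$ and then invoking Proposition~\ref{prop:Scircloop} — the $B$-factors there cancel because $\iota^*$ inverts them against the contribution hidden in $\widehat X_i^{-k/2}$, leaving only powers of the $X_i$.

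The only genuine point to check carefully — and the place where this differs from Proposition~\ref{prop:Scircloop} rather than being a literal copy — is why there are \emph{no} $B$-factors in the answer. In Proposition~\ref{prop:Scircloop} the loop sat in $S^\circ$, so its monodromy was built from the $\widehat X_i$, and expanding $\widehat X_i = X_i\prod_j B_j^{\varepsilon_{ij}}$ produced the $B_1^{g_{c,1}}\cdots B_n^{g_{c,n}}$ factor. Here the loop sits in $S$, so its monodromy is built from the bare $X_i$ — the $B$'s never enter — which is precisely why the formula for a loop in $S$ has no $B$-monomial. I would make this explicit by noting that the $X$-coordinates attached to the $S$-copy inside $S_{\mathcal{D}}$ are, by construction in Chapter~\ref{ch:MeasuredLaminations} and the definition of $\pi:\mathcal{D}\to\mathcal{X}\times\mathcal{X}$ in Theorem~\ref{thm:doubleproperties}(2), the first factor $\pi^*(X_i\otimes 1)=X_i$, not the second factor $X_i\prod_j B_j^{\varepsilon_{ij}}$. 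This is the main (and only mild) obstacle: being precise about which copy of $\mathcal{X}$ one is computing monodromies in; once that is pinned down, the computation is verbatim the one in Proposition~\ref{prop:Scircloop} with $\widehat X$ replaced by $X$ throughout and a reciprocal taken at the end.
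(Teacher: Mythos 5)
Your proposal is correct and its core computation is exactly the paper's: write the monodromy of $c$ as a product of the elementary matrices in the bare coordinates $X_i^{1/2}$ (since $c$ lies in $S$, not $S^\circ$), factor out $X_{i_1}^{-1/2}\cdots X_{i_s}^{-1/2}$, set $F_c=\operatorname{tr}(M^k)$, and take the reciprocal, which is precisely why no $B$-monomial appears. The extra framing via the involution $\iota$ is a harmless sanity check and not part of the paper's argument, which you correctly note is unnecessary.
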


\begin{proof}
Let $c$ be a doubled lamination satisfying the hypotheses of the proposition. In this case, $\mathbb{I}_{\mathcal{D}}(c)$ is defined as the reciprocal of the trace of the $k$th power of the monodromy around~$c$. This monodromy is represented by a product of the matrices 
\[
\left( \begin{array}{cc}
X_i^{1/2} & 0 \\
X_i^{-1/2} & X_i^{-1/2} \end{array} \right)
\quad
\text{and}
\quad
\left( \begin{array}{cc}
X_i^{1/2} & X_i^{1/2} \\
0 & X_i^{-1/2} \end{array} \right),
\]
one for each edge $i$ that $c$ intersects. These matrices factor as 
\[
\left( \begin{array}{cc}
X_i & 0 \\
1 & 1 \end{array} \right)
\cdot X_i^{-1/2}
\quad
\text{and}
\quad
\left( \begin{array}{cc}
X_i & X_i \\
0 & 1 \end{array} \right)
\cdot X_i^{-1/2},
\]
so the monodromy factors as $M\cdot X_{i_1}^{-1/2}\dots X_{i_s}^{-1/2}$ where $i_1,\dots,i_s$ are the edges that $c$ intersects and $M$ is a matrix with polynomial entries. Let us write $F_c(X_1,\dots,X_n)$ for the trace of the matrix $M^k$. Then we have 
\begin{align*}
\mathbb{I}_{\mathcal{D}}(c) &= \frac{1}{F_c(X_1,\dots,X_n)}X_{i_1}^{k/2}\dots X_{i_s}^{k/2} \\
&= \frac{1}{F_c(X_1,\dots,X_n)}X_1^{h_{c,1}}\dots X_n^{h_{c,n}}
\end{align*}
for half integers $h_{c,i}$.
\end{proof}

\begin{proposition}
\label{prop:Shole}
Let $c$ be a doubled lamination consisting of a single loop of weight~$k$ on~$S_{\mathcal{D}}$. Assume that this loop is homotopic to a loop in the simple lamination and the orientation that $c$ provides for this loop agrees with the orientation of~$S$. Then 
\[
\mathbb{I}_{\mathcal{D}}(c)=X_1^{h_{c,1}}\dots X_n^{h_{c,n}}
\]
for half integers $h_{c,i}$.
\end{proposition}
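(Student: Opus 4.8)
The plan is to follow the same strategy as in the proof of Proposition~\ref{prop:Scirchole}, the only change being that the orientation of the relevant simple-lamination loop now agrees with $S$ rather than $S^\circ$, which by part~3 of the definition of $\mathbb{I}_{\mathcal{D}}$ replaces the distinguished eigenvalue by its reciprocal. Concretely, let $\gamma_i$ be the loop of the simple lamination to which $c$ is homotopic, and let $m\in\mathcal{D}_{PGL_2,S}(\mathbb{C})$ be a general point. By definition $\mathbb{I}_{\mathcal{D}}(c)(m)=\lambda^{-k}$, where $\lambda$ is the distinguished eigenvalue of the monodromy around $\gamma_i$. First I would deform $c$ onto a peripheral curve and apply the monodromy computation of Section~\ref{subsec:CalculationOfMonodromies}: exactly as in Proposition~\ref{prop:Scirchole}, Proposition~\ref{prop:monodromyperipheral} gives $\lambda=\widehat{X}_{i_1}^{1/2}\cdots\widehat{X}_{i_s}^{1/2}$, where $i_1,\dots,i_s$ are the edges of the triangulation $T$ that meet $\gamma_i$, listed with multiplicity.

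The heart of the argument is to show that $\widehat{X}_{i_1}\cdots\widehat{X}_{i_s}$ is in fact a Laurent monomial in the $X$-coordinates alone, with no $B$-factors. Since $\widehat{X}_{i_l}=X_{i_l}\prod_j B_j^{\varepsilon_{i_lj}}$, this amounts to the combinatorial identity $\sum_{l=1}^{s}\varepsilon_{i_lj}=0$ for every $j$. I would derive this from the geometry of the moduli space $\mathcal{D}_{PGL_2,S}$: the local systems $\mathcal{L}$ on $S$ and $\mathcal{L}^\circ$ on $S^\circ$ underlying $m$ have isomorphic restrictions to the boundary (Definition~\ref{def:Dspace}), so the monodromy of $\mathcal{L}$ around the boundary component corresponding to $\gamma_i$ and the monodromy of $\mathcal{L}^\circ$ around the corresponding boundary component of $S^\circ$ are conjugate, hence have equal eigenvalues, and the gluing datum $\alpha$ of Definition~\ref{def:Dspace} aligns their distinguished eigenlines. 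By Proposition~\ref{prop:monodromyperipheral} the first eigenvalue is $X_{i_1}^{1/2}\cdots X_{i_s}^{1/2}$ and the second is $\widehat{X}_{i_1}^{1/2}\cdots\widehat{X}_{i_s}^{1/2}$; equating and squaring gives $\prod_l X_{i_l}=\prod_l\widehat{X}_{i_l}$, and since the $B_j$ are coordinate functions on $\mathcal{D}$ this forces $\sum_l\varepsilon_{i_lj}=0$. (The same identity can instead be verified directly from the definition of $\varepsilon^T_{ij}$ for the edges surrounding a puncture, which is an alternative if one prefers to avoid the moduli-space language.)

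Putting the two steps together, $\mathbb{I}_{\mathcal{D}}(c)(m)=\lambda^{-k}=\bigl(\widehat{X}_{i_1}^{1/2}\cdots\widehat{X}_{i_s}^{1/2}\bigr)^{-k}=\bigl(X_{i_1}\cdots X_{i_s}\bigr)^{-k/2}=X_1^{h_{c,1}}\cdots X_n^{h_{c,n}}$ for suitable half-integers $h_{c,i}$, namely $h_{c,j}$ equal to $-k/2$ times the number of indices $l$ with $i_l=j$; that $\mathbb{I}_{\mathcal{D}}(c)$ is independent of the auxiliary choices in the monodromy computation is part of the content of the definition of $\mathbb{I}_{\mathcal{D}}$, which I take as given. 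The main obstacle is the middle step, i.e.\ the vanishing $\sum_l\varepsilon_{i_lj}=0$: one must be careful that the edges ``meeting $\gamma_i$'' are precisely the edges incident to the corresponding puncture, counted with the correct multiplicity (an edge both of whose endpoints lie at the puncture is counted twice), and that the two monodromy eigenvalue computations really select the same eigenvalue rather than reciprocal ones — a point guaranteed by the framings and the gluing datum but worth spelling out.
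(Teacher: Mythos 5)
Your proof is correct, but it takes a detour that the paper avoids. The paper's own argument is two lines: since the orientation agrees with $S$, one computes the distinguished eigenvalue of the monodromy around the loop using the local system on $S$ itself, where Proposition~\ref{prop:monodromyperipheral} applies directly in the plain $X$-coordinates (via $\pi^*(X_i\otimes 1)=X_i$), giving $\mathbb{I}_{\mathcal{D}}(c)=\bigl(X_{i_1}^{1/2}\cdots X_{i_s}^{1/2}\bigr)^{-k}$ with no $B$-factors ever appearing. You instead compute the eigenvalue on the $S^\circ$ side as $\widehat{X}_{i_1}^{1/2}\cdots\widehat{X}_{i_s}^{1/2}$ (as in Proposition~\ref{prop:Scirchole}) and are then forced to prove the cancellation $\sum_l\varepsilon_{i_lj}=0$ for every $j$. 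That identity is true — it is the statement that the product of the $X$-coordinates over the edges incident to a puncture (with multiplicity) is a Casimir of the Poisson bracket $\{X_i,X_j\}=\varepsilon_{ij}X_iX_j$, and your direct combinatorial verification from the definition of $\varepsilon^T_{ij}$ is the cleaner of your two arguments, since the moduli-space version needs the extra care you flag about the two framings selecting corresponding rather than reciprocal eigenvalues. What your longer route buys is worth noting: it makes explicit the consistency between Propositions~\ref{prop:Scirchole} and~\ref{prop:Shole}, namely that the ``distinguished eigenvalue'' is the same whether computed from $\mathcal{L}$ or from $\mathcal{L}^\circ$ — a fact the paper uses silently by computing on whichever side is convenient. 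Note also that this cancellation is genuinely special to peripheral loops: for the non-peripheral loops of Proposition~\ref{prop:Scircloop} the exponent sums $\sum_l\varepsilon_{i_lj}$ are merely even, not zero, which is why $B$-factors survive there.
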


\begin{proof}
Let $c$ be a doubled lamination satisfying the hypotheses of the proposition.  In this case, $\mathbb{I}_{\mathcal{D}}(c)$ is defined as $\lambda^{-k}$ where $\lambda$ is the distinguished eigenvalue of the monodromy around~$c$. Let $i_1,\dots,i_s$ be the edges of the triangulation that meet this loop in the simple lamination. Proposition~\ref{prop:monodromyperipheral} implies 
\[
\mathbb{I}_{\mathcal{D}}(c)=\left(X_{i_1}^{1/2}\dots X_{i_s}^{1/2}\right)^{-k}.
\]
Thus $\mathbb{I}_{\mathcal{D}}(c)$ has the desired form.
\end{proof}

Propositions~\ref{prop:Scircloop}, \ref{prop:Scirchole}, \ref{prop:Sloop}, and~\ref{prop:Shole} provide expressions for $\mathbb{I}_{\mathcal{D}}(c)$ in the cases where the curve $c$ is not an intersecting curve. In each case, the expression includes a product of the form $X_1^{h_{c,1}}\dots X_n^{h_{c,n}}$ where each $h_{c,i}$ is a half integer. In fact, the proofs of these propositions provide a more explicit description of this product: It is obtained by multiplying one factor $X_i^{\pm k/2}$ each time the curve $c$ crosses an edge $i$. This fact will be important below when we discuss rational functions obtained from laminations.

\subsubsection{The general case}

We will now combine the above results to get a general expression for the function $\mathbb{I}_{\mathcal{D}}(l)$. To state this result, we will need the following notation. Any doubled lamination $l\in\mathcal{D}_L(S,\mathbb{Z})$ can be represented by a collection of curves of weight~1. If this collection contains homotopic curves of weights $a$ and $b$ which are not intersecting curves, let us replace these by a single curve of weight $a+b$. In this way, we obtain a new collection of curves representing~$l$. If we now cut the surface along the image of $\partial S$, we obtain a collection $\mathcal{C}^\circ$ of curves on $S^\circ$ and a collection $\mathcal{C}$ of curves on $S$.

\begin{theorem}
\label{thm:main}
Let $l\in\mathcal{D}_L(S,\mathbb{Z})$ and suppose that the orientation of the curve $\gamma_i$ agrees with~$S$ whenever $\gamma_i$ meets a curve of~$l$. Then 
\[
\mathbb{I}_\mathcal{D}(l)=\frac{\prod_{c\in\mathcal{C}^\circ}F_c(\widehat{X}_1,\dots,\widehat{X}_n)}{\prod_{c\in\mathcal{C}}F_c(X_1,\dots,X_n)}B_1^{g_{l,1}}\dots B_n^{g_{l,n}}X_1^{h_{l,1}}\dots X_n^{h_{l,n}}
\]
where the $F_c$ are polynomials, the $g_{l,i}$ are integers and the $h_{l,i}$ are half integers.
\end{theorem}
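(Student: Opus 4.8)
The plan is to reduce Theorem~\ref{thm:main} to the special cases already established in Propositions~\ref{prop:intersecting}, \ref{prop:Scircloop}, \ref{prop:Scirchole}, \ref{prop:Sloop}, and~\ref{prop:Shole}, using the multiplicativity of $\mathbb{I}_{\mathcal{D}}$ over connected components of a lamination. First I would fix an ideal triangulation $T$ of $S$ and realize $l$ by a collection of weight-$1$ curves, then group homotopic non-intersecting curves into curves of integer weight as in the paragraph preceding the theorem statement. By part~(4) of the definition of $\mathbb{I}_{\mathcal{D}}$, the function $\mathbb{I}_{\mathcal{D}}(l)$ is the product of $\mathbb{I}_{\mathcal{D}}(c)$ over the connected curves $c$ making up this collection. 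Each such $c$ is of exactly one of the five types covered above: an intersecting curve on $S_{\mathcal{D}}$; a loop lying entirely in $S^\circ$ not retractible to the simple lamination; a loop homotopic to the simple lamination with orientation agreeing with $S^\circ$; a loop lying entirely in $S$; or a loop homotopic to the simple lamination with orientation agreeing with $S$. (Since the hypothesis is that the orientation of each $\gamma_i$ meeting a curve of $l$ agrees with $S$, the "hole" curves that occur are of the $S$-oriented type; but I would keep the argument uniform and note that the mixed cases only affect the overall monomial prefactor.)

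Next I would assemble the product. For the intersecting curves $c$, Proposition~\ref{prop:intersecting} contributes a factor $F_c(\widehat{X}_1,\dots,\widehat{X}_n)/F_{c'}(X_1,\dots,X_n)$ type ratio, where cutting the intersecting curve along $\partial S$ produces its pieces $c_i$ in $\mathcal{C}$ (odd $i$) and $\mathcal{C}^\circ$ (even $i$); multiplying over all intersecting curves of $l$ gives exactly $\prod_{c\in\mathcal{C}^\circ\cap(\text{intersecting pieces})}F_c(\widehat X)\big/\prod_{c\in\mathcal{C}\cap(\text{intersecting pieces})}F_c(X)$, times a monomial $B^{\mathbf g}X^{\mathbf h}$. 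For the loops in $S^\circ$, Proposition~\ref{prop:Scircloop} contributes $F_c(\widehat X_1,\dots,\widehat X_n)$ times a monomial, and these $F_c$ are precisely indexed by the elements of $\mathcal{C}^\circ$ that are honest closed loops. For the loops in $S$, Proposition~\ref{prop:Sloop} contributes $F_c(X_1,\dots,X_n)^{-1}$ times a monomial, indexed by the honest closed loops in $\mathcal{C}$. The peripheral/simple-lamination loops (Propositions~\ref{prop:Scirchole}, \ref{prop:Shole}) contribute pure monomials in the $B_i$ and $X_i$ only. Collecting: the numerator $\prod_{c\in\mathcal{C}^\circ}F_c(\widehat X)$ and denominator $\prod_{c\in\mathcal{C}}F_c(X)$ appear exactly as stated, and all the leftover factors multiply to a single monomial $B_1^{g_{l,1}}\cdots B_n^{g_{l,n}}X_1^{h_{l,1}}\cdots X_n^{h_{l,n}}$ with $g_{l,i}\in\mathbb{Z}$ and $h_{l,i}\in\tfrac12\mathbb{Z}$. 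Finally one should note that the combined $X$-exponent vector $\mathbf h$ is obtained, per the remarks after Propositions~\ref{prop:intersecting} and~\ref{prop:Shole}, by accumulating a factor $X_i^{\pm 1/2}$ each time a curve of $l$ crosses the edge $i$, which matches the cycle-theoretic description of $\mathbf h$ used later.

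The main obstacle I anticipate is purely bookkeeping: verifying that the bijections between (a) the connected pieces of the cut-up lamination, (b) the labels $c\in\mathcal{C}$, $c\in\mathcal{C}^\circ$ appearing in the theorem, and (c) the indices of the $F$-polynomials produced in each of the five propositions, are mutually consistent — in particular that an intersecting curve's segments $c_i$ land in $\mathcal C$ or $\mathcal C^\circ$ according to the parity convention, and that no $F$-polynomial is double-counted or omitted when a closed loop happens to be isotopic into $S$ versus $S^\circ$. There is also a small compatibility point: the five propositions are each proved relative to a choice of polygon $P$ (for intersecting curves) or a choice of crossing pattern with $T$; I would remark that since all the $F_c$ are, by the matrix formula of~\cite{MW} and the results of~\cite{MSW1}, polynomials only in the $X$-variables associated to internal edges that the relevant curve crosses, they are independent of these auxiliary choices, so the product is well defined and the statement holds for any choice of seed. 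Once these identifications are pinned down, the theorem follows by simply multiplying the five formulas.
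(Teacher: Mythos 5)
Your proposal is correct and is essentially the paper's own argument: the paper likewise sets $F_c=1$ for loops homotopic to the simple lamination and then deduces the formula immediately from Propositions~\ref{prop:Scircloop}, \ref{prop:Scirchole}, \ref{prop:Sloop}, \ref{prop:Shole}, and~\ref{prop:intersecting} together with the multiplicativity of $\mathbb{I}_{\mathcal{D}}$. The extra bookkeeping you flag is handled implicitly by the conventions set up in the paragraph preceding the theorem, so no further argument is needed.
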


\begin{proof}
If $c$ is a closed curve which is homotopic to a loop in the simple lamination, we set $F_c=1$. Then the above formula for $\mathbb{I}_{\mathcal{D}}(l)$ is an immediate consequence of Propositions~\ref{prop:Scircloop}, \ref{prop:Scirchole}, \ref{prop:Sloop}, \ref{prop:Shole}, and~\ref{prop:intersecting} and the multiplicativity of $\mathbb{I}_{\mathcal{D}}$.
\end{proof}

Since the $h_{l,i}$ in Theorem~\ref{thm:main} are only half integers, we see that $\mathbb{I}_\mathcal{D}(l)$ is in general not a rational function in the variables $B_i$ and $X_i$. We will now show that we get a rational function whenever $l\in\mathcal{D}(\mathbb{Z}^t)$.

\begin{theorem}
\label{thm:restrictintegral}
The function $\mathbb{I}_{\mathcal{D}}(l)$ is rational if and only if $l$ has integral coordinates.
\end{theorem}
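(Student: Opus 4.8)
The plan is to leverage Theorem~\ref{thm:main} together with the characterization of tropical integral points in Theorem~\ref{thm:integralDlam}. From Theorem~\ref{thm:main} we have the explicit expression
\[
\mathbb{I}_\mathcal{D}(l)=\frac{\prod_{c\in\mathcal{C}^\circ}F_c(\widehat{X}_1,\dots,\widehat{X}_n)}{\prod_{c\in\mathcal{C}}F_c(X_1,\dots,X_n)}B_1^{g_{l,1}}\dots B_n^{g_{l,n}}X_1^{h_{l,1}}\dots X_n^{h_{l,n}},
\]
where the $F_c$ are honest polynomials in the respective variables and the $g_{l,i}$ are integers. Since $\widehat{X}_i=X_i\prod_j B_j^{\varepsilon_{ij}}$ is a genuine Laurent monomial times $X_i$, the polynomials $F_c(\widehat X_1,\dots,\widehat X_n)$ and $F_c(X_1,\dots,X_n)$ and the $B_i^{g_{l,i}}$ factors are all rational in the $B_i$ and $X_i$. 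Therefore $\mathbb{I}_{\mathcal D}(l)$ is rational if and only if the monomial $X_1^{h_{l,1}}\dots X_n^{h_{l,n}}$ is rational, i.e.\ if and only if every half-integer exponent $h_{l,i}$ is in fact an integer.

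So the claim reduces to: $h_{l,i}\in\mathbb{Z}$ for all $i$ if and only if $l\in\mathcal{D}(\mathbb{Z}^t)$. First I would establish the ``only if'' direction in a way that makes the ``if'' direction transparent too, by recalling the explicit description of this $X$-monomial given in the remarks following Proposition~\ref{prop:intersecting} and Propositions~\ref{prop:Scircloop}--\ref{prop:Shole}: the factor $X_1^{h_{l,1}}\dots X_n^{h_{l,n}}$ is obtained by contributing one power $X_i^{\pm1/2}$ each time a curve of $l$ (or the associated cycle $\eta_l$ built from the $\mathbb{Z}/2\mathbb{Z}$-homology construction) crosses the edge $i$ of $T$. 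Consequently $2h_{l,i}$ equals, up to sign, the number of intersections of the relevant cycle with the edge $i$, counted with multiplicity. Thus $h_{l,i}\in\mathbb{Z}$ for all $i$ precisely when that intersection number is even for every edge $i$. In the proof of Theorem~\ref{thm:integralDlam} (see Step~1 and Step~2 there), this parity condition on the edge-intersection numbers of the cycle $\sigma_l'$ is shown to be equivalent to $[\sigma_l]=0\in H_1(S,\mathbb{Z}/2\mathbb{Z})$, which by Theorem~\ref{thm:integralDlam} is equivalent to $l$ having integral coordinates, i.e.\ $l\in\mathcal{D}(\mathbb{Z}^t)$. (One needs to check that the cycle $\eta_l$ appearing in the $F$-polynomial formula and the cycle $\sigma_l'$ appearing in Theorem~\ref{thm:integralDlam} have the same parity of intersection with each edge; both are assembled from the same crossing data of the curves of $l$ with $T$, so this is a direct bookkeeping comparison.)

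Assembling these equivalences gives the theorem: $\mathbb{I}_{\mathcal D}(l)$ rational $\iff$ all $h_{l,i}\in\mathbb{Z}$ $\iff$ every edge-intersection number is even $\iff$ $[\sigma_l]=0$ in $H_1(S,\mathbb{Z}/2\mathbb{Z})$ $\iff$ $l\in\mathcal{D}(\mathbb{Z}^t)$. The main obstacle I expect is the last bookkeeping step: verifying that the half-integer part of the $X$-monomial really is governed by exactly the same mod-$2$ intersection data that controls $[\sigma_l]$, uniformly across all four types of component (intersecting curves, $S^\circ$-loops, $S$-loops, and loops homotopic into the simple lamination), including keeping track of the contributions coming from the $\mathbf{g}$-vectors via Lemma~\ref{lem:factorization} and from the ``$\widehat{X}_i^{-k/2}$'' factors in Propositions~\ref{prop:Scircloop} and~\ref{prop:Sloop}. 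The earlier observation that $\prod_l\prod_j B_j^{\varepsilon_{i_lj}}$ always has even total degree in each $B_i$ shows that the $B$-part never obstructs rationality, so the entire question is concentrated in the $X$-exponents, and it is precisely there that the homological parity enters.
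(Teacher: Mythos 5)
Your proposal is correct and follows essentially the same route as the paper: both reduce rationality to the integrality of the exponents $h_{l,i}$ via Theorem~\ref{thm:main}, identify $2h_{l,i}$ with the mod-$2$ count of intersections of a cycle homologous to $\sigma_l$ with the edge $i$, and then reuse the two homological arguments from the proof of Theorem~\ref{thm:integralDlam} to show this parity condition is equivalent to $[\sigma_l]=0$, hence to $l\in\mathcal{D}(\mathbb{Z}^t)$. The bookkeeping step you flag is exactly what the paper handles by replacing each intersecting-curve summand $\sigma_c$ of $\sigma_l$ by the homologous cycle $\sigma_c'$ arising from the proof of Proposition~\ref{prop:intersecting}.
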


\begin{proof}
We have seen in Theorem~\ref{thm:integralDlam} that a lamination $l$ has integral coordinates if and only if the homology class $[\sigma_l]\in H_1(S,\mathbb{Z}/2\mathbb{Z})$ vanishes.

\Needspace*{2\baselineskip}
\begin{step}[1]
$[\sigma_l]=0\implies$ rational function
\end{step}

Suppose the cycle $\sigma=\sigma_l$ satisfies $[\sigma]=0\in H_1(S,\mathbb{Z}/2\mathbb{Z})$. We will begin by replacing $\sigma$ by a homologous cycle.

Let $c$ be a closed curve in $\mathcal{C}$ or $\mathcal{C}^\circ$. Then Propositions~\ref{prop:Scircloop}, \ref{prop:Scirchole}, \ref{prop:Sloop}, and~\ref{prop:Shole} give an expression for $\mathbb{I}_{\mathcal{D}}(c)$ in terms of the $B$- and $X$-coordinates. This expression includes a factor which is a monomial in the $X$-coordinates with half integral exponents. By the remarks following Proposition~\ref{prop:Shole}, we know that this monomial has the form $X_{i_1}^{\pm k/2}\dots X_{i_s}^{\pm k/2}$ where $i_1,\dots,i_s$ are the edges of $T$ that $\sigma_c$ intersects. Similarly, if $c$ is an intersecting curve on~$S_\mathcal{D}$, then Proposition~\ref{prop:intersecting} gives an expression for $\mathbb{I}_{\mathcal{D}}(c)$ in terms of the $B$- and $X$-coordinates. This expression includes a factor which is a monomial in the $X$-coordinates with half integral exponents. In proving Proposition~\ref{prop:intersecting}, we essentially constructed a cycle $\sigma_c'$ with $\mathbb{Z}/2\mathbb{Z}$-coefficients such that this monomial has the form $X_{i_1}^{\pm1/2}\dots X_{i_s}^{\pm1/2}$ where $i_1,\dots,i_s$ are the edges of $T$ that $\sigma_c'$ intersects.

It is easy to see that $\sigma_c'$ is homologous to $\sigma_c$ for any intersecting curve $c$. Thus for an arbitrary doubled lamination $l$, we can replace $\sigma$ by a homologous cycle $\sigma'$ in which every summand of $\sigma$ of the form $\sigma_c$ for $c$ an intersecting curve is replaced by $\sigma_c'$. Our assumption that $[\sigma]=0\in H_1(S,\mathbb{Z}/2\mathbb{Z})$ implies that $[\sigma']=0$ as well. Arguing as in the proof of Theorem~\ref{thm:integralDlam}, we find that each edge of $T$ intersects $\sigma'$ in an even number of points. 

Now Theorem~\ref{thm:main} leads to an expression for $\mathbb{I}_{\mathcal{D}}(l)$ in terms of $B$- and $X$-coordinates for any doubled lamination $l$. This expression includes a factor which is a monomial in the $X$-coordinates with half integral exponents $h_{l,i}$, and we want to show that each $h_{l,i}$ is in fact an integer. Since $\mathbb{I}_{\mathcal{D}}(l)$ is the product of the functions $\mathbb{I}_{\mathcal{D}}(c)$ where $c$ is an intersecting curve in $l$ or a loop in~$\mathcal{C}$ or~$\mathcal{C}^\circ$, this follows immediately from the fact that any edge $i$ of the triangulation $T$ intersects $\sigma'$ in an even number of points.

\Needspace*{2\baselineskip}
\begin{step}[2]
Rational function $\implies[\sigma_l]=0$
\end{step}

Suppose $\mathbb{I}_{\mathcal{D}}(l)$ is a rational function in the variables $B_j$ and~$X_j$. Then each of the exponents $h_{l,i}$ in Theorem~\ref{thm:main} is an integer. This means that each edge of the ideal triangulation~$T$ intersects the cycle $\sigma'$ constructed in~Step~1 in an even number of points. Arguing as in the proof of Theorem~\ref{thm:integralDlam}, we can recursively add cycles to~$\sigma'$ to prove that $[\sigma']=0\in H_1(S,\mathbb{Z}/2\mathbb{Z})$. Since $\sigma'$ is homologous to $\sigma$, this implies $[\sigma]=0\in H_1(S,\mathbb{Z}/2\mathbb{Z})$.
\end{proof}

By Theorem~\ref{thm:restrictintegral}, we have constructed a canonical map 
\[
\mathbb{I}_{\mathcal{D}}:\mathcal{D}(\mathbb{Z}^t)\rightarrow\mathbb{Q}(\mathcal{D})
\]
in complete analogy with the classical construction of $\mathbb{I}_{\mathcal{A}}$.

\subsection{The intersection pairing}

In addition to the multiplicative canonical pairing that we defined above, we have a canonical map $\mathcal{I}_{\mathcal{D}}:\mathcal{D}_L(S,\mathbb{Z})\times\mathcal{D}_L(S,\mathbb{Z})\rightarrow\mathbb{Q}$ defined as follows.

Suppose we are given an intersecting curve $l$ on $S_\mathcal{D}$. Deform this curve so that it intersects the image of~$\partial S$ in the minimal number of points. Then, starting from a component of this image which we may call $\gamma_1$, there is a segment $c_1$ of the curve $l$ which lies entirely in~$S$ and connects the component~$\gamma_1$ to another component which we may call~$\gamma_2$. Starting from this component, there is a segment $c_2$ of $l$ which lies entirely in~$S^\circ$ and connects~$\gamma_2$ to a component~$\gamma_3$. Continue labeling in this way until the curve closes.

Now suppose we are given a point $m\in\mathcal{D}_L(S,\mathbb{Z})$. Draw this lamination on the same copy of $S_{\mathcal{D}}$ as $l$ and deform its curves so that they intersect the image of~$\partial S$ in the minimal number of points. Cut the surface $S_\mathcal{D}$ along $\partial S$. Then each $c_i$ is a curve that connects two boundary components of~$S$ or~$S^\circ$. We can wind the ends of $c_i$ around the holes infinitely many times in the direction prescribed by the orientations from $m$.

Consider a curve $c_i$ for $i$ odd. Lifting this curve to the universal cover of $S$, we obtain a geodesic $\tilde{c}_i$ connecting two points on the boundary of $\mathbb{H}$. Choose a distinguished curve intersecting $\tilde{c}_i$ near each of these boundary points, and define~$a_{c_i}$ as half the number of intersections of the lifted curves between the distinguished curves. Next consider $c_i$ for~$i$~even. Lifting to the universal cover of $S^\circ$, we again get a geodesic connecting two points on the boundary of $\mathbb{H}$, and the distinguished curves already chosen determine a pair of distinguished curves near these points. We define~$a_{c_i}^\circ$ as half the number of intersections of the lifted curves between the distinguished curves.

\Needspace*{5\baselineskip}
\begin{definition}\label{def:intersectionD} \mbox{}
Fix a point $m\in\mathcal{D}_L(S,\mathbb{Z})$.
\begin{enumerate}
\item Let $l$ be an intersecting curve of weight $k$ on $S_\mathcal{D}$, and assume that the orientation of each component of~$\gamma$ agrees with the orientation of~$S$. Then 
\[
\mathcal{I}_\mathcal{D}(l,m)=k\biggr(\sum_{\text{$i$ even}}a_{c_i}^\circ-\sum_{\text{$i$ odd}}a_{c_i}\biggr).
\]

\item Let $l$ be a curve of weight $k$ on $S_\mathcal{D}$ which is not an intersecting curve and is not homotopic to a loop in the simple lamination. If $l$ lies in $S^\circ$, then $\mathcal{I}_\mathcal{D}(l,m)$ is defined as $k/2$ times the minimal number of intersections between~$l$ and~$m$. If $l$ lies in $S$, then $\mathcal{I}_\mathcal{D}(l,m)$ is minus this quantity.

\item Let $l$ be a curve of weight $k$ on $S_\mathcal{D}$ which is homotopic to a loop $\gamma_i$ in the simple lamination. If $l$ and~$m$ provide the same orientation for $\gamma_i$, then $\mathcal{I}_\mathcal{D}(l,m)$ is defined as $k/2$ times the minimal number of intersections between~$l$ and~$m$. If $l$ and~$m$ provide different orientations for $\gamma$, then $\mathcal{I}_\mathcal{D}(l,m)$ is minus this quantity.

\item Let $l_1$ and $l_2$ be laminations on $S_\mathcal{D}$ such that no curve from $l_1$ intersects a curve from~$l_2$. Then $\mathcal{I}_\mathcal{D}(l_1+l_2,m)=\mathcal{I}_\mathcal{D}(l_1,m)+\mathcal{I}_\mathcal{D}(l_2,m)$.
\end{enumerate}
\end{definition}

In the classical setting of bounded and unbounded laminations, the intersection pairing is the tropicalization of the multiplicative canonical pairing. We will now prove the analogous result for doubled laminations.

\begin{proposition}
\label{prop:intersectingintersection}
Let $l$ be an intersecting curve of weight~1 on~$S_{\mathcal{D}}$. If $\gamma_i$ is a component of~$\gamma$ that meets a curve of $l$, assume that the orientation that $l$ provides for $\gamma_i$ agrees with the orientation of~$S$. Then $\mathcal{I}_{\mathcal{D}}(l,m)=(\mathbb{I}_{\mathcal{D}}(l))^t(m)$ for any~$m\in\mathcal{D}_L(S,\mathbb{Z})$.
\end{proposition}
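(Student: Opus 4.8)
The plan is to tropicalize the explicit formula for $\mathbb{I}_{\mathcal{D}}(l)$ established in Proposition~\ref{prop:intersecting} and compare it term by term with the combinatorial definition of $\mathcal{I}_{\mathcal{D}}(l,m)$ from Definition~\ref{def:intersectionD}(1). By Proposition~\ref{prop:intersecting} we have, for a fixed ideal triangulation of $S$,
\[
\mathbb{I}_{\mathcal{D}}(l)=\frac{\prod_{i\text{ even}}F_{c_i}(\widehat{X}_1,\dots,\widehat{X}_n)}{\prod_{i\text{ odd}}F_{c_i}(X_1,\dots,X_n)}B_1^{g_{l,1}}\cdots B_n^{g_{l,n}}X_1^{h_{l,1}}\cdots X_n^{h_{l,n}},
\]
and the point $m\in\mathcal{D}_L(S,\mathbb{Z})=\mathcal{D}(\mathbb{Q}^t)$ supplies tropical values $b_j,x_j$ for the coordinates $B_j,X_j$. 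Tropicalization turns the right-hand side into
\[
(\mathbb{I}_{\mathcal{D}}(l))^t(m)=\max_{\text{terms of numerator }F\text{'s}}(\cdots)-\max_{\text{terms of denominator }F\text{'s}}(\cdots)+\sum_i g_{l,i}b_i+\sum_i h_{l,i}\hat{x}_i,
\]
where $\hat{x}_i=x_i+\sum_j\varepsilon_{ij}b_j$ is the tropicalization of $\widehat{X}_i$; so the first task is to identify each of these three pieces with a geometric quantity attached to the configuration of curves $m$ relative to $l$.

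First I would handle the monomial part. The key input is the ``explicit description'' noted in the remark following Proposition~\ref{prop:intersecting}: the monomial $X_1^{h_{l,1}}\cdots X_n^{h_{l,n}}$ is $X_{i_1}^{\pm1/2}\cdots X_{i_s}^{\pm1/2}$ where $i_1,\dots,i_s$ are the edges of $T$ that a certain $\mathbb{Z}/2\mathbb{Z}$-cycle $\eta_l$ crosses, this cycle being read off from the $\mathbf{g}$-vectors via Lemma~\ref{lem:factorization}. Combined with the similar description of the $\mathbf{g}$-vector/$B$-exponent data from Lemma~\ref{lem:Adecomposition}, evaluating on the tropical point $m$ amounts to counting weighted intersections of $\eta_l$ with the lamination $m$ along the edges of $T$. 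The second task is the $F$-polynomial part: each $F_{c_i}$ is, by the matrix formula of~\cite{MW} (Appendix~\ref{sec:ClusterAlgebrasAssociatedToSurfaces}), a sum over perfect matchings of a snake graph $\bar G_{T,c_i}$, and the tropicalization $\max$ picks out the matching whose exponent vector is ``largest'' against $m$. The cluster-algebraic fact I will rely on is that the tropical value of such an $F$-polynomial at $m$ computes, up to the monomial correction already accounted for, the number of intersection points of the arc $\tilde c_i$ (or $\tilde c_i^\circ$) with the lifted curves of $m$, read between the distinguished curves chosen near the two endpoints — exactly the quantities $a_{c_i}$ and $a_{c_i}^\circ$ appearing in Definition~\ref{def:intersectionD}(1). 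Adding these contributions over odd and even $i$ and combining with the monomial part should collapse the whole expression to $k\bigl(\sum_{i\text{ even}}a_{c_i}^\circ-\sum_{i\text{ odd}}a_{c_i}\bigr)$ with $k=1$.

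The cleanest way to organize this is probably to reduce to the already-established classical case: the functions $A_{c_i}$ and $A_{c_i}^\circ$ are, on the respective polygons $P$ and $P^\circ$, cluster $\mathcal{A}$-variables, and their tropicalizations are (half) the lamination intersection numbers by the results of~\cite{dual} and the definition of the $a$-coordinates for bounded laminations (Proposition following the definition of $\mathcal{A}_L(S,\mathbb{Q})$). Since $\mathbb{I}_{\mathcal{D}}(l)(m)$ is literally $\prod_{i\text{ even}}A_{c_i}^\circ/\prod_{i\text{ odd}}A_{c_i}$ by its very definition, tropicalizing that expression directly gives $\sum_{i\text{ even}}(A_{c_i}^\circ)^t-\sum_{i\text{ odd}}(A_{c_i})^t=\sum_{i\text{ even}}a_{c_i}^\circ-\sum_{i\text{ odd}}a_{c_i}$, provided the tropical points feeding into the two factors are the ones determined consistently by $m$ via the gluing map $\varphi$ of Theorem~\ref{thm:doubleproperties}. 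This bypasses the $F$-polynomial bookkeeping entirely and isolates the real content.

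\textbf{Main obstacle.} The delicate point is the matching of \emph{distinguished curves} across the two copies: in the definition of $\mathcal{I}_{\mathcal{D}}(l,m)$ one winds the ends of $c_i$ around the holes and chooses distinguished curves near the boundary, and the claim that the number $a_{c_i}^\circ-a_{c_i}$ is independent of these choices (and matches what the tropicalized $\mathbb{I}_{\mathcal{D}}$ produces) rests on the correspondence of Lemma~\ref{lem:correspondence2} between curve-endpoints on $\partial\tilde S$ and $\partial\tilde S^\circ$. I expect the bulk of the work to be verifying that the basepoint/distinguished-curve choices implicit in the definition of the $\mathcal{A}$-coordinate tropicalizations on $P$ and $P^\circ$ can be taken compatibly, so that the difference of tropical values is exactly the signed count in Definition~\ref{def:intersectionD}(1), with no leftover constant. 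Once that compatibility is pinned down — essentially by the same monodromy argument (``$W_{i_s,\Delta_s}=W_{i_t,\Delta_t}$'') used in the proof of Lemma~\ref{lem:factorization} — the identity $\mathcal{I}_{\mathcal{D}}(l,m)=(\mathbb{I}_{\mathcal{D}}(l))^t(m)$ for weight-one $l$ follows, and the general-weight statement is immediate by homogeneity of degree $k$ on both sides.
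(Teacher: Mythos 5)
Your ``cleanest way'' is exactly the paper's proof: it writes $\mathbb{I}_{\mathcal{D}}(l)(e^{Cm})$ as the ratio $\mathbb{I}_{\mathcal{A}}(u^\circ)(e^{Cv^\circ})/\mathbb{I}_{\mathcal{A}}(u)(e^{Cv})$ for laminations $u,u^\circ,v,v^\circ$ built on the polygons $P$ and $P^\circ$, handles the distinguished-curve compatibility via the lift $\tilde{\iota}$ of Lemma~\ref{lem:correspondence2} just as you anticipate, and then invokes the classical identity $\mathcal{I}=(\mathbb{I}_{\mathcal{A}})^t$. The $F$-polynomial bookkeeping of your first two paragraphs is not used, and the weight-$k$ extension is deferred to the multiplicativity argument in the subsequent theorem.
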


\begin{proof}
Let $m\in\mathcal{D}_L(S,\mathbb{Z})$ be a doubled lamination with coordinates $b_j$ and~$x_j$ for $j\in J$, and let~$e^{Cm}$ denote the point of $\mathcal{D}^+(S)$ with coordinates 
\[
B_j=e^{Cb_j}
\]
and
\[
X_j=e^{Cx_j}
\]
for $j\in J$. This point $e^{Cm}\in\mathcal{D}^+(S)$ determines hyperbolic structures on~$S$ and $S^\circ$, so we can view the universal covers $\tilde{S}$ and~$\tilde{S}^\circ$ as subsets of the hyperbolic plane.

Cut the intersecting curve $l$ along the image of $\partial S$ and then, using the natural map $S^\circ\rightarrow S$, draw all of the resulting curves on $S$. These curves form a cycle on $S$ which we can lift to a path in the universal cover $\tilde{S}$ connecting points on $\partial\mathbb{H}$. Choose an ideal triangulation $T$ of $S$, and lift this to a triangulation $\tilde{T}$ of $\tilde{S}$. Let $P$ be a triangulated ideal polygon, formed from the triangles of $\tilde{T}$, that contains all of the triangles that intersect this path. The curves of the path that project to curves from $S$ determine a lamination $u\in\mathcal{X}_L(P)$.

In exactly the same way, we can lift $S^\circ$ to its universal cover $\tilde{S}^\circ$, and the triangulation $\tilde{T}$ and polygon $P$ determine a triangulation $\tilde{T}^\circ$ and a polygon~$P^\circ$ in~$\tilde{S}^\circ$. The curves of the path in $P^\circ$ that project to curves from $S^\circ$ determine a lamination $u^\circ\in\mathcal{X}_L(P^\circ)$.

Now consider the portion of $m$ that intersects $S$. Lifting these curves to the universal cover, we get a collection of (possibly infinitely many) curves on the polygon~$P$. Let $p$ be any vertex of $P$. If there are infinitely many curves connecting the sides that meet at $p$, then we can choose one such curve $\alpha_p$ and delete all of the curves between~$\alpha_p$ and the point $p$. By doing this for each vertex, we remove all but finitely many curves and get a lamination $v\in\mathcal{A}_L(P)$. 

In the same way, we can consider the portion of $m$ that intersects $S^\circ$. Lifting these curves to the universal cover, we get a collection of (possibly infinitely many) curves on the polygon~$P^\circ$. The distinguished curves $\alpha_p$ that we chose near the vertices of $P$ determine corresponding curves $\alpha_p^\circ$ near the the vertices of $P^\circ$. For each $p$, delete all of the curves on $P^\circ$ between $\alpha_p^\circ$ and $p$. In this way, we remove all but finitely many curves and get a lamination $v^\circ\in\mathcal{A}_L(P^\circ)$.

For any edge $i$ of the triangulation of~$P$, write $a_i$ for the corresponding coordinate of the lamination $v$. For any edge $i$ of the triangulation of~$P^\circ$, write $a_i^\circ$ for the corresponding coordinate of the lamination $v^\circ$. Consider the point $e^{Cv}$ of $\mathcal{A}^+(P)$ with coordinates $A_i=e^{Ca_i}$ and the point $e^{Cv^\circ}$ of $\mathcal{A}^+(P^\circ)$ with coordinates $A_i^\circ=e^{Ca_i^\circ}$. Since the cross ratios of the numbers $A_i$ are simply the coordinates $X_j$ introduced above, we know that the point $e^{Cv}\in\mathcal{A}^+(P)$ determines the same hyperbolic structure on $P$ that we get from the point~$e^{Cm}$. Similarly, the point $e^{Cv^\circ}\in\mathcal{A}^+(P^\circ)$ determines the same hyperbolic structure on~$P^\circ$ that we get from~$e^{Cm}$. It follows that 
\[
\mathbb{I}_{\mathcal{D}}(l)(e^{Cm})=\frac{\mathbb{I}_{\mathcal{A}}(u^\circ)(e^{Cv^\circ})}{\mathbb{I}_{\mathcal{A}}(u)(e^{Cv})}.
\]
Using the definition of tropicalization and the result from the classical theory of the cluster $K_2$- and Poisson varieties, we see that 
\begin{align*}
(\mathbb{I}_{\mathcal{D}}(l))^t(m) &= \lim_{C\rightarrow\infty}\frac{\log\mathbb{I}_{\mathcal{D}}(l)(e^{Cm})}{C} \\
&= \lim_{C\rightarrow\infty}\frac{\log\mathbb{I}_{\mathcal{A}}(u^\circ)(e^{Cv^\circ})}{C} - \frac{\log\mathbb{I}_{\mathcal{A}}(u)(e^{Cv})}{C}\\
&= \mathcal{I}_{\mathcal{D}}(l,m)
\end{align*}
as desired.
\end{proof}

\begin{proposition}
\label{prop:loopintersection}
Let $l$ be a loop of weight $k$ on $S_{\mathcal{D}}$ which is not an intersecting curve. Then $\mathcal{I}_{\mathcal{D}}(l,m)=(\mathbb{I}_{\mathcal{D}}(l))^t(m)$ for any~$m\in\mathcal{D}_L(S,\mathbb{Z})$.
\end{proposition}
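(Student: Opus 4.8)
The plan is to mirror the argument of Proposition~\ref{prop:intersectingintersection}, but now using the explicit monomial-plus-polynomial formulas for $\mathbb{I}_{\mathcal{D}}(l)$ obtained in Propositions~\ref{prop:Scircloop}, \ref{prop:Scirchole}, \ref{prop:Sloop}, and~\ref{prop:Shole}, rather than passing through the map $\varphi$ and the classical $\mathbb{I}_{\mathcal{A}}$. Since $\mathbb{I}_{\mathcal{D}}$ is multiplicative and $\mathcal{I}_{\mathcal{D}}(\cdot,m)$ is additive over disjoint curves by Definition~\ref{def:intersectionD}, it suffices to treat a single loop $l$ of weight $k$, and then the four cases of the definition of $\mathbb{I}_{\mathcal{D}}$ for non-intersecting curves dictate four parallel sub-cases.

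First I would fix $m\in\mathcal{D}_L(S,\mathbb{Z})$ with coordinates $b_j,x_j$ ($j\in J$) and, as in the proof of Proposition~\ref{prop:intersectingintersection}, form the point $e^{Cm}\in\mathcal{D}^+(S)$ with $B_j=e^{Cb_j}$, $X_j=e^{Cx_j}$, giving hyperbolic structures on $S$ and $S^\circ$. The key observation is that tropicalization of the formula in Theorem~\ref{thm:main} distributes: the polynomial factors $F_c$ tropicalize to piecewise-linear functions whose value I must compute, while the monomial factors $B_1^{g_{l,1}}\cdots X_n^{h_{l,n}}$ tropicalize simply to $\sum g_{l,i}b_i+\sum h_{l,i}x_i$. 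So I would argue case by case. When $l$ lies in $S^\circ$ and is not homotopic into the simple lamination (Proposition~\ref{prop:Scircloop}), $\mathbb{I}_{\mathcal{D}}(l)=F_c(\widehat X_1,\dots,\widehat X_n)B^{\mathbf g}X^{\mathbf h}$, and, exactly as in the classical computation of $(\mathbb{I}_{\mathcal{A}})^t$ for an ordinary loop, the tropicalization of $F_c$ evaluated at the $\widehat X_i=X_i\prod_j B_j^{\varepsilon_{ij}}$ computes $k/2$ times the geometric intersection number of the lift of $l$ with the curves of $m$ in $S^\circ$; this matches clause~(2) of Definition~\ref{def:intersectionD}. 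When $l$ lies in $S$ (Proposition~\ref{prop:Sloop}), $\mathbb{I}_{\mathcal{D}}(l)=F_c(X_1,\dots,X_n)^{-1}X^{\mathbf h}$, whose tropicalization is \emph{minus} the analogous intersection number computed in $S$, again matching clause~(2). The two cases where $l$ is homotopic into the simple lamination (Propositions~\ref{prop:Scirchole} and~\ref{prop:Shole}) are pure monomials in the $\widehat X_i$ or $X_i$ respectively, and their tropicalizations are $\pm$ sums of the relevant $x_j$, which by the counting conventions is $\pm k/2$ times the number of intersections of $l$ with $m$ along that boundary loop, matching clause~(3). Finally I would invoke Definition~\ref{def:equivariantmap} and the equivariance built into Definition~\ref{def:intersectionD}(3)--(4) to remove the standing orientation hypothesis.

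Concretely, the cleanest route for the polynomial factors is to reuse the identification established inside the proof of Proposition~\ref{prop:intersectingintersection}: a loop $c$ in $S^\circ$ (resp.\ $S$) not meeting $\partial S$ determines a lamination in $\mathcal{X}_L$ of a polygon $P^\circ$ (resp.\ $P$), and one has $\mathbb{I}_{\mathcal{D}}(c)(e^{Cm})=\mathbb{I}_{\mathcal{A}}(u^\circ)(e^{Cv^\circ})$ (resp.\ its reciprocal), where $u^\circ,v^\circ$ are the induced laminations; then the classical identity $\mathcal{I}(l,m)=(\mathbb{I}_{\mathcal{A}}(l))^t(m)$ from the $\mathcal{A}$-$\mathcal{X}$ duality section does the rest. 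So much of the geometric work is already available, and I would state the loop case as reducing to the polygon computation plus a bookkeeping of signs.

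The main obstacle, as I see it, is purely combinatorial: verifying that the half-integer exponents $h_{l,i}$ extracted in the proofs of Propositions~\ref{prop:Scircloop}--\ref{prop:Shole} really do tropicalize to exactly $k/2$ (with the correct sign) times the minimal geometric intersection number, including the subtlety that crossings at a shared point of $\partial\mathbb{H}$ must be counted with multiplicity and that winding around holes in the direction prescribed by $m$'s orientations is what produces the correct ``hole'' contributions. This is the step where one must be careful that the deformation of the curves of $m$ by spiraling into the holes, which is used to define $\mathcal{I}_{\mathcal{D}}$, is compatible with the deformation used to define the coordinates $X_j$ at $e^{Cm}$; I expect this to require the same kind of argument already used in the proof of Proposition~\ref{prop:intersectingintersection} and in Step~1 of the proof of Theorem~\ref{thm:integralDlam}, namely tracking lifted curves to the universal cover and comparing intersection counts between distinguished curves. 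Once that compatibility is in hand, the rest is assembling the cases and appealing to multiplicativity and additivity.
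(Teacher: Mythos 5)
Your overall strategy is the paper's: identify $\mathbb{I}_{\mathcal{D}}(l)(e^{Cm})$ with $\mathbb{I}_{\mathcal{A}}(u)(e^{Cv})^{\pm1}$ for suitable laminations on the cut surface and then invoke the classical identity $\mathcal{I}(u,v)=(\mathbb{I}_{\mathcal{A}}(u))^t(v)$ from the $\mathcal{A}$--$\mathcal{X}$ duality, together with the sign and orientation conventions of Definition~\ref{def:intersectionD}. The direct tropicalization of the formulas in Propositions~\ref{prop:Scircloop}--\ref{prop:Shole} that you sketch first is not needed and would force you to re-derive the classical duality by hand, since the claim that the tropicalization of $F_c$ computes the intersection number \emph{is} that duality.

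There is, however, one concrete error in your ``cleanest route.'' You assert that a closed loop $c$ lying in $S^\circ$ (or $S$) and not meeting $\partial S$ determines a lamination on a triangulated ideal polygon $P^\circ$, as in the proof of Proposition~\ref{prop:intersectingintersection}. It does not: the polygon construction works for intersecting curves precisely because cutting along $\partial S$ turns them into arcs, whose lifts form a path contained in finitely many triangles of $\tilde{T}$. A noncontractible closed loop lifts to a bi-infinite path invariant under a nontrivial deck transformation, and it cannot be represented by a lamination on a simply connected polygon. The correct reduction, which is what the paper does, stays on the full cut surfaces: take $u$ to be the point of $\mathcal{A}_L(S^\circ)$ (or $\mathcal{A}_L(S)$, after shifting $l$ to the appropriate side when $l$ is homotopic to a loop of the simple lamination) given by $l$ itself, take $v$ to be the point of $\mathcal{X}_L(S^\circ)$ (or $\mathcal{X}_L(S)$) cut out of $m$, and check from the definitions that $\mathbb{I}_{\mathcal{D}}(l)(e^{Cm})=\mathbb{I}_{\mathcal{A}}(u)(e^{Cv})^{\pm1}$; then the classical result applied on that surface finishes the argument. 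Relatedly, the proposition carries no standing orientation hypothesis to remove and the $\kappa_i$ factors of Definition~\ref{def:equivariantmap} play no role here: the orientation cases for loops homotopic to the simple lamination are already built into clause~(3) of both the definition of $\mathbb{I}_{\mathcal{D}}$ and Definition~\ref{def:intersectionD}, and are handled by choosing which of $S$, $S^\circ$ the lamination $u$ lives on.
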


\begin{proof}
Let $m\in\mathcal{D}_L(S,\mathbb{Z})$ be a $\mathcal{D}$-lamination with coordinates $b_j$ and~$x_j$ ($j\in J$), and let $e^{Cm}$ denote the point of $\mathcal{D}^+(S)$ with coordinates $B_j=e^{Cb_j}$ and~$X_j=e^{Cx_j}$.

If $l$ lies entirely on $S^\circ$ and is not homotopic to a curve in the simple lamination, then we define $u$ to be the point of $\mathcal{A}_L(S^\circ)$ obtained by cutting $S_{\mathcal{D}}$. On the other hand, suppose $l$ is homotopic to a loop $\gamma_i$ in the simple lamination and $l$ and~$m$ provide the same orientation for this loop. If the orientation of~$\gamma_i$ agrees with the orientation of~$S^\circ$, then we define $u$ to be the point of $\mathcal{A}_L(S)$ obtained by shifting $l$ onto the surface $S$ and cutting $S_{\mathcal{D}}$. If the orientation agrees with that of $S$, then we define $u$ to be the point of $\mathcal{A}_L(S^\circ)$ obtained by shifting $l$ onto $S^\circ$ and cutting.

Now let $v$ be the point of $\mathcal{X}_L(S)$ or $\mathcal{X}_L(S^\circ)$ that we get by drawing the lamination $m$ on $S_{\mathcal{D}}$ and cutting. Let $e^{Cv}$ denote the point of $\mathcal{X}^+(S)$ or $\mathcal{X}^+(S^\circ)$ whose coordinates are obtained by scaling the coordinates of $v$ by $C$ and exponentiating. By construction, we have 
\[
\mathbb{I}_{\mathcal{D}}(l)(e^{Cm})=\mathbb{I}_{\mathcal{A}}(u)(e^{Cv}).
\]
It follows that 
\begin{align*}
(\mathbb{I}_{\mathcal{D}}(l))^t(m) &= \lim_{C\rightarrow\infty}\frac{\log\mathbb{I}_{\mathcal{D}}(l)(e^{Cm})}{C} \\
&= \lim_{C\rightarrow\infty}\frac{\log\mathbb{I}_{\mathcal{A}}(u)(e^{Cv})}{C} \\
&= (\mathbb{I}_{\mathcal{A}}(u))^t(v).
\end{align*}
Applying the result from the classical theory of the $\mathcal{A}$- and $\mathcal{X}$-spaces, we see that the last expression is $k/2$ times the minimal number of intersections between $l$ and~$m$.

If $l$ lies entirely on $S$ and is not homotopic to a curve in the simple lamination, then we define $u$ to be the point of $\mathcal{A}_L(S)$ obtained by cutting $S_{\mathcal{D}}$. On the other hand, suppose $l$ is homotopic to a loop $\gamma_i$ in the simple lamination and $l$ and $m$ provide different orientations for this loop. If the orientation that $l$ provides for~$\gamma_i$ agrees with the orientation of~$S$, then we define $u$ to be the point of $\mathcal{A}_L(S)$ obtained by shifting $l$ onto the surface $S$ and cutting. If the orientation agrees with that of $S^\circ$, then we define $u$ to be the point of $\mathcal{A}_L(S^\circ)$ obtained by shifting~$l$ onto~$S^\circ$ and cutting.

Let $v$ and $e^{Cv}$ be as above. Then by construction, we have 
\[
\mathbb{I}_{\mathcal{D}}(l)(e^{Cm})=\mathbb{I}_{\mathcal{A}}(u)(e^{Cv})^{-1}
\]
so that 
\begin{align*}
(\mathbb{I}_{\mathcal{D}}(l))^t(m) &= \lim_{C\rightarrow\infty}\frac{\log\mathbb{I}_{\mathcal{D}}(l)(e^{Cm})}{C} \\
&= \lim_{C\rightarrow\infty}\frac{\log\mathbb{I}_{\mathcal{A}}(u)(e^{Cv})^{-1}}{C} \\
&= -(\mathbb{I}_{\mathcal{A}}(u))^t(v).
\end{align*}
Applying the result from the classical theory once again, we see that this last expression is minus $k/2$ times the minimal number of intersections between $l$ and~$m$.
\end{proof}

\begin{theorem}
Let $l$,~$m\in\mathcal{D}_L(S,\mathbb{Z})$. If $\gamma_i$ is a component of~$\gamma$ that meets a curve of $l$, assume that the orientation that $l$ provides for $\gamma_i$ agrees with the orientation of~$S$. Then $\mathcal{I}_{\mathcal{D}}(l,m)=(\mathbb{I}_{\mathcal{D}}(l))^t(m)$.
\end{theorem}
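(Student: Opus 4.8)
The plan is to reduce the general statement to the two cases already treated in Propositions~\ref{prop:intersectingintersection} and~\ref{prop:loopintersection} by exploiting the multiplicativity of both pairings together with the fact that tropicalization converts products to sums. Concretely, given $l\in\mathcal{D}_L(S,\mathbb{Z})$ satisfying the orientation hypothesis, I would first write $l=\sum_i k_i l_i$ where the $l_i$ are the distinct curves appearing in $l$, each taken with multiplicity at most once, and $k_i\in\mathbb{Z}$. By the defining clause of $\mathbb{I}_\mathcal{D}$ we have $\mathbb{I}_\mathcal{D}(l)=\prod_i\mathbb{I}_\mathcal{D}(k_i l_i)$, and by the last clause of Definition~\ref{def:intersectionD} together with the obvious fact that scaling a curve's weight by $k$ scales its contribution to $\mathcal{I}_\mathcal{D}$ by $k$, we get $\mathcal{I}_\mathcal{D}(l,m)=\sum_i \mathcal{I}_\mathcal{D}(k_i l_i,m)=\sum_i k_i\,\mathcal{I}_\mathcal{D}(l_i,m)$ for any $m\in\mathcal{D}_L(S,\mathbb{Z})$.

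Next I would invoke the general principle that tropicalization is additive on products of subtraction-free rational functions: if $F=\prod_i F_i^{k_i}$ then $F^t=\sum_i k_i F_i^t$. This follows directly from the definition $F^t(u)=\lim_{C\to\infty}C^{-1}\log F(e^{Cu})$ and the identity $\log(F_i^{k_i})=k_i\log F_i$. Applying this to $\mathbb{I}_\mathcal{D}(l)=\prod_i \mathbb{I}_\mathcal{D}(l_i)^{k_i}$ — here I should note that each $\mathbb{I}_\mathcal{D}(l_i)$ is indeed subtraction-free in the coordinates $B_j$, $X_j$, which is visible from the explicit formulas of Theorem~\ref{thm:main} since all $F$-polynomials arising there have nonnegative coefficients (being $F$-polynomials of cluster algebras, or traces of products of matrices with nonnegative entries) — yields $(\mathbb{I}_\mathcal{D}(l))^t(m)=\sum_i k_i\,(\mathbb{I}_\mathcal{D}(l_i))^t(m)$.

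Now each $l_i$ is a single curve, and under the standing orientation hypothesis it falls into exactly one of the cases covered by the two propositions: either $l_i$ is an intersecting curve, in which case $\mathcal{I}_\mathcal{D}(l_i,m)=(\mathbb{I}_\mathcal{D}(l_i))^t(m)$ by Proposition~\ref{prop:intersectingintersection} (applied to the weight-one curve underlying $l_i$ and rescaling, or by noting that $l_i$ has weight one in the decomposition), or $l_i$ is a loop not equal to an intersecting curve — lying in $S$, in $S^\circ$, or homotopic to a component of the simple lamination — in which case Proposition~\ref{prop:loopintersection} gives $\mathcal{I}_\mathcal{D}(l_i,m)=(\mathbb{I}_\mathcal{D}(l_i))^t(m)$. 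Summing these equalities with coefficients $k_i$ and comparing with the two displayed identities above completes the proof.

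The argument is essentially bookkeeping, so there is no serious obstacle; the one point requiring a little care is the interplay between weights and multiplicativity. Specifically, I must make sure that ``$\mathbb{I}_\mathcal{D}(k_i l_i)$'' and ``$\mathbb{I}_\mathcal{D}(l_i)^{k_i}$'' agree — this is built into the definition of $\mathbb{I}_\mathcal{D}$ on curves of higher weight (taking $k$th powers of traces, $k$th powers of eigenvalues, $k$th powers of the flag ratio) — and likewise that $\mathcal{I}_\mathcal{D}(k_i l_i,m)=k_i\mathcal{I}_\mathcal{D}(l_i,m)$, which is built into Definition~\ref{def:intersectionD}. Once these compatibilities are recorded, the theorem follows formally from the two propositions and the additivity of tropicalization under products. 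A secondary subtlety is that Proposition~\ref{prop:intersectingintersection} is stated for a weight-one intersecting curve; to handle $k_i>1$ I would simply observe that raising $\mathbb{I}_\mathcal{D}$ to the $k_i$th power and multiplying the intersection number by $k_i$ are compatible with tropicalization, exactly as in the loop case handled in Proposition~\ref{prop:loopintersection}.
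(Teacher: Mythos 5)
Your proposal is correct and follows essentially the same route as the paper: decompose $l$ into its constituent curves, apply Propositions~\ref{prop:intersectingintersection} and~\ref{prop:loopintersection} to each, and conclude using the additivity of $\mathcal{I}_{\mathcal{D}}$, the multiplicativity of $\mathbb{I}_{\mathcal{D}}$, and the fact that tropicalization turns products into sums. Your extra remark about reconciling weights with the weight-one hypothesis of Proposition~\ref{prop:intersectingintersection} is a reasonable point of care that the paper handles implicitly via its convention of representing intersecting curves by weight-one components.
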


\begin{proof}
Let $l\in\mathcal{D}_L(S,\mathbb{\mathbb{Z}})$. Then we can write $l=l_1+\dots+l_d$ where $l_1,\dots,l_d$ are mutually nonintersecting and nonhomotopic closed curves. We then have 
\begin{align*}
\mathcal{I}_{\mathcal{D}}(l,m) &= \mathcal{I}_{\mathcal{D}}(l_1,m)+\dots+\mathcal{I}_{\mathcal{D}}(l_d,m) \\
&= (\mathbb{I}_{\mathcal{D}}(l_1))^t(m)+\dots+(\mathbb{I}_{\mathcal{D}}(l_d))^t(m) \\
&= (\mathbb{I}_{\mathcal{D}}(l_1)\dots\mathbb{I}_{\mathcal{D}}(l_d))^t(m) \\
&= (\mathbb{I}_{\mathcal{D}}(l))^t(m)
\end{align*}
by Propositions~\ref{prop:intersectingintersection} and~\ref{prop:loopintersection} and the multiplicativity and additivity of the maps $\mathbb{I}_{\mathcal{D}}$ and~$\mathcal{I}_{\mathcal{D}}$.
\end{proof}

We can naturally extend $\mathcal{I}_{\mathcal{D}}$ to a pairing $\mathcal{D}_L(S,\mathbb{Z})\times\mathcal{D}_L(S,\mathbb{Z})\rightarrow\mathbb{Q}$ using the tropicalization of the map $\tau$ from~\cite{Dlam}.

\chapter{Duality of quantum cluster varieties}
\label{ch:DualityOfQuantumClusterVarieties}

The cluster Poisson variety and symplectic double can be canonically quantized. In this chapter, we define a canonical map from the tropical integral points of the cluster $K_2$-variety into the quantized algebra of regular functions on the cluster Poisson variety. We define a similar map from the tropical integral points of the symplectic double into its quantized algebra of rational functions. All of the cluster varieties considered here correspond to a disk with finitely many marked points on its boundary. In~\cite{AK}, similar results were obtained for cluster varieties associated to a punctured surface.

\section{Quantization of cluster varieties}

We have already discussed the main elements of the theory of quantum cluster varieties in the introduction. For convenience, we restate the main definitions here. We begin with a notion of seed which is equivalent to the one in~Definition~\ref{def:seed}.

\begin{definition}
A \emph{seed} $\mathbf{i}=(\Lambda,\{e_i\}_{i\in I},\{e_j\}_{j\in J},(\cdot,\cdot))$ is a quadruple where 
\begin{enumerate}
\item $\Lambda$ is a lattice with basis $\{e_i\}_{i\in I}$.
\item $\{e_j\}_{j\in J}$ is a subset of the basis.
\item $(\cdot,\cdot)$ is a $\mathbb{Z}$-valued skew-symmetric bilinear form on $\Lambda$.
\end{enumerate}
\end{definition}

A basis vector $e_i$ with $i\in I-J$ is said to be \emph{frozen}. Note that if we are given a seed, we can form a skew-symmetric integer matrix with entries $\varepsilon_{ij}=(e_i,e_j)$~($i$,~$j\in I$).

\begin{definition}
Let $\mathbf{i}=(\Lambda,\{e_i\}_{i\in I},\{e_j\}_{j\in J},(\cdot,\cdot))$ be a seed and $e_k$ ($k\in J$) a non-frozen basis vector. Then we define a new seed $\mathbf{i}'=(\Lambda',\{e_i'\}_{i\in I},\{e_j'\}_{j\in J},(\cdot,\cdot)')$ called the seed obtained by \emph{mutation} in the direction of $e_k$. It is given by $\Lambda'=\Lambda$, $(\cdot,\cdot)'=(\cdot,\cdot)$, and 
\[
e_i'=
\begin{cases}
-e_k & \mbox{if } i=k \\
e_i+[\varepsilon_{ik}]_+e_k & \mbox{if } i\neq k.
\end{cases}
\]
\end{definition}

\begin{definition}
The \emph{quantum dilogarithm} is the formal infinite product 
\[
\Psi^q(x)=\prod_{k=1}^\infty(1+q^{2k-1}x)^{-1}.
\]
\end{definition}

\begin{definition}
Let $\Lambda$ be a lattice equipped with a $\mathbb{Z}$-valued skew-symmetric bilinear form~$(\cdot,\cdot)$. Then the \emph{quantum torus algebra} is the noncommutative algebra over $\mathbb{Z}[q,q^{-1}]$ generated by variables $Y_v$ ($v\in\Lambda$) subject to the relations 
\[
q^{-(v_1,v_2)}Y_{v_1}Y_{v_2}=Y_{v_1+v_2}.
\]
\end{definition}

This definition allows us to associate to any seed $\mathbf{i}=(\Lambda,\{e_i\}_{i\in I},\{e_j\}_{j\in J},(\cdot,\cdot))$, a quantum torus algebra $\mathcal{X}_{\mathbf{i}}^q$. The set $\{e_j\}_{j\in J}$ provides a set of generators $X_j^{\pm1}$ given by $X_j=Y_{e_j}$ for this algebra. They obey the commutation relations 
\[
X_iX_j=q^{2\varepsilon_{ij}}X_jX_i.
\]
This algebra $\mathcal{X}_{\mathbf{i}}^q$ satisfies the Ore condition from ring theory, so we can form its noncommutative fraction field $\widehat{\mathcal{X}}_{\mathbf{i}}^q$. In addition to associating a quantum torus algebra to every seed, we use the quantum dilogarithm to construct a natural map $\widehat{\mathcal{X}}_{\mathbf{i}'}^q\rightarrow\widehat{\mathcal{X}}_{\mathbf{i}}^q$ whenever two seeds $\mathbf{i}$ and~$\mathbf{i}'$ are related by a mutation.

\Needspace*{3\baselineskip}
\begin{definition}\mbox{}
\begin{enumerate}
\item The automorphism $\mu_k^\sharp:\widehat{\mathcal{X}}_{\mathbf{i}}^q\rightarrow\widehat{\mathcal{X}}_{\mathbf{i}}^q$ is given by conjugation with $\Psi^q(X_k)$:
\[
\mu_k^\sharp=\Ad_{\Psi^q(X_k)}.
\]

\item The isomorphism $\mu_k':\widehat{\mathcal{X}}_{\mathbf{i}'}^q\rightarrow\widehat{\mathcal{X}}_{\mathbf{i}}^q$ is induced by the natural lattice map $\Lambda'\rightarrow\Lambda$.

\item The mutation map $\mu_k^q:\widehat{\mathcal{X}}_{\mathbf{i}'}^q\rightarrow\widehat{\mathcal{X}}_{\mathbf{i}}^q$ is the composition $\mu_k^q=\mu_k^\sharp\circ\mu_k'$.
\end{enumerate}
\end{definition}

If $\mathbf{i}=(\Lambda,\{e_i\}_{i\in I},\{e_j\}_{j\in J},(\cdot,\cdot))$ is any seed, then we can form the ``double'' $\Lambda_{\mathcal{D}}=\Lambda_{\mathcal{D},\mathbf{i}}$ of the lattice $\Lambda$ given by the formula 
\[
\Lambda_{\mathcal{D}}=\Lambda\oplus\Lambda^\vee
\]
where $\Lambda^\vee=\Hom(\Lambda,\mathbb{Z})$. The basis $\{e_i\}$ for $\Lambda$ provides a dual basis $\{f_i\}$ for $\Lambda^\vee$, and hence we have a basis $\{e_i,f_i\}$ for $\Lambda_{\mathcal{D}}$. Moreover, there is a natural skew-symmetric bilinear form $(\cdot,\cdot)_{\mathcal{D}}$ on~$\Lambda_{\mathcal{D}}$ given by the formula 
\[
\left((v_1,\varphi_1),(v_2,\varphi_2)\right)_{\mathcal{D}}=(v_1,v_2)+\varphi_2(v_1)-\varphi_1(v_2).
\]
We can associate to these data a quantum torus algebra~$\mathcal{D}_{\mathbf{i}}^q$. If we let $X_i$ and $B_i$ denote the generators associated to the basis elements $e_i$ and $f_i$, respectively, then we have the commutation relations 
\[
X_iX_j=q^{2\varepsilon_{ij}}X_jX_i, \quad B_iB_j=B_jB_i, \quad X_iB_j=q^{2\delta_{ji}}B_jX_i.
\]
We will write $\widehat{\mathcal{D}}_{\mathbf{i}}^q$ for the (noncommutative) fraction field of~$\widehat{\mathcal{D}}_{\mathbf{i}}^q$.

\Needspace*{3\baselineskip}
\begin{definition}\mbox{}
\begin{enumerate}
\item The automorphism $\mu_k^\sharp:\widehat{\mathcal{D}}_{\mathbf{i}}^q\rightarrow\widehat{\mathcal{D}}_{\mathbf{i}}^q$ is given by 
\[
\mu_k^\sharp=\Ad_{\Psi^q(X_k)/\Psi^q(\widehat{X}_k)}.
\]

\item The isomorphism $\mu_k':\widehat{\mathcal{D}}_{\mathbf{i}'}^q\rightarrow\widehat{\mathcal{D}}_{\mathbf{i}}^q$ is induced by the natural lattice map $\Lambda_{\mathcal{D},\mathbf{i}'}\rightarrow\Lambda_{\mathcal{D},\mathbf{i}}$.

\item The mutation map $\mu_k^q:\widehat{\mathcal{D}}_{\mathbf{i}'}^q\rightarrow\widehat{\mathcal{D}}_{\mathbf{i}}^q$ is the composition $\mu_k^q=\mu_k^\sharp\circ\mu_k'$.
\end{enumerate}
\end{definition}

Notice that the generators $X_i^{\pm1}$ span a subalgebra of $\mathcal{D}_{\mathbf{i}}^q$ which is isomorphic to the quantum torus algebra $\mathcal{X}_{\mathbf{i}}^q$ defined previously. Moreover, since $X_k$ and $\widehat{X}_k$ commute, the restriction of~$\mu_k^q$ to this subalgebra coincides with the previous map $\mu_k^q$.

The proof of the following theorem can be found in Appendix~\ref{ch:DerivationOfTheClassicalMutationFormulas}.

\begin{theorem}
\label{thm:transformation}
The map $\mu_k^q$ is given on generators by the formulas 
\[
\mu_k^q(B_i')=
\begin{cases}
(qX_k\mathbb{B}_k^++\mathbb{B}_k^-)B_k^{-1}(1+q^{-1}X_k)^{-1} & \mbox{if } i=k \\
B_i & \mbox{if } i\neq k
\end{cases}
\]
and
\[
\mu_k^q(X_i')=
\begin{cases}
X_i\prod_{p=0}^{|\varepsilon_{ik}|-1}(1+q^{2p+1}X_k) & \mbox{if } \varepsilon_{ik}\leq0 \mbox{ and } i\neq k \\
X_iX_k^{\varepsilon_{ik}}\prod_{p=0}^{\varepsilon_{ik}-1}(X_k+q^{2p+1})^{-1} & \mbox{if } \varepsilon_{ik}\geq0 \mbox{ and } i\neq k \\
X_k^{-1} & \mbox{if } i=k.
\end{cases}
\]
\end{theorem}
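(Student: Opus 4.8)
The statement is an explicit computation of the mutation map $\mu_k^q = \mu_k^\sharp \circ \mu_k'$ on the generators $B_i'$ and $X_i'$, where $\mu_k^\sharp = \Ad_{\Psi^q(X_k)/\Psi^q(\widehat X_k)}$ and $\mu_k'$ is induced by the lattice isomorphism $\Lambda_{\mathcal D,\mathbf i'} \to \Lambda_{\mathcal D,\mathbf i}$. The plan is to split the work according to the three cases and to extract, in each case, the ``$\sharp$-part'' (conjugation by the quantum dilogarithm ratio) and the ``$'$-part'' (the monomial change of coordinates coming from the lattice map).

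First I would record the effect of $\mu_k'$ alone. The lattice mutation sends $e_i' \mapsto e_i + [\varepsilon_{ik}]_+ e_k$ for $i\neq k$ and $e_k' \mapsto -e_k$, while on the dual side $f_i'$ is determined by $\langle e_j', f_i'\rangle = \delta_{ij}$; working this out gives $f_i' \mapsto f_i$ for $i \neq k$ and $f_k' \mapsto -f_k - \sum_{i}[\varepsilon_{ik}]_+ f_i$ (up to the usual sign conventions, which I would pin down carefully). Translating to generators of the quantum torus via $Y_v$, and keeping track of the $q$-powers forced by the commutation relations $X_iX_j = q^{2\varepsilon_{ij}}X_jX_i$, $X_iB_j = q^{2\delta_{ji}}B_jX_i$, this yields closed monomial formulas for $\mu_k'(X_i')$ and $\mu_k'(B_i')$; in particular $\mu_k'(X_k') = X_k^{-1}$, which already settles the third case since conjugation by $\Psi^q(X_k)/\Psi^q(\widehat X_k)$ fixes $X_k$ (both $X_k$ and $\widehat X_k$ commute with $X_k$).

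Next I would compute the $\sharp$-part. The key input is the standard pentagon/conjugation identity for the quantum dilogarithm: $\Ad_{\Psi^q(X)}(Y) = Y \cdot (1+q X)(1+q^3 X)\cdots$ or its reciprocal, depending on whether $Y$ $q$-commutes with $X$ ``positively'' or ``negatively'', together with the fact that $\Psi^q(X)$ is invariant under $X \mapsto \cdots$ only up to the quasi-periodicity relation $\Psi^q(q^2 X) = (1+qX)\Psi^q(X)$. Applying $\Ad_{\Psi^q(X_k)}$ and $\Ad_{\Psi^q(\widehat X_k)^{-1}}$ to a monomial $Y_v$ produces a finite product $\prod_{p}(1+q^{2p+1}X_k)^{\pm 1}$ (and the analogous factor in $\widehat X_k$), the exponent being governed by $(e_k, v)$ on the $X$-side and by the pairing with $f$'s on the $\widehat X$-side. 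For $X_i'$ with $\varepsilon_{ik}\leq 0$ only the $X_k$-factors survive with the stated sign and range $p=0,\dots,|\varepsilon_{ik}|-1$; for $\varepsilon_{ik}\geq 0$ one gets instead $\prod_{p=0}^{\varepsilon_{ik}-1}(X_k + q^{2p+1})^{-1}$ after pulling out a power of $X_k$, which is exactly the reciprocal-type identity. For $B_k'$ the relevant $\widehat X_k$-dependence interacts with $\mathbb B_k^\pm = \prod_{i|(e_k,e_i)>0}B_i^{(e_k,e_i)}$ etc., and combining the monomial image from $\mu_k'$ with the dilogarithm factors (using $\widehat X_k = X_k \prod_j B_j^{\varepsilon_{kj}}$ and its commutation with $X_k$) collapses to $(qX_k\mathbb B_k^+ + \mathbb B_k^-)B_k^{-1}(1+q^{-1}X_k)^{-1}$; matching the $q$-powers here requires care with the ordering of the noncommuting factors $X_k$ and the $B_i$'s.

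The main obstacle will be bookkeeping rather than conceptual: tracking the precise powers of $q$ that arise from reordering $q$-commuting variables inside the dilogarithm products, and fixing sign conventions for the dual-basis mutation so that the final expressions are the asserted ones and not some $q \leftrightarrow q^{-1}$ or $B_i \leftrightarrow B_i^{-1}$ variant. I would organize this by first verifying the classical limit $q=1$ against Theorem~\ref{thm:introclassicallimit} (and Definition~\ref{def:mutation}) to fix all conventions, then reinstating the $q$-powers one factor at a time. Since the proof is stated in the excerpt to reside in Appendix~\ref{ch:DerivationOfTheClassicalMutationFormulas}, I would present it there as a sequence of lemmas: (i) the monomial formula for $\mu_k'$; (ii) the two conjugation identities for $\Ad_{\Psi^q}$; (iii) assembly of the three cases; with the verification that the output lands in the skew-field (no genuine power series) following from the finiteness of all the products obtained.
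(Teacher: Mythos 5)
Your proposal follows essentially the same route as the paper's Appendix~\ref{ch:DerivationOfTheClassicalMutationFormulas}: compute $\mu_k'$ from the dual-basis mutation of the lattice, compute $\mu_k^\sharp$ via the quasi-periodicity $\Psi^q(q^2x)=(1+qx)\Psi^q(x)$ together with the commutation lemma for power series, and assemble the three cases. The only slip is the sign in your dual-basis formula (the correct one is $f_k'=-f_k+\sum_j[-\varepsilon_{kj}]_+f_j$, not $-f_k-\sum_i[\varepsilon_{ik}]_+f_i$), but you flagged this yourself and your plan to calibrate against the classical limit would catch it.
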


\begin{definition}
We set 
\begin{align*}
\mathcal{X}^q &= \coprod_{\mathbf{i}'\in|\mathbf{i}|}\widehat{\mathcal{X}}_{\mathbf{i}'}^q/\text{identifications}, \\
\mathcal{D}^q &= \coprod_{\mathbf{i}'\in|\mathbf{i}|}\widehat{\mathcal{D}}_{\mathbf{i}'}^q/\text{identifications},
\end{align*}
where we are quotienting by the identifications given by the maps $\mu_k^q$.
\end{definition}

The sets $\mathcal{X}^q$ and $\mathcal{D}^q$ inherit natural algebra structures and in the classical limit $q=1$ are identified with the function fields of the cluster Poisson variety and symplectic double, respectively. For $q\neq1$, we think of $\mathcal{X}^q$ and $\mathcal{D}^q$ as the function fields of corresponding ``quantum cluster varieties''.

\section{The skein algebra}

In this section, we will define a version of the skein algebra, following~\cite{Muller}. Throughout this chapter, the term ``curve'' will refer to the following technical notion.

\begin{definition}
By a \emph{curve} in $S$, we mean an immersion $C\rightarrow S$ of a compact, connected, one-dimensional manifold $C$ with (possibly empty) boundary into $S$. We require that any boundary points of $C$ map to the marked points on $\partial S$ and no point in the interior of $C$ maps to a marked point. By a \emph{homotopy} of two curves $\alpha$ and $\beta$, we mean a homotopy of $\alpha$ and $\beta$ within the class of such curves. Two curves are said to be \emph{homotopic} if they can be related by homotopy and orientation-reversal.
\end{definition}

\begin{definition}
A \emph{multicurve} is an unordered finite set of curves which may contain duplicates. Two multicurves are \emph{homotopic} if there is a bijection between their constituent curves which takes each curve to a homotopic one.
\end{definition}

\begin{definition}
A \emph{framed link} in $S$ is a multicurve such that each intersection of strands is transverse and 
\begin{enumerate}
\item At each crossing, there is an ordering of the strands.
\item At each marked point, there is an equivalence relation on the strands and an ordering on equivalence classes of strands.
\end{enumerate}
By a \emph{homotopy} of framed links, we mean a homotopy through the class of multicurves with transverse intersections where the crossing data are not changed.
\end{definition}

When drawing pictures of framed links, we indicate the ordering of strands at a transverse intersection or marked point by making one strand pass ``over'' the other:
\[
\xygraph{
    !{0;/r2.5pc/:}
    [u(0.5)]!{\xoverv}
}
\qquad
\qquad
\xy 
(0,5)*{}; (5,-5)*{} **\crv{(0,5)&(5,-5)};
\POS?(1)*{\hole}="x"; 
(10,5)*{}; "x" **\crv{}; 
(0,-5)*{}; (10,-5)*{} **\dir{.}
\endxy
\]
(In the second of these pictures, the dotted line indicates a portion of $\partial S$ containing a marked point.)
If two strands are identified by the equivalence relation at a marked point, we indicate this as in the following picture: 
\[
\xy 
(0,5)*{}; (-5,-5)*{} **\crv{(0,5)&(-5,-5)};
(-10,5)*{}; (-5,-5)*{} **\crv{}; 
(0,-5)*{}; (-10,-5)*{} **\dir{.}
\endxy
\]

\begin{definition}
A multicurve with transverse intersections is said to be \emph{simple} if it has no interior intersections and no contractible curves. Note that any simple multicurve can be regarded as a framed link with the simultaneous ordering at each endpoint.
\end{definition}

\begin{definition}
\label{def:skein}
Let us write $\mathcal{K}(S)$ for the free $\mathbb{Z}[\omega,\omega^{-1}]$-module generated by equivalence classes of framed links in~$S$. The \emph{skein module} $\mathrm{Sk}_\omega(S)$ is defined as the quotient of $\mathcal{K}(S)$ by the following local relations. In each of these expressions, we depict the portion of a framed link over a small disk in~$S$. The framed links appearing in a given relation are assumed to be identical to each other outside of the small disk. In the last two relations, the dotted line segment represents a portion of $\partial S$. In these pictures, there may be additional undrawn curves ending at marked points, provided their order with respect to the drawn curves and each other does not change.
\begin{align*}
\xygraph{
    !{0;/r2.5pc/:}
    [u(0.5)]!{\xoverv}
}
\quad
&=
\quad
\omega^{-2}
\quad
\xygraph{
    !{0;/r2.5pc/:}
    [u(0.5)]!{\xunoverv}
}
\quad
+
\quad
\omega^2
\quad
\xygraph{
    !{0;/r2.5pc/:}
    [u(0.5)]!{\xunoverh}
} \\
\xygraph{
    !{0;/r2.5pc/:}
    !{\vcap-}
    !{\vcap}
}
\quad
&=
\quad
-(\omega^4+\omega^{-4}) \\
\omega
\quad
{\xy 
(0,5)*{}; (-5,-5)*{} **\crv{(0,5)&(-5,-5)};
\POS?(1)*{\hole}="x"; 
(-10,5)*{}; "x" **\crv{}; 
(0,-5)*{}; (-10,-5)*{} **\dir{.}
\endxy}
\quad
&=
\quad
{\xy 
(0,5)*{}; (-5,-5)*{} **\crv{(0,5)&(-5,-5)};
(-10,5)*{}; (-5,-5)*{} **\crv{}; 
(0,-5)*{}; (-10,-5)*{} **\dir{.}
\endxy}
\quad
=
\quad
\omega^{-1}
\quad
{\xy 
(0,5)*{}; (5,-5)*{} **\crv{(0,5)&(5,-5)};
\POS?(1)*{\hole}="x"; 
(10,5)*{}; "x" **\crv{}; 
(0,-5)*{}; (10,-5)*{} **\dir{.}
\endxy} \\
{\xy 
(-2,2)*{}; (-5,-5)*{} **\crv{(-2,2)&(-5,-5)};
\POS?(1)*{\hole}="x"; 
(-8,2)*{}; "x" **\crv{}; 
(-2,2)*{}; (-8,2)*{} **\crv{(0,8)&(-10,8)};
(0,-5)*{}; (-10,-5)*{} **\dir{.}
\endxy}
\quad
&=
\quad
{\xy 
(-2,2)*{}; (-5,-5)*{} **\crv{(-2,2)&(-5,-5)};
(-8,2)*{}; (-5,-5)*{} **\crv{}; 
(-2,2)*{}; (-8,2)*{} **\crv{(0,8)&(-10,8)};
(0,-5)*{}; (-10,-5)*{} **\dir{.}
\endxy}
\quad
=
\quad
{\xy 
(2,2)*{}; (5,-5)*{} **\crv{(2,2)&(5,-5)};
\POS?(1)*{\hole}="x"; 
(8,2)*{}; "x" **\crv{}; 
(2,2)*{}; (8,2)*{} **\crv{(0,8)&(10,8)};
(0,-5)*{}; (10,-5)*{} **\dir{.}
\endxy}
\quad
=
\quad 0
\end{align*}
If $K$ is a framed link in $S$, then the class of $K$ in $\mathrm{Sk}_\omega(S)$ will be denoted $[K]$.
\end{definition}

Suppose $K$ and $L$ are two links such that the union of the underlying multicurves has transverse intersections. Then the \emph{superposition} $K\cdot L$ is the framed link whose underlying multicurve is the union of the underlying multicurves of $K$ and $L$, with each strand of $K$ crossing over each strand of $L$ and all other crossings ordered as in~$K$ and~$L$.

\begin{proposition}[\cite{Muller}, Proposition~3.5]
\label{prop:superpositionhomotopy}
$[K\cdot L]$ depends only on the homotopy classes of~$K$ and~$L$.
\end{proposition}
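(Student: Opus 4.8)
The plan is to fix the framed link $L$ and prove that $[K\cdot L]$ is unchanged when $K$ is replaced by a homotopic framed link, then to run the symmetric argument with $K$ fixed — noting that in both halves every $K$--$L$ crossing is still ``$K$ over $L$'' — and to combine the two halves by transitivity of homotopy. By the definition of homotopy of multicurves it suffices to replace a single constituent curve $\alpha$ of $K$ by a homotopic curve, holding all other strands of $K$ and all of $L$ fixed.

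First I would pick a homotopy $\{\alpha_t\}$ from $\alpha$ to its replacement and put it in general position relative to the fixed strands. Since homotoping $\alpha$ alone need not preserve transversality of $K\cup L$, a small perturbation is required; afterwards $\alpha_t$ together with the fixed strands is a transverse multicurve for all but finitely many times $t$, and at each exceptional time the family undergoes one elementary local modification: a Reidemeister~II move (a bigon between $\alpha_t$ and some other strand, of $K$ or of $L$, appearing or collapsing), a Reidemeister~III move (a triple point), or a boundary move in which an endpoint of $\alpha_t$ slides along $\partial S$ past a marked point, or $\alpha_t$ is pushed across a marked point. No Reidemeister~I move is needed, since the definition of homotopy of framed links forbids altering the crossing data. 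Throughout the family, the orderings at all crossings are determined by the framed-link structures of $K$ and $L$ together with the superposition convention that every strand of $K$ lies over every strand of $L$, so each exceptional time is genuinely a local move on the single framed link $K\cdot L$.

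It then remains to verify that $[K\cdot L]\in\mathrm{Sk}_\omega(S)$ is invariant under each such local move, using the relations of Definition~\ref{def:skein}. For a Reidemeister~II move, resolving both crossings of the bigon by the crossing-resolution relation produces four terms which collapse — using the contractible-loop relation when a null-homotopic closed component is created — to the single bigon-free diagram; because $K$ lies uniformly over $L$, this computation is identical whether the bigon is a $K$--$K$ or a $K$--$L$ bigon. Reidemeister~III invariance follows from the crossing-resolution relation by the standard Kauffman-bracket computation, the superposition convention again guaranteeing that the over/under pattern in the triangle is the consistent one. The boundary moves are precisely what the last three relations of Definition~\ref{def:skein} encode: a strand may be slid past a marked point at the cost of a power of $\omega$ that does not depend on the representative. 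Since $[K\cdot L]$ is constant between consecutive exceptional times and unchanged across each of them, it equals the class of the new superposition.

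I expect the main obstacle to be the bookkeeping of crossing and marked-point data, rather than any individual computation: one must check that a generic homotopy of $\alpha$ within the framed-link class can always be arranged so that every crossing it creates or destroys carries exactly the ordering dictated by the superposition convention and by the fixed framed-link structures of $K$ and $L$, and that near the marked points the evolving equivalence relation and the ordering on equivalence classes of strands are compatible with the hypotheses of the boundary relations — so that each exceptional time really is one of the listed moves. Granting this, the per-move checks are routine, and the symmetric argument for $L$ followed by transitivity finishes the proof.
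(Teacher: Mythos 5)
The thesis gives no proof of this statement---it is quoted from Muller---so the only meaningful comparison is with the cited source, and your overall strategy (fix $L$, homotope one constituent curve of $K$ at a time, put the family in general position, and check invariance of the skein class under each elementary event) is exactly the right one and is essentially Muller's. The Reidemeister~II and~III checks are fine: resolving the two crossings of a bigon and using the value $-(\omega^4+\omega^{-4})$ of a contractible loop recovers the bigon-free diagram, and R~III follows from R~II and the resolution relation; the convention that every $K$-strand lies over every $L$-strand guarantees the over/under pattern in each such event is the consistent one. (Two small points of bookkeeping: since a homotopy of framed links is required to pass through transverse multicurves with unchanged crossing data, $K$--$K$ bigons and pure-$K$ triple points never actually occur, so only mixed $K$--$L$ events arise; and no Reidemeister~I events occur because curves are immersions, so the homotopy is a regular homotopy---not because of the crossing-data clause.)

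The genuine gap is in your treatment of the events at marked points, which you yourself flag as the main obstacle but then misdescribe. Endpoints of strands are pinned at marked points, so nothing ``slides along $\partial S$ past a marked point''; the two events that do occur are (i) the tangent direction of a $K$-strand at a marked point rotating past that of an $L$-strand ending at the same marked point, which changes the ordering of the two strands there, and (ii) a $K$-strand sweeping across a marked point at which $L$-strands terminate. Your claim that such a move is permitted ``at the cost of a power of $\omega$'' cannot be right as stated: if the class changed by a nontrivial power of $\omega$, the proposition would be false. What actually happens in case (i) is that the reordering at the marked point multiplies the class by $\omega^{\pm2}$ \emph{and} simultaneously creates or destroys one interior crossing near the boundary; resolving that crossing by the first relation of Definition~\ref{def:skein} yields two terms, one of which vanishes by the corner relation (the terms set equal to $0$) and the other of which carries the compensating factor $\omega^{\mp2}$, so the class is strictly unchanged. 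A similar computation handles (ii). This cancellation is the one non-routine verification in the whole argument, and it is exactly the step your proposal defers; without it the proof is incomplete, and with your literal reading (``a power of $\omega$'') it would be wrong.
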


\begin{definition}
For any framed links $K$ and $L$, choose homotopic links $K'$ and $L'$ such that the union of the multicurves underlying $K'$ and $L'$ is transverse. Then the \emph{superposition product} is defined by 
\[
[K][L]\coloneqq[K'\cdot L'].
\]
This extends to a product on $\mathrm{Sk}_\omega(S)$ by bilinearity.
\end{definition}

Note that the superposition product is well defined and independent of the choice of~$K'$ and~$L'$ by Proposition~\ref{prop:superpositionhomotopy}.

We conclude this section by describing the relation, first discovered by Muller in~\cite{Muller}, between skein algebras and quantum cluster algebras. A review of the relevant notions from the theory of quantum cluster algebras can be found in Appendix~\ref{ch:ReviewOfQuantumClusterAlgebras}. For the rest of this chapter, we take $S$ to be a disk with finitely many marked points on its boundary.

\begin{definition}
Let $T$ be an ideal triangulation of $S$. For any $i\in I$ and $j\in J$, we define 
\[
b_{ij}=
\begin{cases}
1 & \mbox{if $i$, $j$ share a vertex and $i$ is immediately clockwise to $j$} \\
-1 & \mbox{if $i$, $j$ share a vertex and $j$ is immediately clockwise to $i$} \\
0 & \mbox{otherwise}.
\end{cases}
\]
These are entries of a skew-symmetric $|I|\times|J|$ matrix which we denote $\mathbf{B}_T$.
\end{definition}

\begin{definition}
Let $T$ be an ideal triangulation of $S$. For $i$,~$j\in I$, we define 
\[
\lambda_{ij}=
\begin{cases}
1 & \mbox{if $i$, $j$ share a vertex and $i$ is clockwise to $j$} \\
-1 & \mbox{if $i$, $j$ share a vertex and $j$ is clockwise to $i$} \\
0 & \mbox{otherwise}.
\end{cases}
\]
These are entries of a skew-symmetric $|I|\times|I|$ matrix which we denote $\Lambda_T$. We will use the same notation $\Lambda_T$ for the associated skew-symmetric bilinear form $\mathbb{Z}^{I}\times\mathbb{Z}^{I}\rightarrow\mathbb{Z}$.
\end{definition}

\begin{proposition}[\cite{Muller}, Proposition~7.8]
The matrices $\Lambda_T$ and $\mathbf{B}_T$ satisfy the compatibility condition 
\[
\sum_kb_{kj}\lambda_{ki}=4\delta_{ij}.
\]
\end{proposition}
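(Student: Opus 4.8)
Looking at this proposition, I need to verify the compatibility condition $\sum_k b_{kj}\lambda_{ki} = 4\delta_{ij}$ for a disk with marked points on its boundary.

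\textbf{Proof proposal.}
The assertion is equivalent to the statement that the $|J|\times|I|$ matrix $\mathbf{B}_T^{\top}\Lambda_T$ has $(j,i)$-entry equal to $4\delta_{ij}$. Since $S$ is a disk with marked points on its boundary, every ideal triangulation is an honest triangulation of a polygon: there are no self-folded triangles, distinct edges meet in at most one vertex, and each internal edge $j$ is the diagonal of a genuine quadrilateral. So the plan is: fix $j\in J$, let $Q_j$ be the quadrilateral with diagonal $j$, write its vertices in counterclockwise order as $v_1,w_1,v_2,w_2$ with $j=\overline{v_1v_2}$, and label its sides $e_1=\overline{v_1w_1}$, $e_2=\overline{w_1v_2}$, $e_3=\overline{v_2w_2}$, $e_4=\overline{w_2v_1}$. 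One checks directly from the definition of $b_{ij}$ that $e_1,e_2,e_3,e_4$ are the only edges $k$ with $b_{kj}\neq 0$ (they are exactly the four edges sharing a vertex with $j$ that are immediately adjacent to it at $v_1$ or $v_2$), and that the signs alternate around $Q_j$: with the orientation conventions above, $(b_{e_1j},b_{e_2j},b_{e_3j},b_{e_4j})=(1,-1,1,-1)$. Hence $(\mathbf{B}_T^{\top}\Lambda_T)_{ji}=\lambda_{e_1i}-\lambda_{e_2i}+\lambda_{e_3i}-\lambda_{e_4i}$, and the task reduces to evaluating this alternating sum of $\lambda$'s.

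For $i=j$: each $e_\ell$ is \emph{immediately} adjacent to $j$ at the shared vertex, so ``$e_\ell$ is clockwise to $j$'' holds precisely when ``$e_\ell$ is immediately clockwise to $j$'' holds; therefore $\lambda_{e_\ell j}=b_{e_\ell j}$ and each of the four terms $b_{e_\ell j}\lambda_{e_\ell j}$ equals $b_{e_\ell j}^2=1$, giving the total $4$. (A direct check on the square with one diagonal confirms the signs and the value $4$.)

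For $i\neq j$ I would argue by cases on which of the vertices $v_1,v_2,w_1,w_2$ the edge $i$ is incident to, using two structural observations about the fan of edges at each vertex: at $v_1$ the edges $e_1$, $j$, $e_4$ are consecutive in the fan with nothing between $e_1$ and $j$ or between $j$ and $e_4$ (those angular sectors are the interiors of the two triangles of $Q_j$); likewise $e_2$, $j$, $e_3$ are consecutive at $v_2$; and at $w_1$ (resp.\ $w_2$) the edges $e_1,e_2$ (resp.\ $e_3,e_4$) are consecutive with the triangle $v_1w_1v_2$ (resp.\ $v_1v_2w_2$) between them, and $j$ is not incident to $w_1$ or $w_2$. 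If $i$ meets none of the four vertices, all four $\lambda_{e_\ell i}$ vanish. If $i$ meets exactly one of them, only the two $e_\ell$ at that vertex contribute, and since $i\neq j$ it lies on one side of the relevant consecutive block, so the two $\lambda$'s are equal; combined with the fact that those two $e_\ell$ carry opposite signs in the alternating sum, the contribution is $0$. The remaining case is $i\in\{e_1,e_2,e_3,e_4\}$ (the only non-$j$ edges joining two of the four vertices, since the second diagonal of $Q_j$ is not in the triangulation); here one of the four terms drops out as $\lambda_{e_\ell e_\ell}=0$, another drops out for lack of a shared vertex, and the two surviving terms cancel by the same fan analysis at the two vertices of $i$. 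Summing up yields $(\mathbf{B}_T^{\top}\Lambda_T)_{ji}=0$ for $i\neq j$, completing the proof.

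The only delicate point is the orientation bookkeeping — pinning down the clockwise order of edges in the fan at a boundary marked point and thereby the alternating sign pattern of the $b_{e_\ell j}$ and the agreements $\lambda_{e_\ell i}=\lambda_{e_{\ell'} i}$ — but this is elementary once the conventions are fixed, and it can be cross-checked on the square. An alternative route, if one prefers to avoid the case analysis, is to note that the compatibility relation is invariant under mutation of the pair $(\Lambda_T,\mathbf{B}_T)$ and that a flip of $T$ induces such a mutation (the statement for $\mathbf{B}_T$ is the standard flip$=$mutation fact of Fomin--Shapiro--Thurston), so it suffices to verify the identity for a single triangulation of the polygon, e.g.\ the fan triangulation emanating from one vertex, where $\mathbf{B}_T$ and $\Lambda_T$ are bidiagonal and $\mathbf{B}_T^{\top}\Lambda_T=4I$ is immediate.
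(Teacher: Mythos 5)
Your direct verification is correct, but note that the thesis itself offers no proof of this statement: it is quoted from Muller's paper (Proposition~7.8 of \cite{Muller}), so you are supplying an argument where the text defers to a reference. Your argument is the natural one and matches what a local computation must look like: for a disk every internal edge $j$ is the diagonal of an embedded quadrilateral $Q_j$ with four distinct sides, the $j$-th column of $\mathbf{B}_T$ is supported exactly on those four sides with alternating signs, the diagonal entry is $\sum_\ell b_{e_\ell j}^2=4$ because ``immediately clockwise'' implies ``clockwise'' so $\lambda_{e_\ell j}=b_{e_\ell j}$, and the off-diagonal entries vanish by the fan analysis at each shared vertex (I checked your three subcases, including $i\in\{e_1,\dots,e_4\}$, on the model square and they all come out to $0$). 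Two small caveats. First, in the $i\in\{e_1,\dots,e_4\}$ case the two surviving terms cancel not because the two $\lambda$'s are equal with opposite $b$-coefficients (the mechanism of your one-vertex case) but because the $\lambda$'s themselves are opposite; your phrase ``by the same fan analysis'' glosses over this, though the conclusion is right. Second, your proposed alternative route via mutation-invariance of compatibility is not as clean as advertised: flip $=$ mutation for $\mathbf{B}_T$ is indeed standard, but you would also need that the geometrically defined $\Lambda_{T'}$ coincides with the Berenstein--Zelevinsky mutation $E_\epsilon^t\Lambda_T E_\epsilon$ of $\Lambda_T$, which is precisely part of Muller's Theorem~7.9 (Proposition~\ref{prop:quantumflip} in this thesis) and is usually established \emph{after} compatibility; so the direct case analysis you give first is the right primary argument.
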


\begin{definition}
Let $\mathcal{F}$ denote the skew-field of fractions of the skein algebra $\mathrm{Sk}_\omega(S)$. For any ideal triangulation $T$ and any vector $v=(v_1,\dots,v_m)\in\mathbb{Z}_{\geq0}^{I}$, we will write $T^v$ for the simple multicurve having $v_i$-many curves homotopic to $i\in I$ and no other components. The corresponding class~$[T^v]$ is called a \emph{monomial} in the triangulation~$T$. More generally, we write 
\[
[T^{u'-u}]=\omega^{-\Lambda_T(u,u')}[T^u]^{-1}[T^{u'}].
\]
This is well defined and provides a map $M_T:\mathbb{Z}^{I}\rightarrow\mathcal{F}-\{0\}$ given by $M_T(v)=[T^v]$.
\end{definition}

One can show that the pair $(\mathbf{B}_T,M_T)$ is a quantum seed and $\Lambda_T$ is the compatibility matrix associated to the toric frame~$M_T$.

\begin{proposition}[\cite{Muller}, Theorem~7.9]
\label{prop:quantumflip}
Let $T$ be an ideal triangulation of $S$, and let $T'$ be the ideal triangulation obtained from~$T$ by performing a flip of the edge~$k$. Then the quantum seed $(\mathbf{B}_{T'},M_{T'})$ is obtained from $(\mathbf{B}_T,M_T)$ by a mutation in the direction~$k$.
\end{proposition}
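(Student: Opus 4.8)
The plan is to verify that the flip of an ideal triangulation matches, on the nose, the combinatorial mutation of quantum seeds, in two parts: the exchange matrix $\mathbf{B}_T$ transforms by matrix mutation, and the toric frame $M_T$ transforms by the quantum mutation formula. Since a disk with marked points has no self-folded triangles and every flip is regular, I can work entirely with the combinatorics of a quadrilateral and the curves crossing its diagonal.

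\textbf{Step 1: the $\mathbf{B}$-matrix.} First I would show that $\mathbf{B}_{T'}$ is obtained from $\mathbf{B}_T$ by the usual matrix mutation in direction $k$. This is a purely local computation: the entries $b_{ij}$ only change for edges $i,j$ sharing a vertex with the flipped edge $k$, i.e. edges of the quadrilateral whose diagonal is $k$. One checks the four sides and the two diagonals case by case, comparing $b_{ij}'$ against $-b_{ij}$ when $k\in\{i,j\}$ and against $b_{ij}+\tfrac{|b_{ik}|b_{kj}+b_{ik}|b_{kj}|}{2}$ otherwise. This is essentially the surface version of Proposition~\ref{prop:matrixmutation} adapted to the $b$-matrix convention of Muller; it is routine and I would not belabor it.

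\textbf{Step 2: the toric frame, and reduction to a single relation.} Recall that a mutation of toric frames (in the quantum cluster algebra sense) is determined by its value on the mutated cluster variables, and that $M_{T'}(v)=M_T(v)$ for all $v$ supported away from $k$, while the only genuinely new element is $M_{T'}(e_k)=[(T')^{e_k}]$, the class of the flipped diagonal. So the crux is to prove the \emph{skein exchange relation}: expressing $[(T')^{e_k}]$, the single curve given by the new diagonal, in terms of $[T^{e_k}]$ and the curves on the sides of the quadrilateral. Applying the crossing relation of Definition~\ref{def:skein} to the transverse union of the old diagonal and the new diagonal inside the quadrilateral, I get precisely a two-term resolution: $[T^{e_k}]\cdot[(T')^{e_k}]$ equals $\omega^{\pm?}$ times (product of one pair of opposite sides) plus $\omega^{\mp?}$ times (product of the other pair of opposite sides). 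One then has to track the framing/crossing conventions and the powers of $\omega$, using the identity $\sum_k b_{kj}\lambda_{ki}=4\delta_{ij}$ from Muller's compatibility proposition to match the $\omega$-exponents with the $\lambda_{ij}$ appearing in the definition of $[T^{u'-u}]$. After clearing $[T^{e_k}]$ via the definition of negative monomials, this rearranges exactly into the quantum exchange relation
\[
M_{T'}(e_k)=\bigl[T^{e_k}\bigr]^{-1}\Bigl(\omega^{?}\bigl[T^{\mathbf{b}_k^+}\bigr]+\omega^{?}\bigl[T^{\mathbf{b}_k^-}\bigr]\Bigr),
\]
where $\mathbf{b}_k^\pm$ are the positive/negative parts of the $k$-th column of $\mathbf{B}_T$ — i.e. the formula for a quantum seed mutation in direction $k$.

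\textbf{Step 3: assembling.} Given Steps 1 and 2, I invoke the characterization of quantum seed mutation: two quantum seeds are related by mutation in direction $k$ iff their $\mathbf{B}$-matrices are related by matrix mutation and their toric frames agree off $k$ while the $k$-th cluster variable transforms by the exchange relation above (with the sign/grading data forced by compatibility). Both conditions have been checked, so $(\mathbf{B}_{T'},M_{T'})=\mu_k(\mathbf{B}_T,M_T)$.

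\textbf{Main obstacle.} The genuinely delicate point is Step~2 — not the topological content of the skein relation, which is immediate, but the bookkeeping of the $\omega$-powers: one must make the framing order at the marked points (the equivalence relations and orderings on strand classes) and the crossing signs produce exactly the exponents $-\Lambda_T(u,u')$ demanded by the definition of $[T^{u'-u}]$, so that the resolved relation is literally the quantum mutation rule and not merely a $q$-deformed version of it up to an unquantified monomial. Handling the edge cases where sides of the quadrilateral are identified or lie on $\partial S$ (boundary edges, which are frozen) is where I would be most careful; Muller's Lemma on the structure of triangulations of a polygon, together with the absence of self-folded triangles here, keeps this finite and tractable.
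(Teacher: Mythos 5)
The paper does not prove this proposition; it is quoted verbatim from Muller (Theorem~7.9 of~\cite{Muller}) and used as a black box. Your outline is essentially a reconstruction of Muller's own argument: flip equals matrix mutation on $\mathbf{B}_T$, plus the quantum Ptolemy relation obtained by resolving the single crossing of the two diagonals of the quadrilateral via the skein relation, matched against the exchange formula of Proposition~\ref{prop:exchangetoricframe}. Your reduction in Step~2 is sound — since $\Lambda_{M}$ is read off from the commutation of the elements $M(\mathbf{e}_i)$ in $\mathcal{F}$, agreement of two toric frames on all basis vectors forces agreement of their compatibility forms and hence of the frames everywhere — and you correctly identify that the only real work is the $\omega$-exponent bookkeeping at the crossing and at the marked points, which you defer rather than carry out. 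So this is the right approach and consistent with the cited source, but as written it is a proof plan rather than a proof.
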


It follows from Proposition~\ref{prop:quantumflip} that there is a quantum cluster algebra $\mathcal{A}$ canonically associated to the disk~$S$. This algebra is generated by the cluster variables inside of the skew-field of fractions of the skein algebra.

\section{Duality map for the quantum Poisson variety}

\subsection{Realization of the cluster Poisson variety}

We have associated a quantum cluster algebra $\mathcal{A}$ to the disk $S$. Let $\mathcal{F}$ be the ambient skew-field of this algebra $\mathcal{A}$.

\begin{definition}
Let $T$ be an ideal triangulation of $S$. For any $j\in J$, we define an element $X_j=X_{j;T}$ of $\mathcal{F}$ by the formula 
\[
X_j = M_T\big(\sum_s\varepsilon_{js}\mathbf{e}_s\big).
\]
where the $\mathbf{e}_s$ are standard basis vectors. In addition, we will use the notation $q=\omega^4$.
\end{definition}

As the notation suggests, these elements are related to the generators of the quantum Poisson variety.

\begin{proposition}
\label{prop:Xcommutation}
The elements $X_j$~($j\in J$) satisfy the commutation relations 
\[
X_iX_j=q^{2\varepsilon_{ij}}X_jX_i.
\]
\end{proposition}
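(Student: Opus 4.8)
The plan is to compute directly using the definition $X_j = M_T\big(\sum_s \varepsilon_{js}\mathbf{e}_s\big) = [T^{\,v(j)}]$ where $v(j) = \sum_s \varepsilon_{js}\mathbf{e}_s \in \mathbb{Z}^I$, together with the multiplication rule $[T^{u'-u}] = \omega^{-\Lambda_T(u,u')}[T^u]^{-1}[T^{u'}]$ for monomials in a fixed triangulation $T$. From that rule one extracts the standard $q$-commutation of toric frame elements: for any $u, u' \in \mathbb{Z}^I$ one has $M_T(u)M_T(u') = \omega^{2\Lambda_T(u,u')} M_T(u')M_T(u)$, which is exactly the statement that $(\mathbf{B}_T, M_T)$ is a quantum seed with compatibility matrix $\Lambda_T$ (this is the content of the remark just before Proposition~\ref{prop:quantumflip}, and may be invoked). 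So the first step is simply to record $X_iX_j = \omega^{2\Lambda_T(v(i),v(j))} X_jX_i$.

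The second and main step is the linear-algebra identity $\Lambda_T\big(v(i), v(j)\big) = 4\varepsilon_{ij}$, after which, since $q = \omega^4$, we get $\omega^{2\Lambda_T(v(i),v(j))} = \omega^{8\varepsilon_{ij}} = q^{2\varepsilon_{ij}}$, completing the proof. To prove this identity, write $\Lambda_T(v(i),v(j)) = \sum_{s,t} \varepsilon_{is}\varepsilon_{jt}\lambda_{st}$. Now I would use the compatibility condition from Proposition~7.8 of~\cite{Muller}, namely $\sum_k b_{kj}\lambda_{ki} = 4\delta_{ij}$. The subtlety is that this relates the $|I|\times|J|$ matrix $\mathbf{B}_T = (b_{ij})$ (indexed by all edges against internal edges) to $\Lambda_T$, whereas $\varepsilon_{ij} = (e_i,e_j)$ is the full $|I|\times|I|$ exchange matrix; I need to check that $\varepsilon_{is}$, restricted appropriately, agrees with $\pm b_{si}$ or with the relevant entries, and that the sign conventions (``immediately clockwise'' for $b$, ``clockwise'' for $\lambda$, and the orientation convention defining $\varepsilon$ from oriented edges of the $2$-triangulation) line up. Concretely, one observes $b_{kj} = \varepsilon_{jk}$ for $j \in J$ (both count clockwise-adjacency with the same sign, since for $m=2$ the exchange matrix entry between two edges sharing a triangle is $\pm 1$ according to cyclic order), so $\sum_t \varepsilon_{jt}\lambda_{ti} = \sum_t b_{tj}\lambda_{ti}$; applying the compatibility condition with indices suitably matched gives $\sum_t \varepsilon_{jt}\lambda_{ti} = 4\delta_{ij}$ when $i \in J$, and then $\sum_{s,t}\varepsilon_{is}\varepsilon_{jt}\lambda_{ts} = \sum_s \varepsilon_{is}(4\delta_{js})\cdot(\pm 1)$ collapses to $\pm 4\varepsilon_{ij}$; tracking the antisymmetry $\lambda_{ts} = -\lambda_{st}$ fixes the sign as $+$.

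The main obstacle is precisely this sign and index bookkeeping: reconciling the three different matrices ($\varepsilon_{ij}$ from the oriented $2$-triangulation of Chapter~\ref{ch:ModuliSpacesOfLocalSystems}, and $\mathbf{B}_T$, $\Lambda_T$ from Muller's skein-algebra conventions) and verifying the clockwise/counterclockwise orientation conventions agree. I expect the cleanest route is to first establish $b_{ij} = \varepsilon_{ji}$ (equivalently $\mathbf{B}_T$ is the relevant submatrix of $-\varepsilon$ or $\varepsilon$, whichever the conventions dictate) as a small lemma by comparing the two definitions on a single triangle, then feed this into the compatibility relation mechanically. Once the identity $\Lambda_T(v(i),v(j)) = 4\varepsilon_{ij}$ is in hand, the conclusion $X_iX_j = q^{2\varepsilon_{ij}}X_jX_i$ is immediate from the quantum-seed commutation relations and $q = \omega^4$.
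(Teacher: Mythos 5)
Your proposal follows essentially the same route as the paper: invoke the toric-frame commutation relation for $M_T$, reduce to the bilinear identity $\Lambda_T\bigl(\sum_s\varepsilon_{is}\mathbf{e}_s,\sum_t\varepsilon_{jt}\mathbf{e}_t\bigr)=\mp4\varepsilon_{ij}$ via $b_{kj}=\varepsilon_{jk}$ and Muller's compatibility condition $\sum_kb_{kj}\lambda_{ki}=4\delta_{ij}$, and finish with $q=\omega^4$. The conclusion is correct, and your instinct to verify $b_{ij}=\varepsilon_{ji}$ on a single triangle is exactly how the conventions get reconciled (the paper simply records this identity in its appendix on cluster algebras from surfaces).

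One caveat: you have two compensating sign slips relative to the paper's conventions. The toric-frame relation recorded in the paper is $M(v_1)M(v_2)=\omega^{-2\Lambda_M(v_1,v_2)}M(v_2)M(v_1)$, not $\omega^{+2\Lambda_M(v_1,v_2)}$, and correspondingly the key identity comes out as $\Lambda_T\bigl(\sum_s\varepsilon_{is}\mathbf{e}_s,\sum_t\varepsilon_{jt}\mathbf{e}_t\bigr)=-4\varepsilon_{ij}$ (your quantity $\sum_{s,t}\varepsilon_{is}\varepsilon_{jt}\lambda_{ts}$ is $+4\varepsilon_{ij}$, but it equals $-\Lambda_T$ of the two vectors, since the form is $\sum_{s,t}\varepsilon_{is}\varepsilon_{jt}\lambda_{st}$). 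The two errors cancel to give $\omega^{8\varepsilon_{ij}}=q^{2\varepsilon_{ij}}$, so the final statement survives, but as written each intermediate claim is false under the stated conventions and should be corrected.
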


\begin{proof}
By the properties of toric frames, we have 
\[
X_iX_j=\omega^{-2\Lambda_T(\sum_s\varepsilon_{is}\mathbf{e}_s,\sum_t\varepsilon_{jt}\mathbf{e}_t)} X_jX_i.
\]
Now the compatibility condition in the definition of a quantum seed implies 
\begin{align*}
\Lambda_T\big(\sum_s\varepsilon_{is}\mathbf{e}_s,\sum_t\varepsilon_{jt}\mathbf{e}_t\big) &= \sum_{s,t}\varepsilon_{is}\varepsilon_{jt}\Lambda_T(\mathbf{e}_s,\mathbf{e}_t) \\
&= \sum_{s,t}\varepsilon_{is}\varepsilon_{jt}\lambda_{st} \\
&= -\sum_s\varepsilon_{is}\sum_tb_{tj}\lambda_{ts} \\
&= -4\varepsilon_{ij}.
\end{align*}
Therefore $X_iX_j=\omega^{8\varepsilon_{ij}}X_jX_i=q^{2\varepsilon_{ij}}X_jX_i$.
\end{proof}

In the following result, $T'$ denotes the triangulation obtained from $T$ by a flip of the edge~$k$.

\begin{proposition}
\label{prop:Xtrans}
For each $i$, let $X_i'=X_{i;T'}$. Then 
\[
X_i'=
\begin{cases}
X_i\prod_{p=0}^{|\varepsilon_{ik}|-1}(1+q^{2p+1}X_k) & \mbox{if } \varepsilon_{ik}\leq0 \mbox{ and } i\neq k \\
X_iX_k^{\varepsilon_{ik}}\prod_{p=0}^{\varepsilon_{ik}-1}(X_k+q^{2p+1})^{-1} & \mbox{if } \varepsilon_{ik}\geq0 \mbox{ and } i\neq k \\
X_k^{-1} & \mbox{if } i=k.
\end{cases}
\]
\end{proposition}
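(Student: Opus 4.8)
The plan is to read the formula off from the mutation rule for toric frames, using Proposition~\ref{prop:quantumflip} to identify a flip with a quantum seed mutation, in the same concrete spirit as the proof of Proposition~\ref{prop:Xcommutation}. By Proposition~\ref{prop:quantumflip} we have $(\mathbf{B}_{T'},M_{T'})=\mu_k(\mathbf{B}_T,M_T)$. From the review of quantum cluster algebras in Appendix~\ref{ch:ReviewOfQuantumClusterAlgebras} I would recall the explicit description of the mutated toric frame: $M_{T'}(\mathbf{e}_i)=M_T(\mathbf{e}_i)$ for $i\neq k$, while $M_{T'}(\mathbf{e}_k)$ is a sum of two $M_T$-monomials, and the compatibility matrix $\Lambda_{T'}$ agrees with $\Lambda_T$ in the coordinates $i,j\neq k$. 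Combining this with the matrix mutation formula for $\varepsilon'_{ij}$ from Definition~\ref{def:mutation}, expanding the definition $X_i'=M_{T'}\bigl(\sum_s\varepsilon'_{is}\mathbf{e}_s\bigr)$ turns everything into an explicit expression in the monomials $M_T(\mathbf{e}_s)$, hence in the $X_j$ and powers of $q$.

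For $i=k$ the computation is immediate: since $\varepsilon'_{ks}=-\varepsilon_{ks}$ for all $s$ and $\varepsilon_{kk}=0$, the lattice vector $-\sum_s\varepsilon_{ks}\mathbf{e}_s$ is supported on coordinates $s\neq k$, where $M_{T'}$ coincides with $M_T$ and $\Lambda_{T'}$ with $\Lambda_T$; hence $X_k'=M_{T'}\bigl(-\sum_s\varepsilon_{ks}\mathbf{e}_s\bigr)=M_T\bigl(\sum_s\varepsilon_{ks}\mathbf{e}_s\bigr)^{-1}=X_k^{-1}$.

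For $i\neq k$ I would write $\sum_s\varepsilon'_{is}\mathbf{e}_s=\sum_s\varepsilon_{is}\mathbf{e}_s+\bigl(\varepsilon'_{ik}-\varepsilon_{ik}\bigr)\mathbf{e}_k+\sum_{s\neq k}\tfrac12\bigl(|\varepsilon_{ik}|\varepsilon_{ks}+\varepsilon_{ik}|\varepsilon_{ks}|\bigr)\mathbf{e}_s$, substitute the two-term expression for $M_{T'}(\mathbf{e}_k)$, and collect terms using the quasi-commutation relation $X_iX_k=q^{2\varepsilon_{ik}}X_kX_i$ of Proposition~\ref{prop:Xcommutation}. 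The $q$-binomial coefficients produced by repeatedly splitting $M_{T'}(\mathbf{e}_k)$ telescope, yielding $\prod_{p=0}^{|\varepsilon_{ik}|-1}(1+q^{2p+1}X_k)$ when $\varepsilon_{ik}\le0$, and, after moving a monomial factor $X_k^{\varepsilon_{ik}}$ past the product, $\prod_{p=0}^{\varepsilon_{ik}-1}(X_k+q^{2p+1})^{-1}$ when $\varepsilon_{ik}\ge0$. Alternatively, and more economically, one observes that Proposition~\ref{prop:Xcommutation} exhibits an algebra embedding of the quantum torus $\mathcal{X}_{\mathbf{i}}^q$ into the skew-field $\mathcal{F}$, that the analogous embedding attached to $T'$ is intertwined with it by the map $\mu_k^q$, and that the asserted formula is then a direct transcription of Theorem~\ref{thm:transformation}.

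The main obstacle will be the bookkeeping of the $\Lambda_T$-twists in the case $i\neq k$: tracking the powers of $\omega$, and hence of $q$, that appear when products of $M_{T'}$-monomials are rewritten in terms of $M_T$-monomials, and verifying that the accumulated exponents are exactly the odd numbers $2p+1$ occurring in the products. Since this is precisely the computation underlying the general derivation of the quantum $\mathcal{X}$-mutation rule in Appendix~\ref{ch:DerivationOfTheClassicalMutationFormulas}, I would either carry it out in full here or, to avoid repetition, reduce to that appendix via the embedding described above.
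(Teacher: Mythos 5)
Your primary route is sound but genuinely different from the paper's. The paper disposes of this proposition in two lines: it quotes Lemma~5.4 of~\cite{Tran}, which is precisely the statement that a quantum seed mutation transforms the elements $M_T\bigl(\sum_s\varepsilon_{js}\mathbf{e}_s\bigr)$ by the displayed rule with $\omega^{-2(-d_kp-\frac{d_k}{2})}$ in place of $q^{2p+1}$, and then substitutes $d_k=4$ (so that $\omega^{-2(-d_kp-\frac{d_k}{2})}=\omega^{4(2p+1)}=q^{2p+1}$). What you propose is essentially to reprove Tran's lemma from first principles: expand $M_{T'}\bigl(\sum_s\varepsilon'_{is}\mathbf{e}_s\bigr)$ using the $q$-binomial sum defining the mutated toric frame (Definition~\ref{def:mutatedtoricframe}) and resum via the $q$-binomial theorem. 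That is correct in outline -- your $i=k$ case is already complete as stated, since $E_\epsilon$ fixes the sublattice spanned by the $\mathbf{e}_s$ with $s\neq k$, so $\Lambda_{T'}$ and $M_{T'}$ agree there with $\Lambda_T$ and $M_T$ -- and it buys self-containedness at the cost of exactly the $\omega$-power bookkeeping you identify as the main obstacle; the paper's citation avoids that work entirely. One caution: your ``more economical'' alternative is circular. The assertion that the embedding of the quantum torus attached to $T'$ is intertwined with the one attached to $T$ by $\mu_k^q$ is precisely the joint content of Proposition~\ref{prop:Xcommutation} and the present proposition; it cannot be ``observed'' and then used to read the formula off from Theorem~\ref{thm:transformation}. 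Keep the direct computation (or simply cite~\cite{Tran} as the paper does).
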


\begin{proof}
According to Lemma~5.4 of~\cite{Tran}, a mutation in the direction $k$ transforms $X_i$ to the new element 
\[
X_i'=
\begin{cases}
X_i\prod_{p=0}^{|b_{ki}|-1}(1+\omega^{-2(-d_kp-\frac{d_k}{2})}X_k) & \mbox{if } b_{ki}\leq0 \mbox{ and } i\neq k \\
X_iX_k^{b_{ki}}\prod_{p=0}^{b_{ki}-1}(X_k+\omega^{-2(-d_kp-\frac{d_k}{2})})^{-1} & \mbox{if } b_{ki}\geq0 \mbox{ and } i\neq k \\
X_k^{-1} & \mbox{if } i=k
\end{cases}
\]
where $d_k$ is the number appearing in Definition~\ref{def:compatibility}. The statement follows if we substitute $d_k=4$ into this formula.
\end{proof}

\subsection{Construction of the map}

We will now define the map $\mathbb{I}_{\mathcal{A}}^q$. To do this, let $l$ be a point of $\mathcal{A}_0(\mathbb{Z}^t)$, represented by a collection of arcs on~$S$. We deform each arc by dragging its endpoints along the boundary in the counterclockwise direction until they hit the marked points. We can assume that the resulting curves are nonintersecting, so there exists an ideal triangulation $T_l$ of $S$ such that each of these curves coincides with an edge of $T_l$. Let 
\[
\mathbf{w}=(w_1,\dots,w_m)
\]
be the integral vector whose $i$th component $w_i$ is the weight of the curve corresponding to the edge $i$ of $T_l$.

\begin{definition}
We will write $\mathbb{I}_{\mathcal{A}}^q(l)$ for the element of $\mathcal{F}$ given by 
\[
\mathbb{I}_{\mathcal{A}}^q(l)=M_{T_l}(\mathbf{w}).
\]
\end{definition}

It is clear from the definition of the toric frames that this is independent of the choice of~$T_l$. Our goal is to show that this definition provides a map $\mathbb{I}_{\mathcal{A}}^q:\mathcal{A}_0(\mathbb{Z}^t)\rightarrow\mathcal{X}^q$.

\begin{theorem}
\label{thm:Xcanonicalregular}
Let $T$ be an ideal triangulation of $S$, and let $X_j=X_{j;T}$ be defined as above. Then for any $l\in\mathcal{A}_0(\mathbb{Z}^t)$, the element $\mathbb{I}_{\mathcal{A}}^q(l)$ is a Laurent polynomial in the~$X_j$ with coefficients in~$\mathbb{Z}_{\geq0}[q,q^{-1}]$.
\end{theorem}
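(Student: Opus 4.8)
The plan is to identify $\mathbb{I}_{\mathcal{A}}^q(l)$ with a cluster variable (or, more precisely, a monomial in the cluster variables) of the quantum cluster algebra $\mathcal{A}$ associated to the disk, and then invoke the quantum Laurent phenomenon. Recall that $l\in\mathcal{A}_0(\mathbb{Z}^t)$ is represented by a collection of arcs which, after dragging endpoints counterclockwise to marked points, become the edges of an ideal triangulation $T_l$ of $S$, with weight vector $\mathbf{w}=(w_1,\dots,w_m)$. By definition $\mathbb{I}_{\mathcal{A}}^q(l)=M_{T_l}(\mathbf{w})=[T_l^{\mathbf{w}}]$. First I would observe that $T_l^{\mathbf{w}}$ is a positive multicurve, so each nonfrozen edge of $T_l$ — being a cluster variable of $\mathcal{A}$ in the seed attached to $T_l$ by Proposition~\ref{prop:quantumflip} — contributes a nonnegative power; the frozen edges (the boundary arcs of the disk) contribute coefficients that are invertible. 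Hence $\mathbb{I}_{\mathcal{A}}^q(l)$ is, up to multiplication by the invertible frozen variables, a product of quantum cluster variables in the seed $(\mathbf{B}_{T_l},M_{T_l})$.

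The second step is to move from the seed $T_l$ to the given seed $T$. Since any two ideal triangulations of the disk are related by a sequence of flips (the last Proposition of Section~2.1, transported via Proposition~\ref{prop:quantumflip}), the seed $(\mathbf{B}_T,M_T)$ is mutation-equivalent to $(\mathbf{B}_{T_l},M_{T_l})$ inside the ambient skew-field $\mathcal{F}$. Now I invoke the quantum Laurent phenomenon for quantum cluster algebras (from the theory in~\cite{BZq}, reviewed in the appendix): every cluster variable, in particular every edge-class $[i]$ for $i$ an edge of $T_l$, is a Laurent polynomial in the cluster variables $A_j=M_T(\mathbf{e}_j)$ of the seed $T$, with coefficients in $\mathbb{Z}_{\geq0}[q,q^{-1}]$ — the positivity of these coefficients is the quantum positivity theorem for cluster algebras from surfaces (available through Muller's identification of the skein algebra with the quantum cluster algebra, or through the quantum $F$-polynomial formula of~\cite{Tran}). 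Therefore $\mathbb{I}_{\mathcal{A}}^q(l)$ is a Laurent polynomial in the $A_j$ with coefficients in $\mathbb{Z}_{\geq0}[q,q^{-1}]$.

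The third and final step is to rewrite this Laurent polynomial in terms of the cluster Poisson generators $X_j=X_{j;T}=M_T\bigl(\sum_s\varepsilon_{js}\mathbf{e}_s\bigr)$. Here I would use the fact that $l\in\mathcal{A}_0(\mathbb{Z}^t)$, not merely $\mathcal{A}_L(S,\mathbb{Z})$: the condition defining $\mathcal{A}_0$ (vanishing total weight on each boundary segment, equivalently the topological condition $[\sigma_l]=0$ of Theorem~\ref{thm:integralAlam}) forces the exponent of each frozen variable $A_j$ ($j\in I-J$) in $\mathbb{I}_{\mathcal{A}}^q(l)$ to be even, and more precisely forces the $\mathbf{g}$-vector of $\mathbb{I}_{\mathcal{A}}^q(l)$ with respect to the seed $T$ to lie in the image of the map $\mathbf{e}_j\mapsto\sum_s\varepsilon_{js}\mathbf{e}_s$ composed with doubling — exactly the statement that the leading monomial $A^{\mathbf{g}}$ is a monomial in the $X_j$ (compare Lemma~\ref{lem:factorization} and the discussion of tropical integral points). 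Since each quantum $F$-polynomial evaluates on the $\widehat{X}$-variables, which here become the $X_j$ because the coefficient semifield is trivial, the whole expression collapses to a Laurent polynomial in the $X_j$; positivity of coefficients is preserved because all the substitutions involved (products of $A_j$'s, evaluation of $F$-polynomials) have nonnegative coefficients.

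\textbf{Main obstacle.} The routine part is the Laurent phenomenon and the flip-connectivity; the delicate point is step three — verifying that the $\mathbf{g}$-vector of $\mathbb{I}_{\mathcal{A}}^q(l)$ actually lies in the sublattice generated by the columns of $\varepsilon$ (doubled), so that $A^{\mathbf{g}}$ is genuinely a Laurent \emph{monomial} in the $X_j$ rather than just in the $A_j$. This is precisely where the hypothesis $l\in\mathcal{A}_0(\mathbb{Z}^t)$ (as opposed to an arbitrary integral lamination) is used, and I expect the argument to parallel the homological computation in the proof of Theorem~\ref{thm:integralDlam} and the factorization Lemma~\ref{lem:factorization}: one reads off the $\mathbf{g}$-vector from the combinatorics of how the arcs of $l$ cross the edges of $T$, and the even-intersection condition coming from $[\sigma_l]=0$ delivers the divisibility. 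I would also need to check compatibility of the quantization constant ($d_k=4$, $q=\omega^4$ from Proposition~\ref{prop:Xtrans}) throughout, but that is bookkeeping rather than a genuine difficulty.
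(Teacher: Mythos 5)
Your proposal is correct and follows essentially the same route as the paper: decompose $\mathbb{I}_{\mathcal{A}}^q(l)=M_{T_l}(\mathbf{w})$ into a product of quantum cluster variables, expand each via the quantum $F$-polynomial formula (Corollary~\ref{cor:Fpositivity}) so the $F$-part is manifestly a positive Laurent polynomial in the $X_j$, and reduce the remaining factor $M_T\big(\sum_i w_i\mathbf{g}_i\big)$ to a monomial in the $X_j$ by the argument of Lemma~\ref{lem:factorization}. You also correctly isolate the one genuinely delicate point — that the accumulated $\mathbf{g}$-vector lands in the sublattice spanned by the columns of $\varepsilon$, which is exactly where the paper defers to the Lemma~\ref{lem:factorization}-style computation.
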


\begin{proof}
By applying the relations from Definition~\ref{def:skein}, we can write 
\[
\mathbb{I}_{\mathcal{A}}^q(l) = \omega^\alpha\prod_{i=1}^mM_{T_l}(\mathbf{e}_i)^{w_i}
\]
where $\alpha=\sum_{i<j}\Lambda_{T_l}(\mathbf{e}_i,\mathbf{e}_j)w_iw_j$. Then Corollary~\ref{cor:Fpositivity} implies 
\[
\mathbb{I}_{\mathcal{A}}^q(l) = \omega^\alpha\prod_{i=1}^m(F_i\cdot M_T(\mathbf{g}_i))^{w_i}.
\]
Each $F_i$ in this last line denotes a polynomial in the expressions $M_T(\sum_j\varepsilon_{kj}\mathbf{e}_j)$ with coefficients in $\mathbb{Z}_{\geq0}[\omega^4,\omega^{-4}]$. The $\mathbf{g}_i$ are integral vectors. We have 
\begin{align*}
\Lambda_T\big(\sum_i\varepsilon_{ki}\mathbf{e}_i,\mathbf{e}_j\big) = \sum_i\varepsilon_{ki}\lambda_{ij} = \sum_i b_{ik}\lambda_{ij} = 4\delta_{kj}
\end{align*}
so that 
\[
M_T\big(\sum_i\varepsilon_{ki}\mathbf{e}_i\big)M_T(\mathbf{e}_j) = \omega^{-8\delta_{kj}} M_T(\mathbf{e}_j)M_T\big(\sum_i\varepsilon_{ki}\mathbf{e}_i\big).
\]
It follows that we can rearrange the factors in the last expression for $\mathbb{I}_{\mathcal{A}}^q(l)$ to get 
\[
\mathbb{I}_{\mathcal{A}}^q(l) = \omega^\alpha Q\cdot\prod_{i=1}^m M_T(\mathbf{g}_i)^{w_i}
\]
where $Q$ is a polynomial in the expressions $M_T(\sum_j\varepsilon_{kj}\mathbf{e}_j)$ with coefficients in the semiring $\mathbb{Z}_{\geq0}[\omega^4,\omega^{-4}]$. By Lemma~6.2 in~\cite{Tran}, we know that $\Lambda_{T_l}(\mathbf{e}_i,\mathbf{e}_j)=\Lambda_T(\mathbf{g}_i,\mathbf{g}_j)$, and so 
\begin{align*}
M_T\big(\sum_iw_i\mathbf{g}_i\big) &= \omega^{\sum_{i<j}\Lambda_T(\mathbf{g}_i,\mathbf{g}_j)w_iw_j} \prod_{i=1}^m M_T(\mathbf{g}_i)^{w_i} \\
&= \omega^\alpha \prod_{i=1}^m M_T(\mathbf{g}_i)^{w_i}.
\end{align*}
Hence 
\[
\mathbb{I}_{\mathcal{A}}^q(l) = Q\cdot M_T\big(\sum_iw_i\mathbf{g}_i\big).
\]
Arguing as in the proof of Lemma~\ref{lem:factorization}, one shows that the second factor is a monomial in the $X_j$ with coefficient in $\mathbb{Z}_{\geq0}[\omega^4,\omega^{-4}]$. This completes the proof.
\end{proof}

By the properties established in Propositions~\ref{prop:Xcommutation}, and~\ref{prop:Xtrans}, we can regard the Laurent polynomial of Theorem~\ref{thm:Xcanonicalregular} as an element of the algebra $\mathcal{X}^q$. Thus we have constructed a canonical map $\mathbb{I}_{\mathcal{A}}^q:\mathcal{A}_0(\mathbb{Z}^t)\rightarrow\mathcal{X}^q$ as desired.

In recent joint work with Kim~\cite{AK}, the author constructed a similar map for cluster varieties associated to a punctured surface. Unlike the one presented here, the construction of~\cite{AK} relied heavily on the ``quantum trace'' map introduced by Bonahon and Wong~\cite{BonahonWong}. Recent results of L\^{e} suggest that these two approaches are in fact equivalent~\cite{Le}.

\subsection{Properties}

We will now prove several properties of the map $\mathbb{I}_{\mathcal{A}}^q$ conjectured in~\cite{IHES,ensembles}.

\begin{theorem}
The map $\mathbb{I}_{\mathcal{A}}^q:\mathcal{A}_0(\mathbb{Z}^t)\rightarrow\mathcal{X}^q$ satisfies the following properties:
\begin{enumerate}
\item Each $\mathbb{I}_{\mathcal{A}}^q(l)$ is a Laurent polynomial in the variables $X_i$ with coefficients in $\mathbb{Z}_{\geq0}[q,q^{-1}]$.

\item The expression $\mathbb{I}_{\mathcal{A}}^q(l)$ agrees with the Laurent polynomial $\mathbb{I}_{\mathcal{A}}(l)$ when~$q=1$.

\item Let $*$ be the canonical involutive antiautomorphism of $\mathcal{X}^q$ that fixes each $X_i$ and sends~$q$ to~$q^{-1}$. Then $*\mathbb{I}_{\mathcal{A}}^q(l)=\mathbb{I}_{\mathcal{A}}^q(l)$.

\item The highest term of $\mathbb{I}_{\mathcal{A}}^q(l)$ is
\[
q^{-\sum_{i<j}\varepsilon_{ij}a_ia_j}X_1^{a_1}\dots X_n^{a_n}
\]
where $X_1^{a_1}\dots X_n^{a_n}$ is the highest term of the classical expression $\mathbb{I}_{\mathcal{A}}(l)$.

\item For any $l$,~$l'\in\mathcal{A}_0(\mathbb{Z}^t)$, we have
\[
\mathbb{I}_{\mathcal{A}}^q(l)\mathbb{I}_{\mathcal{A}}^q(l') = \sum_{l''\in\mathcal{A}_0(\mathbb{Z}^t)}c^q(l,l';l'')\mathbb{I}_{\mathcal{A}}^q(l'')
\]
where $c^q(l,l';l'')\in\mathbb{Z}[q,q^{-1}]$ and only finitely many terms are nonzero.
\end{enumerate}
\end{theorem}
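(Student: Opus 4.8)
The plan is to prove the five properties in the order they are listed, since each relies on the previous ones, and to build toward the structure-constant property (5), which is the real content. For (1), I would simply invoke Theorem~\ref{thm:Xcanonicalregular} directly, which already establishes that $\mathbb{I}_{\mathcal{A}}^q(l)$ is a Laurent polynomial in the $X_j$ with coefficients in $\mathbb{Z}_{\geq0}[q,q^{-1}]$. For (2), I would observe that the skein relations of Definition~\ref{def:skein} and the mutation formulas of Proposition~\ref{prop:Xtrans} reduce at $\omega=1$ (equivalently $q=1$) to the classical Ptolemy-type and cross-ratio relations used to define $\mathbb{I}_{\mathcal{A}}$ in Definition~\ref{def:canonicalAX}; since $\mathbb{I}_{\mathcal{A}}^q(l)=M_{T_l}(\mathbf{w})$ is a single monomial in the cluster variables of the triangulation $T_l$, one checks the classical limit by comparing with the corresponding product of $A$-coordinates, which is exactly the classical construction. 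Property (3) follows from the fact that the bar-involution $*$ on $\mathcal{X}^q$ is induced from the canonical involution of $\mathrm{Sk}_\omega(S)$ reversing crossings and sending $\omega\mapsto\omega^{-1}$, and each monomial $M_{T_l}(\mathbf{w})$ associated to a \emph{simple} multicurve is manifestly fixed by that involution (a simple link has no crossings, so $*[T_l^{\mathbf{w}}]=[T_l^{\mathbf{w}}]$), and this fixedness is preserved under the change-of-triangulation identifications. Property (4) is a matter of tracking the leading $q$-power through the computation in the proof of Theorem~\ref{thm:Xcanonicalregular}: the leading term of $M_T(\sum_i w_i\mathbf{g}_i)$ is $q^{-\sum_{i<j}\Lambda_T(\mathbf{g}_i,\mathbf{g}_j)w_iw_j/\text{(scaling)}}X_1^{a_1}\cdots X_n^{a_n}$, and the compatibility identity $\sum_k b_{kj}\lambda_{ki}=4\delta_{ij}$ converts $\Lambda_T(\mathbf{g}_i,\mathbf{g}_j)$ into $\varepsilon_{ij}$-data, yielding the stated exponent $-\sum_{i<j}\varepsilon_{ij}a_ia_j$.

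The heart of the argument is property (5). The strategy is as follows. Given $l,l'\in\mathcal{A}_0(\mathbb{Z}^t)$, realize $\mathbb{I}_{\mathcal{A}}^q(l)$ and $\mathbb{I}_{\mathcal{A}}^q(l')$ as classes $[K]$ and $[K']$ in the skein algebra $\mathrm{Sk}_\omega(S)$ of simple multicurves (after dragging endpoints to marked points). Their product in $\mathcal{X}^q$ is, by construction and by Proposition~\ref{prop:superpositionhomotopy}, the superposition product $[K][K']=[K'\cdot K'']$, which is again an element of the skein algebra. Now apply the skein relations of Definition~\ref{def:skein} to resolve all crossings of the superposed link: each application of the first relation replaces a crossing by an $\mathbb{Z}[\omega,\omega^{-1}]$-linear combination of links with one fewer crossing, and the closed-loop and boundary relations remove contractible loops and illegal endpoint configurations. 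Since $S$ is a disk with finitely many marked points, this resolution process terminates in finitely many steps, expressing $[K\cdot K']$ as a \emph{finite} $\mathbb{Z}[\omega,\omega^{-1}]=\mathbb{Z}[q^{1/4},q^{-1/4}]$-linear combination $\sum_{l''} c^q(l,l';l'')\,[K_{l''}]$ of classes of \emph{simple} multicurves $K_{l''}$, each of which is $\mathbb{I}_{\mathcal{A}}^q(l'')$ for a well-defined $l''\in\mathcal{A}_0(\mathbb{Z}^t)$ (the simple multicurves on a disk are in bijection with the integral $\mathcal{A}$-laminations). This gives the desired expansion with only finitely many nonzero terms.

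Two points need care, and the second is the main obstacle. First, one must check that the coefficients $c^q$ actually lie in $\mathbb{Z}[q,q^{-1}]$ and not merely in $\mathbb{Z}[q^{1/4},q^{-1/4}]$: here I would use property (4) together with the parity constraint already noted in the excerpt — for $l\in\mathcal{A}_0(\mathbb{Z}^t)$ the total weight crossing each edge is even, which forces the exponents of $\omega$ arising in the resolution to be multiples of $4$ (equivalently, comparing with the $q=1$ specialization of (2) and the bar-symmetry of (3) pins down the half-integer ambiguities). Second, and this is where the real work lies, one must verify that the resolved basis $\{[K_{l''}]\}$ of simple multicurves is \emph{linearly independent} in $\mathrm{Sk}_\omega(S)$ — otherwise the coefficients $c^q$ are not well defined. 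For the disk this follows from Muller's results (\cite{Muller}): the classes of simple multicurves form a basis of the skein algebra, and the quantum cluster algebra $\mathcal{A}$ sits inside the skew-field of fractions with the monomials $M_T(v)$ behaving as expected under the toric frames. I would isolate this as the key lemma, citing Proposition~\ref{prop:quantumflip} and the basis statement from \cite{Muller}, since without it the entire structure-constant formula is vacuous. The finiteness of the sum is then automatic from termination of crossing resolution on the disk.
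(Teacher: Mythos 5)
Your treatment of parts (1)--(3) matches the paper's: (1) is Theorem~\ref{thm:Xcanonicalregular}, (2) is immediate from the definitions, and (3) is exactly the crossing-reversal antiautomorphism $[K]\mapsto[K^\dagger]$, $\omega\mapsto\omega^{-1}$ of $\mathrm{Sk}_\omega(S)$ from Proposition~3.11 of~\cite{Muller}, which fixes the class of a simple multicurve and is compatible with $*$. The skeleton of (5) --- superposition product, resolution of crossings into a finite $\mathbb{Z}[\omega,\omega^{-1}]$-combination of simple multicurves, each of which is some $\mathbb{I}_{\mathcal{A}}^q(l'')$ --- is also the paper's route (with the minor caveat that negatively weighted special curves force one to first split $\mathbf{w}=\mathbf{w}_++\mathbf{w}_-$ and carry the factors $M_{T_l}(\mathbf{w}_-)$ along outside the skein computation). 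But there are two genuine gaps.

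First, for (4) your plan is to extract the $q$-power by direct computation through the proof of Theorem~\ref{thm:Xcanonicalregular}, asserting that the compatibility identity converts $\Lambda_T(\mathbf{g}_i,\mathbf{g}_j)$ into $\varepsilon_{ij}$-data. That conversion is not carried out and is not routine: the compatibility identity relates $\Lambda_T$ to the exchange matrix only when one argument is a column of $\mathbf{B}_T$, and the $\mathbf{g}$-vectors are not such columns, so the claimed exponent $-\sum_{i<j}\varepsilon_{ij}a_ia_j$ does not fall out of it directly. The paper avoids this computation entirely: by (1) the coefficient $c$ of $X_1^{a_1}\cdots X_n^{a_n}$ lies in $\mathbb{Z}_{\geq0}[q,q^{-1}]$, by (2) it specializes to $1$ at $q=1$, hence $c=q^s$ for a single integer $s$; then $*$-invariance of the leading term from (3), combined with the commutation relations $X_iX_j=q^{2\varepsilon_{ij}}X_jX_i$ used to reverse the order of the monomial, forces $s=-\sum_{i<j}\varepsilon_{ij}a_ia_j$. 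Second, and more seriously, your argument that the structure constants in (5) land in $\mathbb{Z}[q,q^{-1}]$ rather than $\mathbb{Z}[\omega,\omega^{-1}]$ does not work as stated: the skein resolution produces powers of $\omega^{\pm1}$ and $\omega^{\pm2}$ at each step, and the parity of edge-crossings of the laminations does not by itself control the exponents accumulated during resolution. The paper's actual mechanism is a triangularity argument: order the resulting laminations $l_1,\dots,l_k$ by their distinct classical leading monomials under a lexicographic order; since the left-hand side is a Laurent polynomial over $\mathbb{Z}[q,q^{-1}]$ and the top leading term $c^\omega(l,l';l_1)[\mathbb{I}_{\mathcal{A}}^q(l_1)]^H$ cannot cancel against anything below it, $c^\omega(l,l';l_1)\in\mathbb{Z}[q,q^{-1}]$; subtract and induct downward. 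This same device also supplies the linear independence you rightly flag as necessary, so no separate basis theorem needs to be invoked.
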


\begin{proof}
1. This was proved in Theorem~\ref{thm:Xcanonicalregular}.

2. This follows immediately from the definitions.

3. If $K$ is any framed link in~$S$, let $K^\dagger$ be the framed link with the same underlying multicurve and the order of each crossing reversed. By Proposition~3.11 of~\cite{Muller} the map sending $[K]$ to $[K^\dagger]$ and $\omega$ to $\omega^\dagger\coloneqq\omega^{-1}$ extends to an involutive antiautomorphism of the skein algebra $\mathrm{Sk}_\omega(S)$. It extends further to an antiautomorphism of the fraction field $\mathcal{F}$ by the rule $(xy^{-1})^\dagger=(y^\dagger)^{-1}x^\dagger$. This operation is compatible with $*$ in the sense that 
\[
X_j^\dagger=X_j=*X_j
\]
and 
\[
q^\dagger=q^{-1}=*q.
\]
It is easy to check that $\dagger$ preserves $M_{T_l}(\mathbf{w})=\omega^{\sum_{i<j}\Lambda_{T_l}(\mathbf{e}_i,\mathbf{e}_j)}M_{T_l}(\mathbf{e}_1)^{w_1}\dots M_{T_l}(\mathbf{e}_m)^{w_m}$. It follows that $\mathbb{I}_{\mathcal{A}}^q(l)$ is $*$-invariant.

4. By part~1, we can write
\[
\mathbb{I}_{\mathcal{A}}^q(l) = \sum_{(i_1,\dots,i_n)\in\supp(l)}c_{i_1,\dots,i_n}X_1^{i_1}\dots X_n^{i_n}
\]
for some finite subset $\supp(l)\subseteq\mathbb{Z}^n$ where the coefficient $c_{i_1,\dots,i_n}\in\mathbb{Z}_{\geq0}[q,q^{-1}]$ is nonzero for all $(i_1,\dots,i_n)\in\supp(l)$. Consider the coefficient $c=c_{a_1,\dots,a_n}$. We can expand this coefficient as $c=\sum_sc_sq^s$. By setting $q=1$, we see that $\sum_sc_s=1$. It follows that we must have $c_s=1$ for some $s$, and all other terms must vanish. Thus $c=q^s$, and the leading term of $\mathbb{I}_{\mathcal{A}}^q(l)$ has the form
\[
q^sX_1^{a_1}\dots X_n^{a_n}
\]
for some integer $s$. By part~3, we know that the expression $\mathbb{I}_{\mathcal{A}}^q(l)$, and hence its leading term, is $*$-invariant. This means 
\[
q^{-s} X_n^{a_n}\dots X_1^{a_1}=q^s X_1^{a_1}\dots X_n^{a_n}.
\]
We have 
\begin{align*}
X_n^{a_n}\dots X_1^{a_1} &= q^{-2\sum_{j=2}^n\varepsilon_{1j}a_1a_j} X_1^{a_1}(X_n^{a_n}\dots X_2^{a_2}) \\
&= q^{-2\sum_{j=2}^n\varepsilon_{1j}a_1a_j} q^{-2\sum_{j=3}^n\varepsilon_{2j}a_2a_j} X_1^{a_1}X_2^{a_2} (X_n^{a_n}\dots X_3^{a_3}) \\
&= \dots \\
&= q^{-2\sum_{i<j}\varepsilon_{ij}a_ia_j} X_1^{a_1}\dots X_n^{a_n},
\end{align*}
so $*$-invariance of the leading term is equivalent to 
\[
q^{-s-2\sum_{i<j}\varepsilon_{ij}a_ia_j} X_1^{a_1}\dots X_n^{a_n}=q^s X_1^{a_1}\dots X_n^{a_n}.
\]
Equating coefficients, we see that $s=-\sum_{i<j}\varepsilon_{ij}a_ia_j$. Thus we see that the highest term equals $q^{-\sum_{i<j}\varepsilon_{ij}a_ia_j}X_1^{a_1}\dots X_n^{a_n}$.

5. Let $l$,~$l'\in\mathcal{A}_0(\mathbb{Z}^t)$. Then there exist ideal triangulations $T_l$ and $T_{l'}$ of~$S$ and integral vectors $\mathbf{w}$ and $\mathbf{w}'$ such that $\mathbb{I}_{\mathcal{A}}^q(l)=M_{T_l}(\mathbf{w})$ and $\mathbb{I}_{\mathcal{A}}^q(l')=M_{T_{l'}}(\mathbf{w}')$. For any integral vector $\mathbf{v}=(v_1,\dots,v_m)$, we can consider the vector $\mathbf{v}_+$ whose $i$th component equals $v_i$ if $v_i\geq0$ and zero otherwise. We can likewise consider the vector $\mathbf{v}_-$ whose $i$th component equals $v_i$ if $v_i\leq0$ and zero otherwise. Then there exists an integer $N$ such that 
\begin{align*}
\mathbb{I}_{\mathcal{A}}^q(l)\mathbb{I}_{\mathcal{A}}^q(l') &= M_{T_l}(\mathbf{w}) M_{T_{l'}}(\mathbf{w}') \\
&= \omega^N M_{T_l}(\mathbf{w}_-)M_{T_l}(\mathbf{w}_+)M_{T_{l'}}(\mathbf{w}_+')M_{T_{l'}}(\mathbf{w}_-') \\
&= \omega^N M_{T_l}(\mathbf{w}_-)[T_l^{\mathbf{w}_+}][T_{l'}^{\mathbf{w}_+'}]M_{T_{l'}}(\mathbf{w}_-').
\end{align*}
By applying the skein relations, we can write the product $[T_l^{\mathbf{w}_+}][T_{l'}^{\mathbf{w}_+'}]$ as 
\[
[T_l^{\mathbf{w}_+}][T_{l'}^{\mathbf{w}_+'}]=\sum_{i=1}^kd_i^\omega[K_i]
\]
where the $K_i$ are distinct simple multicurves on~$S$ and $d_i^\omega\in\mathbb{Z}[\omega,\omega^{-1}]$ are nonzero. Therefore 
\[
\mathbb{I}_{\mathcal{A}}^q(l)\mathbb{I}_{\mathcal{A}}^q(l') = \sum_{i=1}^k\omega^Nd_i^\omega M_{T_i}(\mathbf{w}_-)M_{T_i}(\mathbf{v}_i)M_{T_i}(\mathbf{w}_-')
\]
where $T_i$ is an ideal triangulation of $S$ such that each each curve of $K_i$ coincides with an edge of $T_i$. Here we have written $\mathbf{v}_i$ for the unique integral vector such that $[K_i]=M_{T_i}(\mathbf{v}_i)$. Each of the products $\omega^Nd_i^\omega M_{T_i}(\mathbf{w}_-)M_{T_i}(\mathbf{v}_i)M_{T_i}(\mathbf{w}_-')$ in this last expression can be written as $c^\omega(l,l';l_i)\mathbb{I}_\mathcal{A}^q(l_i)$ for some $l_i\in\mathcal{A}_0(\mathbb{Z}^t)$ and some $c^\omega(l,l';l_i)\in\mathbb{Z}[\omega,\omega^{-1}]$. Hence 
\[
\mathbb{I}_{\mathcal{A}}^q(l)\mathbb{I}_{\mathcal{A}}^q(l') = \sum_{i=1}^kc^\omega(l,l';l_i)\mathbb{I}_\mathcal{A}^q(l_i).
\]
The left hand side of this last equation is a Laurent polynomial with coefficients in~$\mathbb{Z}[q,q^{-1}]$. Let us write $[\mathbb{I}_{\mathcal{A}}^1(l_i)]^H$ for the highest term of $\mathbb{I}_{\mathcal{A}}^1(l_i)$. We can impose a lexicographic total ordering $\geq$ on the set of all commutative Laurent monomials in $X_1,\dots,X_n$ so that $[\mathbb{I}_{\mathcal{A}}^1(l_i)]^H$ is indeed the highest term of $\mathbb{I}_{\mathcal{A}}^1(l_i)$ with respect to this total ordering. These highest terms are distinct, so we may assume 
\[
[\mathbb{I}_{\mathcal{A}}^1(l_1)]^H > [\mathbb{I}_{\mathcal{A}}^1(l_2)]^H > \dots > [\mathbb{I}_{\mathcal{A}}^1(l_k)]^H.
\]
Consider the expression $c^\omega(l,l';l_1)[\mathbb{I}_{\mathcal{A}}^q(l_1)]^H$. It cannot cancel with any other term in the sum, so we must have $c^\omega(l,l';l_1)\in\mathbb{Z}[q,q^{-1}]$. It follows that the sum 
\[
\sum_{i=2}^kc^\omega(l,l';l_i)\mathbb{I}_\mathcal{A}^q(l_i)
\]
is a Laurent polynomial with coefficients in~$\mathbb{Z}[q,q^{-1}]$. Arguing as before, we see that $c^\omega(l,l';l_2)\in\mathbb{Z}[q,q^{-1}]$. Continuing in this way, we see that all $c^\omega(l,l';l_i)$ lie in $\mathbb{Z}[q,q^{-1}]$. This completes the proof.
\end{proof}

\section{Duality map for the quantum symplectic double}

\subsection{Realization of the symplectic double}

We have associated a quantum cluster algebra $\mathcal{A}$ to the disk $S$. One can likewise associate a quantum cluster algebra $\mathcal{A}^\circ$ to the disk $S^\circ$ obtained from~$S$ by reversing its orientation. If $T$ is any ideal triangulation of $S$, then there is a corresponding triangulation $T^\circ$ of~$S^\circ$. We will write $\Lambda_{T^\circ}$ and $M_{T^\circ}$, respectively, for the compatibility matrix and toric frame corresponding to the triangulation~$T^\circ$. We will write $\mathbf{e}_i$ for the standard basis vector in $\mathbb{Z}^m$ associated to the edge $i$ of $T$ or the corresponding edge of $T^\circ$. Note that we have 
\[
\Lambda_{T^\circ}(\mathbf{e}_i,\mathbf{e}_j)=-\Lambda_T(\mathbf{e}_i,\mathbf{e}_j)
\]
for all $i$ and~$j$. Let $\mathcal{F}$ be the ambient skew-field of the quantum cluster algebra $\mathcal{A}$, and let $\mathcal{F}^\circ$ be the skew-field of the quantum cluster algebra $\mathcal{A}^\circ$. Each of these skew-fields is a two-sided module over the ring $\mathbb{Z}[\omega,\omega^{-1}]$, and we will consider the following elements of the tensor product $\mathcal{F}\otimes_{\mathbb{Z}[\omega,\omega^{-1}]}\mathcal{F}^\circ$.

\begin{definition}
\label{def:tensorgenerators}
Let $T$ be an ideal triangulation of $S$. For any $i\in I$ and any $j\in J$, we define elements $B_i=B_{i;T}$ and $X_j=X_{j;T}$ of $\mathcal{F}\otimes_{\mathbb{Z}[\omega,\omega^{-1}]}\mathcal{F}^\circ$ by the formulas 
\begin{align*}
X_j &= M_T\big(\sum_s\varepsilon_{js}\mathbf{e}_s\big)\otimes1, \\
B_i &= M_T(-\mathbf{e}_i)\otimes M_{T^\circ}(\mathbf{e}_i).
\end{align*}
In addition, we will use the notation $q=\omega^4$.
\end{definition}

Let us denote by $\mathcal{G}$ the subalgebra of $\mathcal{F}\otimes_{\mathbb{Z}[\omega,\omega^{-1}]}\mathcal{F}^\circ$ consisting of Laurent polynomials in the variables $B_i$ for $i\in I$ whose coefficients are rational expressions in the $X_j$ with coefficients in $\mathbb{Z}[\omega,\omega^{-1}]$. It is easy to check that the $B_i$ for $i\in I-J$ are central elements of this algebra, and in what follows it will be important to consider the quotient $\mathcal{F}_{\mathcal{D}}=\mathcal{G}/\mathcal{I}$ where $\mathcal{I}$ is the two-sided ideal of $\mathcal{G}$ generated by $B_i-1$ for $i\in I-J$.

The following result relates the elements $X_j$ and $B_j$ to the generators appearing in the definition of the quantum symplectic double.

\begin{proposition}
\label{prop:commutation}
The elements $B_j$ and $X_j$~($j\in J$) satisfy the following commutation relations:
\[
X_iX_j=q^{2\varepsilon_{ij}}X_jX_i, \quad B_iB_j=B_jB_i, \quad X_iB_j=q^{2\delta_{ij}}B_jX_i.
\]
\end{proposition}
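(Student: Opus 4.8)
The statement is a direct computation with toric frames, relying on three facts already available: the toric frame rule $M_T(v)M_T(w)=\omega^{-\Lambda_T(v,w)}M_T(v+w)$ (equivalently, reading off the relation between $M_T(v)M_T(w)$ and $M_T(w)M_T(v)$, the identity $M_T(v)M_T(w)=\omega^{-2\Lambda_T(v,w)}M_T(w)M_T(v)$, used already in the proof of Proposition~\ref{prop:Xcommutation}); the identity $\Lambda_{T^\circ}(\mathbf{e}_i,\mathbf{e}_j)=-\Lambda_T(\mathbf{e}_i,\mathbf{e}_j)$ noted just above this proposition; and the consequence $\Lambda_T\bigl(\sum_s\varepsilon_{is}\mathbf{e}_s,\mathbf{e}_j\bigr)=4\delta_{ij}$ of the compatibility condition $\sum_k b_{kj}\lambda_{ki}=4\delta_{ij}$, which was extracted in the proof of Theorem~\ref{thm:Xcanonicalregular}. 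Since $B_j$ and $X_j$ for $j\in J$ are built out of $M_T$ and $M_{T^\circ}$, all identities can be checked in $\mathcal{F}\otimes_{\mathbb{Z}[\omega,\omega^{-1}]}\mathcal{F}^\circ$ and then pushed to the quotient $\mathcal{F}_{\mathcal{D}}$.

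First I would dispose of $X_iX_j=q^{2\varepsilon_{ij}}X_jX_i$: the element $X_{j;T}\in\mathcal{F}_{\mathcal{D}}$ is the image of $X_{j;T}\in\mathcal{F}$ from Proposition~\ref{prop:Xcommutation} under the ring homomorphism $x\mapsto\overline{x\otimes 1}$, so this relation is inherited verbatim. Next, for $B_iB_j=B_jB_i$, I would expand $B_iB_j=\bigl(M_T(-\mathbf{e}_i)M_T(-\mathbf{e}_j)\bigr)\otimes\bigl(M_{T^\circ}(\mathbf{e}_i)M_{T^\circ}(\mathbf{e}_j)\bigr)$ and apply the toric frame rule to each factor; this produces prefactors $\omega^{-\Lambda_T(\mathbf{e}_i,\mathbf{e}_j)}$ and $\omega^{-\Lambda_{T^\circ}(\mathbf{e}_i,\mathbf{e}_j)}=\omega^{\Lambda_T(\mathbf{e}_i,\mathbf{e}_j)}$, which cancel, leaving $M_T(-\mathbf{e}_i-\mathbf{e}_j)\otimes M_{T^\circ}(\mathbf{e}_i+\mathbf{e}_j)$. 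This expression is visibly symmetric in $i$ and $j$, so $B_iB_j=B_jB_i$; note that the cancellation of the $\omega$-power is exactly where the orientation reversal built into $S^\circ$ enters.

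Finally, for $X_iB_j=q^{2\delta_{ij}}B_jX_i$, I would write $X_iB_j=\bigl(M_T(\sum_s\varepsilon_{is}\mathbf{e}_s)M_T(-\mathbf{e}_j)\bigr)\otimes M_{T^\circ}(\mathbf{e}_j)$ and $B_jX_i=\bigl(M_T(-\mathbf{e}_j)M_T(\sum_s\varepsilon_{is}\mathbf{e}_s)\bigr)\otimes M_{T^\circ}(\mathbf{e}_j)$; the second tensor factors coincide, and the first factors differ by $\omega^{-2\Lambda_T(\sum_s\varepsilon_{is}\mathbf{e}_s,-\mathbf{e}_j)}=\omega^{8\delta_{ij}}=q^{2\delta_{ij}}$, using $q=\omega^4$ together with the compatibility identity above. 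Assembling the three relations gives the proposition. There is no genuine obstacle here; the only delicate point is the sign bookkeeping — in particular keeping straight the convention $M_T(v)M_T(w)=\omega^{-\Lambda_T(v,w)}M_T(v+w)$ and the sign flip $\Lambda_{T^\circ}=-\Lambda_T$ — and verifying that these computations are legitimately carried out in $\mathcal{F}\otimes\mathcal{F}^\circ$ before descending to $\mathcal{F}_{\mathcal{D}}$.
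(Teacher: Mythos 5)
Your proposal is correct and follows essentially the same route as the paper: the first relation is inherited from Proposition~\ref{prop:Xcommutation}, the second reduces to the cancellation $\Lambda_T(\mathbf{e}_i,\mathbf{e}_j)+\Lambda_{T^\circ}(\mathbf{e}_i,\mathbf{e}_j)=0$, and the third to the compatibility identity $\Lambda_T\bigl(\sum_s\varepsilon_{is}\mathbf{e}_s,-\mathbf{e}_j\bigr)=-4\delta_{ij}$ together with $q=\omega^4$. The only cosmetic difference is that for $B_iB_j=B_jB_i$ you merge each factor into a single toric-frame value $M_T(-\mathbf{e}_i-\mathbf{e}_j)\otimes M_{T^\circ}(\mathbf{e}_i+\mathbf{e}_j)$ and invoke symmetry, whereas the paper commutes the factors directly; both are the same computation.
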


\begin{proof}
The proof of the first relation is identical to the proof of Proposition~\ref{prop:Xcommutation}. To prove the second relation, observe that 
\begin{align*}
B_iB_j &= M_T(-\mathbf{e}_i)M_T(-\mathbf{e}_j)\otimes M_{T^\circ}(\mathbf{e}_i)M_{T^\circ}(\mathbf{e}_j) \\
&= \omega^{-2(\Lambda_T(\mathbf{e}_i,\mathbf{e}_j)+\Lambda_{T^\circ}(\mathbf{e}_i,\mathbf{e}_j))} M_T(-\mathbf{e}_j)M_T(-\mathbf{e}_i)\otimes M_{T^\circ}(\mathbf{e}_j)M_{T^\circ}(\mathbf{e}_i) \\
&= \omega^{-2(\Lambda_T(\mathbf{e}_i,\mathbf{e}_j)+\Lambda_{T^\circ}(\mathbf{e}_i,\mathbf{e}_j))} B_jB_i
\end{align*}
and 
\[
\Lambda_T(\mathbf{e}_i,\mathbf{e}_j)+\Lambda_{T^\circ}(\mathbf{e}_i,\mathbf{e}_j)=0.
\]
Therefore $B_iB_j=B_jB_i$. Finally, we have 
\begin{align*}
X_iB_j &= M_T\big(\sum_s\varepsilon_{is}\mathbf{e}_s\big)M_T(-\mathbf{e}_j)\otimes M_{T^\circ}(\mathbf{e}_j) \\
&= \omega^{-2\Lambda_T(\sum_s\varepsilon_{is}\mathbf{e}_s,-\mathbf{e}_j)} M_T(-\mathbf{e}_j)M_T\big(\sum_s\varepsilon_{is}\mathbf{e}_s\big)\otimes M_{T^\circ}(\mathbf{e}_j) \\
&= \omega^{-2\Lambda_T(\sum_s\varepsilon_{is}\mathbf{e}_s,-\mathbf{e}_j)} B_jX_i.
\end{align*}
By the compatibility condition, we have 
\begin{align*}
\Lambda_T\big(\sum_s\varepsilon_{is}\mathbf{e}_s,-\mathbf{e}_j\big) &= -\sum_s\varepsilon_{is}\Lambda_T(\mathbf{e}_s,\mathbf{e}_j) \\
&= -\sum_sb_{si}\lambda_{sj} \\
&= -4\delta_{ij}.
\end{align*}
Therefore $X_iB_j=\omega^{8\delta_{ij}}B_jX_i=q^{2\delta_{ij}}B_jX_i$.
\end{proof}

In the following result, $T'$ denotes the triangulation obtained from $T$ by a flip of the edge~$k$.

\begin{proposition}
\label{prop:Btrans}
For each $i$, let $B_i'=B_{i;T'}$. Then 
\[
B_i'=
\begin{cases}
(qX_k\mathbb{B}_k^++\mathbb{B}_k^-)B_k^{-1}(1+q^{-1}X_k)^{-1} & \mbox{if } i=k \\
B_i & \mbox{if } i\neq k.
\end{cases}
\]
\end{proposition}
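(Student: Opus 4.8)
The plan is to dispose of the case $i\neq k$ at once and then compute $B_k'$ directly from the quantum exchange relation, combining the computation in $\mathcal{F}$ with the mirror computation in $\mathcal{F}^\circ$. If $i\neq k$, the flip at $k$ does not touch the edge $i$, so the simple multicurves representing $i$ on $S$ and on $S^\circ$ are unchanged; hence $M_{T'}(\mathbf{e}_i)=M_T(\mathbf{e}_i)$, $M_{T'^\circ}(\mathbf{e}_i)=M_{T^\circ}(\mathbf{e}_i)$, and therefore $B_i'=B_i$. For $i=k$ I would write $B_k'=M_{T'}(-\mathbf{e}_k)\otimes M_{T'^\circ}(\mathbf{e}_k)$ and use Proposition~\ref{prop:quantumflip}: since $(\mathbf{B}_{T'},M_{T'})$ is obtained from $(\mathbf{B}_T,M_T)$ by a mutation in the direction~$k$, the quantum exchange relation (recalled in Appendix~\ref{ch:ReviewOfQuantumClusterAlgebras}, and made explicit with its $\omega$-coefficients as in the sources cited there) expresses $M_{T'}(\mathbf{e}_k)$ inside $\mathcal{F}$; the same relation, applied to $T^\circ$, expresses $M_{T'^\circ}(\mathbf{e}_k)$ inside $\mathcal{F}^\circ$. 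I would then multiply the two inside $\mathcal{F}_{\mathcal{D}}$.

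In more detail, set $\mathbf{p}^{+}=\sum_i[\varepsilon_{ki}]_+\mathbf{e}_i$ and $\mathbf{p}^{-}=\sum_i[-\varepsilon_{ki}]_+\mathbf{e}_i$, using the identity $\varepsilon_{ki}=b_{ik}$ that follows from the compatibility condition just as in the proof of Proposition~\ref{prop:Xcommutation}; thus $\sum_s\varepsilon_{ks}\mathbf{e}_s=\mathbf{p}^{+}-\mathbf{p}^{-}$ and $X_k=M_T(\mathbf{p}^{+}-\mathbf{p}^{-})\otimes1$. The quantum exchange relation gives $M_{T'}(\mathbf{e}_k)=\bigl(\omega^{a_1}M_T(\mathbf{p}^{+})+\omega^{a_2}M_T(\mathbf{p}^{-})\bigr)M_T(-\mathbf{e}_k)$ for suitable integers $a_1,a_2$, and since $M_T(-v)=M_T(v)^{-1}$ this inverts to $M_{T'}(-\mathbf{e}_k)=M_T(\mathbf{e}_k)\bigl(\omega^{a_1}M_T(\mathbf{p}^{+})+\omega^{a_2}M_T(\mathbf{p}^{-})\bigr)^{-1}$. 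Because the exchange matrix of~$T^\circ$ is the negative of that of~$T$ (reversing orientation swaps ``clockwise'' and ``counterclockwise''), the roles of $\mathbf{p}^{+}$ and $\mathbf{p}^{-}$ are interchanged in~$\mathcal{F}^\circ$, giving $M_{T'^\circ}(\mathbf{e}_k)=\bigl(\omega^{a_1'}M_{T^\circ}(\mathbf{p}^{-})+\omega^{a_2'}M_{T^\circ}(\mathbf{p}^{+})\bigr)M_{T^\circ}(-\mathbf{e}_k)$. Tensoring these and using that the two factors of $\mathcal{F}_{\mathcal{D}}$ commute, that $M_T(\mathbf{e}_k)\otimes M_{T^\circ}(-\mathbf{e}_k)=B_k^{-1}$, that $M_T(-\mathbf{p}^{\pm})\otimes M_{T^\circ}(\mathbf{p}^{\pm})$ equals $\mathbb{B}_k^{\pm}$ up to an explicit power of~$\omega$, and that $M_T(\mathbf{p}^{-})^{-1}M_T(\mathbf{p}^{+})\otimes1$ is a power of~$\omega$ times~$X_k$, I expect the product to reassemble into $\omega^{N}\bigl(q\,X_k\mathbb{B}_k^{+}+\mathbb{B}_k^{-}\bigr)B_k^{-1}\bigl(1+q^{-1}X_k\bigr)^{-1}$ after the substitution $q=\omega^4$.

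The hard part will be the $\omega$-power bookkeeping that produces the two asymmetric factors $q$ and $q^{-1}$. Their origin is the compatibility condition $\sum_k b_{kj}\lambda_{ki}=4\delta_{ij}$, which yields $\Lambda_T(\mathbf{p}^{+}-\mathbf{p}^{-},\mathbf{e}_k)=4$; consequently $M_T(\mathbf{e}_k)$ does not commute with $M_T(\mathbf{p}^{+})+M_T(\mathbf{p}^{-})$, and the discrepancy between moving it past the two summands, after setting $q=\omega^4$, is precisely the factor that distinguishes $q\,X_k\mathbb{B}_k^{+}$ from $\mathbb{B}_k^{-}$; the sign flip $\Lambda_{T^\circ}=-\Lambda_T$ is what turns this into the $q^{-1}$ inside $(1+q^{-1}X_k)^{-1}$. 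Once those two occurrences of $X_k$ are normalized, the residual overall scalar $\omega^{N}$ must be~$1$, which can be checked either by specializing to $q=1$ and comparing with the defining mutation formula of the symplectic double (equivalently with Theorem~\ref{thm:transformation}), or by invoking the $\dagger$-invariance of $B_k'$, exactly as in part~3 of the earlier theorem on $\mathbb{I}_{\mathcal{A}}^q$. Finally, when $k$ has frozen neighbours one uses that $M_T(-\mathbf{e}_i)\otimes M_{T^\circ}(\mathbf{e}_i)=1$ in $\mathcal{F}_{\mathcal{D}}$ for frozen~$i$, so that $\mathbb{B}_k^{\pm}$ involves only the generators $B_j$ ($j\in J$) and the formula makes sense as stated; this yields $B_k'=(qX_k\mathbb{B}_k^{+}+\mathbb{B}_k^{-})B_k^{-1}(1+q^{-1}X_k)^{-1}$.
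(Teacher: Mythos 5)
Your proposal is correct and follows essentially the same route as the paper: apply the quantum exchange relation (Proposition~\ref{prop:exchangetoricframe}) in each tensor factor of $\mathcal{F}_{\mathcal{D}}$, factor out the monomial parts, and use the compatibility condition $\Lambda_T\big(\sum_i\varepsilon_{ki}\mathbf{e}_i,\mathbf{e}_k\big)=4$ together with $\Lambda_{T^\circ}=-\Lambda_T$ to produce the asymmetric factors $q$ and $q^{-1}$; the paper simply carries out the $\omega$-exponent bookkeeping explicitly (showing the two residual exponents $\beta$ and $\delta$ coincide and cancel) rather than appealing to an invariance argument for the overall scalar. One small caveat: specializing to $q=1$ (i.e.\ $\omega=1$) cannot detect a leftover factor $\omega^{N}$, so of your two proposed shortcuts only the $\dagger$-invariance one actually pins down the normalization.
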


\begin{proof}
By Proposition~\ref{prop:exchangetoricframe}, we have 
\begin{align*}
M_{T'}(\mathbf{e}_k) &= M_T\big(-\mathbf{e}_k+\sum_i[\varepsilon_{ki}]_+\mathbf{e}_i\big) + M_T\big(-\mathbf{e}_k+\sum_i[-\varepsilon_{ki}]_+\mathbf{e}_i\big) \\
&= M_T\big(\sum_i\varepsilon_{ki}\mathbf{e}_i-\mathbf{e}_k+\sum_i[-\varepsilon_{ki}]_+\mathbf{e}_i\big) + M_T\big(-\mathbf{e}_k+\sum_i[-\varepsilon_{ki}]_+\mathbf{e}_i\big) \\
&= \omega^\alpha M_T\big(\sum_i\varepsilon_{ki}\mathbf{e}_i\big) M_T(-\mathbf{e}_k) M_T\big(\sum_i[-\varepsilon_{ki}]_+\mathbf{e}_i\big) \\
&\qquad\qquad + \omega^\beta M_T(-\mathbf{e}_k) M_T\big(\sum_i[-\varepsilon_{ki}]_+\mathbf{e}_i\big)
\end{align*}
where we have written 
\[
\alpha = \Lambda_T\big(\sum_i\varepsilon_{ki}\mathbf{e}_i-\mathbf{e}_k, \sum_i[-\varepsilon_{ki}]_+\mathbf{e}_i\big) + \Lambda_T\big(\sum_i\varepsilon_{ki}\mathbf{e}_i,-\mathbf{e}_k\big)
\]
and 
\[
\beta = \Lambda_T\big(-\mathbf{e}_k, \sum_i[-\varepsilon_{ki}]_+\mathbf{e}_i\big).
\]
Factoring, we obtain 
\[
M_{T'}(\mathbf{e}_k) = \left(1+\omega^{\alpha-\beta}M_T\big(\sum_i\varepsilon_{ki}\mathbf{e}_i\big)\right) \omega^\beta M_T(-\mathbf{e}_k) M_T\big(\sum_i[-\varepsilon_{ki}]_+\mathbf{e}_i\big).
\]
A straightforward calculation using the compatibility condition shows $\alpha-\beta=-4$. Substituting this into the last expression, we see that 
\[
M_{T'}(\mathbf{e}_k)\otimes1 = (1+q^{-1}X_k)\omega^\beta \left(M(-\mathbf{e}_k)\otimes1\right) \left(M_T\big(\sum_i[-\varepsilon_{ki}]_+\mathbf{e}_i\big)\otimes1\right).
\]
On the other hand, we have 
\begin{align*}
1\otimes M_{(T')^\circ}(\mathbf{e}_k) &= 1\otimes M_{T^\circ}\big(-\mathbf{e}_k+\sum_i[\varepsilon_{ki}]_+\mathbf{e}_i\big) + 1\otimes M_{T^\circ}\big(-\mathbf{e}_k+\sum_i[-\varepsilon_{ki}]_+\mathbf{e}_i\big) \\
&= M_T\big(\sum_i\varepsilon_{ki}\mathbf{e}_i - \sum_i[\varepsilon_{ki}]_+\mathbf{e}_i + \sum_i[-\varepsilon_{ki}]_+\mathbf{e}_i\big)\otimes M_{T^\circ}\big(-\mathbf{e}_k+\sum_i[\varepsilon_{ki}]_+\mathbf{e}_i\big) \\
&\qquad+ M_T\big(-\sum_i[-\varepsilon_{ki}]_+\mathbf{e}_i + \sum_i[-\varepsilon_{ki}]_+\mathbf{e}_i\big) \otimes M_{T^\circ}\big(-\mathbf{e}_k+\sum_i[-\varepsilon_{ki}]_+\mathbf{e}_i\big).
\end{align*}
By the properties of toric frames, this equals 
\begin{align*}
&\omega^\gamma M_T\big(\sum_i\varepsilon_{ki}\mathbf{e}_i\big) M_T\big( - \sum_i[\varepsilon_{ki}]_+\mathbf{e}_i\big) M_T\big(\sum_i[-\varepsilon_{ki}]_+\mathbf{e}_i\big) \otimes M_{T^\circ}\big(\sum_i[\varepsilon_{ki}]_+\mathbf{e}_i\big)M_{T^\circ}(-\mathbf{e}_k) \\
&\qquad+\omega^\delta M_T\big(-\sum_i[-\varepsilon_{ki}]_+\mathbf{e}_i\big) M_T\big(\sum_i[-\varepsilon_{ki}]_+\mathbf{e}_i\big) \otimes M_{T^\circ}\big(\sum_i[-\varepsilon_{ki}]_+\mathbf{e}_i\big) M_{T^\circ}(-\mathbf{e}_k)
\end{align*}
for certain exponents $\gamma$ and $\delta$. Factoring, we obtain 
\begin{align*}
&\biggr(\omega^{\gamma-\delta} M_T\big(\sum_i\varepsilon_{ki}\mathbf{e}_i\big) M_T\big( - \sum_i[\varepsilon_{ki}]_+\mathbf{e}_i\big) \otimes M_{T^\circ}\big(\sum_i[\varepsilon_{ki}]_+\mathbf{e}_i\big) \\
&\qquad+ M_T\big(-\sum_i[-\varepsilon_{ki}]_+\mathbf{e}_i\big) \otimes M_{T^\circ}\big(\sum_i[-\varepsilon_{ki}]_+\mathbf{e}_i\big)\biggr) \omega^\delta M_T\big(\sum_i[-\varepsilon_{ki}]_+\mathbf{e}_i\big) \otimes M_{T^\circ}(-\mathbf{e}_k) \\
&=(\omega^{\gamma-\delta}X_k\mathbb{B}_k^++\mathbb{B}_k^-) \omega^\delta \left(1\otimes M_{T^\circ}(-\mathbf{e}_k)\right) \biggr(M_T\big(\sum_i[-\varepsilon_{ki}]_+\mathbf{e}_i\big)\otimes1\biggr).
\end{align*}
A straightforward calculation shows that $\delta=\beta$ and $\gamma-\delta=4$. Substituting these into the last expression, we see that 
\[
1\otimes M_{(T')^\circ}(\mathbf{e}_k) = (qX_k\mathbb{B}_k^++\mathbb{B}_k^-) \omega^\beta \left(1\otimes M_{T^\circ}(-\mathbf{e}_k)\right) \biggr(M_T\big(\sum_i[-\varepsilon_{ki}]_+\mathbf{e}_i\big)\otimes1\biggr).
\]
Finally, by the definition of $B_k'$, we have 
\begin{align*}
B_k' &= M_{T'}(-\mathbf{e}_k)\otimes M_{(T')^\circ}(\mathbf{e}_k) =(1\otimes M_{(T')^\circ}(\mathbf{e}_k)) (M_{T'}(\mathbf{e}_k)\otimes1)^{-1} \\
&= (qX_k\mathbb{B}_k^++\mathbb{B}_k^-) \omega^\beta \left(1\otimes M_{T^\circ}(-\mathbf{e}_k)\right) \biggr(M_T\big(\sum_i[-\varepsilon_{ki}]_+\mathbf{e}_i\big)\otimes1\biggr) \\
&\qquad\cdot \biggr(M_T\big(\sum_i[-\varepsilon_{ki}]_+\mathbf{e}_i\big)\otimes1\biggr)^{-1} \left(M_T(-\mathbf{e}_k)\otimes1\right)^{-1} \omega^{-\beta} (1+q^{-1}X_k)^{-1} \\
&= (qX_k\mathbb{B}_k^++\mathbb{B}_k^-)B_k^{-1}(1+q^{-1}X_k)^{-1}.
\end{align*}
For $i\neq k$, we clearly have $B_i'=B_i$. This completes the proof.
\end{proof}

By Proposition~\ref{prop:Xtrans}, there is a similar statement describing the transformation of the~$X_j$ under a flip of the triangulation.

\subsection{Construction of the map}

We are now ready to define the map $\mathbb{I}_{\mathcal{D}}^q$. To do this, let $l$ be a doubled lamination on~$S_{\mathcal{D}}$, represented by a collection of closed curves of weight~1. Let us deform the curves of $l$ so that they intersect the image of $\partial S$ in the minimal number of points and then cut along the image of $\partial S$. In this way, we obtain two collections of arcs drawn on the surfaces $S$ and~$S^\circ$. We deform each arc on $S$ by dragging its endpoints along the boundary in the counterclockwise direction until they hit the marked points. We deform each arc on $S^\circ$ by dragging its endpoints in the clockwise direction until they hit the marked points. Write $\mathcal{C}$ for the resulting collection of curves on $S$ and $\mathcal{C}^\circ$ for the resulting collection of curves on~$S^\circ$. There is an ideal triangulation $T_{\mathcal{C}}$ of $S$ such that each curve in $\mathcal{C}$ coincides with an edge of~$T_{\mathcal{C}}$, and there is an ideal triangulation $T_{\mathcal{C}^\circ}$ of $S^\circ$ such that each curve in $\mathcal{C}^\circ$ coincides with an edge of $T_{\mathcal{C}^\circ}$. Let 
\begin{align*}
\mathbf{w} &= (w_1,\dots,w_m), \\
\mathbf{w}^\circ &= (w_1^\circ,\dots,w_m^\circ)
\end{align*}
be integral vectors where $w_i$ is the total weight of curves homotopic to the edge $i$ of~$T_{\mathcal{C}}$ and $w_i^\circ$ is the total weight of curves homotopic to the edge $i$ of $T_{\mathcal{C}^\circ}$.

Fix an ideal triangulation $T$ of $S$. Then each $c\in\mathcal{C}$ determines an integral $\mathbf{g}$-vector~$\mathbf{g}_c$. The triangulation $T$ determines a corresponding ideal triangulation $T^\circ$ of~$S^\circ$, and so we can likewise associate to each $c^\circ\in\mathcal{C}^\circ$ an integral vector $\mathbf{g}_{c^\circ}$. One can show that the number 
\[
N_l\coloneqq\Lambda_T(\mathbf{g}_{\mathcal{C}},\mathbf{g}_{\mathcal{C}^\circ}),
\]
where $\mathbf{g}_{\mathcal{C}}=\sum_{c\in\mathcal{C}}\mathbf{g}_c$ and $\mathbf{g}_{\mathcal{C}^\circ}=\sum_{c^\circ\in\mathcal{C}^\circ}\mathbf{g}_{c^\circ}$, is independent of the choice of triangulation~$T$.

\begin{definition}
We will write $\mathbb{I}_{\mathcal{D}}^q(l)$ for the element of $\mathcal{F}\otimes_{\mathbb{Z}[\omega,\omega^{-1}]}\mathcal{F}^\circ$ given by 
\[
\mathbb{I}_{\mathcal{D}}^q(l)=\omega^{-N_l}\cdot[\mathcal{C}]^{-1}\otimes[\mathcal{C}^\circ].
\]
Here we are regarding $\mathcal{C}$ and $\mathcal{C}^\circ$ as simple multicurves on $S$ and $S^\circ$, respectively, and we are writing $[\mathcal{C}]$ and $[\mathcal{C}^\circ]$ for the corresponding classes in the skein algebra.
\end{definition}

Our goal is to show that the above definition provides a map $\mathbb{I}_{\mathcal{D}}^q:\mathcal{D}(\mathbb{Z}^t)\rightarrow\mathcal{D}^q$.

\begin{theorem}
\label{thm:Dcanonicalrational}
Let $T$ be an ideal triangulation of $S$, and let $B_i$,~$X_j$ for $i\in I$ and $j\in J$ be defined as above. Then for any $l\in\mathcal{D}_{PGL_2,S}(\mathbb{Z}^t)$, the element $\mathbb{I}_{\mathcal{D}}^q(l)$ is a Laurent polynomial in the $B_i$ whose coefficients are rational expressions in the $X_j$ with coefficients in~$\mathbb{Z}_{\geq0}[q,q^{-1}]$.
\end{theorem}

\begin{proof}
By applying the relations from Definition~\ref{def:skein}, we can write 
\[
\mathbb{I}_{\mathcal{D}}^q(l) = \omega^{-N_l}\cdot\omega^\alpha\prod_{i=1}^mM_{T_{\mathcal{C}}}(\mathbf{e}_i)^{-w_i} \otimes\omega^{\alpha^\circ}\prod_{i=1}^mM_{T_{\mathcal{C}^\circ}}(\mathbf{e}_i)^{w_i^\circ}
\]
where
\[
\alpha=\sum_{i<j}\Lambda_{T_{\mathcal{C}}}(\mathbf{e}_i,\mathbf{e}_j)w_iw_j
\]
and 
\[
\alpha^\circ=\sum_{i<j}\Lambda_{T_{\mathcal{C}^\circ}}(\mathbf{e}_i,\mathbf{e}_j)w_i^\circ w_j^\circ.
\]
Then Corollary~\ref{cor:Fpositivity} implies 
\[
\mathbb{I}_{\mathcal{D}}^q(l) = \omega^{-N_l+\alpha+\alpha^\circ}\cdot\prod_{i=1}^m\left(F_i\cdot M_T(\mathbf{g}_i)\right)^{-w_i}\otimes \prod_{i=1}^m\left(F_i^\circ\cdot M_{T^\circ}(\mathbf{g}_i^\circ)\right)^{w_i^\circ}.
\]
Each $F_i$ in the last line denotes a polynomial in the expressions $M_T(\sum_j\varepsilon_{kj}\mathbf{e}_j)$ with coefficients in $\mathbb{Z}_{\geq0}[\omega^4,\omega^{-4}]$, and each $F_i^\circ$ denotes a polynomial in the expressions $M_{T^\circ}(\sum_j\varepsilon_{kj}\mathbf{e}_j)$ with coefficients in $\mathbb{Z}_{\geq0}[\omega^4,\omega^{-4}]$. The $\mathbf{g}_i$ and $\mathbf{g}_i^\circ$ are integral vectors. We have 
\begin{align*}
\Lambda_T\big(\sum_i\varepsilon_{ki}\mathbf{e}_i,\mathbf{e}_j\big) = \sum_i\varepsilon_{ki}\lambda_{ij} = \sum_i b_{ik}\lambda_{ij} = 4\delta_{kj}
\end{align*}
so that 
\[
M_T\big(\sum_i\varepsilon_{ki}\mathbf{e}_i\big)M_T(\mathbf{e}_j) = \omega^{-8\delta_{kj}} M_T(\mathbf{e}_j)M_T\big(\sum_i\varepsilon_{ki}\mathbf{e}_i\big).
\]
Similarly, we have 
\[
M_{T^\circ}\big(\sum_i\varepsilon_{ki}\mathbf{e}_i\big)M_{T^\circ}(\mathbf{e}_j) = \omega^{8\delta_{kj}} M_{T^\circ}(\mathbf{e}_j)M_{T^\circ}\big(\sum_i\varepsilon_{ki}\mathbf{e}_i\big).
\]
It follows that we can rearrange the factors in the last expression for $\mathbb{I}_{\mathcal{D}}^q(l)$ to get 
\begin{align*}
\mathbb{I}_{\mathcal{D}}^q(l) &= \omega^{-N_l+\alpha+\alpha^\circ} P\cdot\prod_{i=1}^m M_T(\mathbf{g}_i)^{-w_i}\otimes Q\cdot\prod_{i=1}^m M_{T^\circ}(\mathbf{g}_i^\circ)^{w_i^\circ} \\
&= \omega^{-N_l+\alpha+\alpha^\circ}(P\otimes Q)\cdot\big(\prod_{i=1}^m M_T(\mathbf{g}_i)^{-w_i}\otimes \prod_{i=1}^m M_{T^\circ}(\mathbf{g}_i^\circ)^{w_i^\circ}\big)
\end{align*}
where $P$ is a rational function in the expressions $M_T(\sum_i\varepsilon_{kj}\mathbf{e}_j)$ with coefficients in $\mathbb{Z}_{\geq0}[\omega^4,\omega^{-4}]$ and $Q$ is a polynomial in the expressions $M_{T^\circ}(\sum_i\varepsilon_{kj}\mathbf{e}_j)$ with coefficients in $\mathbb{Z}_{\geq0}[\omega^4,\omega^{-4}]$. We claim that the factor $P\otimes Q$ is a Laurent polynomial in the $B_j$ whose coefficients are rational expressions in the~$X_j$ with coefficients in~$\mathbb{Z}_{\geq0}[q,q^{-1}]$. Indeed, $P\otimes 1$ is a rational function in the $X_j$, while $1\otimes Q$ is a polynomial in the expressions 
\begin{align*}
1\otimes M_{T^\circ}\big(\sum_i\varepsilon_{ki}\mathbf{e}_i\big) &= M_T\big(\sum_i\varepsilon_{ki}\mathbf{e}_i-\sum_i\varepsilon_{ki}\mathbf{e}_i\big) \otimes M_{T^\circ}\big(\sum_i\varepsilon_{ki}\mathbf{e}_i\big) \\
&= M_T\big(\sum_i\varepsilon_{ki}\mathbf{e}_i\big)M_T\big(-\sum_i\varepsilon_{ki}\mathbf{e}_i\big) \otimes M_{T^\circ}\big(\sum_i\varepsilon_{ki}\mathbf{e}_i\big) \\
&= X_k\mathbb{B}_k.
\end{align*}
Hence the product 
\[
P\otimes Q=(P\otimes1)\cdot(1\otimes Q)
\]
is a rational expression as claimed. By Lemma~6.2 in~\cite{Tran}, we have $\Lambda_{T_\mathcal{C}}(\mathbf{e}_i,\mathbf{e}_j)=\Lambda_T(\mathbf{g}_i,\mathbf{g}_j)$, and so we can write 
\[
\mathbb{I}_{\mathcal{D}}(l) = \omega^{-N_l}(P\otimes Q)\cdot\left(M_T(-\mathbf{g}_{\mathcal{C}})\otimes M_{T^\circ}(\mathbf{g}_{\mathcal{C}^\circ})\right).
\]
We have 
\[
M_T(-\mathbf{g}_{\mathcal{C}^\circ})\otimes M_{T^\circ}(\mathbf{g}_{\mathcal{C}^\circ})=\prod_{i=1}^mB_i^{g_i}
\]
for some integral exponents $g_i$, and so 
\begin{align*}
\mathbb{I}_{\mathcal{D}}(l) &= \omega^{-N_l}(P\otimes Q)\cdot\left(M_T(-\mathbf{g}_{\mathcal{C}}) M_T(\mathbf{g}_{\mathcal{C}^\circ})\otimes1\right)\cdot\prod_{i=1}^mB_i^{g_i} \\
&=(P\otimes Q)\cdot\left(M_T(-\mathbf{g}_{\mathcal{C}} + \mathbf{g}_{\mathcal{C}^\circ})\otimes1\right)\cdot\prod_{i=1}^mB_i^{g_i}.
\end{align*}
Arguing as in the proof of Lemma~\ref{lem:factorization}, one shows that the factor $M_T(-\mathbf{g}_{\mathcal{C}} + \mathbf{g}_{\mathcal{C}^\circ})\otimes1$ is a monomial in the $X_j$ with coefficients in $\mathbb{Z}_{\geq0}[\omega^4,\omega^{-4}]$. This completes the proof.
\end{proof}

Thus we see that $\mathbb{I}_{\mathcal{D}}(l)$ is an element of the algebra $\mathcal{G}$ introduced following Definition~\ref{def:tensorgenerators}. By the properties established in Propositions~\ref{prop:commutation}, \ref{prop:Btrans}, and~\ref{prop:Xtrans}, we can regard its image in the quotient $\mathcal{F}_{\mathcal{D}}$ as an element of the algebra $\mathcal{D}_{PGL_2,S}^q$. Thus we have constructed a canonical map $\mathbb{I}_{\mathcal{D}}^q:\mathcal{D}_{PGL_2,S}(\mathbb{Z}^t)\rightarrow\mathcal{D}_{PGL_2,S}^q$ as desired.

\appendix
\chapter{Derivation of the classical mutation formulas}
\label{ch:DerivationOfTheClassicalMutationFormulas}

In this appendix, we calculate the action of the map $\mu_k^q$ of Definition~\ref{def:doublemutation} on the generators~$B_i$ and~$X_i$. As a result of this calculation, we obtain a proof of Theorems~\ref{thm:introclassicallimit} and~\ref{thm:transformation}.

\begin{appendixlemma}
\label{lem:commuteseries}
Let $\varphi(x)$ be any formal power series in $x$. Then 
\[
\varphi(X_k)B_i = B_i\varphi(q^{2\delta_{ik}}X_k), \quad
\varphi(\widehat{X}_k)B_i = B_i\varphi(q^{2\delta_{ik}}\widehat{X}_k),
\]
and
\[
\varphi(X_k)X_i = X_i\varphi(q^{2\varepsilon_{ki}}X_k).
\]
\end{appendixlemma}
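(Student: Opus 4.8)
The plan is to prove all three identities by the same mechanism: reduce each to the defining commutation relations of the quantum torus algebra $\mathcal{D}_{\mathbf{i}}^q$ applied monomial-by-monomial, and then extend to arbitrary formal power series by linearity and continuity in the $x$-adic topology. Since $\varphi(x)=\sum_{n\geq0}c_nx^n$, it suffices to establish the three identities with $\varphi(x)$ replaced by the single monomial $x^n$ for each $n\geq0$; the full statement then follows by summing, as each side is a well-defined formal power series in $X_k$ (resp. $\widehat{X}_k$) with coefficients in the algebra generated by the other generators.

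First I would handle $X_k^n B_i = B_i (q^{2\delta_{ik}}X_k)^n$. This is immediate by induction on $n$ from the relation $X_iB_j=q^{2\delta_{ji}}B_jX_i$ of Definition~\ref{def:quantumtorus} specialized to the double: we have $X_kB_i = q^{2\delta_{ik}}B_iX_k$, and pushing $B_i$ leftward through $n$ copies of $X_k$ collects a factor $q^{2\delta_{ik}}$ each time, giving $X_k^nB_i = q^{2n\delta_{ik}}B_iX_k^n = B_i(q^{2\delta_{ik}}X_k)^n$. The third identity $X_k^nX_i = X_i(q^{2\varepsilon_{ki}}X_k)^n$ is proved the same way, starting from $X_kX_i = q^{2\varepsilon_{ki}}X_iX_k$ (note the sign convention: $X_iX_j=q^{2\varepsilon_{ij}}X_jX_i$ gives $X_kX_i = q^{2\varepsilon_{ki}}X_iX_k$, and $\varepsilon_{ki}=-\varepsilon_{ik}$, so one must be careful which index plays which role, but the induction is mechanical).

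The identity involving $\widehat{X}_k$ requires one extra observation, namely how $\widehat{X}_k$ commutes with $B_i$. Recall $\widehat{X}_i = X_i\prod_j B_j^{(e_i,e_j)} = X_i\prod_j B_j^{\varepsilon_{ij}}$. Since all $B_j$ commute with each other, and $X_k$ commutes with each $B_j$ up to the scalar $q^{2\delta_{kj}}$, I would compute $\widehat{X}_kB_i$ directly: $\widehat{X}_kB_i = X_k\big(\prod_j B_j^{\varepsilon_{kj}}\big)B_i = X_kB_i\prod_j B_j^{\varepsilon_{kj}} = q^{2\delta_{ik}}B_iX_k\prod_j B_j^{\varepsilon_{kj}} = q^{2\delta_{ik}}B_i\widehat{X}_k$. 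Thus $\widehat{X}_k$ satisfies exactly the same commutation relation with $B_i$ as $X_k$ does, and the induction on $n$ goes through verbatim, yielding $\widehat{X}_k^nB_i = B_i(q^{2\delta_{ik}}\widehat{X}_k)^n$.

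I do not expect any serious obstacle here; the only point requiring a moment's care is the bookkeeping of signs and of which variable the Kronecker delta or $\varepsilon$ attaches to, together with making explicit that substituting $q^{2\delta_{ik}}X_k$ (a central rescaling of the generator $X_k$ by a scalar) into a formal power series is well-defined and commutes with the infinite sum — this is where one invokes that everything lives in the appropriate completion. Once the monomial case is settled, the passage to general $\varphi$ is a one-line appeal to linearity and continuity.
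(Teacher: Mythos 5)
Your proposal is correct and follows essentially the same route as the paper: expand $\varphi$ term by term, commute each monomial $X_k^n$ (resp.\ $\widehat{X}_k^n$) past $B_i$ or $X_i$ using the defining relations, and reassemble the series. The only difference is cosmetic — you explicitly derive $\widehat{X}_kB_i=q^{2\delta_{ik}}B_i\widehat{X}_k$ from $\widehat{X}_k=X_k\prod_jB_j^{\varepsilon_{kj}}$ and the commutativity of the $B_j$, where the paper simply cites this as a known fact — and that computation is correct.
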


\begin{proof}
Write $\varphi(x)=a_0+a_1x+a_2x^2+\dots$. Then the relation $X_kX_i=q^{2\varepsilon_{ki}}X_iX_k$ implies 
\begin{align*}
\varphi(X_k)X_i &= a_0X_i+a_1X_kX_i+a_2X_k^2X_i+\dots \\
&= X_ia_0+X_ia_1q^{2\varepsilon_{ki}}X_k+X_ia_2q^{4\varepsilon_{ki}}X_k^2+\dots \\
&= X_i\left(a_0+a_1(q^{2\varepsilon_{ki}}X_k)+a_2(q^{2\varepsilon_{ki}}X_k)^2+\dots\right) \\
&= X_i\varphi(q^{2\varepsilon_{ki}}X_k).
\end{align*}
This proves the third relation. The first two relations are proved similarly. For these one uses the facts $X_kB_i=q^{2\delta_{ik}}B_iX_k$ and $\widehat{X}_kB_i=q^{2\delta_{ik}}B_i\widehat{X}_k$.
\end{proof}

\begin{appendixproposition}
\label{prop:quantumautomorphism}
The map $\mu_k^\sharp$ is given on generators by the formulas 
\[
\mu_k^\sharp(B_i)=
\begin{cases}
B_k(1+qX_k)(1+q\widehat{X}_k)^{-1} & \mbox{if } i=k \\
B_i & \mbox{if } i\neq k
\end{cases}
\]
and 
\[
\mu_k^\sharp(X_i)=
\begin{cases}
X_i(1+qX_k)(1+q^3X_k)\dots(1+q^{2|\varepsilon_{ik}|-1}X_k) & \mbox{if } \varepsilon_{ik}\leq0 \\
X_i{\left((1+q^{-1}X_k)(1+q^{-3}X_k)\dots(1+q^{1-2|\varepsilon_{ik}|}X_k)\right)}^{-1} & \mbox{if } \varepsilon_{ik}\geq0.
\end{cases}
\]
\end{appendixproposition}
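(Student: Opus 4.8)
The plan is to compute the conjugation $\mu_k^\sharp = \Ad_{\Psi^q(X_k)/\Psi^q(\widehat{X}_k)}$ directly on each generator, exploiting two facts: first, that $X_k$ and $\widehat{X}_k$ commute with each other (noted after the definition of $\widehat{X}_i$), so that $\Psi^q(X_k)$ and $\Psi^q(\widehat{X}_k)$ commute and the two conjugations can be carried out independently; second, the functional equation for the quantum dilogarithm, namely that $\Psi^q(q^2 x) = (1+qx)\,\Psi^q(x)$, which follows by inspection of the product $\prod_{k\geq1}(1+q^{2k-1}x)^{-1}$ by shifting the index. I would carry out the computation in the following order.

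\emph{Step 1: the case $i=k$.} Here $\mu_k^\sharp(B_k) = \Psi^q(X_k)\Psi^q(\widehat{X}_k)^{-1} B_k \Psi^q(\widehat{X}_k)\Psi^q(X_k)^{-1}$. Using Lemma~\ref{lem:commuteseries} with $\varphi = \Psi^q$ and $i=k$, we get $\Psi^q(\widehat{X}_k)^{-1}B_k = B_k\,\Psi^q(q^2\widehat{X}_k)^{-1}$ and $B_k\,\Psi^q(\widehat{X}_k) = \Psi^q(q^{-2}\widehat{X}_k)\,B_k$; combining these and then applying the functional equation twice (once in the form $\Psi^q(q^2\widehat X_k)^{-1} = (1+q\widehat X_k)\Psi^q(\widehat X_k)^{-1}$ and symmetrically for the $X_k$-factor, using $X_kB_k = q^{2}B_kX_k$) collapses everything to $B_k(1+qX_k)(1+q\widehat X_k)^{-1}$. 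One must be slightly careful about whether the $X_k$ and $\widehat X_k$ factors end up on the left or right of $B_k$, but since $(1+qX_k)$ and $(1+q\widehat X_k)^{-1}$ commute with each other and I can normalize their position, this is bookkeeping rather than substance.

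\emph{Step 2: the case $i\neq k$, action on $B_i$.} For $i\neq k$ the generator $B_i$ commutes with both $X_k$ and $\widehat{X}_k$ (from $X_iB_j = q^{2\delta_{ij}}B_jX_i$ and the corresponding relation for $\widehat X_k$, noting $\delta_{ik}=0$), hence commutes with $\Psi^q(X_k)$ and $\Psi^q(\widehat X_k)$, so $\mu_k^\sharp(B_i)=B_i$. This is immediate.

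\emph{Step 3: action on $X_i$.} Conjugation by $\Psi^q(\widehat X_k)$ fixes $X_i$ since $\widehat X_k$ commutes with $X_i$ (as $X_k$ commutes with $\widehat X_k$ and $X_i$ commutes with... actually one checks $\widehat X_k X_i = q^{?}X_i\widehat X_k$ directly — if $i=k$ it commutes, and for $i\neq k$ one uses the explicit form $\widehat X_k = X_k\prod_j B_j^{\varepsilon_{kj}}$ together with the commutation relations; in fact the relevant point is that $\mu_k^\sharp$ restricted to the $X$-subalgebra must agree with the cluster Poisson mutation, as remarked in the text). So it suffices to compute $\Ad_{\Psi^q(X_k)}(X_i)$. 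By Lemma~\ref{lem:commuteseries}, $\Psi^q(X_k)^{-1}X_i = X_i\Psi^q(q^{2\varepsilon_{ki}}X_k)^{-1}$ and $X_i\Psi^q(X_k)=\Psi^q(q^{-2\varepsilon_{ki}}X_k)X_i$, giving $\mu_k^\sharp(X_i) = X_i\,\Psi^q(q^{2\varepsilon_{ki}}X_k)^{-1}\Psi^q(X_k)$, and then iterating the functional equation $\varepsilon_{ki}$ times (in one direction or the other according to the sign of $\varepsilon_{ki}=-\varepsilon_{ik}$) telescopes this ratio into the stated product $(1+qX_k)\cdots(1+q^{2|\varepsilon_{ik}|-1}X_k)$ when $\varepsilon_{ik}\leq 0$, or its inverse with $q\mapsto q^{-1}$ when $\varepsilon_{ik}\geq 0$. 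I expect the main obstacle to be getting the powers of $q$ exactly right in this telescoping — in particular tracking the sign conventions relating $\varepsilon_{ik}$ and $\varepsilon_{ki}$ and making sure the shift $q^{2\varepsilon_{ki}}$ produces the arithmetic progression of exponents $1,3,\dots,2|\varepsilon_{ik}|-1$ rather than an off-by-two variant. Everything else is a routine application of Lemma~\ref{lem:commuteseries} and the dilogarithm functional equation.
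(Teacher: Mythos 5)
Your overall strategy is exactly the paper's: push the generator through the dilogarithm factors using Lemma~\ref{lem:commuteseries}, then collapse the resulting ratios of dilogarithms with the functional equation $\Psi^q(q^2x)=(1+qx)\Psi^q(x)$, using throughout that $X_k$ and $\widehat X_k$ commute. Steps 1 and 2 are right in substance. However, Step 3 as written computes the wrong conjugation. From the lemma, $\Psi^q(X_k)X_i=X_i\Psi^q(q^{2\varepsilon_{ki}}X_k)$, so
\[
\Ad_{\Psi^q(X_k)}(X_i)=\Psi^q(X_k)\,X_i\,\Psi^q(X_k)^{-1}=X_i\,\Psi^q(q^{2\varepsilon_{ki}}X_k)\,\Psi^q(X_k)^{-1},
\]
whereas your identity $\Psi^q(X_k)^{-1}X_i=X_i\Psi^q(q^{2\varepsilon_{ki}}X_k)^{-1}$ produces $X_i\,\Psi^q(q^{2\varepsilon_{ki}}X_k)^{-1}\Psi^q(X_k)$, which is $\Ad_{\Psi^q(X_k)^{-1}}(X_i)$. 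Since all these factors are power series in $X_k$ and commute, your expression is the reciprocal of the correct one: for $\varepsilon_{ik}\le0$ (so $\varepsilon_{ki}=|\varepsilon_{ik}|$) it telescopes to $X_i\bigl((1+qX_k)\cdots(1+q^{2|\varepsilon_{ik}|-1}X_k)\bigr)^{-1}$ rather than the stated product. The fix is simply to apply the lemma to $\Psi^q(X_k)X_i$ rather than to $\Psi^q(X_k)^{-1}X_i$, i.e.\ to conjugate in the direction the definition actually prescribes.

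There is a smaller slip of the same kind in Step 1: inverting $\Psi^q(q^2\widehat X_k)=(1+q\widehat X_k)\Psi^q(\widehat X_k)$ gives $\Psi^q(q^2\widehat X_k)^{-1}=(1+q\widehat X_k)^{-1}\Psi^q(\widehat X_k)^{-1}$, not $(1+q\widehat X_k)\Psi^q(\widehat X_k)^{-1}$; with the corrected identity the computation does collapse to $B_k(1+qX_k)(1+q\widehat X_k)^{-1}$ as you claim. Also, while $(1+qX_k)$ and $(1+q\widehat X_k)^{-1}$ commute with each other, neither commutes with $B_k$, so their position relative to $B_k$ is not free bookkeeping; one must check (as the careful computation shows) that they land on the right of $B_k$ with no extra powers of $q$.
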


\begin{proof}
By Lemma~\ref{lem:commuteseries}, we have 
\begin{align*}
\mu_k^\sharp(B_i) &= \Psi^q(X_k)\Psi^q(\widehat{X}_k)^{-1}B_i\Psi^q(\widehat{X}_k)\Psi^q(X_k)^{-1} \\
&= B_i\Psi^q(q^{2\delta_{ik}}X_k)\Psi^q(q^{2\delta_{ik}}\widehat{X}_k)^{-1}\Psi^q(\widehat{X}_k)\Psi^q(X_k)^{-1}. 
\end{align*}
If $i\neq k$, then this equals $B_i$ as desired. Suppose on the other hand that $i=k$. Using the identity $\Psi^q(q^2x)=(1+qx)\Psi^q(x)$ and commutativity of~$X_i$ and~$\widehat{X}_k$, we can rewrite this last expression as
\begin{align*}
\mu_k^\sharp(B_i) &= B_k\Psi^q(q^2X_k)\Psi^q(q^2\widehat{X}_k)^{-1}\Psi^q(\widehat{X}_k)\Psi^q(X_k)^{-1} \\
&= B_k(1+qX_k)\Psi^q(X_k) \left((1+q\widehat{X}_k)\Psi^q(\widehat{X}_k)\right)^{-1} \Psi^q(\widehat{X}_k)\Psi^q(X_k)^{-1} \\
&= B_k(1+qX_k)(1+q\widehat{X}_k)^{-1}.
\end{align*}
This completes the proof of the first formula.

To prove the second formula, observe that by Lemma~\ref{lem:commuteseries} and the commutativity of~$X_i$ and~$\widehat{X}_k$ we have 
\begin{align*}
\mu_k^\sharp(X_i) &= \Psi^q(X_k)\Psi^q(\widehat{X}_k)^{-1}X_i\Psi^q(\widehat X_k)\Psi^q(X_k)^{-1} \\
&= \Psi^q(X_k)X_i\Psi^q(X_k)^{-1} \\
&= X_i\Psi^q(q^{-2\varepsilon_{ik}}X_k)\Psi^q(X_k)^{-1}.
\end{align*}
If $\varepsilon_{ik}\leq0$, then the identity $\Psi^q(q^2x)=(1+qx)\Psi^q(x)$ implies 
\begin{align*}
\mu_k^\sharp(X_i) &= X_i\Psi^q(q^2q^{2|\varepsilon_{ik}|-2}X_k)\Psi^q(X_k)^{-1} \\
&= X_i(1+q^{2|\varepsilon_{ik}|-1}X_k)\Psi^q(q^{2|\varepsilon_{ik}|-2}X_k)\Psi^q(X_k)^{-1} \\
&= X_i(1+q^{2|\varepsilon_{ik}|-1}X_k)(1+q^{2|\varepsilon_{ik}|-3}X_k)\Psi^q(q^{2|\varepsilon_{ik}|-4}X_k)\Psi^q(X_k)^{-1} \\
&= \dots \\
&= X_i(1+q^{2|\varepsilon_{ik}|-1}X_k)\dots(1+q^3X_k)(1+qX_k)
\end{align*}
as desired. If $\varepsilon_{ik}\geq0$, there is a similar argument using the identity $\Psi^q(q^{-2}x)=(1+q^{-1}x)^{-1}\Psi^q(x)$.
\end{proof}

\begin{appendixlemma}
\label{lem:dualmutation}
Let $(\Lambda,\{e_i\},(\cdot,\cdot))$ be a seed. If we mutate this seed in the direction of a basis vector~$e_k$, then the basis $\{f_i\}$ for $\Lambda^\vee$ transforms to a new basis $\{f_i'\}$ given by 
\[
f_i'=
\begin{cases}
-f_i+\sum_j[-\varepsilon_{kj}]_+f_j & \mbox{if } i=k \\
f_i & \mbox{if } i\neq k
\end{cases}
\]
\end{appendixlemma}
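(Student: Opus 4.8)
The statement concerns how the dual basis $\{f_i\}$ of $\Lambda^\vee$ transforms under a mutation of the seed $(\Lambda,(\cdot,\cdot),\{e_i\})$ at $e_k$. The plan is to compute directly from the defining property of the dual basis, namely $f_i(e_j) = \delta_{ij}$, together with the explicit formula for the mutated basis $\{e_i'\}$ from Definition~\ref{def:altmutation}, which gives $e_k' = -e_k$ and $e_i' = e_i + [\varepsilon_{ik}]_+ e_k$ for $i \neq k$.

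First I would recall that the mutated dual basis $\{f_i'\}$ is the unique basis of $\Lambda^\vee$ satisfying $f_i'(e_j') = \delta_{ij}$ for all $i,j$. So the proof reduces to verifying that the elements defined by the displayed formula satisfy this duality relation. For $i \neq k$, I would check that $f_i(e_j') = \delta_{ij}$: when $j \neq k$ this is $f_i(e_j + [\varepsilon_{jk}]_+ e_k) = \delta_{ij} + [\varepsilon_{jk}]_+ \delta_{ik} = \delta_{ij}$ since $i \neq k$; when $j = k$ this is $f_i(-e_k) = -\delta_{ik} = 0$. For $i = k$, I would check $f_k'(e_j') = \delta_{kj}$ with $f_k' = -f_k + \sum_j [-\varepsilon_{kj}]_+ f_j$. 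Evaluating on $e_k' = -e_k$ gives $(-f_k)(-e_k) + \sum_j [-\varepsilon_{kj}]_+ f_j(-e_k) = 1 - [-\varepsilon_{kk}]_+ = 1$ since $\varepsilon_{kk} = 0$. Evaluating on $e_j' = e_j + [\varepsilon_{jk}]_+ e_k$ for $j \neq k$ gives $(-f_k + \sum_l [-\varepsilon_{kl}]_+ f_l)(e_j + [\varepsilon_{jk}]_+ e_k) = -[\varepsilon_{jk}]_+ + [-\varepsilon_{kj}]_+ = -[\varepsilon_{jk}]_+ + [\varepsilon_{jk}]_+ = 0$, using skew-symmetry $\varepsilon_{kj} = -\varepsilon_{jk}$.

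Since both computations confirm the duality relation and the dual basis is uniquely determined by it, the formula follows. The one point requiring a little care is bookkeeping with the $[\,\cdot\,]_+$ notation and the sign conventions in skew-symmetry — in particular remembering that $[-\varepsilon_{kj}]_+ = [\varepsilon_{jk}]_+$ — but there is no real obstacle here; the entire argument is a short direct verification. I would present it as a couple of displayed line-by-line checks.

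\begin{proof}
Recall from Definition~\ref{def:altmutation} that the mutated basis is given by $e_k' = -e_k$ and $e_i' = e_i + [\varepsilon_{ik}]_+ e_k$ for $i \neq k$. The dual basis $\{f_i'\}$ is characterized by $f_i'(e_j') = \delta_{ij}$, so it suffices to verify that the elements in the statement satisfy this.

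For $i \neq k$ we take $f_i' = f_i$. If $j \neq k$ then $f_i(e_j') = f_i(e_j) + [\varepsilon_{jk}]_+ f_i(e_k) = \delta_{ij}$, since $f_i(e_k) = 0$; and $f_i(e_k') = -f_i(e_k) = 0 = \delta_{ik}$.

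For $i = k$ we take $f_k' = -f_k + \sum_j [-\varepsilon_{kj}]_+ f_j$. Using $\varepsilon_{kk} = 0$,
\[
f_k'(e_k') = -f_k(-e_k) + \sum_j [-\varepsilon_{kj}]_+ f_j(-e_k) = 1 - [-\varepsilon_{kk}]_+ = 1.
\]
For $j \neq k$, using skew-symmetry in the form $[-\varepsilon_{kj}]_+ = [\varepsilon_{jk}]_+$,
\[
f_k'(e_j') = -f_k(e_j) - [\varepsilon_{jk}]_+ f_k(e_k) + [-\varepsilon_{kj}]_+ = -[\varepsilon_{jk}]_+ + [\varepsilon_{jk}]_+ = 0.
\]
Thus $f_i'(e_j') = \delta_{ij}$ in all cases, and by uniqueness of the dual basis the formula holds.
\end{proof}
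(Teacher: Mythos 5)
Your proof is correct and amounts to the same underlying fact as the paper's one-line argument, which simply observes that the $e$-mutation is given by an explicit matrix and that the dual basis transforms by its transpose; your version spells out the verification $f_i'(e_j')=\delta_{ij}$ directly. If anything, the direct check is slightly more careful, since the dual basis in general transforms by the \emph{inverse} transpose, and the two coincide here only because the mutation matrix is an involution --- a point the paper leaves implicit.
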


\begin{proof}
The transformation $e_i\mapsto e_i'$ can be represented by an explicit matrix by Definition~\ref{def:altmutation}, and the transformation rule appearing in the lemma is represented by the transpose of this matrix.
\end{proof}

\begin{appendixproposition}
\label{prop:quantummutation}
The map $\mu_k'$ is given on generators by the formulas 
\[
\mu_k'(B_i')=
\begin{cases}
\mathbb{B}_k^-/B_k & \mbox{if } i=k \\
B_i & \mbox{if } i\neq k
\end{cases}
\]
and 
\[
\mu_k'(X_i')=
\begin{cases}
X_k^{-1} & \mbox{if } i=k \\
q^{-[\varepsilon_{ik}]_+\varepsilon_{ik}}X_iX_k^{[\varepsilon_{ik}]_+} & \mbox{if } i\neq k.
\end{cases}
\]
\end{appendixproposition}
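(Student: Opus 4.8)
The map $\mu_k'$ is, by Definition~\ref{def:doublemutation}(2), induced by the natural lattice isomorphism $\Lambda_{\mathcal{D},\mathbf{i}'}\to\Lambda_{\mathcal{D},\mathbf{i}}$. Concretely this means that if $Y'_v$ denotes the generator of $\widehat{\mathcal{D}}_{\mathbf{i}'}^q$ attached to $v\in\Lambda_{\mathcal{D},\mathbf{i}'}$ and $Y_v$ the generator of $\widehat{\mathcal{D}}_{\mathbf{i}}^q$ attached to the image of $v$ under the identification of underlying lattices, then $\mu_k'(Y'_v)=Y_v$; note that since $(\cdot,\cdot)'_{\mathcal{D}}=(\cdot,\cdot)_{\mathcal{D}}$ this is an algebra map. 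So the whole task is to express the basis $\{e'_i,f'_i\}$ of $\Lambda_{\mathcal{D},\mathbf{i}'}$ in terms of $\{e_i,f_i\}$ of $\Lambda_{\mathcal{D},\mathbf{i}}$, substitute, and then repackage the resulting monomial $Y_{(\text{combination})}$ back into the generators $X_i=Y_{e_i}$, $B_i=Y_{f_i}$ using the quantum-torus relation $q^{-(v_1,v_2)_{\mathcal{D}}}Y_{v_1}Y_{v_2}=Y_{v_1+v_2}$ to pull out the correct power of $q$.

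First I would record the transformation of the $e_i$ from Definition~\ref{def:altmutation} ($e'_k=-e_k$, $e'_i=e_i+[\varepsilon_{ik}]_+e_k$ for $i\neq k$) and the transformation of the $f_i$ from Lemma~\ref{lem:dualmutation} ($f'_k=-f_k+\sum_j[-\varepsilon_{kj}]_+f_j$, $f'_i=f_i$ for $i\neq k$). For the $X$-formula: $\mu_k'(X'_k)=\mu_k'(Y_{e'_k})=Y_{-e_k}=X_k^{-1}$, and for $i\neq k$, $\mu_k'(X'_i)=Y_{e_i+[\varepsilon_{ik}]_+e_k}$; applying the commutation relation $X_iX_k=q^{2\varepsilon_{ik}}X_kX_i$ (equivalently $(e_i,e_k)_{\mathcal D}=(e_i,e_k)=\varepsilon_{ik}$, since $f$-components vanish) gives $Y_{e_i+[\varepsilon_{ik}]_+e_k}=q^{-[\varepsilon_{ik}]_+\varepsilon_{ik}}X_iX_k^{[\varepsilon_{ik}]_+}$ — I'd double-check the sign by writing $Y_{a+b}=q^{(a,b)_{\mathcal D}}Y_aY_b$ with $a=e_i$, $b=[\varepsilon_{ik}]_+e_k$ so that $(a,b)_{\mathcal D}=[\varepsilon_{ik}]_+\varepsilon_{ik}$, but then one needs to be careful that the relation in Definition~\ref{def:quantumtorus} is $q^{-(v_1,v_2)}Y_{v_1}Y_{v_2}=Y_{v_1+v_2}$, i.e. $Y_{v_1+v_2}=q^{-(v_1,v_2)}Y_{v_1}Y_{v_2}$, which yields precisely the factor $q^{-[\varepsilon_{ik}]_+\varepsilon_{ik}}$ as claimed. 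For $i\neq k$ we also have $\mu_k'(B'_i)=\mu_k'(Y_{f_i})=Y_{f_i}=B_i$ immediately.

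The only substantive computation is $\mu_k'(B'_k)=Y_{-f_k+\sum_j[-\varepsilon_{kj}]_+f_j}$. Here I would use that the $f_j$'s all commute with each other (since $(f_j,f_l)_{\mathcal D}=0$), so no $q$-powers arise from reassembling the $f$-part: $Y_{-f_k+\sum_j[-\varepsilon_{kj}]_+f_j}=Y_{-f_k}\cdot\prod_{j:\varepsilon_{kj}<0}Y_{f_j}^{-\varepsilon_{kj}}$. Now $Y_{-f_k}=B_k^{-1}$ and $\prod_{j:\varepsilon_{kj}<0}B_j^{-\varepsilon_{kj}}$ is exactly $\mathbb{B}_k^-$ as defined (recalling $(e_k,e_j)=\varepsilon_{kj}$, so the condition $(e_k,e_i)<0$ is $\varepsilon_{ki}<0$, i.e. $\varepsilon_{kj}<0$ after renaming). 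One subtlety worth a sentence: the combination $-f_k+\sum_j[-\varepsilon_{kj}]_+f_j$ never has a nonzero coefficient on $f_k$ beyond the explicit $-f_k$ term, because $[-\varepsilon_{kk}]_+=[0]_+=0$, so $Y_{-f_k}$ and $\mathbb{B}_k^-$ multiply without overlap and without a $q$-factor, giving $\mu_k'(B'_k)=B_k^{-1}\mathbb{B}_k^-=\mathbb{B}_k^-/B_k$.

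I do not anticipate a genuine obstacle here; the proposition is essentially a bookkeeping statement about the lattice mutation and the quantum-torus relations. The one place to be vigilant is sign conventions — both the sign in the exponent $q^{-[\varepsilon_{ik}]_+\varepsilon_{ik}}$ and the directions in the transpose relation of Lemma~\ref{lem:dualmutation} — so I would verify the $X$-formula against the $i\ne k$ commutation relation and the $B$-formula against the special value at $q=1$ (where it should reduce to the classical coefficient mutation appearing in Theorem~\ref{thm:introclassicallimit} after composing with $\mu_k^\sharp$).
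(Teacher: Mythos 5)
Your proposal is correct and follows essentially the same route as the paper's proof: apply Lemma~\ref{lem:dualmutation} together with Definition~\ref{def:altmutation} to the basis of $\Lambda_{\mathcal{D}}$, use $(f_i,f_j)_{\mathcal{D}}=0$ so that the $B$-formula involves no $q$-factor, and use the relation $Y_{v_1+v_2}=q^{-(v_1,v_2)_{\mathcal{D}}}Y_{v_1}Y_{v_2}$ with $(e_i,e_k)_{\mathcal{D}}=\varepsilon_{ik}$ to produce the coefficient $q^{-[\varepsilon_{ik}]_+\varepsilon_{ik}}$. The sign check you flag comes out exactly as in the paper.
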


\begin{proof}
Let $Y_v$ be the generator of $\mathcal{D}_{\mathbf{i}}$ associated to $v\in\Lambda_{\mathcal{D}}$ as in Definition~\ref{def:quantumtorus}. By Lemma~\ref{lem:dualmutation} and the fact that $(f_i,f_j)_{\mathcal{D}}=0$ for all $i$,~$j$, we have 
\begin{align*}
\mu_k'(B_k') &= Y_{-f_k+\sum_j[-\varepsilon_{kj}]_+f_j} \\
&= B_k^{-1}\mathbb{B}_k^{-}.
\end{align*}
Similarly we have 
\begin{align*}
\mu_k'(X_i') &= Y_{e_i+[\varepsilon_{ik}]_+e_k} \\
&= q^{-[\varepsilon_{ik}]_+(e_i,e_k)_{\mathcal{D}}}Y_{e_i}Y_{[\varepsilon_{ik}]_+e_k} \\
&= q^{-[\varepsilon_{ik}]_+\varepsilon_{ik}}X_iX_k^{[\varepsilon_{ik}]_+}
\end{align*}
for $i\neq k$ and $\mu_k'(X_i')=Y_{-e_k}=X_k^{-1}$ for $i=k$.
\end{proof}

\begin{appendixtheorem}
The map $\mu_k^q$ is given on generators by the formulas 
\[
\mu_k^q(B_i')=
\begin{cases}
(qX_k\mathbb{B}_k^++\mathbb{B}_k^-)B_k^{-1}(1+q^{-1}X_k)^{-1} & \mbox{if } i=k \\
B_i & \mbox{if } i\neq k
\end{cases}
\]
and
\[
\mu_k^q(X_i')=
\begin{cases}
X_i\prod_{p=0}^{|\varepsilon_{ik}|-1}(1+q^{2p+1}X_k) & \mbox{if } \varepsilon_{ik}\leq0 \mbox{ and } i\neq k \\
X_iX_k^{\varepsilon_{ik}}\prod_{p=0}^{\varepsilon_{ik}-1}(X_k+q^{2p+1})^{-1} & \mbox{if } \varepsilon_{ik}\geq0 \mbox{ and } i\neq k \\
X_k^{-1} & \mbox{if } i=k.
\end{cases}
\]
\end{appendixtheorem}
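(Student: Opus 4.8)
The plan is to compute $\mu_k^q = \mu_k^\sharp \circ \mu_k'$ directly on generators by composing the two formulas already established in Appendix Propositions~\ref{prop:quantumautomorphism} and~\ref{prop:quantummutation}. Since $\mu_k^q(Y') = \mu_k^\sharp(\mu_k'(Y'))$ for each generator $Y'$ of $\mathcal{D}_{\mathbf{i}'}^q$, everything reduces to substituting the output of $\mu_k'$ into $\mu_k^\sharp$ and simplifying, using the commutation relations together with Appendix Lemma~\ref{lem:commuteseries}.

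First I would handle the $X$-generators, which is the easier half. For $i = k$ we have $\mu_k'(X_k') = X_k^{-1}$, so $\mu_k^q(X_k') = \mu_k^\sharp(X_k^{-1}) = \mu_k^\sharp(X_k)^{-1}$; since conjugation by $\Psi^q(X_k)/\Psi^q(\widehat{X}_k)$ fixes $X_k$ (both $X_k$ and $\widehat{X}_k$ commute with $X_k$), this is just $X_k^{-1}$. For $i \neq k$, $\mu_k'(X_i') = q^{-[\varepsilon_{ik}]_+\varepsilon_{ik}} X_i X_k^{[\varepsilon_{ik}]_+}$, and applying $\mu_k^\sharp$ (which fixes $X_k$ and sends $X_i$ to the product of $(1+q^{2p+1}X_k)$ or inverses from Proposition~\ref{prop:quantumautomorphism}) and then regrouping the $X_k$-powers and $q$-factors using $X_k X_i = q^{2\varepsilon_{ik}} X_i X_k$ yields the stated two cases. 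This is a routine but slightly fiddly bookkeeping exercise in tracking powers of $q$ when moving $X_k^{[\varepsilon_{ik}]_+}$ past $X_i$ and recombining with the dilogarithm factors; the $\varepsilon_{ik}\geq 0$ case requires rewriting $X_i X_k^{\varepsilon_{ik}}\prod(1+q^{1-2j}X_k)^{-1}$ in the form $X_i X_k^{\varepsilon_{ik}}\prod_{p=0}^{\varepsilon_{ik}-1}(X_k+q^{2p+1})^{-1}$ by pulling a power of $X_k$ out of each factor.

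Next I would treat the $B_k'$ generator, which is the main obstacle. Here $\mu_k'(B_k') = B_k^{-1}\mathbb{B}_k^-$, so $\mu_k^q(B_k') = \mu_k^\sharp(B_k^{-1}\mathbb{B}_k^-) = \mu_k^\sharp(B_k)^{-1}\,\mu_k^\sharp(\mathbb{B}_k^-)$. By Proposition~\ref{prop:quantumautomorphism}, $\mu_k^\sharp(B_k) = B_k(1+qX_k)(1+q\widehat{X}_k)^{-1}$, and since $\mathbb{B}_k^-$ is a product of $B_i$ with $i \neq k$ it is fixed by $\mu_k^\sharp$. So $\mu_k^q(B_k') = (1+q\widehat{X}_k)(1+qX_k)^{-1}B_k^{-1}\mathbb{B}_k^-$, and the work is to show this equals $(qX_k\mathbb{B}_k^+ + \mathbb{B}_k^-)B_k^{-1}(1+q^{-1}X_k)^{-1}$. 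The key identity to establish is $\widehat{X}_k \mathbb{B}_k^- = X_k \mathbb{B}_k^+$ as elements of $\mathcal{D}_{\mathbf{i}}^q$ (up to the correct power of $q$), which follows from $\widehat{X}_k = X_k\prod_j B_j^{\varepsilon_{kj}} = X_k\mathbb{B}_k^+(\mathbb{B}_k^-)^{-1}$ together with the commutativity of the $B_j$; one then has $(1+q\widehat{X}_k)\mathbb{B}_k^- = \mathbb{B}_k^- + q\widehat{X}_k\mathbb{B}_k^-$ and must carefully commute $\widehat{X}_k$ past things while tracking that $\widehat{X}_k$ commutes with $X_k$ but $X_k B_j = q^{2\delta_{jk}}B_j X_k$. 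Finally, converting $(1+qX_k)^{-1}$ on the left to $(1+q^{-1}X_k)^{-1}$ on the right of $B_k^{-1}$ uses $X_k B_k^{-1} = q^{-2}B_k^{-1}X_k$, which shifts $qX_k \mapsto q^{-1}X_k$; assembling these pieces gives the claimed formula. The case $i \neq k$ is immediate since $\mu_k'$ and $\mu_k^\sharp$ both fix $B_i$. I expect the $B_k'$ computation, specifically the manipulation of the noncommutative product $(1+q\widehat{X}_k)\mathbb{B}_k^-$ and the correct normalization of $q$-powers, to be where all the subtlety lies.
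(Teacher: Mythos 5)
Your proposal is correct and follows essentially the same route as the paper's proof: compute $\mu_k^q=\mu_k^\sharp\circ\mu_k'$ on each generator by substituting Proposition~\ref{prop:quantummutation} into Proposition~\ref{prop:quantumautomorphism}, use $\mathbb{B}_k^-\widehat{X}_k=qX_k\mathbb{B}_k^+\cdot q^{-1}$ (i.e.\ $\widehat{X}_k\mathbb{B}_k^-=X_k\mathbb{B}_k^+$, with no extra power of $q$ since $X_k$ and $\widehat{X}_k$ commute with $\mathbb{B}_k^\pm$), and move $(1+qX_k)^{-1}$ past $B_k^{-1}$ via $X_kB_k^{-1}=q^{-2}B_k^{-1}X_k$. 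The only quibble is in the $\varepsilon_{ik}\ge0$ case for $X_i'$: what one pulls out of each factor $(1+q^{1-2j}X_k)^{-1}$ is a power of $q$ (namely $q^{2j-1}$), not of $X_k$, and these powers combine via $1+3+\cdots+(2N-1)=N^2$ to cancel the prefactor $q^{-\varepsilon_{ik}^2}$ coming from $\mu_k'$.
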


\begin{proof}
By our formulas for $\mu_k'$ and $\mu_k^\sharp$, we have 
\begin{align*}
\mu_k^q(B_k') &= \mu_k^\sharp(\mu_k'(B_k')) = \mu_k^\sharp(\mathbb{B}_k^-/B_k) \\
&= \mathbb{B}_k^-\left(B_k(1+qX_k)(1+q\widehat{X}_k)^{-1}\right)^{-1} \\
&= \mathbb{B}_k^-(1+q\widehat{X}_k)(1+qX_k)^{-1}B_k^{-1}.
\end{align*}
By the definitions of $\mathbb{B}_k^-$ and $\widehat{X}_k$, this equals
\begin{align*}
\mu_k^q(B_k') &= \prod_{i|\varepsilon_{ki}<0}B_i^{-\varepsilon_{ki}}\big(1+qX_k\prod_iB_i^{\varepsilon_{ki}}\big)(1+qX_k)^{-1}B_k^{-1} \\
&= \big(\prod_{i|\varepsilon_{ki}<0}B_i^{-\varepsilon_{ki}}+qX_k\prod_{i|\varepsilon_{ki}>0}B_i^{\varepsilon_{ki}}\big)(1+qX_k)^{-1}B_k^{-1} \\
&=(qX_k\mathbb{B}_k^++\mathbb{B}_k^-)(1+qX_k)^{-1}B_k^{-1} \\
&=(qX_k\mathbb{B}_k^++\mathbb{B}_k^-)B_k^{-1}(1+q^{-1}X_k)^{-1}.
\end{align*}
From this calculation, we easily obtain the formula describing the action of the map $\mu_k^q$ on the generators $B_i'$.

On the other hand, if $\varepsilon_{ik}\leq0$ and $i\neq k$, then 
\begin{align*}
\mu_k^q(X_i') &= \mu_k^\sharp(\mu_k'(X_i')) = \mu_k^\sharp(X_i) \\
&= X_i(1+qX_k)(1+q^3X_k)\dots(1+q^{2|\varepsilon_{ik}|-1}X_k) \\
&= X_i\prod_{p=0}^{|\varepsilon_{ik}|-1}(1+q^{2p+1}X_k).
\end{align*}
If $\varepsilon_{ik}\geq0$ and $i\neq k$, then 
\begin{align*}
\mu_k^q(X_i') &= \mu_k^\sharp(\mu_k'(X_i')) = \mu_k^\sharp(q^{-\varepsilon_{ik}^2}X_iX_k^{\varepsilon_{ik}}) \\
&= q^{-\varepsilon_{ik}^2}X_i{\left((1+q^{-1}X_k)(1+q^{-3}X_k)\dots(1+q^{1-2|\varepsilon_{ik}|}X_k)\right)}^{-1}X_k^{\varepsilon_{ik}} \\
&= q^{-\varepsilon_{ik}^2}X_i{\left(q^{-1}(X_k+q)q^{-3}(X_k+q^3)\dots q^{1-2|\varepsilon_{ik}|}(X_k+q^{2|\varepsilon_{ik}|-1})\right)}^{-1}X_k^{\varepsilon_{ik}}.
\end{align*}
Applying the identity $1+3+5+\dots+(2N-1)=N^2$, this becomes 
\begin{align*}
\mu_k^q(X_i') &= X_i{\left((X_k+q)(X_k+q^3)\dots (X_k+q^{2|\varepsilon_{ik}|-1})\right)}^{-1}X_k^{\varepsilon_{ik}} \\
&= X_iX_k^{\varepsilon_{ik}} (X_k+q)^{-1}(X_k+q^3)^{-1}\dots (X_k+q^{2|\varepsilon_{ik}|-1})^{-1} \\
&= X_iX_k^{\varepsilon_{ik}}\prod_{p=0}^{\varepsilon_{ik}-1}(X_k+q^{2p+1})^{-1}.
\end{align*}
Finally, if $i=k$, then we have 
\[
\mu_k^q(X_i') = \mu_k^\sharp(\mu_k'(X_i')) = \mu_k^\sharp(X_k^{-1}) = X_k^{-1}.
\]
This proves the second formula.
\end{proof}

\chapter{Review of cluster algebras}
\label{ch:ReviewOfClusterAlgebras}

\section{General theory of cluster algebras}

In this appendix, we review the results we need from the theory of cluster algebras. All of the definitions and results of this section come from~\cite{FZIV}.

\begin{definition}
If $(\mathbb{P},\oplus,\cdot)$ is any semifield, then the group ring $\mathbb{ZP}$ is an integral domain, and hence we can form its fraction field $\mathbb{QP}$. We will write~$\mathcal{F}$ for a field isomorphic to the field of rational functions in~$n$ independent variables with coefficients in~$\mathbb{QP}$.
\end{definition}

\begin{definition}
A \emph{labeled seed} $(\mathbf{x},\mathbf{y},\mathbf{B})$ consists of a skew-symmetrizable $n\times n$ integer matrix~$\mathbf{B}=(b_{ij})$, an $n$-tuple $\mathbf{y}=(y_1,\dots,y_n)$ of elements of $\mathbb{P}$, and an $n$-tuple $\mathbf{x}=(x_1,\dots,x_n)$ of elements of $\mathcal{F}$ such that the $x_i$ are algebraically independent over~$\mathbb{QP}$ and $\mathcal{F}=\mathbb{QP}(x_1,\dots,x_n)$.
\end{definition}

\begin{definition}
Let $(\mathbf{x},\mathbf{y},\mathbf{B})$ be a labeled seed, and let $k\in\{1,\dots,n\}$. Then we define a new seed $(\mathbf{x}',\mathbf{y}',\mathbf{B}')$, called the seed obtained by \emph{mutation} in the direction $k$ as follows:
\begin{enumerate}
\item The entries of $\mathbf{B}'=(b_{ij}')$ are given by 
\[
b_{ij}'=
\begin{cases}
-b_{ij} & \mbox{if } k\in\{i,j\} \\
b_{ij}+\frac{|b_{ik}|b_{kj}+b_{ik}|b_{kj}|}{2} & \mbox{if } k\not\in\{i,j\}.
\end{cases}
\]
\item The elements of the $n$-tuple $\mathbf{y}'=(y_1',\dots,y_n')$ are given by 
\begin{align*}
y_j'=
\begin{cases}
y_k^{-1} & \mbox{if } j=k \\
y_jy_k^{[b_{kj}]_+}(y_k\oplus1)^{-b_{kj}} & \mbox{if } j\neq k
\end{cases}
\end{align*}
where we are using the notation $[b]_+=\max(b,0)$.
\item The elements of the $n$-tuple $\mathbf{x}'=(x_1',\dots,x_n')$ are given by 
\begin{align*}
x_j' =
\begin{cases}
\frac{y_k\prod_{i|b_{ik>0}}x_i^{b_{ik}} + \prod_{i|b_{ik<0}}x_i^{-b_{ik}}}{(y_k\oplus1)x_k} & \mbox{if } j=k \\
x_j & \mbox{if } j\neq k.
\end{cases}
\end{align*}
\end{enumerate}
\end{definition}

\begin{definition}
We denote by $\mathbb{T}_n$ an $n$-regular tree with edges labeled by the numbers~$1,\dots,n$ in such a way that the $n$ edges emanating from any vertex have distinct labels. A \emph{cluster pattern} is an assignment of a labeled seed $\Sigma_t=(\mathbf{x}_t,\mathbf{y}_t,\mathbf{B}_t)$ to every vertex $t\in\mathbb{T}_n$ so that if $t$ and $t'$ are vertices connected by an edge labeled~$k$, then $\Sigma_{t'}$ is obtained from $\Sigma_t$ by a mutation in the direction $k$. We will use the following notation for the data of $\Sigma_t$:
\[
\mathbf{x}_t=(x_{1;t},\dots,x_{n;t}), \quad \mathbf{y}_t=(y_{1;t},\dots,y_{n;t}), \quad \mathbf{B}_t=(b_{ij}^t).
\]
\end{definition}

\begin{definition}
Given a cluster pattern $t\mapsto(\mathbf{x}_t,\mathbf{y}_t,\mathbf{B}_t)$, we form the set of all cluster variables in all seeds of the cluster pattern:
\[
\mathcal{S}=\{x_{l;t}:t\in\mathbb{T}_n,1\leq l\leq n\}.
\]
Then the \emph{cluster algebra} with coefficients in $\mathbb{P}$ is the $\mathbb{ZP}$-subalgebra of $\mathcal{F}$ generated by elements in this set $\mathcal{S}$.
\end{definition}

\section{$F$-polynomials}

\begin{definition}
A \emph{cluster algebra with principal coefficients} at a vertex $t_0\in\mathbb{T}_n$ is a cluster algebra with $\mathbb{P}=\mathrm{Trop}(y_1,\dots,y_n)$ and $\mathbf{y}_{t_0}=(y_1,\dots,y_n)$.
\end{definition}

Let~$\mathcal{A}$ be a cluster algebra with principal coefficients, and denote the data of the initial seed $\Sigma_{t_0}=(\mathbf{x}_{t_0},\mathbf{y}_{t_0},\mathbf{B}_{t_0})$ by 
\[
\mathbf{x}_{t_0}=(x_1,\dots,x_n), \quad \mathbf{y}_{t_0}=(y_1,\dots,y_n), \quad \mathbf{B}_{t_0}=(b_{ij}^0).
\]
By iterating the exchange relations, we can express any cluster variable $x_{l;t}$ as a subtraction-free rational function of the variables $x_1,\dots,x_n,y_1,\dots,y_n$. We will denote this subtraction-free rational function by 
\[
X_{l;t}\in\mathbb{Q}_\mathrm{sf}(x_1,\dots,x_n,y_1,\dots,y_n).
\]
We will denote by $F_{l;t}\in\mathbb{Q}_\mathrm{sf}(y_1,\dots,y_n)$ the subtraction-free rational function obtained from~$X_{l;t}$ by specializing all the $x_i$ to 1. Thus 
\[
F_{l;t}(y_1,\dots,y_n)=X_{l;t}(1,\dots,1,y_1,\dots,y_n).
\]
By the Laurent phenomenon theorem of Fomin and Zelevinsky, $X_{l;t}$ is a Laurent polynomial in $x_1,\dots,x_n$ whose coefficients are integral polynomials in $y_1,\dots,y_n$, and $F_{l;t}$ is an integral polynomial in $y_1,\dots,y_n$.

\begin{definition}
The expressions $X_{l;t}$ and $F_{l;t}$ are called $X$- and \emph{$F$-polynomials}, respectively.
\end{definition}

There is a $\mathbb{Z}^n$-grading on the ring $\mathbb{Z}[x_1^{\pm1},\dots,x_n^{\pm1},y_1,\dots,y_n]$ given by the formulas 
\[
\deg(x_i) = \mathbf{e}_i, \quad \deg(y_i) = -\mathbf{b}_j^0
\]
where $\mathbf{e}_i$ is the $i$th standard basis vector in $\mathbb{Z}^n$ and $\mathbf{b}_j^0=\sum_i b_{ij}^0\mathbf{e}_i$ is the $j$th column of $\mathbf{B}_{t_0}$. By a result of \cite{FZIV}, each $X$-polynomial is homogeneous with respect to this $\mathbb{Z}^n$-grading.

\begin{definition}
The degree 
\[
\mathbf{g}_{l;t}=
\left( \begin{array}{ccc}
g_1 \\
\vdots \\
g_n \end{array} \right)
=\deg(X_{l;t})\in\mathbb{Z}^n
\]
is called the \emph{$\mathbf{g}$-vector} of the cluster variable $x_{l;t}$.
\end{definition}

The notions of $F$-polynomials and $\mathbf{g}$-vectors are important because they allow us to express an arbitrary cluster variable in terms of the variables $x_1,\dots,x_n,y_1,\dots,y_n$ of the initial seed $\Sigma_{t_0}$. To see this, we need one more piece of notation. It is a fact that any subtraction-free rational identity that holds in the semifield $\mathbb{Q}_\mathrm{sf}(u_1,\dots,u_n)$ will remain valid when we replace the $u_i$ by elements of an arbitrary semifield~$\mathbb{P}$. Thus if $f$ is a subtraction-free rational expression in $u_1,\dots,u_n$, there is a well defined element $f|_\mathbb{P}(y_1,\dots,y_n)$ of $\mathbb{P}$ obtained by evaluating $f$ at $y_1,\dots,y_n\in\mathbb{P}$.

\begin{proposition}[\cite{FZIV}, Corollary 6.3]
\label{prop:FZ63}
Let $\mathcal{A}$ be a cluster algebra over an arbitrary semifield $\mathbb{P}$ of coefficients. Then a cluster variable $x_{l;t}$ can be expressed in terms of the cluster variables at an initial seed as 
\[
x_{l;t}=\frac{F_{l;t}|_\mathcal{F}(\widehat{y}_1,\dots,\widehat{y}_n)}{F_{l;t}|_{\mathbb{P}}(y_1,\dots,y_n)}x_1^{g_1}\dots x_n^{g_n}
\]
where 
\[
\widehat{y}_j=y_j\prod_i x_i^{b_{ij}^0}.
\]
\end{proposition}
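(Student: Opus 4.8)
The plan is to reduce the whole statement to the case of principal coefficients and there invoke the homogeneity of $X$-polynomials quoted above. First I would work inside the cluster algebra with principal coefficients at $t_0$, so $\mathbb{P}=\mathrm{Trop}(y_1,\dots,y_n)$ and the cluster variable at $t$ is the Laurent polynomial $X_{l;t}\in\mathbb{Z}[x_1^{\pm1},\dots,x_n^{\pm1},y_1,\dots,y_n]$. By the cited result of \cite{FZIV}, $X_{l;t}$ is homogeneous of degree $\mathbf{g}_{l;t}$ for the $\mathbb{Z}^n$-grading $\deg(x_i)=\mathbf{e}_i$, $\deg(y_j)=-\mathbf{b}_j^0$. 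Spelling this out monomial by monomial, a monomial $c\,\mathbf{x}^{\mathbf{a}}\mathbf{y}^{\mathbf{d}}$ of $X_{l;t}$ has degree $\mathbf{a}-\sum_j d_j\mathbf{b}_j^0$, so homogeneity forces $\mathbf{a}=\mathbf{g}_{l;t}+\sum_j d_j\mathbf{b}_j^0$, whence the monomial equals $c\,\mathbf{x}^{\mathbf{g}_{l;t}}\prod_j(y_j\mathbf{x}^{\mathbf{b}_j^0})^{d_j}=c\,\mathbf{x}^{\mathbf{g}_{l;t}}\widehat{\mathbf{y}}^{\mathbf{d}}$. Summing over monomials yields $X_{l;t}=\mathbf{x}^{\mathbf{g}_{l;t}}\,F_{l;t}(\widehat y_1,\dots,\widehat y_n)$, since setting all $x_i=1$ turns $X_{l;t}$ into $F_{l;t}$ and $\widehat y_j$ into $y_j$. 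Because $F$-polynomials have constant term $1$, their tropical evaluation is the unit monomial, so this is exactly the asserted formula when $\mathbb{P}=\mathrm{Trop}(y_1,\dots,y_n)$.

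The second and main step is to pass from principal coefficients to an arbitrary semifield $\mathbb{P}$ with arbitrary initial coefficient tuple $\mathbf{y}=(y_1,\dots,y_n)\in\mathbb{P}^n$. I would argue by induction on the distance $d(t_0,t)$ in the tree $\mathbb{T}_n$, proving simultaneously a companion formula for the coefficients, namely that $y_{j;t}$ is a fixed Laurent monomial in $y_1,\dots,y_n$ times a ratio of the $\mathbb{P}$-evaluations $F_{i;t}|_{\mathbb{P}}(\mathbf{y})$ of the relevant $F$-polynomials; this companion formula is itself established by the analogous induction carried out purely inside the semifield $\mathbb{P}$ using only the $\mathbf{y}$-seed mutation rule. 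The base case $t=t_0$ is immediate, since $F_{l;t_0}=1$ and $\mathbf{g}_{l;t_0}=\mathbf{e}_l$. For the inductive step across an edge labeled $k$ I would substitute the inductive hypotheses for the $x_{i;t}$ and for $y_{k;t}$ into the exchange relation for $x_{k;t'}$: the factor $(y_{k;t}\oplus 1)$ appearing there is precisely what produces the new denominator $F_{k;t'}|_{\mathbb{P}}(\mathbf{y})$, while the numerator, after clearing the monomial prefactors and using $\widehat y_j=y_j\prod_i x_i^{b_{ij}^0}$, should collapse to $F_{k;t'}|_{\mathcal F}(\widehat{\mathbf{y}})\,\mathbf{x}^{\mathbf{g}_{k;t'}}$.

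The crux, and where the real work lies, is exactly that last collapse: verifying that the expression obtained from the exchange relation matches the separation formula at $t'$ amounts to re-deriving the recursions for $F$-polynomials and $\mathbf{g}$-vectors of \cite{FZIV} and checking they are compatible with the exchange relation. I expect this bookkeeping to be the main obstacle — it is purely formal but delicate, especially in tracking the exponents of the $(y_k\oplus 1)$-type factors so that the distinction between tropical $\min$ and the $\oplus$ of $\mathbb{P}$ is handled correctly. An alternative that trades this induction for a structural argument is to exhibit a coefficient-specialization homomorphism from the principal-coefficient algebra to $\mathcal{A}$ sending $\widehat y_j$ to $\widehat y_j$ and $y_j$ to $y_j$, and to push the identity of Step 1 through it; but constructing and justifying such a homomorphism is itself the substance of the separation phenomenon, so either route ultimately confronts the same computation. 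Throughout I would freely use the principle recalled in the excerpt — that a subtraction-free rational identity valid over $\mathbb{Q}_{\mathrm{sf}}(u_1,\dots,u_n)$ remains valid after evaluating the $u_i$ in any semifield — since this is what licenses moving between $\mathrm{Trop}(y_1,\dots,y_n)$, $\mathbb{P}$, and $\mathcal{F}$.
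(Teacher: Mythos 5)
The paper does not prove this proposition; it is quoted directly from \cite{FZIV} (Corollary~6.3), so there is no internal proof to compare against. Your outline reproduces the argument of the original source: the principal-coefficient case via homogeneity of the $X$-polynomials (which the appendix quotes from \cite{FZIV}), followed by the ``separation of additions'' induction that simultaneously tracks the cluster variables and the coefficient tuple along $\mathbb{T}_n$. Both steps are the right ones.

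Two remarks. First, in Step~1 you justify $F_{l;t}|_{\mathrm{Trop}(y_1,\dots,y_n)}(y_1,\dots,y_n)=1$ by appealing to the constant term of $F_{l;t}$ being $1$; at the level of generality of \cite{FZIV} that is only a conjecture (their Conjecture~5.4), whereas the unconditional fact you actually need is that $F_{l;t}$ is not divisible by any $y_j$ (\cite{FZIV}, Proposition~5.2), which already forces each tropical exponent $\min_{\mathbf{d}} d_j$ to vanish. Second, the step you flag as ``the crux'' --- verifying that the exchange relation at the edge $k$ collapses to the separation formula at $t'$ --- is indeed the entire content of Theorem~3.7 and Proposition~3.13 of \cite{FZIV}; it is handled there by proving the identity once in the universal semifield $\mathbb{Q}_{\mathrm{sf}}(x_1,\dots,x_n,y_1,\dots,y_n)$ and then specializing, exactly as your closing paragraph anticipates. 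So as written your proposal is an accurate but incomplete sketch: the missing verification is a known, purely formal computation along the route you describe rather than a conceptual gap, but it is the bulk of the work and would need to be carried out (or cited) for the argument to stand on its own.
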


\section{Cluster algebras associated to surfaces}
\label{sec:ClusterAlgebrasAssociatedToSurfaces}

In~\cite{FST}, Fomin, Shapiro, and Thurston discuss the relationship between cluster algebras and the combinatorics of decorated surfaces. The idea of~\cite{FST} is to associate to a decorated surface $S$ a corresponding cluster algebra. This cluster algebra is defined in such a way that each seed corresponds to a ``tagged triangulation'' of the surface~$S$. An ordinary ideal triangulation is a special case of a tagged triangulation provided there are no self-folded triangles. Fomin, Shapiro, and Thurston assume that the surface $S$ is not a sphere with one, two, or three punctures, a monogon with zero or one puncture, or a bigon or triangle without punctures. According to Lemma~2.13 of~\cite{FST}, such a surface always admits an ideal triangulation $T$ with no self-folded triangles.

If $T$ is an ideal triangulation of $S$ with no self-folded triangles, then we get an exchange matrix $b_{ij}=\varepsilon_{ji}$ ($i,j\in J$), indexed by the internal edges of $T$. To each internal edge $i$ of $T$, we associate variables $x_i$ and~$y_i$. This defines a labeled seed, and hence a cluster algebra. This is the cluster algebra that Fomin, Shapiro, and Thurston associate to the surface~$S$.

If $c$ is any arc on $S$ which is an internal edge for some ideal triangulation and does not cut out a once punctured monogon, then there is a cluster variable $x_c$ in this cluster algebra corresponding to~$c$. In particular, this means that for any such arc $c$ on $S$ there is an associated $F$-polynomial 
\[
F_c(y_1,\dots,y_n)
\]
and a $\mathbf{g}$-vector $\mathbf{g}_c$.

As part of their work on the positivity conjecture for cluster algebras from surfaces, Musiker, Schiffler, and Williams~\cite{MSW1} gave a formula for computing the $\mathbf{g}$-vector associated to an arc. Their construction associates, to any arc $c$, a graph $\bar{G}_{T,c}$ in the plane with labeled edges. This graph is obtained by gluing together ``tiles'' of the form
\[
\xy /l1.3pc/:
{\xypolygon4"A"{~:{(2,0):}}};
{"A2"\PATH~={**@{-}}'"A4"};
\endxy
\]

Indeed, suppose $c$ is an arc on a triangulated unpunctured surface. (We refer the reader to \cite{MSW1} for the case of a surface with punctures, which is similar.) Assume that this arc is not an edge of the triangulation. The illustration below shows an example of such a curve on a disk with ten marked points.
\[
\xy /l6pc/:
(3,0.5)*{}="1";
(2.5,-0.5)*{}="2";
(2,-0.6)*{}="3";
(1.5,-0.62)*{}="4";
(1,-0.7)*{}="5";
(0.5,-0.7)*{}="6";
(0,0)*{}="7";
(0.75,0.4)*{}="8";
(1.6,0.7)*{}="9";
(2.3,0.7)*{}="10";
{"1"\PATH~={**@{-}}'"2"},
{"2"\PATH~={**@{-}}'"3"},
{"3"\PATH~={**@{-}}'"4"},
{"4"\PATH~={**@{-}}'"5"},
{"5"\PATH~={**@{-}}'"6"},
{"6"\PATH~={**@{-}}'"7"},
{"7"\PATH~={**@{-}}'"8"},
{"8"\PATH~={**@{-}}'"9"},
{"9"\PATH~={**@{-}}'"10"},
{"10"\PATH~={**@{-}}'"1"},
{"10"\PATH~={**@{-}}'"2"},
{"10"\PATH~={**@{-}}'"3"},
{"10"\PATH~={**@{-}}'"4"},
{"9"\PATH~={**@{-}}'"4"},
{"8"\PATH~={**@{-}}'"4"},
{"8"\PATH~={**@{-}}'"5"},
{"8"\PATH~={**@{-}}'"6"},
"1";"7" **\crv{(2,-1) & (2,0)},
(2.25,-0.35)*{c};
(2.45,0.2)*{\tau_1};
(2.25,0.2)*{\tau_2};
(1.9,0.2)*{\tau_3};
(1.47,0)*{\tau_4};
(1.2,-0.31)*{\tau_5};
(0.84,-0.31)*{\tau_6};
(0.5,-0.31)*{\tau_7};
\endxy
\]

Choose an orientation for $c$, and label the arcs that $c$ crosses in order by $\tau_{i_1},\dots,\tau_{i_d}$. For any index~$j$, let $\Delta_{j-1}$ and $\Delta_j$ be the two triangles on either side of $\tau_{i_j}$. Then we can associate a tile $G_j$ as above to each $\tau_{i_j}$. It consists of two triangles with edges labeled as in~$\Delta_{j-1}$ and~$\Delta_j$ and glued together along the edge labeled $\tau_{i_j}$ so that the orientations of these triangles both agree or both disagree with those of~$\Delta_{j-1}$ and~$\Delta_j$. Note that there are two possible planar embeddings of the graph $G_j$.

The two arcs $\tau_{i_j}$ and $\tau_{i_{j+1}}$ are edges of the triangle $\Delta_j$. We will write $\tau_{[c_j]}$ for the third arc in this triangle. Then we can recursively glue together the tiles in order from~1 to~$d$ so that $G_{j+1}$ and $G_j$ are glued along the edges labeled $\tau_{[c_j]}$ and if the orientation of the triangles of~$G_j$ agrees with the orientation of $\Delta_{j-1}$ and $\Delta_j$ then the orientation of $G_{j+1}$ disagrees with the orientation of $\Delta_{j}$ and $\Delta_{j+1}$, and vice versa. We denote the resulting graph by $\bar{G}_{T,c}$.

For example, the graph $\bar{G}_{T,c}$ corresponding to the above example is
\[
\xy /l8pc/:
(1,-1)*{}="00";
(1.5,-1)*{}="10";
(2,-1)*{}="20";
(2.5,-1)*{}="30";
(3,-1)*{}="40";
(1,-0.5)*{}="01";
(1.5,-0.5)*{}="11";
(2,-0.5)*{}="21";
(2.5,-0.5)*{}="31";
(3,-0.5)*{}="41";
(1,0)*{}="02";
(1.5,0)*{}="12";
(2,0)*{}="22";
(2.5,0)*{}="32";
(3,0)*{}="42";
(1,0.5)*{}="03";
(1.5,0.5)*{}="13";
(2,0.5)*{}="23";
(2.5,0.5)*{}="33";
(3,0.5)*{}="43";
(2.5,1)*{}="34";
(3,1)*{}="44";
{"00"\PATH~={**@{-}}'"10"},
{"10"\PATH~={**@{-}}'"20"},
{"01"\PATH~={**@{-}}'"11"},
{"11"\PATH~={**@{-}}'"21"},
{"12"\PATH~={**@{-}}'"22"},
{"22"\PATH~={**@{-}}'"32"},
{"32"\PATH~={**@{-}}'"42"},
{"13"\PATH~={**@{-}}'"23"},
{"23"\PATH~={**@{-}}'"33"},
{"33"\PATH~={**@{-}}'"43"},
{"34"\PATH~={**@{-}}'"44"},
{"00"\PATH~={**@{-}}'"01"},
{"10"\PATH~={**@{-}}'"11"},
{"11"\PATH~={**@{-}}'"12"},
{"12"\PATH~={**@{-}}'"13"},
{"20"\PATH~={**@{-}}'"21"},
{"21"\PATH~={**@{-}}'"22"},
{"22"\PATH~={**@{-}}'"23"},
{"32"\PATH~={**@{-}}'"33"},
{"33"\PATH~={**@{-}}'"34"},
{"42"\PATH~={**@{-}}'"43"},
{"43"\PATH~={**@{-}}'"44"},
{"10"\PATH~={**@{-}}'"01"},
{"20"\PATH~={**@{-}}'"11"},
{"21"\PATH~={**@{-}}'"12"},
{"22"\PATH~={**@{-}}'"13"},
{"32"\PATH~={**@{-}}'"23"},
{"42"\PATH~={**@{-}}'"33"},
{"43"\PATH~={**@{-}}'"34"},
(3.05,0.25)*{\tau_{i_1}};
(2.75,-0.05)*{\tau_{i_3}};
(2.25,-0.05)*{\tau_{i_4}};
(2.05,-0.25)*{\tau_{i_4}};
(2.05,-0.75)*{\tau_{i_5}};
(1.75,-1.05)*{\tau_{i_7}};
(1.25,-0.44)*{\tau_{i_6}};
(1.44,-0.25)*{\tau_{i_6}};
(1.44,0.25)*{\tau_{i_5}};
(1.75,0.56)*{\tau_{i_3}};
(2.25,0.56)*{\tau_{i_2}};
(2.45,0.75)*{\tau_{i_2}};
(2.75,0.75)*{\tau_{i_1}};
(2.75,0.25)*{\tau_{i_2}};
(2.25,0.25)*{\tau_{i_3}};
(1.75,0.25)*{\tau_{i_4}};
(1.75,-0.25)*{\tau_{i_5}};
(1.75,-0.75)*{\tau_{i_6}};
(1.25,-0.75)*{\tau_{i_7}};
(2.75,0.5)*{\tau_{[c_1]}};
(2.5,0.25)*{\tau_{[c_2]}};
(2,0.25)*{\tau_{[c_3]}};
(1.75,0)*{\tau_{[c_4]}};
(1.75,-0.5)*{\tau_{[c_5]}};
(1.5,-0.75)*{\tau_{[c_6]}};
\endxy
\]

Write $G_{T,c}$ for the graph obtained from $\bar{G}_{T,c}$ by removing the diagonal in every tile. Recall that for any graph $G$, a \emph{perfect matching} of $G$ is a collection $P$ of edges such that every vertex of $G$ is incident to exactly one edge in $P$. It is easy to show that the graph~$G_{T,c}$ constructed in~\cite{MSW1} has exactly two perfect matchings consisting only of boundary edges. These perfect matchings are called the \emph{minimal matching} and \emph{maximal matching} and are denoted $P_{-}=P_{-}(G_{T,c})$ and $P_{+}=P_{+}(G_{T,c})$, respectively. In the above example, the maximal matching $P_{+}$ is the matching that contains the horizontal edge at the bottom of the graph~$\bar{G}_{T,c}$.

If the edges of a perfect matching $P$ are labeled $\tau_{j_1},\dots,\tau_{j_r}$, then we define the \emph{weight} $x(P)$ of $P$ as the product 
\[
x(P)=\prod_{s=1}^r x_{\tau_{j_s}}
\]
of the cluster variables associated to $\tau_{j_1},\dots,\tau_{j_r}$. Similarly, if $\tau_{i_1},\dots,\tau_{i_d}$ is the sequence of arcs in $T$ that $c$ crosses, then we define the \emph{crossing monomial} $\cross(T,c)$ of $c$ with respect to $T$ as the product 
\[
\cross(T,c)=\prod_{s=1}^d x_{\tau_{i_s}}.
\]
Note that the arcs $\tau_{i_1},\dots,\tau_{i_d}$ in this definition also appear as the labels on the diagonal edges in the graph $\bar{G}_{T,c}$.

\begin{proposition}[\cite{MSW1}]
Let $c$ be an arc on a decorated surface $S$. Then the $\mathbf{g}$-vector associated to $c$ is given by the formula 
\[
\mathbf{g}_{c}=\deg\left(\frac{x(P_{-})}{\cross(T,c)}\right).
\]
\end{proposition}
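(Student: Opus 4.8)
The statement to prove is the formula $\mathbf{g}_c = \deg\left(\frac{x(P_-)}{\cross(T,c)}\right)$ for the $\mathbf{g}$-vector of the cluster variable associated to an arc $c$ on a decorated surface $S$. Since this is exactly the content of the cited result of Musiker, Schiffler, and Williams \cite{MSW1}, the plan is really to assemble their argument from the pieces already laid out in the preceding paragraphs: the tile-by-tile construction of the snake graph $\bar G_{T,c}$, the identification of the two boundary perfect matchings $P_-$ and $P_+$, and the weight/crossing-monomial bookkeeping. First I would recall the combinatorial expansion formula of \cite{MSW1}: in the cluster algebra with principal coefficients at the seed attached to the triangulation $T$, the cluster variable $x_c$ equals
\[
x_c = \frac{1}{\cross(T,c)}\sum_{P}x(P)\,y(P),
\]
where the sum is over all perfect matchings $P$ of $G_{T,c}$ and $y(P)$ is the height monomial in the principal coefficients $y_i$, normalized so that $y(P_-)=1$. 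This is the key input and I would cite it directly rather than reprove it.

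The main step is then to track the $\mathbb{Z}^n$-grading through this formula. Recall from the excerpt that $\deg(x_i)=\mathbf{e}_i$ and $\deg(y_j)=-\mathbf{b}_j^0$, and that every $X$-polynomial is homogeneous of degree $\mathbf{g}_{l;t}$. So it suffices to compute the degree of a single monomial in the expansion, say the one coming from the minimal matching $P_-$, which contributes the term $\frac{x(P_-)}{\cross(T,c)}$ (its $y$-factor being $1$ by normalization). Homogeneity of $X_c$ forces $\deg$ of that term to equal $\mathbf{g}_c$, giving exactly the claimed formula. The only thing that requires care here is confirming that $P_-$'s contribution carries trivial $y$-monomial: this follows from the definition of the minimal matching as the one whose height $h(P_-)$ is minimal, together with the convention in \cite{MSW1} normalizing heights so that the minimum is zero; I would state this normalization explicitly and note that it is compatible with the grading convention $\deg(y_j)=-\mathbf{b}_j^0$ used in the excerpt.

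Two compatibility checks would fill out the argument. First, I must verify that the exchange matrix $b_{ij}=\varepsilon_{ji}$ attached to $T$ in the surface cluster algebra is the same one with respect to which the $\mathbf{g}$-vectors and $F$-polynomials of \cite{FZIV} are defined — this is just matching conventions between the surface combinatorics (Section on cluster algebras associated to surfaces) and the general theory (Proposition on $F$-polynomials), and there are no subtleties beyond a possible sign/transpose, which I would pin down. Second, I should note that the formula is independent of the chosen orientation of $c$ and of the planar embedding of each tile $G_j$, since $\bar G_{T,c}$ and hence $P_-$, $x(P_-)$, and $\cross(T,c)$ are well defined up to the symmetries already built into the construction.

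The main obstacle, such as it is, is bookkeeping rather than conceptual: one must be scrupulous about the normalization of the height function $y(P)$ and the identification of $P_-$ as the matching of minimal height, because the whole formula hinges on $y(P_-)$ being the trivial monomial. If instead one used the maximal matching $P_+$, the resulting degree would differ by $\sum_j \mathbf{b}_j^0$ summed over the arcs crossed — a genuine but harmless discrepancy that must be acknowledged and resolved in favor of $P_-$. Everything else is a direct application of homogeneity of the $X$-polynomial together with the cited expansion theorem, so the proof is short once these conventions are fixed.
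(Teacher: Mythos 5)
The paper states this proposition as a cited result from Musiker--Schiffler--Williams and gives no proof of its own, so there is nothing to compare against line by line; your argument is the standard derivation and is correct. The two inputs you isolate --- the perfect-matching expansion formula with the normalization $y(P_-)=1$, and the homogeneity of the $X$-polynomial with respect to the grading $\deg(x_i)=\mathbf{e}_i$, $\deg(y_j)=-\mathbf{b}_j^0$ --- are exactly what is needed, and reading off the degree of the $y$-free monomial contributed by the minimal matching gives the stated formula.
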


\chapter{Review of quantum cluster algebras}
\label{ch:ReviewOfQuantumClusterAlgebras}

\section{General theory of quantum cluster algebras}

Here we review the theory of quantum cluster algebras, following~\cite{BZq,Tran}. Throughout this section, $m$ and $n$ will be positive integers with $m\geq n$.

\begin{definition}
Let $k\in\{1,\dots,n\}$. We say that an $m\times n$ matrix $\mathbf{B}'=(b_{ij}')$ is obtained from an $m\times n$ matrix $\mathbf{B}=(b_{ij})$ by \emph{matrix mutation} in the direction $k$ if the entries of $\mathbf{B}'$ are given by 
\[
b_{ij}'=
\begin{cases}
-b_{ij} & \mbox{if } k\in\{i,j\} \\
b_{ij}+\frac{|b_{ik}|b_{kj}+b_{ik}|b_{kj}|}{2} & \mbox{if } k\not\in\{i,j\}.
\end{cases}
\]
In this case, we write $\mu_k(\mathbf{B})=\mathbf{B}'$.
\end{definition}

\begin{definition}
\label{def:compatibility}
Let $\mathbf{B}=(b_{ij})$ be an $m\times n$ integer matrix, and let  $\Lambda=(\lambda_{ij})$ be a skew-symmetric $m\times m$ integer matrix. We say that the pair $(\Lambda,\mathbf{B})$ is \emph{compatible} if for each $j\in\{1,\dots,n\}$ and $i\in\{1,\dots,m\}$, we have
\[
\sum_{k=1}^mb_{kj}\lambda_{ki}=\delta_{ij}d_j
\]
for some positive integers $d_j$~($j\in\{1,\dots,n\}$). Equivalently, the product $\mathbf{B}^t\Lambda$ equals the $n\times m$ matrix $(D|0)$ where $D$ is the $n\times n$ diagonal matrix with diagonal entries $d_1,\dots,d_n$.
\end{definition}

Let $k\in\{1,\dots,n\}$ and choose a sign $\epsilon\in\{\pm1\}$. Denote by $E_\epsilon$ the $m\times m$ matrix with entries given by 
\[
e_{ij}=
\begin{cases}
\delta_{ij} & \mbox{if } j\neq k \\
-1 & \mbox{if } i=j=k \\
\max(0,-\epsilon b_{ik}) & \mbox{if } i\neq j=k
\end{cases}
\]
and set 
\[
\Lambda'=E_\epsilon^t\Lambda E_\epsilon.
\]

\begin{proposition}[\cite{BZq}, Proposition~3.4]
The matrix $\Lambda'$ is skew-symmetric and independent of the sign $\epsilon$. Moreover, $(\Lambda',\mu_k(\mathbf{B}))$ is a compatible pair.
\end{proposition}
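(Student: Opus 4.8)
The final statement is Proposition B.4 (attributed to \cite{BZq}): for a compatible pair $(\Lambda,\mathbf{B})$, the matrix $\Lambda'=E_\epsilon^t\Lambda E_\epsilon$ is skew-symmetric, independent of $\epsilon$, and $(\Lambda',\mu_k(\mathbf{B}))$ is again compatible. Here is how I would prove it.

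\textbf{Skew-symmetry.} This is immediate: $(\Lambda')^t = (E_\epsilon^t\Lambda E_\epsilon)^t = E_\epsilon^t\Lambda^t E_\epsilon = -E_\epsilon^t\Lambda E_\epsilon = -\Lambda'$, using only that $\Lambda$ is skew-symmetric. No choices enter, so this part is formal.

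\textbf{Independence of $\epsilon$ and compatibility.} The plan is to work with the block form of the compatibility condition, $\mathbf{B}^t\Lambda = (D\mid 0)$, and the known matrix-mutation identity for the extended exchange matrix. First I would record the factorization $\mu_k(\mathbf{B}) = E_\epsilon \mathbf{B} F_\epsilon$, where $F_\epsilon$ is the $n\times n$ analogue of $E_\epsilon$ built from the principal part of $\mathbf{B}$ (this is the standard ``sign-decomposition'' of matrix mutation, valid for either sign $\epsilon$, and $F_\epsilon$ is invertible over $\mathbb{Z}$ with $F_\epsilon^2 = I$; likewise $E_\epsilon^2=I$). Then I compute
\[
\mu_k(\mathbf{B})^t \Lambda' = (E_\epsilon \mathbf{B} F_\epsilon)^t (E_\epsilon^t \Lambda E_\epsilon) = F_\epsilon^t \mathbf{B}^t E_\epsilon^t E_\epsilon^t \Lambda E_\epsilon = F_\epsilon^t \mathbf{B}^t \Lambda E_\epsilon,
\]
using $E_\epsilon^t E_\epsilon^t = E_\epsilon^t E_\epsilon$ only after checking $E_\epsilon^2 = I$ — actually the cleaner route is $E_\epsilon^t E_\epsilon = ?$; since this is not the identity I would instead use the verified identity $\mu_k(\mathbf{B}) = E_\epsilon\,\mathbf{B}\,F_\epsilon$ together with $E_\epsilon = E_{-\epsilon}^{-1}$-type relations. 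Substituting $\mathbf{B}^t\Lambda = (D\mid 0)$ gives $\mu_k(\mathbf{B})^t\Lambda' = F_\epsilon^t (D\mid 0) E_\epsilon$, and a direct block computation — $F_\epsilon$ acts on the first $n$ columns/rows, $(D\mid 0)$ has its nonzero block there, and $E_\epsilon$ acting on the right permutes/adjusts accordingly — yields $(D'\mid 0)$ with $D'$ diagonal and positive (in fact $d_k' = d_k$ up to the sign flip absorbed by the minus in the $(k,k)$ entry of $E_\epsilon$, and $d_j'=d_j$ for $j\neq k$). That establishes compatibility of $(\Lambda',\mu_k(\mathbf{B}))$. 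For independence of $\epsilon$: since the compatibility matrix of a given exchange matrix is uniquely determined by the data $d_1,\dots,d_n$ up to the kernel of $\mathbf{B}^t$, and here $\mathbf{B}^t$ (equivalently $\mu_k(\mathbf{B})^t$) has a left inverse on the relevant block because the $d_j$ are nonzero, the matrix $\Lambda'$ satisfying $\mu_k(\mathbf{B})^t\Lambda'=(D'\mid 0)$ and skew-symmetry is unique — so $E_{+1}^t\Lambda E_{+1} = E_{-1}^t\Lambda E_{-1}$. Alternatively one can check the identity $E_{+1} - E_{-1} = $ (a rank-one matrix supported in column $k$ whose contribution cancels against $\Lambda$'s skew-symmetry) directly; I would present whichever of the two is shorter once the block computation is in hand.

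\textbf{Main obstacle.} The only real work is the bookkeeping in the block multiplication $F_\epsilon^t(D\mid 0)E_\epsilon$ and verifying it lands in the form $(D'\mid 0)$ with $D'$ diagonal positive — this requires being careful about which entries of $E_\epsilon$ and $F_\epsilon$ are nonzero and tracking how the $-1$ in position $(k,k)$ and the $\max(0,-\epsilon b_{ik})$ entries interact with the diagonal $D$. Establishing uniqueness of the compatibility matrix (needed for the $\epsilon$-independence shortcut) also requires noting that $\mathbf{B}^t$ having full row rank $n$ — which follows from the existence of the compatible $\Lambda$ with $\mathbf{B}^t\Lambda=(D\mid 0)$ and $D$ invertible over $\mathbb{Q}$ — pins down $\Lambda'$ on the span of the rows; combined with skew-symmetry this is enough, but one should double-check that no ambiguity survives on the complementary subspace (it does not, because $\Lambda'$ is explicitly given). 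Everything else is routine linear algebra, so I would keep the exposition to the factorization identity, the substitution, the block computation, and the one-line uniqueness argument.
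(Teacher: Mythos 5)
The thesis states this result as a citation of Berenstein--Zelevinsky and gives no proof of its own, so your proposal can only be compared with the standard argument from \cite{BZq}; your skeleton --- skew-symmetry by transposing, the factorization $\mu_k(\mathbf{B})=E_\epsilon\mathbf{B}F_\epsilon$ with $E_\epsilon^2=F_\epsilon^2=I$, and the reduction of compatibility to the block identity $F_\epsilon^t(D\mid 0)E_\epsilon=(D\mid 0)$ --- is exactly that argument. (Minor point: $E_\epsilon^{-1}=E_\epsilon$, not $E_{-\epsilon}$, and the step you hedge about is justified simply by $E_\epsilon^tE_\epsilon^t=(E_\epsilon^2)^t=I$.) However, two of your steps need repair. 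The ``uniqueness'' route to $\epsilon$-independence fails whenever $m>n$: the conditions ``$\mu_k(\mathbf{B})^t\Lambda'=(D\mid 0)$ and $\Lambda'$ skew-symmetric'' determine the columns of $\Lambda'$ only modulo the $(m-n)$-dimensional kernel of $\mu_k(\mathbf{B})^t$ --- in particular the entries $\lambda'_{ij}$ with $i,j>n$ are not constrained at all, and for $m-n\geq 2$ there exist nonzero skew-symmetric $\Delta$ with $\mu_k(\mathbf{B})^t\Delta=0$. Your parenthetical ``(it does not, because $\Lambda'$ is explicitly given)'' is circular, since the point is precisely to show the two explicit candidates agree. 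You must therefore use your second route: $E_{+1}-E_{-1}=-\mathbf{b}^ke_k^t$ is rank one, and in
\[
E_{+1}^t\Lambda E_{+1}-E_{-1}^t\Lambda E_{-1}=(E_{+1}-E_{-1})^t\Lambda E_{+1}+E_{-1}^t\Lambda(E_{+1}-E_{-1})
\]
the two terms equal $+d_ke_ke_k^t$ and $-d_ke_ke_k^t$. But the cancellation is not produced by ``$\Lambda$'s skew-symmetry'' as your parenthesis suggests: the essential input is the compatibility relation itself, namely $(\mathbf{b}^k)^t\Lambda=d_ke_k^t$ (the $k$-th row of $\mathbf{B}^t\Lambda$) and its transpose $\Lambda\mathbf{b}^k=-d_ke_k$, together with $e_k^tE_\epsilon=-e_k^t$.

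The second repair concerns the block computation, which is not pure bookkeeping. After substituting, the $(i,k)$ entry of $F_\epsilon^t(D\mid 0)E_\epsilon$ for $i\neq k$ comes out as $d_i\max(0,-\epsilon b_{ik})-d_k\max(0,\epsilon b_{ki})$, and this vanishes only because compatibility forces $d_ib_{ik}=-d_kb_{ki}$ for $i,k\leq n$ --- the skew-symmetrizability of the principal part, i.e.\ the proposition quoted immediately before this one. With that identity in hand the product is $(D\mid 0)$ exactly: the new diagonal is $D'=D$ on the nose (the two sign flips coming from the $(k,k)$ entries of $E_\epsilon$ and $F_\epsilon^t$ cancel), not merely ``diagonal positive up to a sign flip.'' With these two corrections --- dropping the uniqueness shortcut and identifying the compatibility relation as the engine of both cancellations --- your outline becomes a complete proof.
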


\begin{definition}[\cite{BZq}, Definition~3.5]
Let $(\Lambda,\mathbf{B})$ be a compatible pair and let $k\in\{1,\dots,n\}$. We say that the pair $(\Lambda',\mu_k(\mathbf{B}))$ is obtained from $(\Lambda,\mathbf{B})$ by \emph{mutation} in the direction~$k$ and write $\mu_k(\Lambda,\mathbf{B})=(\Lambda',\mu_k(\mathbf{B}))$.
\end{definition}

Let $L$ be a lattice of rank $m$ equipped with a skew-symmetric bilinear form $\Lambda:L\times L\rightarrow\mathbb{Z}$. Let $\omega$ be a formal variable. We can associate to these data a quantum torus algebra~$\mathcal{T}$. It is generated over $\mathbb{Q}[\omega,\omega^{-1}]$ by variables $A^v$~($v\in\Lambda$) subject to the commutation relations 
\[
A^{v_1}A^{v_2}=\omega^{-\Lambda(v_1,v_2)}A^{v_1+v_2}.
\]
In the literature on quantum cluster algebras, this quantum torus algebra is typically called a \emph{based quantum torus}, and the parameter is denoted $q^{-1/2}$, rather than $\omega$. (See~\cite{BZq,Muller,Tran} for example.) This quantum torus algebra has a noncommutative fraction field which we denote~$\mathcal{F}$.

\begin{definition}
A \emph{toric frame} in $\mathcal{F}$ is a mapping $M:\mathbb{Z}^m\rightarrow\mathcal{F}-\{0\}$ of the form 
\[
M(v)=\phi(A^{\eta(v)})
\]
where $\phi$ is an automorphism of $\mathcal{F}$ and $\eta:\mathbb{Z}^m\rightarrow L$ is an isomorphism of lattices.
\end{definition}

Note that the image $M(\mathbb{Z}^m)$ of a toric frame is a basis for an isomorphic copy of~$\mathcal{T}$ in~$\mathcal{F}$. We have the relations 
\begin{align*}
M(v_1)M(v_2) &= \omega^{-\Lambda_M(v_1,v_2)}M(v_1+v_2), \\
M(v_1)M(v_2) &= \omega^{-2\Lambda_M(v_1,v_2)}M(v_2)M(v_1), \\
M(v)^{-1} &= M(-v), \\
M(0) &=1
\end{align*}
where the form $\Lambda_M$ on $\mathbb{Z}^m$ is obtained from $\Lambda$ using the isomorphism $\eta$.

\begin{definition}
A \emph{quantum seed} is a pair $(M,\mathbf{B})$ where $M$ is a toric frame in~$\mathcal{F}$ and $\mathbf{B}$ is an $m\times n$ integer matrix such that $(\Lambda_M,\mathbf{B})$ is a compatible pair.
\end{definition}

\begin{definition}
\label{def:mutatedtoricframe}
Let $(M,\mathbf{B})$ be a quantum seed and write $\mathbf{B}=(b_{ij})$. For any index $k\in\{1,\dots,k\}$ and any sign $\epsilon\in\{\pm1\}$, we define a mapping $M':\mathbb{Z}^m\rightarrow\mathcal{F}-\{0\}$ by the formulas 
\begin{align*}
M'(v) &= \sum_{p=0}^{v_k}\binom{v_k}{p}_{\omega^{-d_k}}M(E_\epsilon v+\epsilon p b^k), \\
M'(-v) &= M'(v)^{-1}
\end{align*}
where $v=(v_1,\dots,v_m)\in\mathbb{Z}^m$ is such that $v_k\geq0$ and $b^k$ denotes the $k$th column of~$\mathbf{B}$. Here the $t$-binomial coefficient is given by 
\[
\binom{r}{p}_t=\frac{(t^r-t^{-r})\dots(t^{r-p+1}-t^{-r+p-1})}{(t^p-t^{-p})\dots(t-t^{-1})}.
\]
\end{definition}

\begin{proposition}[\cite{BZq}, Proposition~4.7]
The mapping $M'$ satisfies the following properties:
\begin{enumerate}
\item The mapping $M'$ is a toric frame which is independent of the sign~$\epsilon$.

\item The pair $(\Lambda_{M'},\mu_k(\mathbf{B}))$ is compatible and obtained from the pair $(\Lambda_M,\mathbf{B})$ by mutation in the direction~$k$.

\item The pair $(M',\mu_k(\mathbf{B}))$ is a quantum seed.
\end{enumerate}
\end{proposition}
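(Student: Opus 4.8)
The three assertions here are not independent: once we know $M'$ is a toric frame (part 1) and that $(\Lambda_{M'},\mu_k(\mathbf{B}))$ is compatible (part 2), the pair $(M',\mu_k(\mathbf{B}))$ is a quantum seed by the very definition, so (3) comes for free. Moreover, Proposition~3.4 of~\cite{BZq} already established that $(\Lambda',\mu_k(\mathbf{B}))$ is compatible and that it equals $\mu_k(\Lambda_M,\mathbf{B})$ in the sense of Definition~3.5 of~\cite{BZq}, where $\Lambda'=E_\epsilon^t\Lambda_M E_\epsilon$; so all of (2) reduces to identifying the form $\Lambda_{M'}$ attached to the new frame with $\Lambda'$. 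The plan is therefore to prove (1) and this one identification. I would first reduce to the standard frame: writing $M(v)=\phi(A^{\eta(v)})$ for an automorphism $\phi$ of $\mathcal{F}$ and a lattice isomorphism $\eta:\mathbb{Z}^m\to L$, and noting that the defining formula for $M'$ is built functorially out of $M$, one may assume $\phi=\mathrm{id}$, $L=\mathbb{Z}^m$, $\eta=\mathrm{id}$, so that $M(v)=A^v$ and $\Lambda_M=\Lambda$.

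The computational heart is a direct identity in the quantum torus $\mathcal{T}$. Evaluating the defining sum on basis vectors gives $M'(e_i)=A^{e_i}$ for $i\neq k$ (a single term, since $(e_i)_k=0$) and $M'(e_k)=A^{E_\epsilon e_k}+A^{E_\epsilon e_k+\epsilon b^k}$. The two exponents differ by $\epsilon b^k$, and the compatibility relation $\mathbf{B}^t\Lambda=(D\mid 0)$ forces $\Lambda(b^k,e_j)=0$ for $j\neq k$ and $\Lambda(b^k,E_\epsilon e_k)=-d_k$; consequently $M'(e_k)$ $\omega$-commutes with each $M'(e_j)$, $j\neq k$, and the two monomials making up $M'(e_k)$ satisfy a two-variable relation $XY=\omega^{2\epsilon d_k}YX$ of exactly the shape required to apply the $\omega^{-d_k}$-binomial theorem. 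That theorem then shows $M'(e_k)^n$ coincides with the defining sum $M'(ne_k)$, and more generally yields the monomial multiplication law $M'(v)M'(w)=\omega^{-\Lambda'(v,w)}M'(v+w)$ with $\Lambda'=E_\epsilon^t\Lambda E_\epsilon$; the only nontrivial instances are those where a power of $M'(e_k)$ must be commuted past the other generators, and these are governed by the same commutation constants. Independence of the sign $\epsilon$ falls out of this same bookkeeping: the substitution $p\mapsto v_k-p$, the symmetry $\binom{v_k}{p}_t=\binom{v_k}{v_k-p}_t$, and the elementary matrix identity $E_{-\epsilon}v=E_\epsilon v+\epsilon v_k b^k$ (checked columnwise from $[a]_+-[-a]_+=a$) together show the two sums defining $M'$ agree.

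To finish part (1) I would exhibit the toric-frame data directly. Take $\eta'=E_\epsilon$, which lies in $GL_m(\mathbb{Z})$ since $\det E_\epsilon=-1$, and let $\phi'$ be the map of $\mathcal{F}$ fixing $A^{e_j}$ for $j\neq k$ and sending $A^{E_\epsilon e_k}$ to $A^{E_\epsilon e_k}+A^{E_\epsilon e_k+\epsilon b^k}$. Because $\{E_\epsilon e_k\}\cup\{e_j:j\neq k\}$ is a $\mathbb{Z}$-basis of $\mathbb{Z}^m$, the element $A^{E_\epsilon e_k}$ together with the $A^{e_j}$ generate a based quantum torus, and one checks that $\phi'$ extends to an automorphism of the skew-field $\mathcal{F}$: it is an elementary exchange map whose associated birational transformation is reversible, exactly as for the classical $\mathcal{A}$-exchange relation at $q=1$. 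Both $v\mapsto M'(v)$ and $v\mapsto\phi'(A^{\eta'(v)})$ satisfy the same multiplication law and agree on the standard basis, hence coincide; so $M'(v)=\phi'(A^{\eta'(v)})$ is a toric frame, with attached form $\Lambda_{M'}(v,w)=\Lambda_M(E_\epsilon v,E_\epsilon w)=\Lambda'(v,w)$, which completes (2) and therefore (3). I expect the real difficulty to lie in the middle paragraph: pushing the $\omega$-exponents through the matrix $E_\epsilon$ and confirming that all the commutation constants line up so that the $q$-binomial theorem applies verbatim — this is precisely where the compatibility condition $\mathbf{B}^t\Lambda=(D\mid 0)$ is indispensable. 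A secondary point requiring care is justifying that $\phi'$ is genuinely a skew-field automorphism without implicitly assuming the very mutation structure one is constructing.
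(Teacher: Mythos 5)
This proposition is quoted in the thesis directly from Berenstein--Zelevinsky (\cite{BZq}, Proposition~4.7); the paper supplies no proof of its own, so there is nothing internal to compare against. Your reconstruction is, in substance, the original Berenstein--Zelevinsky argument, and the computations you outline are correct: the reduction of (3) to (1) and (2), the reduction of (2) to the identification $\Lambda_{M'}=E_\epsilon^t\Lambda_M E_\epsilon$ (with Proposition~3.4 of \cite{BZq} supplying compatibility of the mutated pair), the evaluation $M'(\mathbf{e}_i)=M(\mathbf{e}_i)$ for $i\neq k$ and $M'(\mathbf{e}_k)=M(E_\epsilon\mathbf{e}_k)+M(E_\epsilon\mathbf{e}_k+\epsilon b^k)$, the use of $\mathbf{B}^t\Lambda=(D\mid 0)$ to get $\Lambda(b^k,\mathbf{e}_j)=0$ for $j\neq k$ and $\Lambda(b^k,E_\epsilon\mathbf{e}_k)=-d_k$, and the sign-independence via $E_{-\epsilon}v=E_\epsilon v+\epsilon v_k b^k$ together with the symmetry of the $t$-binomial coefficients are all sound.

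Two points deserve tightening. First, the multiplication law $M'(v)M'(w)=\omega^{-\Lambda'(v,w)}M'(v+w)$ must be checked for arbitrary $v,w$, not only on basis vectors: the individual monomials $M(E_\epsilon v+\epsilon pb^k)$ and $M(E_\epsilon w+\epsilon p'b^k)$ quasi-commute with constants $\Lambda(E_\epsilon v,E_\epsilon w)+\epsilon d_k(p'v_k-pw_k)$ that genuinely depend on $p,p'$, and it is the $q$-Vandermonde identity for the $\omega^{-d_k}$-binomial coefficients that recombines the double sum into the single sum defining $M'(v+w)$; you gesture at this but it is the crux. Second, your construction of $\phi'$ establishes that the images $M'(\mathbf{e}_j)$ satisfy the correct quasi-commutation relations, which gives a homomorphism from the based quantum torus, but invertibility on the skew-field is not automatic from ``reversibility of the birational transformation.'' The clean way to close this (and the way \cite{BZq} does it) is to verify that mutation in direction $k$ is an involution on quantum seeds, so that applying the same construction to $(M',\mu_k(\mathbf{B}))$ recovers $M$; this exhibits an explicit two-sided inverse for $\phi'$ and completes the proof that $M'$ is a toric frame.
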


\begin{definition}
Let $(M,\mathbf{B})$ be a quantum seed, and let $k\in\{1,\dots,n\}$. Let $M'$ be the mapping from Definition~\ref{def:mutatedtoricframe}, and let $\mathbf{B}'=\mu_k(\mathbf{B})$. Then we say that the quantum seed $(M',\mathbf{B}')$ is obtained from $(M,\mathbf{B})$ by \emph{mutation} in the direction~$k$.
\end{definition}

\begin{proposition}[\cite{BZq}, Proposition~4.9]
\label{prop:exchangetoricframe}
Let $(M,\mathbf{B})$ be a quantum seed, and suppose that $(M',\mathbf{B}')$ is obtained from $(M,\mathbf{B})$ by mutation in the direction~$k$. Then 
\[
M'(\mathbf{e}_k)=M\big(-\mathbf{e}_k+\sum_{i=1}^m[b_{ik}]_+\mathbf{e}_i\big) + M\big(-\mathbf{e}_k+\sum_{i=1}^m[-b_{ik}]_+\mathbf{e}_i\big)
\]
and $M'(\mathbf{e}_i)=M(\mathbf{e}_i)$ for $i\neq k$.
\end{proposition}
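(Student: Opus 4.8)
The plan is to prove the proposition by a direct specialization of Definition~\ref{def:mutatedtoricframe}: one evaluates the general formula for the mutated toric frame $M'$ at the standard basis vectors $\mathbf{e}_k$ and at $\mathbf{e}_i$ for $i\neq k$. Nothing is needed beyond that definition, the values of the smallest $t$-binomial coefficients, and an explicit description of the relevant columns of the matrix $E_\epsilon$.

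First I would record two elementary computations. From the product formula $\binom{r}{p}_t=\frac{(t^r-t^{-r})\cdots(t^{r-p+1}-t^{-r+p-1})}{(t^p-t^{-p})\cdots(t-t^{-1})}$ one reads off $\binom{1}{0}_t=\binom{1}{1}_t=\binom{0}{0}_t=1$ for every $t$ (empty products in numerator and denominator where appropriate). Next I would identify the $k$-th column of $E_\epsilon$: by the definition of $E_\epsilon$ its diagonal $(k,k)$ entry is $-1$ and its $(i,k)$ entry for $i\neq k$ is $\max(0,-\epsilon b_{ik})=[-\epsilon b_{ik}]_+$; since the principal part of $\mathbf{B}$ is skew-symmetric we have $b_{kk}=0$, so the $k$-th column can be written uniformly as $E_\epsilon\mathbf{e}_k=-\mathbf{e}_k+\sum_{i=1}^m[-\epsilon b_{ik}]_+\mathbf{e}_i$. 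For $i\neq k$ the definition gives $E_\epsilon\mathbf{e}_i=\mathbf{e}_i$.

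Then I would carry out the specialization. For $v=\mathbf{e}_k$ we have $v_k=1$, so Definition~\ref{def:mutatedtoricframe} with $\epsilon=1$ gives
\[
M'(\mathbf{e}_k)=M(E_1\mathbf{e}_k)+M(E_1\mathbf{e}_k+b^k),
\]
where $b^k=\sum_i b_{ik}\mathbf{e}_i$ is the $k$-th column of $\mathbf{B}$. Inserting $E_1\mathbf{e}_k=-\mathbf{e}_k+\sum_i[-b_{ik}]_+\mathbf{e}_i$ and using the identity $[-b_{ik}]_+ + b_{ik}=[b_{ik}]_+$, the second argument becomes $-\mathbf{e}_k+\sum_i[b_{ik}]_+\mathbf{e}_i$, which is exactly the asserted identity (with the two summands written in the opposite order). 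For $i\neq k$ we have $v_k=0$, so the sum in Definition~\ref{def:mutatedtoricframe} collapses to its $p=0$ term and $M'(\mathbf{e}_i)=M(E_\epsilon\mathbf{e}_i)=M(\mathbf{e}_i)$.

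I do not anticipate a real obstacle; the proof is essentially a one-line unwinding of the definition. The only place requiring a little care is the bookkeeping of the $k$-th column of $E_\epsilon$ — in particular the use of $b_{kk}=0$ to absorb the $k$-th coordinate into the sum over all $i$ — and it is worth adding a remark that the resulting expression is manifestly symmetric under swapping $[b_{ik}]_+\leftrightarrow[-b_{ik}]_+$, consistent with the independence of $M'$ from the sign $\epsilon$ already established in~\cite{BZq}.
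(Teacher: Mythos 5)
Your proof is correct: the paper itself states this result as a citation of \cite{BZq}, Proposition~4.9, without proof, and your argument is exactly the standard derivation — evaluate Definition~\ref{def:mutatedtoricframe} at $v=\mathbf{e}_k$ (where $v_k=1$ forces the sum to its $p=0,1$ terms with unit $t$-binomial coefficients) and at $v=\mathbf{e}_i$, $i\neq k$ (where $v_k=0$), using $E_\epsilon\mathbf{e}_k=-\mathbf{e}_k+\sum_i[-\epsilon b_{ik}]_+\mathbf{e}_i$ and $[-b]_++b=[b]_+$. The bookkeeping, including the use of $b_{kk}=0$ (which follows from skew-symmetrizability of the principal part), is accurate, so there is nothing to add.
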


\begin{definition}
Denote by $\mathbb{T}_n$ an $n$-regular tree with edges labeled by the numbers $1,\dots,n$ in such a way that the $n$ edges emanating from any vertex have distinct labels. A \emph{quantum cluster pattern} is an assignment of a quantum seed $\Sigma_t=(M_t,\mathbf{B}_t)$ to each vertex $t\in\mathbb{T}_n$ so that if $t$ and $t'$ are vertices connected by an edge labeled~$k$, then $\Sigma_{t'}$ is obtained from $\Sigma_t$ by a mutation in the direction~$k$.
\end{definition}

Given a quantum cluster pattern, let us define $A_{i;t}=M_t(\mathbf{e}_i)$. For $i\in\{n+1,\dots,m\}$, we have $A_{i;t}=A_{i;t'}$ for all $t$,~$t'\in\mathbb{T}_n$, so we may omit one of the subscripts and write $A_i=A_{i;t}$ for all $t\in\mathbb{T}_n$. Let 
\[
\mathcal{S}=\{A_{i;t}:i\in\{1,\dots,n\},t\in\mathbb{T}_n\}.
\]

\begin{definition}[\cite{BZq}, Definition~4.12]
Given a quantum cluster pattern $t\mapsto(M_t,\mathbf{B}_t)$, the associated \emph{quantum cluster algebra} $\mathcal{A}$ is the $\mathbb{Z}[\omega^{\pm1},A_{n+1}^{\pm1},\dots,A_m^{\pm1}]$-subalgebra of the ambient skew-field~$\mathcal{F}$ generated by elements of $\mathcal{S}$.
\end{definition}

\section{Quantum $F$-polynomials}

One of the important tools that we apply in our construction of the map $\mathbb{I}_{\mathcal{D}}^q$ is the notion of a quantum $F$-polynomial from~\cite{Tran}. This extends Fomin and Zelevinsky's notion of $F$-polynomial~\cite{FZIV} to the noncommutative setting and allows us to express a generator $A_{j;t}$ of a quantum cluster algebra in terms of the generators associated with an initial seed.

\begin{theorem}[\cite{Tran}, Theorem~5.3]
\label{thm:quantumF}
Let $(M_0,\mathbf{B}_0)$ be an initial quantum seed in a quantum cluster algebra $\mathcal{A}$ and write $\mathbf{B}_0=(b_{ij})$. Then there exists an integer $\lambda_{j;t}\in\mathbb{Z}$ and a polynomial~$F_{j;t}$ in the variables 
\[
Y_j=M_0\big(\sum_i b_{ij}\mathbf{e}_i\big)
\]
with coefficients in $\mathbb{Z}[\omega,\omega^{-1}]$ such that the cluster variable $A_{j;t}\in\mathcal{A}$ is given by 
\[
A_{j;t}=\omega^{\lambda_{j;t}}F_{j;t}\cdot M_0(\mathbf{g}_{j;t})
\]
where $\mathbf{g}_{j;t}$ is an integer vector called the extended $\mathbf{g}$-vector of~$A_{j;t}$.
\end{theorem}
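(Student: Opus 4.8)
The plan is to adapt Fomin and Zelevinsky's ``separation of additions'' argument (Proposition~\ref{prop:FZ63}) to the noncommutative setting and to prove the formula by induction on the distance in the tree $\mathbb{T}_n$ between the initial vertex $t_0$ and the vertex $t$.

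First I would reduce to the case of a quantum cluster algebra with \emph{principal coefficients}: one enlarges the initial quantum seed $(M_0,\mathbf{B}_0)$ to a seed whose underlying lattice has rank $2n$, whose exchange matrix has the block form $\binom{\widetilde{\mathbf{B}}}{I}$, and whose compatibility matrix is chosen so as to restrict correctly; the general statement then follows by specializing in the frozen directions, exactly as in the classical theory. In this setting the elements $Y_j=M_0(\sum_i b_{ij}\mathbf{e}_i)$ play the role of the classical coefficients $\widehat{y}_j$. At the outset I would record the commutation relation forced by the compatibility condition of Definition~\ref{def:compatibility}: from $M_0(v_1)M_0(v_2)=\omega^{-2\Lambda_{M_0}(v_1,v_2)}M_0(v_2)M_0(v_1)$ and $\sum_k b_{kj}\lambda_{ki}=\delta_{ij}d_j$ one gets $Y_j M_0(\mathbf{e}_i)=\omega^{-2d_j\delta_{ij}}M_0(\mathbf{e}_i)Y_j$, and more generally $Y_j$ $q$-commutes with any $M_0(\mathbf{v})$ through a power of $\omega$ depending only on the $j$th coordinate of $\mathbf{v}$. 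This is precisely what makes the product $F_{j;t}\cdot M_0(\mathbf{g}_{j;t})$ unambiguous up to an overall power of $\omega$.

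The base case $t=t_0$ is immediate: $A_{j;t_0}=M_0(\mathbf{e}_j)$, so one takes $F_{j;t_0}=1$, $\mathbf{g}_{j;t_0}=\mathbf{e}_j$, $\lambda_{j;t_0}=0$. For the inductive step, let $t$ and $t'$ be joined by an edge labelled $k$, and assume the formula holds at $t$ in every direction. I would apply the exchange relation of Proposition~\ref{prop:exchangetoricframe} to write $A_{k;t'}=M_{t'}(\mathbf{e}_k)$ as a sum of two monomials in the frame $M_t$, substitute the inductive expressions $A_{i;t}=\omega^{\lambda_{i;t}}F_{i;t}M_0(\mathbf{g}_{i;t})$, and use the recorded commutation relations to push every factor $M_0(\mathbf{g})$ to the right. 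This produces an expression of the form $\omega^{c}\,P\,M_0(\mathbf{v})$, where $c\in\mathbb{Z}$, $\mathbf{v}\in\mathbb{Z}^m$, and $P$ is a priori a Laurent polynomial in the $Y_i$. One then checks two things: that $P$ is in fact a genuine polynomial in the $Y_i$ (the would-be denominators cancel, which follows from the quantum Laurent phenomenon for $\mathcal{A}$ together with the classical cancellation obtained on setting $\omega=1$), so that it may serve as $F_{k;t'}$; and that the exponent vector $\mathbf{v}$ obeys the same recurrence as the extended $\mathbf{g}$-vectors of~\cite{FZIV}, so that $\mathbf{v}=\mathbf{g}_{k;t'}$. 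The leftover scalar is absorbed into $\lambda_{k;t'}$, while $F_{i;t'}=F_{i;t}$ and $\mathbf{g}_{i;t'}=\mathbf{g}_{i;t}$ for $i\neq k$.

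The hard part will be the noncommutative bookkeeping in the inductive step, and in particular a homogeneity statement: one must show that the two monomials produced by Proposition~\ref{prop:exchangetoricframe}, after reordering using only the compatibility-controlled commutators recorded above, can be written over one and the same right-hand monomial $M_0(\mathbf{v})$, so that their sum genuinely has the form $P\,M_0(\mathbf{v})$ rather than an inhomogeneous combination. This is the quantum analogue of the homogeneity of classical cluster variables with respect to the $\mathbb{Z}^m$-grading of~\cite{FZIV}, and it has to be carried through the induction in tandem with the $\mathbf{g}$-vector recurrence, since each of the two statements feeds the other. Once homogeneity and the polynomiality of $P$ are in hand, identifying the recurrences with their classical counterparts via the limit $\omega=1$ and tracking the scalar $\omega^{\lambda_{j;t}}$ are routine, and the theorem follows.
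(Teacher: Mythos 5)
First, a point of calibration: the thesis does not actually prove this statement. It is imported verbatim from \cite{Tran} (Theorem~5.3) in a review appendix, so your proposal has to be measured against Tran's published argument rather than anything internal to the paper. With that said, your overall strategy is the right one and is close to how the result is actually established: induction on the distance from $t_0$ in $\mathbb{T}_n$, the exchange relation of Proposition~\ref{prop:exchangetoricframe}, the commutation rule $Y_jM_0(\mathbf{e}_i)=\omega^{-2d_j\delta_{ij}}M_0(\mathbf{e}_i)Y_j$ forced by Definition~\ref{def:compatibility}, and the identification of the resulting exponent recurrence with the $\mathbf{g}$-vector recurrence of \cite{FZIV}. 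One structural remark: the detour through principal coefficients is unnecessary and not free in the quantum world. The theorem is already stated for an arbitrary compatible pair, with $Y_j=M_0(\sum_ib_{ij}\mathbf{e}_i)$ playing the role of $\widehat{y}_j$; if you insist on enlarging the seed you must also produce a compatible $\Lambda$ for the enlarged exchange matrix and a quantum torus homomorphism relating the two algebras, which is extra work that buys you nothing here.

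The genuine gap is the polynomiality of $F_{j;t}$. Homogeneity (your ``hard part'') gets you to $A_{j;t}=\omega^{c}\,P\,M_0(\mathbf{v})$ with $P$ a \emph{Laurent} polynomial in the $Y_i$: since $\mathbf{B}$ has full rank, the monomials $M_0(\mathbf{v}+\mathbf{B}\mathbf{n})$, $\mathbf{n}\in\mathbb{Z}^n$, are linearly independent over $\mathbb{Z}[\omega,\omega^{-1}]$, so the support of $P$ is a well-defined finite subset of $\mathbb{Z}^n$, and the theorem asserts it lies in $\mathbb{Z}_{\geq0}^n$. Your proposed deduction of this from ``the quantum Laurent phenomenon together with the classical cancellation obtained on setting $\omega=1$'' does not work: a term $c_{\mathbf{n}}Y^{\mathbf{n}}$ with $\mathbf{n}\notin\mathbb{Z}_{\geq0}^n$ and $c_{\mathbf{n}}\in\mathbb{Z}[\omega,\omega^{-1}]$ divisible by $\omega-1$ vanishes in the classical specialization without vanishing quantumly, so the quantum support can a priori be strictly larger than the nonnegative support guaranteed classically by Proposition~3.6 of \cite{FZIV}, and by the linear independence above there is no further cancellation to rescue you. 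Closing this is exactly the nontrivial content of the theorem: one must either run the division step $A_{k;t'}=(M_1+M_2)A_{k;t}^{-1}$ of the induction honestly in the quantum torus (the quantum analogue of the argument behind \cite{FZIV}, Proposition~3.6, which is where the real work lies even classically, e.g.\ via the upper-bound results of \cite{BZq}), or import a positivity statement for the coefficients far stronger than anything you have assumed. Until this is repaired, your construction only produces a Laurent polynomial $F_{j;t}$, which is strictly weaker than the statement.
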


The polynomial $F_{j;t}$ appearing in the theorem is known as a \emph{quantum $F$-polynomial}. For the quantum cluster algebras considered in this paper, we have the following refinement of Theorem~\ref{thm:quantumF}.

\begin{corollary}
\label{cor:Fpositivity}
Let $\mathcal{A}$ be a quantum cluster algebra of type $\mathrm{A}_n$, and suppose the matrix~$D$ appearing in the compatibility condition of Definition~\ref{def:compatibility} is four times the identity. Then there exists a polynomial $F_{j;t}$ in the variables $Y_1,\dots,Y_n$ with coefficients in $\mathbb{Z}_{\geq0}[\omega^4,\omega^{-4}]$ such that the cluster variable $A_{j;t}\in\mathcal{A}$ is given by 
\[
A_{j;t}=F_{j;t}\cdot M_0(\mathbf{g}_{j;t})
\]
where $\mathbf{g}_{j;t}$ denotes the extended $\mathbf{g}$-vector of~$A_{j;t}$.
\end{corollary}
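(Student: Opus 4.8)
The plan is to deduce this corollary from Theorem~\ref{thm:quantumF} by pinning down the two parameters that the general theorem leaves unspecified: the power of $\omega$ (the integer $\lambda_{j;t}$) and the ring in which the coefficients of the quantum $F$-polynomial lie. For a quantum cluster algebra of type $\mathrm{A}_n$, Theorem~\ref{thm:quantumF} already gives $A_{j;t}=\omega^{\lambda_{j;t}}F_{j;t}\cdot M_0(\mathbf{g}_{j;t})$ with $F_{j;t}$ a polynomial in $Y_1,\dots,Y_n$ over $\mathbb{Z}[\omega,\omega^{-1}]$, so what remains is (a) to absorb the prefactor $\omega^{\lambda_{j;t}}$ into $F_{j;t}$ and show the resulting coefficients lie in $\mathbb{Z}_{\geq0}[\omega^4,\omega^{-4}]$, and (b) to check that the only powers of $\omega$ appearing are multiples of~$4$. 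First I would recall, from Proposition~\ref{prop:quantumflip} and the surrounding discussion, that our quantum seeds arise from the skein algebra with parameter $\omega$ and that $q=\omega^4$; the compatibility matrix there satisfies $\mathbf{B}_T^t\Lambda_T = 4 I$, which is exactly the hypothesis that $D$ is four times the identity, so $d_k=4$ for every mutation direction.

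The key computation is then to track the parameter through the mutation formula for toric frames in Definition~\ref{def:mutatedtoricframe}. Since $d_k=4$, the $\omega^{-d_k}$-binomial coefficients appearing there are $\binom{v_k}{p}_{\omega^{-4}}$, and these are Laurent polynomials in $\omega^4$ with nonnegative integer coefficients (each factor $\frac{t^a-t^{-a}}{t-t^{-1}} = t^{a-1}+t^{a-3}+\dots+t^{-(a-1)}$ with $t=\omega^{-4}$ is manifestly a nonnegative-coefficient Laurent polynomial in $t$, and a ratio of products of such factors that is known to be a polynomial stays in $\mathbb{Z}_{\geq0}[t^{\pm1}]$ by the standard $q$-binomial positivity). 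Next, I would use the relation
\[
M_0\big(\textstyle\sum_i b_{ik}\mathbf{e}_i\big)M_0(\mathbf{e}_j) = \omega^{-2\Lambda_{M_0}(\sum_i b_{ik}\mathbf{e}_i,\,\mathbf{e}_j)}M_0(\mathbf{e}_j)M_0\big(\textstyle\sum_i b_{ik}\mathbf{e}_i\big)
\]
together with the compatibility identity $\Lambda_{M_0}(\sum_i b_{ik}\mathbf{e}_i,\mathbf{e}_j)=4\delta_{kj}$ (this is precisely the computation already carried out inside the proofs of Proposition~\ref{prop:Xcommutation} and Theorem~\ref{thm:Dcanonicalrational}) to see that every structure constant relating the $Y$-variables to each other and to the $M_0(\mathbf{e}_i)$ is a power of $\omega^8=q^2$, hence a power of $\omega^4$. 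Inductively along the tree $\mathbb{T}_n$, starting from $F_{j;t_0}=1$, this shows that at each mutation step the quantum $F$-polynomial stays a polynomial in $Y_1,\dots,Y_n$ with coefficients in $\mathbb{Z}_{\geq0}[\omega^4,\omega^{-4}]$, and that the prefactor $\omega^{\lambda_{j;t}}$ is itself a power of $\omega^4$, so it can be swallowed into the constant term of $F_{j;t}$ without leaving $\mathbb{Z}_{\geq0}[\omega^4,\omega^{-4}]$.

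Finally, positivity of the coefficients in the classical sense — that is, the fact that the polynomial one obtains has \emph{nonnegative} integer coefficients and not merely integer coefficients — is where the restriction to type $\mathrm{A}_n$ (i.e.\ to a disk with marked points) is essential: here I would invoke the known positivity of quantum $F$-polynomials for cluster algebras from surfaces without punctures, which follows from the combinatorial expansion formulas of Musiker--Schiffler--Williams reviewed in Appendix~\ref{sec:ClusterAlgebrasAssociatedToSurfaces} (each perfect matching contributes a monomial with coefficient a power of $\omega$, and collecting them gives nonnegative coefficients), or alternatively from Muller's results on skein algebras which realize each cluster variable as the class of a simple multicurve. The main obstacle I anticipate is not any single step but the bookkeeping: one must be careful that ``$F_{j;t}$ is a polynomial in $Y_1,\dots,Y_n$'' is preserved under mutation (a priori the formula in Definition~\ref{def:mutatedtoricframe} could introduce denominators or the variables $M_0(\mathbf{e}_{n+1}),\dots,M_0(\mathbf{e}_m)$ attached to frozen edges), and that the powers of $\omega$ genuinely all land in $4\mathbb{Z}$ rather than merely $2\mathbb{Z}$ — the latter relies crucially on the hypothesis $D=4I$, which forces the relevant exponents, computed as in the displayed identity above, to be multiples of $8$ whenever they involve a $Y$-variable paired against a generator.
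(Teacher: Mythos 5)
Your proposal reaches the right conclusion but by a noticeably longer road than the paper, which disposes of the corollary in three sentences of citations: classical $F$-polynomials of type $\mathrm{A}_n$ have nonzero constant term by Musiker--Schiffler--Williams, so Tran's Theorem~6.1 forces $\lambda_{j;t}=0$ in Theorem~\ref{thm:quantumF}, and Tran's Theorem~7.4 then supplies both the restriction of the coefficients to Laurent polynomials in $\omega^4$ and their positivity. You instead re-derive the $\omega^4$-integrality by hand, tracking exponents through Definition~\ref{def:mutatedtoricframe} and using $D=4I$ to see that every relevant pairing $\Lambda_{M_0}\bigl(\sum_i b_{ik}\mathbf{e}_i,\cdot\bigr)$ is a multiple of $4$; that computation is sound and is essentially the content of Tran's Theorem~7.4 specialized to this setting, so your route is more self-contained at the cost of bookkeeping the paper outsources.

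Two caveats. First, your phrase ``inductively along the tree \dots the quantum $F$-polynomial stays a polynomial \dots with coefficients in $\mathbb{Z}_{\geq0}[\omega^4,\omega^{-4}]$'' overstates what induction can deliver: the exchange relation divides by the previous cluster variable, so neither polynomiality nor nonnegativity propagates mechanically from one seed to the next --- this is precisely why positivity was a conjecture rather than an exercise. You do recognize this and fall back on Musiker--Schiffler--Williams or Muller for positivity, which is the right move, but you should be explicit that what is needed is the \emph{quantum} positivity statement (coefficients in $\mathbb{Z}_{\geq0}[q,q^{-1}]$ after the correct normalization), not merely classical positivity at $q=1$; that quantum statement is exactly what Tran's Theorem~7.4 packages. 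Second, your handling of the prefactor $\omega^{\lambda_{j;t}}$ --- absorbing it into $F_{j;t}$ on the grounds that $\lambda_{j;t}\in4\mathbb{Z}$ --- suffices for the corollary as phrased, but the cleaner fact, and the one the paper uses, is that $\lambda_{j;t}=0$ outright, which follows immediately from the nonzero constant term of the classical $F$-polynomial via Tran's Theorem~6.1.
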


\begin{proof}
For algebras of type~$\mathrm{A}_n$, it is known that each classical $F$-polynomial has nonzero constant term (for example by~\cite{MSW1}). Hence, by~\cite{Tran}, Theorem~6.1, we have $\lambda_{j;t}=0$ in Theorem~\ref{thm:quantumF}. By~\cite{Tran}, Theorem~7.4, we know that the coefficients of $F_{j;t}$ are Laurent polynomials in $\omega^4$ with positive integral coefficients.
\end{proof}


\begin{thebibliography}{99}
\addcontentsline{toc}{chapter}{Bibliography}

\bibitem{Dlam} Allegretti, D.G.L. (2014). Laminations from the symplectic double. \texttt{arXiv:1410.3035 [math.GT]}.

\bibitem{Dmoduli} Allegretti, D.G.L. (2015). The cluster symplectic double and moduli spaces of local systems. \texttt{arXiv:1509.01342 [math.AG]}.

\bibitem{AK} Allegretti, D.G.L. and Kim, H.K. (2015). A duality map for quantum cluster varieties from surfaces. \texttt{arXiv:1509.01567 [math.QA]}.

\bibitem{Dquantum} Allegretti, D.G.L. (2016). A duality map for the quantum symplectic double. \texttt{arXiv:1605.01599 [math.QA]}.

\bibitem{BZq} Berenstein, A. and Zelevinsky, A. (2005). Quantum cluter algebras. \emph{Advances in Mathematics}, \textbf{195}(2), 405--455.

\bibitem{BonahonWong} Bonahon, F. and Wong, H. (2011). Quantum traces for representations of surface groups in $SL_2(\mathbb{C})$. \emph{Geometry and Topology}, \textbf{15}(3), 1569--1615.

\bibitem{CF} Chekhov, L. and Fock, V.V. (1999). Quantum Teichm\"uller space. \texttt{arXiv:math/9908165 [math.QA]}.

\bibitem{FK} Faddeev, L. and Kashaev, R.M. (1994). Quantum dilogarithm. \emph{Modern Physics Letters~A}, \textbf{9}(05), 427--434.

\bibitem{IHES} Fock, V.V. and Goncharov, A.B. (2006). Moduli spaces of local systems and higher Teichm\"uller theory. \emph{Publications Math\'ematiques de l'Institut des Hautes \'Etudes Scientifiques}, \textbf{103}(1), 1--211.

\bibitem{dual} Fock, V.V. and Goncharov, A.B. (2007). Dual Teichm\"uller and lamination spaces. In Handbook of Teichm\"uller theory I, \emph{IRMA Lectures in Mathematics and Theoretical Physics}, \textbf{11}, 647--684.

\bibitem{ensembles} Fock, V.V. and Goncharov, A.B. (2009). Cluster ensembles, quantization and the dilogarithm. \emph{Annales Scientifiques de l'\'Ecole Normale Sup\'erieure}, \textbf{42}(6), 865--930.

\bibitem{dilog} Fock, V.V. and Goncharov, A.B. (2009). The quantum dilogarithm and representations of quantum cluster varieties. \emph{Inventiones mathematicae}, \textbf{175}(2), 223--286.

\bibitem{infinity} Fock, V.V. and Goncharov, A.B. (2011). Cluster $\mathcal{X}$-varieties at infinity. \texttt{arXiv:1104.0407 [math.AG]}.

\bibitem{double} Fock, V.V. and Goncharov, A.B. (2014). Symplectic double for moduli spaces of $G$-local systems on surfaces. \texttt{arXiv:1410.3526 [math.AG]}.

\bibitem{FST} Fomin, S., Shapiro, M., Thurston, D. (2008). Cluster algebras and triangulated surfaces. Part~I: Cluster complexes. \emph{Acta Mathematica}, \textbf{201}(1), 83--146.

\bibitem{FZI} Fomin, S. and Zelevinsky, A. (2002). Cluster algebras I: Foundations. \emph{Journal of the American Mathematical Society}, \textbf{15}(2), 497--529.

\bibitem{FZIV} Fomin, S. and Zelevinsky, A. (2007). Cluster algebras IV: Coefficients. \emph{Compositio Mathematica}, \textbf{143}(01), 112--164.

\bibitem{GMN} Gaiotto, D., Moore, G.W., and Neitzke, A. (2013). Framed BPS states. \emph{Advances in Theoretical and Mathematical Physics}, \textbf{17}(2), 241--397.

\bibitem{GHKK} Gross, M., Hacking, P., Keel, S., and Kontsevich, M. (2014). Canonical bases for cluster algebras. \texttt{arXiv:1411.1394 [math.AG]}.

\bibitem{K} Kashaev, R.M. (1998). Quantization of Teichm\"uller spaces and the quantum dilogarithm. \emph{Letters in Mathematical Physics}, \textbf{43}(2), 105--115.

\bibitem{wallcross} Kontsevich, M. and Soibelman, Y. (2008). Stability structures, motivic Donaldson-Thomas invariants, and cluster transformations. \texttt{arXiv:0811.2435 [math.AG]}.

\bibitem{Le} L\^{e}, T. (2015). Quantum Teichm\"uller spaces and quantum trace map. \texttt{arXiv:1511.06054 [math.GT]}.

\bibitem{Muller} Muller, G. (2012). Skein algebras and cluster algebras of marked surfaces. \texttt{arXiv:1204.0020 [math.QA]}.

\bibitem{MSW1} Musiker, G. Schiffler, R., and Williams, L. (2011). Positivity for cluster algebras from surfaces. \emph{Advances in Mathematics}, \textbf{227}(6), 2241--2308.

\bibitem{MSW2} Musiker, G. Schiffler, R., and Williams, L. (2013). Bases for cluster algebras from surfaces. \emph{Compositio Mathematica}, \textbf{149}(2), 217--263.

\bibitem{MW} Musiker, G. and Williams, L. (2013). Matrix formulae and skein relations for cluster algebras from surfaces. \emph{International Mathematics Research Notices}, \textbf{2013}(13), 2891--2944.

\bibitem{Neitzke} Neitzke, A. (2014). Cluster-like coordinates in supersymmetric quantum field theory. \emph{Proceedings of the National Academy of Sciences}, \textbf{111}(27), 9717--9724.

\bibitem{PP} Papadopoulos, A. and Penner, R.C. (1993). The Weil-Petersson symplectic structure at Thurston's boundary. \emph{Transactions of the American Mathematical Society}, \textbf{335}(2), 891--904.

\bibitem{Penner} Penner, R.C. (2012). \emph{Decorated Teichm\"uller theory}. European Mathematical Society.

\bibitem{Th} Thurston, W.P. (1980). \emph{The Geometry and Topology of Three-Manifolds}. Princeton University notes.

\bibitem{Tran} Tran, T. (2011). $F$-polynomials in quantum cluster algebras. \emph{Algebras and representation theory}, \textbf{14}(6), 1025--1061.

\end{thebibliography}
\end{document}